\title{Serre weights and Breuil's lattice conjecture in dimension three}
\author{Daniel Le}
\address{Department of Mathematics,
University of Toronto,
40 St. George Street,
Toronto, ON M5S 2E4, Canada}
\email{le@math.toronto.edu}
\author{Bao V.~Le Hung}
\address{Department of Mathematics,
Northwestern University, 
2033 Sheridan Road\\
Evanston, IL 60208, USA}
\email{lhvietbao@googlemail.com}
\author{Brandon Levin}
\address{Department of Mathematics,
University of Arizona, 
617 N. Santa Rita Avenue,
P.O. Box 210089,
Tucson, Arizona 85721, USA}
\email{bwlevin@math.arizona.edu}
\author{Stefano Morra}
\address{LAGA, UMR 7539, CNRS, Universit\'e Paris 13 - Sorbonne Paris Cit\'e, 
Universit\'e de Paris 8,
99 avenue Jean Baptiste Cl\'ement,
93430 Villetaneuse,
France }
\email{morra@math.univ-paris13.fr}
\begin{document}

\begin{abstract} We prove in generic situations that the lattice in a tame type induced by the completed cohomology of a $U(3)$-arithmetic manifold is purely local, i.e., only depends on the Galois representation at places above $p$.  This is a generalization to $\GL_3$ of the lattice conjecture of Breuil. In the process, we also prove the geometric Breuil--M\'ezard conjecture for (tamely) potentially crystalline deformation rings with Hodge-Tate weights $(2,1,0)$ as well as the Serre weight conjectures of \cite{herzig-duke} over an unramified field extending the results of \cite{LLLM}.  We also prove results in modular representation theory about lattices in Deligne--Luzstig representations for the group $\GL_3(\F_q)$.
\end{abstract}

\maketitle

\tableofcontents

\section{Introduction}

One of the most important developments in the Langlands program in recent years has been the $p$-adic local Langlands correspondence for $\GL_2(\Qp)$.  Unfortunately, extending this correspondence even to $\GL_2(K)$ has proven to be exceedingly difficult and all evidence suggests that the desired correspondence will be much more complicated.  On the other hand, there has been some progress on several avatars of the $p$-adic local Langlands correspondence, namely, (generalized) Serre weight conjectures, geometric Breuil--M\'ezard conjecture, and Breuil's lattice conjecture.  These conjectures inform our understanding of what the sought after $p$-adic correspondence should look like.    In this paper, we prove versions of each of these three conjectures for $\GL_3(K)$ when $K/\Qp$ is unramified. 

\subsection{Breuil's lattice conjecture}

Motivated by Emerton's local-global compatibility for completed cohomology, \cite{CEGGPS} constructs a candidate for one direction of the $p$-adic local Langlands correspondence for $\GL_n(K)$.  Namely, they associate to any continuous $n$-dimensional $\overline{\Q}_p$-representation $\rho$ of $\Gal(\overline{K}/K)$ an admissible Banach space representation $V(\rho)$ of $\GL_n(K)$  by patching completed cohomology.  However, the construction depends on a choice of global setup, and one expects it to be a deep and difficult problem to show that the correspondence $\rho \mapsto V(\rho)$ is purely local.
 
In \cite{breuil-buzzati}, Breuil formulates a conjecture on lattices in tame types cut out by completed cohomology of Shimura curves which is closely related to the local nature of $V(\rho)$.  This conjecture was proven subsequently in the ground-breaking work of Emerton--Gee--Savitt \cite{EGS}.   Our first main theorem is a generalization of Breuil's conjecture to three-dimenional Galois representations and the completed cohomology of $U(3)$-arithmetic manifolds.

Specifically, let $p$ be a prime, $F/F^+$ a CM extension (unramified everywhere), and $r:G_F \ra \GL_3(\overline{\Q}_p)$ a Galois representation. Let $\lambda$ be the Hecke eigensystem corresponding to $r$, which appears in the cohomology of a $U(3)$-arithmetic manifold.  Choose a place $v|p$ of $F^+$ which splits in $F$, and let $\tld{H}$ be the integral $p$-adically completed cohomology with infinite level at $v$. 
One expects completed cohomology to realize a global $p$-adic Langlands correspondence generalizing the case of ${\GL_2}_{/\Q}$. 
That is, by letting $\tld{v}$ denote a place of $F$ above $v$ and $G_{F_{\tld{v}}}$ be the absolute Galois group of $F_{\tld{v}}$, the $\GL_3(F_{\tld{v}})$-representation on the Hecke eigenspace $\tld{H}[\lambda]$ corresponds to $r|_{G_{F_{\tld{v}}}}$ via a hypothetical $p$-adic local Langlands correspondence (when the level outside $v$ is chosen minimally). 
In particular, the globally constructed object $\tld{H}[\lambda]$ should depend only on $r|_{G_{F_{\tld{v}}}}$.

Suppose that $r$ is tamely potentially crystalline with Hodge--Tate weights $(2,1,0)$ at each place above $p$. 
For simplicity, in the introduction we suppose that $r$ is unramified away from $p$, although our results hold if $r$ is minimally split ramified. 
Assume that each place $v|p$ in $F^+$ splits in $F$ and fix a place $\tld{v}| v$ for all $v|p$ in $F^+$.
Let $\sigma(\tau)$ be the tame type corresponding to the Weil--Deligne representations associated to $r|_{G_{F_{\tld{v}}}}$ for all $v|p$ under the inertial local Langlands correspondence.
Throughout the introduction, the tame type $\sigma(\tau)$ is assumed to be sufficiently generic.
If $r$ is modular, then by classical local-global compatibility, $\tld{H}[\lambda][1/p]$ contains $\sigma(\tau)$ with multiplicity one.

\begin{thm}[Breuil's conjecture, cf.~Theorem \ref{thm:lattice}] \label{mainthm1} Assume that $p$ is unramified in $F^+$, and that $\overline{r}$ satisfies Taylor--Wiles hypotheses and is semisimple at places above $p$. 
Assume the level of $\tld{H}$ outside $p$ is minimal with respect to $r$.
Then, the lattice 
\[
\sigma(\tau)^0 := \sigma(\tau) \cap \tld{H}[\lambda] \subset \sigma(\tau)
\]
depends only on the collection $\{ r|_{G_{F_{\tld{v}}}} \}_{v|p}$.  
\end{thm}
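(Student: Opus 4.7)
The plan is to apply Taylor-Wiles patching in the style of Kisin, combined with the geometric Breuil-M\'ezard conjecture for potentially crystalline deformation rings of Hodge-Tate weights $(0,1,2)$, the Serre weight conjectures of \cite{herzig-duke}, and the classification of lattices in Deligne-Lusztig representations for $\GL_3(\F_q)$; all three ingredients are independently established in the paper. The argument is a $\GL_3$-analogue of the proof of Breuil's conjecture in the $\GL_2$ case due to Emerton-Gee-Savitt.

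First, I would build a patched module $M_\infty$ from the completed cohomology $\tld{H}$ using the Taylor-Wiles-Kisin method. By construction, $M_\infty$ carries a continuous action of $\prod_{v|p}\GL_3(\cO_{F^+_v})$ and an action of a patched deformation ring $R_\infty$ which is formally smooth over the completed tensor product of local framed deformation rings $R^\square_{\bar{r}_{\tld{v}}}$. For $\sigma(\tau) = \bigotimes_{v|p} \sigma(\tau_{\tld{v}})$ the Deligne-Lusztig representation corresponding to $\tau$, the $\sigma(\tau)$-isotypic patched module $M_\infty(\sigma(\tau))$ is maximal Cohen-Macaulay and supported on the patched potentially crystalline deformation ring $R_\infty^{\tau}$ of Hodge-Tate weights $(2,1,0)$ and inertial type $\tau$. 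The lattice $\tau^0$ can then be recovered, up to scalar, from $M_\infty(\sigma(\tau))$ by pulling back along the point $x \in \Spec R_\infty$ corresponding to $r$: concretely, $\tau^0$ is identified with the fiber $M_\infty(\sigma(\tau)) \otimes_{R_\infty, x} \overline{\Q}_p$ viewed inside $\sigma(\tau)$ via the natural comparison arising from classical local-global compatibility.

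The heart of the matter is then to show that $M_\infty(\sigma(\tau))$ is free of rank one over the completion of $R_\infty^{\tau}$ at the image of $x$. Granting this, the recovered lattice $\tau^0$ depends only on $x \in \Spec R_\infty^\tau$, hence only on the collection $\{r|_{G_{F_{\tld{v}}}}\}_{v|p}$, and the theorem follows. The freeness is proved by a numerical matching of multiplicities: the geometric Breuil-M\'ezard conjecture (proved in the paper at weights $(0,1,2)$) expresses the Hilbert-Samuel cycle of $R_\infty^{\tau}/\varpi$ as a sum $\sum_\sigma [\overline{\sigma(\tau)}:\sigma] \cdot \cZ(\sigma)$ over Serre weights $\sigma$, while local-global compatibility for $M_\infty$ modulo $p$ combined with the Serre weight conjectures of \cite{herzig-duke} identifies the set of modular weights with the predicted ``explicit'' set $W^{\text{expl}}(\overline{r})$ and controls their multiplicities. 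Comparing both decompositions forces the patched module to be locally free of rank one over $R_\infty^{\tau}$ along the support of $M_\infty(\sigma(\tau))$.

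The main obstacle is this freeness step: the Jordan-H\"older structure of $\overline{\sigma(\tau)}$ for $\GL_3(\F_q)$ can have up to nine constituents in the generic principal series case, so the cycle-theoretic matching is combinatorially intricate and relies crucially on the explicit local models and tame Kisin modules with descent data developed elsewhere in the paper, without which the individual cycles $\cZ(\sigma)$ cannot be pinned down. Once freeness is in hand, the last mile is to translate the local dependence of $M_\infty(\sigma(\tau))$ into a dependence statement for the lattice $\tau^0 \subset \sigma(\tau)$: this uses the classification of $\cO$-lattices in $\sigma(\tau)$ for $\GL_3(\F_q)$ proved in the paper, which says that such a lattice is determined up to homothety by its mod-$p$ reduction, and that this reduction is read off from the structure of the patched module at the closed point in the closure of $x$.
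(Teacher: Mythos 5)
Your high-level plan — patch completed cohomology, prove the resulting $\tau$-isotypic patched module is locally free of rank one over the potentially crystalline deformation ring, and use the geometric Breuil--M\'ezard conjecture and the lattice classification to transport this back into a statement about $\tau^0$ — is the right strategy, and the main ingredients you name are the correct ones. But two of your key steps, as stated, would not go through, and the missing pieces are precisely what makes the $\GL_3$ case substantially harder than the $\GL_2$ case of Emerton--Gee--Savitt.

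\emph{Cyclicity is not a numerical consequence of Breuil--M\'ezard.} You assert that freeness of the patched module over $R_\infty^\tau$ follows from a multiplicity comparison of Hilbert--Samuel cycles. This is how EGS argue for potentially Barsotti--Tate deformation rings of $\GL_2$, where it is in fact true that \emph{every} lattice $\tau^\kappa$ with irreducible cosocle has cyclic patched module. In dimension three this is false: as the paper emphasizes, $M_\infty(\tau^\kappa)$ is cyclic only when the cosocle $\kappa$ satisfies an upper-alcove condition in each embedding where the shape has small length (condition~\eqref{up:each:emb} in Theorem~\ref{thm:cyclic}), and for other $\kappa$ the patched module is genuinely not free even though the total multiplicity count still works out. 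The Breuil--M\'ezard cycle identity constrains $e(M_\infty(\taubar^\kappa))$ but cannot detect whether the module is cyclic. The actual proof of Theorem~\ref{thm:cyclic} is a multi-step induction on the complexity of the shape, which builds $M_\infty(\taubar^\sigma)$ from its subquotients via the submodule structure of $\taubar^\sigma$ supplied by Theorem~\ref{thm structure}, and crucially requires explicit ideal-theoretic relations among the components of $\overline{R}_\infty(\tau)$ established in \S\ref{sec:idealrel} (e.g.\ Lemmas~\ref{lem:ideal}, \ref{lem:ideal:1:id}, \ref{lem:ideal:0:alpha}). None of that is captured by a multiplicity count.

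\emph{A lattice is not determined by its mod-$p$ reduction, and the translation back to $\tau^0$ is more delicate.} Your ``last mile'' relies on the claim that an $\cO$-lattice in $\tau$ is determined up to homothety by its reduction; this is simply false (already for $\GL_2$). The paper's route is different: it writes $\tau^0 = \sum_{\kappa\in\JH(\taubar)}\gamma(\kappa)^{-1}\tau^\kappa$ using \cite[Lemma 4.1.2]{EGS}, reducing the problem to showing each scalar $\gamma(\kappa)\in\cO$ is purely local. This scalar is extracted from the image of the map $M_\infty(\tau^\kappa)/\lambda \to M_\infty(\tau^\sigma)/\lambda$, which — because $M_\infty(\tau^\sigma)$ is free of rank one over $R_\infty(\tau)$ — is an \emph{ideal}, computed by the gauge theorem (Theorem~\ref{thm:gauge}) as a product of prime ideals $\mathfrak{p}(\theta)$ lying in $\widehat{\bigotimes}_{v|p} R^\Box_{\tld{v}}$. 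The locality of $\tau^0$ comes entirely from the fact that these primes live in the local deformation rings, not from any intrinsic property of the reduction of the lattice. Without Theorem~\ref{thm:cyclic} (so the image is an ideal), Theorem~\ref{thm:gauge} (to compute it), Theorem~\ref{thm structure} (to control the structure of $\taubar^\kappa$ feeding both), and the Schikhof-duality bookkeeping to relate the $\gamma(\kappa)$ to the patched side, the argument does not close.
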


Let $\overline{H}$ be the mod $p$ reduction of $\tld{H}$, so that $\overline{H}$ is the mod $p$ cohomology with infinite level at places above $p$ of a $U(3)$-arithmetic manifold.
We prove the following ``mod $p$ multiplicity one'' result (cf.~Theorem \ref{thm:modpmultone}).
\begin{thm}[Theorem \ref{thm:modpmultone}]
\label{thm2} 
Keep the assumptions of Theorem $\ref{mainthm1}$.
Let $\sigma(\tau)^{\sigma}$ be a lattice in $\sigma(\tau)$ such that its mod $p$ reduction $\overline{\sigma}(\tau)^\sigma$ has an irreducible upper alcove cosocle.
Then $\Hom_{K_p}(\overline{\sigma}(\tau)^\sigma, \overline{H}[\lambda])$ is one dimensional. 
\end{thm}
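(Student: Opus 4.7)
The plan is to reduce the mod-$p$ multiplicity statement to a cyclicity statement for a patched module, and then to prove that cyclicity using the geometric Breuil--M\'ezard conjecture (established earlier in the paper) together with the structure theory of lattices in tame Deligne--Lusztig representations of $\GL_3(\F_q)$.

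First I would set up a Taylor--Wiles--Kisin patched module $M_\infty$ for $\overline{H}[\lambda]$, adapting the standard Caraiani--Emerton--Gee--Geraghty--Pa\v{s}k\=unas--Shin construction to the $U(3)$-arithmetic manifold at hand; this is available in view of the Taylor--Wiles hypotheses and the split-ramification condition on $\overline{r}$. The object $M_\infty$ is a module over a framed-deformation power-series ring $R_\infty$ with a compatible action of $K_p$, and for a continuous $\cO[K_p]$-module $V$ one obtains an associated $R_\infty$-module $M_\infty(V)$. A standard Nakayama-style unwinding reduces the statement $\dim_{\F}\Hom_{K_p}(\overline{\tau},\overline{H}[\lambda]) = 1$ to the assertion that $M_\infty(\overline{\tau}^\sigma)$ is a cyclic $R_\infty$-module.

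Next I would apply the geometric Breuil--M\'ezard conjecture (proved in the paper for tame types with Hodge--Tate weights $(0,1,2)$) to compute the cycle of $M_\infty(\overline{\tau}^\sigma)$ on the special fiber of $R_\infty$. This cycle is $\sum_{\sigma' \in \JH(\overline{\tau}^\sigma)} m_{\sigma'}\, Z_{\sigma'}$, with $m_{\sigma'}$ the Jordan--H\"older multiplicity of $\sigma'$ in $\overline{\tau}^\sigma$ and $Z_{\sigma'}$ the Breuil--M\'ezard cycle attached to the Serre weight $\sigma'$. The hypothesis that the cosocle $\sigma$ is \emph{upper alcove} now plays two essential roles: the component $Z_\sigma$ is formally smooth (by the structural results on potentially crystalline deformation rings established earlier in the paper), and the lattice theory for $\GL_3(\F_q)$ developed in the paper implies that $\sigma$ occurs with multiplicity exactly one in $\overline{\tau}^\sigma$. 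Together these give that $M_\infty(\overline{\tau}^\sigma)$ has generic rank one on the smooth component $Z_\sigma$.

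Finally, I would promote this cycle-theoretic information to cyclicity by an induction along the cosocle filtration of $\overline{\tau}^\sigma$, tracking at each stage the generic rank of the patched module on every component $Z_{\sigma'}$. I expect the main obstacle to be closing this induction cleanly: one must show that the successive subquotients contribute generators only on components already in the support of the piece constructed so far, which requires combining the precise combinatorial description of lattices with cosocle $\sigma$ (from the modular representation theory part of the paper) with depth/Cohen--Macaulay properties of the patched module coming out of the Breuil--M\'ezard analysis. The upper-alcove hypothesis on $\sigma$ is precisely what makes this combinatorics tractable and pins down the induction.
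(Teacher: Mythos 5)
Your bird's-eye structure matches the paper's: set up a Taylor--Wiles--Kisin patching module $M_\infty$, recognize that $\dim_\F \Hom_{K_p}(\overline{\tau},\overline{H}[\lambda])=1$ follows from cyclicity of $M_\infty(\overline{\tau}^\sigma)$, prove that cyclicity using the Breuil--M\'ezard multiplicity results plus the lattice structure theory. But the engine room of the argument is missing or misdirected, and as stated your plan would not close.

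The central problem is the final paragraph: ``promote this cycle-theoretic information to cyclicity by induction along the cosocle filtration of $\overline{\tau}^\sigma$, tracking the generic rank on every component.'' Generic-rank information on each irreducible component of $\overline{X}_\infty(\tau)$ is too coarse to control minimal generators. If $M$ is maximal Cohen--Macaulay over a non-smooth local ring and has generic rank one on every component, it can still fail to be cyclic: you need to control the number of generators at points where several components meet, and the successive quotients in a cosocle filtration each hit several components at once. What actually makes this work in the paper is a \emph{gluing} procedure: one decomposes a quotient $V$ of $\overline{\tau}^\sigma$ as a fiber product $V \cong V^1 \oplus_{V^3} V^2$ using the classification of submodules from the lattice structure theorem (Theorem \ref{thm structure}), applies exactness of $M_\infty$ to get $M_\infty(V)\cong M_\infty(V^1)\oplus_{M_\infty(V^3)}M_\infty(V^2)$, and then needs the purely algebraic fact that the fiber product of the three cyclic quotients $R/I_1$, $R/I_2$, $R/I_3$ is itself cyclic; this only holds if $I_1+I_2=I_3$, which is a concrete computation in the special fiber of the local deformation ring (the content of \S\ref{sec:idealrel}, e.g.\ Lemmas \ref{lem:ideal} and \ref{lem:ideal:1:alpha}). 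Without those explicit intersection relations the induction does not terminate; the patched modules $M_\infty(V)$ for intermediate $V$ need not be cyclic even when $M_\infty(\sigma)$ is. Moreover the paper's induction is not on the cosocle filtration: Lemma \ref{lemma:4weight} handles the case where every embedding has shape length $\geq 2$ (using the $x_jy_j=p$ presentation of the deformation ring and the saturation input Proposition \ref{prop saturation 1}), and Lemmas \ref{lemma:broom}--\ref{lemma:9weight} induct on the number of ``bad'' embeddings (shape length $\leq 1$).

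You also slightly misstate what the upper-alcove hypothesis buys. That $\sigma$ occurs with multiplicity one in $\overline{\tau}^\sigma$ is automatic for generic tame types (Proposition \ref{prop:typedecomp}) and has nothing to do with the alcove. And $Z_{\sigma'}$ is formally smooth for \emph{every} modular $\sigma'$, not just upper-alcove ones — this is exactly what makes Lemma \ref{lemma:diamond} work for each individual Serre weight. The genuine role of the hypothesis (formalized as condition (\ref{up:each:emb}) in Theorem \ref{thm:cyclic}) is in the inductive steps: when a shape $\widetilde{w}_{\tld{v},i}$ has length $\leq 1$, the deformation ring has many components meeting in complicated ways, and requiring $\sigma$ to be upper alcove there forces the Jordan--H\"older multiset of $\overline{\tau}^\sigma$ at that embedding to avoid precisely the configurations of components for which the ideal relations would fail. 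This combinatorics, driven by the extension graph description in Theorem \ref{thm structure} and Table \ref{Table:intsct}, is the part your proposal would need to supply and does not.
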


These theorems should be compared to Theorem 8.2.1 and Theorem 10.2.1 in \cite{EGS} in dimension two. In the special case where $p$ is split in $F^+$ and $\overline{r}$ is irreducible above $p$, both theorems were proven by the first author in \cite{Le}.

The main ingredients used in \cite{EGS} are the Taylor--Wiles patching method, the geometric Breuil--M\'ezard conjecture for potentially Barsotti--Tate Galois deformation rings (building on work of \cite{breuil-buzzati}), and a classification of lattices in tame types (extending \cite{breuil-buzzati, BP}).
When we began this project, only the first of these tools was available in the case of $\GL_3$.  The analogue of potentially Barsotti--Tate Galois deformation rings are potentally crystalline deformation rings with Hodge--Tate weights $(2,1,0)$.  In \cite{LLLM}, we develop a technique for computing these Galois deformation rings when the descent data is tame and sufficiently generic.   
We discuss in \S \ref{intro:BM} the geometric Breuil--M\'ezard conjecture for these rings.  The representation theoretic results are discussed in \S \ref{intro:reptheory}.      

It is worth mentioning several key differences which distinguish our situation from \cite{EGS}.  Breuil's conjecture for $\GL_2$ gave an explicit description of the lattice $\sigma(\tau)^0$ in terms of the Dieudonn\'e module of $r|_{G_{F_{\tld{v}}}}$.  We prove abstractly that $\sigma(\tau)^0$ is ``purely local'' (Theorems \ref{thm:gauge} and \ref{thm:lattice}), but without giving any explicit description of the lattice. The lattice $\sigma(\tau)^0$ is determined by the parameters of the Galois deformation ring but in a complicated way.    

Let $\sigma(\tau)^{\sigma}$ be a lattice in $\sigma(\tau)$ whose reduction has irreducible cosocle $\sigma$. To prove Theorem \ref{thm2}, we show that a certain (minimal) patched module $M_{\infty}(\sigma(\tau)^{\sigma})$ is free of rank one over the local Galois deformation ring (with patching variables) $R_{\infty}(\tau)$ (Theorem \ref{thm:cyclic}).
In fact, this result is also a key step in our proof of Theorem \ref{mainthm1}.
In \emph{loc. cit.}, the analogue of this result is Theorem 10.1.1 where they show that the patched module of any lattice with irreducible cosocle is free of rank 1. In our situation, it is no longer true that all such patched modules are cyclic.   Rather, this is only true when the cosocle $\sigma$ is upper alcove in every embedding.  As a consequence of this, one can deduce that the isomorphism class as an $R_{\infty}(\tau)$-module of $M_{\infty}(\sigma(\tau)^{\sigma})$ is purely local for any lattice $\sigma(\tau)^{\sigma}$, however, it need not be free.  

For the proof that $M_{\infty}(\sigma(\tau)^{\sigma})$ is free of rank one when $\sigma$ is upper alcove, we induct on the complexity of the deformation ring.  The simplest deformation rings resemble those for $\GL_2$ and so we follow the strategy similar to \cite{EGS}.  For the most complicated deformation rings, we build up $M_{\infty}(\sigma(\tau)^{\sigma})$ from its subquotients relying on the description of the submodule structure of reduction $\ovl{\sigma}(\tau)^{\sigma}$ discussed in \ref{intro:reptheory} and crucially intersection theory results for components of mod $p$ fiber of the Galois deformation ring.  
 
\subsection{Serre weight and Breuil--M\'ezard conjectures}\label{intro:BM}

There is an analogous global context for a mod $p$ Galois representation $\rbar:G_F \ra \GL_3(\F)$ whose corresponding Hecke eigensystem $\mathfrak{m}$ appears in the mod $p$ cohomology with infinite level at $v$ of a $U(3)$-arithmetic manifold.
One expects $\ovl{H}[\mathfrak{m}]$ to correspond to $\rbar|_{G_{F_{\tld{v}}}}$ via a hypothetical mod $p$ local Langlands correspondence.
Furthermore, if we let $W_v(\rbar)$ be the set of Jordan--H\"{o}lder factors in the $\GL_3(\cO_{F_{\tld{v}}})$-socle\footnote{
strictly speaking the Hecke eigenspace $\ovl{H}[\mathfrak{m}]$ is a smooth representation of $G(F^+_v)$, where $G_{/{F^+_v}}$ is a reductive group having a reductive model $\cG_v$ over $\cO_{F^+_v}$ together with an isomorphism $\iota_{\tld{v}}: \cG_v(\cO_{F^+_v})\stackrel{\sim}{\ra}\GL_3(\cO_{F_{\tld{v}}})$.} of $\ovl{H}[\mathfrak{m}]$ and $\rbar|_{G_{F_{\tld{v}}}}$ is tamely ramified, \cite{herzig-duke,GHS} predict that $W_v(\rbar) = W^?(\rbar|_{I_{F_{\tld{v}}}})$, where $I_{F_{\tld{v}}}$ denotes the inertia subgroup of $G_{F_{\tld{v}}}$  and $W^?(\rbar|_{I_{F_{\tld{v}}}})$ is a set which is explicitly defined in terms of $\rbar|_{I_{F_{\tld{v}}}}$.
We have the following version of the weight part of Serre's conjecture (cf. Theorem \ref{SWC2}).

\begin{thm}[The weight part of Serre's conjecture]
Assume that $p$ is unramified in $F^+$, and that $\overline{r}$ satisfies Taylor--Wiles hypotheses, has split ramification, and is semisimple and sufficiently generic at places above $p$. 
Then $W_v(\rbar) = W^?(\rbar|_{I_{F_{\tld{v}}}})$.
\end{thm}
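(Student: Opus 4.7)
The plan is to combine the Taylor--Wiles patching functor $M_\infty$ with the component-wise description of the tamely potentially crystalline deformation rings of Hodge--Tate weights $(0,1,2)$ and the geometric Breuil--M\'ezard conjecture for these rings, all of which are established earlier in the paper. Modularity of a Serre weight $\sigma$ for $\rbar|_{G_{F_{\tld{v}}}}$ translates, via patching, into the non-vanishing $M_\infty(\sigma)\neq 0$, so the theorem reduces to proving the equality of sets
\[
\{\sigma : M_\infty(\sigma)\neq 0\} \;=\; W^?(\rbar|_{I_{F_{\tld{v}}}}).
\]

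For the inclusion $W_v(\rbar)\subseteq W^?(\rbar|_{I_{F_{\tld{v}}}})$ (weight elimination), suppose $M_\infty(\sigma)\neq 0$. A projective cover argument produces a tame type $\tau$ and a lattice $\tau^\sigma\subset\tau$ with cosocle $\sigma$ such that the surjection $\tau^\sigma\onto\sigma$ induces a surjection $M_\infty(\tau^\sigma)\onto M_\infty(\sigma)$, whence $M_\infty(\tau)\neq 0$. This forces $\rbar|_{G_{F_{\tld{v}}}}$ to admit a potentially crystalline lift with Hodge--Tate weights $(0,1,2)$ and inertial type $\tau$; our classification of components of such deformation rings (in terms of the shape of $\rbar|_{G_{F_{\tld{v}}}}$) then constrains the pair $(\rbar|_{G_{F_{\tld{v}}}},\tau)$ tightly and yields $\sigma \in W^?(\rbar|_{I_{F_{\tld{v}}}})$.

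For the reverse inclusion, modularity of $\rbar$ guarantees the existence of at least one $\sigma_0\in W_v(\rbar)$, and hence of a tame type $\tau$ with $M_\infty(\tau)\neq 0$. The geometric Breuil--M\'ezard formula
\[
e\bigl(M_\infty(\tau)/\varpi\bigr) \;=\; \sum_{\sigma\in \JH(\overline{\tau})} m_\sigma(\tau)\,e\bigl(M_\infty(\sigma)\bigr),
\]
combined with our explicit knowledge of the Hilbert--Samuel multiplicities of the components of the special fiber of the tamely potentially crystalline deformation ring and of the multiplicities $m_\sigma(\tau)$ in $\JH(\overline{\tau})$ (obtained from the representation-theoretic analysis of lattices in Deligne--Lusztig representations for $\GL_3(\F_q)$), pins down the support $\{\sigma:M_\infty(\sigma)\neq 0\}$. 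Varying $\tau$ over a family whose Jordan-H\"older factors collectively exhaust $W^?(\rbar|_{I_{F_{\tld{v}}}})$ propagates modularity to every predicted weight.

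The main obstacle is coordinating both directions simultaneously: one must know, embedding by embedding, (i) which components of the special fiber of the tamely potentially crystalline deformation ring are nonempty together with their Hilbert--Samuel multiplicities, and (ii) how $\JH(\overline{\tau})$ decomposes, with exact multiplicities, across all tame types $\tau$ relevant to $\rbar|_{G_{F_{\tld{v}}}}$. In dimension three, both combinatorial inputs are substantially more intricate than in the $\GL_2$ setting handled in \cite{EGS}, and their matching across the Breuil--M\'ezard equality is where the intersection-theoretic results for the mod $p$ fibers of the deformation rings, together with the submodule-structure results for lattices in tame types, play an essential role.
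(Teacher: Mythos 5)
Your high-level plan — reduce to non-vanishing of $M_\infty(\sigma)$, use weight elimination via deformation-ring vanishing for one inclusion, and use the numerical Breuil--M\'ezard equality to fill out the support for the other — is the same as the paper's. But as written, both halves have a gap in how the argument is supposed to close.

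In the elimination direction, you argue that $M_\infty(\sigma)\neq 0$ produces a tame type $\tau$ with $\sigma\in \JH(\overline{\tau})$ and $M_\infty(\tau)\neq 0$, and that the classification of the components of $R_{\overline{\rho}}^{\tau}$ then ``constrains the pair tightly'' to conclude $\sigma\in W^?$. This inference does not close: $M_\infty(\tau)\neq 0$ gives $R_{\overline{\rho}}^\tau\neq 0$, which via the Kisin-module analysis only gives that \emph{some} Jordan--H\"older factor of $\overline{\tau}$ lies in $W^?(\overline{\rho})$, not that $\sigma$ itself does. The argument has to be run in the contrapositive, and the engine is Corollary~\ref{separateweights}: for any sufficiently deep weight $\sigma\notin W^?(\overline{\rho}^{\mathrm{ss}})$ one can \emph{choose} a tame type $\tau$ with $\sigma\in\JH(\overline{\tau})$ but $W^?(\overline{\rho}^{\mathrm{ss}},\overline{\tau})=\emptyset$; Proposition~\ref{telim} then forces $R_{\overline{\rho}}^{\tau}=0$, hence $M_\infty(\tau^\circ)=0$, hence $M_\infty(\sigma)=0$ by exactness. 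Without this ``separating type'' construction the elimination step is incomplete.

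In the existence direction, ``varying $\tau$ over a family whose Jordan--H\"older factors exhaust $W^?$ propagates modularity'' under-specifies the mechanism. One needs types whose intersection $W^?(\overline{\rho},\overline{\tau})$ is extremely small and whose shapes have length $\geq 2$ in every embedding (so that Lemma~\ref{lemma:BM} gives the exact count of components of $\overline{R}^{\tau}_{\overline{\rho}}$). The propagation is an induction on \emph{defect}: Lemma~\ref{lemma:lower-defect} shows a modular weight of positive defect forces a modular weight of strictly smaller defect (via the nested types $\tau\subset\tau'$ of Lemma~\ref{lemma:weight-combinatorics} and a subtraction of multiplicities), reducing to defect~$0$; then Lemma~\ref{lemma:defect-0-case} spreads modularity along the extension graph, one edge at a time, using types that contain exactly two adjacent predicted weights. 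The numerical Breuil--M\'ezard equality alone does not determine which summands are nonzero; the careful choice of isolating types is exactly what turns the equality of multiplicities into an identification of the support. You should make the induction on defect and the combinatorial construction of isolating types explicit; without them the argument is a plan, not a proof.
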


In \cite{LLLM}, we prove this theorem with the additional assumption that $p$ is split in $F^+$.  The strategy is to show the numerical Breuil--M\'{e}zard conjecture for the simplest deformation rings (where the shape has length at least two) using \cite[\S 6.2]{LLLM}.  The key new tool is a more conceptual and robust combinatorial technique for computing the intersection between the predicted weights $W^?(\rbar|_{I_{F_{\tld{v}}}})$ and the  Jordan--H\"{o}lder factors of a type, and which is developed in \S 2.  This allows us to inductively prove that all predicted weights are modular. 
 
Using a patching functor which is constructed globally, we show that the generic fibers of tamely potentially crystalline deformation rings of Hodge--Tate weight $(2,1,0)$ are connected for $\rhobar$ generic and deduce the full numerical Breuil--M\'{e}zard conjecture for these Galois deformation rings. 
Using the numerical formulation, we prove the following geometric version of the Breuil--M\'{e}zard conjecture (cf. \cite{EG}, Proposition \ref{prop:identify:cmpt}).

\begin{thm}[Proposition \ref{prop:identify:cmpt}]
Assume that $\rbar|_{G_{F_{\tld{v}}}}$  is semisimple and sufficiently generic.
There is a unique assignment $\sigma \mapsto \mathfrak{p}(\sigma)$ taking Serre weights $\sigma \in W^?(\rbar|_{G_{F_{\tld{v}}}})$ to prime ideals in the unrestricted framed deformation ring $R^{\Box}_{\rbar|_{G_{F_{\tld{v}}}}}$ such that the special fiber $\Spec(\ovl{R}^{\tau}_{\rbar|_{G_{F_{\tld{v}}}}})$ of the potentially crystalline framed deformation ring $R^{\tau}_{\rbar|_{G_{F_{\tld{v}}}}}$ of Hodge--Tate weight $(2,1,0)$ and tame type $\tau$, is the reduced underlying subscheme of
\[
\underset{\sigma\in W^?(\rbar|_{I_{F_{\tld{v}}}}) \cap \JH(\ovl{\sigma(\tau)})}{\bigcup} \Spec(R^{\Box}_{\rbar|_{G_{F_{\tld{v}}}}}/\mathfrak{p}(\sigma)).
\]
Moreover, this is compatible with any patching functor.
\end{thm}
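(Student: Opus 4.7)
The plan is to deduce the geometric Breuil--M\'ezard statement from the numerical version already established in the paper, following the strategy of Emerton--Gee \cite{EG}. The first step is to use the numerical BM formula together with equidimensionality of $\Spec \overline{R}^{\tau}_{\rbar|_{G_{F_{\tld{v}}}}}$ at the expected dimension to control its cycle-theoretic structure: since the Hilbert--Samuel multiplicity equals $|W^?(\rbar|_{I_{F_{\tld{v}}}}) \cap \JH(\taubar)|$ and every modular Serre weight contributes exactly one, the special fiber must have precisely that many irreducible components, each occurring with generic multiplicity one. In particular $\Spec \overline{R}^{\tau}_{\rbar|_{G_{F_{\tld{v}}}}}$ is generically reduced, and its set of irreducible components is in natural bijection with $W^?(\rbar|_{I_{F_{\tld{v}}}}) \cap \JH(\taubar)$.

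Next, for each weight $\sigma \in W^?(\rbar|_{I_{F_{\tld{v}}}})$ I would construct the prime $\mathfrak{p}(\sigma)$ via the patching functor $M_\infty$. Choose a tame type $\tau$ with $\sigma \in \JH(\taubar)$ and a lattice $\tau^\sigma \subset \tau$ with irreducible cosocle $\sigma$. Then $M_\infty(\tau^\sigma)$ is a maximal Cohen--Macaulay module over a quotient of $R_\infty(\tau)$, hence is supported on a union of top-dimensional components of $\Spec \overline{R}_\infty^{\tau}$. Using the cyclicity theorem (Theorem \ref{thm:cyclic}) in the upper alcove case, and for general cosocles an inductive extraction of contributions from the submodule structure of $\overline{\tau}^\sigma$, one isolates a single irreducible component attached to $\sigma$. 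This yields a prime $\mathfrak{p}_\tau(\sigma) \subset \overline{R}^{\tau}_{\rbar|_{G_{F_{\tld{v}}}}}$, which one pulls back along the surjection $R^{\Box}_{\rbar|_{G_{F_{\tld{v}}}}} \twoheadrightarrow \overline{R}^{\tau}_{\rbar|_{G_{F_{\tld{v}}}}}$ to define a candidate $\mathfrak{p}(\sigma) \subset R^{\Box}_{\rbar|_{G_{F_{\tld{v}}}}}$ containing $p$.

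The third step is to verify independence of $\tau$. For two tame types $\tau_1, \tau_2$ with $\sigma \in \JH(\overline{\tau_1}) \cap \JH(\overline{\tau_2})$, the primes $\mathfrak{p}_{\tau_i}(\sigma)$ are both characterized as the support of the mod $p$ patched object attached to $\sigma$ (i.e.\ $M_\infty(\sigma)$ itself), whose formation is intrinsic and does not involve $\tau$. Hence both pull back to the same prime of $R^{\Box}_{\rbar|_{G_{F_{\tld{v}}}}}$, giving a well-defined $\mathfrak{p}(\sigma)$, and the union decomposition of $\Spec \overline{R}^{\tau}_{\rbar|_{G_{F_{\tld{v}}}}}$ follows at once by combining the bijection of Step 1 with the identification of components in Step 2. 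Compatibility with an arbitrary patching functor is then automatic, since the assignment is characterized purely on the Galois side.

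The main obstacle lies in Step 2: for cosocles $\sigma$ that are not upper alcove at every embedding, $M_\infty(\tau^\sigma)$ is not cyclic over $R_\infty(\tau)$, so one cannot directly read off a single component from its support. Overcoming this requires the detailed analysis of the submodule structure of reductions of lattices in tame types, coupled with intersection-theoretic input on how components of $\Spec \overline{R}^{\tau}_{\rbar|_{G_{F_{\tld{v}}}}}$ meet, in order to inductively peel off one weight at a time and match it with a unique component.
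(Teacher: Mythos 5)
Your Step~2 goes through the wrong object, and the difficulty you flag at the end is a symptom of that choice rather than something to be overcome by further work. To define $\mathfrak{p}(\sigma)$, the paper does not analyze $M_\infty(\tau^\sigma)$ (the patched module of a lattice in $\tau$ with irreducible cosocle $\sigma$) at all; it takes the patched module $M_\infty(\sigma)$ of the irreducible Serre weight $\sigma$ \emph{itself}. The numerical Breuil--M\'ezard statement $e(M_\infty(\sigma))=1$ from Theorem~\ref{SWC}, combined with $M_\infty(\sigma)$ being maximal Cohen--Macaulay of the right dimension, forces $\mathrm{Supp}_{R_\infty}M_\infty(\sigma)$ to be the closure of a single prime ideal of the form $\mathfrak{p}'(\sigma)R_\infty$ with $\mathfrak{p}'(\sigma)\subset R^\Box_{\rhobar}$. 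This gives the candidate assignment with no reference to a type, no cyclicity theorem, and no analysis of $\overline{\tau}^\sigma$. The route you propose -- extracting a single component from $M_\infty(\tau^\sigma)$ by ``inductively peeling off'' using the submodule structure of $\overline{\tau}^\sigma$ and intersection theory -- is essentially circular here, because matching components to weights in $\Spec(\overline{R}^\tau_{\rhobar})$ is precisely what one is trying to establish.

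Once $\mathfrak{p}'(\sigma)$ is in hand, the verification that it satisfies the union decomposition uses: weight elimination to identify the right-hand side with $\mathrm{Supp}_{R_\infty}M_\infty(\taubar^\circ)$; connectedness of the generic fiber of $R^\tau_{\rhobar}$ (Theorem~\ref{thm:conngenfib}) to identify the scheme-theoretic support of $M_\infty(\taubar^\circ)$ with all of $\Spec\overline{R}_\infty(\tau)$; and the fact that this support is generically reduced without embedded components. You gesture at the first of these but omit the connectedness input, which is essential. Two further gaps: you have not argued \emph{uniqueness} of the assignment, which requires its own induction on defect (isolating $\mathfrak{p}(\sigma)$ by choosing types as in Lemma~\ref{lemma:weight-combinatorics} so that $\sigma$ is the unique weight of maximal defect); and compatibility with an arbitrary patching functor is not automatic ``since the assignment is characterized on the Galois side'' -- it requires a separate induction showing the scheme-theoretic support of $M_\infty(\sigma)$ equals $\Spec(R_\infty/\sum_{\tld{v}}\mathfrak{p}(\sigma_{\tld{v}})R_\infty)$ for any such functor.
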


\subsection{Representation theory results} \label{intro:reptheory}

In order to deduce Breuil's lattice conjecture from the Breuil-M\'ezard conjecture we need (and prove) new results on integral structures in Deligne-Lusztig representations, which may be of independent interest.
The main theorem (Theorem \ref{thm structure}) is a classification of integral lattices with irreducible cosocle in tame types, by means of an \emph{extension graph}, which plays a key role in the proofs of Theorems \ref{mainthm1} and \ref{thm2}.

We now briefly describe the extension graph.
In \S \ref{sec:gph:1}, we introduce a graph on the set of $p$-regular Serre weights (with fixed central character), with vertices corresponding to $p$-regular Serre weights and adjacency between vertices described in a combinatorially explicit way. We then show in Lemma \ref{lem:dic} that two vertices are adjacent if and only if the corresponding Serre weights have a non-trivial $\GL_3(\Fq)$-extension between them, justifying the terminology.
This gives a natural notion of \emph{graph distance} $\mathrm{d}_{\mathrm{gph}}$ between two $p$-regular Serre weights.
Theorem \ref{thm structure} states the following.

\begin{thm}[Theorem \ref{thm structure}] \label{intro:submod}
Assume that $R$ is a sufficiently generic Deligne--Lusztig representation of $\GL_3(\F_q)$.
\emph{(}In particular, the Jordan--H\"older factors of $\ovl{R}$ occur with multiplicity one.\emph{)}
If $\sigma$ is a Jordan--H\"older factor of $\ovl{R}$, let $R^\sigma$ be the unique lattice up to homothety with cosocle $\sigma$.
If $\sigma, \sigma'\in \JH(\ovl{R})$ and that $\dgr{\sigma}{\sigma'} = d$, then:
\begin{enumerate}
\item \label{intro:loewy} $\sigma'$ is a direct summand of the $d$-th layer of the cosocle filtration of $\ovl{R}^\sigma$;
\item if $\sigma''\in \JH(\ovl{R})$ is such that $\dgr{\sigma}{\sigma''} = d+1$ and $\dgr{\sigma'}{\sigma''}=1$ then $\ovl{R}^\sigma$ has a subquotient which is isomorphic to the unique non-split extension of $\sigma'$ by $\sigma''$; and
\item \label{intro:sat} if $R^{\sigma'} \subseteq R^\sigma$ is a saturated inclusion of lattices, then $p^dR^\sigma\subseteq R^{\sigma'}$ is also a saturated inclusion of lattices.
\end{enumerate}
\end{thm}

The argument is involved, using a mixture of local and global techniques, but we can distinguish two main steps in its proof.
In the first step (\S \ref{subsub:Weyl} and \ref{subsub:emb:arg}), we prove the first two items of Theorem \ref{intro:submod} in the case when $\sigma$ is a lower alcove weight of \emph{defect zero} (cf. Definition \ref{defn:obv} and Theorem \ref{thm:soc:simple}).
The proof uses methods from the modular representation theory of algebraic groups, embedding $\ovl{R}^{\sigma}$ in a Weyl module with non-$p$-restricted  highest weights. The key local argument is a careful study of the restriction of algebraic representations to rational points (Proposition \ref{prop:nonsplit}), which lets us constrain the submodule structure of (part of) the $\GL_3(\F_q)$-restriction of an algebraic Weyl module in terms of the extension graph.
This method does not work for all weights $\sigma\in\JH(\ovl{R})$, as the corresponding lattices will not always have simple socle, and thus can not be embedded into a Weyl module. %

In the second step (\S \ref{subsec:dist}), we reduce the theorem for the remaining lattices to the case treated in the first step. We relate the first two items and the last item of Theorem \ref{intro:submod}.
The last item, a statement in characteristic zero, is amenable to an inductive analysis.
First, we show that for a fixed weight $\sigma \in \JH(\ovl{R})$, item (\ref{intro:sat}) of Theorem \ref{intro:submod} actually implies the other two items (cf. Proposition  \ref{prop:dcosoc}).
This crucially uses Theorem \ref{intro:submod}(\ref{intro:sat}) in the case $d=1$ (cf. Proposition \ref{prop saturation 1}), which is proved using the computation of deformation rings in \cite{LLLM}, combinatorics of \S \ref{sec:weights}, and the Kisin--Taylor--Wiles patching method.
This argument follows the suggestion in \cite[\S B.2]{EGS} that tamely potentially crystalline deformation rings strongly reflect aspects of local representation theory through global patching constructions.

Next, we show that the first two items of Theorem \ref{intro:submod} applied to $R$ and its dual in the case of lower alcove defect zero weights imply Theorem \ref{intro:submod}(\ref{intro:sat}) in the case of lower alcove defect zero weights.
From this starting point, an inductive argument proves Theorem \ref{intro:submod}(\ref{intro:sat}), thus concluding the proof of Theorem \ref{intro:submod}.

\emph{Acknowledgments:} We would like to thank Christophe Breuil, James Humphreys, and Cornelius Pillen for many helpful conversations.  We would also like to thank Matthew Emerton, Toby Gee, and Florian Herzig for their support, guidance, and for comments on an earlier draft of this paper.  
Part of this work was carried out while the authors were visiting the Institut Henri Poincar\'e and the Mathematisches Forschungsinstitut Oberwolfach, and we would like to thank these institutions for their hospitality. BLH acknowedges support from the National Science Foundation under grant Nos.~DMS-1128155, DMS-1802037 and the Alfred P. Sloan Foundation.
DL was supported by the National Science Foundation under agreements Nos.~DMS-1128155 and DMS-1703182 and an AMS-Simons travel grant.
Finally, the authors express their utmost gratitude to the anonymous referee, for his or her meticulous and patient reading of several versions of this paper. 
The reports were invaluable in helping us to improve the quality, precision, and clarity of this paper.

\subsection{Notation} \label{subsection:Notation}

If $F$ is any field, we write $G_F\defeq \Gal(\overline{F}/F)$ for the absolute Galois group, where $\overline{F}$ is a separable closure of $F$. 
If $F$ is a number field and $v$ is a prime of $F$ we write $G_{F_v}$ for the decomposition group at $v$ and $I_{F_v}$ for the inertia subgroup of $G_{F_v}$.
If $F$ is a $p$-adic field, we write $I_F$ to denote the inertia subgroup of $G_F$.

We fix once and for all an algebraic closure $\overline{\Q}$ of $\Q$. All number fields are considered as subfield of our fixed $\overline{\Q}$. Similarly, if $\ell\in \Q$ is a prime, we fix algebraic closures $\overline{\Q}_\ell$ as well as embeddings $\overline{\Q}\iarrow\overline{\Q}_{\ell}$. All finite extensions of $\Q_{\ell}$ will thus be considered as subfields in $\overline{\Q}_{\ell}$. Moreover, the residue field of $\overline{\Q}_{\ell}$ is denoted by $\overline{\F}_\ell$.

Let $p>3$ be a prime. 
For $f>0$ we let $K$ be the unramified extension of $\Qp$ of degree $f$. 
We write $k$ for its residue field (of cardinality $q=p^f$) and  $\cO_K=W(k)$ for its ring of integers. 
For $r\geq1$ we set $e_r\defeq p^{fr}-1$ and fix a compatible system of roots $\pi_r \defeq (-p)^{\frac{1}{e_r}}\in \overline{K}$. 
We write $e$ for $e_1$ and $\pi$ for $\pi_1$.
Define the extension $L = K(\pi)$ and set $\Delta_0\defeq \Gal(L/K)$. 
The choice of the root $\pi$ let us define a character
\begin{align*}
\teich{\omega}_{\pi}:\Delta_0&\rightarrow W(k)\s\\
g&\mapsto \frac{g(\pi)}{\pi}
\end{align*}
whose associated residual character is denoted by $\omega_{\pi}$. 
In particular, for $f=1$, $\omega_{\pi}$ is the mod $p$ cyclotomic character, which will be simply denoted by $\omega$. 
If $F_w/\Qp$ is a finite extension and $W_{F_w}\leq G_{F_w}$ denotes the Weil group we normalize Artin's reciprocity map $\mathrm{Art}_{F_w}: F_w\s\ra W_{F_w}^{\mathrm{ab}}$ in such a way that uniformizers are sent to geometric Frobenius elements.

\vspace{2mm}

Let $E\subset \overline{\Q}_p$ be a finite extension of $\Qp$, which will be our coefficient field. 
We write $\cO$ for its ring of integers, fix an uniformizer $\varpi\in \cO$ and let $\mathfrak{m}_E=(\varpi)$. 
We write $\F\defeq \cO/\mathfrak{m}_E$ for its residue field. 
We will always assume that $E$ is sufficiently large. 
In particular, we will assume that any embedding $\sigma: K\iarrow \overline{\Q}_p$ factors through $E\subset \overline{\Q}_p$. %

\vspace{2mm}

We fix an embedding $\sigma_0: K\into E$. The embedding $\sigma_0$ induces maps $\cO_K \iarrow \cO$ and $k \iarrow \F$; we will abuse notation and denote these all by $\sigma_0$. We let $\phz$ denote the $p$-th power Frobenius on $k$ and set $\sigma_i \defeq \sigma_0 \circ \phz^{-i}$. The choice of $\sigma_0$ gives $\omega_f\defeq \sigma_0 \circ \widetilde{\omega}_{\pi}:I_K \ra \cO^{\times}$, a fundamental character of niveau $f$.   
We fix once and for all a sequence $\un{p}\defeq (p_n)_{n\in \N}$ where $p_n\in\ovl{\Q}_p$ satisfies $p_{n+1}^p=p_n$ and $p_0=-p$.
We let $K_\infty\defeq \underset{n\in\N}{\bigcup}K(p_n)$ and $G_{K_\infty}\defeq \Gal(\ovl{\Q}_p/K_\infty)$.

\vspace{2mm}

Let $\rho: G_K\rightarrow \GL_n(E)$ be a $p$-adic, de Rham Galois representation. 
For $\sigma: K\iarrow E\subset\overline{\Q}_p$, we define $\mathrm{HT}_\sigma(\rho)$ to be the multiset of $\sigma$-labeled Hodge-Tate weights of $\rho$, i.e. the set of integers $i$ such that $\dim_E\big(\rho\otimes_{\sigma,K}\C_p(-i)\big)^{G_K}\neq 0$ (with the usual notation for Tate twists).  
In particular, the cyclotomic character has Hodge--Tate weights 1 for all embedding $\sigma\into E$. 
For $\mu=(\mu_j)_j\in X^*(\un{T})$ we say that $\rho$ has Hodge--Tate weighs $\mu$ if
\[
\mathrm{HT}_{\sigma_j}(\rho)=\{\mu_{1,j},\mu_{2,j},\dots,\mu_{n,j}\}.
\]
The \emph{inertial type} of $\rho$ is the isomorphism class of $\mathrm{WD}(\rho)|_{I_K}$, where $\mathrm{WD}(\rho)$ is the Weil--Deligne representation attached to $\rho$ as in \cite{CDT}, Appendix B.1 (in particular, $\rho\mapsto\mathrm{WD}(\rho)$ is \emph{covariant}).
An inertial type is a morphism $\tau: I_K\ra \GL_n(E)$ with open kernel and which extends to the Weil group $W_K$ of $G_K$.
We say that $\rho$ has type $(\mu,\tau)$ if $\rho$ has Hodge--Tate weights $\mu$ and inertial type given by (the isomorphism class of) $\tau$.

Let $G\defeq {\GL_3}_{/ \F}$, denote by $T\subseteq G$ the torus of diagonal matrices, and write $W$ (resp. $W_a$, resp. $\tld{W}$) for the Weyl group (resp. the affine Weyl group, resp. the extended affine Weyl group) of $G$. 
We let $X^*(T)$ denote the group of characters of $T$, which we identify with $\Z^3$ in the usual way.
Let $R$ (resp. $R^\vee$) denote the set of roots (resp. coroots) of $G$ and $\La_R\subseteq X^*(T)$ the root lattice.
We then have
\begin{equation}
\label{eqn:iso:weyl}
W_a=\La_R\rtimes W(G),\qquad\qquad \tld{W}=X^*(T)\rtimes W(G).
\end{equation}
Let $\eps_1'$ and $\eps'_2$ be $(1,0,0)$ and $(0,0,-1)$ respectively. 
Let ${G}^{\mathrm{der}}$ be ${\SL_3}_{/\F}$ with torus ${T}^{\mathrm{der}}$.
Let ${\Lambda}_W = X^*({T}^{\mathrm{der}})$ denote the weight lattice for ${G}^{\mathrm{der}}$.
Let $X^0({T})$ denote the kernel of the restriction map $X^*({T}) \surj {\Lambda}_W$. 
We write $\eps_{1}$ and $\eps_{2}$ for the images of $\eps'_{1}$ and $\eps'_{2}$ in $\Lambda_W$, respectively.
We define in a similar fashion the various Weyl groups ${W}^{\der}$, ${W}_a^{\der}$, $\tld{{W}}^{\der}$ for ${G}^{\der}$.
Note that we have canonical isomorphisms $W\cong W^{\der}$ and $W_a\cong W_a^{\der}$.

Let $\mathcal{S}$ be a finite set.
For each $\tld{v}\in \mathcal{S}$, let $F_{\tld{v}}$ be an unramified extension of $\Q_p$ of degree $f_{\tld{v}}$, and let $k_{\tld{v}}$ be the residue field of $F_{\tld{v}}$.
Let $\cJ$ be the set of ring homomorphisms $\prod_{\tld{v} \in \mathcal{S}} k_{\tld{v}} \ra \F$.
If we fix an embedding $\sigma_{\tld{v},0}: k_{\tld{v}} \into \F$ and set $\sigma_{\tld{v},i}$ to be $\sigma_{\tld{v},0}\circ \varphi^{-i}$, then $\cJ$ is naturally identified with the set of pairs $(\tld{v},i_{\tld{v}})$ with $\tld{v}\in \mathcal{S}$ and $i_{\tld{v}} \in \Z/f_{\tld{v}}$.
In applications, $\mathcal{S}$ will be a finite set of places dividing $p$ of a number field $F$. 
Sometimes, $\mathcal{S}$ will have cardinality one, in which case we might drop the subscripts from $f_{\tld{v}}$ and $k_{\tld{v}}$ and denote the single unramified extension $F_{\tld{v}}$ of $\Q_p$ by $K$.

Let $\un{G}_0$ be the algebraic group $\prod_{\tld{v}\in \mathcal{S}} \Res_{k_{\tld{v}}/\FF_p} \GL_3$ with $\un{T}_0$ the diagonal torus and center $\un{Z}_0$. 
Let $\un{G}$ be the base change $\un{G}_0 \times_{\F_p} \F$, and similarly define $\un{T}$ and $\un{Z}$.
There is a natural isomorphism $\un{G} \cong \prod_{i\in \cJ} {\GL_3}_{/\F}$.
One has similar isomorphisms for $\un{T}$, $\un{Z}$, $X^*(\un{T})$, $\un{R}$, $\un{R}^\vee$ where $\un{R}$ (resp. $\un{R}^\vee$) denotes the set of roots (resp. coroots) of $\un{G}$.
If $\mu \in X^*(\un{T})$, then we correspondingly write $\mu = \sum_{i\in \cJ} \mu_i=\sum_{\tld{v}\in\cS}\mu_{\tld{v}}$.
We use similar notation for similar decompositions.
Again we identify $X^*(\un{T})$ with $(\Z^3)^\cJ$ in the usual way and let $\eps'_{1,i}$ and $\eps'_{2,i}$ be $(1,0,0)$ and $(0,0,-1)$, respectively, in the $i$-th coordinate and $0$ otherwise.
Let $\un{R}^+\subseteq \un{R}$ (resp. $\un{R}^{\vee,+}\subseteq \un{R}^\vee$) be the subset of positive roots (resp. coroots) of $\un{G}$ with respect to the upper triangular Borel in each embedding.
We define dominant (co)characters with respect to these choices.
Let $X^*_+(\un{T})$ be the set of dominant weights.
We denote by $X_1(\un{T}) \subset X^*_+(\un{T})$ be the subset of weights $\lambda\in X^*_+(\un{T})$  satisfying $0\leq \langle \lambda,\alpha^\vee\rangle\leq p-1$ for all simple roots $\alpha\in \un{R}^+$.
We call $X_1(\un{T})$ the set of $p$-restricted weights. Let $\eta'_i\in X^*(\un{T})$ be $(1,0,-1)$ in the $i$-th coordinate and $0$ otherwise, and let $\eta'$ be $\sum_{i\in\cJ} \eta'_i \in X^*(\un{T})$.
Let $\eta_i\in X^*(\un{T})$ be $(2,1,0)$ in the $i$-th coordinate and $0$ otherwise, and let $\eta$ be $\sum_{i\in\cJ} \eta_i \in X^*(\un{T})$.
Then $\eta$ is a lift of the half sum of the positive roots of $\un{G}$.

Let $\un{W}$ be the Weyl group of $\un{G}$ with longest element $w_0$.
Let $\un{W}_a$ and $\widetilde{\un{W}}$ be the affine Weyl group and extended affine Weyl group, respectively, of $\un{G}$.
Let $\un{\La}_R \subset X^*(\un{T})$ denote the root lattice of $\un{G}$.
As above we have identifications $\un{W}\cong W^\cJ$, $\un{W}_a\cong W_a^\cJ$, $\tld{\un{W}}\cong \tld{W}^\cJ$ and isomorphisms analogous to (\ref{eqn:iso:weyl}).

The Weyl groups $\un{W}$, $\tld{\un{W}}$, $\un{W}_a$ act naturally on $X^*(\un{T})$.
The image of $\lambda\in X^*(\un{T})$ via the injection $X^*(\un{T})\into \tld{\un{W}}$ is denoted by $t_\lambda$.
Our convention is that the \emph{dot action} is always a $p$-dot action i.e.~ $t_\lambda w \cdot \mu = t_{p\lambda} w (\mu+\eta) - \eta$.

Recall that for $(\alpha,n)\in \un{R}^+\times \Z$, we have the root hyperplane $H_{\alpha,n}\defeq \{\lambda:\ \langle\lambda+\eta,\alpha^\vee\rangle=np\}$. An alcove (or sometimes $p$-alcove) is a connected component of  
the complement $X^*(\un{T})\otimes_{\Z}\R\ \setminus\ \big(\bigcup_{(\alpha,n)}H_{\alpha,n}\big)$.
We say that an alcove $\un{C}$ is $p$-restricted if $0<\langle\lambda+\eta,\alpha^\vee\rangle<p$ for all simple roots $\alpha\in \un{R}^+$ and $\lambda\in \un{C}$.
If $\un{C}_0 \subset X^*(\un{T})\otimes_{\Z}\R$ denotes the dominant base alcove we let 
\[\tld{\un{W}}^+\defeq\{\tld{w}\in \tld{\un{W}}:\tld{w}\cdot \un{C}_0 \textrm{ is dominant}\}.\]
and
\[\tld{\un{W}}^+_1\defeq\{\tld{w}\in \tld{\un{W}}^+:\tld{w}\cdot \un{C}_0 \textrm{ is } p\textrm{-restricted}\}.\]
Let $\tld{w}_h  =( \tld{w}_{h,i}) \in \tld{\un{W}}^+_1$ be the element $w_0 t_{-\eta}$.
The discussion in this paragraph also applies for $G$, from which we define the dominant base alcove $C_0$, $\tld{{W}}^+$, and $\tld{{W}}_1^+$ for $G$.

There is a Frobenius action on $X^*(\un{T})$, denoted by $\pi$ and an induced action of $\pi$ on $\un{W}$ defined by $\pi(w)(\pi(\lambda))=\pi(w(\lambda))$.
If $\lambda\in X^*(\un{T})$ then $\pi(\lambda)_{\tld{v},i_{\tld{v}}}=\lambda_{\tld{v},i_{\tld{v}}-1}$ under the standard identification and similarly $\pi(s)_{\tld{v},i_{\tld{v}}} = s_{\tld{v},i_{\tld{v}}-1}$ for an element $s$ of one of the Weyl groups above.

\vspace{2mm}

Let $\un{G}_0^{\mathrm{der}}$ be $\prod_{\tld{v}\in \mathcal{S}} \Res_{k_{\tld{v}}/\FF_p} \SL_3$ with torus $\un{T}_0^{\mathrm{der}}$.
Let $\un{G}^{\mathrm{der}}$ be the base change $\un{G}_0^{\mathrm{der}} \times_{\F_p} \F$, and similarly define $\un{T}^{\mathrm{der}}$.
Let $\un{\Lambda}_W = X^*(\un{T}^{\mathrm{der}})$ denote the weight lattice for $\un{G}^{\mathrm{der}}$.
Let $X^0(\un{T})$ denote the kernel of the restriction map $X^*(\un{T}) \surj \un{\Lambda}_W$. 
We write $\eps_{1,i}$ and $\eps_{2,i}$ for the images of $\eps_{1,i}'$ and $\eps_{2,i}'$ respectively.
We define in a similar fashion the various Weyl groups $\un{W}^{\der}$, $\un{W}_a^{\der}$, $\tld{\un{W}}^{\der}$, $\tld{\un{W}}^{\der,+}$, $\tld{\un{W}}^{\der,+}_1$ for $\un{G}^{\der}$ (and $\tld{{W}}^{\der,+}$, $\tld{{W}}^{\der,+}_1$ for $G$), with analogous product decompositions.

\vspace{2mm}

Let $\alpha, \beta, \gamma^+$ denote the generators for the affine Weyl group $W_a$ of $\GL_3$ given by reflection over the walls of $C_0$ with $\alpha, \beta \in W$ and $\gamma^+$ is the affine reflection. Note that $\alpha, \beta$ satisfy $\alpha(\eps'_2) = \eps'_2$ and $\beta(\eps'_1) = \eps'_1$.   

 \vspace{2mm}

Let $S_3$ denote the symmetric group on $\{1,2,3\}$. We fix an injection $S_3\hookrightarrow \GL_3(\Z)$ sending $s$ to the permutation matrix whose $(k, m)$-entry is $\delta_{k,s(m)}$ and $\delta_{k,s(m)}\in\{0,1\}$ is the Kronecker $\delta$ specialized at $\{k,s(m)\}$. We will abuse notation and simply use $s$ to denote the corresponding permutation matrix. 
We consider the embedding  $X^*(T)\into\GL_3(\F(\!(v)\!))$ defined by $\lambda \mapsto v^{\lambda}$ where then $v^{(a,b,c)}$ is the diagonal matrix with entries $v^a, v^b, v^c$ respectively.
In this way, we get an embedding $\widetilde{W} \hookrightarrow \GL_3(\F(\!(v)\!))$.
Finally for $m\geq 0$ and a collection $(B_j)_{j=0,\dots,m}$ of square matrices of the same size, we write $\prod_{j=0}^{m}B_j=B_0\cdot B_1\cdot\dots\cdot B_m$.
\vspace{2mm}

Let $V$ be a representation of a finite group $\Gamma$ over an $E$-vector space.
We write $\JH(\ovl{V})$ to denote the set of Jordan--H\"older factors of the mod $\varpi$-reduction of an $\cO$-lattice in $V$. 
This set is independent of the choice of the lattice.

If $R$ is a ring, we let $\Irr(\Spec(R))$ denote the set of minimal primes of $R$.

\section{Extension graph}
\label{sec:graph}

In this section, we give a description of the extension graph of generic irreducible representations of the group $\rG\defeq\un{G}_0(\F_p)$ and describe the constituents of the mod $p$ reduction of \emph{generic} Deligne--Lusztig representations of $\rG$.
This also gives a description of the set $W^?(\rhobar)$ of predicted Serre weights defined by Herzig \cite{herzig-duke}.   
In \S \ref{sec:weights}, we use these descriptions to prove the Serre weight conjectures.  The distance in the extension graph also plays an important role in computing the cosocle filtration for the reductions of lattices in tame types in \S \ref{subsec:dist}.  

\subsection{Definition and properties of the extension graph}
\label{sec:gph:1}

The surjection $X^*(T) \ra \Lambda_W$ identifies $p$-restricted alcoves of $\GL_3$ and $\SL_3$.
Let $C_0$ and $C_1$ denote the $p$-restricted lower and upper alcoves, respectively, of the weight space of $\GL_3$,
so that $\mathcal{A} \defeq \{C_0,C_1\}^\cJ$ is naturally identified with the set of $p$-restricted alcoves for $\un{G}$ (or $\un{G}^\der$ when convenient). 
Our alcoves are fundamental domains with respect to the \emph{dot action}, so the base alcove will have a vertex at $-\eta$.
For notational convenience, we sometimes write $0$ and $1$ instead of $C_0$ and $C_1$ so that $\mathcal{A}$ is identified with $\{0,1\}^\cJ$. 
We recall some notions from \S \ref{subsection:Notation}.
Let $X_1(\un{T})$ denote the set of $p$-restricted weights of $\un{G}$ and $\un{C}_0\subseteq X(\un{T})\otimes\R$ denote the lowest alcove $(C_0)_{i\in \cJ}$. %
We consider $\widetilde{\un{W}}^{\mathrm{der}} = \un{\Lambda}_W \rtimes \un{W}$ acting \emph{via the $p$-dot action} on $\un{\La}_W\otimes\R$.
Let $\widetilde{\un{W}}^{\mathrm{der},+}_1 \subset \widetilde{\un{W}}^{\mathrm{der}}$ denote the set of elements which take $\un{C}_0$ to an element of $\mathcal{A}$.
There are six elements of $\widetilde{W}^{\der,+}_1$, namely $\mathrm{id}, (1 2)t_{-({\eps}_1-{\eps}_2)}, (2 3)t_{-({\eps}_2-{\eps}_1)}, (1 2 3)t_{-{\eps}_2}, (1 3 2)t_{-{\eps}_1},$ and $(1 3)t_{-({\eps}_1+{\eps}_2)}$. 

Observe that the inclusion $\un{\Lambda}_W \into \widetilde{\un{W}}^{\mathrm{der}}$ (resp.~$X^*(\un{T}) \into \tld{\un{W}}$) induces an isomorphism $\iota^{\der}:\un{\La}_W/\un{\La}_R \risom \widetilde{\un{W}}^{\der}/\un{W}^{\der}_a$ (resp.~$\iota:X^*(\un{T})/\un{\Lambda}_R (\cong X^*(\un{Z})) \risom \tld{\un{W}}/\un{W}_a$).
Let $\mathcal{P}^{\mathrm{der}} \subset \un{\Lambda}_W \times \widetilde{\un{W}}^{\der,+}_1$ be the subset of pairs $(\omega,\tld{w})$ with $\iota^{\der}(-\pi^{-1}(\omega)+\un{\Lambda}_R) = \tld{w}\un{W}^{\mathrm{der}}_a$.
We similarly define $\mathcal{P} \subset X^*(\un{T}) \times \tld{\un{W}}^+_1$.
Note that restriction gives a natural surjection $\mathcal{P}\onto \mathcal{P}^{\der}$.

\begin{lemma}
\label{lem:eq:Weyl}
The map 
\begin{align*}
\beta: \mathcal{P}^{\der} &\ra \un{\Lambda}_W\times \mathcal{A} \\
(\omega,\tld{w}) &\mapsto (\omega,\pi(\tld{w}) \cdot \un{C}_0)
\end{align*}
is a bijection.
\end{lemma}
\begin{proof}
The map $\beta$ is a bijection because $\iota^{\der}$ is a bijection and $\un{W}^{\der}_a$ acts simply transitively on the set of alcoves for $\un{G}^{\mathrm{der}}$.
\end{proof}

Let $\mu$ be an element of $\un{C}_0/(p-\pi)X^0(\un{T})$. 
We will often have some lift of $\mu$ in $X^*(\un{T})$ in mind, but what we write will not depend on the choice of this lift.
We define a map
\begin{align}\label{eqn:tr}
\mathcal{P}^{\der} &\ra X^*(\un{T})/(p-\pi)X^0(\un{T}) \\
(\omega,\tld{w}) &\mapsto \tld{w}'\cdot(\mu-\eta+\omega'),\nonumber
\end{align}
where $(\omega',\tld{w}')\in \mathcal{P}$ is a lift of $(\omega,\tld{w})$.
The map (\ref{eqn:tr}) does not depend on the choice of lift. 
Then we define
\[\Trns'_\mu: \un{\Lambda}_W \times \mathcal{A} \ra X^*(\un{T})/(p-\pi)X^0(\un{T})\]
to be the composition of $\beta^{-1}$ with (\ref{eqn:tr}).

\begin{defn}\label{defn:regular}
We say that a weight $\lambda \in X^*(\un{T})$ is {\it $p$-regular} if $\langle \lambda+\eta,\alpha^\vee \rangle \notin p\Z$ for all positive coroots $\alpha^\vee \in \un{R}^{\vee, +}$.

\vspace{1mm}

\noindent
Note that a $p$-regular element belongs to a unique alcove.
\end{defn}

\vspace{6mm}

\noindent
Define $\un{\Lambda}^\mu_W$ to be the set 
\[
\left\{
\omega \in \un{\Lambda}_W  :  \omega+\mu-\eta\in\un{C}_0
\right\}
\]
(taking the image of $\mu-\eta$ in $\un{\Lambda}_W$).
Let $\Trns_\mu$ be the restriction of $\Trns_\mu'$ to $\un{\Lambda}_W^\mu \times \mathcal{A}$.
We establish some basic properties of $\Trns_\mu$.
\begin{prop}
\label{prop:inj:trns}
\begin{enumerate}
\item \label{item:regalcove} \emph{(}Any lift of\emph{)} $\Trns_\mu(\omega,a)$ in $X^*(\un{T})$ is $p$-regular and is in alcove $\pi^{-1}(a)$.
\item \label{item:inj} The map $\Trns_\mu$ is injective.
\end{enumerate}
\end{prop}
\begin{proof} 
For (\ref{item:regalcove}), suppose that $(\omega',\tld{w}')\in \mathcal{P}$ is a lift of $\beta^{-1}(\omega,a)$.
Then $\mu-\eta+\omega'$ is $p$-regular in alcove $\un{C}_0$ by definition of $\un{\La}_W^\mu$ so that $\Trns_\mu(\omega,a) = \tld{w}'\cdot(\mu-\eta+\omega')$ is $p$-regular in alcove $\pi^{-1}(a) = \tld{w}\cdot \un{C}_0$.

For (\ref{item:inj}), suppose that $\Trns_\mu(\omega,a) = \Trns_\mu(\nu,b)$.
Let $(\omega',\tld{w}')$ and $(\nu',\tld{x}')\in \mathcal{P}$ be lifts of $\beta^{-1}(\omega,a)$ and $\beta^{-1}(\nu,b)$, respectively.
Then $\tld{w}'\cdot(\mu-\eta+\omega') \equiv \tld{x}'\cdot (\mu-\eta+\nu') \pmod {(p-\pi)X^0(\un{T})}$, from which we conclude from the definition of $\mathcal{P}^{\mathrm{der}}$ that $\omega'|_{\un{Z}} \equiv \nu'|_{\un{Z}} \pmod{3X^*(\un{Z})}$.
Combining this with the fact that $a=b$ from (\ref{item:regalcove}) and using $\iota$, we see that $\tld{w}' \equiv \tld{x}' \pmod {X^0(\un{T})}$.
This implies that $\omega = \nu$.
\end{proof}

\begin{prop}\label{connected}
Let $\mu \in X^*(\un{T})$ be a character. 
We have:
\begin{align}
\label{eq:connected}
\Trns_\mu\big(\un{\La}_W^\mu\times\cA\big)=
\left\{
\begin{aligned}
\lambda\in X_1(\un{T})/(p-\pi)X^0(\un{T}):&\quad\textrm{$\lambda$ is $p$-regular and}\\
&\quad(\lambda-\mu+\eta)|_{\un{Z}}\in(p-\pi)X^*(\un{Z})
\end{aligned}
\right\}
\end{align}
\end{prop}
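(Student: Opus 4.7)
The plan is to prove both inclusions. The inclusion $\subseteq$ is immediate from Proposition~\ref{properties of trns}: items (2) and (3) give that every $\Trns_\mu(\omega',a)$ is $p$-regular, lies in the $p$-restricted alcove $\pi^{-1}(a)\subset X_1(\un{T})/(p-\pi)X^0(\un{T})$, and satisfies the central-character condition.

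For the reverse inclusion I explicitly invert $\Trns_\mu$. Fix $\lambda$ in the RHS. Its $p$-regularity singles out the unique $p$-restricted alcove $a_\lambda$ containing $\overline{\lambda}$; I set $a:=\pi(a_\lambda)\in\mathcal{A}$, and let $\tld{w}\in\tld{\un{W}}^{+,\mathrm{der}}_1$ be the unique element with $\tld{w}\cdot\un{A}=a_\lambda$ provided by Lemma~\ref{tmumap}. Since $\tld{w}^{-1}\cdot\overline{\lambda}\in\un{A}$, the element $\omega':=\tld{w}^{-1}\cdot\overline{\lambda}-\overline{\mu-\eta}\in\Lambda_W$ satisfies $\omega'+\overline{\mu-\eta}\in\un{A}$, so $\omega'\in\Lambda_W^\mu$. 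I then check $\Trns_\mu(\omega',a)\equiv\lambda\pmod{(p-\pi)X^0(\un{T})}$ using formula~\eqref{eq:ref:form}: by construction both sides have the same image $\overline{\lambda}\in\Lambda_W$, so their difference lies in $X^0(\un{T})/(p-\pi)X^0(\un{T})$.

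To conclude, I must show this difference vanishes. Since $\tld{w}$ acts trivially on $X^*(\un{Z})$ (its Weyl and $\Lambda_R$-translation parts both annihilate the center), the central-character hypothesis $(\lambda-\mu+\eta)|_{\un{Z}}\in(p-\pi)X^*(\un{Z})$ transports to the vanishing of the difference's restriction to $\un{Z}$ in $X^*(\un{Z})/(p-\pi)X^*(\un{Z})$; the $\pi$-equivariant commensurability of $X^0(\un{T})$ and $X^*(\un{Z})$ (using $p>3$) then lifts this vanishing back to $X^0(\un{T})/(p-\pi)X^0(\un{T})$. The main obstacle I anticipate is precisely this last compatibility step: carefully tracking the $p$-dot action through the canonical identification $\un{W}_a^{\mathrm{der}}\cong\un{W}_a$ and the section-dependent comparison between $X^0(\un{T})$ and $X^*(\un{Z})$, while verifying that the central-character hypothesis is strong enough to force the final congruence modulo $(p-\pi)X^0(\un{T})$ (rather than merely modulo $(p-\pi)X^*(\un{Z})$).
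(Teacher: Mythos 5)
Your proposal has two related gaps in the reverse inclusion, and the second one is fatal as stated.

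\textbf{Non-uniqueness of $\tld{w}$.} There is no ``unique element $\tld{w}\in\tld{\un{W}}^{+,\mathrm{der}}_1$ with $\tld{w}\cdot\un{A}=a_\lambda$.'' Lemma~\ref{tmumap} gives a bijection $\Lambda_R\times\tld{\un{W}}^{+,\mathrm{der}}_1\risom\Lambda_W\times\mathcal{A}$, not a bijection between $\tld{\un{W}}^{+,\mathrm{der}}_1$ and $\mathcal{A}$; the map $\tld{\un{W}}^{+,\mathrm{der}}_1\to\mathcal{A}$, $\tld{w}\mapsto\tld{w}\cdot\un{A}$, is $|\Lambda_W/\Lambda_R|^f=3^f$-to-one. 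Only one of those $3^f$ choices puts $\tld{w}$ in the coset $\un{W}_a^{\mathrm{der}}\cdot t_{-\pi^{-1}\omega'}$ that formula~\eqref{eq:ref:form} actually requires, and since your $\omega'$ is defined \emph{from} $\tld{w}$, you cannot single it out without additional input. With a wrong choice, $\Trns_\mu(\omega',a)$ no longer has image $\overline{\lambda}$ in $\Lambda_W$ (Proposition~\ref{properties of trns}(\ref{properties of trns:2}) only guarantees the image lies in the same $\tld{\un{W}}^{\mathrm{der}}$-orbit and alcove, and that orbit meets each alcove in $3^f$ points, not one).

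\textbf{The ``lift back'' step fails.} Even granting the right $\tld{w}$, the final step is wrong: the comparison map $X^0(\un{T})/(p-\pi)X^0(\un{T})\to X^*(\un{Z})/(p-\pi)X^*(\un{Z})$ is, under the natural identifications $X^0(\un{T})\cong\Z^f\cong X^*(\un{Z})$, multiplication by $3$ on $\Z/(p^f-1)$ (since $\Sigma(X^0(\un{T}))=3X^*(\un{Z})$). This is \emph{not} injective whenever $3\mid p^f-1$, which the hypothesis $p>3$ does not preclude (e.g.~$p=7$, $f=1$, where the kernel has order $3$). So vanishing of your discrepancy in $X^*(\un{Z})/(p-\pi)X^*(\un{Z})$ says nothing about its vanishing in $X^0(\un{T})/(p-\pi)X^0(\un{T})$. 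Note also that this ``coarse'' central character check cannot possibly discriminate among your $3^f$ candidate values of $\Trns_\mu(\omega', a)$, since Proposition~\ref{properties of trns}(\ref{properties of trns:3}) makes \emph{all} of them satisfy the same congruence modulo $(p-\pi)X^*(\un{Z})$.

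The paper sidesteps both issues by working the other way around: it first uses the surjectivity $\Sigma(\sec(T_a))+\Sigma(X^0(\un{T}))=X^*(\un{Z})$ to choose $\omega\in T_a$ (equivalently, the correct lift of the alcove to $\tld{\un{W}}^{+,\mathrm{der}}_1$) and $x\in X^0(\un{T})$ so the $X^*(\un{Z})$-component matches on the nose (not merely modulo $(p-\pi)X^*(\un{Z})$). The residual discrepancy then lands in $\ker\Sigma=\can(\Lambda_R)$ and is absorbed into the $\Lambda_R$-coordinate $\nu$ of $\Trns_\mu$, rather than requiring an injectivity assertion about $X^0(\un{T})$ modulo $(p-\pi)$. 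Your strategy can be repaired, but the repair is essentially this: the choice of $\tld{w}$ must be made by matching $\Sigma$ modulo $\Sigma(X^0(\un{T}))$, after which there is nothing left to ``lift back.''
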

\begin{proof}
The inclusion of the left hand side of (\ref{eq:connected}) in the right follows from Proposition \ref{prop:inj:trns}(\ref{item:regalcove}) and the formula for $\Trns_\mu$.
We show now the reverse inclusion. 
Let $\lambda$ be an element in the right hand side of (\ref{eq:connected}).
Then we let 
\begin{itemize}
\item $a\in \mathcal{A}$ denote the unique alcove that contains $\pi (\lambda)$;
\item $\omega'\in X^*(\un{T})$ be such that $(\lambda-\mu+\eta)|_{\un{Z}} = (1-p\pi^{-1})\omega'|_{\un{Z}}$;
\item $\omega\in \un{\Lambda}_W$ be the image of $\omega'$; 
\item $(\omega',\tld{w}') \in \mathcal{P}$ be a lift of $\beta^{-1}(\omega,a)$, and
\item $\nu$ be $\lambda - \tld{w}'\cdot (\mu-\eta+\omega') \in \un{\Lambda}_R$.
\end{itemize}
Then $(w'^{-1}(\nu)+\omega',\tld{w}') \in \mathcal{P}$ is a lift of $\beta^{-1}(w'^{-1}(\nu)+\omega,a)$, so that $\Trns_\mu(w'^{-1}(\nu)+\omega,a) = \lambda$ in $X^*(\un{T})/(p-\pi)X^0(\un{T})$.
\end{proof}

\begin{prop} \label{changeofcoordinates}
Let $\lambda-\eta$ be a lift of  $\Trns_\mu(\varepsilon,\un{C}_0)$ and $\beta^{-1}(\varepsilon,\un{C}_0) = (\varepsilon,\tld{w})$. 
Then 
\[\Trns_\lambda(\nu,a) = \Trns_\mu(w^{-1}(\nu)+\varepsilon,a)\]
for $(\nu,a)\in \un{\La}_W^{\lambda}\times \cA$, where $w \in \un{W}$ is the image of $\tld{w}$.
\end{prop}
\begin{proof}
Let $(\nu',\tld{x}') \in \mathcal{P}$ be a lift of $\beta^{-1}(\nu,a)$.
Similarly, let $(\varepsilon',\tld{w}') \in \mathcal{P}$ be a lift of $(\varepsilon,\tld{w})$.
Then 
\begin{align*}
\Trns_\lambda(\nu,a) &= \tld{x}'\cdot (\lambda-\eta+\nu')\\
&= \tld{x}'\cdot(\tld{w}' \cdot (\mu-\eta+\varepsilon')+\nu')\\
&= \tld{x}'\tld{w}' \cdot (\mu-\eta+\varepsilon'+w^{-1}(\nu')) \\
&= \Trns_\mu(w^{-1}(\nu)+\varepsilon,a).
\end{align*}
\end{proof}

Recall that a \emph{Serre weight} is an irreducible $\ovl{\F}_p$-representation of $\rG$.
Each Serre weight is obtained by restriction to $\rG$ from an irreducible algebraic representation of $\un{G}$ of highest weight $\lambda\in X_1(\un{T})$, and this process gives a bijection between from $X_1(\un{T})/(p-\pi)X^0(\un{T})$ to the set of Serre weights of $\rG$ (as described in \cite[Theorem 3.10]{herzig-duke}, cf.~also the beginning of \S~\ref{sec:inj:env} below). 
If $\lambda\in X_1(\un{T})$, we write $F(\lambda)$ for the Serre weight corresponding to $\lambda$.
We say that a Serre weight $F$ is $p$-\emph{regular} if $F\cong F(\lambda)$ where $\lambda\in X_1(\un{T})$ is $p$-regular (cf. Definition \ref{defn:regular}).
Given $\mu\in\un{C}_0$ and $(\omega,a)\in \un{\La}_W^\mu\times \mathcal{A}$, we get a corresponding $p$-regular Serre weight $F(\Trns_\mu(\omega,a))$.
Proposition \ref{prop:inj:trns}(\ref{item:inj}) and \ref{connected} show that $F(\Trns_\mu(-))$ induces a bijection between the set $\un{\Lambda}_W^\mu\times \mathcal{A}$ and the set of $p$-regular Serre weights of $\rG$ with the same central character as $F(\mu-\eta)$.

\begin{defn}
\label{df:adj}
We say that $(\omega, a)$ and $(\nu, b)$ in $\un{\Lambda}^{\mu}_{W}\times \mathcal{A}$  are \emph{adjacent} if  there exists $j \in \cJ$  such that both $a_i = b_i$ and $\omega_i = \nu_i$ for $i \in \cJ$ with $i \neq j$, $a_j \neq b_j$, and 
\begin{equation}\label{eqn:adj}
\omega_{j} - \nu_{j} \in  \{0, \pm (\eps_{1,j}-\eps_{2,j}) , \pm \eps_{1,j}, \pm \eps_{2,j}\}.
\end{equation}
If $F$ and $F'$ are $p$-regular Serre weights we say that they are \emph{adjacent} if we can write $F=F(\Trns_\mu(\omega,a))$ and $F'=F(\Trns_\mu(\nu,b))$ for some $p$-regular $\mu\in \un{C}_0/(p-\pi)X^0(\un{T})$ and some $(\omega, a)$ and $(\nu, b)$ which are adjacent in $\un{\Lambda}^{\mu}_{W}\times \mathcal{A}$. 
By Proposition \ref{changeofcoordinates}, this definition does not depend on the choice of the lowest alcove weight $\mu\in \un{C}_0$.
Indeed if $F=F(\Trns_\mu(\omega,a))$, $F'=F(\Trns_\mu(\nu,b))$ are adjacent $p$-regular Serre weights and $\lambda\in \un{C}_0$ such that $F(\lambda-\eta)$ and $F(\mu-\eta)$ have the same central character, then $F(\Trns_\lambda(w(\omega-\varepsilon),a))=F(\Trns_\mu(\omega,a))$ and $F(\Trns_\lambda(w(\nu-\varepsilon),b))=F(\Trns_\mu(\nu,b))$ for some $w\in \un{W}$ and $\varepsilon \in \un{\Lambda}_W$, and $(w(\omega-\varepsilon),a)$, $(w(\nu-\varepsilon),b)$ are again adjacent since the set in (\ref{eqn:adj}) is $\un{W}$-invariant.

\end{defn}

\begin{rmk} Geometrically, $\un{\Lambda}_W^\mu$ is the intersection of $\un{\Lambda}_W$ with a translate of a $p$-alcove, and two pairs $(\omega,a)$ and $(\nu,b)$ are adjacent if and only if $\omega$ and $\nu$ are either equal or neighbors (i.e.~ differ by a $\un{W}$-conjugate of a fundamental weight), and either $\omega\neq \nu$ and $a$ and $b$ have different labels in the unique component where $\omega$ and $\nu$ differ, or $\omega=\nu$ and $a$ and $b$ differ in exactly one component.
Note that for any $(\omega,a)\in \un{\La}_{W}^{\mu}$ with all its neighbors in $\un{\La}_{W}^{\mu}$, there are $7^{\# \cJ}$ adjacent vertices. 
\end{rmk}
Definition \ref{df:adj} endows $\un{\Lambda}^{\mu}_{W} \times \mathcal{A}$  with a graph structure with edges given by adjacency.
By the above description, this graph is connected and thus it is endowed with a metric.
By Proposition \ref{connected}, any $\mu\in \un{C}_0$ thus endows the set of $p$-regular Serre weights with the same central character as $F(\mu-\eta)$ with a metric. 
By Proposition \ref{changeofcoordinates}, the metric on a fixed set is independent of the choice of $\mu$.
\begin{defn} \label{defn:dgph}
Given $p$-regular Serre weights $F,\, F'$ with the same central character, we denote their distance by  $\dgr{F}{F'}$.
In particular, $F$ and $F'$ are adjacent if and only if $\dgr{F}{F'} = 1$.
\end{defn}
We conclude this section by showing the relation between $p$-regular Serre weights and $\rG$-extensions.
We start by recalling the following definition
\begin{defn}
\label{dfn:deep:S}
Let $\lambda\in X^*(T)$ be a weight. 
We say that $\lambda$ lies $n$-deep in its alcove if for all $\alpha^\vee\in R^{\vee,+}$,
there exist integers $m_{\alpha}\in \Z$ such that $pm_{\alpha}+n<\langle\lambda+\eta,\alpha^\vee\rangle< p(m_{\alpha}+1)-n$.
We have the analogous definition for weights in $X^*(\un{T})$.
\end{defn}
For instance, a dominant weight $\lambda \in X^*(\un{T})$ lies $n$-deep in alcove $\un{C}_0$ if  $n<\langle\lambda+\eta,\alpha^\vee\rangle< p-n$ for all $i=0,\dots,f-1$ and all positive coroots $\alpha^\vee\in \un{R}^{\vee,+}$.

A Serre weight $F$ is said to be \emph{$n$-deep} if we can write $F\cong F(\mu)$ for some $\mu\in X_1(\un{T})$ which is $n$-deep.
We call the graph on the $p$-regular Serre weights defined by adjacency the \emph{extension graph}.
The terminology is justified by the following theorem.
 
\begin{thm}  Let $F,\, F'$ be Serre weights which are both $6$-deep.  Then $\Ext^1_{\rG}(F, F') \neq 0$ if and only if $F$ and $F'$ are adjacent.  
\end{thm}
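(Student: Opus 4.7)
The plan is to translate the problem into an $\Ext^1$ computation for the algebraic group $\un{G}$ and then match the answer with the combinatorial condition of Definition~\ref{df:adj}.

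First, I would invoke a Cline--Parshall--Scott / Bendel--Nakano--Pillen style comparison theorem for the finite versus the algebraic group. For regular Serre weights $F(\lambda),\,F(\mu)$ that are sufficiently deep in their alcoves, restriction induces an isomorphism
\[
\Ext^1_{\rG}\bigl(F(\lambda),F(\mu)\bigr) \;\cong\; \bigoplus_{\nu} \Ext^1_{\un{G}}\bigl(L(\lambda),\,L(\mu)\otimes\nu\bigr),
\]
where $\nu$ ranges over representatives of $X^0(\un{T})/(p-\pi)X^0(\un{T})$, i.e.\ central characters becoming trivial on $\un{Z}(\Fp)$. The $7$-deep hypothesis is calibrated precisely so that all the twists $\mu+\nu$ remain in a sufficiently generic range (well away from alcove walls) in which the comparison applies cleanly and no Jantzen sum formula correction appears.

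Second, I would apply Steinberg's tensor product theorem, writing $L(\lambda)\cong\bigotimes_{i=0}^{f-1}L(\lambda_i)^{(i)}$ and similarly for $L(\mu+\nu)$, each factor having $p$-restricted highest weight. A K\"unneth argument, valid under the depth assumption because each factor then lies in a single block, forces $\Ext^1_{\un{G}}\bigl(L(\lambda),L(\mu+\nu)\bigr)$ to vanish unless $\lambda_i=\mu_i+\nu_i$ for every index $i$ apart from a single $j$, in which case the entire computation reduces to $\Ext^1_{\GL_3/\F}\bigl(L(\lambda_j),L(\mu_j+\nu_j)\bigr)$ between two $p$-restricted irreducibles in the $j$-th embedding factor.

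Third, I would invoke the standard description of $\Ext^1$ between generic $p$-restricted irreducibles for $\GL_3$ (equivalently $\SL_3$ up to central twist): such an $\Ext^1$ is one-dimensional if and only if $\mu_j+\nu_j=s\cdot\lambda_j$ for a single reflection $s$ across one of the three walls of the $p$-restricted alcove containing $\lambda_j$, and vanishes otherwise. Enumerating these reflections across the two $p$-restricted alcoves, together with the degenerate case where only the alcove label flips, Lemma~\ref{lem:eq:Weyl} and Table~\ref{TablePP} show that the corresponding differences $\omega_j-\omega_j'$ trace out exactly the seven-element set $\{0,\pm\omega_{1,j},\pm\omega_{2,j},\pm(\omega_{1,j}-\omega_{2,j})\}$, each paired with a simultaneous switch of the alcove label in position $j$ and agreement of $\omega$ and $a$ in all other positions. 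By Proposition~\ref{changeofcoordinates}, independence of the chosen center $\mu$ is automatic, so this bijection matches precisely the adjacency condition of Definition~\ref{df:adj}, establishing both directions of the theorem.

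The main obstacle is Step~1: making the bound quantitative enough to see that $7$-deep suffices. The genuine danger is that for weights too close to an alcove wall the central-twist sum could introduce algebraic extensions corresponding to ``moves'' that are not of the adjacent form above (for instance, reflections across a second wall reached via a compound twist by $\nu$), and one has to check these do not contribute. Steps~2 and~3 are then essentially bookkeeping given the well-known modular representation theory of $\SL_3$ in the restricted region and the explicit data recorded in Table~\ref{TablePP}.
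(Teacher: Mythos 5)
Your Step 1 is where the argument breaks, and it fails for a concrete reason. Summing only over scalar twists $\nu\in X^0(\un T)/(p-\pi)X^0(\un T)$ cannot recover the finite-group $\Ext^1$: by the linkage principle for $\un{G}$, a nonzero $\Ext^1_{\un G}(L(\lambda), L(\mu+\nu))$ requires $\mu+\nu$ to lie in $\un W_a\cdot\lambda$, and among weights of the form $\mu+\nu$ with $\nu\in X^0(\un T)$ and $\lambda$, $\mu$ both $p$-restricted, there is at most \emph{one} such weight. So the right-hand side of your proposed formula detects at most one of the seven adjacency directions (the one with $\omega'_j=\omega_j$, $a'_j=a_j+1$, i.e.\ the single reflection in the wall separating the two restricted alcoves). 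The other six adjacencies come from factors $L(\tld{w}\cdot\mu)$ lying in \emph{non-restricted} alcoves, and these never arise as $L(\mu)\otimes\nu$ for a scalar character $\nu$. This is visible in the paper's own data: for a single $\GL_3$-factor, $\gr_1\, Q_1(\mu_i)$ has three composition factors, exactly one of which has $p$-restricted highest weight (Table~\ref{TableWeyl1}), and upon restricting to $\rG$ the two non-restricted factors split further into three Serre weights each by the translation principle (Lemma~\ref{lem:trans:princ}), giving the count $1+3+3=7$. You also anticipated the opposite failure mode --- that the central-twist sum might introduce spurious extensions --- but the real problem is that it \emph{misses} most of them.

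The paper's proof of Lemma~\ref{lem:dic} therefore takes a genuinely different route: it does not compare $\Ext^1$ groups directly. Instead it compares whole socle filtrations, using Corollary~\ref{cor:projfil} (resting on Theorem~\ref{thm:dec:tens} and Lemma~\ref{lem:res:soc}, both Pillen-style results) to identify $\soc^{\rG}_i(\Inj F(\mu))$ with $\soc^{\un{G}}_i(Q_1(\mu))|_\rG$, where $Q_1(\mu)$ is the $\un G$-lift of the $\un G_1\un T$-injective hull $\wdht{Q}_1(\mu)$. One then reads off $\gr_1\, Q_1(\mu)$ from the known $\SL_3$ alcove picture and restricts each factor to $\rG$ via Steinberg and translation --- capturing exactly the non-restricted neighbours your formula omits. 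Your Steps 2 and 3 are fine as far as they go (the K\"unneth reduction to a single embedding is also used in the paper, and your description of $\Ext^1_{\GL_3}$ between restricted irreducibles is correct), but Step 3 is precisely where the one-reflection-per-embedding count falls short of seven. The fix is to replace the $\Ext^1_{\un G}$-comparison-with-central-twists by the correct $\un{G}_1\un{T}$-level analysis (equivalently, the injective hull comparison), as the paper does.
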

\begin{proof}
This is Lemma \ref{lem:dic}, whose proof only uses modular representation theory and does not rely on any results from other sections.
\end{proof} 

\begin{rmk}\label{rmk:bipartite}
We partition the set of vertices $(\omega,a)$ into two sets, according to the class $\Sigma_i a_i \mod 2$ (by interpreting $a_i \in \{0,1\}$).This makes the extension graph into a bipartite graph. 
\end{rmk}

\subsection{Types and Serre weights}
\label{sec:types:SW}

Now suppose that $\mathcal{S}$ contains exactly one element $\tld{v}$, and let $K$ be $F_{\tld{v}}$.
An \emph{inertial type} is a representation $\tau: I_K\ra\GL_3(E)$ with open kernel which extends to the Weil group of $K$.
An inertial type is \emph{tame} if it factors through the tame inertial quotient. 
All our tame inertial types are defined over $\cO$ and we use $\ovl{\tau}:I_K\ra\GL_3(\F)$ to denote the reduction to the residue field.

Tame inertial types have a combinatorial description which we will now recall (cf.~ \cite[(6.15)]{herzig-duke} or \cite[Definition 8.2.2]{GHS}). Let $(w, \mu) \in \un{W} \times X^*(\un{T})$.  
As in \cite[(4.1)]{herzig-duke} or the paragraph preceding \cite[Definition 10.1.12]{GHS}, for any $(\nu, \sigma) \in X^*(\un{T}) \rtimes \un{W}$, define
\begin{equation} \label{sigmaconj}
^{(\nu, \sigma)} (w, \mu) = (\sigma w \pi(\sigma)^{-1}, \sigma(\mu) + p \nu - \sigma w \pi(\sigma)^{-1}\pi(\nu))
\end{equation}
and we write $(w, \mu) \sim (w', \mu')$ if there exists $(\nu, \sigma)$ such that $^{(\nu, \sigma)} (w, \mu) = (w', \mu')$.
The following describes all isomorphism classes of tame inertial types for $K$.

\begin{defn} \label{defn:tau} Define an inertial type $\tau(w, \mu):I_K \ra \GL_3(\cO)$ %
as follows:  If $w = (s_0, \ldots, s_{f-1})$, then set $s_{\tau} = s_0 s_{f-1} s_{f-2} \cdots s_1 \in W$  and  $\bm{\alpha} \in X^*(\un{T})$ such that $\bm{\alpha}_0 = \mu_0$ and $\bm{\alpha}_{j} = s_1^{-1} s_2^{-1} \ldots s_j^{-1}(\mu_j)$ for $1 \leq j \leq f-1$.  Let $r$ denote the order of $s_{\tau}$, and set $f' = fr$.  Then, 
\begin{equation} \label{eq:def:type}
\tau(w,\mu) \defeq \bigoplus_{1 \leq i \leq 3} \omega_{f'}^{\sum_{0 \leq k \leq r-1} \bf{a}^{(0)}_{s_{\tau}^{k}(i)} p^{fk}} 
\end{equation}
where $\bf{a}^{(0)} := \sum_{j =0}^{f-1}  \bm{\alpha}_{j} p^j\in \Z^3$. Note that $(w, \mu) \sim ((s_{\tau}, 1,\ldots, 1), \bm{\alpha})$ and $\tau(w, \mu) \cong \tau((s_{\tau}, 1,\ldots, 1), \bm{\alpha})$ by construction. 
\end{defn} 

\begin{defn}
\label{defi:gen}
Let $\tau$ be a tame inertial type. 
\begin{enumerate}
\item
\label{def:gen:reg}
We say that $\tau$ is \emph{regular} if the characters appearing in the right hand side of (\ref{eq:def:type}) are pairwise distinct.
\item 
\label{defi:gen:type}
We say that $\tau$ is \emph{$n$-generic} if there is an isomorphism $\tau\cong \tau(s,\lambda + \eta)$  for some $s \in \un{W}$ and $\lambda \in X^*_+(\un{T})$ which is $n$-deep in alcove $\un{C}_0$. 
\item
\label{def:gen:Gal}
We say that $\rhobar: G_K\ra\GL_3(\F)$ is \emph{$n$-generic} if $\rhobar^{\mathrm{ss}}|_{I_K}\cong\taubar$ for a tame inertial type $\tau$ which is $n$-generic. 
\item 
\label{def:LApres}
A \emph{lowest alcove presentation} of $\tau$ is a pair $(s, \mu) \in \un{W} \times X^*(\un{T})$ where $\mu \in \un{C}_0$ such that $\tau \cong \tau(s, \mu + \eta)$ (which by definition exists exactly when $\tau$ is 0-generic).
\end{enumerate}
\end{defn}

\begin{rmk}  
The notion of genericity given in \cite[Definition 2.1]{LLLM} is slightly different than Definition \ref{defi:gen}(\ref{defi:gen:type}) above. 
In particular, an inertial type which is $n$-generic in the sense of Definition \ref{defi:gen}(\ref{defi:gen:type}) is \emph{a fortiori} $n$-generic in the sense of \cite{LLLM}.
Furthermore our notion of genericity differs from that of \cite[Definition 10.1.12]{GHS} by a shift by $\eta$.  
\end{rmk}

\begin{rmk}
If $\tau$ is $1$-generic then $\tau$ is regular.
\end{rmk}

Now let $\mathcal{S}$ have arbitrary (finite) cardinality rather than one.
For $s\in \un{W}$ and $\mu\in X^*(\un{T})$, we can define $s_{\tld{v}}$ and $\mu_{\tld{v}}$ in the evident way.
Then we set $\tau_\cS(s,\mu)$ to be the collection $(\tau_{\tld{v}}(s_{\tld{v}},\mu_{\tld{v}}))_{\tld{v}\in \cS}$.

The notions of regular and $n$-generic are extended in the evident way to a collection $\tau_{\cS}(s,\mu)$ of tame inertial types.
Similarly, there is an evident notion of $n$-generic for a collection $\rhobar_\cS$ of Galois representations $\rhobar_{\tld{v}}: G_{F_{\tld{v}}} \ra \GL_3(\F)$ for $\tld{v} \in \mathcal{S}$.

We now introduce the Deligne--Lusztig representations which are relevant for this paper.
See also \cite[\S 9.2]{GHS} and \cite[\S 2.2]{LLL}, though we note that our context is slightly more general than \cite[\S 2.2]{LLL} since $\mathcal{S}$ may have size greater than one.
Let $(s,\mu)\in \un{W}\times X^*(\un{T})$ be a \emph{good pair} (\cite[\S 2.2]{LLL}).
Following \cite[Proposition 9.2.1 and 9.2.2]{GHS}, we can attach to $(s,\mu)$ a genuine Deligne--Lusztig representation $R_s(\mu)$ of $\rG\defeq \un{G}_0(\Fp)$ defined over $E$ (taking $E$ to be sufficiently large). 
Note that $R_s(\mu)$ is denoted as $R(s,\mu)$ in \cite{GHS}.

\begin{defn}
Let $(s,\mu)\in\un{W}\times X^*(\un{T})$ be a good pair and let $n\geq 0$.
We say that $R_{s}(\mu)$ is $n$-generic if there exists an isomorphism $R_s(\mu)\cong R_{s'}(\mu')$ where $\mu'-\eta$ is $n$-deep in alcove $\un{C}_0$ (note that the evident generalization of \cite[Lemma 2.2.3]{LLL} implies that $(s',\mu')$ is good).
By \cite[Proposition 9.2.1]{GHS} and the evident generalization of \cite[Proposition 2.2.4]{LLL}, when $n\geq 0$ then $\tau_{\cS}(s,\mu)$ is $n$-generic (cf. Definition \ref{defi:gen}(\ref{defi:gen:type})) if and only if $R_s(\mu)$ is $n$-generic. %
\end{defn}

By inflation, we consider $R_s(\mu)$ as a smooth $\prod_{\tld{v}\in \mathcal{S}} \GL_3(\cO_{F_{\tld{v}}})$-representation.
We recall some basic results on $R_{s}(\mu)$.
Let $\sigma(\tau_{\tld{v}})$ be a smooth $\GL_3(\cO_{F_{\tld{v}}})$-representation associated to $\tau_{\tld{v}}$ as in \cite[Theorem 3.7]{CEGGPS}, and let $\sigma(\tau_\mathcal{S})$ be the $\prod_{\tld{v}\in \mathcal{S}} \GL_3(\cO_{F_{\tld{v}}})$-representation $\otimes_{\tld{v}\in \mathcal{S}} \sigma(\tau_{\tld{v}})$.

\begin{prop}
\label{prop:basic:Ktypes}
Let $\mu-\eta\in X^*(\un{T})$ be $1$-deep in alcove $\un{C}_0$. Then:
\begin{enumerate}
\item $($\cite[Theorem 6.8]{DeligneLusztig}$)$
 $R_s(\mu)$ is irreducible.

\item $($\cite[Corollary 2.3.5]{LLL}$)$
\label{item2:basic:Ktypes} 
 $\sigma(\tau_\cS)$ can be taken to be $R_s(\mu)$.
\end{enumerate}
\end{prop}

We conclude this section with some background on Serre weights associated to semisimple Galois representations.
Let $\ovl{\tau}_\cS$ be a collection of tame representations $\ovl{\tau}_{\tld{v}}: I_{F_{\tld{v}}}\ra\GL_3(\F)$ which extend to $G_{F_{\tld{v}}}$. 
By \cite[Proposition 9.2.1]{GHS}, one attaches to $\ovl{\tau}_{\tld{v}}$ an $E$-valued representation $V(\ovl{\tau}_{\tld{v}})$ of $\GL_3(k_{\tld{v}})$.
Let $V(\ovl{\tau}_{\cS})$ be the $\rG$-representation $\otimes_{\tld{v}\in \cS} V(\ovl{\tau}_{\tld{v}})$.
When $\ovl{\tau}_{\cS}$ is $\ovl{\tau}_\cS(s,\mu)$ for a good pair $(s,\mu)$, $V(\taubar_\cS)$ is isomorphic to $R_s(\mu)$ (\cite[Proposition 9.2.3]{GHS}).
We write $\rhobar_\cS|_{I_{F_\cS}}$  for the restriction to inertia of the collection $\rhobar_\cS$.

Let $\tld{w}_h\defeq w_0t_{-\eta}\in\tld{\un{W}}$.
Recall the self-bijection $\mathcal{R}$ on $p$-regular Serre weights defined in \cite[\S 9.2]{GHS}:
\[
\mathcal{R}(F(\lambda))\defeq F(\tld{w}_h\cdot \lambda).
\]
\begin{defn}\cite[Definition 9.2.5]{GHS} \label{defn:serrewts}
Let $\rhobar_\cS$ be a collection of $2$-generic semisimple Galois representations $\rhobar_{\tld{v}}:G_{F_{\tld{v}}}\ra\GL_3(\F)$.
The set of predicted weights for $\rhobar_\cS$ is defined to be
$$
W^?(\rhobar_{\cS})\defeq \left\{
\mathcal{R}(F)\,:\,F\in \JH(\ovl{V(\rhobar_\cS|_{I_{F_\cS}})})
\right\}.
$$
If $\tau_\cS$ is a collection of tame inertial types $\tau_{\tld{v}}: I_{F_{\tld{v}}} \ra\GL_3(\cO)$, we furthermore define
$$
W^?(\rhobar_\cS, \tau_\cS) \defeq W^?(\rhobar_\cS) \cap \mathrm{JH}(\ovl{\sigma(\tau_\cS)}).
$$
\end{defn}

\begin{rmk}
The condition that $\rhobar_\cS|_{I_{F_{\cS}}}$ is $2$-generic is to ensure that the elements of $\JH(\ovl{V(\rhobar_\cS|_{I_{F_{\cS}}})})$ are all $0$-deep, so that $\mathcal{R}$ is defined (cf.~Lemma \ref{lem:deep:type}).
\end{rmk}

Recall from \cite[Definition 7.1.3]{GHS} that there is a subset $W_{\mathrm{obv}}(\rhobar)\subseteq W^?(\rhobar)$ of \emph{obvious 
Serre weights of $\rhobar$}.
The set $W_{\mathrm{obv}}(\rhobar_{\mathcal{S}})$ is defined in the evident way.

\subsection{Combinatorics of types and Serre weights}\label{subsec:comb}

Recall the notation $\rhobar_{\mathcal{S}}$ from the previous section.
We will always assume that $\rhobar_{\tld{v}}$ is $2$-generic and semisimple for all $\tld{v}\in \mathcal{S}$ in what follows.
The following proposition describes $W_{\mathrm{obv}}(\rhobar_{\mathcal{S}})$ in terms of the extension graph. 

\begin{prop}\label{prop:obv}
Assume that $\rhobar_{\cS}$ is a collection of semisimple Galois representations. 
Suppose that $\rhobar_{\cS}|_{I_{F_{\cS}}}\cong\taubar_{\cS}(s,\lambda)$, %
where $\lambda-\eta$ is $3$-deep in $\un{C}_0$.
Then $W_\mathrm{obv}(\rhobar_{\mathcal{S}})$ is the set
\[\{F\big(\Trns_\lambda(s(\omega),\pi(\tld{w})\cdot\un{C}_0)\big) : \tld{w}=wt_{-\pi^{-1}\omega} \in \widetilde{\un{W}}^{+,\mathrm{der}}_1\}\]
\end{prop}
\begin{proof}
This follows from \cite[Corollary 2.2.13 and (2.6)]{LLL}.
\end{proof}

\begin{defn} \label{weightlabel} Define
$$
\Sigma_0 \defeq  \begin{Bmatrix} (\eps_1+ \eps_2, 0), (\eps_1 - \eps_2, 0), (\eps_2 - \eps_1, 0) \\
 (0, 1) , (\eps_1, 1), (\eps_2, 1) \\
 (0, 0), (\eps_1, 0), (\eps_2, 0) 
 \end{Bmatrix}.$$
 Define   
$\Sigma\defeq \Sigma_0^{\cJ} \subset \un{\Lambda}_W \times \mathcal{A}$.
\end{defn}

The set of predicted weights $W^{?}(\rhobar_{\cS})$ and the Jordan--H\"{o}lder factors of a Deligne--Lusztig representation $R_s(\mu)$ 
will be described in terms of the ``triangle'' $\Sigma_0$ inside $\Lambda_W \times \{C_0,C_1\}$ (see Figure \ref{Triangle}). 
\begin{figure}[h] 
\centering
\includegraphics[scale=.7]{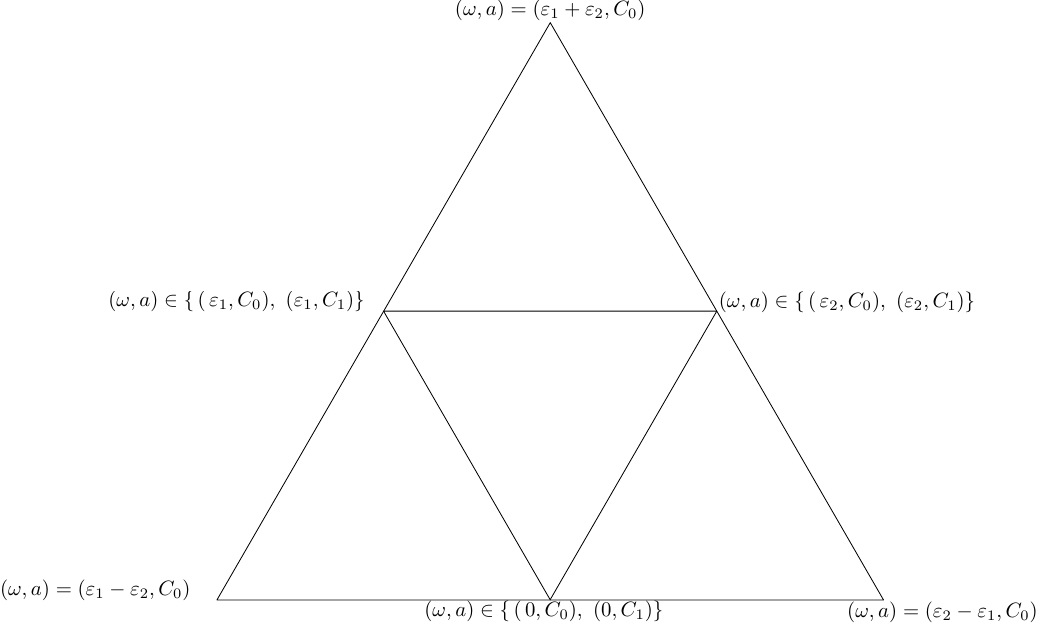}
\caption{The set $\Sigma_0$.}
\label{Triangle}
\end{figure}

\begin{defn} \label{obvweights} Define the following subsets of $\Sigma_0$:
\begin{align*}
\Sigma_0^{\mathrm{obv}} &\defeq  \begin{Bmatrix} (\eps_1+ \eps_2, 0), (\eps_1 - \eps_2, 0), (\eps_2 - \eps_1, 0) \\
 (0, 1) , (\eps_1, 1), (\eps_2, 1) \\
 \end{Bmatrix},\\
\Sigma_0^{\mathrm{inn}}&\defeq  \Sigma_0\setminus \Sigma_0^{\mathrm{obv}}.
\end{align*}
 We say that $(\omega, a) \in \Sigma$ is \emph{obvious} (resp. \emph{inner}) in component $i$ if $(\omega_i, a_i) \in \Sigma_0^{\mathrm{obv}}$ (resp. $(\omega_i, a_i) \in \Sigma_0^{\mathrm{inn}}$).  
\end{defn}

We define an involution $r$ of $\un{\Lambda}^{\mu}_W \times \mathcal{A}$ by $r(\omega,a) = (\omega,a')$ with $a_i \neq a_i'$ for all $i\in \cJ$. 

\begin{prop} \label{prop:serrewts}
Let $\mu\in X^*(\un{T})$.
Suppose that $V(\rhobar_\cS|_{I_{F_\cS}}) = R_s(\mu+\nu)$ where $\nu \in \un{\Lambda}_R$ and $\mu+\nu-\eta$ is $3$-deep in alcove $\un{C}_0$.
Then $W^{?}(\rhobar_\cS)$ is $F(\Trns_\mu(t_\nu sr(\Sigma)))$.
\end{prop}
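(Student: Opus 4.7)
The plan is to match both sides as $9^f$-element combinatorial sets. By Proposition \ref{prop:inj:trns} and the genericity hypotheses, $F(\Trns_\mu(t_\nu sr(\Sigma)))$ consists of $9^f$ distinct Serre weights; likewise, for sufficiently generic $\lambda := \mu+\un{1}+\nu$ the Jordan--H\"older set $\JH(\overline{R_s(\lambda)})$ has $9^f$ elements (by the standard description of mod $p$ reductions of tame Deligne--Lusztig representations of $\GL_3(\F_q)$, cf.\ \cite{herzig-duke}), so $|W^?(\rhobar)| = 9^f$ as well. It therefore suffices to establish one inclusion. As a preliminary move, I invoke Proposition \ref{changeofcoordinates} to transfer the translation formula to coordinates centered at $\lambda$, reducing the claim to an identification of $\mathcal{R}(\JH(\overline{R_s(\lambda)}))$ with the image of $sr(\Sigma)$ under the translation map in these new coordinates.

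I then handle separately the obvious and inner parts of $\Sigma$. For the obvious locus, Proposition \ref{prop:obv} describes $W_{\mathrm{obv}}(\rhobar)$ as a $6^f$-element set parameterized by $\widetilde{\un{W}}^{+,\mathrm{der}}_1$, and Table \ref{TablePP} assigns to each $wt_{-\pi^{-1}\omega}\in\widetilde{\un{W}}^{+,\mathrm{der}}_1$ a pair in $\Lambda_W/\Lambda_R \times \mathcal{A}$. Matching these pairs with Definition \ref{weightlabel} identifies $\widetilde{\un{W}}^{+,\mathrm{der}}_1$ embedding-wise with $\Sigma_0^{\mathrm{obv}}$ after flipping the alcove label, which is precisely the action of $r$. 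Unwinding the definition of $\Trns_\mu$ then yields $W_{\mathrm{obv}}(\rhobar) = F(\Trns_\mu(t_\nu sr((\Sigma_0^{\mathrm{obv}})^f)))$. For the inner part, I use the explicit description of $\JH(\overline{R_s(\lambda)})$ organized as an $f$-fold product of nine-vertex triangles (Figure \ref{Triangle}); the three ``inner'' vertices in each embedding contribute the $9^f - 6^f$ Jordan--H\"older factors not covered by obvious weights. Applying $\mathcal{R}$ from \cite[Def.~9.2.4]{GHS}, which shifts each such JH factor by an alcove-dependent affine reflection, I verify by direct computation that the resulting Serre weights coincide with $F(\Trns_\mu(t_\nu sr(\omega,a)))$ for the remaining elements $(\omega,a) \in \Sigma$.

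The main obstacle is the inner case: the interplay between $r$ (which sends the inner $\Sigma_0$-vertices from the lower to the upper alcove label), the Weyl element $s$ (encoding the Deligne--Lusztig parameter), and the shift built into $\mathcal{R}$ must combine precisely to realize $F\circ \Trns_\mu$ on $\Sigma_0^{\mathrm{inn}}$ in each embedding. This calculation requires careful bookkeeping and relies on the $3$-depth hypothesis on $\mu+\nu-\eta$ to ensure all intermediate weights remain $p$-regular and in their expected alcoves (via Proposition \ref{properties of trns}), as well as on the linearity and $\un{W}$-equivariance of the composite map $(\ref{eq:obs:equi})$ to propagate the Weyl conjugation by $s$ through the $\Trns$-formula.
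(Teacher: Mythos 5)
Your proposal is correct in outline but takes a genuinely different route from the paper. The paper's proof is very short: it computes $W_{\mathrm{obv}}(\rhobar)$ via Proposition \ref{prop:obv} (just as you do for the ``obvious'' locus), rewrites it in $\Trns_\mu$-coordinates, and then invokes the structural fact from \cite[Proposition~5.7]{herzig-duke} that $W^?(\rhobar)$ is precisely the $\uparrow$-closure of $W_{\mathrm{obv}}(\rhobar)$ within $p$-restricted alcoves --- in $\Trns_\mu$-coordinates this closure just lets the alcove label $a$ range freely, which is exactly the passage from $\Sigma_0^{\mathrm{obv}}$ to $\Sigma_0$. That one citation dispenses with the entire inner case in a single stroke. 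By contrast, you go back to the definition $W^?(\rhobar)=\mathcal{R}(\JH(\ovl{V(\rhobar|_{I_K})}))$ and compute $\JH(\ovl{R_s(\lambda)})$ and then the effect of $\mathcal{R}$ directly. This is workable, but it is more bookkeeping-heavy (you have to track the alcove-dependent affine reflection built into $\mathcal{R}$ embedding-by-embedding), and your cardinality preamble becomes superfluous once you are matching both halves explicitly.

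One caveat: be careful not to use Figure~\ref{Triangle} or Proposition~\ref{prop:typedecomp} as an ingredient. In the paper those are \emph{consequences} of Proposition~\ref{prop:serrewts}: the identity $\JH(\ovl{R_s(\mu+\nu)})=F(\Trns_\mu(t_\nu s(\Sigma)))$ is deduced afterward using $t_{\un{1}}\circ\mathcal{R}\circ\Trns_\mu=\Trns\circ r$. If you want your inner-case computation to be non-circular, you must obtain the description of $\JH(\ovl{R_s(\lambda)})$ from Herzig's intrinsic alcove-combinatorial description and translate it into $\Trns$-coordinates by hand, rather than reading it off Figure~\ref{Triangle}.
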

\begin{proof}
By Proposition \ref{prop:obv}, $W_\mathrm{obv}(\rhobar_\cS)$ is given by 
\[\{F\big(\Trns_{\mu+\nu}(s(\omega),\pi(\tld{w})\cdot \un{C}_0)\big):\tld{w}=wt_{-\pi^{-1}\omega} \in \widetilde{\un{W}}^{+,\mathrm{der}}_1\}.\]
Since $\nu\in\un{\La}_R$, we have $\Trns_{\mu+\nu}(s(\omega),\pi(\tld{w})\cdot \un{C}_0)=\Trns_\mu(s(\omega)+\nu,\pi(\tld{w})\cdot \un{C}_0)$.
By the generalization of Jantzen's formula in \cite[Appendix, Theorem 3.4]{herzig-duke}, the reduction of $R_s(\mu + \nu)$ is given in terms of reductions of Weyl modules.
The fact that $\mu+\nu-\eta$ is $3$-deep implies that these Weyl modules decompose in a generic pattern.
Hence $W^{?}(\rhobar_{\cS})$ is given by \cite[Proposition 10.1.8]{GHS}, which is
\[F(\{\Trns_\mu(t_\nu s(\omega), a)\,|\,\tld{w}=wt_{-\pi^{-1}\omega} \in \widetilde{\un{W}}^{+,\mathrm{der}}_1, \pi (\tld{w})\cdot \un{C}_0\uparrow a\}).\]
This set is precisely $F(\Trns_\mu(t_\nu sr(\Sigma)))$.
\end{proof}

\begin{prop} \label{prop:typedecomp}
If $\nu\in X^*(\un{T})$ such that $\nu+\eta \in \un{\Lambda}_R$ and $\mu+\nu-\eta$ is $3$-deep in alcove $\un{C}_0$, %
then $\JH(\overline{R_s(\mu+\nu)})=F(\Trns_\mu(t_\nu s(\Sigma)))$.
\end{prop}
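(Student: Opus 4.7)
The plan is to derive this proposition by combining the classical description of Jordan--H\"older constituents of a generic Deligne--Lusztig representation with a direct combinatorial matching against the set $\Sigma$ introduced in Definition~\ref{weightlabel}. The proof will run essentially parallel to that of Proposition~\ref{prop:serrewts}, except that here we capture all Jordan--H\"older factors of $\overline{R_s(\mu+\nu)}$ rather than only those picked out by the up-arrow partial order that characterizes $W^?(\rhobar)$. Both sides of the asserted equality will have cardinality $9^f$: the right-hand side by Proposition~\ref{prop:inj:trns} and $|\Sigma_0|=9$, and the left-hand side by the classical count of Jordan--H\"older factors of a sufficiently generic $\GL_3(\F_q)$-Deligne--Lusztig representation.

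First, I would invoke the classical formula for $\JH(\overline{R_s(\lambda)})$ when $\lambda-\eta$ is $3$-deep in $\un{C}_0$. Such a formula, obtainable from the Jantzen translation principle applied to the Herzig/GHS recipe for Deligne--Lusztig characters (the same sort of input used for \cite[Proposition~5.7]{herzig-duke} and the proof of Proposition~\ref{prop:serrewts}), states that
\[
\JH(\overline{R_s(\lambda)}) \;=\; \left\{\, F\bigl(\tld{w}\cdot (s(\lambda-\eta)+s\sec(\omega))\bigr)\,:\, wt_{-\pi^{-1}\omega}\in \widetilde{\un{W}}^{+,\mathrm{der}}_1,\ a\in\cA\,\right\}
\]
each constituent appearing with multiplicity one, where the association of $\tld w=wt_{-\pi^{-1}\omega}$ to $(\omega,a)$ is governed by Lemma~\ref{tmumap}. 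The $3$-depth hypothesis on $\lambda-\eta=\mu+\nu-\eta$ ensures that every candidate weight lies in $X_1(\un{T})$, is $p$-regular, and no cancellation or collision occurs, so the formula yields exactly $9^f$ distinct constituents.

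Second, I would verify that this set coincides with $F(\Trns_\mu(t_\nu s(\Sigma)))$ by unwinding the definition of $\Trns_\mu$ given in equation~\eqref{eq:ref:form} and invoking Proposition~\ref{changeofcoordinates} to accommodate the base point $\mu$ versus $\lambda=\mu+\nu$. Concretely, each Jordan--H\"older constituent from the classical formula matches, modulo $(p-\pi)X^0(\un{T})$, an element $\Trns_\mu(t_\nu s(\omega'),a)$ with $(\omega',a)\in\Sigma$; the map from the JH indexing set to $\Sigma$ is the bijection from Lemma~\ref{tmumap} twisted by the translation--Weyl element $t_\nu s$, which preserves the block structure $\Sigma=\Sigma_0^f$ as $\nu\in\Lambda_R$ stabilizes $\Sigma$ up to re-indexing. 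This reduces the verification to a single embedding, and the $9$ cases corresponding to $\Sigma_0$ can then be checked directly against Table~\ref{TablePP}, which explicitly lists each element $wt_{-\omega}\in\widetilde{\un{W}}^{+,\mathrm{der}}_1$ together with its associated pair $(\omega\bmod\Lambda_R,\pi wt_{-\pi^{-1}\omega}\cdot\un{A})$.

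The principal obstacle is careful bookkeeping across several conventions operating simultaneously: the choice of section $\sec:\Lambda_W\to X^*(\un{T})$, the identification of the dot action modulo $(p-\pi)X^0(\un{T})$, the labeling $\cA=\{A,B\}^f$ of $p$-restricted alcoves, and the $s$-twist relating $R_s$-types to principal series. A subtle point is that the translation part $\nu\in\Lambda_R$ must be absorbed into the base point via Proposition~\ref{changeofcoordinates}; doing so cleanly requires using $\Lambda_R\subset X^*(\un{T})$ with the image in $X^*(\un{Z})$ trivial (as fixed in Section~\ref{subsection:Notation}) so that central character conditions of Proposition~\ref{properties of trns}(\ref{properties of trns:3}) remain consistent.
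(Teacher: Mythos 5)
Your proposal takes a genuinely different route from the paper. The paper deduces Proposition~\ref{prop:typedecomp} in one line from Proposition~\ref{prop:serrewts} by exploiting the duality $F(\Trns_\mu(\omega,a))\in\JH(\overline{R_s(\lambda)})$ if and only if $F(\Trns_\mu(r(\omega,a)))\in \cR(\JH(\overline{R_s(\lambda+\un{1})}))$, together with the commutation $t_{\un{1}}\circ\cR\circ\Trns_\mu = \Trns_\mu\circ r$; this converts the already-proved description of $W^?(\rhobar)$ via $r(\Sigma)$ into the statement for $\Sigma$ itself without needing any new input on Jordan--H\"older multiplicities. You instead propose to re-derive $\JH(\overline{R_s(\mu+\nu)})$ directly from a ``classical formula'' and then match it against $F(\Trns_\mu(t_\nu s(\Sigma)))$ by brute combinatorics. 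That is a viable alternative in principle (it is essentially what Proposition~\ref{prop:serrewts} itself does, but on the $\JH$ side rather than the $W^?$ side), but it repeats the work rather than reusing it.

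There is, however, a genuine gap in the execution. The classical formula you display,
\[
\JH(\overline{R_s(\lambda)}) = \left\{\, F\bigl(\tld{w}\cdot (s(\lambda-\eta)+s\sec(\omega))\bigr) : wt_{-\pi^{-1}\omega}\in \widetilde{\un{W}}^{+,\mathrm{der}}_1,\ a\in\cA\,\right\},
\]
does not actually define a $9^f$-element set: the index $a\in\cA$ never enters the expression, so taken literally the set has $6^f$ elements, which is the count for $W_{\mathrm{obv}}$, not for the full $\JH$. If instead one lets $a$ vary freely and lets $\tld{w}$ depend on both $\omega$ and $a$, one obtains a $12^f$-element candidate set, which is too large. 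The mechanism that cuts this down to $9^f$ is the up-arrow constraint $a \uparrow \pi wt_{-\pi^{-1}\omega}\cdot(\un{A})$ (or its reverse, depending on which side of the $r$-involution one stands on), coming from \cite[Proposition~5.7]{herzig-duke}; it is exactly this constraint that makes the resulting pairs $(\omega,a)$ land precisely in $t_\nu s(\Sigma)$ rather than in all of $\Lambda_W\times\cA$, and it is the step where the multiplicity-one statement is also established. Your proposal never makes this constraint explicit, so the claimed bijection with $\Sigma$ via Lemma~\ref{tmumap} (which is a bijection onto all of $\Lambda_W\times\cA$, not onto the $9^f$-element subset $\Sigma$) has no content as written. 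The cardinality comparison at the start of the proposal does not substitute for this: equal cardinalities do not identify the sets. To repair the argument you would need to state the formula with the $\uparrow$ condition in place, then observe (as the paper does for $W^?$) that the resulting set of pairs $(\omega,a)$ is exactly $t_\nu s(\Sigma)$, reducing to a single-embedding check against Table~\ref{TablePP}.
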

\begin{proof}
Since $t_{\un{1}}\circ\mathcal{R}\circ \Trns_\mu=\Trns_\mu\circ r$, $F(\Trns_\mu(\omega,a)) \in \JH(\overline{R_s(\lambda-\un{1})})$ if and only if $F(\Trns_\mu(r(\omega,a)))\in \mathcal{R}(\JH(\ovl{R_s(\lambda)})) = W^?(\rhobar_{\cS})$ where $V(\rhobar|_{I_{F_{\cS}}}) = R_s(\lambda)$.
The result then follows from Proposition \ref{prop:serrewts}.
\end{proof}

\begin{defn} \label{typeaction}
For any $\widetilde{w} \in \tld{\un{W}}$, we set $R_{\widetilde{w}}(\mu) \defeq  R_w(\mu+\widetilde{w}(0))$, where $w\in \un{W}$ is the projection of $\widetilde{w}$.
\end{defn}

There is an action of $\tld{\un{W}}$ on $\un{\Lambda}_W \times \mathcal{A}$ where it acts through $\tld{\un{W}}^\der$ on just the first factor (via the \emph{usual} action, not the dot action).
By Propositions \ref{prop:serrewts} and \ref{prop:typedecomp}, we have the following:

\begin{prop} 
\label{prop:equivariance}
Let $\tld{w}\in \un{W}_a t_\eta$ and assume that $\mu+\tld{w}^{-1}(0)-\eta$ is $3$-deep in $\un{C}_0$.
Then the set $\JH(\ovl{R_{\widetilde{w}^{-1}}(\mu)})$ is $F(\Trns_\mu(\widetilde{w}^{-1}(\Sigma)))$.
If $\tld{w} \in \un{W}_a$ and $V(\rhobar_{\cS}|_{I_{F_{\cS}}}) = R_{\widetilde{w}}(\mu)$, then the set $W^?(\rhobar_{\cS})$ is $F(\Trns_\mu(\widetilde{w}(r(\Sigma))))$.
\end{prop}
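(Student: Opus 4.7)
The plan is to observe that Proposition \ref{prop:equivariance} is essentially a restatement of Propositions \ref{prop:serrewts} and \ref{prop:typedecomp} in the more uniform language of the affine Weyl group action on $\Sigma$, so the proof is pure bookkeeping once the semidirect product decomposition of $\tld{w}$ is made explicit.

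First I would use the decomposition $\un{W}_a = \Lambda_R \rtimes \un{W}$ to write $\tld{w} = t_\nu s$ with $\nu \in \Lambda_R$ and $s \in \un{W}$. Since the action of $\tld{w}$ on $X^*(\un{T}) \otimes \R$ at the origin gives $\tld{w}(\un{0}) = t_\nu s(\un{0}) = \nu$, the definition $R_{\tld{w}}(\mu) \defeq R_w(\mu + \tld{w}(\un{0}))$ yields
\[
R_{\tld{w}}(\mu) = R_s(\mu + \nu).
\]
Moreover, the hypothesis that $\mu + \tld{w}(\un{0}) - \eta = \mu + \nu - \eta$ is $3$-deep in $\un{C}_0$ is exactly the depth hypothesis needed to invoke both Proposition \ref{prop:typedecomp} (for $R_s(\mu + \nu)$) and Proposition \ref{prop:serrewts} (for $R_s(\mu + \un{1} + \nu)$ once one shifts by $\un{1}$).

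Next I would match the actions. The action of $\un{W}_a$ on $\Lambda_W \times \mathcal{A}$ acts on the first factor only, and $(t_\nu s)(\omega, a) = (s(\omega) + \nu, a)$, which is literally the set $t_\nu s(\Sigma)$ appearing on the right-hand side of Proposition \ref{prop:typedecomp}. Thus
\[
\JH(\overline{R_{\tld{w}}(\mu)}) = \JH(\overline{R_s(\mu + \nu)}) = F(\Trns_\mu(t_\nu s(\Sigma))) = F(\Trns_\mu(\tld{w}(\Sigma))),
\]
which gives the first assertion. For the second assertion, if $V(\rhobar|_{I_K}) = R_{\tld{w}}(\mu + \un{1}) = R_s((\mu + \un{1}) + \nu) = R_s(\mu + \nu + \un{1})$, then Proposition \ref{prop:serrewts} (with $\mu$ there being our $\mu$ and $\nu$ the same $\nu$) gives $W^?(\rhobar) = F(\Trns_\mu(t_\nu s r(\Sigma))) = F(\Trns_\mu(\tld{w}(r(\Sigma))))$, as desired.

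The only mild subtlety is ensuring that the hypothesis of Proposition \ref{prop:serrewts} — phrased in terms of $\mu + \nu - \eta$ being $3$-deep — lines up with our hypothesis on $\mu + \tld{w}(\un{0}) - \eta$; since $\tld{w}(\un{0}) = \nu$, the two conditions coincide, so no additional argument is required. Otherwise the proof is a pure unwinding of notation, and there is no genuine obstacle.
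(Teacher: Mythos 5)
Your proof is correct and takes essentially the same approach as the paper: the paper states Proposition \ref{prop:equivariance} simply ``by Propositions \ref{prop:serrewts} and \ref{prop:typedecomp},'' and your argument unwinds that citation via the decomposition $\tld{w}=t_\nu s$, $\tld{w}(\un{0})=\nu$, and the observation $t_\nu s(\Sigma)=\tld{w}(\Sigma)$. Nothing further is required.
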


For the rest of this section, fix a character $\mu$ such that $\mu - \eta$ is $3$-deep in $\un{C}_0$ and an element $s \in \un{W}$.
For $(s\omega, a) \in \un{\Lambda}_W^\mu \times \mathcal{A}$, we let $\sigma_{(\omega, a)}^{(s,\mu)}=F(\Trns_\mu(s\omega,a))$.
When the pair $(s,\mu)$ is clear from the context, we will simply write $\sigma_{(\omega,a)}$ instead of $\sigma_{(\omega,a)}^{(s,\mu)}$ to lighten notation.
Let $\widetilde{w} \in \un{W}_a t_{\eta}$ such that $s\widetilde{w}^{-1}(\Sigma) \subset \un{\Lambda}_W^\mu \times \mathcal{A}$ and $\mu+s\tld{w}^{-1}(0)-\eta$ is $3$-deep.
Then the map
\begin{align}
\label{eq:map:sigma}
\tld{w}^{-1}(\Sigma) &\ra \mathrm{JH}(\ovl{R_{s\tld{w}^{-1}}(\mu)}) \\
\tld{w}^{-1}(\omega,a) &\ra \sigma_{(\tld{w}^{-1}(\omega),a)}^{(s,\mu)}\nonumber
\end{align}
is a bijection by Proposition \ref{prop:equivariance}.
Similarly, if $V(\rhobar_\cS|_{I_{F_\cS}}) = R_s(\mu)$, then the map
\begin{align}
\label{eq:map:sigma1}
r(\Sigma) &\ra W^?(\rhobar_{\cS}) \\
r(\omega,a) &\ra \sigma_{r(\omega,a)}^{(s,\mu)}\nonumber
\end{align}
is a bijection.

\begin{defn} \label{defn:obv} We say that $\sigma\in \mathrm{JH}(\ovl{R_{s\tld{w}^{-1}}(\mu)})$ is an \emph{outer} (resp. \emph{inner}) weight for $R\defeq R_{s \tld{w}^{-1}}(\mu)$ in component $i\in \cJ$ if $\sigma=\sigma_{(\tld{w}^{-1}(\omega),a)}$ with $(\omega_i,a_i)\in \Sigma^{\mathrm{obv}}_0$ (resp. $(\omega_i,a_i)\in \Sigma^{\mathrm{inn}}_0$). 
We define the \emph{defect of $\sigma$ with respect to $R$} as
\begin{equation}
\label{df:defc}
\Def_{R}(\sigma)\defeq \#\big\{i\in \cJ \  \text{such that $\sigma$ is an inner weight of $R$ in component $i$} \big\}.
\end{equation}
Similarly, if $V(\rhobar_\cS|_{I_{F_{\cS}}}) = R_s(\mu)$, we say that $\sigma\in W^{?}(\rhobar_\cS)$ is an \emph{obvious} (resp.~ \emph{shadow}) weight for $\rhobar_\cS$ in component $i \in \cJ$ if $\sigma=\sigma_{r(\omega,a)}$ with $(\omega_i,a_i)\in \Sigma^{\mathrm{obv}}_0$ (resp. $(\omega_i,a_i)\in \Sigma^{\mathrm{inn}}_0$). We define the \emph{defect of $\sigma$ with respect to $\rhobar_\cS$} as
\begin{equation}
\label{df:defc:rhobar}
\Def_{\rhobar_\cS}(\sigma)\defeq \#\big\{i\in \cJ \  \text{such that $\sigma$ is a shadow weight of $\rhobar_\cS$ in component $i$} \big\}.
\end{equation}
\end{defn}

We give the set $\Sigma_0$ the structure of a graph in Table \ref{TableExtGraph}.
We write $\Sigma_0^\cJ$ for the corresponding product graph.
The above bijections respect the notion of adjacency for $\JH(\ovl{R_{s\tld{w}^{-1}}(\mu)})$, $W^?(\rhobar_\cS)$, and $\Sigma_0^\cJ$.

\begin{table}[h]
\caption{\textbf{The graph $\Sigma_0$}}\label{TableExtGraph}
\centering
\adjustbox{max width=\textwidth}{
\begin{tabular}{ c }
$
\xymatrix{
&&&
\sigma_{(\eps_1 + \eps_2, 0)} \ar@{-}[dr]\ar@{-}[dl]&
&&&
\\
&&
\sigma_{(\eps_1, 1)} \ar@{-}[drrr] \ar@{-}[dr] \ar@{-}[dl] \ar@{-}[dll]
&
&
\sigma_{(\eps_2, 1)}\ar@{-}[dr]\ar@{-}[dl]\ar@{-}[dlll]\ar@{-}[drr]
&&
\\
\sigma_{(\eps_1 - \eps_2, 0)} \ar@{-}[drrr] & \sigma_{(\eps_1, 0)}\ar@{-}[drr]
&
&
\sigma_{(\underline{0}, 0)} \ar@{-}[d]
&
&
\sigma_{(\eps_2, 0)} \ar@{-}[dll]&  \sigma_{(\eps_2 - \eps_1, 0)}\ar@{-}[dlll]
\\
&&&\sigma_{(\underline{0}, 1)}&&&
}
$\\
\end{tabular}
}
\end{table}

We now describe the intersections of $\JH(\ovl{R_{s\tld{w}^{-1}}(\mu)})$ and $W^?(\rhobar_\cS)$ for $\rhobar_\cS|_{G_{F_\cS}} \cong \taubar_{\cS}(s,\mu)$ in terms of the action of $\tld{\un{W}}$ on $\un{\Lambda}_W$ using Proposition \ref{prop:equivariance}.  
We first introduce a subset of $\tld{\un{W}}$, the admissible set, which appears in Corollary \ref{cor:intersections} and again in \S \ref{sec:weights}. 

Recall from \S \ref{subsection:Notation} that $\tld{W}$ denotes the extended affine Weyl group of $G\defeq {\GL_3}_{/ \F}$.
The choice of the dominant base alcove endows $\tld{W}$ with a Bruhat order which is denoted by $\leq$. 
We also have the natural generalization of the Bruhat order $\leq$ on $\un{\tld{W}}$ associated to the choice of the dominant base alcove.

\begin{defn} \label{adm} 
Let $\lambda \in X^*(T)$. 
We define
\[
\Adm (\lambda) = \{ \tld{w} \in \widetilde{W} \mid \tld{w} \leq t_{s(\lambda)} \text{ for some } s \in W \}.
\] 
Similarly, for a weight $\mu =(\mu_j)_j \in X^*(\un{T})$ define
\[
\Adm (\mu) = \prod_j \Adm (\mu_j) \subset \widetilde{\un{W}}
\]
\end{defn}
\begin{rmk} 
For combinatorics of weights as in this section, we use the dominant base alcove and the corresponding affine reflection $\gamma^+=t_{(1,0,-1)}(13)$ as a generator for $W^{\der}_{a}$.  When working on the Galois side, as in \S \ref{sec:weights}, the \emph{anti-dominant} base alcove appears naturally, and the analogous set $\Adm^\vee(\eta)$ for the anti-dominant base alcove plays an important role (see Definition \ref{affineadjoint} and the discussion before).  The affine generator for the antidominant choice of base alcove is $\gamma=(13)t_{(1,0,-1)}$. Note that in \cite{LLLM} everything is in terms of anti-dominant and so what we call $\Adm(\eta)$ there we denote by $\Adm^{\vee}(\eta)$ here.  
\end{rmk}

For $\tld{w}\in \tld{W}$, define $\Sigma_{\tld{w}}\defeq \Sigma_0\cap\tld{w}\big(r(\Sigma_0)\big)$.
Similarly, for $\tld{w}\in{\tld{\un{W}}}$, let $\Sigma_{\tld{w}}\defeq \Sigma\cap\tld{w}\big(r(\Sigma)\big)$.
One can directly compute these sets using Figure \ref{Triangle}. 
For example, using Figure \ref{Admissiblepic}, one can check that $\Sigma_{\widetilde{w}}$ is empty if and only if $\tld{w}$ is not in $\Adm(\eta)X^0(\un{T})$ (in essence, \cite[Proposition 4.4.2]{LLL} is a generalization of this fact).
Otherwise, see Table \ref{Intersections} for a description of $\Sigma_{\tld{w}}$. 

\begin{cor} \label{cor:intersections}
Let $\widetilde{w} \in \un{W}_a t_{\eta}$.
Assume that both $\mu - \eta$ and $\mu + s \tld{w}^{-1}(\underline{0}) - \eta$ are $3$-deep in $\un{C}_0$.
The set $\mathcal{R}(\JH(\ovl{R_{s}(\mu)}))\cap \JH(\ovl{R_{s\widetilde{w}^{-1}}(\mu)})$ is the set of weights $\sigma_{r(\omega,a)}^{(s,\mu)}$ where $(\omega, a) \in \Sigma_{\widetilde{w}^{-1}}$. 
Similarly, the set $\JH(\ovl{R_{s}(\mu-\un{1})})\cap \mathcal{R}(\JH(\ovl{R_{s\widetilde{w}}(\mu-\un{1})}))$ is the set of weights $\sigma_{(\omega,a)}^{(s,\mu)}$ where $(\omega, a) \in \Sigma_{\widetilde{w}}$.
\end{cor} 
\begin{proof}  By Proposition \ref{prop:equivariance}, one can reduce the case where $s = \Id$.  In that case, (\ref{eq:map:sigma}) and (\ref{eq:map:sigma1}) imply that the set of Serre weights in the intersection $\mathcal{R}(\JH(\ovl{R_{s}(\mu)}))\cap \JH(\ovl{R_{s\widetilde{w}^{-1}}(\mu)})$ is $\Trns_\mu(\tld{w}^{-1}(\Sigma)\cap r(\Sigma))$.
We conclude by noting that $\tld{w}^{-1}(\Sigma)\cap r(\Sigma) = r(\Sigma_{\tld{w}^{-1}})$.
The statement for $\JH(\ovl{R_{s}(\mu-\un{1})})\cap \mathcal{R}(\JH(\ovl{R_{s\widetilde{w}}(\mu-\un{1})}))$ is proved similarly.
\end{proof}

\begin{table}[h]
\caption{Intersections} \label{Intersections} 
\begin{center}
\adjustbox{max width=\textwidth}{%
\begin{tabular}{| c | c || c | c |}
\hline
& & & \\
$\widetilde{w}t_{-\un{1}}$  & $\Sigma_{\widetilde{w}}$ & $\widetilde{w}t_{-\un{1}}$ & $\Sigma_{\widetilde{w}}$\\
& & & \\
\hline
& & & \\
$\gamma^+ \alpha \beta \alpha = t_{(1,0,-1)}$ & $\{ (\eps_1 + \eps_2, 0)\}$    &$ \alpha \beta  \alpha \gamma^+=t_{(-1,0,1)}$& $\{ (0, 1) \}$ \\
& & & \\
\hline
& & & \\
$\beta \gamma^+ \beta \alpha=t_{(1,-1,0)}$ &  $\{ (\eps_1,1) \}$    &$\alpha \beta \gamma^+ \beta=t_{(-1,1,0)}$ & $\{ (\eps_2-\eps_1 , 0)\}$  \\
& & & \\
\hline
& & & \\
$\alpha \gamma^+ \alpha \beta=t_{(0,1,-1)}$& $\{  (\eps_2,1) \}$   &$\beta \gamma^+ \alpha \gamma^+=t_{(0,-1,1)}$ &  $\{ (\eps_1 - \eps_2, 0)\}$ \\ 
& & & \\
\hline
& & & \\
$\beta \alpha \gamma^+$ & $\{(\eps_1 -  \eps_2, 0), (\underline{0},1) \}$ &$\alpha \beta \gamma^+$ & $\{(\eps_2 - \eps_1, 0), (0,1) \}$  \\
& & &\\
\hline
& & &\\
$\alpha \beta \alpha$ & $\{(\underline{0},1), (\underline{0},0) \}$ &   & \\
& & &\\
\hline 
& & &\\
$\beta \alpha$ &  $\begin{Bmatrix}(\underline{0},1), & (\underline{0},0) \\ (\eps_1 - \eps_2, 0), & (\eps_1, 1) \end{Bmatrix}$ & $\alpha \beta$ & $\begin{Bmatrix}(\underline{0},1), & (\underline{0},0) \\ (\eps_2 - \eps_1, 0), & (\eps_2, 1) \end{Bmatrix}$\\
& & &\\
\hline
& & &\\
$\alpha$ & $\begin{Bmatrix}(\underline{0},1), & (\underline{0},0), & (\eps_2, 1) \\ (\eps_2, 0), & (\eps_2 - \eps_1, 0), & (\eps_1, 1) \end{Bmatrix}$& $\mathrm{Id}$ & $ \begin{Bmatrix} (\underline{0},1) & (\eps_1, 1) & (\eps_2, 1)  \\
 (\underline{0},0) & (\eps_1, 0) & (\eps_2, 0)   \end{Bmatrix}$
 \\
& & &\\
\hline
\end{tabular}}
\end{center}
\caption*{Not all elements of $\Adm(\eta)t_{-\un{1}}$ appear in the table. There is an order three symmetry of $\Adm(\eta)t_{-\un{1}}$ induced by the outer automorphism of $\tld{W}^{\der}$, and we include at least one representative for each orbit.  
} 
\end{table}

\begin{figure}[h] 
\centering
\includegraphics[scale=.4]{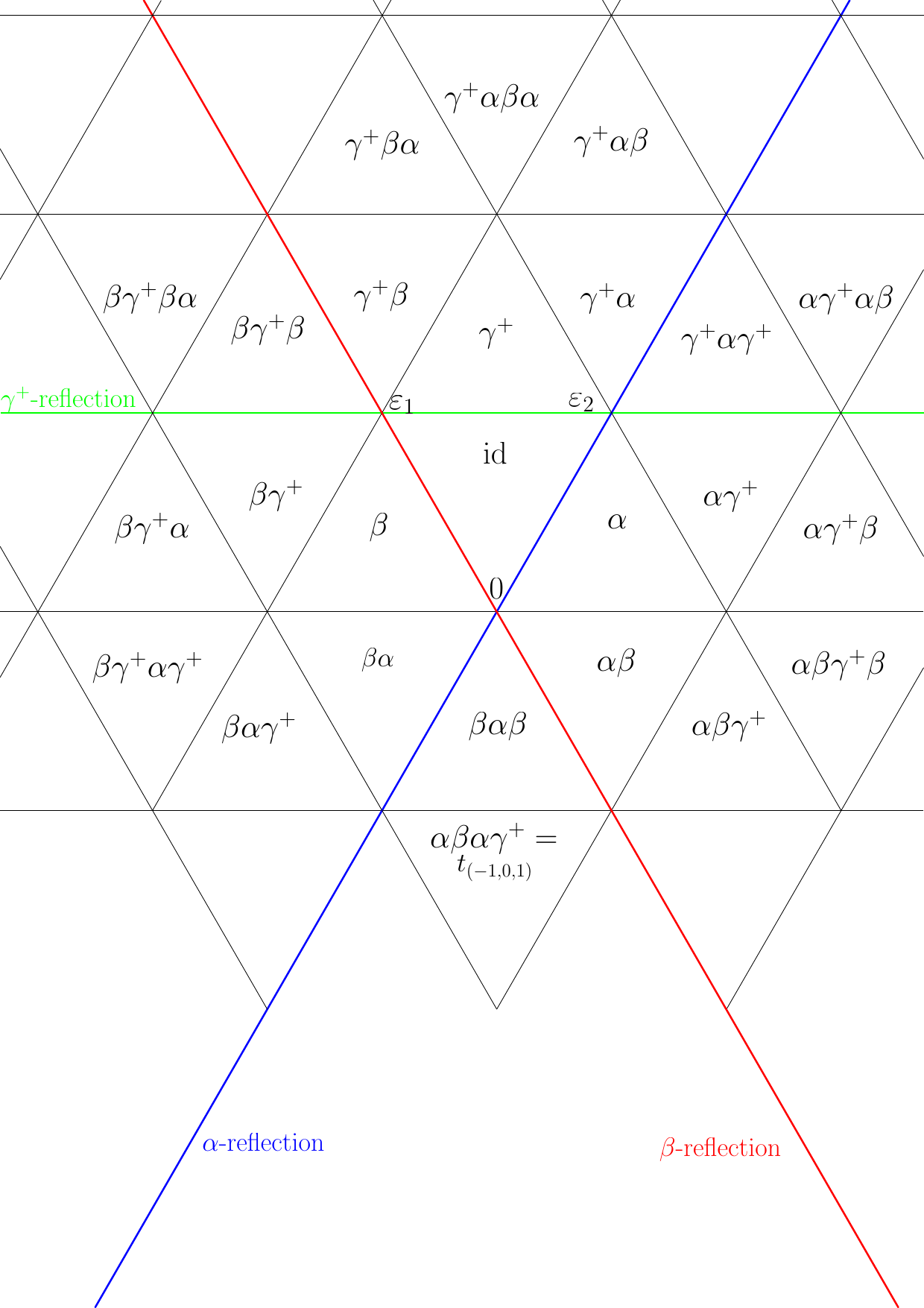}
\caption{The standard apartment of $\mathrm{SL}_3$. Labeled alcoves are alcoves in $\Adm (\eta)$.}
\label{Admissiblepic}
\end{figure}

\begin{lemma} \label{lemma:elimpic}
If $(\omega,a)\in \un{\Lambda}_W\times \mathcal{A} \setminus r(\Sigma)$, there exists $\widetilde{w}\in \un{W}_a t_{\eta}$ such that 
$(\omega,a)\in \widetilde{w}^{-1}(\Sigma)$ and $\widetilde{w}\not\in\Adm(\eta)$. 
\end{lemma}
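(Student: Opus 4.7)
The plan is to work embedding by embedding, reducing the lemma to a combinatorial check handled by case analysis.

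First, the groups $\un{W}_a = \prod_j W_a^{\der}$ and $\Adm^+(\eta') = \prod_j \Adm^+(\eta')$ decompose over embeddings by Definition \ref{adm}, as do $\Sigma = \prod_j \Sigma_0$ and the action on $\Lambda_W \times \mathcal{A}$. Since $\un{1}$ is $\un{W}$-invariant and $\eta' = \eta + \un{1}$, the element $t_{\un{1}}$ is central in $\tld{\un{W}}$, and left-multiplying $t_{\un{1}}\widetilde{w}_j \leq t_{s\eta'}$ by $t_{-\un{1}}$ yields $\widetilde{w}_j \leq t_{s\eta}$. Thus per embedding $t_{\un{1}}\widetilde{w}_j \in \Adm^+(\eta')$ if and only if $\widetilde{w}_j \in \Adm^+(\eta)$, and therefore $t_{\un{1}}\widetilde{w} \notin \Adm^+(\eta')$ if and only if $\widetilde{w}_{j_0} \notin \Adm^+(\eta)$ for some embedding $j_0$. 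Since $(\omega, a) \notin r(\Sigma)$, choose $j_0$ such that $(\omega_{j_0}, a_{j_0}) \notin r(\Sigma_0)$; for the remaining $j$, any $\widetilde{w}_j \in W_a^{\der}$ with $(\omega_j, a_j) \in \widetilde{w}_j(\Sigma_0)$ suffices, which exists because $W_a^{\der}$ preserves $\Lambda_R$-cosets of $\Lambda_W$ and $\Sigma_0$ meets every coset with both alcove labels. This reduces to the per-embedding claim: for $(\omega_0, a_0) \in \Lambda_W \times \mathcal{A} \setminus r(\Sigma_0)$, find $\widetilde{w}_0 \in W_a^{\der}$ with $(\omega_0, a_0) \in \widetilde{w}_0(\Sigma_0)$ and $\widetilde{w}_0 \notin \Adm^+(\eta)$.

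The valid $\widetilde{w}_0$ are parametrized by pairs $(x, w')$ with $x \in \Sigma_0$ of the same $\Lambda_R$-class and alcove as $(\omega_0, a_0)$ and $w' \in \un{W}$, via $\widetilde{w}_0 = w' t_{w'^{-1}(\omega_0) - x}$, a finite set ($6$ or $12$ elements). Case (a): $(\omega_0, a_0) \in \Sigma_0 \setminus r(\Sigma_0) = \{(\omega_1+\omega_2, 0), (\omega_1-\omega_2, 0), (\omega_2-\omega_1, 0)\}$. By the $3$-fold rotational symmetry of $\Sigma_0$ visible in Table \ref{TableExtGraph}, reduce to $(\omega_0, a_0) = (\omega_1+\omega_2, 0) = (\eta, 0)$ and take $\widetilde{w}_0 = w_0 t_{-2\eta}$ (with $x = (\eta, 0)$, $w' = w_0$); then $\widetilde{w}_0(\eta) = w_0(-\eta) = \eta$, so $(\eta, 0) \in \widetilde{w}_0(\Sigma_0)$, and rewriting $\widetilde{w}_0 = t_{2\eta} w_0$, the affine length formula yields $\ell(\widetilde{w}_0) = \ell(t_{2\eta}) - \ell(w_0) = 8 - 3 = 5 > 4 = \ell(t_{s\eta})$ for all $s \in \un{W}$, so $\widetilde{w}_0 \notin \Adm^+(\eta)$. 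Equivalently, by Corollary \ref{cor:intersections}, one checks directly that $\widetilde{w}_0$ sends every element of $r(\Sigma_0)$ outside the support of $\Sigma_0$, confirming non-admissibility.

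Case (b): $(\omega_0, a_0) \notin \Sigma_0 \cup r(\Sigma_0)$. Whenever every valid difference $w'^{-1}(\omega_0) - x$ lies outside the root-convex-hull $\{0, \pm\alpha_1, \pm\alpha_2, \pm\eta\}$ of $\un{W}\cdot\eta \cap \Lambda_R$, the length formula $\ell(t_\lambda) = \sum_{\alpha \in R^+}|\langle\lambda,\alpha^\vee\rangle|$ forces $\ell(\widetilde{w}_0) > 4$, so $\widetilde{w}_0 \notin \Adm^+(\eta)$. The finitely many ``borderline'' positions where some $w'^{-1}(\omega_0) - x$ happens to land inside this small hexagon are handled by hand: varying $w'$ over $\un{W}$ (typically using one of $s_1$, $s_2$, $s_1 s_2$, $s_2 s_1$, or $w_0$) always exits the convex hull for at least one choice, again giving $\ell(\widetilde{w}_0) > 4$. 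This borderline analysis is the main technical obstacle; it is most efficiently executed using Figure \ref{Admissiblepic} together with the identification (Corollary \ref{cor:intersections}) of $\Adm^+(\eta)$ with those $\widetilde{w}_0$ sending some element of $r(\Sigma_0)$ into $\Sigma_0$, so that in each residual case one explicitly exhibits a $\widetilde{w}_0$ sending $r(\Sigma_0)$ disjointly away from $\Sigma_0$.
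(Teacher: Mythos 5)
Your proposal is correct and takes essentially the same route as the paper's own (very brief) proof: reduce embedding‑by‑embedding to the $f=1$ case, then verify the per‑embedding claim by a finite check against Figure \ref{Admissiblepic}; your length bound $\ell(\widetilde{w}_0)>4=\ell(t_{s\eta})$ is just a convenient way to dispatch all but finitely many positions of $\omega_0$ before that check. The parts you flag as "handled by hand" are precisely what the paper also leaves to the reader via the figure, so there is no gap relative to the paper's own level of detail.
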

\begin{proof}
We can work one component at a time, and thus reduce to the case where $\#\cJ=1$.
This can then be checked directly using Figure \ref{Admissiblepic}.
\end{proof}

\begin{cor} \label{separateweights}
Let $\lambda\in X_1(\un{T})$ be 5-deep and $\mu-\eta \in \un{C}_0$ be 3-deep. 
If $F(\lambda) \notin \mathcal{R}(\JH(\ovl{R_s(\mu)}))$,  then there exists a $3$-generic Deligne--Lusztig representation $R_{s'}(\mu')$ such that $F(\lambda) \in \JH(\ovl{R_{s'}(\mu')})$ but 
\[
 \mathcal{R}(\JH(\ovl{R_s(\mu)})) \cap  \JH(\ovl{R_{s'}(\mu')}) = \emptyset.
 \]
\end{cor}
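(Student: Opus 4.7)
The plan is to find $\tld{w} \in \un{W}_a$ that transports $F(\lambda)$ into an alcove outside $\Adm^+(\eta)$, and then to invoke Corollary \ref{cor:intersections} to force the required intersection to be empty.

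First, I would dispense with the case in which the central character of $F(\lambda)$ fails to match that of the weights in $W^?(\rhobar)$ modulo $(p-\pi)X^*(\un{Z})$: there any $3$-generic type containing $F(\lambda)$ (which exists by the $5$-depth of $\lambda$) automatically has all its Jordan--H\"older constituents sharing the central character of $F(\lambda)$, hence disjoint from $W^?(\rhobar)$. With matching central characters, Proposition \ref{connected} lets me write $F(\lambda) = F(\Trns_{\mu}(\omega_\lambda, a_\lambda))$ for a unique $(\omega_\lambda, a_\lambda) \in \Lambda^{\mu}_W \times \cA$. Since $W^?(\rhobar) = F(\Trns_\mu(sr(\Sigma)))$ by Proposition \ref{prop:serrewts}, the hypothesis $F(\lambda)\notin W^?(\rhobar)$ combined with the injectivity of $\Trns_\mu$ (Proposition \ref{prop:inj:trns}) gives $s^{-1}(\omega_\lambda, a_\lambda) \notin r(\Sigma)$. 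Lemma \ref{lemma:elimpic} then furnishes $\tld{w} \in \un{W}_a$ with $s^{-1}(\omega_\lambda, a_\lambda) \in \tld{w}(\Sigma)$ and $t_{\un{1}}\tld{w} \notin \Adm^+(\eta')$.

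I would then set $R_{s'}(\mu') := R_{s\tld{w}}(\mu)$, which by Proposition \ref{prop:equivariance} has $\JH(\ovl{R_{s\tld{w}}(\mu)}) = F(\Trns_\mu(s\tld{w}(\Sigma)))$ and so contains $F(\lambda)$ by construction. Because $s(\un{1}) = \un{1}$ for every $s \in \un{W}$ and $t_{\un{1}}$ is central in $\tld{\un{W}}$, a direct Bruhat-order computation shows $t_{\un{1}}\tld{w} \in \Adm^+(\eta')$ if and only if $\tld{w} \in \Adm^+(\eta)$. Hence some component $\tld{w}_j$ lies outside $\Adm^+(\eta_j)$, and the ``resp.'' case of Corollary \ref{cor:intersections} then yields $\Sigma_{\tld{w}_j} = \emptyset$, forcing
\[
\mathcal{R}(\JH(\ovl{R_s(\mu + \un{1})})) \cap \JH(\ovl{R_{s\tld{w}}(\mu)}) = \emptyset.
\]

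The main obstacle I anticipate is verifying the $3$-genericity of $R_{s\tld{w}}(\mu)$, which (writing $\tld{w} = wt_\nu$ so that $R_{s\tld{w}}(\mu) = R_{sw}(\mu + sw(\nu))$) amounts to showing $\mu + sw(\nu) - \eta$ is $3$-deep in $\un{C}_0$; this is also exactly the hypothesis required for Corollary \ref{cor:intersections} to be applicable to the type $R_{s\tld{w}}(\mu)$. This is precisely where the stronger $5$-depth hypothesis on $\lambda$ is meant to enter: because every Jordan--H\"older factor of any Deligne--Lusztig type sits within bounded graph-radius of the type's center (the radius being controlled by the bounded size of $\Sigma_0 \subset \Lambda_W \times \cA$), the presence of the $5$-deep weight $F(\lambda)$ in $\JH(\ovl{R_{s\tld{w}}(\mu)})$ should force the center $\mu + sw(\nu) - \eta$, after a suitable $\un{W}$-conjugation bringing $R_{s\tld{w}}(\mu)$ to the form $R_{s''}(\lambda'' + \eta)$ of Definition \ref{defi:gen:type}, to be $3$-deep in $\un{C}_0$, with the two-level buffer between $5$-depth and $3$-depth exactly compensating for the graph-radius of $\Sigma$.
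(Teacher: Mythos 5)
Your proposal follows the paper's strategy closely: handle mismatched central characters by any type containing $F(\lambda)$; use Proposition \ref{connected} to place $F(\lambda)$ at a coordinate $(\omega,a)\notin r(\Sigma)$; invoke Lemma \ref{lemma:elimpic} to produce $\tld{w}\in\un{W}_a\setminus\Adm^+(\eta)$ with $(\omega,a)\in\tld{w}(\Sigma)$; set $R_{s'}(\mu')=R_{s\tld{w}}(\mu)$; and finish with Corollary \ref{cor:intersections}. (Your explicit check that $t_{\un{1}}\tld{w}\in\Adm^+(\eta')$ iff $\tld{w}\in\Adm^+(\eta)$ is harmless but unnecessary --- the paper's convention already identifies $\Adm^\pm(\eta')$ with $\Adm^\pm(\eta)$.)

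The genuine gap is exactly the step you flag yourself --- the $3$-genericity of $R_{s\tld{w}}(\mu)$, equivalently the $3$-depth of $\mu'-\eta$ and the inclusion $s\tld{w}(\Sigma)\subset\Lambda_W^\mu$, both of which are needed for Proposition \ref{prop:equivariance} and Corollary \ref{cor:intersections} to apply to the new type. Your explanation that $5$-depth of $\lambda$ plus the bounded radius of $\Sigma$ "should force" $3$-depth of the center is the right intuition but it is not a proof: it conflates graph distance with depth in a way that needs an actual identity. The paper closes the gap by a re-centering device you don't carry out: it introduces the weight $\lambda'$ with $\sigma_{(\omega,\un{0})}^{(s,\mu)}=F(\lambda')$, notes that $\lambda'$ is in the $\un{W}_a$-orbit of $\lambda$ (hence $5$-deep by Lemma \ref{lem:deep}), shows via a direct computation (in the spirit of Lemma \ref{lem:deep:type}) that every weight in $\Trns_{\lambda'+\eta}(\un{W}\cdot\Sigma)$ is $3$-deep, and then applies Proposition \ref{changeofcoordinates} to recognize $\mu'-\eta$ as a member of this set. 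Without this re-centering via Proposition \ref{changeofcoordinates}, the depth claim remains unproved, and with it both the $3$-genericity of $R_{s'}(\mu')$ and the applicability of Corollary \ref{cor:intersections}.
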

\begin{proof}
If the central characters do not match, i.e.,  $(\lambda - \mu+\eta)|_{\un{Z}} \notin (p- \pi)X^*(\un{Z})$, then any Deligne-Lusztig representation which contains $F(\lambda)$ works.
Otherwise, by Proposition \ref{connected}, there exists $(s\omega, a) \in \un{\Lambda}_{W}^{\mu} \times \mathcal{A}$ such that 
$\sigma_{(\omega, a)}^{(s,\mu)} \cong F(\lambda)$.
Moreover, since $F(\lambda) \notin \mathcal{R}(\JH(\ovl{R_s(\mu)}))$, we have $(\omega,a) \notin r(\Sigma)$.
By Lemma \ref{lemma:elimpic}, there exists $\tld{w} \in \un{W}_a t_{\eta} \setminus \Adm(\eta)$ such that $(\omega,a) \in \tld{w}^{-1}(\Sigma)$.
We take $s' = sw^{-1}$ and $\mu' = \mu+s\tld{w}^{-1}(0)$ so that $R_{s'}(\mu')=R_{s\tld{w}^{-1}}(\mu)$.
Since $(\omega,a) \in \tld{w}^{-1}(\Sigma)$, $\omega - \tld{w}^{-1}(0)$ is in $\un{W}(\ovl{\Sigma})$, where $\ovl{\Sigma}$ denotes the image of $\Sigma$ in $\un{\Lambda}_W$.
A direct computation shows that since $\lambda$ is $5$-deep, $\mu+s\tld{w}^{-1}(0)-\eta$ is $3$-deep in $\un{C}_0$.
By Proposition \ref{prop:equivariance}, we have that  $F(\lambda) \in \JH(\ovl{R_{s'}(\mu')})$.
By Corollary \ref{cor:intersections} and the remark before it, we have that $\mathcal{R}(\JH(\ovl{R_s(\mu)})) \cap  \JH(\ovl{R_{s'}(\mu')}) = \emptyset$.
\end{proof}

\section{Serre weight conjectures} \label{sec:weights}

In \S 3.1-3.3, we improve results of \cite{LLLM} on Kisin modules and \'etale $\phz$-modules with descent data.  Most of these results are generalized to $\GL_n$ in \cite{LLL}.  %
In \S \ref{subsec:et:dd}, we prove a key structure result relating the Frobenius of a Kisin module with tame descent data to the Frobenius of the corresponding \'etale $\phi$-module (Proposition \ref{prop:phif}). This plays an important role in computing the Galois representation associated to semisimple Kisin modules (Proposition \ref{prop:phiV}) and later establishing an explicit geometric Breuil-M\'ezard conjecture (see \S \ref{subsec:GBM}).  
The remainder of \S \ref{Semisimple Kisin modules} is devoted to bounding reductions of potentially crystalline representations of type $(\eta, \tau)$ for the purpose of weight elimination (Theorem \ref{fixedpoints}).

In \S \ref{sec:SandSW}, we connect the results of \S \ref{sub:backgr}-\ref{Semisimple Kisin modules} with the combinatorics of Serre weights from \S \ref{subsec:comb}.  In particular, Proposition \ref{typematching} describes the intersection of the set of predicted weights of a semisimple $\rhobar$ with the set of Jordan--H\"older constituents of a Deligne--Lusztig representation, as a function of its shape.  In \S \ref{sec:SWC}, we prove the Serre weight conjectures for semisimple $\rhobar$ by matching the number of predicted Serre weights to the number of irreducible components of the Galois deformation rings which were computed in \cite[Theorem \ref{thm:BM}]{LLLM}.  In \S \ref{subsec:GBM}, we prove a geometric Breuil-M\'ezard conjecture which assigns to each (predicted) Serre weight a prime ideal of the universal framed deformation ring (Proposition \ref{prop:identify:cmpt}).  In \S \ref{sec:match}, we make this assignment explicit for certain tamely potentially crystalline deformation rings.  The explicit ideals play an important role in the proof of Breuil's lattice conjecture in \S \ref{sec:BC}.

Throughout this section, we assume that $\cS=\{\tld{v}\}$ unless otherwise stated, and write $K\defeq F_{\tld{v}}$. We drop the subscript $\tld{v}$ from notation in this situation. The set $\cJ$ is identified with the set of embeddings $K\into E$ and also with $\Z/f\Z$ using our chosen embedding $\sigma_0$ (cf.~Section  \ref{subsection:Notation}). For $i\in \cJ$, we will use the word component $i$ and embedding $i$ interchangeably.%

\subsection{Background}
\label{sub:backgr}

In this subsection, we recall some basic facts on Kisin modules with tame descent data.
We start with the following involution on $\tld{\un{W}}$, which naturally appears when passing from Kisin modules to $G_{K_\infty}$-representations.
We let $\tld{\un{W}}^{\vee}$ (resp. $\tld{W}^\vee$) be the partially ordered group which is identified with $\tld{\un{W}}$ (resp. $\tld{W}$) as a group, and whose Bruhat order is defined by the \emph{antidominant} base alcove (and still denoted as $\leq$).
\begin{defn} \label{affineadjoint}
Define a bijection $\tld{w}\mapsto \tld{w}^*$ between $\tld{\un{W}}^\vee$ and $\tld{\un{W}}$ as follows:
\begin{enumerate}
\item For $w = (w_j) \in \un{W}$ define $w^* = (w^*_j) \in \un{W}$ by $w^*_j = w_{f-1-j}^{-1}$.
\item For $\nu = (\nu_j) \in X^*(\un{T})$ define $\nu^* = (\nu^*_j) \in X^*(\un{T})$ by $\nu^*_j = \nu_{f-1-j}$.
\item For $\widetilde{w} = wt_\nu \in \widetilde{\un{W}}^\vee$ define $\widetilde{w}^* \in \widetilde{\un{W}}$  by $\widetilde{w}^* = t_{\nu^*} w^*$.
\end{enumerate}
Note that $\widetilde{w} \mapsto \widetilde{w}^*$ is an antihomomorphism of groups. 
For $j\in\cJ$ we write $\tld{w}^*_j$ for the $j$-th component of $\tld{w}^*$.
\end{defn}

Let $\tau$ be a tame inertial type which we always assume to be 1-generic. Fix a lowest alcove presentation $(s, \mu)$ for $\tau$ (Definition \ref{defn:tau}).

If $s = (s_0, \ldots, s_{f-1})$ and $\mu = (\mu_j)_{0 \leq j \leq f-1} \in X^*(\un{T})$, we take $s_\tau \defeq s_0 s_{f-1} s_{f-2} \cdots s_1 \in W$ and  $\bm{\alpha}_{(s, \mu)} \in X^*(\un{T})$ such that $\bm{\alpha}_{(s,\mu), j} = s_1^{-1} s_2^{-1} \ldots s_j^{-1}(\mu_j + \eta_j)$ for $1 \leq j \leq f-1$ and $\bm{\alpha}_{(s, \mu), 0} = \mu_0 + \eta_0$. Let $r \in \{1,2,3\}$ be the order of $s_{\tau}$, and write $K'$ for the unramified extension of $\Qp$ of degree $f'\defeq fr$.
 
\begin{rmk} \label{rmk:compare}
In \cite{LLLM}, the notion of lowest alcove presentation does not appear. Everything is written for presentations of the form $\tau((s_{\tau}, 1, \ldots, 1), \bm{\alpha}_{(s, \mu)})$ (see, for example, the beginning of \cite[\S 2.1, \S 6.1]{LLLM}). 
In the notation of \emph{loc.~cit.},  $\bm{\alpha}_{(s, \mu), j} = (a_{1, j}, a_{2,j}, a_{3,j})$.  The element \[s_{\mathrm{or}} \defeq (s_1^{-1} s_2^{-1} \ldots s_{f-1}^{-1}, s_1^{-1} s_2^{-1} \ldots s_{f-2}^{-1},   \ldots, s_1^{-1},  1) \in \un{W},\] has the property that $s^*_{\mathrm{or}}(\bm{\alpha}_{(s, \mu)}) = \mu + \eta$, and hence ${}^{(0, s^*_{\mathrm{or}})}(((s_{\tau}, 1, \ldots, 1), \bm{\alpha}_{(s, \mu)}))=(s,\mu+\eta)$.
The element $s_{\mathrm{or}}$ is called the \emph{orientation} of $\bm{\alpha}_{(s, \mu)}$ (\cite[Definition 2.6 and equation (2.2)]{LLLM}).  
\end{rmk}

If $r =1$, we say that $\tau$ is a \emph{principal series type}. Otherwise, we write $\tau'$ for the base change of $\tau$ to $K'/K$ (which is just $\tau$ considered as a principal series type for $G_{K'}$). We record the relevant data for $\tau'$.  
Define $\bm{\alpha}'_{(s,\mu)} \in  X^*(T)^{\Hom(k', \F)} \cong X^*(\un{T})^{r}$ (using a choice of embedding $\sigma_0':k'\into \F$ extending $\sigma_0$) by 
\[
\bm{\alpha}'_{(s,\mu), j + kf} \defeq  s_{\tau}^{-k} (\bm{\alpha}_{(s, \mu), j}) \text{ for } 0 \leq j \leq f-1, 0 \leq k \leq r-1.
\]   

If $\tau_{K'}(w', \mu')$ is the analogous construction of tame types over $K'$ for $(w', \mu') \in (\un{W} \times X^*(\un{T}))^{r}$ as in Definition \ref{defn:tau}, then $\tau' \cong \tau_{K'}(1, \bm{\alpha}'_{(s,\mu)})$ by direct comparison using \eqref{eq:def:type}.  The \emph{orientation} $s'_{\mathrm{or}} \in \un{W}^{r}$ of $\bm{\alpha}'_{(s,\mu)}$ in the sense of \cite[Definition 2.6]{LLLM} is given by 
\begin{equation} \label{primeorient}
s'_{\mathrm{or}, j + kf} \defeq s_{\tau}^{k+1} s_{\mathrm{or}, j}\text{ for } 0 \leq j \leq f-1, 0 \leq k \leq r-1 
\end{equation} 
 (compare with \cite[Proposition 6.1]{LLLM}).

Let $L'=K'(\pi_r)=K'((-p)^{\frac{1}{p^{rf}-1}})$ and $\Delta'\defeq \Gal(L'/K')\subseteq \Delta\defeq \Gal(L'/K)$.
Note that $\tau$ defines a $\cO$-valued representation of $\Delta'$.
For any complete local Noetherian $\cO$-algebra $R$ with residue field $\F'$ finite over $\F$, let $\fS_{L', R} \defeq (W(k') \otimes_{\Zp} R)[\![u']\!]$. We endow $\fS_{L', R}$ with an action of $\Delta$ as follows: for any $g$ in $\Delta'$, $g(u') = \frac{g(\pi_r)}{\pi_r} u'$ and $g$ acts trivially on the coefficients; if $\sigma \in\Gal(L'/\Qp)$ is the lift of Frobenius on $W(k')$ which fixes $\pi_r$, then $\sigma^f$ generates $\Gal(K'/K)$ acting in natural way on $W(k')$ and trivially on both $u'$ and $R$. Set $v = (u')^{p^{rf}-1}$, and note that 
\[
(\fS_{L', R})^{\Delta = 1} = (W(k) \otimes_{\Zp} R)[\![v]\!].
\]
As usual, $\varphi:\fS_{L', R} \ra \fS_{L', R}$ acts as $\sigma$ on $W(k')$, trivially on $R$, and sends $u'$ to $(u')^{p}$.

If $\tau$ is a principal series types, let $Y^{[0, h], \tau}(R)$ be the category of Kisin modules over $L'$ with tame descent of type $\tau$ and height in $[0,h]$ as defined in \cite[\S 3]{CL}.
More generally, we have the following:

\begin{defn} \label{defn:Kisin} An element $(\fM, \phi_{\fM}, \{ \widehat{g} \}) \in Y^{[0, h], \tau}(R)$ is a Kisin module $(\fM, \phi_{\fM})$ over $\fS_{L', R}$ (\cite[Definition 2.3]{LLLM}) with height less than $h$ together with a semilinear action of $\Delta$ which commutes with $\phi_{\fM}$ such that for each $0 \leq j \leq f' - 1$ 
\[
\fM^{(j)} \mod u' \cong \tau^{\vee} \otimes_{\cO} R 
\]  
as $\Delta'$-representations. In particular, the semilinear action induces an isomorphism $\iota_{\fM}:(\sigma^f)^*(\fM) \cong \fM$ (see \cite[\S 6.1]{LLLM}) as elements of $Y^{[0,h], \tau'}(R)$.
\end{defn}   
\begin{rmk}
As explained in \cite[\S 6.1]{LLLM}, the data of an extension of the action of $\Delta'$ to an action of $\Delta$ is equivalent to the choice of an isomorphism $\iota_{\fM}:(\sigma^f)^*(\fM) \cong \fM$ satisfying an appropriate cocycle condition. We will use both point of view interchangeably.
\end{rmk}
\begin{rmk} \label{tauvee} The appearance of $\tau^{\vee}$ in the definition is due to the fact that we are using the contravariant functors to Galois representations to be consistent with \cite{LLLM} as opposed to the covariants versions which appear in \cite{CL, EGH}.  In \cite{LLLM}, we didn't use the notation $\tau^{\vee}$. Instead, we included it in our description of descent data by having a minus sign in the equation before Definition 2.1 of \emph{loc.~cit.}   
The notion of Kisin module with tame descent data of type $\tau$ here is consistent with what appears in \emph{loc.~cit.}
\end{rmk}

\begin{defn} \label{def:eigen} An \emph{eigenbasis} of $\fM \in Y^{[0,h], \tau}(R)$ is an eigenbasis $\beta'=({\beta'}^{(j')})_{j'}$ for $\fM$ considered as a Kisin module with descent data of type $\tau'$ in the sense of \cite[Definition 2.8]{LLLM}, which is compatible with the isomorphism $\iota_{\fM}$: by letting  ${\beta'}^{(j')}=({f'_1}^{(j')},{f'_2}^{(j')}, {f'_3}^{(j')})$ then
\begin{equation*}
\xymatrix{
\fM^{(j')}&\big((\sigma^f)^*(\fM)\big)^{(j'+f)}\ar_-{\sim}[l]\ar^-{\iota_{\fM}}[r]&\fM^{(j'+f)}\\
\{{f'_1}^{(j')},{f'_2}^{(j')}, {f'_3}^{(j')}\}\ar@{|->}[rr]&&
\{{f'_1}^{(j'+f)},{f'_2}^{(j'+f)}, {f'_3}^{(j'+f)}\}
}
\end{equation*}
for all $0\leq j'\leq f'-1$, where the isomorphism on the left hand side is obtained from \cite[Lemma 6.2]{LLLM}.
For short, the compatibility above will be written as $\iota_{\fM}({\beta'}^{(j')})={\beta'}^{(j'+f)}$. (See also \cite[Definition 3.2.8]{LLL}.) 
\end{defn} 

\subsection{\'Etale $\phz$-modules with descent data} 
\label{subsec:et:dd}

The main result of this subsection is Proposition \ref{prop:phif}, describing the Frobenius action on \'etale $\phz$-modules with tame descent data.

Let  $\cO_{\cE,K'}$ (resp. $\cO_{\cE,L'}$) be the $p$-adic completion of $(W(k')[\![v]\!])[1/v]$ (resp. of $(W(k')[\![u']\!])[1/u']$). 
Recall from \cite[\S 2.3]{LLLM}, that for a complete local Noetherian $\cO$-algebra $R$ we have the category $\Phi\text{-}\Mod^{\text{\'et}}_{K'}(R)$ (resp.  $\Phi\text{-}\Mod^{\text{\'et}}_{dd,L'}(R)$) of \'etale $(\phz,\cO_{\cE,K'}\widehat{\otimes}_{\Zp}R)$-modules (resp. \'etale $(\phz,\cO_{\cE,L'}\widehat{\otimes}_{\Zp}R)$-modules with descent data from $L'$ to $K'$).
There is an analogous definition of $\Phi\text{-}\Mod^{\text{\'et}}_{K}(R)$ and  $\Phi\text{-}\Mod^{\text{\'et}}_{dd,L}(R)$.
Given $(\fM,\iota_{\fM})\in Y^{[0,h],\tau}(R)$, the element $\fM \otimes\cO_{\cE,L'}$ is naturally an object $\Phi\text{-}\Mod^{\text{\'et}}_{dd,L'}(R)$. 

We define an \'etale $\phz$-module $\cM \in \Phi\text{-}\Mod^{\text{\'et}}_{K}(R)$ by 
\[
\cM \defeq (\fM\otimes\cO_{\cE,L'})^{\Delta=1} %
\]
with the induced Frobenius.  This defines a functor from $Y^{[0,h],\tau}(R)$ to $\Phi\text{-}\Mod^{\text{\'et}}_{K}(R)$.  %
Finally, recall  the functor $\eps_0:\Phi\text{-}\Mod^{\text{\'et}}_{K}(R)\rightarrow \Phi^f\text{-}\Mod^{\text{\'et}}_{W(k)}(R)$ from \cite[\S 2.3]{LLLM}. It is obtained by considering the $f$-fold composite of the partial Frobenii which acts on $\cM^{(0)}$.

Let $R$ be a local, Artinian $\cO$-algebra with finite residue field $\F$. We have the usual functor $\bV^*_K$ from $\Phi\text{-}\Mod^{\text{\'et}}_{K}(R)$ to representations of $G_{K_{\infty}}$ over $R$.
Recall from \cite[\S 6.1]{LLLM} the functor
\begin{equation*}
T_{dd'}^*:Y^{[0,h], \tau}(R)\ra \Rep_R(G_{K_{\infty}})
\end{equation*}  
which is defined as $(\fM,\iota_{\fM})\mapsto \bV_K^*(\cM)$.
From now onwards, \emph{we write $T_{dd}^*$ for the functor which was written as $T_{dd'}^*$ in \cite[\S 6.1]{LLLM}.}

We can now state the main result of this section, which is a sharpening of \cite[Proposition 2.26]{LLLM}.

\begin{prop}
\label{prop:phif} 
Let $\tau$ be a $1$-generic type with lowest alcove presentation $(s, \mu)$. 
Let $\fM \in Y^{[0,h], \tau}(R)$ and $\beta$ be an eigenbasis of $\fM$. Let $s_{\orient}=(s_{\orient,j})_j$ be the 
orientation for $\tau$ and write $A^{(j)}=\Mat_{\beta}\big(\phi^{(j)}_{\fM, s_{\orient,j+1}(3)}\big)$ for $j\in\{0,\dots,f-1\}$.
Let $\cM = (\fM \otimes \cO_{\cE, L'})^{\Delta=1} \in \Phi\text{-}\Mod^{\text{\'et}}_K(R)$.

Then there is a basis $\mathfrak{f} = (\mathfrak{f}^{(j)})_j$ for $\cM$ such that
\[
\Mat_{\mathfrak{f}}(\phi_{\cM}^{(j)}) = A^{(j)}  s^*_j v^{\mu^*_j + \eta^*_j}. 
\]
\end{prop}

\begin{proof}
The proof is a direct computation using  \cite[Proposition 2.26 and Proposition 6.1]{LLLM}.

Let $\bm{\alpha}'_{(s, \mu)}$ and $(s'_{\orient,j'})$ be as in the discussion after Remark \ref{rmk:compare}. Let ${\beta'}^{(j')} = ({f_1'}^{(j')}, {f_2'}^{(j')}, {f_3'}^{(j')})$ be an eigenbasis for $\fM$ and write ${A}^{(j')}\defeq \Mat_{\beta'}\big(\phi^{(j')}_{\fM', s'_{\orient,j'+1}(3)}\big)$ for $0 \leq j' < f'-1$.

For any $0 \leq j' \leq f' -1$, set
\[
\bf{a}_{(s,\mu)}^{\prime \, (j')} = \sum_{i=0}^{f'-1} \bm{\alpha}'_{(s, \mu), -j' + i} p^i \in \Z^3
\]
where $-j' + i$ is taken modulo $f'$ (compare with \cite[\S 2.1]{LLLM}). Then \[{\mathfrak{f}'}^{(j')} \defeq \{(u')^{\bf{a}_{(s,\mu), 1}^{\prime \, (j')}}{f_1'}^{(j')}, (u')^{\bf{a}_{(s,\mu), 2}^{\prime \, (j')}} {f_2'}^{(j')}, (u')^{\bf{a}_{(s,\mu), 3}^{\prime \, (j')}}{f_3'}^{(j')}\}\] is a basis for ${\cM'}^{(j')}\defeq({\fM}^{(j')}[1/u'])^{\Delta'=1}$ for $0 \leq j' < f'-1$.

 For any $0 \leq j' \leq f'-1$, a direct computation (using \cite[Proposition 2.13]{LLLM}) shows that the matrix for $\phi_{\cM'}^{(j')}:{\cM'}^{(j')} \ra {\cM'}^{(j'+1)}$ with respect to the bases above is:
\[
s'_{\mathrm{or}, j'+1} A^{(j')} (s'_{\mathrm{or}, j'+1})^{-1} (u')^{p \bf{a}^{\prime \, (j')}_{(s,\mu)} - \bf{a}^{\prime \, (j'+1)}_{(s,\mu)}}.    
\]
Since $p \bf{a}^{\prime \, (j')}_{(s,\mu)} - \bf{a}^{\prime \, (j'+1)}_{(s, \mu)} = (p^{f'} -1) \bm{\alpha}'_{(s, \mu), f'-1-j'}$, this is same as 
\begin{equation} 
\label{phi-mod-dd}
s'_{\mathrm{or}, j'+1} A^{(j')} (s'_{\mathrm{or}, j'+1})^{-1}  v^{\bm{\alpha}'_{(s, \mu), f'-1-j'}}.  
\end{equation}
Define $\tld{\beta}$ by $\tld{\beta}^{(j')} =  {\mathfrak{f}'}^{(j')} s'_{\mathrm{or}, j'}$ (reordering the basis vectors). Let $j' = j + if$ for $0 \leq j \leq f-1$. Then the matrix for $\phi_{\cM'}^{(j')}$ with respect to $\tld{\beta}$ is given by 
\begin{equation} \label{eq:final}
 A^{(j')} (s'_{\mathrm{or}, j'+1})^{-1}  s'_{\mathrm{or},j'} v^{(s'_{\mathrm{or},j'})^{-1} (\bm{\alpha}'_{(s, \mu), f'-1-j'})} =  A^{(j)} s^*_j v^{\mu^*_j + \eta^*_j}
\end{equation}
using \eqref{primeorient}, Remark \ref{rmk:compare} and the fact that $A^{(j')}$ only depends on $j'$ mod $f$ (cf. \cite[Proposition 6.9]{LLLM}). 

Finally, recall that the eigenbasis $\beta'$ is required to be compatible with $\iota_{\fM}:(\sigma^f)^*(\fM) \cong \fM$ as in Definition \ref{def:eigen}, which gives $\iota^{(j')}_\fM(\beta'^{(j')})=\beta'^{(j'+f)}s_\tau^{-1}$. Hence $\tld{\beta}$ descends to a collection of ordered bases $(\mathfrak{f}^{(j)})_{0 \leq j \leq f-1}$ of the $\cM^{(j)} = ((\fM[1/u'])^{\Delta = 1})^{(j)}$ such the matrix for the partial Frobenius map $\cM^{(j)} \ra \cM^{(j+1)}$ is given by \eqref{eq:final}.

\end{proof}

\begin{prop} 
\label{prop:phif:1} 
Let $\fM \in Y^{[0,h],\tau}(R)$ and let $\cM = (\fM \otimes \cO_{\cE, L'})^{\Delta=1} \in \Phi\text{-}\Mod^{\text{\'et}}_K(R)$ be the associated \'etale $\phz$-module over $K$.

If $\mathfrak{f}$ is a basis of  $\cM$ and $B^{(j)} \defeq\Mat_{\mathfrak{f}}(\phi_{\cM}^{(j)})$ are the matrices of the partial Frobenii, then the \'etale $\phz^f$-module $\eps_0(\cM)$ is described by 
\begin{equation*}
\Mat_{\mathfrak{f}^{(0)}}(\phi^{f}_{\cM^{(0)}})=\prod^{f-1}_{j=0} \phz^j (B^{(f-j-1)}).
\end{equation*}
\end{prop}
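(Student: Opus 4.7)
The plan is to directly iterate the partial Frobenii $f$ times on $\cM^{(0)}$ and track matrices using semilinearity. Recall that any \'etale $\phz$-module $\cM$ over $K$ decomposes along embeddings as $\cM = \prod_{j=0}^{f-1}\cM^{(j)}$, that the Frobenius $\phi_{\cM}$ restricts to $\phz$-semilinear partial Frobenii $\phi_{\cM}^{(j)}\colon\cM^{(j)}\to\cM^{(j+1)}$ (indices taken modulo $f$), and that $B^{(j)} = \Mat_{\mathfrak{f}}(\phi_{\cM}^{(j)})$ is characterized by
\[
\phi_{\cM}^{(j)}(\mathfrak{f}^{(j)}) = \mathfrak{f}^{(j+1)}\, B^{(j)},
\]
with the cyclic convention $\mathfrak{f}^{(f)} = \mathfrak{f}^{(0)}$. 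By definition of $\eps_0$ in \cite{LLLM} \S 2.3, the $\phz^f$-module $\eps_0(\cM)$ has underlying module $\cM^{(0)}$ with Frobenius $\phi^f_{\cM^{(0)}} = \phi_{\cM}^{(f-1)}\circ\cdots\circ\phi_{\cM}^{(1)}\circ\phi_{\cM}^{(0)}$.

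Next, I would prove by induction on $k$ that for $1 \leq k \leq f$,
\[
(\phi_{\cM}^{(k-1)}\circ\cdots\circ\phi_{\cM}^{(0)})(\mathfrak{f}^{(0)}) = \mathfrak{f}^{(k)}\cdot B^{(k-1)}\,\phz(B^{(k-2)})\cdots \phz^{k-1}(B^{(0)}).
\]
The base case $k=1$ is the defining relation for $B^{(0)}$. For the inductive step, apply $\phi_{\cM}^{(k)}$ to both sides and use $\phz$-semilinearity together with the product rule for matrix coefficients: the left-hand side becomes $\phi_{\cM}^{(k)}(\mathfrak{f}^{(k)})\cdot\phz\!\bigl(B^{(k-1)}\phz(B^{(k-2)})\cdots\phz^{k-1}(B^{(0)})\bigr)$, which equals $\mathfrak{f}^{(k+1)}\cdot B^{(k)}\phz(B^{(k-1)})\cdots\phz^{k}(B^{(0)})$ after distributing $\phz$ across the product and substituting $\phi_{\cM}^{(k)}(\mathfrak{f}^{(k)}) = \mathfrak{f}^{(k+1)}B^{(k)}$.

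Specializing to $k=f$ and invoking the cyclic identification $\mathfrak{f}^{(f)} = \mathfrak{f}^{(0)}$ then gives
\[
\phi^f_{\cM^{(0)}}(\mathfrak{f}^{(0)}) = \mathfrak{f}^{(0)} \cdot \prod_{j=0}^{f-1}\phz^j\bigl(B^{(f-1-j)}\bigr),
\]
which is exactly the claimed formula. The argument is essentially a routine bookkeeping of semilinearity, so no serious obstacle arises; the only points requiring care are fixing the index convention for the partial Frobenii $\phi^{(j)}_{\cM}$ consistent with Corollary \ref{cor:normalform} and tracking the cyclic identification $\mathfrak{f}^{(f)} = \mathfrak{f}^{(0)}$ coming from the embeddings of $k$.
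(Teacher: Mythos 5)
Your proof is correct and gives exactly the routine iteration argument the paper relies on implicitly; note that the paper states Proposition \ref{prop:phif:1} without any proof, treating it as an immediate bookkeeping consequence of semilinearity, the cyclic identification $\mathfrak{f}^{(f)}=\mathfrak{f}^{(0)}$, and the definition of $\eps_0$ as the $f$-fold composite of partial Frobenii.
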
 

\begin{lemma} \label{phiV} Let $\rhobar: G_K\ra \GL_3(\F)$ be semisimple and $0$-generic.  
Let $\cM\in \Phi^f\text{-}\Mod^{\mathrm{\acute{e}t}}_{W(k)}(\F)$ such that $\bV^*_K(\cM) \cong \rhobar|_{G_{K_{\infty}}}$.  
Assume there exists $s_0\in W$, $\lambda = (\lambda_j) \in X^*_1(\un{T})$ and a basis for $\cM$ such that the $\phi^f$-action on $\mathcal{M}$ is given by $D s_0^{-1} v^\mu$ where $D \in T(\F)$ and $\mu \defeq \sum_{j=0}^{f-1}p^j\lambda_j\in X^*(T)$.
Then by letting $s=(s_0,1,\dots,1)\in \un{W}$,
\[
V(\rhobar|_{I_K}) = R_s(\lambda).
\]
\end{lemma}
\begin{proof}
Let $k$ be the order of $s_0$ and write $\mu=(\mu_1,\mu_2,\mu_3)\in X^*(T)$. 
Then 
\[\phi^{fk}(e_i) = \left(\prod_{m=0}^{k-1} D_{s_0^{-m}(i)} \right) v^{\sum_{m=0}^{k-1}p^{f(k-1-m)}\mu_{s_0^{-m}(i)}} e_i= \left(\prod_{m=0}^{k-1} D_{s_0^{m+1}(i)}\right) v^{\sum_{m=0}^{k-1} p^{fm} \mu_{s_0^{m+1}(i)}} e_i.\]
Hence $\rhobar|_{I_K}$ is isomorphic to $\bigoplus_{i=1}^3 \omega_{kf}^{\sum_{m=0}^{k-1} p^{fm} \mu_{s_0^{m+1}(i)}}$.
The lemma now follows from  (\ref{eq:def:type}).
\end{proof}

\subsection{Semisimple Kisin modules}
\label{Semisimple Kisin modules}

The statement of \cite[Theorem 2.21]{LLLM} gives a classification of Kisin modules with descent datum which can arise as reductions of potentially crystalline representations with Hodge--Tate weights $(2,1,0)^f$.  In this subsection, we identify Kisin modules which correspond to semisimple Galois representations in an explicit way (Theorem \ref{fixedpoints}).   

For $\F'/\F$ finite, let $\cI(\F') \subset \GL_3(\F'[\![v]\!])$ be the Iwahori subgroup of elements which are upper triangular modulo $v$.  
Recall from \S \ref{sub:backgr} the partially ordered group $\un{\tld{W}}^\vee$ (resp. $\tld{W}^\vee$) which is identified with $\tld{\un{W}}$ (resp. $\tld{W}$) as a group, but whose Bruhat order is defined by the \emph{antidominant} base alcove (and still denoted as $\leq$).
For any character $\mu\in X^*(\un{T})$, we define $\Adm^\vee(\mu)$ as in Definition \ref{adm} but for antidominant base alcove.

Let $h$ be a positive integer. 
As before, let $\tau$ be a $1$-generic tame type with fixed lowest alcove presentation $(s, \mu)$.  
Recall from \cite[Definition 2.17]{LLLM} the notion of \emph{shape}:
\begin{defn}
\label{defn:shape}  Let $\tau$ be a principal series type,  and let $\tld{w} = (\widetilde{w}_0, \widetilde{w}_1, \ldots, \widetilde{w}_{f-1}) \in \tld{\un{W}}^\vee$. %
A Kisin module $\fM \in Y^{[0,h], \tau}(\F')$ has \emph{shape} $\tld{w}$ if for any eigenbasis $\beta$, the matrices $ \big(A^{(j)} \big)_j = \big(\Mat_{\beta}\big(\phi_{\fM, s_{\orient,j+1}(3)}^{(j)} \big) \big)_j$ have the property that $A^{(j)} \in \Iw(\F') \widetilde{w}_j \Iw(\F')$. 

For a non-principal series type, we define the \emph{shape} of a Kisin module $\fM \in Y^{[0,h], \tau}(\F')$ in terms of its base change $\mathrm{BC}(\fM)$, see \cite[Definition 3.2.11]{LLL} or \cite[Definition 6.10]{LLLM} for details.
\end{defn} 

For any $\lambda \in X^*(\un{T})$ effective (i.e. $\lambda_j=(a_{i,j})_i$ with $a_{i,j}\geq 0$ for all $i,\,j$) %
and $h\gg0$, let $Y^{\lambda, \tau}\subset Y^{[0,h], \tau}$ denote the closed substack defined in \cite[\S 5]{CL} (cf. also \cite[\S 2.2]{LLLM}).
Then for any finite extension $\F'/\F$, $Y^{\lambda, \tau}(\F') \subset Y^{[0,h], \tau}(\F')$ is the subgroupoid consisting of Kisin modules with shape in $\mathrm{Adm}^\vee(\lambda)$.%

\begin{defn} \label{shaperhobar} Let $\tau$ be a 3-generic type with lowest alcove presentation $(s, \mu)$ and let $\rhobar: G_K\ra \GL_3(\F)$.  Assume there exists $\overline{\fM}_{\rhobar}\in Y^{\eta, \tau}(\F)$ such that $T^*_{dd}(\overline{\fM}_{\rhobar}) \cong \rhobar|_{G_{K_{\infty}}}$.  We define $\tld{w}(\rhobar,\tau)\in\Adm^{\vee}(\eta)$ to be the shape of $\overline{\fM}_{\rhobar}$. 

This is well-defined because there exists at most one Kisin module $\overline{\fM}_{\rhobar}$ with height in $[0,2]$ such that $T^*_{dd}( \overline{\fM}_{\rhobar}) \cong \rhobar|_{G_{K_{\infty}}}$ (\cite[Theorem 3.2]{LLLM} for a principal series type, and \S 6.2 and the discussion before Lemma 6.13 in \emph{loc.~cit.}~for the general case). 
\end{defn}

\begin{rmk}
More generally if $\#\cS>1$, for $\tld{w}=(\tld{w}_{\tld{v}})_{\tld{v}\in\cS}$ we define $\tld{w}^*$ as the collection $(\tld{w}^*_{\tld{v}})_{\tld{v}\in\cS}$, where each $\tld{w}_{\tld{v}}^*\in\tld{\un{W}}_{\tld{v}}$ is as in Definition \ref{affineadjoint}.
Similarly if $\tau_\cS$ is a collection of $3$-generic tame inertial type and $\rhobar_{\cS}$ is a collection of Galois representations, we let $\tld{w}(\rhobar_{\cS},\tau_\cS)\defeq (\tld{w}(\rhobar_{\tld{v}},\tau_{\tld{v}}))_{\tld{v}\in\cS}$.
\end{rmk}

We now introduce the notion of semisimple Kisin module of shape $\tld{w}$.  
 
\begin{defn} \label{Kissemisimple}  Let $\overline{\fM} \in Y^{[0,h], \tau}(\F')$, where $\F'/\F$ is a finite extension.  We say that $\overline{\fM}$ is \emph{semisimple} of shape $\tld{w} = (\widetilde{w}_j)$ if there exists an eigenbasis $\beta$ of $\overline{\fM}$ such that 
$$
A^{(j)} \in T(\F'[\![v]\!]) \widetilde{w}_j
$$ 
for $0 \leq j \leq f'-1$.
 \end{defn} 
 
\begin{prop}  If $\overline{\fM}$ is semisimple of shape $\tld{w}$, then $T^*_{dd}(\overline{\fM})$ is semisimple.  
\end{prop}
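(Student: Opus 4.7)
The plan is to reduce, via the normal form results from Proposition \ref{prop:phif} and its corollary, to showing that an \'etale $\phz^f$-module with monomial Frobenius matrix gives rise to a semisimple Galois representation.

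First I would apply Corollary \ref{cor:normalform} to obtain a basis $\mathfrak{f}=(\mathfrak{f}^{(j)})_j$ of $\cM \defeq (\overline{\fM}\otimes \cO_{\cE,L'})^{\Delta=1}$ such that the partial Frobenius matrices are
\[
B^{(j)} \defeq \Mat_{\mathfrak{f}}(\phz^{(j)}_{\cM}) = A^{(j)}\, s_{\tau,\mu,j}\, v^{s_j^{-1}(\mu_{f-1-j})}.
\]
The semisimplicity hypothesis on $\overline{\fM}$ says exactly that (after possibly extending scalars to a finite $\F'/\F$) we have $A^{(j)} \in T(\F'[\![v]\!])\,\tld{w}_j$. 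Since $s_{\tau,\mu,j} \in \un{W}$ is a permutation matrix, each $B^{(j)}$ is a monomial matrix: a diagonal matrix with $v$-power entries times a permutation matrix.

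Next, I would invoke Proposition \ref{prop:phif:1} to compute the $\phz^f$-Frobenius on $\eps_0(\cM)$ as $\prod_{j=0}^{f-1}\phz^j(B^{(f-1-j)})$. The Frobenius twist $\phz$ acts on entries and fixes the underlying permutation structure, so each $\phz^j(B^{(f-1-j)})$ remains monomial; the product of monomial matrices is monomial. Writing this product as $D\,\sigma$ for a diagonal $D$ and permutation $\sigma$, decompose $\sigma$ into its cycles. Each cycle of length $\ell$ yields a $\phz^f$-stable direct summand of $\eps_0(\cM)$ of rank $\ell$, and these summands are permuted only within themselves, giving a direct sum decomposition of $\eps_0(\cM)$ as a $\phz^f$-module.

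For each cyclic summand, after base changing to the unramified extension of degree $\ell$ (equivalently, passing to the $\phz^{f\ell}$-module), the corresponding block becomes diagonal and hence splits into rank one \'etale $\phz^{f\ell}$-modules, which correspond to characters of the Galois group of the appropriate niveau extension. Consequently, the restriction of $T^*_{dd}(\overline{\fM})$ to the Galois group of a sufficiently large unramified extension is a direct sum of characters; and over $K_\infty$ itself the representation is a direct sum of the inductions of such characters from the cyclic niveau subextensions, which are exactly the standard irreducible tame pieces. Semisimplicity follows. The main (minor) obstacle is the bookkeeping of permutations and Frobenius twists in step two to confirm that monomiality is preserved under the composition; once this is verified the conclusion is immediate from the standard dictionary between monomial \'etale $\phz$-modules and direct sums of fundamental-character representations.
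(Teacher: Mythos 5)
Your proof is correct and follows essentially the same route as the paper: reduce via Proposition \ref{prop:phif} (or, as you do, Corollary \ref{cor:normalform}) and Proposition \ref{prop:phif:1} to the observation that the $\phz^f$-Frobenius matrix on $\eps_0(\cM)$ is monomial, i.e.\ lies in $T(\F(\!(v)\!))s$ for some $s\in W$, and then conclude by restricting to the unramified extension of $K_\infty$ of degree the order of $s$, where the representation becomes a direct sum of characters. The paper's proof is just a more compressed version of your cycle-decomposition argument.
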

  \begin{proof}
If $\overline{\fM}$ is semisimple, we can choose a basis such that $A^{(j)} \in T(\F'[\![v]\!]) \widetilde{w}_j$ where $\F'/\F$ is a finite extension. 
Using this basis, we see by Proposition \ref{prop:phif} and \ref{prop:phif:1} that the matrix for the \'etale $\phz^f$-module $\eps_0(\ovl{\fM}[1/u']^{\Delta = 1})$ lies in $T(\F'(\!(v)\!)) s$ for some $s \in W$. 
If $s$ has order $d$, then $T^*_{dd}(\overline{\fM})$ restricted to the unramified extension of $K_{\infty}$ of degree $d$ is a direct sum of characters (as $p\nmid d$) and so $T^*_{dd}(\overline{\fM})$ is semisimple. 
  \end{proof}
  
\begin{prop} \label{simpleform} If $\overline{\fM}$ is semisimple of shape $\tld{w} = (\widetilde{w}_j)$, then there exists an eigenbasis $\beta$ such that 
\begin{equation*}
A^{(j)} = \widetilde{w}_j \text{ for } 0 \leq j \leq f-2,\qquad   A^{(f-1)} \in T(\F') \widetilde{w}_{f-1} .
\end{equation*}
\end{prop}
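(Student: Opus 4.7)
The plan is to perform a diagonal change of eigenbasis and conclude by a $v$-adic fixed-point argument. By Definition \ref{Kissemisimple} (possibly after extending $\F$), choose an eigenbasis $\beta$ of $\overline{\fM}$ with $A^{(j)} = D_j \widetilde{w}_j$ where $D_j \in T(\F[\![v]\!])$. Under a diagonal change of eigenbasis by $E^{(j)} \in T(\F[\![v]\!])$, the new Frobenius matrices are $(E^{(j)})^{-1} A^{(j)} \phi(E^{(j+1)})$, where $\phi$ acts on $\F[\![v]\!]$ by $v\mapsto v^p$. Writing $\widetilde{w}_j = v^{\mu_j} w_j$ with $w_j \in \un{W}$, and using that $v^{\mu_j}$ is central in $T$ while $\widetilde{w}_j$ acts on $T$ by conjugation exactly as $w_j$ does, this simplifies to $(E^{(j)})^{-1} D_j \bigl(w_j \cdot \phi(E^{(j+1)})\bigr) \widetilde{w}_j$. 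Thus the shape is preserved and the new ``$D$-part'' is $D'_j := (E^{(j)})^{-1} D_j \bigl(w_j \cdot \phi(E^{(j+1)})\bigr)$.

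To arrange $D'_j = \id$ for $j = 0, \ldots, f-2$, impose the recursion $E^{(j)} = D_j \bigl(w_j \cdot \phi(E^{(j+1)})\bigr)$, which determines $E^{(0)}, \ldots, E^{(f-2)}$ uniquely in terms of $E^{(f-1)}$. Substituting into the formula for $D'_{f-1}$, one obtains
\[
D'_{f-1} \;=\; (E^{(f-1)})^{-1} \cdot \widetilde{F} \cdot \Psi(E^{(f-1)}),
\]
where $\widetilde{F} \in T(\F[\![v]\!])$ depends only on $D_0, \ldots, D_{f-1}$, and $\Psi \colon T(\F[\![v]\!]) \to T(\F[\![v]\!])$ is the group homomorphism $\Psi(E) := W \cdot \phi^f(E)$ with $W := w_{f-1} w_0 \cdots w_{f-2} \in \un{W}$.

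The remaining, and main, point is to choose $E^{(f-1)}$ so that $D'_{f-1} \in T(\F)$. Let $T^n := \ker\bigl(T(\F[\![v]\!]) \surj T(\F[v]/v^n)\bigr)$ be the standard congruence filtration. Since $\phi^f$ acts on $\F[\![v]\!]$ by $v \mapsto v^{p^f}$ and $W$ only permutes coordinates of $T$, we have $\Psi(T^n) \subseteq T^{np^f} \subseteq T^{n+1}$ for $n \geq 1$. By successive approximation on the graded pieces $T^n/T^{n+1}$, the map $E \mapsto E/\Psi(E)$ is a bijection $T^1 \xrightarrow{\sim} T^1$. Writing $\widetilde{F} = \widetilde{F}_0 \widetilde{F}_1$ with $\widetilde{F}_0 \in T(\F)$ and $\widetilde{F}_1 \in T^1$, we find the unique $\widetilde{E} \in T^1$ with $\widetilde{E}/\Psi(\widetilde{E}) = \widetilde{F}_1$ and set $E^{(f-1)} := \widetilde{E}$. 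Using that $T$ is abelian, one then reads off $D'_{f-1} = \widetilde{F}_0 \in T(\F)$, as required.

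The fixed-point step is the main (and only non-formal) input, relying on the $v$-adic contractivity of $\phi^f$ on $T^1$. If the initial eigenbasis was only defined over a finite extension $\F'/\F$, a standard Galois descent argument recovers an eigenbasis over $\F$ realizing the normal form, since all matrices in the conclusion already lie in $T(\F)$ and since eigenbases form a torsor for a group defined over $\F$.
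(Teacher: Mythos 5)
Your argument is correct and rests on the same $v$-adic contraction idea as the paper's proof. The packaging is a touch different: the paper iterates a single small change of eigenbasis (with $I^{(j+1)}=(D_1^{*,(j)})^{-1}$), arguing that each iteration gains a factor $v^p$ and hence converges, whereas you solve the recursion $E^{(j)} = D_j(w_j\cdot\phi(E^{(j+1)}))$ for $j = f-2,\dots,0$ in one shot and reduce the remaining problem to a fixed-point equation at $j = f-1$, which you then solve by the contraction $\Psi(T^n)\subseteq T^{np^f}$. Your version has the bonus of explicitly eliminating the constant parts $D_j \bmod v$ at $j < f-1$ inside the same recursion; in the paper's iteration, that telescoping of $T(\F)$-parts to the index $f-1$ is left implicit. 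One small thing to double-check against the convention in \cite[Proposition 2.15]{LLLM}: there the Frobenius twist appears on the index-$j$ (rather than index-$(j+1)$) factor of the change-of-basis, so the recursion in your step 5 should run in the other direction; this is a purely notational shift and does not affect the substance of the argument or the appearance of the operator $\Psi = \mathrm{Ad}(W)\circ\phi^f$ at a single index.
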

\begin{proof}
See \cite[Proposition 3.2.16]{LLL}.
\end{proof}

\begin{defn} 
For any $\tld{w}=(w_j t_{\lambda_j}) \in \widetilde{\un{W}}^\vee$, define $\cM(\tld{w})\in \Phi\text{-}\Mod^{\text{\'et}}_K(\F)$ to be the free \'etale $\phz$-module over $\cO_{\cE,K} \otimes \F$ of rank 3 such that the matrix of
\[
\phi_{\cM}^{(j)}:\cM(\tld{w})^{(j)} \rightarrow \cM(\tld{w})^{(j+1)}
\]
is given by $w_j v^{\lambda_{j}}$ (with respect to the standard basis).  

\end{defn}

\begin{prop} \label{prop:phiV} Let $(s, \mu)$ be a lowest alcove presentation of $\tau$.  If $\overline{\fM} \in Y^{[0,h], \tau}(\F')$ is semisimple of shape $\tld{w} \in \un{\widetilde{W}}^{\vee}$, then 
\[
T^*_{dd}(\overline{\fM})|_{I_K} \cong \bV^*_K(\cM(\tld{w} s^* t_{\mu^*+\eta^*}))|_{I_K} \cong \taubar(w, \nu + \eta)
\]  
where $\tld{w} s^* t_{\mu^*} = w^* t_{\nu^*}$. $($Note that since both $T^*_{dd}(\overline{\fM})$ and $\bV^*_K(\cM(\tld{w} s^* t_{\mu^*+\eta^*}))$ are tame, they canonically extend to $G_K$.$)$
\end{prop}
\begin{proof}
This is the special case when $n = 3$ of  \cite[Corollary 3.2.17]{LLL}.  For sake of completeness, we include an argument here. 

The first isomorphism follows from Proposition \ref{simpleform} combined with Proposition \ref{prop:phif}.  

Let $w, \nu$ satisfy $\tld{w} s^* t_{\mu^*} = w^* t_{\nu^*}$.  One easily checks that the $\phi^f$-action on $\eps_0(\cM(w^* t_{\nu^* + \eta^*}))\in \Phi^f\text{-}\Mod^{\text{\'et}}_{W(k)}(\F)$ is given by $\prod_{j=0}^{f-1} w^{-1}_j v^{p^j(\nu_j + \eta_j)}$ using Proposition \ref{prop:phif:1}.

Let $s\in\un{W}$.
The $\phi^f$-action of $\eps_0(\cM((st_{\nu + \eta} w \pi(s)^{-1})^*))\in \Phi^f\text{-}\Mod^{\text{\'et}}_{W(k)}(\F)$ is given by 
\[\prod_{j=0}^{f-1} s_{j-1} w^{-1}_j s_{j}^{-1} v^{p^js_{j}(\nu_j + \eta_j)} = s_{f-1} \left(\prod_{j=0}^{f-1} w^{-1}_j v^{p^j(\nu_j + \eta_j)}\right) s_{f-1}^{-1}.\]
We conclude that $\eps_0(\cM(w^* t_{\nu^* + \eta^*})) \cong \eps_0(\cM((st_{\nu + \eta} w \pi(s)^{-1})^*))$. Therefore, $\bV^*_K(\cM(w^* t_{\nu^* + \eta^*})) \cong \bV^*_K(\cM((st_{\nu + \eta} w \pi(s)^{-1})^*)).$

As $(w,\nu + \eta)\sim (sw\pi(s)^{-1},s(\nu + \eta))$,  by \cite[Proposition 2.2.4]{LLL} and \cite[Lemma 4.2]{herzig-duke}, we reduce to the case where $w_i = 1$ for $i\neq f-1$.  The second isomorphism then follows from Lemma \ref{phiV}.

\end{proof}

\begin{prop} \label{unramtwist} Let $\rhobar, \rhobar'$ be semisimple representations of $G_{K}$.  Assume there exists $\overline{\fM}$ semisimple of shape $\tld{w}$ such that $T^*_{dd} (\overline{\fM}) \cong \rhobar|_{G_{K_{\infty}}}$.   Then $\rhobar'|_{I_K} \cong \rhobar|_{I_K}$ if and only if there exists a semisimple $\overline{\fM}'$ of shape $\tld{w}$ such that $T^*_{dd}(\overline{\fM}') \cong \rhobar|_{G_{K_{\infty}}}$.     
\end{prop}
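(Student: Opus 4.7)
The plan is to make Proposition \ref{simpleform} the centerpiece and read off the assertion by explicit computation of $T^*_{dd}(\overline{\fM})|_{G_{K_\infty}}$ in the semisimple case. After modifying the eigenbasis, I may assume $A^{(j)} = \widetilde{w}_j$ for $j < f-1$ and $A^{(f-1)} = D\,\widetilde{w}_{f-1}$ for some $D \in T(\F)$, so that semisimple Kisin modules with fixed shape $\bf{w}$ and type $\tau$ are classified (up to isomorphism) by the element $D$ taken modulo the residual $T(\F)$-conjugation action coming from the eigenbasis ambiguity.

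Next, applying Corollary \ref{cor:normalform} and then Proposition \ref{prop:phif:1}, the matrix of $\phz^f$ acting on the $f$-fold composition $\eps_0(\cM)$, where $\cM = (\fM \otimes \cO_{\cE,L'})^{\Delta = 1}$, takes the form $\Lambda \cdot s$ with $\Lambda \in T(\F(\!(v)\!))$ diagonal and $s \in \un{W}$ a Weyl element depending only on $\bf{w}$ and $s_\tau$. Base-changing $\cM$ to the unramified extension of $K$ of degree $d \defeq \mathrm{ord}(s)$ diagonalizes $\Lambda \cdot s$; each rank-one summand is explicitly of the form $\omega_{fd}^{c} \cdot \mathrm{unr}(x)$, where the integer exponent $c$ is read off from the $v$-valuations of the entries of $\Lambda$ (and so depends only on $\bf{w}$ and $\tau$), while $x \in \overline{\F}^{\,\times}$ is a monomial in the diagonal entries of $D$ and in the scalar entries of the $\widetilde{w}_j$'s.

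For the backward direction this is now immediate: if $\overline{\fM}'$ is semisimple of the same shape $\bf{w}$ and type $\tau$, the formulae above force $T^*_{dd}(\overline{\fM}')|_{I_K} \cong T^*_{dd}(\overline{\fM})|_{I_K}$, and since $\rhobar$ and $\rhobar'$ are semisimple this yields $\rhobar'|_{I_K} \cong \rhobar|_{I_K}$. For the forward direction, given $\rhobar'$ semisimple with $\rhobar'|_{I_K} \cong \rhobar|_{I_K}$, I solve for $D' \in T(\F)$ so that the associated semisimple Kisin module of shape $\bf{w}$ produces exactly the required unramified twists on each Jordan--H\"older constituent of $\rhobar|_{G_{K_\infty}}$. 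The main obstacle is verifying surjectivity of the map sending $D$ to the tuple of unramified parts of the Frobenius eigenvalues, which I would check case-by-case by niveau of $\tau$: using the explicit form of $s$ and the Galois descent structure to confirm that varying $D \in T(\F)$ exhausts every unramified twist of the semisimple constituents of $\rhobar$ compatible with the $s$-action coming from Galois.
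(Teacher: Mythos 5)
Your plan follows the paper's route: both hinge on the normal form from Proposition \ref{simpleform}, deduce from Corollary \ref{cor:normalform} and Proposition \ref{prop:phif:1} that the composite $\phi^f$-matrix has shape $\Lambda s$ with fixed $v$-adic valuations and Weyl element, and reduce the ``only if'' direction to hitting all unramified twists by varying $D\in T(\F')$. The paper's phrase ``counting unramified twists'' is simply a compressed version of your surjectivity claim.

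The one real gap is exactly the point you flag yourself: you \emph{assert} that varying $D$ exhausts all unramified twists compatible with the $s$-action, but you do not verify it. This is the entire content of the ``only if'' implication, so deferring it to a ``case-by-case by niveau'' check that is not carried out leaves the proof incomplete. The verification is genuinely easy once set up --- the map $D\mapsto(\text{tuple of Frobenius eigenvalues modulo }v)$ is a group homomorphism $T(\F')\to (\F'^\times)^{\#\{\text{cycles of }s\}}$ landing in the $s$-invariant configurations, and one shows this surjects by writing down which diagonal entries of $D$ feed each rank-one constituent after pushing through the cycle structure --- but you should record it. Two small inaccuracies: after base-changing to kill the cycle of $s$ you may be forced to take $D'$ in $T(\F')$ for a finite extension $\F'/\F$ and then descend, not directly in $T(\F)$; and the $\widetilde{w}_j$ contribute only a permutation matrix and $v$-powers in the normal form (Proposition \ref{simpleform} absorbs any residual scalar into $D$), so the ``scalar entries of the $\widetilde{w}_j$'s'' you mention do not appear.
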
  
\begin{proof}
This follows from counting unramified twists using Proposition \ref{simpleform}. 
\end{proof}

The remainder of the subsection is devoted to results used for weight elimination. 
\begin{lemma} \label{reducetoss} If $\rhobar:G_{K}\to \GL_3(\F)$ admits a potentially crystalline lift of type $(\lambda,\tau)$, then so does $\rhobar^{\mathrm{ss}}$ $($after possibly replacing $E$ with a ramified extension$)$. 
\end{lemma}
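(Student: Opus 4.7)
The approach is to construct the required lift of $\rhobar^{\mathrm{ss}}$ from the given lift of $\rhobar$ by a single lattice modification, realized as conjugation over a suitable totally ramified extension of the coefficient ring. Since conjugation preserves the isomorphism class of the representation over $E$, it automatically preserves the property of being potentially crystalline of type $(\lambda, \tau)$; only the residual representation changes under the modification.

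Concretely, let $\rho: G_K \to \GL_3(\cO)$ denote the given lift (after possibly enlarging $E$), let $k \leq 3$ be the length of a Jordan--H\"older filtration $0 = F_0 \subsetneq \cdots \subsetneq F_k = \rhobar$ with irreducible graded pieces $\rhobar_i := F_i/F_{i-1}$ of dimension $n_i$. After enlarging $\F$ if necessary so that each $\rhobar_i$ is absolutely irreducible, choose an $\cO$-basis of the underlying $\cO^{\oplus 3}$ whose reduction is compatible with the filtration. In this basis the matrix of $\rho(g)$ has block upper-triangular reduction with diagonal blocks realizing the $\rhobar_i$, so its $(i,j)$-block lies in $\varpi\cdot\Mat_{n_i,n_j}(\cO)$ whenever $i>j$, while the diagonal blocks reduce to the $\rhobar_i$ themselves.

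Now extend $\cO$ to $\cO' := \cO[\varpi']$ where $(\varpi')^k = \varpi$ (a totally ramified extension, with residue field still $\F$) and set $E' := \cO'[1/p]$. Conjugate $\rho$ by
\[
P := \mathrm{diag}\bigl((\varpi')^{k-1} I_{n_1},\, (\varpi')^{k-2} I_{n_2},\, \ldots,\, (\varpi')^{0} I_{n_k}\bigr)\ \in\ \GL_3(E').
\]
A direct valuation check --- using that the $(i,j)$-block of $\rho(g)$ has $\varpi'$-valuation at least $k$ when $i>j$, while the extra factor $(\varpi')^{j-i}$ introduced by conjugation has absolute value at most $k-1$ --- shows that $P\rho(-)P^{-1}$ is in fact $\cO'$-valued and reduces modulo $\varpi'$ to the block-diagonal representation $\bigoplus_i \rhobar_i = \rhobar^{\mathrm{ss}}$. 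Since $P\rho(-)P^{-1}$ is isomorphic to $\rho$ after extension of scalars to $E'$, it is potentially crystalline of the same Hodge type $\lambda$ and inertial type $\tau$, and so exhibits the required lift of $\rhobar^{\mathrm{ss}}$.

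There is no substantive obstacle to this plan: it is a classical lattice-modification trick. The only care required is to choose the ramification index of $\cO'/\cO$ large enough (equal to $k$ suffices) so that, simultaneously across all off-diagonal blocks of $\rho$, the strict lower-triangular blocks retain integrality after being divided by powers of $\varpi'$, while the strict upper-triangular blocks are pushed into the maximal ideal after being multiplied by powers of $\varpi'$.
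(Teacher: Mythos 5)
Your proposal is correct and takes essentially the same approach as the paper, which simply says: fix the lift $\rho$, then ``by enlarging the coefficient ring of $\rho$, we can always find a lattice whose reduction is semisimple.'' What you have done is write out explicitly the classical lattice-modification argument that this one-line proof tacitly invokes, including the specific choice of ramified extension $\cO'$ and the conjugating matrix $P$; the verification via valuation bookkeeping is sound.
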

\begin{proof} Fix a characteristic $0$ lift $\rho$ of $\rhobar$ which is potentially crystalline of type $(\lambda,\tau)$. Then by enlarging the coefficient ring of $\rho$, we can always find a lattice whose reduction is semisimple by \cite[Lemma 5(2)]{Enns}.
\end{proof}

The following two Theorems are key inputs in weight elimination by giving an upper bound on the semisimple representations which are reductions of potentially crystalline representations of type $(\lambda, \tau)$.
\begin{thm} \label{thm:potcris}  Let $\rhobar:G_K \ra \GL_3(\F)$.  
Assume that $\rhobar$ has a potentially crystalline lift of type $(\lambda, \tau)$ where $\lambda\in X^*(\un{T})$ is effective.
Assume that either $(1)$ $\tau$ is a regular principal series type or $(2)$ $\lambda=\eta$ and $\tau$ is $3$-generic.
Then, for a sufficiently large $h$, there is a Kisin module $\overline{\fM} \in  Y^{[0,h], \tau}(\F)$ of shape  $\tld{w} = (\widetilde{w}_0, \widetilde{w}_1, \ldots, \widetilde{w}_{f-1}) \in \Adm^\vee(\lambda)$ such that $T^*_{dd}(\overline{\fM}) \cong \rhobar|_{G_{K_{\infty}}}$. In particular, $\overline{\fM} \in  Y^{\lambda, \tau}(\F) $.

\end{thm}
\begin{proof}
See \cite[Theorem 3.2.20]{LLL}.
\end{proof}

\begin{thm} \label{fixedpoints} Let $\rhobar$ be a semisimple representation of $G_{K}$ and assume $\tau$ is a $1$-generic tame type. Assume that either $(1)$ $\rhobar$ is a direct sum of characters or $(2)$ $\lambda = \eta$ and $\tau$ is $3$-generic. 

If there exists a Kisin module $\overline{\fM} \in  Y^{\lambda, \tau}(\F)$ such that $T^*_{dd}(\overline{\fM}) \cong \rhobar|_{G_{K_{\infty}}}$, then there exists a finite extension $\F'/\F$ and a semisimple Kisin module $\overline{\fM}' \in Y^{\lambda, \tau}(\F')$ such that $($after extending scalars$) 
$ $T^*_{dd}(\overline{\fM}') \cong \rhobar|_{G_{K_{\infty}}}$.  Furthermore, we can take $\F' = \F$ in case $(2)$.
\end{thm}

\begin{rmk} \label{rmk:Nonzero def ring}Theorems \ref{thm:potcris} and \ref{fixedpoints} together say that for a fixed 3-generic type $\tau$ the set of semisimple $\rhobar|_{I_K}$ which arise as reductions of potentially crystalline representations of type $(\eta, \tau)$ are in bijection with a subset of $\mathrm{Adm}^\vee((2,1,0))^f$.  In fact, it turns out this admissibility condition exactly captures those semisimple $\rhobar$ which are reductions of crystalline representations of type $(\eta, \tau)$. Checking this is equivalent to checking that the potentially crystalline deformation ring of type $(\eta,\tau)$ of the semisimple $\rhobar$ corresponding to $\tld{w}\in \mathrm{Adm}^\vee((2,1,0))^f$ is non-zero. When $\tld{w}$ is such that $\ell(\tld{w}_i) >1$ for all $i$, this non-triviality follows from \cite[Table 7]{LLLM}. When $K=\Qp$, \cite[\S 8]{LLLM} shows the non-triviality for all admissible $\tld{w}$. For general unramified $K$ and admissible $\tld{w}$, the non-triviality of the deformation ring will follow from Theorem \ref{thm:BM} below.    
\end{rmk}

\begin{proof}[Proof of Theorem \ref{fixedpoints}]
We first treat the case where $\rhobar$ is a direct sum of characters. %
Let $\cM_{dd} = \overline{\fM}[1/u']$ be an \'etale $\phz$-module with descent datum for $L'/K$ (i.e., a semilinear action of $\Delta$). 
Since there is an equivalence of categories between \'etale $\phz$-modules over $L'$ with descent datum to $K$ and $G_{K_{\infty}}$-representations (cf. \cite[pg. 24]{LLLM} for principal series case and \S 6.1 in general), if $\rhobar$ is a direct sum of characters then $\cM_{dd} = \cM_1 \oplus \cM_2 \oplus \cM_3$ where each $\cM_i$ is stable under $\phi_{\cM_{dd}}$ and the descent datum.  

Let $Y^{\lambda, \tau}_{\cM_{dd}}$ be the Kisin variety parametrizing lattices in $\cM_{dd}$ which lie in $Y^{\lambda, \tau}$. \cite[Definition 3.1]{LLLM} defines $Y^{\lambda, \tau}_{\cM_{dd}}$ in the principal series case.  In general, $Y^{\lambda, \tau}_{\cM_{dd}}$ is the closed subscheme of fixed points of $Y^{\lambda, \tau'}_{\cM_{dd}}$ under the natural action of $\sigma^f$.
The torus $T = \Gm^3$ acts on $\cM_{dd}$ by scaling individually in each factor.  
As a consequence, we get an algebraic action of $T$ on the projective variety $Y^{\lambda, \tau}_{\cM_{dd}}$.

Any such action has a fixed point over some finite extension $\F'/\F$.  
Let $\overline{\fM}' \subset Y^{\lambda, \tau}_{\cM_{dd}}(\F')$ be a $T$-fixed point.   
Let $\chi_i:T \ra \Gm$ denote the projection onto each coordinate and set $\overline{\fM}'_i \defeq {(\overline{\fM}')}^{\chi_i}$.   Then 
\begin{equation} \label{a1}
\overline{\fM}' = \overline{\fM}'_1 \oplus \overline{\fM}'_2 \oplus \overline{\fM}'_3.
\end{equation}
Since the $T$-action commutes with $\phi_{\cM_{dd}}$ and $\Delta$, each $\overline{\fM}'_i$ is stable under both, hence $\overline{\fM}'_i$ is a rank one Kisin module with descent datum.  Any choice of eigenbasis which respects this decomposition shows that $\overline{\fM}'$ is semisimple. Because $\overline{\fM}'$ is in $Y^{\lambda,\tau}(\F')$, it is semisimple with an admissible shape $\tld{w} \in \Adm^\vee(\lambda)$.

Now suppose that $\tau$ is 3-generic, but $\rhobar$ is not necessarily a direct sum of characters. 
In this case, the Kisin module $\overline{\fM}$ of type $(\eta,\tau)$ is unique by \cite[Theorem 3.2]{LLLM}. We make a base change to the unramified extension $\breve{K}/K$ of degree $6$. 
Let $\breve{\tau}$ be the base change of $\tau$ to $\breve{K}$, and let $\breve{\ovl{\fM}}=\BC(\ovl{\fM})$ be the base change of the Kisin module $\ovl{\fM}$.
Since $\breve{\ovl{\fM}}$ is the unique Kisin module of type $(\eta,\breve{\tau})$, by the above argument, it must be semisimple. 

Recall the notion of gauge basis \cite[Definition 2.22]{LLLM}. 
Fix a gauge basis $\beta$ of $\ovl{\fM}$ and let $\breve{\beta}$ be the induced gauge basis on $\breve{\ovl{\fM}}$.
The eigenbasis $\breve{\beta}_2$ for $\breve{\ovl{\fM}}$ which puts the partial Frobenii in the form as in Proposition \ref{simpleform} is also a gauge basis.   
By \cite[Theorem 4.16]{LLLM}, $\breve{\beta}$ and $\breve{\beta}_2$ differ by embedding-wise scaling by torus elements.   
We conclude that the matrices for the partial Frobenii with respect to $\breve{\beta}$ and hence $\beta$ are monomial. 
Since the only monomial matrices $\mathcal{I}(\F)\tld{w}\mathcal{I}(\F)$ are $T(\F[[v]])\tld{w}$, we see that $\ovl{\fM}$ is semisimple.

\end{proof}

\subsection{Shapes and Serre weights}
\label{sec:SandSW}

We continue to assume $\cS=\{\tld{v}\}$.
We compute $V(\rhobar|_{I_K})$ in terms of shape for a semisimple $\rhobar$.  
This will effectively allow us to determine $W^?(\rhobar)\cap \JH(\ovl{\sigma(\tau)})$ for a $3$-generic tame type $\tau$ (Proposition \ref{typematching}) via the combinatorics of \S 2 (especially Corollary \ref{cor:intersections}). 

\begin{prop} \label{typereflection} Let $(s, \mu)$ be a lowest alcove presentation of $\tau$. Let $\overline{\fM} \in Y^{[0,h], \tau}(\F')$ be semisimple of shape $\widetilde{w} = (\widetilde{w}_j) \in\tld{\un{W}}^\vee$. 
Then:
\[
V(T_{dd}^*(\overline{\fM})|_{I_K}) = R_{s \widetilde{w}^*}(\mu + \eta).
\]
\end{prop}
\begin{proof}
By Proposition \ref{prop:phiV}, $T_{dd}^*(\overline{\fM})|_{I_K} \cong \taubar(w, \nu + \eta)$ where $\tld{w} s^* t_{\mu^*} = w^* t_{\nu^*}$.   By \cite[Proposition 9.2.3]{GHS}, 
\[
V(T_{dd}^*(\overline{\fM})|_{I_K})  = R_w(\nu + \eta)
\]
Finally, an easy calculation shows that $R_{s \tld{w}^*}(\mu + \eta) = R_w(\nu + \eta)$ using Definition \ref{typeaction}.  
\end{proof}

\begin{prop} \label{typematching}  
Let $\tau$ be a $3$-generic type with lowest alcove presentation $(s, \mu)$
and let $\overline{\fM} \in Y^{[0,h], \tau}(\F')$ be semisimple of shape $\widetilde{w} = (\widetilde{w}_j) \in\tld{\un{W}}^\vee$.
Let $\rhobar:G_{K} \ra \GL_3(\F)$ be semisimple and $3$-generic and assume that $\rhobar|_{G_{K_\infty}}\cong T^*_{dd}(\ovl{\fM})$. Then
$$
W^?(\rhobar, \tau) \defeq 
W^?(\rhobar)\cap \JH(\ovl{\sigma(\tau)})=
 \{\sigma_{(\omega,a)}^{(s, \mu + \eta)}: (\omega,a) \in \Sigma_{\widetilde{w}^*}\} \subset \JH(\overline{R_{s} (\mu + \eta)})
$$   
\end{prop}
\begin{proof}
This follows from Corollary \ref{cor:intersections} and Proposition \ref{typereflection}. %
\end{proof}

\begin{rmk} \label{rmk:expadj} There is an explicit list of elements of $\Adm^\vee(2,1,0)$ given in \cite[Table 1]{LLLM}.  The effect of the involution $\tld{w}\mapsto \tld{w}^*$ is, in addition to reversing the order of components, to reverse the order of the word and to turn $\gamma = (13) v^{(1,0, -1)}$ into $\gamma^+$.  In particular, $(\cdot)^*$ defines a bijection between $\Adm^\vee(2,1,0)$ and $\Adm(2,1,0)$.   
\end{rmk}

\subsubsection{Type elimination results}

We assume throughout that $\rhobar:G_K \ra \GL_3(\F)$ is 6-generic.%

\begin{prop} \label{telim} Let $\tau$ be a $1$-generic tame inertial type.  
If $\rhobar$ is $6$-generic and arises as the reduction of a potentially crystalline representation of type $(\eta, \tau)$  then 
$$
 W^?(\rhobar^{\mathrm{ss}}, \tau) \neq \emptyset.
$$
\end{prop}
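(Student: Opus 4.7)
The plan is to trace the hypothesis through the Kisin-module classification of \S \ref{Semisimple Kisin modules} and then read off non-emptiness directly from the combinatorial Table \ref{Intersections}; this mirrors the weight-elimination strategy of \cite{LLLM}.

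First, I would reduce to the semisimple case via Lemma \ref{reducetoss}: possibly after enlarging $\F$, the semisimplification $\rhobar^{\mathrm{ss}}$ also admits a potentially crystalline lift of type $((2,1,0),\tau)$. Applying Theorem \ref{thm:potcris} produces a Kisin module $\overline{\fM} \in Y^{\eta',\tau}(\F')$ of some shape $\bf{w} \in \Adm^{-}(\eta')$ with $T^*_{dd}(\overline{\fM}) \cong \rhobar^{\mathrm{ss}}|_{G_{K_\infty}}$. Next, because $\rhobar^{\mathrm{ss}}$ is completely reducible, Theorem \ref{fixedpoints} supplies a \emph{semisimple} Kisin module $\overline{\fM}'$ of shape $\bf{w}' \in \Adm^{-}(\eta')$ realizing the same Galois representation on $G_{K_\infty}$; the shape $\bf{w}'$ need not coincide with $\bf{w}$, but both lie in $\Adm^{-}(\eta')$.

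Applying Proposition \ref{typematching} to $\overline{\fM}'$ then yields the explicit description
\[
W^{?}(\rhobar^{\mathrm{ss}}, \taubar) = \left\{ \sigma^{(s_{\tau,\mu}^{*},\, s^{*}(\mu))}_{(\omega,a)} : (\omega_j,a_j) \in \Sigma_{\tld{u}_j} \right\},
\]
where $\tld{u} := (\bf{w}')^{*}$ is the adjoint of $\bf{w}'$ in the sense of Definition \ref{affineadjoint}. By Remark \ref{rmk:expadj} the involution $(\cdot)^{*}$ maps $\Adm^{-}(\eta')$ bijectively onto $\Adm^{+}(\eta')$, so each $\tld{u}_j \in \Adm^{+}(\eta')$, which is identified with $\Adm^{+}(\eta)$ via the convention of \S \ref{subsec:comb}. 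An inspection of Table \ref{Intersections} (one representative per outer-automorphism orbit, the remaining cases recovered from the rotational symmetry of $\Sigma_0$) shows that $\Sigma_{\tld{v}} \neq \emptyset$ for every $\tld{v} \in \Adm^{+}(\eta)$. Hence $\prod_j \Sigma_{\tld{u}_j} \neq \emptyset$, and the proposition follows.

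The hard part will be verifying that the 3-genericity of $\tau$ demanded by Proposition \ref{typematching} follows from the hypotheses rather than being an extra assumption: the 6-genericity of $\rhobar$ directly controls $V(\rhobar^{\mathrm{ss}}|_{I_K})$, but not $\tau$ \textit{per se}. Using Proposition \ref{typereflection}, the admissibility $\bf{w} \in \Adm^{-}(\eta')$ bounds in $\tld{\un{W}}$ the discrepancy between the orientation data of $\tau$ and of $V(\rhobar^{\mathrm{ss}}|_{I_K})$ by a translation comparable to $\eta'$, so the 6-depth of $V(\rhobar^{\mathrm{ss}}|_{I_K})$ leaves enough slack to force $s^{*}(\mu) - \eta$ to be at least 3-deep in $\un{C}_0$, i.e.\ $\tau$ to be 3-generic.
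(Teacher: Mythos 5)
Your overall strategy is the right one, but there is a genuine logical gap in how you invoke Theorem \ref{fixedpoints}. You assert without justification that ``$\rhobar^{\mathrm{ss}}$ is completely reducible'' and apply the first branch of Theorem \ref{fixedpoints}. In this paper ``completely reducible'' means a direct sum of \emph{three characters} (see the proof of Theorem \ref{fixedpoints}, where $\cM_{dd} = \cM_1 \oplus \cM_2 \oplus \cM_3$ with each $\cM_i$ of rank one and stable under $\phi$ and the descent datum). A semisimple $\rhobar^{\mathrm{ss}}$ of niveau $2$ or $3$ is \emph{not} completely reducible in this sense, so the hypothesis of the first branch simply fails for it. The second branch of Theorem \ref{fixedpoints}, which applies to a general semisimple $\rhobar$, requires the additional hypothesis that $\tau$ is $3$-generic. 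You acknowledge at the end that the $3$-genericity of $\tau$ needs to be verified, but you treat it as an after-the-fact check rather than a prerequisite: you have already used Theorem \ref{fixedpoints} (on a possibly non-completely-reducible $\rhobar^{\mathrm{ss}}$) and Proposition \ref{typematching} (which itself also requires $\tau$ to be $3$-generic) before the genericity of $\tau$ has been established. As written, the argument is circular whenever $\rhobar^{\mathrm{ss}}$ has higher niveau.

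The fix, and what the paper actually does, is to decouple the two cases up front. If $\tau$ is $3$-generic, run your argument using the second branch of Theorem \ref{fixedpoints}, Proposition \ref{typereflection} (or \ref{typematching}), and Corollary \ref{cor:intersections}/Table \ref{Intersections}. If $\tau$ is \emph{not} $3$-generic, then by Proposition \ref{telim2} (which is the precise version of the ``slack'' argument you gesture at, and which is proved independently by base-changing to a niveau-$1$ setting before it ever reaches a semisimple Kisin module) a $6$-generic $\rhobar$ cannot arise as the reduction of a potentially crystalline representation of type $(\eta',\tau)$ at all, so the hypothesis of Proposition \ref{telim} is vacuous and there is nothing to prove. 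Note that your proposed route to $3$-genericity of $\tau$ (``using Proposition \ref{typereflection}'') cannot work as stated, since Proposition \ref{typereflection} speaks about \emph{semisimple} Kisin modules, whose existence here is exactly what is contingent on the $3$-genericity of $\tau$ that you are trying to establish. The remainder of your proof --- passing to $\rhobar^{\mathrm{ss}}$ by Lemma \ref{reducetoss}, producing a shape in $\Adm^-(\eta')$ by Theorem \ref{thm:potcris}, applying $(\cdot)^*$ to land in $\Adm^+(\eta)$ and then reading off non-emptiness from Table \ref{Intersections} --- is correct and matches the paper.
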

\begin{proof}
First, assume that $\tau$ is 3-generic.  By Theorem \ref{thm:potcris} combined with  Lemma \ref{reducetoss}, there exists $\overline{\fM} \in  Y^{\eta, \tau}(\F)$ such that $T^*_{dd}(\overline{\fM}) \cong \rhobar^{\mathrm{ss}}|_{G_{K_{\infty}}}$.   
In fact, by Theorem \ref{fixedpoints}, we can take $\overline{\fM}$ to be semisimple of shape $\tld{w} = (\widetilde{w}_j) \in \Adm^\vee(\eta)$.  
By Proposition \ref{typematching} and Table \ref{Intersections}, we conclude that 
$$
W^?(\rhobar^{\mathrm{ss}}, \tau) \neq \emptyset.
$$
If $\tau$ is not 3-generic, then Proposition \ref{telim2} below shows that $\rhobar$ does not arise as the reduction of potentially crystalline representation of type $(\eta, \tau)$ for any such $\tau$. 
\end{proof}

\begin{prop} \label{telim2}  
Let $n\geq 4$ and assume that $\rhobar:G_K \ra \GL_3(\F)$ is $n$-generic.
Assume that $\rhobar$ arises as the reduction of a potentially crystalline representation of type $(\eta, \tau)$ where $\tau$ is a $1$-generic tame inertial type.
Then $\tau$ is $(n-3)$-generic.
\end{prop}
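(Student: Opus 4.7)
The plan is to reduce to the semisimple case, extract the shape of a semisimple Kisin module realizing $\rhobar^{\mathrm{ss}}|_{G_{K_\infty}}$, and compare the resulting Deligne--Lusztig parameter to the one forced by the $n$-genericity of $\rhobar$.

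First, by Lemma \ref{reducetoss}, $\rhobar^{\mathrm{ss}}$ also admits a potentially crystalline lift of type $(\eta', \tau)$. Theorem \ref{thm:potcris} then supplies a Kisin module in $Y^{\eta', \tau}(\F')$ of shape in $\Adm^-(\eta')$ realizing $\rhobar^{\mathrm{ss}}|_{G_{K_\infty}}$, and since $\rhobar^{\mathrm{ss}}$ is completely reducible, the first case of Theorem \ref{fixedpoints} (which requires no genericity of $\tau$) produces a \emph{semisimple} such Kisin module $\overline{\fM}$ of some shape $\widetilde{w} = w\, t_\nu \in \Adm^-(\eta')$. Parametrizing $\tau \cong \tau((s_\tau, 1, \ldots, 1), -\mu)$ and writing $\tau \cong R_{s_{\tau,\mu}^*}(s^*(\mu))$ with $s^*(\mu) - \eta \in \un{C}_0$ via Proposition \ref{orienttype}, Proposition \ref{typereflection} then gives
\[
V(\rhobar^{\mathrm{ss}}|_{I_K}) \cong R_{s_{\tau,\mu}^* w^*}\bigl(s^*(\mu) + s_{\tau,\mu}^*(\nu^*)\bigr)
\]
after unpacking $\widetilde{w}^* = t_{\nu^*} w^*$ and the identity $R_{\widetilde{w}}(\lambda) = R_w(\lambda + \widetilde{w}(0))$.

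Next, the $n$-genericity of $\rhobar$ provides another presentation $V(\rhobar^{\mathrm{ss}}|_{I_K}) \cong R_{s'}(\mu')$ with $\mu' - \eta$ being $n$-deep in $\un{C}_0$. The twisted Weyl equivalence $R_w(\lambda) \cong R_{vw\pi(v)^{-1}}(v\lambda)$ for $v\in\un{W}$ (\cite[Lemma 4.2]{herzig-duke}), combined with the rigidity of generic Deligne--Lusztig parameters (which rules out an unwanted $(p^f-1)X^*(\un{T})$-shift, since both candidates have coroot pairings bounded well below $p^f-1$), will allow me to identify $\mu' = v\bigl(s^*(\mu) + s_{\tau,\mu}^*(\nu^*)\bigr)$ for some $v \in \un{W}$.

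Finally, the admissibility $\widetilde{w} \in \Adm^-(\eta')$ forces each $\nu_i$ to lie in the convex hull of $\un{W} \cdot (2,1,0)$, so $|\langle s_{\tau,\mu}^*(\nu^*)_i, \beta^\vee \rangle| \leq 2$ for all $i$ and every positive root $\beta$. Translating the $n$-depth bounds $n < \langle \mu'_i, \alpha^\vee \rangle < p - n$ along $v_i^{-1}$ (which permutes roots up to sign), and using the strict positivity $\langle s^*(\mu)_i, \beta^\vee \rangle > 0$ (because $s^*(\mu) - \eta \in \un{C}_0$) together with the hypothesis $n \geq 4$ to discard the sign-reversal case (in which one would get $\langle s^*(\mu)_i, \beta^\vee \rangle \leq -2$), yields $n - 2 < \langle s^*(\mu)_i, \beta^\vee \rangle < p - n + 2$. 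Hence $s^*(\mu) - \eta$ is $(n-3)$-deep in $\un{C}_0$, i.e., $\tau$ is $(n-3)$-generic. The main obstacle is the rigidity step in the third paragraph: namely, checking that no $(p^f-1)$-shift arises when matching the two presentations of $V(\rhobar^{\mathrm{ss}}|_{I_K})$. This should follow from the fact that $s_{\tau,\mu}^*(\nu^*)$ is bounded by $2$ independently of $p$, while both presentations live well inside a fundamental domain of width $p^f - 1$.
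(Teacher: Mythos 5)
There are two genuine gaps in your argument.

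\textbf{First gap: the reduction to a semisimple Kisin module is circular without base change.} You invoke the first case of Theorem~\ref{fixedpoints} on the grounds that ``$\rhobar^{\mathrm{ss}}$ is completely reducible.'' But in the paper's usage, ``completely reducible'' means a direct sum of \emph{characters} (the étale $\phi$-module decomposes as a sum of three rank-one pieces); a semisimple $\rhobar$ of niveau $2$ or $3$ is not completely reducible in this sense. For such $\rhobar^{\mathrm{ss}}$ the only remaining branch of Theorem~\ref{fixedpoints} assumes $\tau$ is $3$-generic, which is exactly what you are trying to prove. The paper sidesteps this by first base changing to an unramified extension so that $\tau$ becomes principal series and $\rhobar$ becomes niveau $1$ (genericity is insensitive to such base changes); only then is the ``completely reducible'' case available unconditionally. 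That base change is not cosmetic; without it your argument is circular.

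\textbf{Second gap: the ``rigidity'' step is aimed at the wrong ambiguity.} When two Deligne--Lusztig presentations $R_{s_1}(\lambda_1)\cong R_{s_2}(\lambda_2)$ agree, the equivalence is \emph{not} merely a Weyl twist up to a $(p^f-1)X^*(\un{T})$-shift. The equivalence relation (the formula before (4.1) in \cite{herzig-duke}, exactly as the paper invokes it) also allows a shift of the form $p\pi^{-1}\nu'-s_2\nu'$ for $\nu'\in X^*(\un{T})$, $\sigma\in\un{W}$. This shift is of size roughly $p|\nu'|$, not $p^f-1$, and cannot be ruled out just because both weights live in a fundamental domain of width $p^f-1$: two lowest-alcove presentations can genuinely differ by a nonzero $\nu'$. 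The paper's proof is organized around exactly this point: it uses the $n$-depth hypothesis together with the bound $|\langle\lambda+\eta',\alpha^\vee\rangle|\le p+2$ to deduce $(p-1)|\langle\nu',\alpha^\vee\rangle|<2p+2-n$, which forces $|\langle\nu',\alpha^\vee\rangle|<2$ precisely when $n\ge 4$. The residual $\pm 1$ contribution from $\nu'$, combined with the $\pm 2$ contribution from the shape translation, is where the $3$ in the conclusion ``$(n-3)$-generic'' actually comes from. Your argument, which assumes $\nu'=0$, would spuriously yield $(n-2)$-genericity, and there is no justification for $\nu'=0$; the correct statement is only that $\nu'$ is bounded, and establishing that bound is the substance of the proof you omitted.
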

\begin{proof} 
By Lemma \ref{reducetoss}. we can assume that $\rhobar$ is semisimple.
The result follows now from \cite[Proposition 3.3.2]{LLL}. 
\end{proof}

\subsection{Serre weight conjectures}
\label{sec:SWC}

We are now ready to prove an abstract version of the Serre weight conjecture as well as a numerical Breuil--M\'ezard statement.
In this section, we allow $\cS$ to have arbitrary (finite) cardinality.
\subsubsection{Setup and summary of results}
\label{sub:sub:setup.}

Recall from \S \ref{subsection:Notation} that $\cS$ is a finite set and $F_{\tld{v}}$ is a finite extension of $\QQ_p$ for each $\tld{v}\in \cS$, and $k_{\tld{v}}$ is the residue field of $F_{\tld{v}}$.
For Definition \ref{minimalpatching} below, we will not require that $F_{\tld{v}}$ an unramified extension of $\Q_p$.
In applications, $F$ will be a fixed global field, $\cS$ will be a set of places in $F$, and $F_{\tld{v}}$ will be the completion at $\tld{v}$ of $F$.
Let $\rhobar_\cS$ be a collection $(\rhobar_{\tld{v}})_{\tld{v}\in \cS}$ where $\rhobar_{\tld{v}}:G_{F_{\tld{v}}} \ra \GL_3(\F)$ is a continuous Galois representation.
Let $R_{\widetilde{v}}^\square=R_{\rhobar_{\widetilde{v}}}^\square$ denote the unrestricted universal framed deformation ring of $\rhobar_{\tld{v}}$.
Fix a natural number $h$ and let 
\[R_\infty = \Big( \widehat{\underset{\tld{v}\in \cS}{\bigotimes}} R_{\widetilde{v}}^\square \Big)[\![x_1,x_2,\ldots, x_h]\!] \textrm{ and } X_\infty = \Spf R_\infty.\]

If $\tau_{\widetilde{v}}$ is an inertial type for $G_{F_{\widetilde{v}}}$, then let $R_{\widetilde{v}}^{\tau_{\widetilde{v}}}=R_{\rhobar_{\widetilde{v}}}^{\tau_{\widetilde{v}}}$ be the universal framed deformation ring of $\rhobar_{\tld{v}}$ of inertial type $\tau_{\widetilde{v}}$ and (parallel) Hodge--Tate weights $(2,1,0)$.
If $\tau_\cS = (\tau_{\widetilde{v}})_{\tld{v}\in \cS}$, then let
\[R_\infty(\tau_\cS) = \widehat{\underset{\tld{v}\in \cS}{\bigotimes}} R_{\tld{v}}^{\tau_{\tld{v}}}\otimes_{\widehat{\underset{\tld{v}\in \cS}{\bigotimes}} R_{\widetilde{v}}^\square} R_\infty \textrm{ and } X_\infty(\tau_\cS) = \Spf R_\infty\,(\tau_\cS).\]
Let $d+1$ be the dimension of $X_\infty(\tau_\cS)$ (the dimension is independent of $\tau_\cS$ by \cite[Theorem 3.3.4]{KisinPSS}).
We denote by $\overline{R}_{\widetilde{v}}^\square$, $\overline{R}_\infty$, etc. the reduction of these objects modulo $\varpi$.

Let $K_{\tld{v}}$ be the group $\GL_3(\cO_{F_{\tld{v}}})$ and $K$ be the product $\prod_{\tld{v}\in \cS} K_{\tld{v}}$.
Results towards the inertial local Langlands correspondence (cf.~ Proposition \ref{prop:basic:Ktypes}(\ref{item2:basic:Ktypes})) associate a $\ovl{\Q}_p$-valued $K_{\tld{v}}$-representation $\sigma(\tau_{\tld{v}})$ to a $1$-generic tame inertial type $\tau_{\tld{v}}$ (and $\sigma(\tau_{\tld{v}})$ can be realized over $\cO$).

\begin{defn}\label{minimalpatching}
Let $\Rep_{K}(\cO)$ denote the category of continuous $K$-representations over finitely generated $\cO$-modules and $\Mod(X_\infty)$ the category of coherent sheaves over $X_\infty$.

A \emph{weak minimal patching functor for $\rhobar_{\cS} = (\rhobar_{\tld{v}})_{\tld{v}\in\cS}$} is defined to be a nonzero covariant exact functor 
$M_{\infty}:\Rep_{K}(\cO)\ra \Coh(X_{\infty})$ satisfying the following axioms:
\begin{enumerate}
	\item For each $\tld{v}\in\cS$, let $\tau_{\tld{v}}$ be an inertial type for $G_{F_{\tld{v}}}$. Let $\sigma(\tau_\cS)$ be the $K$-representation $\underset{\tld{v}\in\cS}{\bigotimes}\sigma(\tau_{\tld{v}})$. 
If $\sigma(\tau_\cS)^{\circ}$ is an $\cO$-lattice in $\sigma(\tau_\cS)$, then $M_{\infty}(\sigma(\tau_\cS)^{\circ})$ is $p$-torsion free and is maximally Cohen--Macaulay over $R_\infty(\tau_\cS)$; \label{support}
	\item If $M_\infty(R_1(\mu))$ is nonzero, then $\rhobar_{\tld{v}}$ has a semistable lift of type $\tau_{\tld{v}}(1,\mu_{\tld{v}})$ for all $\tld{v}\in \cS$. \label{item:semistable}
	\item if $\sigma$ is an irreducible $\prod_{\tld{v}\in\cS}\GL_3(k_{\tld{v}})$-representation over $\F$, the module $M_{\infty}(\sigma)$ is either $0$ or maximal Cohen--Macaulay over its support, which is equidimensional of dimension $d$; and \label{dimd}
	\item the locally free sheaf $M_\infty(\sigma(\tau_\cS)^\circ)[1/p]$ (being maximal Cohen--Macaulay over the regular ring $R_\infty(\tau_\cS)[\frac{1}{p}]$) has rank at most one on each connected component. %
\end{enumerate}
\end{defn}

Assume that a weak minimal patching functor $M_\infty$ for $\rhobar_\cS$ exists. 
Following \cite[Definition 3.2.6]{GHS}, let $W^{\mathrm{BM}}(\rhobar_{\cS})$ be the set of irreducible $\rG$-representations (recall that $\rG \defeq \underset{\tld{v}\in \cS}{\prod} \GL_3(k_{\tld{v}})$) $\sigma$ over $\F$ such that $M_\infty(\sigma)$ is nonzero (note that \emph{a priori} this set depends on the choice of $M_{\infty}$). 
For a finitely generated $R_\infty$-module $M$ with scheme-theoretic support $\Spec A$ of dimension at most $d$, define $e(M)$ to be $d!$ times the coefficient of the degree $d$-term of the Hilbert polynomial of $M$ (considered as an $A$-module).
In particular $e(M)$ is the Hilbert--Samuel multiplicity of $M$ as an $A$ module when $\dim A=d$, and is $0$ otherwise.

We now assume that for all $\tld{v}\in \cS$, $F_{\tld{v}}$ is an unramified extension of $\Q_p$.
Recall Definition \ref{defn:serrewts} of $W^?(\rhobar_\cS)$.

\begin{thm} \label{SWC}
Suppose that $\rhobar_\cS$ is semisimple and $10$-generic, and that $M_\infty$ is a weak patching functor for $\rhobar_{\cS}$.
Then for all Serre weights $\sigma$, $e(M_\infty(\sigma)) = 1$ if $\sigma \in W^?(\rhobar_{\cS})$ and $e(M_\infty(\sigma)) = 0$ otherwise.
In particular, $W^{\mathrm{BM}}(\rhobar_{\cS}) = W^?(\rhobar_{\cS})$.
\end{thm}
\begin{thm} \label{thm:BM}\label{thm:conngenfib} 
Let $K/\Q_p$ be a finite unramified extension of degree $f$.
Let $\rhobar:G_K \rightarrow \GL_3(\F)$ be a continuous, $10$-generic, and semisimple Galois representation, and let $\tau$ be a tame inertial type. 
If $\tau$ is not $1$-generic, $R_{\rhobar}^{\tau}$ is $0$.
If $\tau$ is $1$-generic, then the number of irreducible components of $\overline{R}_{\rhobar}^{\tau}\defeq R_{\rhobar}^{\tau}/\varpi$ is equal to the number of elements in $W^?(\rhobar, \tau)$.
The ring $R_{\rhobar}^{\tau}$ is a normal domain and is Cohen--Macaulay.
Moreover $\ovl{R}_{\rhobar}^{\tau}$ is reduced and its components are formally smooth of the same dimension.
\end{thm}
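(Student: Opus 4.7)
The plan is to combine the moduli-theoretic analysis of tamely potentially crystalline deformation rings via Kisin modules with descent data, developed in \cite{LLLM} and refined in Sections~\ref{Semisimple Kisin modules}--\ref{sec:SandSW}, with the numerical Breuil--M\'ezard input furnished by Theorem~\ref{SWC}.

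First, I would establish the structural statements (Cohen-Macaulayness, normality, and the domain property of the generic fiber) through a local model argument. By \cite{LLLM}, the moduli $Y^{\eta',\tau}$ of Kisin modules of type $(\eta',\tau)$, up to a formally smooth cover given by the choice of an eigenbasis, is cut out by matrix equations $A^{(j)} \in \mathcal{I} \tld{w}_j \mathcal{I}$ with $\tld{w}_j \in \Adm^-(\eta')$. After the gauge-fixing procedure of \emph{loc.~cit.}, these become explicit polynomial equations that identify, embedding by embedding, our deformation ring with an open subscheme of a product of affine Schubert varieties of $\GL_3$ inside the affine flag variety. Such Schubert varieties are classically known to be normal, Cohen-Macaulay, with reduced special fiber and with smooth, geometrically irreducible generic fiber. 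Transporting these properties through the smooth cover yields that $R_{\rhobar}^{\tau}$ is normal and Cohen-Macaulay, and that $R_{\rhobar}^{\tau}[1/p]$ is smooth and geometrically connected, hence a domain.

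Second, to determine the number of irreducible components of the special fiber, I would globalize $\rhobar$ (standard under our genericity hypotheses) to produce a weak minimal patching functor $M_\infty$ as in the setup of \S\ref{sec:SWC}. By Theorem~\ref{SWC}, $e(M_\infty(\sigma)) = 1$ for $\sigma \in W^?(\rhobar, \taubar)$ and $e(M_\infty(\sigma)) = 0$ for $\sigma \in \JH(\taubar)\setminus W^?(\rhobar, \taubar)$. Since $\taubar$ has multiplicity-free Jordan--H\"older series (Proposition~\ref{prop:typedecomp}), additivity of Hilbert--Samuel multiplicities across the JH filtration of a lattice $\tau^\circ$ yields
\[
e\bigl(M_\infty(\tau^\circ)/\varpi\bigr) \;=\; \sum_{\sigma \in \JH(\taubar)} e(M_\infty(\sigma)) \;=\; |W^?(\rhobar, \taubar)|.
\]
Combining this with the first step --- $R_{\rhobar}^{\tau}$ Cohen-Macaulay with reduced special fiber, and by minimality axiom~(\ref{minimal}) the sheaf $M_\infty(\tau^\circ)[1/p]$ having rank exactly one on each component of $R_\infty(\tau)$ --- this translates to $e(\overline{R}_{\rhobar}^{\tau}) = |W^?(\rhobar, \taubar)|$, which equals the number of irreducible components of the (reduced) special fiber.

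The main obstacle is the local-model step. One must verify, in the generality required, that the gauge-fixing of \cite{LLLM} yields a formally smooth cover of $\Spf R_{\rhobar}^{\tau}$ by an explicit union of affine Schubert-type schemes, and that the shape stratification by $\tld{w} \in \Adm^-(\eta')$ supplies a scheme-theoretic (rather than merely set-theoretic) irreducible decomposition of the special fiber, so that normality, reducedness of the special fiber, and connectedness of the generic fiber all transport cleanly through the smooth cover. Once this is in place, the numerical comparison above is routine.
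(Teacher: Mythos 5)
Your second step---using the patching functor and Theorem~\ref{SWC} to compute the Hilbert--Samuel multiplicity as $e(\overline{R}^{\tau}_{\rhobar}) = |W^?(\rhobar,\taubar)|$---is sound and is part of what the paper does. The problem is the first step, and correcting it also changes the logical structure of the whole argument.

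The identification of $\Spf R_{\rhobar}^{\tau}$ with an open in a product of affine Schubert varieties is not correct. The moduli $Y^{\eta',\tau}$ of Kisin modules with height in $[0,2]$ (the ``height'' conditions $A^{(j)} \in \mathcal{I}\tld{w}_j\mathcal{I}$) is indeed locally modeled on products of Schubert varieties in the affine Grassmannian for $\GL_3$, but $R_{\rhobar}^{\tau}$ imposes in addition the \emph{monodromy condition} $P_N(A^{(j)})|_{v=-p}=0$ (the superscript $\nabla$ on $\overline{R}^{\expl,\nabla}_{\ovl{\fM},\bf{w}}$ in \cite{LLLM}). These are genuine extra equations, not a choice of open locus: the explicit presentations in \cite[\S 5.3, Table 7, and \S 8]{LLLM} begin from the Schubert-cell parametrization and then cut out a strictly smaller subscheme. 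The Cohen--Macaulay, reduced, and normality properties of these rings are obtained by direct computation with those presentations (\cite[Propositions 8.5, 8.12]{LLLM}), not inherited from Schubert-variety geometry, and they are for a candidate ring $\tld{R}$ that \emph{a priori} only bounds $\overline{R}^{\tau}_{\rhobar}$ from above.

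This matters for two reasons. First, once you no longer have a smooth parametrization by a geometrically irreducible variety, you lose the purely local route to the connectedness of $R_{\rhobar}^{\tau}[1/p]$: for shapes $\tld{w}_j$ of length $<2$ the explicit ring $\tld{R}$ already has several irreducible components in its special fiber, and whether the generic fiber of the actual deformation ring is connected is precisely the content that the paper (and \cite{LLLM}) gets \emph{globally}, via the patching functor --- it is not a local computation. Second, even the component count in the remaining (short) shape cases is not a straightforward matching of $e$ with the number of components; the paper uses a sandwich (\cite[Lemma 8.8]{LLLM}): the explicit presentation of $\tld{R}$ gives an upper bound on the number of components of $\overline{R}^{\tau}_{\rhobar}$, while the weak minimal patching functor together with Theorem~\ref{SWC} and the cycle-theoretic argument of \cite[Proposition 7.14]{LLLM} gives a matching lower bound, and equality then forces $\overline{R}^{\tau}_{\rhobar}$ (up to formal smoothness) to coincide with $\tld{R}$. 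That identification is what transports reducedness, Cohen--Macaulayness, and normality to $R_{\rhobar}^{\tau}$; it is an output of the patching argument, not an input to it. Filling this in requires replacing your Schubert-variety claim with the monodromy-corrected local computations of \cite[\S 5.3, \S 8]{LLLM} and incorporating the local/global sandwich.
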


The proofs of Theorems \ref{SWC} and \ref{thm:BM} appear in \S \ref{subsubsec:proofs}.

\subsubsection{Types, weights, and the Breuil--M\'ezard philosophy}
In this subsection, we describe the basic inductive argument towards the proofs of Theorems \ref{SWC} and \ref{thm:BM}.

Recall that we have a length function $\ell: \tld{W}^\vee\rightarrow \Z$. 

\begin{lemma} \label{lemma:BM}
Let $K/\Q_p$ be a finite unramified extension of degree $f$.
Let $\rhobar:G_K \rightarrow \GL_3(\F)$ be a semisimple Galois representation.	
Let $\tau$ be $5$-generic with lowest alcove presentation $(s, \mu)$.  Assume that $V(\rhobar|_{I_K}) = R_{s \widetilde{w}^*}(\mu + \eta)$ where $\tld{w} = \tld{w}(\rhobar, \tau) = (\widetilde{w}_0,\ldots, \widetilde{w}_{f-1})\in\Adm^{\vee}(\eta)$ with $\ell(\tld{w}_j)>1$ for every $j$. Then $\overline{R}_{\rhobar}^{\tau}\neq 0$ and the number of irreducible components of $\overline{R}_{\rhobar}^{\tau}$ is equal to the number of elements in $W^?(\rhobar, \tau)$.
The ring $R_{\rhobar}^{\tau}$ is a normal domain and is Cohen--Macaulay.
Moreover $\ovl{R}_{\rhobar}^{\tau}$ is reduced and its components are formally smooth of the same dimension.
\begin{proof}
By Proposition \ref{typereflection}, our hypotheses implies that there exists a Kisin module $\overline{\fM}_{\rhobar}\in Y^{\eta,\tau}(\F)$ such that $T^*_{dd}(\overline{\fM}_{\rhobar}) \cong \rhobar|_{G_{K_{\infty}}}$, and that $\tld{w}=\tld{w}(\rhobar,\tau)$.
In \cite[\S 5.3, \S 6]{LLLM}, we gave an explicit presentation for $R_{\rhobar}^{\tau}$ whenever the length of $\widetilde{w}_j$ is at least 2 and the type $\tau$ is $5$-generic.  
For each shape, there were at most two cases depending on $\overline{\fM}_{\rhobar}$.  By Proposition \ref{unramtwist} and Proposition \ref{fixedpoints}, whenever $\rhobar$ is semisimple, $\overline{\fM}_{\rhobar}$ is semisimple in the sense of Definition \ref{defn:shape} and so $\overline{\fM}_{\rhobar}$ lies in the special locus for the given shape.  

We claim that 
\[
\# \Irr\big(\Spec(\overline{R}_{\rhobar}^{\tau})\big) = \prod_j  2^{4 - \ell(\widetilde{w}_j)}
\]
and that $\overline{R}_{\rhobar}^{\tau}$ is reduced and Cohen--Macaulay. 
Indeed, from Table 7 in \emph{loc.~cit.}~,  we see that $\overline{R}_{\rhobar}^{\tau}$ is the completion of a tensor product over $\bF$ of rings of the form $ \bF[X]$, $\bF[X,Y]/XY$ at the maximal ideal generated by the variables $X,Y$. Since such a tensor product is Cohen-Macaulay, so is its completion by \cite[\href{https://stacks.math.columbia.edu/tag/07NX}{Tag 07NX}]{stacks-project}. Since such a tensor product is reduced and excellent (being a finite type $\bF$-algebra), its completion is reduced by \cite[\href{https://stacks.math.columbia.edu/tag/07NZ}{Tag 07NZ}]{stacks-project}.
Finally, we also note that since each irreducible components of such a tensor product is smooth, the completion of the irreducible components stay irreducible and are formally smooth.

This implies the remaining properties (see the proof of \cite[Corollary 8.9]{LLLM}. Note that the reducedness of $\overline{R}_{\rhobar}^{\tau}$ implies ${R}_{\rhobar}^{\tau}$ is a normal domain).

Finally, we compare this with the size of $W^?(\rhobar, \tau)$, which is $\prod_{j\in \cJ} \#\Sigma_{(\widetilde{w}^*_j)}$  by Proposition \ref{typematching} using Table \ref{Intersections}.%
\end{proof}
\end{lemma}

\begin{cor}\label{cor:globalBM}
Suppose that for each $\tld{v}\in \cS$, $\rhobar_{\tld{v}}:G_{F_{\tld{v}}} \rightarrow \GL_3(\F)$ is a semisimple Galois representation and $\tau_{\tld{v}}$ is a $5$-generic tame inertial type satisfying the hypotheses of Lemma \ref{lemma:BM}.
Let $\tau_\cS$ be $(\tau_{\tld{v}})_{\tld{v}\in \cS}$.
Then $e(\ovl{R}_\infty(\tau_\cS))$ is equal to $\# W^?(\rhobar_\cS, \tau_\cS)$.
\end{cor}
\begin{proof}
This follows immediately from Lemma \ref{lemma:BM} and \cite[Lemma 2.2.14]{EG} and properties of tensor products of representations.
\end{proof}

\begin{prop}[Weight elimination] \label{WE}  
Suppose that for each $\tld{v}$, $\rhobar_{\tld{v}}:G_{F_{\tld{v}}} \ra \GL_3(\F)$ is a $10$-generic Galois representation.   
Then
$$
W^{\mathrm{BM}}(\rhobar_{\cS}) \subset W^?(\rhobar^{\mathrm{ss}}_{\cS}).
$$
\end{prop} 
\begin{proof}
Suppose that $\sigma$ is a Serre weight such that $\sigma \in W^{\mathrm{BM}}(\rhobar_\cS) \setminus W^?(\rhobar_\cS^{\mathrm{ss}})$.
Note that $W^{\mathrm{BM}}(\rhobar_\cS)$ satisfies the (evident generalization of) \cite[Axiom (WE)]{Enns} by \ref{minimalpatching}(\ref{item:semistable}).
By \cite[Theorem 8]{Enns} and \cite[Remark 2.2.8]{LLL}, $\rhobar_\cS$ is $10$-generic implies that $\sigma$ is $3$-generic.
Then $\sigma\in\JH(\ovl{\sigma(\tau_\cS)})$ for some collection $\tau_\cS\defeq (\tau_{\tld{v}})_{\tld{v}\in\cS}$ of $1$-generic tame types (for example taking a tame principal series type containing $\sigma$). 
Since $\rhobar_\cS$ is $10$-generic, we see by Proposition \ref{telim2} and Definition \ref{minimalpatching}(\ref{support}) that $M_\infty(\ovl{\sigma(\tau_\cS)})\neq 0$ implies that $\tau_\cS$ is $7$-generic.
By Lemma \ref{lem:deep:type} below (whose proof only uses modular representation theory and is independent of the results in \S \ref{sec:weights}), $\sigma$ is $5$-deep.

By Corollary \ref{separateweights}, $\tau_\cS$ above can be taken so that $W^?(\rhobar^{\mathrm{ss}}_\cS,\tau_\cS) = \emptyset$ and $\sigma\in \JH(\ovl{\sigma(\tau_\cS)})$.
By Proposition \ref{telim}, $\rhobar^{\mathrm{ss}}_{\tld{v}}$ is not the reduction of a potentially crystalline representation of type $(\eta_{\tld{v}}, \tau_{\tld{v}})$ for some $\tld{v}\in \cS$.
By Proposition \ref{reducetoss}, $\rhobar_{\tld{v}}$ is also not the reduction of a potentially crystalline representation of type $(\eta_{\tld{v}}, \tau_{\tld{v}})$ for some $\tld{v}\in \cS$.
Then $X_\infty(\tau_\cS) = \emptyset$, and $M_\infty(\sigma(\tau_\cS)^\circ) = 0$ for any $\cO$-lattice $\sigma(\tau_\cS)^\circ$ in $\sigma(\tau_\cS)$.
By exactness of $M_\infty$, $M_\infty(\sigma) = 0$ which is a contradiction.
\end{proof}

\begin{rmk} 
\label{rmk:loc:alg:WE}
Instead of appealing to \cite{Enns}, one can show that provided that $\rhobar_\cS$ is generic enough, every element of $W^{\mathrm{BM}}(\rhobar_{\cS})$ appears as a Jordan--H\"older factor of the reduction of some $1$-generic tame type as follows.
Even though not all Serre weights come from the reduction $\sigma(\tau_\cS)$ for a collection $\tau_\cS$ of $1$-generic tame types, they do occur in the reduction of $V_\lambda \otimes \sigma(\tau_\cS)$ where $\lambda$ is sufficiently large (independent of $p$).
The results of \S \ref{Semisimple Kisin modules} hold also for potentially crystalline representations of type $(\lambda_{\tld{v}} + \eta_{\tld{v}},\tau_{\tld{v}})_{\tld{v}\in\cS}$ (possibly with a stronger genericity hypothesis), and then the same argument as in the proof of Proposition \ref{WE} gives the result.
\end{rmk}

Given a collection $\tau_\cS$ of tame inertial types and a $\cO$-lattice $\sigma(\tau_\cS)^\circ$ in $\sigma(\tau_\cS)$,
we write $\ovl{\sigma}(\tau_\cS)^\circ$ to denote the reduction of $\sigma(\tau_\cS)^\circ$ modulo $\varpi$.

\begin{lemma}\label{lemma:multone}
Suppose that for each $\tld{v}\in \cS$, $\rhobar_{\tld{v}}:G_{F_{\tld{v}}} \rightarrow \GL_3(\F)$ is a semisimple Galois representation and $\tau_{\tld{v}}$ is a $5$-generic tame inertial type satisfying the hypotheses of Lemma \ref{lemma:BM}.
Let $\tau_\cS = (\tau_{\tld{v}})_{\tld{v}\in \cS}$.
Let $\sigma(\tau_\cS)^{\circ}$ be an $\cO$-lattice in $\sigma(\tau_\cS)$.
Then either $M_\infty(\ovl{\sigma}(\tau_\cS)^\circ)=0$ or $e(M_\infty(\ovl{\sigma}(\tau_\cS)^\circ))$ is equal to $e(\overline{R}_\infty(\tau_\cS))$, and the alternative does not depend on the lattice.
\end{lemma}
\begin{proof}
The generic fiber of $X_\infty(\tau_\cS)$ is irreducible since its special fiber is reduced (see Lemma \ref{lemma:BM}).
Then since $M_\infty(\sigma(\tau_\cS)^\circ)$ is maximal Cohen--Macaulay over $X_\infty(\tau_\cS)$, either $M_\infty(\sigma(\tau_\cS)^\circ)$ has full support on $X_\infty(\tau_\cS)$ or $M_\infty(\sigma(\tau_\cS)^\circ)$ is $0$.
The lemma now follows from the proof of \cite[Proposition 7.14]{LLLM}.
\end{proof}

\begin{lemma} \label{lemma:weight-combinatorics}
Suppose that $\rhobar_{\tld{v}}:G_{F_{\tld{v}}} \rightarrow \GL_3(\F)$ is semisimple and $7$-generic for all $\tld{v}\in \cS$.
If $\sigma \in W^?(\rhobar_{\cS})$ has defect $\delta$ %
then there are tame inertial types $\tau_{\tld{v}}$ and $\tau'_{\tld{v}}$ satisfying the hypotheses of Corollary \ref{cor:globalBM} such that if $\tau_\cS=(\tau_{\tld{v}})_{\tld{v}\in \cS}$ and $\tau_\cS'=(\tau'_{\tld{v}})_{\tld{v}\in \cS}$, we have
\begin{enumerate}
\item $\sigma \in W^?(\rhobar_{\cS},\tau_\cS) \subset W^?(\rhobar_{\cS},\tau_\cS')$; \label{type-inclusion}
\item $\#W^?(\rhobar_{\cS},\tau_\cS) = 2^\delta$ and $\#W^?(\rhobar_{\cS},\tau_\cS') = 2^{\delta+1}$; \label{minimal-type}
\item all Serre weights in $W^?(\rhobar_{\cS},\tau_\cS)$ and $W^?(\rhobar_{\cS},\tau_\cS')$ have defect at most $\delta$ and if $\delta>0$, $\sigma$ is the unique Serre weight in $W^?(\rhobar_{\cS},\tau_\cS')$ with defect $\delta$; \label{lower-defect}
\item if $\delta=0$, 
for any $\sigma'\in W^?(\rhobar_{\cS})$ of defect $0$ which is  adjacent to $\sigma$, one can choose the $\tau_\cS'$ above so that $W^?(\rhobar_{\cS},\tau_\cS')=\{\sigma,\ \sigma'\}$. 
\label{obvious-neighbor}
\end{enumerate}
\end{lemma}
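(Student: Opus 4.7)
The plan is to reduce the assertions to a purely combinatorial problem at the level of individual embeddings, then to resolve it by direct inspection of Table \ref{Intersections}. By Proposition \ref{typematching}, under the genericity assumptions on the $\rhobar_{\tld{v}}$, the intersection $W^?(\rhobar_{\tld{\Sigma}},\taubar)$ factors as $\prod_{(\tld{v},j)}\Sigma_{\tld{w}^*_{\tld{v},j}}$, where $\tld{w}^*_{\tld{v},j}\in \Adm^+(\eta)$ is the local shape at embedding $j$ of $\rhobar_{\tld{v}}$ relative to $\tau_{\tld{v}}$. By Definition \ref{defn:obv} the defect of a weight in this product is the sum over $(\tld{v},j)$ of the indicator that its local coordinate lies in $\Sigma_0^{\mathrm{inn}}$. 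Constructing $\tau,\tau'$ therefore amounts to choosing, at each $(\tld{v},j)$, a shape of length at least $2$ (so that Corollary \ref{cor:globalBM} applies) with $\Sigma$-set of prescribed cardinality and content.

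To construct $\tau$, write $(\omega_{\tld{v},j},a_{\tld{v},j})$ for the local coordinate of $\sigma$. At each embedding where this lies in $\Sigma_0^{\mathrm{obv}}$ I would choose the length-$4$ element of $\Adm^+(\eta)$ whose $\Sigma$-set is the singleton $\{(\omega_{\tld{v},j},a_{\tld{v},j})\}$, which exists by the six length-$4$ rows of Table \ref{Intersections} together with the order-three rotational symmetry of $\Adm^+(\eta)$. At each embedding where the local coordinate lies in $\Sigma_0^{\mathrm{inn}}$ I would choose a length-$3$ shape whose $\Sigma$-set is a $2$-element subset consisting of the given inner coordinate and one obvious one (for the inner coordinate $(\underline{0},0)$ take $\alpha\beta\alpha$, with $\Sigma_{\alpha\beta\alpha} = \{(\underline{0},0),(\underline{0},1)\}$, and handle the two remaining inner coordinates by the same rotational symmetry). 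A direct count then yields $\#W^?(\rhobar_{\tld{\Sigma}},\taubar) = 2^\delta$, and every element of this set differs from $\sigma$ only by exchanging inner coordinates for obvious ones at some shadow embeddings, so has defect strictly less than $\delta$ unless it equals $\sigma$.

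For $\tau'$ when $\delta>0$, I would fix an embedding $(\tld{v}_0,j_0)$ where $\sigma$ is shadow and replace the length-$3$ local shape there with a length-$2$ shape $\tld{w}'^*_{\tld{v}_0,j_0}$ whose $\Sigma$-set has cardinality $4$, contains the previous $\Sigma$-set, and contains exactly one inner coordinate; the inclusion $\Sigma_{\alpha\beta\alpha}\subset \Sigma_{\beta\alpha}$ in Table \ref{Intersections} is the prototype. The resulting $\tau'$ satisfies $W^?(\rhobar_{\tld{\Sigma}},\taubar)\subset W^?(\rhobar_{\tld{\Sigma}},\taubar')$ and $\#W^?(\rhobar_{\tld{\Sigma}},\taubar') = 2^{\delta+1}$; the three new global weights each choose an obvious option at $(\tld{v}_0,j_0)$ in place of $\sigma$'s inner one, so they have defect at most $\delta-1$ and $\sigma$ remains unique of defect $\delta$. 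When $\delta=0$ the weight $\sigma$ is obvious at every embedding; given $\sigma''$ adjacent to $\sigma$, Definition \ref{df:adj} forces them to differ at a unique embedding $(\tld{v}_0,j_0)$, and one replaces the length-$4$ shape there by a length-$3$ shape whose $\Sigma$-set is exactly the adjacent pair of local coordinates.

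The main obstacle is the verification needed in this last step: for every adjacency of local coordinates in $\Sigma_0$, there exists a length-$3$ element of $\Adm^+(\eta)$ whose $\Sigma$-set realizes exactly that pair. This is a finite check that uses the explicit list of length-$3$ rows in Table \ref{Intersections} and the order-three symmetry of $\Adm^+(\eta)$ to cover all the adjacencies appearing in Table \ref{TableExtGraph}. Once this is in place, the four items follow from the product decomposition of the defect and the evident comparisons of $\Sigma$-sets.
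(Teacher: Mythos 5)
Your proposal follows essentially the same route as the paper: reduce to the per-embedding combinatorics of Table \ref{Intersections} via the product factorizations coming from Proposition \ref{typematching} (the paper invokes Corollary \ref{cor:intersections} directly, but this is the same information), assign a length-$4$ shape at embeddings where $\sigma$ is obvious, a length-$3$ Weyl-segment shape at embeddings where $\sigma$ is a shadow, and for $\tau'$ pass once from a length-$3$ shape to a compatible length-$2$ shape (for $\delta>0$) or from a length-$4$ shape to a length-$3$ shape (for $\delta=0$). The only difference from the paper is packaging.

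One thing you should tighten is the ``main obstacle'' you identify at the end. The assertion that \emph{every} adjacency of local coordinates in $\Sigma_0$ is realized as $\Sigma_{\tld{w}}$ for a length-$3$ element $\tld{w}\in\Adm^+(\eta)$ is not true as stated: there are nine length-$3$ elements, six of whose $\Sigma$-sets are pairs of two obvious coordinates and three of whose $\Sigma$-sets are aligned obvious--inner (Weyl-segment) pairs; the six ``unaligned'' obvious--inner adjacencies (for instance $(\omega_1,1)$ with $(\underline 0,0)$) are not realized. The verification succeeds anyway because items (3) and (4) together force both weights in the $\delta=0$ case to have defect zero, and so to be obvious at every embedding, and exactly the six obvious--obvious local adjacencies occur as $\Sigma$-sets of length-$3$ elements --- which is what the paper records. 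You should make this restriction explicit: as written, a reader trying to do the ``finite check'' you describe would conclude it fails. (Also, a small slip in the $\delta>0$ construction of $\tau'$: passing from a length-$3$ to a length-$2$ shape at a single shadow embedding adds two new local coordinates, both obvious, so the number of \emph{new} global weights is $2^{\delta}$, not three.)
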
%
\begin{proof}
This is essentially a consequence of results in \S \ref{subsec:comb}. 
Suppose that $V(\rhobar_\cS|_{I_{F_\cS}}) = R_s(\mu)$ with $\mu-\eta$ $7$-deep in alcove $\un{C}_0$, and label $W^?(\rhobar_{\cS})$ by $r(\Sigma)$ as in Proposition \ref{prop:serrewts}.  
Let $\tld{w} = (\tld{w}_{\tld{v}})_{\tld{v}\in \cS} = (\tld{w}_i)_{i\in \cJ}\in \Adm(\eta)$ and $\tau_\cS$ be the tame inertial type with lowest alcove presentation $(sw^{-1},\mu + s\tld{w}^{-1}(0)-\eta)$, where $w \in \un{W}$ is the image of $\tld{w}$.  
Then $\mu + s\tld{w}^{-1}(0)-\eta$ is $5$-deep in alcove $\un{C}_0$ so that $\tau_\cS$ is $5$-generic.
Corollary \ref{cor:intersections} says that %
\[
W^?(\rhobar_\cS, \tau_\cS) = \{\sigma_{r(\omega,a)}^{(s,\mu)}:r(\omega,a)\in \prod_{i\in \cJ} r(\Sigma_{\widetilde{w}_i^{-1}})\}
\]
where $\Sigma_{\widetilde{w}_i^{-1}}=\Sigma_0\cap\tld{w_i}^{-1}\big(r(\Sigma_0)\big)$. 
If $\ell(\tld{w}_i)>1$ for all $i$, then $\tau_\cS$ satisfies the hypotheses of Corollary \ref{cor:globalBM} (noting that $\ell(\tld{w}) = \ell(\tld{w}^*)$ by the proof of \cite[Lemma 2.1.3]{LLL}, where the lengths are as elements of $\un{\tld{W}}^\vee$ and $\un{\tld{W}}$, respectively). 

Let $(\omega,a)=((\omega_i,a_i))_i$ be such that $\sigma=\sigma^{(s,\mu)}_{r(\omega,a)}$.
We will construct the required types by appropriately choosing $(\tld{w}_i)_i$.
If $(\omega_i, a_i)\in \Sigma_0^{\mathrm{obv}}$, we can find an element $\tld{w}_i \in \Adm((2,1,0))$ such that $\ell(\tld{w}_i) = 4$ and $\Sigma_{\widetilde{w}_i^{-1}} = \{(\omega_i, a_i)\}$.
If $(\omega_i, a_i)\in \Sigma_0^{\mathrm{inn}}$, we can find an element $\widetilde{w}_it_{-\un{1}} \in \{ \alpha \beta \alpha, \beta \gamma \beta, \alpha \gamma \alpha \}$ such that $\Sigma_{\widetilde{w}_i^{-1}}$ contains exactly $\{(\omega_i, a_i), r(\omega_i, a_i) \}$. This choice of $ (\tld{w}_i)_{i\in \cJ}$ gives a type $\tau_\cS=(\tau_{\tld{v}})_{\tld{v}\in\cS}$
such that $\sigma \in W^?(\rhobar_{\cS},\tau_\cS)$, $\#W^?(\rhobar_{\cS},\tau_\cS) = 2^\delta$, and all weights in $W^?(\rhobar_{\cS},\tau_\cS)$ have defect at most $\delta$.  

To construct $\tau_\cS'$ satisfying (\ref{minimal-type}) and (\ref{lower-defect}), we proceed similarly. 
We first deal with the case $\delta>0$.
In this case, we can find a $j_0\in \cJ$ such that $\ell(\widetilde{w}_{j_0}) = 3$, and consider any $\widetilde{w}_{j_0}' \leq \widetilde{w}_{j_0}$ of length $2$.   
Then
\[
\Sigma_{\widetilde{w}_{j_0}^{-1}} \subset    \Sigma_{\widetilde{w}_{j_0}'^{-1}}
\]
and has size 4 (see Table \ref{Intersections}).  Furthermore, $\Sigma_{\widetilde{w}_{j_0}'^{-1}} \backslash  \Sigma_{\widetilde{w}_{j_0}^{-1}} \subset \Sigma^{\mathrm{obv}}_0$. 
Let $\tld{w}'$ be such that $\tld{w}'_{i}=\tld{w}_{i}$ if $i\neq j_0$ and $\tld{w}'_{j_0}$ is the element chosen above.
Then the type $\tau_\cS'$ such that $\sigma(\tau_\cS')=R_{s(\tld{w}')^{-1}}(\mu)$ satisfies items (\ref{minimal-type}) and (\ref{lower-defect}).

Finally, assume that $\delta=0$. Let $\sigma'\in W^?(\rhobar_{\cS})$ be a defect $0$ weight adjacent to $\sigma$, and write $\sigma'=\sigma_{r(\omega',a')}^{(s,\mu)}$. Then there is a unique $j_0\in \cJ$ such that $(\omega'_{j_0},a'_{j_0})\neq (\omega_{j_0},a_{j_0})$. 
There are six possible pairs $\{ (\omega_j, a_j), (\omega_j', a'_j) \} \in \Sigma^{\mathrm{obv}}_0$ which are adjacent in the Table \ref{TableExtGraph}, each of which is $\Sigma_{\widetilde{w}_j^{-1}}$ for some length three non-shadow element $\widetilde{w}'_j$ (see \cite[Table 1]{LLLM}). We let $\tau_\cS'$ be the inertial type corresponding to $\tld{w}'$ such that $\tld{w}'_i=\tld{w}_i$ for $i\neq j_0$ and $\tld{w}'_{j_0}$ chosen as in the previous sentence. This gives the type satisfying items (\ref{minimal-type}) and (\ref{lower-defect}) and (\ref{obvious-neighbor}). 
\end{proof}
\begin{rmk}\label{rmk:minimaltype}%
From Table \ref{Intersections}, we see that the type $\tau_\cS$ constructed in Lemma \ref{lemma:weight-combinatorics} is uniquely characterized by requiring that $\sigma \in W^?(\rhobar_{\cS},\tau_\cS)$ and $\#W^?(\rhobar_{\cS},\tau_\cS) = 2^\delta$.
We call it the \emph{minimal type} of $\sigma$ with respect to $\rhobar$.
\end{rmk}
In what follows, our $\rhobar_\cS=(\rhobar_{\tld{v}})_{\tld{v}\in \cS}$ will be assumed to be 10-generic, so that Proposition \ref{WE} applies. 
We observe that if $\tau_\cS$ is a $3$-generic tame type, then $\ovl{\sigma}(\tau_\cS)^\circ$ is multiplicity free for any choice of lattice $\sigma(\tau_\cS)^\circ$. 
Then 
\[
e(M_\infty(\ovl{\sigma}(\tau_\cS)^\circ)) = \sum_{\sigma \in \JH(\ovl{\sigma(\tau_\cS)})} e(M_\infty(\sigma))
\]
by Definition \ref{minimalpatching}, and in fact
\[
e(M_\infty(\ovl{\sigma}(\tau_\cS)^\circ)) = \sum_{\sigma \in W^?(\rhobar_\cS^{\mathrm{ss}},\tau_\cS)} e(M_\infty(\sigma)).
\]
Finally, observe that if $\rhobar_\cS$ is $10$-generic and $W^?(\rhobar_\cS,\tau_\cS)$ is nonempty, then $\tau_\cS$ is $7$-generic by Proposition \ref{telim2}.%

\begin{lemma} \label{lemma:defect-0-case}
Suppose that $\rhobar_{\tld{v}}:G_{F_{\tld{v}}} \rightarrow \GL_3(\F)$ is semisimple and $10$-generic for all $\tld{v}\in \cS$.
If there exists $\sigma \in W^{\mathrm{BM}}(\rhobar_{\cS})$ with defect $0$, then for all $\sigma' \in W^?(\rhobar_{\cS})$, $e(M_\infty(\sigma')) = 1$. In particular, $W^{\mathrm{BM}}(\rhobar_{\cS}) = W^?(\rhobar_{\cS})$.
\begin{proof}

We first prove the lemma assuming $\sigma'$ has defect $0$ by induction on %
$d\defeq \dgr{\sigma}{\sigma'}$.
By Lemma \ref{lemma:weight-combinatorics}, one can choose a $1$-generic tame type $\tau_\cS$ such that $W^?(\rhobar_{\cS},\tau_\cS) = \{\sigma\}$. Note that $\tau_\cS$ is then $7$-generic under our assumptions.
Then $e(M_\infty(\sigma)) = 1$ by Corollary \ref{cor:globalBM} and Lemma \ref{lemma:multone}.
This establishes the case $d=0$.

Suppose that $\sigma' \in W^?(\rhobar_{\cS})$ has defect $0$ and that $d>0$.
Then there is a $\sigma'' \in W^?(\rhobar_{\cS})$ adjacent to $\sigma'$, with defect $0$, and such that $\dgr{\sigma}{\sigma''}=d-1$.
We choose a type $\tau_\cS'$ as in Lemma \ref{lemma:weight-combinatorics}(\ref{obvious-neighbor}) for the adjacent weights $\sigma'$ and $\sigma''$ and an $\cO$-lattice $\sigma(\tau_\cS')^{\circ}$ in $\sigma(\tau_\cS')$.
Then by inductive hypothesis $M_\infty(\sigma'')$, and hence $M_\infty(\ovl{\sigma}(\tau_\cS')^{\circ})$, is nonzero.
By Corollary \ref{cor:globalBM} and Lemma \ref{lemma:multone}, $e(M_\infty(\ovl{\sigma}(\tau_\cS')^{\circ}) = 2$.
We deduce from the inductive hypothesis that $e(M_\infty(\sigma')) = 1$.

We now prove the general case of the lemma by induction on the defect.
Suppose that $\sigma' \in W^?(\rhobar_{\cS})$ has defect $\delta>0$.
We choose $\tau_\cS$ as in Lemma \ref{lemma:weight-combinatorics} (and an $\cO$-lattice $\sigma(\tau_\cS)^0$ in $\sigma(\tau_\cS)$) taking $\sigma$ to be $\sigma'$.
By Lemma \ref{lemma:weight-combinatorics}(\ref{minimal-type}) and (\ref{lower-defect}), $W^?(\rhobar_{\cS},\tau_\cS)$ contains a weight of lower defect.
The inductive hypothesis implies $M_\infty(\ovl{\sigma}(\tau_\cS)^{\circ})$ is nonzero.
By Lemma \ref{lemma:multone} and Corollary \ref{cor:globalBM}, $e(M_\infty(\ovl{\sigma}(\tau_\cS)^{\circ})) = 2^\delta$.
By Lemma \ref{lemma:weight-combinatorics}(\ref{lower-defect}) and induction, $e(M_\infty(\ovl{\sigma}(\tau_\cS)^{\circ})) - e(M_\infty(\sigma'))$ is the number of weights in $W^?(\rhobar_{\cS},\tau_\cS) \setminus\{\sigma'\}$, which is $2^\delta-1$.
We conclude that $e(M_\infty(\sigma')) = 1$.
\end{proof}
\end{lemma}

\begin{lemma} \label{lemma:lower-defect}
Suppose that $\rhobar_{\tld{v}}:G_{F_{\tld{v}}} \rightarrow \GL_3(\F)$ is semisimple and $10$-generic for all $\tld{v}\in \cS$.
If $\sigma \in W^{\mathrm{BM}}(\rhobar_{\cS})$ has defect $\delta>0$, then there exists $\sigma' \in W^{\mathrm{BM}}(\rhobar_{\cS})$ with defect less than $\delta$.
\begin{proof}
Choose $\tau_\cS$ and $\tau_\cS'$ as in Lemma \ref{lemma:weight-combinatorics} and fix lattices $\sigma(\tau_\cS)^{\circ}$ and $\sigma(\tau_\cS')^{\circ}$.
Then $M_\infty(\sigma(\tau_\cS)^{\circ})$ and $M_\infty(\sigma(\tau_\cS')^{\circ})$ are nonzero. 
Hence by Lemmas \ref{lemma:multone} and \ref{lemma:weight-combinatorics}(\ref{minimal-type}) and Corollary \ref{cor:globalBM}, $e(M_\infty(\ovl{\sigma}(\tau_\cS')^{\circ})) - e(M_\infty(\ovl{\sigma}(\tau_\cS')^{\circ})) = 2^\delta$.
By Proposition \ref{lemma:weight-combinatorics}(\ref{type-inclusion}), $e(M_\infty(\ovl{\sigma}(\tau_\cS')^{\circ})) - e(M_\infty(\ovl{\sigma}(\tau_\cS)^{\circ}))$ is the sum of $e(M_\infty(\sigma'))$ as $\sigma'$ runs over the Serre weights in $W^?(\rhobar_{\cS},\tau_\cS') \setminus W^?(\rhobar_{\cS},\tau_\cS)$.
By Lemma \ref{lemma:weight-combinatorics}(\ref{lower-defect}), the Serre weights in $W^?(\rhobar_{\cS},\tau_\cS') \setminus W^?(\rhobar_{\cS},\tau_\cS)$ have defect less than $\delta$.
We conclude that there must be a Serre weight $\sigma' \in W^{\mathrm{BM}}(\rhobar_{\cS})$ of defect less than $\delta$. 
\end{proof}
\end{lemma}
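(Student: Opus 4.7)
The plan is to apply Lemma \ref{lemma:weight-combinatorics} to the given weight $\sigma$ of defect $\delta>0$ and then compare Hilbert--Samuel multiplicities through the exact functor $M_\infty$. Concretely, pick tame types $\tau$ and $\tau'$ satisfying the hypotheses of Corollary \ref{cor:globalBM} as produced in Lemma \ref{lemma:weight-combinatorics}, so that $\sigma\in W^?(\rhobar_{\tld{\Sigma}},\taubar)\subset W^?(\rhobar_{\tld{\Sigma}},\taubar')$ with $\#W^?(\rhobar_{\tld{\Sigma}},\taubar)=2^\delta$ and $\#W^?(\rhobar_{\tld{\Sigma}},\taubar')=2^{\delta+1}$, and with all weights in $W^?(\rhobar_{\tld{\Sigma}},\taubar')\setminus W^?(\rhobar_{\tld{\Sigma}},\taubar)$ of defect strictly less than $\delta$ by item (\ref{lower-defect}) of that lemma.

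Fix $\cO$-lattices $\tau^\circ\subset \tau$ and $(\tau')^\circ\subset \tau'$. Since $\sigma\in W^{\mathrm{BM}}_t(\rhobar_{\tld{\Sigma}})$ occurs as a Jordan--H\"older factor of both $\taubar^\circ$ and $(\taubar')^\circ$, exactness of $M_\infty$ gives $M_\infty(\taubar^\circ)\neq 0$ and $M_\infty((\taubar')^\circ)\neq 0$. Applying Lemma \ref{lemma:multone} to each, together with Corollary \ref{cor:globalBM}, we obtain
\[
e(M_\infty(\taubar^\circ))=e(\overline{R}_\infty(\tau))=2^\delta,\qquad e(M_\infty((\taubar')^\circ))=e(\overline{R}_\infty(\tau'))=2^{\delta+1},
\]
so the difference $e(M_\infty((\taubar')^\circ))-e(M_\infty(\taubar^\circ))=2^\delta>0$.

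Now I would compute this difference a second way, via a Jordan--H\"older analysis. Using additivity of Hilbert--Samuel multiplicity on short exact sequences and exactness of $M_\infty$, $e(M_\infty(\taubar^\circ))$ (resp.\ $e(M_\infty((\taubar')^\circ))$) equals the sum of $e(M_\infty(\sigma''))$ over the Jordan--H\"older constituents $\sigma''\in \JH(\taubar)$ (resp.\ $\JH(\taubar')$), each occurring with multiplicity one by the combinatorics of \S\ref{subsec:comb}. By Proposition \ref{WE}, $e(M_\infty(\sigma''))=0$ unless $\sigma''\in W^?(\rhobar_{\tld{\Sigma}})$, so
\[
e(M_\infty((\taubar')^\circ))-e(M_\infty(\taubar^\circ))=\sum_{\sigma'\in W^?(\rhobar_{\tld{\Sigma}},\taubar')\setminus W^?(\rhobar_{\tld{\Sigma}},\taubar)} e(M_\infty(\sigma')).
\]
Since the left-hand side is positive, there exists at least one $\sigma'$ in this difference set with $e(M_\infty(\sigma'))>0$, i.e.\ $\sigma'\in W^{\mathrm{BM}}_t(\rhobar_{\tld{\Sigma}})$. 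By item (\ref{lower-defect}) of Lemma \ref{lemma:weight-combinatorics}, such a $\sigma'$ has defect strictly less than $\delta$, which is the desired weight. The only subtle step is the second multiplicity computation, which relies on the multiplicity-one statement for Jordan--H\"older factors of generic tame types from \S \ref{subsec:comb} (Proposition \ref{prop:typedecomp}); the rest is a direct application of Lemma \ref{lemma:weight-combinatorics} and Proposition \ref{WE}.
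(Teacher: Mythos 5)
Your proof is correct and follows essentially the same route as the paper's: apply Lemma \ref{lemma:weight-combinatorics} to produce $\tau\subset\tau'$, compare multiplicities via Lemma \ref{lemma:multone} and Corollary \ref{cor:globalBM} to get the difference $2^\delta$, then use weight elimination (Proposition \ref{WE}) and the inclusion from item (\ref{type-inclusion}) to express this difference as a sum of $e(M_\infty(\sigma'))$ over $W^?(\rhobar_{\tld{\Sigma}},\taubar')\setminus W^?(\rhobar_{\tld{\Sigma}},\taubar)$, so positivity forces some such $\sigma'$ into $W^{\mathrm{BM}}_t$ with defect $<\delta$ by item (\ref{lower-defect}). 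Your write-up is slightly more explicit in two places where the paper is terse — why $M_\infty(\taubar^\circ)$ and $M_\infty((\taubar')^\circ)$ are nonzero (because $\sigma\in W^{\mathrm{BM}}_t$ occurs in both), and the appeal to residual multiplicity one via Proposition \ref{prop:typedecomp} underlying the Jordan--H\"older additivity step — but the argument is the same.
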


\subsubsection{Proofs}\label{subsubsec:proofs}

\begin{proof}[Proof of Theorem \ref{SWC}]
Since $M_\infty$ is non-zero, there is a Serre weight $\sigma \in W^{\mathrm{BM}}(\rhobar_{\cS})$ $\subset W^?(\rhobar_{\cS})$ by Proposition \ref{WE}. 
By induction on the defect using Lemma \ref{lemma:lower-defect}, we can assume without loss of generality that the defect of $\sigma$ is $0$.
The theorem now follows from Lemma \ref{lemma:defect-0-case}.
\end{proof}

\begin{rmk}\label{rmk:descendsupport}
Our axioms for $M_\infty$ imply that if $\sigma \in \JH(\ovl{\sigma(\tau_\cS)})$, then the support of $M_\infty(\sigma)$ is a (possibly empty) union of irreducible components of $\Spec \ovl{R}_\infty(\tau_\cS)$. As $\Spec R_\infty(\tau_\cS)$ is the preimage of 
\[
\Spec \widehat{\bigotimes}_{\tld{v}\in \cS} R_{\tld{v}}^{\tau_{\tld{v}}} \subset \Spec \widehat{\bigotimes}_{\tld{v}\in \cS} R_{\tld{v}}^\Box
\]
inside $\Spec R_\infty$, we have shown that if $\sigma \in W^{\mathrm{BM}}(\rhobar_\cS)$, then the scheme-theoretic support of $M_\infty(\sigma)$ is irreducible with generic point given by the preimage in $R_\infty$ of a prime ideal in $\widehat{\bigotimes}_{\tld{v}\in \cS} R_{\tld{v}}^\Box$.
\end{rmk}

We now give two examples of weak minimal patching functors using the setup from \cite[\S 7.1]{LLLM}.
Recall the definitions from {\it loc.~cit.}~ of $F/F^+$, $\Sigma_p^+$, $G_{/F^+}$, and $\iota_w$ (see also \S \ref{subsec:global}).
Suppose that $\rbar:G_F \ra \GL_3(\F)$ is 
\begin{itemize}
\item automorphic (of some weight) in the sense of \cite[Definition 7.1]{LLLM};
\item satisfies the Taylor--Wiles hypotheses in the sense of \cite[Definition 7.3]{LLLM}; and
\item 
if $\rbar$ is ramified at a finite place $w\notin \Sigma_p$ of $F$ then $w|_{F^+}$ splits in $F$ (we say that $\rbar$ has \emph{split ramification} outside of $p$).
\end{itemize}
Then \cite[Proposition 7.15]{LLLM} constructs a weak minimal patching functor for $\rbar$ in the sense of \cite[Definition 7.11]{LLLM}, which we will denote $\tld{M}_\infty$.
Let $h$ be the corresponding integer.

For each $v\in \Sigma_p^+$, choose a place $\tld{v}$ of $F$ lying above $v$.
Let $\cS_p$ be the set $\{\tld{v}:v\in \Sigma_p^+\}$.
Let $\rhobar_{\cS_p}$ be $(\rhobar_{\tld{v}})_{\tld{v}\in \cS_p}$ where we define $\rhobar_{\tld{v}} \defeq \rbar|_{G_{F_{\tld{v}}}}: G_{F_{\tld{v}}} \ra \GL_3(\F)$.
Define $R_\infty$, $R_\infty(\tau_{\cS_p})$, etc. ~as before with respect to $h$ above.
Let $K$ be $\prod_{\cS_p}K_{\tld{v}}$ as before.
 and the proof of \cite[Proposition 4.2.6]{LLL}.

\begin{prop}\label{prop:patchfunctor}
Let $M_\infty:\Rep_{K}(\cO)\ra \Coh(X_{\infty})$ be the functor $\tld{M}_\infty \circ \prod_{\tld{v}\in \cS_p}\iota_{\tld{v}}$.
Then $M_\infty$ is a weak minimal patching functor for $\rhobar_{\cS_p}$.
\end{prop}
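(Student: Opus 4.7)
The plan is to verify the three axioms in Definition \ref{minimalpatching} one at a time, transporting each from the corresponding property of $\tld{M}_\infty$ established in \cite[Proposition 7.15]{LLLM} through the isomorphisms $\iota_{\tld{v}}$. First I would observe that $\prod_{v\in \Sigma_p^+}\iota_{\tld{v}}$ induces an exact equivalence between $\Rep_K(\cO)$ and the category of representations of $\prod_{v\in \Sigma_p^+}\cG_v(\cO_{F^+_v})$ on which $\tld{M}_\infty$ is defined. Under this equivalence the $K_{\tld{v}}$-type attached to an inertial type $\tau_{\tld{v}}$ via \cite[Theorem 3.7]{CEGGPS} corresponds to the compact-open representation used at $v$ in \cite[\S 7.1]{LLLM}, and similarly for irreducible mod $p$ inputs. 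Moreover, since $R^\Box_{\rhobar_{\tld{v}}}$ and its potentially crystalline quotients $R^{\tau_{\tld{v}}}_{\rhobar_{\tld{v}}}$ depend only on $\rhobar_{\tld{v}} = \rbar|_{G_{F_{\tld{v}}}}$, the rings $R_\infty$ and $R_\infty(\tau)$ defined just before Definition \ref{minimalpatching} coincide, via this same change of variables, with the ambient rings underlying the formal schemes on which $\tld{M}_\infty$ takes values.

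With these identifications in hand, the axioms are essentially tautological to check: axiom (\ref{support}) (the $p$-torsion-freeness and maximal Cohen--Macaulay property over $R_\infty(\tau)$) follows from the corresponding assertion for $\tld{M}_\infty$ in \cite[Proposition 7.15]{LLLM}; axiom (\ref{dimd}) (pure support of dimension $d$ for irreducible mod $p$ inputs) transports in the same way; and axiom (\ref{minimal}) (rank at most one on each connected component of the regular generic fiber) is precisely the minimality statement incorporated into \cite[Proposition 7.15]{LLLM}. The nonvanishing and exactness of $M_\infty$ are likewise inherited from those of $\tld{M}_\infty$ since $\prod_v\iota_{\tld{v}}$ is an equivalence.

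In short, there is no substantive obstacle to overcome: the definition of $M_\infty$ is tailored so that the local axioms of Definition \ref{minimalpatching}, phrased entirely in terms of the local rings $R^\Box_{\rhobar_{\tld{v}}}$ and representations of $K_{\tld{v}} = \GL_3(\cO_{F_{\tld{v}}})$, match under the tautological translation the global axioms of \cite[Definition 7.11]{LLLM}, which are phrased in terms of $\cG_v(\cO_{F^+_v})$. The only point requiring care is verifying that this translation preserves the Cohen--Macaulay, dimension, and rank conditions, which is automatic because $\prod_v\iota_{\tld{v}}$ acts as the identity on the coefficient ring $R_\infty$.
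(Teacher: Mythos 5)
Your proposal is correct and matches the paper's (unwritten) argument: the paper simply states that the proposition ``follows easily from definitions and \cite[Proposition 7.15]{LLLM},'' and your write-up is the routine unpacking of that remark, verifying the three axioms of Definition \ref{minimalpatching} by transport through $\prod_{v\in\Sigma_p^+}\iota_{\tld{v}}$ and the identification of the ambient rings. There is nothing further to add.
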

\begin{proof}
Definition \ref{minimalpatching}(\ref{item:semistable}) follows from the proof of \cite[Proposition 4.2.6]{LLL}.
The remaining properties follow easily from definitions and \cite[Proposition 7.15]{LLLM}.
\end{proof}

Now suppose that $p>3$ and $K/\QQ_p$ is a finite extension.
Let $\rhobar:G_K\ra \GL_3(\F)$ be a continuous Galois representation with a potentially diagonalizable lift of type $(\eta,\tau)$, such that $R^{\tau}_{\rhobar}$ is formally smooth.
For example, if $\rhobar$ is $6$-generic and semisimple, 
we can take $\tau$ so that if $\tld{w}(\rhobar,\tau) = (\tld{w}_j)_j$, then $\ell(\tld{w}_j) = 4$ for all $j$.
We now construct a weak minimal patching functor for $\rhobar$ (here $\#\cS = 1$ in the notation of Definition \ref{minimalpatching}).

\cite[Corollary A.7]{EG} constructs a CM extension $F/F^+$, a choice of places $\cS_p$ above $\Sigma_p^+$ as before, and an automorphic Galois representation $\rbar: G_F \ra \GL_3(\F)$ satisfying the above itemized properties such that there is an isomorphism $K \cong F_{\tld{v}}$ and $\rbar|_{G_{F_{\tld{v}}}} \cong\rhobar$ for all $\tld{v}\in \cS_p$.
Fix a place $\tld{v}\in \cS_p$ and let $R_\infty$ be $R_{\rhobar}^\square \widehat{\otimes} \widehat{\bigotimes}_{v'\in \Sigma_p^+, v'\neq v} R_{\rhobar_{\tld{v}'}}^\tau[\![x_1,\ldots,x_h]\!]$.
Note that $R_\infty$ is as defined in \S \ref{sub:sub:setup.} (by increasing $h$, and the assumption of formal smoothness of $R_{\rhobar}^{\tau}$) and we identify $R_\infty$ with $R_{\rbar|_{G_{F_{\tld{v}}}}}^\square \widehat{\otimes} \widehat{\bigotimes}_{v'\in \Sigma_p^+, v'\neq v} R_{\rbar|_{G_{F_{\tld{v}'}}}}^{\tau}[\![x_1,\ldots,x_h]\!]$.
Let $X_\infty = \Spf R_\infty$ as usual.
We abusively let $K$ be the group $\GL_3(\cO_K)$ (the meaning of each $K$ will be clear from the context).
Fix a lattice $\sigma(\tau)^\circ$ in the $K$-module $\sigma(\tau)$.

Let $\tld{M}_\infty$ be the weak minimal patching functor for $\rbar$ constructed above.
The following proposition follows from the construction of $\rbar$ and Proposition \ref{prop:patchfunctor}. 

\begin{prop}\label{prop:singlepatch} Assume the above setup.
Let $M_\infty:\Rep_{K}(\cO)\ra \Coh(X_{\infty})$ be the functor 
\[\tld{M}_\infty \circ \underset{v\in \Sigma_p^+} \prod \iota_{\tld{v}} \circ\left(-\otimes \widehat{\bigotimes}_{v\in \Sigma_p^+, v'\neq v} \sigma(\tau)^\circ\right).\]
Then $M_\infty$ is a patching functor for $\rhobar$.
\end{prop}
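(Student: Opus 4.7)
The plan is to verify the three axioms of Definition \ref{minimalpatching} for the functor $M_\infty$, leveraging the fact that $\tld{M}_\infty$ is already known to be a weak minimal patching functor for $\rbar$ (by Proposition \ref{prop:patchfunctor}) and that the formal smoothness hypothesis on $R_{\rhobar}^{\lambda+\eta',\tau}$ makes the ``frozen'' auxiliary factors harmless.

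First, I would compute the support of $M_\infty((\tau')^\circ)$ for any inertial type $\tau'$ for $G_K$.  By unwinding the definition
\[
M_\infty((\tau')^\circ) = \tld{M}_\infty\Bigl((\tau')^\circ \,\otimes\, \widehat{\bigotimes}_{v' \neq v}\bigl(W(\lambda)\otimes \tau^\circ\bigr)\Bigr)
\]
and applying the support axiom for $\tld{M}_\infty$ at each place, this support is identified with
\[
\Spf\Bigl( R_{\rhobar}^{\tau'} \,\widehat{\otimes}\, \widehat{\bigotimes}_{v' \neq v} R_{\rhobar}^{\lambda+\eta',\tau}[\![y_1,\dots,y_{h'}]\!] \Bigr),
\]
where $h'$ is the patching integer attached to $\tld{M}_\infty$ and we use $\rbar|_{G_{F_{\tld{v}'}}} \cong \rhobar$ at each auxiliary place.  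Here the input $W(\lambda)\otimes\tau^\circ$ at $\tld{v}'\neq\tld{v}$ cuts out the locus where the deformation has Hodge--Tate weights $\lambda+\eta'$ and inertial type $\tau$.

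Next, formal smoothness of $R_{\rhobar}^{\lambda+\eta',\tau}$ implies the completed tensor product $\widehat{\bigotimes}_{v' \neq v} R_{\rhobar}^{\lambda+\eta',\tau}$ is itself formally smooth over $\cO$, hence isomorphic to a power series ring.  Choosing the integer $h$ in the statement so that it equals $h'$ plus the total relative dimension of these formally smooth factors yields a canonical identification of the support above with $\Spf R_\infty(\tau')$, compatibly with the identification $R_\infty \cong R_{\rhobar}^\square\,\widehat{\otimes}\,\cO[\![\mathrm{vars}]\!]$.

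With this identification in hand, the three axioms transfer directly.  Axiom \ref{minimalpatching}(\ref{support}) follows because $p$-torsion freeness and the maximal Cohen--Macaulay property of $\tld{M}_\infty$ applied to the corresponding input are preserved under formally smooth base change, so descend to $R_\infty(\tau')$.  Axiom \ref{minimalpatching}(\ref{dimd}) is analogous for a Serre weight $\sigma$: the module $M_\infty(\sigma)=\tld{M}_\infty(\sigma\otimes\widehat{\bigotimes}_{v'\neq v}(W(\lambda)\otimes\tau^\circ))$ is either zero or MCM over its support of pure dimension $d$, since $\tld{M}_\infty$ has this property and the formally smooth factors do not affect purity or dimension.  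Axiom \ref{minimalpatching}(\ref{minimal}) is inherited from the minimality of $\tld{M}_\infty$: formal smoothness preserves irreducibility of generic fibers, so connected components of $X_\infty(\tau')[\tfrac{1}{p}]$ correspond bijectively to connected components of the generic fiber of the global patching space cut out by the chosen auxiliary types, and the rank on each is at most one.

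The main obstacle is really bookkeeping: one must verify that the support calculation in the first step is valid (which requires some version of ``local-global'' compatibility of $\tld{M}_\infty$ when the auxiliary types vary), and then choose $h$ precisely so that $R_\infty$ of the single-place setup matches the corresponding piece of the global ring.  Once this matching is established, no genuinely new ingredient is required beyond the standard fact that MCM, dimension, and connected components of generic fibers are preserved along formally smooth morphisms.
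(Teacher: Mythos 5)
The paper's own proof is a single sentence ("follows from the construction of $\rbar$ and Proposition \ref{prop:patchfunctor}"), so any detailed justification is fair game. Your overall approach — identifying the support via formal smoothness of the auxiliary factors, choosing $h$ accordingly, and transferring the three axioms — is the right one and is consistent with what the paper leaves implicit.

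There is one step, however, where the argument is not quite complete. For axiom (\ref{dimd}) you write that $M_\infty(\sigma)=\tld{M}_\infty\bigl(\sigma\otimes\widehat{\bigotimes}_{v'\neq v}(W(\lambda)\otimes\tau^\circ)\bigr)$ is $0$ or MCM of pure dimension $d$ ``since $\tld{M}_\infty$ has this property.'' But axiom (\ref{dimd}) for $\tld{M}_\infty$ only applies to inputs that are tensors of \emph{irreducible} $\GL_3(k_{\tld{v}'})$-representations at every place, whereas the auxiliary factor $\ovl{W(\lambda)\otimes\tau^\circ}$ is a reducible mod-$p$ representation with many Jordan--H\"older constituents. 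So this axiom does not apply verbatim, and one cannot immediately conclude MCM of pure dimension for a module that has a filtration by several such $\tld{M}_\infty(\sigma\otimes\text{irreducibles})$ pieces. The missing ingredient is precisely what formal smoothness of $R_{\rhobar}^{\lambda+\eta',\tau}$ is for: its special fiber is irreducible, hence (by applying Theorem \ref{SWC} to $\tld{M}_\infty$ together with Proposition \ref{prop:identify:cmpt}, or by Definition \ref{minimalpatching}(\ref{support}) at the auxiliary places combined with reduction mod $\varpi$) exactly one Jordan--H\"older factor of $\ovl{W(\lambda)\otimes\tau^\circ}$ is modular at each $\tld{v}'$. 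The filtration therefore collapses under $\tld{M}_\infty$ to a single nonzero layer, to which axiom (\ref{dimd}) does apply. Adding this observation closes the gap, and the remaining two axioms are handled as you indicate.
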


\begin{cor}\label{cor:multipatch}
Suppose that $\rhobar_\cS$ is a collection of continuous Galois representations satisfying the properties of $\rhobar$ in Proposition \ref{prop:singlepatch}%
Then there exists a patching functor for $\rhobar_\cS$.
\end{cor}
\begin{proof}
We can take the completed tensor product of the patching functors constructed in Proposition \ref{prop:singlepatch} for each $\rhobar_{\tld{v}}$.
\end{proof}

\begin{proof}[Proof of Theorem \ref{thm:BM}] 
Suppose that $\tau$ is not $1$-generic.
We claim that $R^\tau_{\rhobar} = 0$.
It suffices to show that after restriction to $G_{K'}$ for any unramified extension $K'/K$, $\rhobar$ does not have a potentially crystalline lift of type $\tau$ and Hodge--Tate weights $(2,1,0)$.
Moreover, after such a restriction, $\tau$ is still not $1$-generic and $\rhobar$ is $10$-generic.
We can then assume without loss of generality that $\tau$ is principal series.
By \cite[Remark 2.2.8]{LLL}, $\tau$ is not $2$-generic and $\rhobar$ is $10$-generic in the sense of \cite[Definition 2.2]{Enns}.
Then $\rhobar$ does not have a potentially crystalline lift of type $\tau$ and Hodge--Tate weights $(2,1,0)$ by \cite[Proposition 7]{Enns}.

If $\tau$ is $1$-generic and $R^{\tau}_{\rhobar}\neq 0$, then Proposition \ref{telim2} implies that $\tau$ is $7$-generic.
Proposition \ref{typematching} implies that $W^?(\rhobar,\tau) \neq \emptyset$.
Suppose that $\tau$ is $1$-generic and $W^?(\rhobar,\tau)\neq \emptyset$. 
By Proposition \ref{prop:singlepatch}, a patching functor $M_\infty$ for $\rhobar$ exists.
For any $\cO_E$-lattice $\sigma(\tau)^\circ \subset \sigma(\tau)$, $M_\infty(\sigma(\tau)^\circ) \neq 0$ by Theorem \ref{SWC}.
This implies that $R^\tau_{\rhobar}$ is nonzero.

We now show that if $\tau$ is $1$-generic and $R^\tau_{\rhobar}\neq 0$ then 
\begin{equation}
\label{eq:BM:mult}
e(\ovl{R}_{\rhobar}^\tau) = e(M_\infty(\ovl{\sigma}(\tau)^\circ)).
\end{equation}
As $R^\tau_{\rhobar}\neq 0$ we deduce from Proposition \ref{telim2} that $\tau$ is 7-generic. The proof of (\ref{eq:BM:mult}) is now obtained by a direct generalization of the arguments of \cite[\S 8]{LLLM}, so we only explain the key details.
There is a ring $\tld{R}_{\rhobar}^\tau$ of the same dimension as $\ovl{R}_{\rhobar}^\tau[\![X_1,\cdots X_{3f}]\!]$ which is a power series ring over the completed tensor product over $\F$ of rings $\widetilde{R}$ in \cite[\S 8]{LLLM} and $\ovl{R}^{\mathrm{expl},\nabla}_{\ovl{\fM},\tld{w}}$ for some $\ovl{\fM}$ and $\tld{w}$ from \cite[Table 7]{LLLM}, and which admits a surjection to $\ovl{R}_{\rhobar}^\tau[\![X_1,\cdots X_{3f}]\!]$.
Then we have 
\[
e(\tld{R}_{\rhobar}^\tau) = e(M_\infty(\ovl{\sigma}(\tau)^\circ))
\]
for any $\cO$-lattice $\sigma(\tau)^\circ \subset \sigma(\tau)$ by computing both sides using \cite[Table 7, Propositions 8.5 and 8.12]{LLLM} and Theorem \ref{SWC} (using that $\rhobar$ is $10$-generic).
The proof of \cite[Proposition 7.14]{LLLM} implies 
\[
e(M_\infty(\ovl{\sigma}(\tau)^\circ)) \leq e(\ovl{R}_{\rhobar}^\tau).
\] 
Then we must have that $e(\tld{R}_{\rhobar}^\tau) = e(\ovl{R}_{\rhobar}^\tau[\![X_1,\cdots X_{3f}]\!])=e(\ovl{R}_{\rhobar}^\tau)$ by the above surjection.
Furthermore, $\tld{R}_{\rhobar}^\tau$ is reduced and Cohen--Macaulay by the same argument as in the proof of Lemma \ref{lemma:BM}.%
Then $\ovl{R}_{\rhobar}^\tau[\![X_1,\cdots X_{3f}]\!]$ is isomorphic to $\tld{R}_{\rhobar}^\tau$ by \cite[Lemma 8.8]{LLLM} and the desired properties of $\ovl{R}_{\rhobar}^\tau$ hold because they do for $\tld{R}_{\rhobar}^\tau$.
The desired ring theoretic properties of $R_{\rhobar}^\tau$ follow from the proof of \cite[Corollary 8.9]{LLLM}.
\end{proof}

\begin{rmk}\label{rmk:inj}
Since the number of irreducible components of $\ovl{R}_{\rhobar}^\tau$ is equal to $\# W^?(\rhobar,\tau)$, $\ovl{R}_{\rhobar}^\tau$ is reduced and $M_\infty(\ovl{\sigma}(\tau)^\circ)$ has full support over $\ovl{R}_\infty(\tau)$, the proof of Theorem \ref{thm:BM} shows that the irreducible support of $M_\infty(\sigma)$ must be different for each $\sigma \in W^?(\rhobar,\tau)$.
\end{rmk}

\subsection{The geometric Breuil--M\'{e}zard conjecture}
\label{subsec:GBM}

We now show that weak minimal patching functors can be used to assign components in deformation rings to Serre weights.

\begin{prop} 
\label{prop:identify:cmpt}
\begin{enumerate}
\item \label{item:assignment} Let $\rhobar$ be as in Theorem \ref{thm:conngenfib}.
There is a unique assignment $\sigma \mapsto \mathfrak{p}(\sigma)$ for $\sigma \in W^?(\rhobar)$ such that $\mathfrak{p}(\sigma)\subset R_{\rhobar}^{\Box}$ is a prime ideal and
\begin{equation}\label{eqn:union}
\Spec(\ovl{R}^{\tau}_{\rhobar})=\underset{\sigma\in W^?(\rhobar,\tau)}{\bigcup} \Spec(R_{\rhobar}^{\Box}/\mathfrak{p}(\sigma))
\end{equation}
for any tame type $\tau$ where the right-hand side is given the reduced scheme-structure.
Moreover, the image of $\mathfrak{p}(\sigma)$ in $\ovl{R}^{\tau}_{\rhobar}$ is a minimal prime ideal and $(\ref{eqn:union})$ is the decomposition of $\ovl{R}^{\tau}_{\rhobar}$ into irreducible components.
\item If $M_\infty$ is a weak minimal patching functor for $\rhobar_{\cS} = (\rhobar_{\tld{v}})_{\tld{v}\in \cS}$, then the scheme-theoretic support of $M_\infty(\otimes_{\tld{v}\in \cS}\sigma_{\tld{v}})$ is $\Spec(R_\infty/\sum_{\tld{v}\in \cS}\mathfrak{p}(\sigma_{\tld{v}})R_\infty)$. \label{item:patchcomp}
\end{enumerate}
\end{prop}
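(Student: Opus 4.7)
The plan is to construct the assignment $\sigma\mapsto \mathfrak{p}(\sigma)$ using a specific weak minimal patching functor, verify the identity (\ref{eqn:union}) via the numerical Breuil--M\'ezard statement (Theorem \ref{thm:BM}) together with the Serre weight conjecture (Theorem \ref{SWC}), and then deduce uniqueness by induction on defect using Lemma \ref{lemma:weight-combinatorics}.

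To construct $\mathfrak{p}(\sigma)$, I would apply Proposition \ref{prop:singlepatch} to produce a weak minimal patching functor $M_\infty$ for the single Galois representation $\rhobar$; the ring $R_\infty$ identifies with a formal power series ring over $R^\square_\rhobar$ since the other local deformation factors entering its construction are formally smooth by hypothesis. For each $\sigma \in W^?(\rhobar)$, Theorem \ref{SWC} gives $e(M_\infty(\sigma)) = 1$, while Definition \ref{minimalpatching}(\ref{dimd}) asserts that $M_\infty(\sigma)$ is maximal Cohen--Macaulay on its support, which is pure of dimension $d$. The Hilbert--Samuel additivity formula
\[ e(M_\infty(\sigma)) = \sum_{\mathfrak{P}} \ell(M_\infty(\sigma)_\mathfrak{P}) \cdot e(R_\infty/\mathfrak{P}), \]
with $\mathfrak P$ ranging over the minimal primes of $\supp(M_\infty(\sigma))$ (all of dimension $d$ by purity), and with all summands being positive integers, then forces $\supp(M_\infty(\sigma)) = V(\mathfrak{P}(\sigma))$ for a unique such prime $\mathfrak{P}(\sigma)$, at which $M_\infty(\sigma)$ has length one. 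I would define $\mathfrak p(\sigma)\subset R^\square_{\rhobar}$ as the unique prime with $\mathfrak p(\sigma)R_\infty = \mathfrak{P}(\sigma)$.

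For the formula (\ref{eqn:union}) with an arbitrary tame $10$-generic $\tau$, exactness of $M_\infty$ combined with Proposition \ref{WE} and Theorem \ref{SWC} shows that the Jordan--H\"older factors of $\taubar^\circ$ on which $M_\infty$ is nonzero are exactly the elements of $W^?(\rhobar,\taubar)$; since $M_\infty(\tau^\circ)$ is maximal Cohen--Macaulay over $R_\infty(\tau)$ by Definition \ref{minimalpatching}(\ref{support}), its reduction modulo $\varpi$ has support all of $\Spec \ovl R_\infty(\tau)$. Hence
\[ \Spec \ovl R_\infty(\tau) = \bigcup_{\sigma\in W^?(\rhobar,\taubar)} V(\mathfrak{P}(\sigma)) \]
set-theoretically, and Theorem \ref{thm:BM} forces both sides to have the same number of irreducible components, so the $\mathfrak P(\sigma)$ exhaust the minimal primes of $\ovl R_\infty(\tau)$. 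Reducedness of $\ovl R^\tau_\rhobar$, which can be extracted from the proof of Theorem \ref{thm:BM} via the reduced Cohen--Macaulay cover $\tld R$ of \cite{LLLM}, yields (\ref{eqn:union}) after descending to $R^\square_\rhobar$ and shows each $\mathfrak p(\sigma)$ is a minimal prime of $\ovl R^\tau_\rhobar$. Uniqueness is then immediate by induction on defect using Lemma \ref{lemma:weight-combinatorics}: for $\sigma$ of defect zero, Lemma \ref{lemma:weight-combinatorics}(\ref{minimal-type}) provides a type $\tau$ with $W^?(\rhobar,\taubar) = \{\sigma\}$, forcing $\mathfrak p(\sigma) = \sqrt{I_\tau}$ where $I_\tau = \ker(R^\square_\rhobar\onto \ovl R^\tau_\rhobar)$; for defect $\delta>0$, Lemma \ref{lemma:weight-combinatorics}(\ref{lower-defect}) gives a type $\tau'$ where $\sigma$ is the unique weight of maximal defect, pinning down $\mathfrak p(\sigma)$ as the remaining minimal prime of $\ovl R^{\tau'}_\rhobar$ not assigned by the induction hypothesis.

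For part (\ref{item:patchcomp}), the same multiplicity argument applied to $M_\infty(\otimes_{\tld v}\sigma_{\tld v})$ shows its support equals $V(\mathfrak Q)$ for a unique prime $\mathfrak Q\subset R_\infty$; I would identify $\mathfrak Q = \sum_{\tld v}\mathfrak p(\sigma_{\tld v}) R_\infty$ by choosing types $\tau_{\tld v}$ with $\sigma_{\tld v}\in W^?(\rhobar_{\tld v},\taubar_{\tld v})$ and exploiting that reducedness of each $\ovl R^{\tau_{\tld v}}_{\rhobar_{\tld v}}$ (Theorem \ref{thm:BM}) makes the irreducible components of $\Spec\ovl R_\infty(\tau)$ products of components at each place, of the form $V(\sum_{\tld v}\mathfrak p(\sigma'_{\tld v}) R_\infty)$ for tuples $(\sigma'_{\tld v})\in \prod W^?(\rhobar_{\tld v}, \taubar_{\tld v})$ by part (\ref{item:assignment}) applied at each $\tld v$. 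An induction on total defect across places, parallel to that of the previous paragraph but using these product types, identifies $\mathfrak Q$ as the component indexed by $(\sigma_{\tld v})_{\tld v}$. The main technical obstacle throughout is verifying that $\mathfrak p(\sigma)$ is genuinely intrinsic, i.e.~independent of the auxiliary type used to isolate it; this is resolved by combining the sharp multiplicity control $e(M_\infty(\sigma)) = 1$ with the combinatorial flexibility of Lemma \ref{lemma:weight-combinatorics}, which for every defect provides a tame type in which a chosen weight of that defect is the unique one at maximal defect.
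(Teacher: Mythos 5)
Your proposal is correct and follows essentially the same approach as the paper: construct $\mathfrak{p}(\sigma)$ from a weak minimal patching functor via Proposition \ref{prop:singlepatch}, use $e(M_\infty(\sigma))=1$ from Theorem \ref{SWC} to pin down the support as a single irreducible component, verify (\ref{eqn:union}) using the Cohen--Macaulay/maximally-supported property together with the component count from Theorem \ref{thm:BM}, and establish both uniqueness and part (\ref{item:patchcomp}) by induction on defect through Lemma \ref{lemma:weight-combinatorics}. The only cosmetic difference is that the paper proves uniqueness before existence and phrases the key reducedness step in terms of the scheme-theoretic support of $M_\infty(\taubar^\circ)$ being generically reduced with no embedded components (given connectedness of the generic fiber from Theorem \ref{thm:conngenfib}), rather than appealing directly to reducedness of $\ovl R^\tau_\rhobar$ as you do, but these come to the same thing.
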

\begin{proof}
We first prove uniqueness.
Suppose there is such an assignment.
This closely follows the procedure of induction on the defect with respect to $W^?(\rhobar)$ in the proof of Theorem \ref{SWC}.
If the defect of $\sigma \in W^?(\rhobar)$ is $0$, then by letting $\tau$ be the minimal type of $\sigma$ with respect to $\rhobar$ we have $\#W^?(\rhobar,\tau)=1$ by Lemma \ref{lemma:weight-combinatorics}.
Then we must have $\mathfrak{p}(\sigma) = \mathrm{Ann}_{R_{\rhobar}^{\Box}}\ovl{R}^{\tau}_{\rhobar}$.
If the defect of $\sigma$ is $\delta>0$, then choose $\tau$ be the minimal type of $\sigma$ with respect to $\rhobar$ as in Lemma \ref{lemma:weight-combinatorics}. Then by induction and Lemma \ref{lemma:weight-combinatorics}(\ref{lower-defect}), there is a unique component of $\Spec(\ovl{R}^{\tau}_{\rhobar})$ whose defining ideal is not $\mathfrak{p}(\sigma')$ for some $\sigma'\in W^?(\rhobar,\tau)$ of lower defect.
Then $\mathfrak{p}(\sigma)$ must be this defining ideal.

We now show existence of an assignment.
By Proposition \ref{prop:singlepatch}, there is a weak minimal patching functor $M_\infty$ for $\rhobar$, which we fix.
By Remark \ref{rmk:descendsupport}, the generic point of the scheme-theoretic support of $M_\infty(\sigma)$ is of the form $\mathfrak{p}'(\sigma) R_\infty$ for some prime ideal $\mathfrak{p}'(\sigma) \subset {R}^{\Box}_{\rhobar}$.
We claim that $\sigma \mapsto \mathfrak{p}'(\sigma)$ is an assignment satisfying (\ref{eqn:union}).
Indeed, since the generic fiber of $R^{\tau}_{\rhobar}$ is connected by \ref{thm:conngenfib}, $\Spec \ovl{R}_\infty(\tau)$ is the scheme-theoretic support of $M_\infty(\ovl{\sigma}(\tau)^{\circ})$. 
On the other hand, $M_\infty(\ovl{\sigma}(\tau)^{\circ})$ is filtered by $M_\infty(\sigma)$ for $\sigma \in W^?(\rhobar,\tau)$ so that the support of $M_\infty(\ovl{\sigma}(\tau)^{\circ})$ is 
\begin{equation} \label{eqn:unioncheck}
\underset{\sigma\in W^?(\rhobar,\tau)}{\bigcup} \Spec(R_\infty/\mathfrak{p}'(\sigma)R_\infty).
\end{equation}
(\ref{eqn:unioncheck}) is a decomposition of $\ovl{R}_\infty(\tau)$ into irreducible components by Remark \ref{rmk:inj}. Finally, we observe that this statement descends to $\ovl{R}^{\tau}_{\rhobar}$.

We now show part (\ref{item:patchcomp}).
Suppose that $M_\infty$ is a weak minimal patching functor for $\rhobar_{\cS}$.
Let $\sigma = \otimes_{\tld{v}\in \Sigma}\sigma_{\tld{v}} \in W^?(\rhobar_{\cS})$.
Again by the proof of Theorem \ref{SWC}, the scheme-theoretic support of $M_\infty(\sigma)$ is $\Spec R_\infty/(\sum_{\tld{v}\in \cS}\mathfrak{p}(\sigma)_{\tld{v}} R_\infty)$ for some prime ideals $\mathfrak{p}(\sigma)_{\tld{v}} \subset R^\Box_{\rhobar_{\tld{v}}}$.
We will show that $\mathfrak{p}(\sigma)_{\tld{v}} = \mathfrak{p}(\sigma_{\tld{v}})$, where $\mathfrak{p}$ is the assignment in part (\ref{item:assignment}).
We induct on $\delta = \mathrm{Def}_{\rhobar_\cS}(\sigma)$.
If $\delta = 0$, then one can choose $\tau_\cS$ as in Lemma \ref{lemma:weight-combinatorics} so that $\ovl{R}_\infty(\tau_\cS) = R_\infty/\mathfrak{p}(\sigma)$.
We conclude that $\mathfrak{p}(\sigma)_{\tld{v}} = \mathfrak{p}(\sigma_{\tld{v}})$ for all ${\tld{v}}\in \cS$ in this case.
Suppose that $\delta > 0$.
Again choose $\tau_\cS$ as in Lemma \ref{lemma:weight-combinatorics}.
Then by the inductive hypothesis, for any weight $\sigma' =\otimes_{\tld{v}\in \cS} \sigma'_{\tld{v}} \in W^?(\rhobar_{\cS},\tau_\cS)$ with $\sigma' \not\cong \sigma$, the scheme-theoretic support of $M_\infty(\sigma')$ is $R_\infty/(\sum_{\tld{v}\in \cS}\mathfrak{p}(\sigma'_{\tld{v}}) R_\infty)$.
Since the generic fiber of $\Spec(\widehat{\otimes}_{\tld{v}\in \cS} R_{\rhobar_{\tld{v}}}^{\tau_{\tld{v}}})$ is connected by \ref{thm:conngenfib}, by item (\ref{item:assignment}) the scheme-theoretic support of $M_\infty(\ovl{\sigma}(\tau_\cS)^\circ)$ is 
\begin{equation}
\underset{\sigma'\in W^?(\rhobar_{\cS},\tau_\cS)}{\bigcup} \Spec(R_\infty/(\sum_{\tld{v}\in \cS} \mathfrak{p}(\sigma'_{\tld{v}}) R_\infty)).
\end{equation}
From this, we see that since $e(M_\infty(\sigma)) = 1$, the scheme-theoretic support of $M_\infty(\sigma)$ is forced to be $\Spec R_\infty/(\sum_{\tld{v}\in \cS}\mathfrak{p}(\sigma_{\tld{v}}) R_\infty)$. 
\end{proof}

We have the following refinement of Proposition \ref{prop:identify:cmpt}(\ref{item:patchcomp}).

\begin{lemma}
\label{lem:supp} Assume $\rhobar$ is $10$-generic.
Let $\tau_\cS= (\tau_{\tld{v}})_{\tld{v}\in \cS}$ be a collection of tame inertial types and let $V$ be a subquotient of $\ovl{\sigma}(\tau_\cS)^\circ$ for some $\cO$-lattice $\sigma(\tau_\cS)^\circ$ in $\sigma(\tau_\cS)$.
Define the closed subscheme $\ovl{X}_\infty(V)\into \Spec \ovl{R}_\infty$ to be the reduced subscheme underlying $\bigcup_{\sigma\in \JH(V)} \supp(M_{\infty}(\sigma))$.
Then the scheme-theoretic support of $M_{\infty}(V)$ is $\ovl{X}_\infty(V)$.
In particular if $M_\infty(V)$ is a cyclic $R_\infty$-module, then $M_\infty(V) \cong R_\infty/I(V)$ where
\begin{equation*}
I(V)\defeq \underset{\sigma\in \JH(V)}{\bigcap}\mathrm{Ann}_{R_{\infty}}(M_{\infty}(\sigma)).
\end{equation*}
\end{lemma}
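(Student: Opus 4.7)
The plan is to reduce the scheme-theoretic support statement to the single-weight case (which is Proposition \ref{prop:identify:cmpt}(\ref{item:patchcomp})) by a composition-series argument, combined with a depth calculation that rules out embedded associated primes; the ``in particular'' then follows immediately.

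First I would fix a composition series $0 = V_0 \subsetneq V_1 \subsetneq \cdots \subsetneq V_n = V$ of $V$ as a $K$-module, with successive quotients $V_i/V_{i-1} \cong \sigma_i$ enumerating $\JH(V)$ bijectively (this is where multiplicity-freeness enters). Exactness of $M_\infty$ yields a filtration of $M_\infty(V)$ with graded pieces the $M_\infty(\sigma_i)$. By axiom (\ref{dimd}) of Definition \ref{minimalpatching}, each nonzero $M_\infty(\sigma_i)$ is maximal Cohen--Macaulay of dimension $d$ over $R_\infty$, so iterating the Depth Lemma along this filtration would give $\mathrm{depth}_{R_\infty}M_\infty(V) \geq d$. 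Since the support of $M_\infty(V)$ is contained in the $d$-dimensional set $\bigcup_\sigma \supp M_\infty(\sigma)$, we conclude that $M_\infty(V)$ is either zero or itself maximal Cohen--Macaulay of dimension $d$; in particular it has no embedded associated primes.

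Next, by Proposition \ref{prop:identify:cmpt}(\ref{item:patchcomp}) combined with Theorem \ref{SWC}, for each $\sigma = \otimes_{\tld{v}}\sigma_{\tld{v}} \in \JH(V)$ with $M_\infty(\sigma) \neq 0$ the scheme-theoretic support of $M_\infty(\sigma)$ is $\Spec R_\infty/\mathfrak{q}(\sigma)$ where $\mathfrak{q}(\sigma) \defeq \sum_{\tld{v}}\mathfrak{p}(\sigma_{\tld{v}})R_\infty$ is prime, and $M_\infty(\sigma)$ is faithful Cohen--Macaulay of generic rank one over $R_\infty/\mathfrak{q}(\sigma)$. Localizing the filtration of $M_\infty(V)$ at $\mathfrak{q}(\sigma)$: the graded pieces $M_\infty(\sigma_i)_{\mathfrak{q}(\sigma)}$ for $\sigma_i \neq \sigma$ vanish, since $\mathfrak{q}(\sigma_i)$ and $\mathfrak{q}(\sigma)$ are pairwise incomparable primes of the same dimension $d$. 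Hence $M_\infty(V)_{\mathfrak{q}(\sigma)} \cong M_\infty(\sigma)_{\mathfrak{q}(\sigma)}$, which is a one-dimensional vector space over the residue field of $R_\infty$ at $\mathfrak{q}(\sigma)$.

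Combining these observations, the associated primes of $M_\infty(V)$ are exactly the set $\{\mathfrak{q}(\sigma):\sigma\in\JH(V),\, M_\infty(\sigma)\neq 0\}$ (no embedded primes, by Cohen--Macaulayness), and at each such prime the localization of $M_\infty(V)$ is annihilated by the maximal ideal of $R_{\mathfrak{q}(\sigma)}$. A standard primary decomposition argument—contracting back from the localizations, using the absence of embedded primes and the incomparability of the $\mathfrak{q}(\sigma)$'s—then yields
\[
\mathrm{Ann}_{R_\infty}M_\infty(V) \;=\; \bigcap_{\sigma\in\JH(V)}\mathfrak{q}(\sigma) \;=\; I(V),
\]
which is precisely the defining ideal of the reduced scheme $\ovl{X}_\infty(V)$. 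The ``in particular'' is then immediate: any cyclic module is isomorphic to $R_\infty$ modulo its annihilator. The subtlest point will be verifying that each primary component of $\mathrm{Ann}_{R_\infty}M_\infty(V)$ is the prime $\mathfrak{q}(\sigma)$ itself (not a proper $\mathfrak{q}(\sigma)$-primary ideal), and this is exactly what the generic-rank-one computation of the previous paragraph buys.
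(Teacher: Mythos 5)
Your proof is correct and is essentially an unpacked version of the paper's own, which defers the details to the proof of Proposition~8.1.1 in \cite{EGS} and summarizes them in exactly the two steps you carry out: the scheme-theoretic support of $M_\infty(V)$ is generically reduced (your localization computation at each $\mathfrak{q}(\sigma)$, using that $M_\infty(\sigma)$ has generic rank one over $R_\infty/\mathfrak{q}(\sigma)$ by Theorem~\ref{SWC} and Proposition~\ref{prop:identify:cmpt}), and it has no embedded primes because $M_\infty(V)$ is Cohen--Macaulay (your filtration-and-depth-lemma argument). No divergence from the paper's route.
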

\begin{proof}
Since $\rhobar$ is $10$-generic, there is nothing to prove unless $\tau_S$ is $7$-generic.
The proof now follows exactly as in the second paragraph of the proof of \cite[Proposition 8.1.1]{EGS}.
We recall the argument. 
The support of $M_\infty(V)$ is (the topological space) $\ovl{X}_\infty(V)$ since $M_\infty(V)$ is filtered by $M_\infty(\sigma)$ for $\sigma \in \JH(V)$.
It suffices to show that the scheme-theoretic support of $M_\infty(V)$ is reduced.
The scheme-theoretic support of $M_\infty(V)$ is generically reduced (since the same is true for $M_\infty(\ovl{\sigma}(\tau_\cS)^\circ)$). 
Now since each $M_\infty(\sigma)$ is maximal Cohen-Macaulay over $\ovl{R}_\infty(\tau_\cS)$ (by Definition \ref{minimalpatching}(\ref{dimd}) and the fact that $\dim \ovl{R}_\infty(\tau_{\cS})=d$), and the maximal Cohen--Macaulay property is preserved under extension (by the characterization of depth in terms of $\Ext$ groups), $M_\infty(V)$ is maximal Cohen--Macaulay over $\ovl{R}_\infty(\tau_\cS)$.
This guarantees that the scheme-theoretic support of $M_\infty(V)$ has no embedded associated primes, and hence is reduced.
\end{proof}

\subsubsection{Matching components}
\label{sec:match}

Recall that Proposition \ref{prop:identify:cmpt} gives a canonical parametrization of the irreducible components of the special fiber of the potentially crystalline deformation ring $R_{\rhobar}^{\tau}$ in terms of $W^?(\rhobar, \tau)$. 
Given a lowest alcove presentation $(s,\mu-\eta)$ of $\tau=\tau(s,\mu)$ we will define in item (\ref{it:def:rings}) below explicit rings $\ovl{R}_{\ovl{\fM},\tld{w}}^{\expl,\nabla}$, building on \cite[\S 5.3.1, \S 8]{LLLM}. 
The rings $\ovl{R}_{\ovl{\fM},\tld{w}}^{\expl,\nabla}$ will be formally smooth modifications of $\ovl{R}_{\rhobar}^{\tau}$.
Thus, Proposition \ref{prop:identify:cmpt} gives a bijection between minimal primes of $\ovl{R}_{\ovl{\fM},\tld{w}}^{\expl,\nabla}$ and $W^?(\rhobar, \tau)$. On the other hand, by Corollary \ref{cor:intersections}, the data $(s,\mu)$ gives a description of $W^?(\rhobar, \tau)$ as $\sigma^{(s,\mu)}_{(\omega,a)}$, for $(\omega,a)\in \Sigma_{\tld{w}}$. In this subsection, we will make the bijection between minimal primes of 
 $\ovl{R}_{\ovl{\fM},\tld{w}}^{\expl,\nabla}$ and $\Sigma_{\tld{w}}$ explicit. This will be needed in Section \ref{sec:BC}, where we need to check relations between ideals corresponding to various subquotients of $\ovl{\sigma}(\tau)^0$ for certain lattices $\sigma(\tau)^0$.

We begin by recalling the relationship between $\ovl{R}_{\ovl{\fM},\tld{w}}^{\expl,\nabla}$ and $\ovl{R}^{\tau}_{\rhobar}$.
For the rest of this section, we assume $\rhobar$ is $10$-generic. Recall that we have chosen a lowest alcove presentation $(s,\mu-\eta)$ of $\tau=\tau(s,\mu)$. We assume that $R^\tau_{\rhobar}\neq 0$, and thus $\tau$ is $7$-generic and there exists a unique $\ovl{\fM}\in Y^{\eta,\tau}(\F)$ such that $T^*_{dd}(\overline{\fM}) \cong \rhobar|_{G_{K_{\infty}}}$. Let $\tld{w}=\tld{w}(\rhobar,\tau)$ be the shape of $\ovl{\fM}$ and let $\overline{\cM} \defeq (\ovl{\fM}[1/u'])^{\Delta = 1}$. We also recall the notion of gauge basis \cite[Definitions 4.15, 6.11]{LLLM}, and we fix a gauge basis $\ovl{\beta}$ for $\ovl{\fM}$. We also fix a framing (i.e.~a basis) for $\rhobar$. Recall from \cite[Definition 2.11 and \S 6]{LLLM} (see also the discussion after \cite[Definition 3.2.8]{LLL}, which is more aligned with the notation of this paper) the notation $A^{(i)}\defeq \Mat_{\beta}(\phi^{(i)}_{\fM,s_{\orient,i+1}(3)})$, for an eigenbasis $\beta$ of a Kisin module $\fM$ with descent data of type $\tau$ and $s_{\orient}\in \un{W}$ the orientation of $\tau=\tau(s,\mu)$. We say that $A^{(i)}$ is the matrix of the $i$-th partial Frobenius $\phi^{(i)}_{\fM,s_{\orient,i+1}(3)}$ with respect to the eigenbasis $\beta$.

We have the following canonical diagram (cf \cite[Diagram (5.9)]{LLLM}:
\begin{equation}
\label{diag:f.s.prime}
\xymatrix{
\Spf(\ovl{R}^{\tau,\ovl{\beta},\Box}_{\ovl{\fM},\rhobar})\ar^{f.s.}[rr]
\ar_{f.s.}[d]\ar@{}[drr]|{\ulcorner}&&\Spf(\ovl{R}_{\ovl{\fM},\tld{w}}^{\expl,\nabla})\ar^{f.s.}[d]\ar@{^{(}->}[r]\ar@{}[dr]|{\ulcorner}&\ovl{D}_{\ovl{\fM}}^{\tau,\ovl{\beta}}\ar^{f.s.}[d]
\\
\Spf (\ovl{R}^{\tau}_{\rhobar})=\Spf(\ovl{R}^{\tau,\Box}_{\ovl{\fM},\rhobar})\ar^{f.s.}[rr]\ar@{^{(}->}[d]\ar@{}[drr]|{\ulcorner}&&\left[\Spf(\ovl{R}_{\ovl{\fM},\tld{w}}^{\expl,\nabla})/\widehat{\bf{G}}_m^{3f}\right]\ar@{^{(}->}^{\imath_{\tau}}[d]\ar@{^{(}->}[r]&
 \ovl{Y}^{\eta,\tau}_{\ovl{\fM}} \ar^{\imath'_{\tau}}@{^{(}->}[dl]
\\
\Phi\text{-}\Mod^{\text{\'et},\Box}_{\ovl{\cM}}\ar^{f.s.}[rr]
&&\Phi\text{-}\Mod^{\text{\'et}}_{\ovl{\cM}}
}
\end{equation}
where $f.s.$ stands for a formally smooth morphism. %
We explain the diagram:
\begin{enumerate}
\item $\ovl{R}^{\tau,\ovl{\beta},\Box}_{\ovl{\fM},\rhobar}$, $\ovl{R}^{\tau,\Box}_{\ovl{\fM},\rhobar}$, are defined in \cite[\S 4.3, \S 5.2, \S 6.2]{LLLM}. They parametrize various deformation problems of $\rhobar$ and $\ovl{\fM}$ with extra data such as framings on Galois representations and gauge bases on $\ovl{\fM}$. We note that in \emph{loc.~cit.}, we used the symbol $\mu$ for what we call $\eta$ in this paper.
\item $\Phi\text{-}\Mod^{\text{\'et}}_{\ovl{\cM}}$ (resp. $\Phi\text{-}\Mod^{\text{\'et},\Box}_{\ovl{\cM}}$) denotes the groupoid of \'{e}tale $\phz$-modules deforming $\ovl{\cM}$ (resp. deformations with a basis on the associated $G_{K_\infty}$-representation).
\item $\ovl{Y}^{\eta,\tau}_{\ovl{\fM}}$ is the groupoid whose values on a local Artinian $\F$-algebra $A$ is given by the groupoid of pairs $(\fM_A,\jmath_A)$ where $\fM_A\in Y^{\eta,\tau}(A)$ and $\jmath_A:\fM_A\otimes_A\F\stackrel{\sim}{\ra}\ovl{\fM}$ is an isomorphism in $Y^{\eta,\tau}(\F)$. %
The groupoid $\ovl{D}_{\ovl{\fM}}^{\tau,\ovl{\beta}}$ parametrizes the same data plus the data of a gauge basis lifting $\ovl{\beta}$. By \cite[Theorem 6.12 ]{LLLM}, there is an action of $\widehat{\bf{G}}_m^{3f}$ on $\ovl{D}_{\ovl{\fM}}^{\tau,\ovl{\beta}}$ by scaling the gauge basis, and one has $\left[\ovl{D}_{\ovl{\fM}}^{\tau,\ovl{\beta}}/\widehat{\bf{G}}_m^{3f}\right]\cong \ovl{Y}^{\eta,\tau}_{\ovl{\fM}}$.
By \cite[Theorem 4.17 and \S 6.2]{LLLM}, $\ovl{D}_{\ovl{\fM}}^{\tau,\ovl{\beta}}$ is representable by $\ovl{R}^{\tau,\ovl{\beta}}_{\ovl{\fM}}=\widehat{\bigotimes}_{i}(R_{\tld{w}_i}^{\mathrm{expl}})^{p\text{-flat, red}}/\varpi$. Over $\ovl{R}^{\tau,\ovl{\beta}}_{\ovl{\fM}}$, we have a universal pair $(\fM^{\univ},\beta^{\univ})$, and hence the universal matrices of partial Frobenii $A^{(i),\univ}$.
By construction, $(R_{\tld{w}_i}^{\mathrm{expl}})^{p\text{-flat, red}}/\varpi$ is a quotient of the power series ring over $\F$ generated by (suitable modifications of) the coefficients of the entries of $A^{(i),\univ}$ subject to certain ``finite height'' equations. The map $\imath'_{\tau}$ is the map sending $\fM^{\univ}$ to $(\fM^{\univ}[\frac{1}{u'}])^{\Delta=1}$
\item 
\label{it:def:rings}
The ring $\ovl{R}_{\ovl{\fM},\tld{w}}^{\expl,\nabla}= \widehat{\bigotimes}_{i}\ovl{R}_{\ovl{\fM},\tld{w}_i}^{\expl,\nabla}$. We recall the description of each component ring from \cite{LLLM} (see also Table \ref{Table:intsct} below). 
We let $(a,b,c)\in \Fp^3$ be the mod $p$ reduction of $s^{-1}_{f-1-i}(\mu_{f-1-i})\in X^*(T)\cong \Z^3$.
\begin{enumerate}
\item \label{expl:length>1}When $\ell(\tld{w}_i)>1$: $\ovl{R}_{\ovl{\fM},\tld{w}_i}^{\expl,\nabla}$ is the quotient of the power series ring over $\F$ generated by (suitable modifications of) of the coefficients of the entries of $A^{(i),\univ}$ by an explicit list of relations given by \cite[\S 5.3 and Table 7]{LLLM}. In this case, we even have the rings  $R_{\ovl{\fM},\tld{w}_i}^{\expl,\nabla}$ and diagram (\ref{diag:f.s.prime}) can be lifted to a diagram over $\cO$ with the same properties, cf.~\cite[Diagram (5.9)]{LLLM}.

Note that, strictly speaking, \cite[Table 7]{LLLM} only has entries for $\tld{w}_i$ belonging to a certain set of representatives under the action of the outer automorphisms of $\tld{W}_a$.
\item
\label{expl:alpha}
When $\ell(\tld{w}_i)=1$: By symmetry, we may assume $\tld{w}_i=\alpha t_{\un{1}}$. 
The matrix $A^{(i),\univ}$ has the form
\begin{align*}
A^{(i),\univ}=\begin{pmatrix}
c_{11}&c_{12}+vc_{12}^*& c_{13}\\
vc_{21}^*&c_{22}+vd_{22}&c_{23}\\
vc_{31}&vc_{32}&c_{33}+vc_{33}^*
\end{pmatrix}.
\end{align*}
Set $\tld{c}_{32}\defeq \frac{c_{32}c_{21}^*-d_{22}c_{31}}{c_{21}^*}$. 
We \emph{define} $\ovl{R}_{\ovl{\fM},\tld{w}_i}^{\expl,\nabla}$ to be the quotient of the power series ring 
\[
\F[\![c_{11},c_{12},c_{13}, c_{22},c_{23}, c_{31}, \tld{c}_{32},c_{33},d_{22}, c^{*}_{12}-[\ovl{c}^*_{12}], c^{*}_{21}-[\ovl{c}^*_{21}], c^{*}_{33}-[\ovl{c}^*_{33}]]\!]
\] by the relations:
\begin{align*}
&c_{11}c_{23}=0,&&
c_{33}^*c_{11}\tld{c}_{32}=c_{13}c_{31}\tld{c}_{32},\\
&c_{11}d_{22}c^*_{33}=\frac{b-c}{a-b}c^*_{21}c_{13}\tld{c}_{32},&&
c_{13}c_{23}\tld{c}_{32}=0,\\
&c_{23}c_{31}\tld{c}_{32}=0,&&
(a-b)c_{13}c_{31}d_{22}+(c-b)c_{13}\tld{c}_{32}c_{21}^*+(-1-a+c)c_{23}c_{31}c_{12}^*=0,\\
&c_{12}c^*_{33}=\frac{a-c}{a-b}c_{13}\tld{c}_{32},
&&
c_{22}c_{33}^*=\frac{(-1-a+c)}{(-1-a+b)}c_{23}\tld{c}_{32},
\\
&c_{21}^*c_{33}=c_{31}c_{23}.
&&
\end{align*}
These equations comes from \cite[Proposition 8.11]{LLLM} and its proof, by restoring the units $c_{12}^*, c_{21}^*$ and $c_{33}^*$ (which were set to be $1$ in \emph{loc.~cit.}); the first $6$ equations above are deduced from the equations appearing in the statement of \emph{loc.~cit.}~(where $d_{22}$ above is denoted by $c'_{22}$ in \emph{loc.~cit.}), the seventh and eighth equations above appear in the proof of \emph{loc.~cit.}, and the last equation above is implicit in \emph{loc.~cit.}~(where we solved $c_{33}$ using the $p$-saturation of the $2$ by $2$ minor condition).
In particular, $\ovl{R}_{\ovl{\fM},\tld{w}_i}^{\expl,\nabla}$ is a formal power series ring over the ring $\tld{R}$ appearing in \cite[Proposition 8.11]{LLLM}.
\item 
\label{expl:id}
When $\ell(\tld{w}_i)=0$ the matrix $A^{(i),\univ}$ has the form:
\begin{align*}
A^{(i),\univ}=\begin{pmatrix}
c_{11}+vc_{11}^*&c_{12}& c_{13}\\
vc_{21}&c_{22}+vc_{22}^*&c_{23}\\
vc_{31}&vc_{32}&c_{33}+vc_{33}^*
\end{pmatrix}.
\end{align*}
We \emph{define} $\ovl{R}_{\ovl{\fM},\tld{w}_i}^{\expl,\nabla}$ to be the quotient of $\F[\![c_{ij},\  c^{*}_{kk}-[\ovl{c}_{kk}^*],\, 1\leq i,j,k\leq 3]\!]$ by the relations
\begin{align*}
&c_{ii}c_{jj}=0\,\text{for $i\neq j$},&&
c_{11}c_{23}=0&&c_{31}c_{22}=0,&&
c_{33}c_{12}=0,\\
&c_{12}c_{23}=c_{22}c_{13},&&
c_{11}c_{32}=c_{12}c_{31},&&
c_{21}c_{33}=c_{31}c_{23}&&
\end{align*}
and
\begin{align*}
&(-1-a+c)c^*_{22}c_{33}+(-1-a+b)c_{22}c^*_{33}-(-1-a+c)c_{23}c_{32}=0
\\
&(a-b)c^*_{33}c_{11}+(-1-b+c)c_{33}c^*_{11}-(a-b)c_{13}c_{31}=0
\\
&(b-c)c^*_{11}c_{22}+(a-c)c_{11}c^*_{22}-(b-c)c_{12}c_{21}=0\\
&c_{11}c_{22}^*c_{33}^*+c_{22}c_{11}^*c_{33}^*+c_{33}c_{11}^*c_{22}^*-c_{11}^*c_{23}c_{32}-c_{22}^*c_{13}c_{31}-c_{33}^*c_{12}c_{21}+c_{13}c_{32}c_{21}=0.
\end{align*}
These equations comes from \cite[Corollary 8.4]{LLLM}, by restoring the units $c_{kk}^*$ (which were set to be $1$ in \emph{loc.~cit.}) and we added the equation $c_{12}c_{33}=0$, which was missing in \emph{loc.~cit.}~and which is obtained by the $2$ by $2$ minor condition on $A^{(i),\univ}$.
In particular, $\ovl{R}_{\ovl{\fM},\tld{w}_i}^{\expl,\nabla}$ is a formal power series ring over the ring $\tld{R}$ appearing in \cite[Corollary 8.4]{LLLM}.
\end{enumerate}
\end{enumerate}
Note that each object with a superscript $\Box$ receives a $\widehat{\GL}_3$-action, corresponding to changing the framing on the Galois representation.
 
We now justify the diagram:
\begin{itemize}
\item We claim that there is a canonical isomorphism $\left[\Spf(\ovl{R}^{\tau,\ovl{\beta},\Box}_{\ovl{\fM},\rhobar})/\widehat{\GL}_3\right]\cong \Spf (\ovl{R}_{\ovl{\fM},\tld{w}}^{\expl,\nabla})$. When $\ell(\tld{w}_i)>1$ for all $i$, this is \cite[Theorem 5.12, Theorem 6.14 and Table 7]{LLLM}. When $\ell(\tld{w}_i)\leq 1$ for some $i$, this follows from the same arguments as \cite[\S 8]{LLLM} together with Theorem \ref{SWC}, as explained in the proof of Theorem \ref{thm:BM}. This justifies the existence and the properties of the first row of Diagram (\ref{diag:f.s.prime}). It is clear from the construction that the $\widehat{\bf{G}}_m^{3f}$-action on $\ovl{D}_{\ovl{\fM}}^{\tau,\ovl{\beta}}$ preserves $\Spf (\ovl{R}_{\ovl{\fM},\tld{w}}^{\expl,\nabla})$. Let $(\fM_{\tld{w},\tau},\beta_{\tld{w},\tau})$ be the restriction of $(\fM^{\univ},\beta^{\univ})$ to $\Spf (\ovl{R}_{\ovl{\fM},\tld{w}}^{\expl,\nabla})$. Thus the map $\imath_{\tau}$ sends $\fM_{\tld{w},\tau}$ to $\cM_{\tld{w},\tau}\defeq (\fM_{\tld{w},\tau}[\frac{1}{u'}])^{\Delta=1}$.
\item The second row is obtained from the first row by quotiening by the $\widehat{\bf{G}}_m^{3f}$-action coming from scaling the gauge basis, hence inherits all properties from the first row. The top squares are Cartesian.
\item The second column is obtained from the first column by quotiening by the $\widehat{\GL}_3$-action coming from changing the framing of the Galois representation, hence the bottom square is Cartesian.
\end{itemize}
The following Proposition finishes our justification of the diagram:
\begin{prop}
\label{prop:tg:inj}
Assume that $\tau$ is $3$-generic and that $\ovl{\fM}\in \ovl{Y}^{\eta,\tau}(\F)$ is semisimple of shape $\tld{w}=(\tld{w}_j)$.
Let $\ovl{\beta}$ be a gauge basis for $\ovl{\fM}$.
Then the map 
\[
\imath'_\tau:  \ovl{Y}^{\eta,\tau}_{\ovl{\fM}}\rightarrow
\Phi\text{-}\Mod^{\text{\'et}}_{\ovl{\cM}}
\]
is a monomorphism.
\end{prop}
\begin{proof}
We need to prove that the map on the groupoids of $\F[\varepsilon]/(\varepsilon^2)$-points
\[
\ovl{Y}^{\eta,\tau}_{\ovl{\fM}}(\F[\varepsilon]/(\varepsilon^2))\rightarrow
\Phi\text{-}\Mod^{\text{\'et}}_{\ovl{\cM}}(\F[\varepsilon]/(\varepsilon^2))
\]
induced by $\imath'_{\tau}$ is fully faithful. But this follows from \cite[Proposition 3.2.18]{LLL}, noting that the right-hand side is equivalent to $\mathrm{Rep}_{\F[\eps]/\eps^2}(G_{K_\infty})_{\rhobar}$ in \emph{loc.~cit.}
\end{proof}

Diagram (\ref{diag:f.s.prime}) gives a bijection between the set of minimal primes $\Irr\big(\Spec(\ovl{R}^{\tau}_{\rhobar})\big)$ and $\Irr(\Spec(\ovl{R}_{\ovl{\fM},\tld{w}}^{\expl,\nabla}))=\prod_i \Irr(\Spec \big(\ovl{R}_{\ovl{\fM},\tld{w}_i}^{\expl,\nabla})\big)$. By Proposition \ref{prop:identify:cmpt}, this set is in bijection with the set $\fp(\sigma)$ for $\sigma\in W^?(\rhobar, \tau)$.
On the other hand, by Proposition \ref{typematching}, we have a bijection
\begin{align*}
\prod_i \Sigma_{\widetilde{w}^*_i}&\stackrel{\sim}{\ra}W^?(\rhobar, \tau) \\
(\omega,a)&\mapsto \sigma^{(s,\mu)}_{(\omega,a)}
\end{align*}
The following Theorem is the main result of this subsection, which computes the above bijections in terms of the explicit rings:
\begin{thm} 
\label{thm:matching}
Assume $\rhobar$ is $10$-generic. Via the above bijections, we have
\begin{align*}
\prod_{i=0}^{f-1}\Sigma_{\tld{w}^*_{i}}&\stackrel{\sim}{\longrightarrow}
\prod_i \Irr\big(\Spec (\ovl{R}_{\ovl{\fM},\tld{w}_i}^{\expl,\nabla})\big)\overset{\sim}{\longrightarrow}\Irr\big(\Spec(\ovl{R}^{\tau}_{\rhobar})\big)\\
\big((\omega_i,a_i)\big)_i&\longmapsto  \big((\fC_{(\omega_{f-1-i},a_{f-1-i})}\ovl{R}_{\ovl{\fM},\tld{w}_i}^{\expl,\nabla}\big)_i \longmapsto \mathfrak{p}(\sigma^{(s,\mu)}_{(\omega, a)}),\nonumber
\end{align*}
where $\fC_{(\omega_i,a_i)}$ is the minimal prime of $\ovl{R}_{\ovl{\fM},\tld{w}_{f-1-i}}^{\expl,\nabla}$ is given by Table \ref{Table:intsct}.
\end{thm}
\begin{rmk}
\label{rmk:chg:var}
Note that Table \ref{Table:intsct} only gives the ideals $\fC_{(\omega_i,a_i)}\subseteq \ovl{R}_{\ovl{\fM},\tld{w}_{f-1-i}}^{\expl,\nabla}$ for a set of representatives for $\tld{w}_{f-1-i}\in \Adm^\vee((2,1,0))$ for the action of the outer automorphisms of $\tld{W}^\vee$.
This is sufficient because this action corresponds to changing the lowest alcove presentation of $\tau$.
\end{rmk}
We now describe the strategy of proof. The main idea is that by the proof of Proposition \ref{prop:identify:cmpt}, the prime $\fp(\sigma)$ can be characterized by the relation (\ref{eqn:union}) for a specific type $\tau'$ (inducting on defect of $\sigma$). Namely, one can use the minimal type $\tau'$ of $\sigma$ with respect to $\rhobar$. For this type, each component of $\tld{w}'=\tld{w}(\rhobar,\tau')$ has length $\geq 3$. Furthermore, the minimal type $\tau'$ has the property that $\Spf\ovl{ R}^{\tau'}_{\rhobar}\subset \Spf \ovl{R}^{\tau}_{\rhobar}$ inside $\Spf \ovl{R}^{\Box}_{\rhobar}$. Thus it suffices to determine the subset of $\Irr\big(\Spec (\ovl{R}_{\ovl{\fM},\tld{w}_i}^{\expl,\nabla})\big)$ which occur in $\ovl{ R}^{\tau'}_{\rhobar}$. This is achieved by ``matching'' the universal \'{e}tale $\phz$-module over a union of irreducible components of $\ovl{R}_{\ovl{\fM},\tld{w}}^{\expl,\nabla}$ with the universal \'{e}tale $\phz$-module living over $\ovl{R}^{\tau'}_{\rhobar}$ (or rather $\ovl{R}_{\ovl{\fM}',\tld{w}'}^{\expl,\nabla}$).

The precise formulation of this matching mechanism is given by the following:
\begin{lemma}\label{lem:matching} Let $\tau=\tau(s,\mu)$ and $\tau'=\tau(s',\mu')$. Assume that $R^{\tau}_{\rhobar}, R^{\tau'}_{\rhobar}\neq 0$, and the running hypothesis that $\rhobar$ is $10$-generic. Consider diagram (\ref{diag:f.s.prime}) for $R^{\tau}_{\rhobar}$, $R^{\tau'}_{\rhobar}$, constructed using the above presentations. We decorate the objects that occur in the diagram for $\tau'$ with the same symbol as those in the diagram for $\tau$ but with a superscript ${}^\prime$ added \emph{(}so we have, e.g.~$\tld{w}'$, $\fM_{\tld{w}',\tau'}$, etc.\emph{)}. Assume that there are ideals $I$, \emph{(}resp. $I'$\emph{)} of $\ovl{R}_{\ovl{\fM},\tld{w}}^{\expl,\nabla}$ \emph{(}resp. $\ovl{R}_{\ovl{\fM}',\tld{w}'}^{\expl,\nabla}$\emph{)} such that
\begin{itemize}
\item $I$, $I'$ are intersections of minimal primes.
\item There is an isomorphism $\ovl{R}_{\ovl{\fM},\tld{w}}^{\expl,\nabla}/I\cong \ovl{R}_{\ovl{\fM}',\tld{w}'}^{\expl,\nabla}/I'$.%
\item There exists an isomorphism of \'{e}tale $\phz$-modules between the base change of $\cM_{\tld{w},\tau}$ to $\ovl{R}_{\ovl{\fM},\tld{w}}^{\expl,\nabla}/I$ and the base change of $\cM_{\tld{w}',\tau'}$ to $\ovl{R}_{\ovl{\fM}',\tld{w}'}^{\expl,\nabla}/I'$ compatible with the above ring homomorphism. 
\end{itemize}
Let $J$ \emph{(}rep. $J'$\emph{)} denote the intersection of minimal primes in $\ovl{R}^{\tau}_{\rhobar}$ \emph{(}resp. $\ovl{R}^{\tau'}_{\rhobar}$\emph{)} which correspond to $I$ \emph{(}resp. $I'$\emph{)}. Then, $J$ and $J'$ induce the same ideals in $\ovl{R}^{\Box}_{\rhobar}$.
\end{lemma}
\begin{proof} Our hypotheses implies there is an isomorphism
\begin{equation}
\xymatrix{
\Spf(\ovl{R}_{\ovl{\fM},\tld{w}}^{\expl,\nabla}/I)\ar[rd]\ar^{\sim}[rr]&
&\Spf(\ovl{R}_{\ovl{\fM}',\tld{w}'}^{\expl,\nabla}/I')\ar[dl]
\\
&\Phi\text{-}\Mod^{\text{\'et}}_{\ovl{\cM}}
&}
\end{equation}
Pulling this back along the map $\Phi\text{-}\Mod^{\text{\'et},\Box}_{\ovl{\cM}}\rightarrow \Phi\text{-}\Mod^{\text{\'et}}_{\ovl{\cM}}$ gives a commutative diagram
\begin{equation}
\xymatrix{
\Spf(\ovl{R}^{\tau,\ovl{\beta},\Box}_{\ovl{\fM},\rhobar})/I^\Box)\ar_{f.s}[d]\ar^{\sim}[rr]&
&\Spf(\ovl{R}^{\tau',\ovl{\beta}',\Box}_{\ovl{\fM}',\rhobar}/I^{\prime\,\Box})\ar_{f.s}[d]
\\
\Spf(\ovl{R}^{\tau}_{\rhobar}/J)\ar@{^{(}->}[d]\ar@{^{(}->}[rdd] & & \Spf(\ovl{R}^{\tau'}_{\rhobar}/J')\ar@{_{(}->}[d] \ar@{_{(}->}[ldd]
\\
\Spf( \ovl{R}^{\Box}_{\rhobar})\ar@{^{(}->}[rd]&&\Spf( \ovl{R}^{\Box}_{\rhobar})\ar@{_{(}->}[dl]\\
&\Phi\text{-}\Mod^{\text{\'et},\Box}_{\ovl{\cM}}
&}
\end{equation}
where we use \cite[Proposition 3.12]{LLLM} to see that the natural map $\Spf(\ovl{R}^{\Box}_{\rhobar}) \to \Phi\text{-}\Mod^{\text{\'et},\Box}_{\ovl{\cM}}$ is a monomorphism (as $\rhobar$ $10$-generic implies $\ad(\rhobar)$ is cyclotomic free).
But this implies that $\Spf(\ovl{R}^{\tau}_{\rhobar}/J)$ and $\Spf(\ovl{R}^{\tau'}_{\rhobar}/J')$ define the same subfunctor of 
$\Phi\text{-}\Mod^{\text{\'et},\Box}_{\ovl{\cM}}$, hence also the same subfunctor of $\Spf(\ovl{R}^{\Box}_{\rhobar})$.
\end{proof}
In practice, we will apply the Lemma by matching matrices of partial Frobenii: 
\begin{cor}\label{cor:matching} Keep the notations and setting of Lemma \ref{lem:matching}. Assume that there exists $\tld{z}=(\tld{z}_i)_i\in {\un{W}}^\vee_a$ with corresponding $\tld{z}^*=(\tld{z}^*_i)_i\in \un{W}_a$ and $z$ is the image of $\tld{z}$ in $\un{W}^\vee$, such that $s'=sz^*$ and $\mu'=\mu+s\tld{z}^*(0)$.
Write $(I_i)_i$ for the collection of ideals in $\ovl{R}_{\ovl{\fM},\tld{w}_i}^{\expl,\nabla}$ giving rise to $I$, and write $A^{(i)}$ for the matrix of the $i$-th partial Frobenius of $\fM_{\tld{w},\tau}$ with respect to $\beta_{\tld{w},\tau}$, and let $(I'_i)_i$ and $A'^{(i)}$ be the analogous objects for $\tau'$. Assume that:
\begin{itemize} 
\item The isomorphism $\ovl{R}_{\ovl{\fM},\tld{w}}^{\expl,\nabla}/I\cong \ovl{R}_{\ovl{\fM}',\tld{w}'}^{\expl,\nabla}/I'$ is induced by a collection of isomorphisms $\ovl{R}_{\ovl{\fM},\tld{w}_i}^{\expl,\nabla}/I_i\cong \ovl{R}_{\ovl{\fM}',\tld{w}'_i}^{\expl,\nabla}/I'_i$.
\item For each $i$, 
\[A^{(f-1-i)} \mod I_{f-1-i} = A'^{(f-1-i)}\tld{z}_{f-1-i} \mod I'_{f-1-i}\] 
via the above isomorphisms.
\end{itemize} 
Then the same conclusion as Lemma \ref{lem:matching} holds.
\end{cor}
\begin{proof} This follows from Lemma \ref{lem:matching}, Proposition \ref{prop:phif} and the fact that $\tld{z}s^*t_{\mu^*}=s'^*t_{\mu'^*}$.
\end{proof}

\begin{rmk}
\label{matching gauge} 
Note that the second condition in Corollary \ref{cor:matching} implies in particular that 
\begin{equation}
\label{eq:equal:modM}
\big(\ovl{A}^{(i)}\big)_i=\big(\ovl{A}^{\prime\,(i)} \tld{z}_i \big)_i
\end{equation} 
where $\ovl{A}^{(i)}, \ovl{A}^{\prime \,(i)}$ denote the reductions modulo the maximal ideal.  We now explain how in the situations where we apply Corollary \ref{cor:matching} we can always arrange this.  

If we have $\rhobar$ semisimple and $\tau$, $\tau', \tld{z}$ as in Proposition \ref{prop:minimal type formula}, then  
\begin{equation}
\label{eq:equal:modM0}
\tld{w}=\tld{w}' \tld{z}
\end{equation}
Furthermore, for any choices of gauge bases $\beta$, $\beta'$, for $\fM_{\tld{w},\tau}$ and $\fM_{\tld{w}',\tau'}$ respectively, one has
$\ovl{A}^{(i)}\in T(\bF) \tld{w}_i$ and $\ovl{A}^{\prime\,(i)}\in T(\bF) \tld{w}'_i$ by Proposition \ref{simpleform}. Let $\fM'_{\tld{w}, \tau}$ denote the element of $Y^{\eta, \tau}(\F)$ with eigenbasis $\beta_{\fM'}$ such that the partial Frobenii is given by $(\ovl{A}^{\prime\,(i)} \tld{z}_i)_i$. (Note that $\fM'_{\tld{w}, \tau} \in Y^{\eta, \tau}(\F)$ because it has shape $\tld{w} \in \Adm^{\vee}(\eta)$.)    Since $\tld{z}s^*t_{\mu^*}=s'^*t_{\mu'^*}$,  $(\fM'_{\tld{w}, \tau})^{\Delta = 1} \cong (\fM_{\tld{w}', \tau'})^{\Delta = 1}$ by Proposition \ref{prop:phif}. Thus,  $T^*_{\mathrm{dd}}(\fM'_{\tld{w}, \tau}) \cong \rhobar$.  By the triviality of Kisin variety (see \cite[Theorem 3.2]{LLLM}), $\fM'_{\tld{w}, \tau} \cong \fM_{\tld{w}, \tau}$ and $\beta_{\fM'}$ defines a gauge basis of $\fM_{\tld{w}, \tau}$. Thus, by replacing $\beta_{\fM}$ by $\beta_{\fM'}$ (this can be done by scaling by an element of $\un{T}(\F)$ by \cite[Proposition 3.2.22]{LLL}), we can then ensure that condition (\ref{eq:equal:modM}) holds.
In all the matching the we perform below, we will always assume that the gauge bases have been chosen so that condition (\ref{eq:equal:modM}) holds.
\end{rmk}

The following Proposition describes all the types $\tau'$ for which we need to perform some matching with $\tau$.
\begin{prop}\label{prop:minimal type formula} Assume $\rhobar$ is $10$-generic. Let $\tau=\tau(s,\mu)$ be a tame inertial type such that $R^{\tau}_{\rhobar}\neq 0$, and let $\tld{w}=\tld{w}(\rhobar,\tau)=(\tld{w}_i)_i$. Let $(\omega,a)=\big((\omega_i,a_i)\big)_i\in \prod_i \Sigma_{\tld{w}^*_i}$ and $\sigma=\sigma^{(s,\mu)}_{(\omega,a)}\in W^?(\rhobar,\tau)$. Let $\tau'$ be the minimal type of $\sigma$ with respect to $\rhobar$, cf.~Remark \ref{rmk:minimaltype}. Then:
\begin{itemize}
\item For $\tld{z}^*=(\tld{z}^*_i)_i\in \un{\tld{W}}_a$ as given by Table \ref{Table:intsct}, we have $\tau'=\tau(s',\mu')$ where
\[s'=sz^*,\quad \mu'=\mu+s\tld{z}^*(0)\] 
\item $\tld{w}(\rhobar, \tau') \tld{z} = \tld{w}$; and%
\item  $W^?(\rhobar,\tau')\subset W^?(\rhobar,\tau)$.
\end{itemize}
\end{prop}
\begin{proof} This is immediate by computing the pairwise intersections among $\Sigma_0$, $\tld{w}_i^*(r(\Sigma_0))$ and $\tld{z}_i^*(\Sigma_0)$.
\end{proof}

\begin{proof}[Proof of Theorem \ref{thm:matching}]
By Proposition \ref{prop:minimal type formula} and the proof of Proposition \ref{prop:identify:cmpt}, for each $\sigma\in W^?(\rhobar,\tau)$ with minimal type $\tau'$ with respect to $\rhobar$, we need to show that the intersection of minimal primes 
\begin{equation*}
\underset{(\omega,a)\in \Sigma \cap \tld{z}^*(\Sigma)}{\bigcap} \Bigg(\sum_{i=0}^{f-1} \fC_{(\omega_i,a_i)}\ovl{R}_{\ovl{\fM},\tld{w}}^{\expl,\nabla}\bigg)=\sum_{i=0}^{f-1}\bigg(\underset{(\omega_i,a_i)\in \Sigma_0 \cap \tld{z}_i^*(\Sigma_0)}{\bigcap}\fC_{(\omega_i,a_i)}\ovl{R}_{\ovl{\fM},\tld{w}}^{\expl,\nabla}\bigg) 
\end{equation*}
of $\ovl{R}_{\ovl{\fM},\tld{w}}^{\expl,\nabla}$ corresponds to the intersection of primes $\underset{\sigma'\in W^?(\rhobar,\tau')}{\bigcap} \fp(\sigma')$ of $\ovl{R}^{\Box}_{\rhobar}$. 
But this follows from Corollary \ref{cor:matching} (which allows us to work for each $i$ separately) and the explicit computations in Subsection \ref{subsec:explicitcomputation} below.
\end{proof}

\begin{table}[h]
\caption{}
\label{Table:intsct}
\centering
\adjustbox{max width=\textwidth}{
\begin{tabular}{| c | c | c | c | c | }
\hline
$\tld{w}_{f-1-i}t_{-\un{1}}$&$A^{(f-1-i)}$&$(\omega_i,a_i)\in \Sigma_{\tld{w}^*_i}$&$\fC_{(\omega_i, a_i)}$&$\tld{z}_i^*$\\
\hline
&&&&\\
\multirow{2}{*}{$\beta\alpha\gamma$}&\multirow{2}{*}{$\begin{matrix}\begin{pmatrix}vc_{11}&v c_{12}^*&0\\v^2c_{21}^*&v c_{22}&0\\v(c_{31}+v d_{31})&vc_{32}&c_{33}^*\end{pmatrix}\\
c_{11}c_{22}=0;\\
(-1-a+c)c_{12}^*c_{31}-(-1-b+c)c_{32}c_{11}=0
\end{matrix}$}&$(\eps_1+\eps_2,0)$&$(c_{11})$ &$\beta\gamma^+\beta$ \\
&&&&\\
\cline{3-5}&&&&\\
&&$(\eps_2,1)$&$(c_{22})$&$\alpha$ 
\\
&&&&\\
\hline
\hline
&&&&\\
\multirow{2}{*}{$\alpha\beta\gamma$}&\multirow{2}{*}{$\begin{matrix}\begin{pmatrix}v^2c_{11}&0&0\\v(c_{21}+vd_{21})&c_{22}&c_{23}^*\\v(c_{21}c_{33}(c_{23}^*)^{-1}+v d_{31})&vc^*_{32}&c_{33}\end{pmatrix}\\
c_{22}c_{33}=0;\\
(-1-a+c)c_{21}c^*_{32}+(b-c)d_{31}c_{22}=0
\end{matrix}$}&$(\eps_1+\eps_2,0)$&$(c_{22})$ &$\alpha\gamma^+\alpha$ 
\\
&&&&\\
\cline{3-5}&&&&\\
&&
$(\eps_1,1)$&$(c_{33})$&$\beta$ \\
&&&&\\
\hline
\hline
&&&&\\
\multirow{2}{*}{$\alpha\beta\alpha$}&\multirow{2}{*}{$
\begin{matrix}
\begin{pmatrix}
c_{11}&c_{11}c_{32}(c^{*}_{31})^{-1}&d_{33}c_{11}(c^{*}_{31})^{-1}+vc_{13}^{*}\\
0&v c^{*}_{22}& vc_{23} \\
v c^{*}_{31}&v c_{32}&v d_{33}
\end{pmatrix}\\
\\
c_{11}((a-b)c_{23}c_{32}-(a-c)c^{*}_{33}d_{33})=0
\end{matrix}
$}&$(0,0)$&$(c_{11})$ &$\gamma^+$ 
\\
&&&&\\
\cline{3-5}&&&&\\
&&
$(0,1)$&$((a-b)c_{23}c_{32}-(a-c)c^{*}_{33}d_{33})$&$\id$ \\
&&&&\\
\hline
\hline
&&&&\\
\multirow{4}{*}{$\alpha\beta$}&\multirow{4}{*}{$\begin{matrix}\begin{pmatrix}c_{13}c_{12}(c_{32}^*)^{-1}&c_{12}&c_{13}+v c_{13}^*\\v c_{21}^*&c_{22}&c_{23}+vd_{23}\\v c_{31}&vc^*_{32}&c_{31}c_{23}(c_{21}^*)^{-1}+v d_{33}\end{pmatrix}\\
\\
c_{12}c_{23}-c_{22}c_{13}=0;\\
c_{22}c_{31}=0;\\
c_{32}^*c_{13}-d_{33}c_{12}=0;\\
c_{12}((a-b)c_{31}d_{23}-(b-c)d_{33}c_{21}^*)=0;\\
(-1-a+c)c_{23}c_{32}^*=(-1-a+b)c_{22}d_{33}
\end{matrix}$}&$(\eps_1,1)$&$(c_{12},c_{31})$ &$\gamma^+\beta$ \\&&&&\\
\cline{3-5}&&&&
\\
&&$(\eps_1-\eps_2,0)$&$(c_{31},d_{33})$&$\beta\gamma^+\alpha\beta$ \\
&&&&\\
\cline{3-5}&&&&
\\
&&$(0,0)$&$(c_{12},c_{22})$&$\alpha\gamma^+$ \\&&&&\\
\cline{3-5}&&&&
\\
&&$(0,1)$&$(c_{22},(b-c)d_{33}c_{21}^*+(a-b)c_{31}d_{23})$&$\alpha$ \\
&&&&\\
\hline
\hline
&&&&\\
\multirow{4}{*}{$\beta\alpha$}&\multirow{4}{*}{$\begin{matrix}\begin{pmatrix}c_{11}&(c_{31}^*)^{-1}c_{11}c_{32}+v c_{12}^*&c_{13}\\0&vd_{22}&vc_{23}^*\\v c^*_{31}&vc_{32}&c_{33}+v d_{33}\end{pmatrix}\\
\\
c_{11}c_{33}=0;\\
d_{22}(c_{13}c_{31}^*-c_{11}d_{33})=0;\\
c_{11}((a-b)c_{32}c^*_{23}-(a-c)d_{22}d_{33})=0;\\
(1+a-c)c_{33}c_{23}^*c_{12}^*=c_{13}((a-b)c_{32}c^*_{23}-(a-c)d_{22}d_{33})
\end{matrix}$}&$(\eps_2,1)$&$(d_{22},c_{11})$ &$\gamma^+\alpha$ \\
&&&&\\
\cline{3-5}&&&&
\\
&&$(\eps_2-\eps_1,0)$&$(d_{22},c_{32})$&$\alpha\gamma^+\beta\alpha$ \\
&&&&\\
\cline{3-5}&&&&
\\
&&$(0,0)$&$(c_{11},c_{13})$&$\beta\gamma^+$ \\
&&&&\\
\cline{3-5}&&&&
\\
&&$(0,1)$&$((a-b)c_{32}c^*_{23}-(a-c)d_{22}d_{33},c_{13}c_{31}^*-c_{11}d_{33})$&$\beta$ \\
&&&&\\
\hline
\hline
\multirow{6}{*}{$\alpha$}&\multirow{6}{*}{$\begin{matrix}\begin{pmatrix}
c_{11}&c_{12}+vc_{12}^*& c_{13}\\
vc_{21}^*&c_{22}+vd_{22}&c_{23}\\
vc_{31}&vc_{32}&(c_{21}^*)^{-1}c_{31}c_{23}+vc_{33}^*
\end{pmatrix}\\
\\
\text{see \S \ref{subsec:explicitcomputation} for the relations among coefficients}\end{matrix}$}&$(\eps_1,1)$&$(c_{11},c_{13},c_{31})$&$\beta\gamma^+\beta$ \\
\cline{3-5}&&$(\eps_2,0)$&$(c_{11},c_{31},c_{32}c_{21}^*-d_{22}c_{31})$
&$\gamma^+\alpha\beta$\\
\cline{3-5}&&$(\eps_2,1)$&$ (c_{11},c_{32}c_{21}^*-d_{22}c_{31}, (a-b)c_{13}d_{22}+(-1-a+c)c_{23}c^*_{12})$
&$\gamma^+\alpha$\\
\cline{3-5}&&$(\eps_2-\eps_1,0)$&$(c_{23},d_{22},c_{32}c_{21}^*-d_{22}c_{31})$
&$\alpha\beta\gamma^+\beta\alpha$\\
\cline{3-5}&&$(0,0)$&$(c_{11},c_{13},c_{23})$
&$\beta\alpha\gamma^+$\\
\cline{3-5}&&$(0,1)$&$ (c_{11}c_{33}^*-c_{13}c_{31},c_{23}, (a-b)c_{31}d_{22}+(c-b)(c_{32}c_{21}^*-d_{22}c_{31}))$
&$\beta\alpha$\\
\hline
\hline
\multirow{6}{*}{$\id$}&\multirow{6}{*}{$
\begin{matrix}
\begin{pmatrix}
c_{11}+vc_{11}^*&c_{12}& c_{13}\\
vc_{21}&c_{22}+vc_{22}^*&c_{23}\\
vc_{31}&vc_{32}&c_{33}+vc_{33}^*
\end{pmatrix}
\\
\text{see \S \ref{subsec:explicitcomputation} for the relations among coefficients}\end{matrix}
$}&$(\eps_1,0)$&$(c_{ii}, \text{\tiny{$i=1,2,3$}}, \ c_{21},c_{31},c_{23})$&
$\beta\gamma^+\beta\alpha$\\
\cline{3-5}&&$(\eps_1,1)$&$
(c_{31}, c_{33}, c_{11}, (-1-a+c)c_{32}c_{13} - (-1-a+b)c_{12}c^*_{33}, c_{21}c_{13} - c_{23}c^*_{11})
$&
$\beta\gamma^+\beta$ \\
\cline{3-5}&&$(\eps_2,0)$&$(c_{ii}, \text{\tiny{$i=1,2,3$}},\ c_{12},c_{31},c_{32})$&
$\alpha\gamma^+\alpha\beta$ \\
\cline{3-5}&&$(\eps_2,1)$&$(c_{12}, c_{22}, c_{11}, (a-b)c_{21}c_{13} -(-1-b+c)c_{23}c^*_{11}, c_{21}c_{32} - c_{31}c^*_{22})
$&
$\alpha\gamma^+\alpha$ \\
\cline{3-5}&&$(0,0)$&$(c_{ii}, \text{\tiny{$i=1,2,3$}},\ c_{13},c_{23},c_{12})$&
$\alpha\beta\alpha\gamma^+$\\
\cline{3-5}&&$(0,1)$&$
(c_{23}, c_{33}, c_{22}, (a-b)c_{21}c_{32} -(a-c)c_{31}c^*_{22}, c_{32}c_{13} - c_{12}c^*_{33})$&
$\alpha\beta\alpha$\\
\hline
\end{tabular}}
\caption*{The table records data relevant to Theorem \ref{thm:matching}. %
The first column records the components of the shape $\tld{w}=\tld{w}(\rhobar,\tau)$. The second column records the form of the matrix of partial Frobenius $A^{(f-1-i)}$, and the presentation of the ring $\ovl{R}_{\ovl{\fM},\tld{w}_{f-1-i}}^{\expl,\nabla}$ in terms of the entries of $A^{(f-1-i)}$. The fourth column records the prime ideal $\fC_{(\omega_i,a_i)}$ in the statement of Theorem \ref{thm:matching}. The last column records the element $\tld{z}^*\in \un{W}_a$ that occurs in Proposition \ref{prop:minimal type formula}, which controls the minimal type $\tau'$ of the weight given by the third column with respect to $\rhobar$. Thus $\tau'=\tau(s z^*,\mu+s \tld{z}^*(0))$.
Note that $\tld{w}(\rhobar,\tau')=\tld{w}(\rhobar,\tau)\tld{z}^{-1}$. Finally, the structure constants that feature in the presentation of $\ovl{R}_{\ovl{\fM},\tld{w}_{f-1-i}}^{\expl,\nabla}$ are given by $(a,b,c)\in\Fp^3$ with $(a,b,c)\equiv s_i^{-1}(\mu_i)\mod p$.
}
\end{table}
\begin{table}[h]
\centering
\adjustbox{max width=\textwidth}{
\begin{tabular}{| c | c | c | c | c |}
\hline
$\tld{w}_{f-1-i}t_{-\un{1}}$&$A^{(f-1-i)}$&$(\omega_i,a_i)\in \Sigma_{\tld{w}^*_{i}}$&$\fW_{\omega_i}\defeq\fC_{(\omega_i,0)}\cap \fC_{(\omega_i,1)}$ &$\tld{z}_i^*$\\
\hline
&&&&\\
{$\alpha\beta$}&{$
\begin{matrix}\begin{pmatrix}c_{13}c_{12}(c_{32}^*)^{-1}&c_{12}&c_{13}+v c_{13}^*\\v c_{21}^*&c_{22}&c_{23}+vd_{23}\\v c_{31}&vc^*_{32}&c_{31}c_{23}(c_{21}^*)^{-1}+v d_{33}\end{pmatrix}\\
\\
c_{12}c_{23}-c_{22}c_{13}=0;\\
c_{22}c_{31}=0;\\
c_{32}^*c_{13}-d_{33}c_{12}=0;\\
c_{12}((a-b)c_{31}d_{23}-(b-c)d_{33}c_{21}^*)=0;\\
(-1-a+c)c_{23}c_{32}^*=(-1-a+b)c_{22}d_{33}
\end{matrix}$
}&$(0,1),\ (0,0)$&
$(c_{22})$&$\alpha$\\
&&&&\\
\hline
\hline
&&&&\\
{$\beta\alpha$}&{$\begin{matrix}\begin{pmatrix}c_{11}&(c_{31}^*)^{-1}c_{11}c_{32}+v c_{12}^*&c_{13}\\0&vd_{22}&vc_{23}^*\\v c^*_{31}&vc_{32}&c_{33}+v d_{33}\end{pmatrix}\\
\\
c_{11}c_{32}=0;\\
d_{22}(c_{13}c_{31}^*-c_{11}d_{33})=0;\\
c_{11}((a-b)c_{32}c^*_{23}-(a-c)d_{22}d_{33})=0;\\
(1+a-c)c_{33}c_{23}^*c_{12}^*=c_{13}((a-b)c_{32}c^*_{23}-(a-c)d_{22}d_{33})
\end{matrix}$}&$(0,1),\ (0,0)$&
$(c_{13}c_{31}^*-c_{11}d_{33})$&$\beta$\\
&&&&\\
\hline
\hline
&&&&\\
\multirow{2}{*}{$\alpha$}&\multirow{2}{*}{$\begin{matrix}
\begin{pmatrix}
c_{11}&c_{12}+vc_{12}^*& c_{13}\\
vc_{21}^*&c_{22}+vd_{22}&c_{23}\\
vc_{31}&vc_{32}&(c_{21}^*)^{-1}c_{31}c_{23}+vc_{33}^*
\end{pmatrix}
\\
\text{see \S \ref{subsec:explicitcomputation} for the relations among coefficients}\end{matrix}$}&$(\eps_2,1),\ (\eps_2,0)$&
$(c_{32}c_{21}^*-d_{22}c_{31},c_{11})$&$\gamma^+\alpha$\\
&&&&\\
\cline{3-5}&&&&\\
&&$(0,1),\ (0,0)$&
$(c_{11}c_{33}^*-c_{13}c_{31},\ c_{23})$&$\beta\alpha$\\
&&&&\\
\hline
\hline
&&&&\\
\multirow{3}{*}{$\id$}&\multirow{3}{*}{$\begin{matrix}
\begin{pmatrix}
c_{11}+vc_{11}^*&c_{12}& c_{13}\\
vc_{21}&c_{22}+vc_{22}^*&c_{23}\\
vc_{31}&vc_{32}&c_{33}+vc_{33}^*
\end{pmatrix}
\\
\text{see \S \ref{subsec:explicitcomputation} for the relations among coefficients}\end{matrix}$}&$(\eps_1,1),\ (\eps_1,0)$&
$(c_{33},\ c_{11},\  c_{31},\ c_{11}^*c_{23}-c_{21}c_{13})$&$\beta\gamma^+\beta$\\
&&&&\\
\cline{3-5}&&&&\\
&&$(\eps_2,1),\ (\eps_2,0)$&
$(c_{11},\ c_{22},\  c_{12},\ c_{22}^*c_{31}-c_{32}c_{21})$&$\alpha\gamma^+\alpha$\\
&&&&\\
\cline{3-5}&&&&\\
&&$(0,1),\ (0,0)$&
$(c_{22},\ c_{33},\  c_{23},\ c_{33}^*c_{12}-c_{13}c_{32})$&$\alpha\beta\alpha$\\
&&&&\\
\hline
\end{tabular}}
\caption*{The table records further data relevant to Theorem \ref{thm:matching}.
The meaning of the columns is the same as in Table \ref{Table:intsct}.}
\end{table}
We record the following lemma, which follows from Theorem \ref{thm:matching}, for future use.
Let $\tld{w} = \tld{w}(\rhobar,\tau)$.
Suppose that $\ell(\widetilde{w}_i) \geq 2$ for all $0\leq i\leq f-1$. 
Let $N \defeq \sum_i (4-\ell(\widetilde{w}_i))$, and $R_N \defeq \widehat{\otimes}_{j=1}^N\cO[\![x_j,y_j]\!]/(x_jy_j-p)$.
 In this case, there exists a formally smooth map $R_N \ra {R}^{\expl,\nabla}_{\overline{\fM},\tld{w}}$, which we fix.
We think of $R_N$ as a subalgebra of ${R}^{\expl,\nabla}_{\overline{\fM},\tld{w}}$.
For $\sigma \in W^?(\rhobar, \tau)$, let $\mathfrak{p}^{\expl}(\sigma) \subset {R}^{\expl,\nabla}_{\overline{\fM},\tld{w}}$ be the prime ideal corresponding to the weight $\sigma$.
The minimal prime ideals of ${R}^{\expl,\nabla}_{\overline{\fM},\tld{w}}$ containing $\varpi$ are of the form $((z_j)_{j=1}^N+(\varpi)){R}^{\expl,\nabla}_{\overline{\fM},\tld{w}}$ where $z_j \in \{x_j,y_j\}$ for all $1\leq j \leq N$.
Define $z_j(\sigma)$ so that $\mathfrak{p}^{\expl}(\sigma) = ((z_j(\sigma))_{j=1}^N+(\varpi)){R}^{\expl,\nabla}_{\overline{\fM},\tld{w}}$.

\begin{lemma}\label{lemma:expl4wt}
Assume that $\rhobar$ is $10$-generic and that $\ell(\widetilde{w}_i) \geq 2$ for all $0\leq i\leq f-1$.
For $\sigma_1$ and $\sigma_2 \in W^?(\rhobar,\tau)$,
\[\#\big(\{z_j(\sigma_1)\}_{j=1}^N \Delta \{z_j(\sigma_2)\}_{j=1}^N\big) = 2\dgr{\sigma_1}{\sigma_2}\]
where $\Delta$ denotes the symmetric difference.
\end{lemma}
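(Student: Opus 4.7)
The plan is to reduce the identity to a single-embedding computation and then verify it by inspection of Tables \ref{TableExtGraph} and \ref{Table:intsct}. By construction, $R^{\expl,\nabla}_{\overline{\fM},\bf{w}}$ decomposes as a completed tensor product $\widehat{\otimes}_i R^{\expl,\nabla}_{\overline{\fM},\widetilde{w}_i}$, and the formally smooth cover $R$ similarly factors as $\widehat{\otimes}_i R_i$ where $R_i = \widehat{\otimes}_{j=1}^{N_i}\cO[\![x_{i,j},y_{i,j}]\!]/(x_{i,j}y_{i,j}-p)$ with $N_i = 4-\ell(\widetilde{w}_i)$; in particular the $N$ nodal pairs are organized embeddingwise. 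Combined with Proposition \ref{typematching} (identifying $W^?(\rhobar,\taubar)$ with $\prod_i \Sigma_{\widetilde{w}^*_i}$ and factoring $\mathfrak{p}^{\expl}(\sigma)$ accordingly via the matching of components in \S\ref{sec:match}), this yields
\[
\#\bigl(\{z_j(\sigma_1)\}_{j=1}^N \Delta \{z_j(\sigma_2)\}_{j=1}^N\bigr) \;=\; \sum_{i=0}^{f-1} \#\bigl(\{z_{i,j}(\sigma_{1,i})\}_{j=1}^{N_i} \Delta \{z_{i,j}(\sigma_{2,i})\}_{j=1}^{N_i}\bigr).
\]
On the graph side, Definition \ref{df:adj} shows that every edge in the extension graph changes exactly one embedding's coordinates, so the extension graph on $\prod_i \Sigma_{\widetilde{w}^*_i}$ is the product of the embeddingwise subgraphs, and $\dgr{\sigma_1}{\sigma_2} = \sum_i \dgr{\sigma_{1,i}}{\sigma_{2,i}}$. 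This reduces us to the case $f=1$.

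In the case $f=1$, fix a shape $\widetilde{w}$ with $\ell(\widetilde{w})\geq 2$ and set $N = 4-\ell(\widetilde{w})\in\{0,1,2\}$, so that $|\Sigma_{\widetilde{w}^*}| = 2^N$. The statement is trivial when $N=0$. When $N=1$, Table \ref{Table:intsct} shows that the two prime ideals $\mathfrak{p}^{\expl}(\sigma)$ are distinguished precisely by which of $\{x_1,y_1\}$ appears, and $\Sigma_{\widetilde{w}^*}$ is a single edge in the subgraph of Table \ref{TableExtGraph}, so the count matches. When $N=2$, the four weights in $\Sigma_{\widetilde{w}^*}$ form a $4$-cycle in the subgraph of Table \ref{TableExtGraph}, while the four choices $(z_1,z_2)\in \{x_j,y_j\}_{j=1}^2$ also form a $4$-cycle under Hamming distance; reading off Table \ref{Table:intsct} one checks that the resulting labelling $\sigma \mapsto (z_1(\sigma),z_2(\sigma))$ is a bijection mapping adjacent vertices to adjacent vertices, and hence preserves the metric.

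The main obstacle is purely bookkeeping in the final $N=2$ case, since Table \ref{Table:intsct} records the generators of $\mathfrak{p}^{\expl}(\sigma)$ in terms of matrix coordinates $c_{k\ell}$ (with variants $c'_{k\ell}$, $\tld{c}_{k\ell}$) on the explicit presentation of $R^{\expl,\nabla}_{\overline{\fM},\widetilde{w}_i}$ from \cite[\S 5.3]{LLLM}. To conclude one must invoke the change of variables of \cite[\S 5.3.2]{LLLM} to identify each nodal pair $(x_j,y_j)$ with specific pairs of these matrix coordinates, and then inspect, for each length-$2$ shape $\widetilde{w}$ individually (e.g. $\alpha\beta$, as tabulated), that the four-weight cycle and the two-bit cube align as claimed. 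Once this identification is made explicit in each of the finitely many relevant shapes, the lemma follows.
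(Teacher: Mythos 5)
Your proposal is correct and follows essentially the same approach as the paper: reduce embeddingwise using the tensor-product factorization of $R$ and the additivity of the graph distance, then verify the single-embedding $N\in\{0,1,2\}$ cases against Table~\ref{Table:intsct}. The paper phrases the final check as a codimension (height) comparison between $\Spec R/(\mathfrak{p}^{\expl}(\sigma_1)+\mathfrak{p}^{\expl}(\sigma_2))$ and $\Spec R/((z_j)_j \cup (z_j')_j + (\varpi))$ rather than as an explicit adjacency-preserving bijection, but the two are equivalent and rest on the same table computation (the $\alpha\beta$ row, together with the symmetry noted in Table~\ref{Intersections}).
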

\begin{proof}
From the $\alpha\beta\alpha$, $\alpha\beta$ and $\beta\alpha$ row of Table \ref{Table:intsct}, we check that if $\sigma_1,\ \sigma_2 \in W^?(\rhobar,\tau)$, then $\mathfrak{p}^{\expl}(\sigma_1) + \mathfrak{p}^{\expl}(\sigma_2)$ is a prime ideal of ${R}^{\expl,\nabla}_{\overline{\fM},\tld{w}}$ of height
\begin{equation*}
\label{height}
\dgr{\sigma_1}{\sigma_2}+1.
\end{equation*}
On the other hand, the height of the intersection of $\big((z_j)_j+(\varpi)\big) + \big((z_j')_j+(\varpi)\big)$ is easily seen to be 
\[\frac{1}{2}\#\big(\{z_j\}_{j=1}^N \Delta \{z_j'\}_{j=1}^N\big)+1.\]
\end{proof}

For convenience, we list in the Table \ref{table:lifts} the length 4 shapes and their universal families over $\overline{R}^{\expl,\nabla}_{\overline{\fM},\tld{w}_j}$.%

\begin{table}[h]
\caption{}
\label{table:lifts}
\centering
\adjustbox{max width=\textwidth}{
\begin{tabular}{| c | c | | c | c |}
\hline
& &&\\
$\widetilde{w}'_{f-1-i}t_{-\un{1}}$ & ${A}^{\prime\, (f-1-i)}$ & $\widetilde{w}'_{f-1-i}t_{-\un{1}}$ & ${A}^{\prime\, (f-1-i)}$ \\ 
 &&&\\
\hline
 &&&\\
$\alpha \beta \alpha \gamma$ &   $\begin{pmatrix} v^2 c_{11}^{\prime\,*} & 0 & 0 \\ v^2 c'_{21} & v c_{22}^{\prime\,*} & 0 \\ v^2 d'_{31} & v c'_{32} & c_{33}^{\prime\,*}\end{pmatrix}$
&
$\beta \gamma \alpha \gamma$ & $\begin{matrix}  \begin{pmatrix} v  c_{11}^{\prime\,*} & v c'_{12} & 0 \\ 0 & v^2 c_{22}^{\prime\,*} & 0 \\ v c'_{31} & v c'_{32}+v^2 d'_{32}& c_{33}^{\prime\,*}\end{pmatrix}
\\
\\
(-1-b'+c')c'_{32} c_{11}^{\prime\,*}-(-1-a'+c')c'_{12}c'_{31}=0
\end{matrix}
$
\\
 &&&\\
\hline
 &&&\\
$ \beta\gamma\beta \alpha$ &   $\begin{pmatrix} c_{11}^{\prime\,*} & v d'_{12} & c'_{13} \\ 0 & v^2 c_{22}^{\prime\,*} & 0 \\ 0 & v^2 c'_{32} & v c_{33}^{\prime\,*}\end{pmatrix}$&
$\gamma\alpha\beta\alpha$
&
$\begin{matrix}  \begin{pmatrix} c_{11}^{\prime\,*} &  c'_{12} & c'_{13}+vd_{13}' \\ 0 & v c_{22}^{\prime\,*} & v c'_{23} \\ 0& 0& v^2 c_{33}^{\prime\,*}\end{pmatrix}
\\
\\
(a'-c')c'_{13} c_{22}^{\prime\,*}-(a'-b')c'_{23}c'_{12}=0
\end{matrix}
$
\\
&&&\\
\hline
&&&\\
$\alpha \gamma \alpha \beta$ &   $\begin{pmatrix} v c_{11}^{\prime\,*} & 0 & v c'_{13} \\ v c'_{12} & c_{22}^{\prime\,*} &v d_{23}' \\  0 & 0 & v^2 c_{33}^{\prime\,*}\end{pmatrix}$&
$\alpha\beta\gamma\beta$
&
$\begin{matrix}  \begin{pmatrix} v^2 c_{11}^{\prime\,*} &  0 & 0 \\ v(c'_{21}+v d_{21}') & c_{22}^{\prime\,*} &  c'_{23} \\ v^2 c'_{31}& 0& v c_{33}^{\prime\,*}\end{pmatrix}
\\
\\
(-1-a'+b')c'_{21} c_{33}^{\prime\,*}-(b'-c')c'_{31}c'_{23}=0
\end{matrix}
$
\\
&&&\\
\hline
&&&\\
$\alpha \beta\alpha$ &   $\begin{matrix}
\begin{pmatrix} c'_{11} & c'_{11}c'_{32}(c_{31}^{\prime\,*})^{-1} & c'_{11}d'_{33}(c_{31}^{\prime\,*})^{-1}+v c_{13}^{\prime\,*} \\ 0 & vc_{22}^{\prime\,*} &v c_{23}' \\  v c_{31}^{\prime\,*} & vc_{32}' & v d'_{33}\end{pmatrix}
\\
\\
c'_{11}\big((a'-b')c_{23}'c'_{32}-(a'-c')c_{22}^{\prime\,*}d'_{33}\big)=0
\end{matrix}$&
$\beta\gamma\beta$
&
$\begin{matrix}  \begin{pmatrix} v d'_{11} &  c_{12}^{\prime\,*} & c_{13}' \\ v(c'_{22}d'_{11}(c_{12}^{\prime\, *})^{-1}+v c_{21}^{\prime\,*}) & c'_{22} &  c'_{22}c'_{13}(c_{12}^{\prime\,*})^{-1} \\ v^2 c'_{31}& 0& v c_{33}^{\prime\,*}\end{pmatrix}
\\
\\
c'_{22}\big((b'-c')c_{31}'c'_{13}-(-1-a'+b')c_{33}^{\prime\,*}d'_{11}\big)=0
\end{matrix}
$
\\
&&&\\
\hline
 &&&\\
 $\gamma\alpha \gamma$ &   $\begin{matrix}
\begin{pmatrix} v c_{11}^{\prime\,*} & vc'_{12} & 0\\ v c'_{21} & v d'_{22} & c_{23}^{\prime\,*} \\  v c'_{33}c'_{21}(c_{23}^{\prime\,*})^{-1} & v(c'_{33}d'_{22}(c_{23}^{\prime\, *})^{-1}+v c_{32}^{\prime\,*}) &  c'_{33}\end{pmatrix}
\\
\\
c'_{33}\big((-1-a'+c')c_{12}'c'_{21}-(1-b'+c')c_{11}^{\prime\,*}d'_{22}\big)=0
\end{matrix}$&
&
\\
 &&&\\
\hline
\hline
\end{tabular}}
\caption*{This Table records the relevant data for the type $\tau'$ occurring in the proof of Theorem \ref{thm:matching}. The first column records the components of the shape $\tld{w}'=\tld{w}(\rhobar,\tau')$. The second column records the form of the matrix of partial Frobenius $A^{\prime\,(f-1-i)}$, and the presentation of the ring $\ovl{R}_{\ovl{\fM}',\tld{w}'_{f-1-i}}^{\expl,\nabla}$ in terms of the entries of $A^{\prime\,(f-1-i)}$. The structure constants that feature in the presentation of $\ovl{R}_{\ovl{\fM}',\tld{w}'_{f-1-i}}^{\expl,\nabla}$ are given by $(a',b',c')\in\Fp^3$ where $(a',b',c')\equiv (s'_i)^{-1}(\mu'_i)\mod p$. Note that $(s'_i)^{-1}(\mu'_i)\equiv\tld{z}_{f-1-i}(a,b,c) \mod p$.}%
\end{table}
\subsubsection{Explicit computations}\label{subsec:explicitcomputation}%
In this section, we record the matching of matrices of partial Frobenii needed in the proof of Theorem \ref{thm:matching}. We are always in the setting of the theorem. In particular, we have two types $\tau$, $\tau'$ with chosen presentations related by the element $\tld{z}^*$ together with matrices of partial Frobenii $A^{(f-1-i)}$, $A'^{(f-1-i)}$. We will fix $i$ throughout.

We will frequently recall presentations of rings from \cite{LLLM}, and since we work in characteristic $p$, all occurrences of the symbol $e$ in \emph{loc.~cit.}~will become $-1$ here. We let $(a,b,c)$ and $(a',b',c')\in \Fp^3$ be such that $(a,b,c)=s_{i}^{-1}(\mu_{i}) \mod p$, $(a',b',c')=(s^{\prime}_{i})^{-1}(\mu^{\prime}_{i})\mod p$.
Note that $(a',b',c')=\tld{z}_{f-1-i}(a,b,c)$.
These are the structure constants that feature in the presentation of our explicit rings. %
We will also replace occurrences of the symbols $c'_{ij}$ in \emph{loc.~cit.}~by $d_{ij}$, as we wish to decorate objects associated to $\tau'$ with a prime superscript.

\vspace{2mm}

\paragraph{\emph{Case $\tld{w}_{f-1-i}=\alpha \beta t_{\un{1}}$.}}
From \cite[Table 5]{LLLM}, the matrix of the partial Frobenius $A^{(f-1-i)}$ has the form
\begin{align*}
\begin{pmatrix}
c_{31}c_{12}(c_{32}^*)^{-1}&c_{12}&c_{13}+v c_{13}^*\\
vc_{21}^*&c_{22}&c_{23}+v d_{23}\\
vc_{31}&vc_{32}^*&\left(c_{31}c_{23}(c_{21}^*)^{-1}+vd_{33}\right)
\end{pmatrix}
\end{align*}
The ring $\ovl{R}_{\ovl{\fM},\tld{w}_{f-1-i}}^{\expl,\nabla}$ is the quotient of $\F[\![c_{12},c_{13},c_{22}, c_{23}, d_{23}, c_{31}, d_{33}, c_{13}^*-[\ovl{c}_{13}^*], c_{21}^*-[\ovl{c}_{21}^*],c_{32}^*-[\ovl{c}_{32}^*]]\!]$ by the relations
\begin{align}
\label{eq:list:rel}
c_{12}c_{23}-c_{22}c_{13}&=0;\\
c_{22}c_{31}&=0;&\nonumber\\
c_{32}^*c_{13}-d_{33}c_{12}&=0;&\nonumber\\
c_{12}((b-c)d_{33}c_{21}^*+(a-b)c_{31}d_{23})&=0;\nonumber\\
(-1-a+c)c_{23}c^*_{32}&=(-1-a+b)c_{22}d_{33}.\nonumber
&%
\end{align}
Note that all equations except for the last one are mod $\varpi$ reductions of equations in \cite[\S 5.3.3]{LLLM}.
We explain how to justify the last equation from the computations in \cite[\S 5.3.3]{LLLM}, which was implicit in \cite[Table 7]{LLLM}. For the remainder of this paragraph, we adopt the notation of \emph{loc.~cit.} 
We have $c_{12}((a-b)c_{31}c'_{23}+(b-c)c^*_{21}c'_{33})=pz^*$. 
From the equation $c_{32}^*c_{13}-pc^*_{32}c^*_{13}-c'_{33}c_{12}=0$, replacing $p$ by $(z^*)^{-1}(c_{12}((a-b)c_{31}c'_{23}+(b-c)c^*_{21}c'_{33}))$ we deduce that $c_{13}$ is a certain multiple of $c_{12}$. Finally using the equation $c_{12}c_{23}=c_{22}c_{13}$ and canceling out $c_{12}$ (as it is a unit after inverting $p$), we get an equation solving $c_{23}$ in terms of the remaining variables. The mod $\varpi$ reduction of this equation gives the last equation above.

The minimal primes of $\overline{R}^{\expl,\nabla}_{\overline{\fM},\tld{w}_{f-1-i}}$ are 
\begin{align*}
\fC_{(0,0)}=(c_{12},\ c_{22});&& \fC_{(\eps_1,1)}=(c_{12},\ c_{31});&& \fC_{(\eps_1-\eps_2,0)}=(c_{31},\ d_{33}); && \fC_{(0,1)}=(c_{22},\ (b-c)d_{33}c_{21}^*+(a-b)c_{31}d_{23}).
\end{align*}
(Note that in $\overline{R}^{\expl,\nabla}_{\overline{\fM},\tld{w}_{f-1-i}}$ there are no more relation than those listed in (\ref{eq:list:rel}).
Indeed, let $\tld{R}$ be the quotient of $\F[\![c_{12},c_{13},c_{22}, c_{23}, d_{23}, c_{31}, d_{33}, c_{13}^*-[\ovl{c}_{13}^*], c_{21}^*-[\ovl{c}_{21}^*],c_{32}^*-[\ovl{c}_{32}^*]]\!]$ by the relations (\ref{eq:list:rel}).
Then the discussion above proves that there is a surjection $\tld{R}\onto\overline{R}^{\expl,\nabla}_{\overline{\fM},\tld{w}_{f-1-i}}$.
A direct check on the relations (\ref{eq:list:rel}) shows that $\tld{R}$ is reduced, equidimensional of the same dimension as $\overline{R}^{\expl,\nabla}_{\overline{\fM},\tld{w}_{f-1-i}}$, and with the same Hilbert--Samuel multiplicity.
Therefore the surjection is an isomorphism, see Lemma \ref{lem:iso:ring} below.)
We need to perform matching for the ideals $\fC_{(0,0)}, \fC_{(\eps_1,1)},\fC_{(\eps_1-\eps_2,0)}, \fW_{(0)}\defeq \fC_{(0,1)}\cap \fC_{(0,0)}$.
We provide details for the matching of the ideal $
\fC_{(0,0)}$ in Table \ref{Table:intsct}.
We have
\begin{align*}
\overline{R}^{\expl,\nabla}_{\ovl{\fM},\tld{w}_{f-1}}\big/
\fC_{(0,0)}=\F[\![d_{23}, c_{31}, d_{33}, c_{13}^*-[\ovl{c}_{13}^*], c_{21}^*-[\ovl{c}_{21}^*],c_{32}^*-[\ovl{c}_{32}^*]]\!]
\end{align*}
and
\begin{align*}
A^{(f-1-i)}\mod  \fC_{(0,0)}=\begin{pmatrix}
0&0&v c_{13}^*\\
v c_{21}^*&0&v d_{23}\\
v c_{31}&v c_{32}^*&v d_{33}
\end{pmatrix}
\end{align*}
On the other hand, $\tld{w}'_{f-1-i}=\alpha\beta\alpha\gamma t_{\un{1}}$,  $\tld{z}^*_i=\alpha\gamma^+$, $(a',b',c')=(c-1,a,b+1)$ and $A^{\prime\,(f-1-i)}$ has the form
\begin{align*}
\begin{pmatrix} v^2 c_{11}^{\prime\,*} & 0 & 0 \\ v^2 c'_{21} & v c_{22}^{\prime\,*} & 0 \\ v^2 d'_{31} & v c'_{32} & c_{33}^{\prime\,*}
\end{pmatrix}.
\end{align*}
Now we note that $\tld{z}_{f-1-i}=t_{(-1,0,1)}(123)$ and 
\begin{align*}
\begin{pmatrix}
0&0&v c_{13}^*\\
v c_{21}^*&0&v d_{23}\\
v c_{31}&v c_{32}^*&v d_{33}
\end{pmatrix}= \begin{pmatrix} v^2 c_{11}^{\prime\,*} & 0 & 0 \\ v^2 c'_{21} & v c_{22}^{\prime\,*} & 0 \\ v^2 d'_{31} & v c'_{32} & c_{33}^{\prime\,*} \end{pmatrix} 
\begin{pmatrix} 0 & 0 & v^{-1} \\ 1 &0 & 0 \\ 0& v &0
\end{pmatrix}
\end{align*}
under the isomorphism
\begin{align*}\overline{R}^{\expl,\nabla}_{\ovl{\fM},\tld{w}_{f-1-i}}\big/
\fC_{(0,0)}\cong \ovl{R}^{\expl,\nabla}_{\ovl{\fM}',\tld{w}'_{f-1-i}}
\end{align*}
given by the change of variable
\begin{align*}
c_{21}^*&=c_{22}^{\prime\,*},& c_{13}^*&=c_{11}^{\prime\,*},&d_{23}&=c'_{21},
\\
c_{31}&=c'_{32},&c^*_{32}&=c_{33}^{\prime\,*},& d_{33}&=d'_{31}.\end{align*}
Such a change of variable is allowed provided the units that are matched with each other agree modulo the maximal ideal, because $\ovl{c}^{\prime\,\ast}_{22}=\ovl{c}^*_{21}$, $\ovl{c}^{\prime\,\ast}_{11}=\ovl{c}^*_{13}$ and $\ovl{c}^{\prime\,\ast}_{33}= \ovl{c}^*_{32}$ as elements of $\F$ (see Remark \ref{matching gauge}).
Thus \ref{cor:matching} applies and we are done with this case.
The matching for $\fC_{(\eps_1-\eps_2,0)}$ can performed in a similar fashion.

Next we provide details for the matching of $\fC_{(\eps_1,1)}$.
We have
\[
\overline{R}^{\expl,\nabla}_{\ovl{\fM},\tld{w}_{f-1-i}}\big/
\fC_{(\eps_1,1)}=\F[\![c_{22},c_{23},d_{23}, d_{33}, c_{13}^*-[\ovl{c}_{13}^*], c_{21}^*-[\ovl{c}_{21}^*],c_{32}^*-[\ovl{c}_{32}^*]]\!]\big/\big(  (-1-a+c)c_{23}c^*_{32}-(-1-a+b)c_{22}d_{33}
 \big).
\]
Moreover
\begin{align*}
A^{(f-1-i)}\mod  \fC_{(\eps_1,1)}=\begin{pmatrix}
0&0&v c_{13}^*\\
v c_{21}^*&c_{22}&c_{23}+v d_{23}\\
0&v c_{32}^*&v d_{33}
\end{pmatrix}.
\end{align*}
On the other hand, $\tld{w}'_{f-1-i}=\alpha\beta\gamma\beta t_{\un{1}}$, $\tld{z}^*_i=\gamma^+\beta$, $(a',b',c')=(c-1,a+1,b)$ and $A^{\prime\,(f-1-i)}$ has the form
\begin{align*}
A^{\prime\,(f-1-i)}=\begin{pmatrix}
v^2c^{\prime\,*}_{11}&0&0\\
vc^{\prime}_{21}+v^2d^{\prime}_{21} &c^{\prime\,*}_{22}& c'_{23} \\
v^2 c^{\prime}_{31}&0&vc^{\prime\,*}_{33}
\end{pmatrix}.
\end{align*}
The ring $\overline{R}^{\expl,\nabla}_{\ovl{\fM}',\tld{w}'_{f-1-i}}$ is the quotient of $\F[\![ c'_{21}, d'_{21},c'_{23}, c'_{31}, c_{11}^{\prime\,*}-[\ovl{c}_{11}^{\prime\,*}], c_{22}^{\prime\,*}-[\ovl{c}_{22}^{\prime\,*}],c_{33}^{\prime\,*}-[\ovl{c}_{33}^{\prime\,*}]]\!]$ by the relation
\begin{equation*}
(-1-a'+b')c'_{21}c^{\prime\,*}_{33}-(b'-c')c'_{31}c'_{23}=0.
\end{equation*}

Now we note that %
\begin{equation*}
\begin{pmatrix}
0&0&v c_{13}^*\\
v c_{21}^*&c_{22}&c_{23}+v d_{23}\\
0 &v c_{32}^*&v d_{33}
\end{pmatrix}=\begin{pmatrix}
v^2c^{\prime\,*}_{11}&0&0\\
vc^{\prime}_{21}+v^2d^{\prime}_{21} &c^{\prime\,*}_{22}& c'_{23} \\
v^2 c^{\prime}_{31}&0&vc^{\prime\,*}_{33}
\end{pmatrix}
\begin{pmatrix}
0&0&v^{-1}\\
v&0&0\\
0&1&0
\end{pmatrix}
\end{equation*}
under the isomorphism 
\begin{align*}
\label{iso:ring:AB}
\overline{R}^{\expl,\nabla}_{\ovl{\fM},\tld{w}_{f-1-i}}\big/
\fC_{(\eps_1,1)}\cong \ovl{R}^{\expl,\nabla}_{\ovl{\fM}',\tld{w}'_{f-1-i}}
\end{align*}
given by the change of variable 
\begin{align*}
c_{22}&=c'_{23},&
c_{23}&=c'_{21},&
d_{23}&=d'_{21},&\\
d_{33}&=c'_{31},&
c_{13}^*&=c^{\prime\,*}_{11},&
c_{21}^*&=c^{\prime\,*}_{22},&
c_{32}^*&=c^{\prime\,*}_{33}.
\end{align*}
We finish the matching in this case.

Finally, we explain the matching of the ideal $\fW_{0}\defeq\fC_{(0,1)}\cap \fC_{(0,0)}=(c_{22})$.
We have
\[
\overline{R}^{\expl,\nabla}_{\ovl{\fM},\tld{w}_{f-1-i}}\big/
\fW_{0}=\F[\![c_{12},d_{23}, c_{31}, d_{33}, c_{13}^*-[\ovl{c}_{13}^*], c_{21}^*-[\ovl{c}_{21}^*],c_{32}^*-[\ovl{c}_{32}^*]]\!]\big/\big(c_{12}((b-c)d_{33}c_{21}^*+(a-b)c_{31}d_{23})\big).
\]
Moreover
\begin{align*}
A^{(f-1-i)}\mod  \fW_{0}=\begin{pmatrix}
c_{31}c_{12}(c_{32}^*)^{-1}&c_{12}&d_{33}c_{12}(c_{32}^*)^{-1}+v c_{13}^*\\
v c_{21}^*&0&v d_{23}\\
v c_{31}&v c_{32}^*&v d_{33}
\end{pmatrix}.
\end{align*}
On the other hand, $\tld{w}'_{f-1-i}=\alpha\beta\alpha t_{\un{1}}$, $\tld{z}^*_i=\alpha$, $(a',b',c')=(b,a,c)$ and $A^{\prime\,(f-1-i)}$ has the form
\begin{align*}
A^{\prime\,(f-1-i)}=\begin{pmatrix}
c'_{11}&c'_{11}c'_{32}(c^{\prime\,*}_{31})^{-1}&d'_{33}c'_{11}(c^{\prime\, *}_{31})^{-1}+vc_{13}^{\prime\,*}\\
0&v c^{\prime\,*}_{22}& vc'_{23} \\
v c^{\prime\,*}_{31}&v c'_{32}&v d'_{33}
\end{pmatrix}.
\end{align*}
The ring $\overline{R}^{\expl,\nabla}_{\ovl{\fM}',\tld{w}'_{f-1-i}}$ is the quotient of $\F[\![ c'_{11}, c'_{23},c'_{32}, d'_{33}, c_{31}^*-[\ovl{c}_{31}^*], c_{22}^*-[\ovl{c}_{22}^*],c_{13}^*-[\ovl{c}_{13}^*]]\!]$ by the relations
\begin{equation*}
c'_{11}((a'-b')c'_{23}c'_{32}-(a'-c')c^{\prime\,*}_{33}d'_{33})=0.
\end{equation*}
(We remark that the form of the $(3,3)$ entry of $A^{\prime\,(f-1-i)}$ is as above because in the notation of \cite[\S 5.3.1]{LLLM} we have $c_{33}=-y'_{33}c_{13}c_{31}^{*}=0 \mod p$ since $c_{13}c^*_{31}=c_{11}c'_{33}\mod p$ and $y'_{33}c_{11}=0 \mod p$.)

Now we note that %
\begin{equation*}
\begin{pmatrix}
c_{31}c_{12}(c_{32}^*)^{-1}&c_{12}&d_{33}c_{12}(c_{32}^*)^{-1}+v c_{13}^*\\
v c_{21}^*&0&v d_{23}\\
v c_{31}&v c_{32}^*&v d_{33}
\end{pmatrix}=
\begin{pmatrix}
c'_{11}&c'_{11}c'_{32}(c^{\prime\,*}_{31})^{-1}&d'_{33}c'_{11}(c^{\prime\, *}_{31})^{-1}+vc_{13}^{\prime\,*}\\
0&v c^{\prime\,*}_{22}& vc'_{23} \\
v c^{\prime\,*}_{31}&v c'_{32}&v d'_{33}
\end{pmatrix}
\begin{pmatrix}
0&1&0\\
1&0&0\\
0&0&1
\end{pmatrix}
\end{equation*}
under the isomorphism 
\begin{align*}
\label{iso:ring:AB}
\overline{R}^{\expl,\nabla}_{\ovl{\fM},\tld{w}_{f-1-i}}\big/
\fW_{0}\cong \ovl{R}^{\expl,\nabla}_{\ovl{\fM}',\tld{w}'_{f-1-i}}
\end{align*}
given by the change of variable 
\begin{align*}
c_{12}&=c'_{11},&
c_{31}&=c_{32}',&
d_{23}&=d'_{23},&\\
d_{33}&=d'_{33},&
c_{32}^*&=c^{\prime\,*}_{31},&
c_{21}^*&=c^{\prime\,*}_{22},&
c_{13}^*&=c^{\prime\,*}_{13}.
\end{align*}

\paragraph{\emph{Case $\tld{w}_{f-1-i}=\alpha t_{\un{1}}$.}}

From \cite[Table 5]{LLLM}, the matrix of the partial Frobenius $A^{(f-1-i)}$ has the following form:
\begin{align*}
\begin{pmatrix}
c_{11}&c_{12}+vc_{12}^*& c_{13}\\
vc_{21}^*&c_{22}+vd_{22}&c_{23}\\
vc_{31}&vc_{32}&c_{33}+vc_{33}^*
\end{pmatrix}.
\end{align*}
Set $\tld{c}_{32}\defeq \frac{c_{32}c_{21}^*-d_{22}c_{31}}{c_{21}^*}$. 
Recall from item (\ref{expl:alpha}) above that $\ovl{R}^{\expl,\nabla}_{\ovl{\fM},\tld{w}_{f-1-i}}$ is the quotient of 
\[
\F[\![c_{11},c_{12},c_{13}, c_{22},c_{23}, c_{31}, \tld{c}_{32},c_{33},d_{22}, c^{*}_{12}-[\ovl{c}^*_{12}], c^{*}_{21}-[\ovl{c}^*_{21}], c^{*}_{33}-[\ovl{c}^*_{33}]]\!]
\] by the relations:
\begin{align*}
&c_{11}c_{23}=0,&&
c_{33}^*c_{11}\tld{c}_{32}=c_{13}c_{31}\tld{c}_{32},\\
&c_{11}d_{22}c^*_{33}=\frac{b-c}{a-b}c^*_{21}c_{13}\tld{c}_{32},&&
c_{13}c_{23}\tld{c}_{32}=0,\\
&c_{23}c_{31}\tld{c}_{32}=0,&&
(a-b)c_{13}c_{31}d_{22}+(c-b)c_{13}\tld{c}_{32}c_{21}^*+(-1-a+c)c_{23}c_{31}c_{12}^*=0\\
&c_{12}c^*_{33}=\frac{a-c}{a-b}c_{13}\tld{c}_{32}
&&
c_{22}c_{33}^*=\frac{(-1-a+c)}{(-1-a+b)}c_{23}\tld{c}_{32}
\\
&c_{21}^*c_{33}=c_{31}c_{23}.
&&
\end{align*}
(Note that $\ovl{R}^{\expl,\nabla}_{\ovl{\fM},\tld{w}_{f-1-i}}$ is formally smooth of relative dimension $3$ over the ring $\tld{R}$ defined in \cite[Proposition 8.11]{LLLM}.)

We provide detail for the matching of the ideal $\fC_{(\eps_1,1)}$. 
We have 
\[
\ovl{R}^{\expl,\nabla}_{\ovl{\fM},\tld{w}_{f-1-i}}\big/ \fC_{(\eps_1,1)}=\F[\![c_{22},c_{23},c_{32},d_{22}, c_{21}^*-[\ovl{c}^*_{21}], c_{12}^*-[\ovl{c}^*_{12}],c_{33}^*-[\ovl{c}^*_{33}]]\!]\big/\big((-1-a+b)c_{22}c_{33}^*-(-1-a+c)c_{23}c_{32}\big)
\]
and
\[
A^{(f-1-i)}\mod \fC_{(\eps_1,1)}=\begin{pmatrix}
0&vc_{12}^*&0\\
v c_{21}^*&c_{22}+v d_{22}&c_{23}\\
0&v c_{32}&v c_{33}^*
\end{pmatrix}.
\]
On the other hand, $\tld{w}'_{f-1-i}=\alpha\beta\gamma\beta$, $\tld{z}^*_{i}=\beta\gamma^+\beta$, $(a',b',c')=(b-1,a+1,c)$ and $A^{\prime\,(f-1-i)}$ has the form
\[
\begin{pmatrix} v^2 c_{11}^{\prime\,*} &  0 & 0 \\ v(c'_{21}+v d_{21}') & c_{22}^{\prime\,*} &  c'_{23} \\ v^2 c'_{31}& 0& v c_{33}^{\prime\,*}\end{pmatrix}.
\]
The ring $\ovl{R}^{\expl,\nabla}_{\ovl{\fM}',\tld{w}'_{f-1-i}}$ is the quotient of $\F[\![c'_{21},d'_{21},c'_{23},c'_{31},c_{11}^{\prime\,*}-[\ovl{c}_{11}^{\prime\,*}],c_{22}^{\prime\,*}-[\ovl{c}_{22}^{\prime\,*}], c_{33}^{\prime\,*}-[\ovl{c}_{33}^{\prime\,*}]]\!]$ by the relation
\[(-1-a'+b')c'_{21} c_{33}^{\prime\,*}-(b'-c')c'_{31}c'_{23}=0.\]
We now note that
\[
\begin{pmatrix}
0&vc_{12}^*&0\\
v c_{21}^*&c_{22}+v d_{22}&c_{23}\\
0&v c_{32}&v c_{33}^*
\end{pmatrix}=
\begin{pmatrix} v^2 c_{11}^{\prime\,*} &  0 & 0 \\ v(c'_{21}+v d_{21}') & c_{22}^{\prime\,*} &  c'_{23} \\ v^2 c'_{31}& 0& v c_{33}^{\prime\,*}\end{pmatrix}\begin{pmatrix} 0 &  v^{-1} & 0 \\ v& 0&  0\\0 & 0& 1\end{pmatrix}
\]
under the isomorphism
\[
\ovl{R}^{\expl,\nabla}_{\ovl{\fM},\tld{w}_{f-1-i}}\big/\fC_{(\eps_1,1)}\cong \ovl{R}^{\expl,\nabla}_{\ovl{\fM}',\tld{w}'_{f-1-i}}
\]
given by the change of variables
\begin{align*}
c_{12}^*=c_{11}^{\prime\,*},&&c_{21}^*=c_{22}^{\prime\, *},&&
d_{22}=d'_{21}\\
c_{23}=c'_{23}&&
c_{32}=c'_{31}
&&
c_{33}^*=c_{33}^{\prime\,*}.
\end{align*}

We now explain the matching of the ideal $\fW_{\eps_2}\defeq\fC_{(\eps_2,0)}\cap \fC_{(\eps_2,1)}=(c_{11},\tld{c}_{32})$.
We have
\[
\ovl{R}^{\expl,\nabla}_{\ovl{\fM},\tld{w}_{f-1-i}}\big/\fW_{\eps_2}=
\F[\![c_{13}, c_{23}, c_{31},d_{22}, c^{*}_{12}-[\ovl{c}^*_{12}], c^{*}_{21}-[\ovl{c}^*_{21}], c^{*}_{33}-[\ovl{c}^*_{33}]\!]\big/c_{31}((a-b)c_{13}d_{22}+(-1-a+c)c_{12}^*c_{23}).
\]
Moreover
\[
A^{(f-1-i)}\mod \fW_{\eps_2}=
\begin{pmatrix}
0&v c_{12}^*&c_{13}\\
v c_{21}^*& v d_{22}&c_{23}\\
v c_{31}&v d_{22}c_{31}(c_{21}^*)^{-1}&c_{23}c_{31}(c_{21}^*)^{-1}+v c_{33}^*
\end{pmatrix}.
\]
On the other hand, $\tld{w}'_{f-1-i}=\alpha\gamma\alpha$, $\tld{z}^*_i=\gamma^+\alpha$ and $(a',b',c')=(b,c-1,a+1)$ and $A^{\prime\,(f-1-i)}$ has the form
\[
\begin{pmatrix} v c_{11}^{\prime\,*} & vc'_{12} & 0\\ v c'_{21} & v d'_{22} & c_{23}^{\prime\,*} \\  v c'_{33}c'_{21}(c_{23}^{\prime\,*})^{-1} & v(c'_{33}d'_{22}(c_{23}^{\prime\, *})^{-1}+v c_{32}^{\prime\,*}) &  c'_{33}\end{pmatrix}.
\]
The ring $\ovl{R}^{\expl,\nabla}_{\ovl{\fM}',\tld{w}'_{f-1-i}}$ is the quotient of $\F[\![c'_{12},c_{21}',d_{22}',c_{33}',c_{11}^{\prime\,*}-[\ovl{c}_{11}^{\prime\,*}],c_{23}^{\prime\,*}-[\ovl{c}_{23}^{\prime\,*}], c_{32}^{\prime\,*}-[\ovl{c}_{32}^{\prime\,*}]]\!]$ by the relation
\[
c'_{33}\big((-1-a'+c')c_{12}'c'_{21}-(1-b'+c')c_{11}^{\prime\,*}d'_{22}\big)=0.
\]
Now we note that
\[
\text{\small{$
\begin{pmatrix}
0&v c_{12}^*&c_{13}\\
v c_{21}^*& v d_{22}&c_{23}\\
v c_{31}&v d_{22}c_{31}(c_{21}^*)^{-1}&c_{23}c_{31}(c_{21}^*)^{-1}+v c_{33}^*.
\end{pmatrix}=
\begin{pmatrix} v c_{11}^{\prime\,*} & vc'_{12} & 0\\ v c'_{21} & v d'_{22} & c_{23}^{\prime\,*} \\  v c'_{33}c'_{21}(c_{23}^{\prime\,*})^{-1} & v(c'_{33}d'_{22}(c_{23}^{\prime\, *})^{-1}+v c_{32}^{\prime\,*}) &  c'_{33}\end{pmatrix}
\begin{pmatrix} 0 & 1& 0\\ 0 &0 &v^{-1} \\  v& 0 &  0\end{pmatrix}
$}}
\]
under the isomorphism 
\[
\ovl{R}^{\expl,\nabla}_{\ovl{\fM},\tld{w}_{f-1-i}}\big/\fW_{\eps_2}\cong \ovl{R}^{\expl,\nabla}_{\ovl{\fM}',\tld{w}'_{f-1-i}}
\]
given by the change of variables
\begin{align*}
c_{21}^*=c^{\prime\,*}_{23},&&c_{31}=c_{33}',	&&c_{12}^*=c_{11}^{\prime\,*}&&\\
d_{22}=c_{21}'&&c_{13}=c_{12}',&&
c_{23}=d_{22}'&&c_{33}^*=c_{32}^{\prime\,*}.
\end{align*}

\paragraph{\emph{Case $\tld{w}_{f-1-i}=t_{\un{1}}$.}}

From \cite[Table 5]{LLLM}, the matrix of the partial Frobenius $A^{(f-1-i)}$ has the following form:
\begin{align*}
\begin{pmatrix}
c_{11}+vc_{11}^*&c_{12}& c_{13}\\
vc_{21}&c_{22}+vc_{22}^*&c_{23}\\
vc_{31}&vc_{32}&c_{33}+vc_{33}^*
\end{pmatrix}.
\end{align*}
Recall from item (\ref{expl:id}) above that $\ovl{R}^{\expl,\nabla}_{\ovl{\fM},\tld{w}_{f-1-i}}$ is the quotient of $\F[\![c_{ij},\  c^{*}_{kk}-[\ovl{c}_{kk}^*],\, 1\leq i,j,k\leq 3]\!]$ by the relations
\begin{align*}
&c_{ii}c_{jj}=0\,\text{for $i\neq j$},&&
c_{11}c_{23}=0&&c_{31}c_{22}=0,&&
c_{33}c_{12}=0,\\
&c_{12}c_{23}=c_{22}c_{13},&&
c_{11}c_{32}=c_{12}c_{31},&&
c_{21}c_{33}=c_{31}c_{23}&&
\end{align*}
and
\begin{align*}
&(-1-a+c)c^*_{22}c_{33}+(-1-a+b)c_{22}c^*_{33}-(-1-a+c)c_{23}c_{32}=0
\\
&(a-b)c^*_{33}c_{11}+(-1-b+c)c_{33}c^*_{11}-(a-b)c_{13}c_{31}=0
\\
&(b-c)c^*_{11}c_{22}+(a-c)c_{11}c^*_{22}-(b-c)c_{12}c_{21}=0\\
&c_{11}c_{22}^*c_{33}^*+c_{22}c_{11}^*c_{33}^*+c_{33}c_{11}^*c_{22}^*-c_{11}^*c_{23}c_{32}-c_{22}^*c_{13}c_{31}-c_{33}^*c_{12}c_{21}+c_{13}c_{32}c_{21}=0.
\end{align*}
(Note that $\ovl{R}^{\expl,\nabla}_{\ovl{\fM},\tld{w}_{f-1-i}}$ is formally smooth of relative dimension $3$ over the ring $\tld{R}$ defined in \cite[Corollary 8.4]{LLLM}.)

We provide detail for the matching of $\fC_{(0,0)}=(c_{11}, c_{22}, c_{33}, c_{13},c_{23},c_{12})$.

We have 
\[
\ovl{R}^{\expl,\nabla}_{\ovl{\fM},\tld{w}_{f-1-i}}\big/\fC_{(0,0)}=\F[\![c_{ij},\ c^{*}_{kk}-[\ovl{c}_{kk}^*],\, 1\leq j<i\leq 3,\ 1\leq k\leq 3]\!]
\]
and 
\[
A^{(f-1-i)}\mod \fC_{(0,0)}=\begin{pmatrix}
v c_{11}^*&0&0\\
v c_{21}&v c_{22}^*& 0 \\
v c_{31}&v c_{32}&v c_{33}^*
\end{pmatrix}.
\]
On the other hand, $\tld{w}'_{f-1-i}=\alpha\beta\alpha\gamma t_{\un{1}}$ and $A^{\prime\,(f-1-i)}$ has the form
\[
\begin{pmatrix}
v^2 c_{11}^{\prime\,*}&0&0\\
v^2 c'_{21}&v c_{22}^{\prime\,*}& 0 \\
v^2 c'_{31}&v c'_{32}& c_{33}^{\prime\,*}
\end{pmatrix}.
\]
We note that $\tld{z}_{f-1-i}=t_{(1,0,-1)}$ and 
\[
\begin{pmatrix}
v c_{11}^*&0&0\\
v c_{21}&v c_{22}^*& 0 \\
v c_{31}&v c_{32}&v c_{33}^*
\end{pmatrix}=\begin{pmatrix}
v^2 c_{11}^{\prime\,*}&0&0\\
v^2 c'_{21}&v c_{22}^{\prime,*}& 0 \\
v^2 c'_{31}&v c'_{32}& c_{33}^{\prime\,*}
\end{pmatrix}
\begin{pmatrix}
v^{-1}&0&0\\
0&1& 0 \\
0&0&v
\end{pmatrix}
\]
under the isomorphism
\[
\ovl{R}^{\expl,\nabla}_{\ovl{\fM},\tld{w}_{f-1-i}}\big/\fC_{(0,0)}\cong \ovl{R}^{\expl,\nabla}_{\ovl{\fM}',\tld{w}'_{f-1-i}}
\]
given by the change of variables 
\begin{align*}
&c_{ij}=c'_{ij}\, \text{for $1\leq j<i\leq 3$},&& c^*_{kk}=c^{\prime\,*}_{kk}\, \text{for $1\leq k\leq 3$}.
\end{align*}

We now explain the matching for the ideal $\fW_{0}\defeq\fC_{(0,0)}\cap\fC_{(0,1)}=(c_{22},\ c_{33},\  c_{23},\ c_{33}^*c_{12}-c_{13}c_{32})$.
We have
\[
\ovl{R}^{\expl,\nabla}_{\ovl{\fM},\tld{w}_{f-1-i}}\big/\fW_{0}=\F[\![c_{13},c_{21},c_{31},c_{32},c^*_{11}-[\ovl{c}^*_{11}],c^*_{22}-[\ovl{c}^*_{22}],c^*_{33}-[\ovl{c}^*_{33}]]\!]\big/(c_{13}((a-c)c_{22}^*c_{31}-(b-c)c_{32}c_{21}))
\]
(we have used the relations
\begin{align*}
&c_{33}^*c_{12}-c_{13}c_{32}=0,&&c_{33}^*c_{11}-c_{13}c_{31}=0,&&
(a-c)c_{22}^*c_{11}-(b-c)c_{12}c_{21}=0
\end{align*}
holding in $\ovl{R}^{\expl,\nabla}_{\ovl{\fM},\tld{w}_{f-1-i}}\big/\fW_{0}$).
Moreover
\[
A^{(f-1-i)}\mod\fW_{0}=\begin{pmatrix}
c_{13}c_{31}(c_{33}^*)^{-1}+v c_{11}^*&c_{13}c_{32}(c_{33}^*)^{-1}&c_{13}\\
v c_{21}&v c_{22}^*& 0 \\
v c_{31}&v c_{32}&v c_{33}^*
\end{pmatrix}.
\]
On the other hand $\tld{w}_{f-1-i}=\alpha\beta\alpha=\tld{z}^*_i$, $(a',b',c')=(c,b,a)$ and $A^{\prime\,(f-1-i)}$ has the form
\[
\begin{pmatrix} c'_{11} & c'_{11}c'_{32}(c_{31}^{\prime\,*})^{-1} & c'_{11}d'_{33}(c_{31}^{\prime\,*})^{-1}+v c_{13}^{\prime\,*} \\ 0 & vc_{22}^{\prime\,*} &v c_{23}' \\  v c_{31}^{\prime\,*} & vc_{32}' & v d'_{33}\end{pmatrix}.
\]
The ring $R^{\expl,\nabla}_{\ovl{\fM}',\tld{w}'_{f-1-i}}$ is the quotient of $\F[\![c'_{11},c'_{23},c'_{32},d'_{33},c_{13}^{\prime\,*}-[\ovl{c}_{13}^{\prime\,*}],c_{22}^{\prime\,*}-[\ovl{c}_{22}^{\prime\,*}],c_{31}^{\prime\,*}-[\ovl{c}_{31}^{\prime\,*}]]\!]$ by the relation
\[
c'_{11}\big((a'-b')c_{23}'c'_{32}-(a'-c')c_{22}^{\prime\,*}d'_{33}\big)=0.
\]
We now note that 
\[
\begin{pmatrix}
c_{13}c_{31}(c_{33}^*)^{-1}+v c_{11}^*&c_{13}c_{32}(c_{33}^*)^{-1}&c_{13}\\
v c_{21}&v c_{22}^*& 0 \\
v c_{31}&v c_{32}&v c_{33}^*
\end{pmatrix}=\begin{pmatrix} c'_{11} & c'_{11}c'_{32}(c_{31}^{\prime\,*})^{-1} & c'_{11}d'_{33}(c_{31}^{\prime\,*})^{-1}+v c_{13}^{\prime\,*} \\ 0 & vc_{22}^{\prime\,*} &v c_{23}' \\  v c_{31}^{\prime\,*} & vc_{32}' & v d'_{33}\end{pmatrix}\begin{pmatrix}0&0&1\\0&1&0\\1&0&0 \end{pmatrix}
\]
under the isomorphism
\[
\ovl{R}^{\expl,\nabla}_{\ovl{\fM},\tld{w}_{f-1-i}}\big/\fW_{0}\cong\ovl{R}^{\expl,\nabla}_{\ovl{\fM'},\tld{w}'_{f-1-i}}
\]
given by the change of variables
\begin{align*}
&
c_{13}=c'_{11},&&c_{21}=c'_{23},&&
c_{31}=d'_{33},&&\\
&c_{32}=c'_{32},&&c^*_{11}=c_{13}^{\prime\,*},&&c^*_{22}=c_{22}^{\prime\,*},&&c^*_{33}=c^{\prime\,*}_{31}.
\end{align*}

\paragraph{\emph{Case $\tld{w}_{f-1-i}= \beta\alpha t_{\un{1}}$ and $\alpha\beta\alpha t_{\un{1}}$}}: The computations of these two cases are very similar to those we already performed and are left to the reader.

\subsubsection{Ideal relations in deformation rings} \label{sec:idealrel}

In this subsection, we collect some results about sums of intersections of minimal primes in the potentially crystalline deformation rings $R^{\tau}_{\rhobar}$. These computations play a crucial role in \S \ref{sec:cyc}, where they are used to compute the value of a patching functor on certain representations as the limit of the value of the patching functor on simpler pieces of the representation.

Thanks to Theorem \ref{thm:matching}, all computations that we need to perform can be done on the rings $\ovl{R}_{\ovl{\fM},\tld{w}_{f-1-i}}^{\expl,\nabla}$ given in Table \ref{Table:intsct}. We continue to adopt the notation and setting of Theorem \ref{thm:matching}.

We will frequently make use of the following:
\begin{lemma} 
\label{lem:iso:ring}
Suppose we have a surjection of rings $g: S \onto R$. Assume that $R$ and $S$ are equidimensional of dimension $d$, have the same number of minimal primes, and $S$ is reduced. Then $g$ is an isomorphism.
\end{lemma}
\begin{proof} The first two hypotheses imply that the kernel of $g$ is nilpotent, since $g$ induces and isomorphism between the underlying topological spaces of $\Spec (R)$ and $\Spec(S)$. But since $S$ is reduced, this kernel must in fact be $0$.
\end{proof}

\vspace{2mm}

\paragraph{\emph{Ideal relations in $\ovl{R}_{\ovl{\fM},\id}^{\expl,\nabla}$}.}

\begin{lemma}
\label{lem:ideal}
In the ring $\ovl{R}^{\expl,\nabla}_{\ovl{\fM},\id}$, we have 
\begin{equation*}
(\fW_{0}\cap\fC_{(\eps_1,0)})+(\fW_{0}\cap\fC_{(\eps_2,0)})=\fW_{0}.
\end{equation*}
\end{lemma}
\begin{proof} 
An elementary check on the list of generators of the ideals $\fW_{0}$, $\fC_{(\eps_1,0)}$ and $\fC_{(\eps_2,0)}$ (cf.~ Table \ref{Table:intsct}) shows that
\begin{align*}
\fW_{0}\cap \fC_{(\eps_1,0)}\supseteq(c_{33}, c_{23}, c_{22}),&\qquad
\fW_{0}\cap \fC_{(\eps_2,0)}\supseteq(c_{33},  c_{22}, c_{13}c_{32}-c_{12}c_{33}^*).
\end{align*}
In particular (again by looking at the list of generators of $\fW_{0}$), we have $(\fW_{0}\cap \fC_{(\eps_1,0)})+(\fW_{0}\cap \fC_{(\eps_2,0)})\supseteq\fW_{0}$.
The reverse inclusion is obvious.
\end{proof}

\begin{lemma}
\label{lem:ideal:0}
In the ring $\overline{R}_{\overline{\fM},\id}^{\expl,\nabla}$, we have
\begin{equation*}
\label{int:three} \fW_{0}\cap \fC_{(\eps_1,0)}\cap \fC_{(\eps_2,0)}=(c_{22},c_{33}).
\end{equation*}
\end{lemma}
\begin{proof}
We let $\tld{R}$ be the ring with the same presentation as $\overline{R}_{\overline{\fM},\id}^{\expl,\nabla}$ except that all $c^*_{ij}$ are set to $1$. As explained in \cite[Corollary 8.4]{LLLM}, there is a natural identification of $\overline{R}_{\overline{\fM},\id}^{\expl,\nabla}$ with the power series over $\tld{R}$ with $3$ variables, and we can work with the ring $\tld{R}$ instead of $\overline{R}_{\overline{\fM},\id}^{\expl,\nabla}$. All the ideals that we consider come from $\tld{R}$, and are given by generators with the same name.

From Table \ref{Table:intsct}, we immediately obtain $(c_{22},c_{33})\subseteq \fW_{0}\cap \fC_{(\eps_1,0)}\cap \fC_{(\eps_2,0)}$, and hence we need to prove that the surjection
\begin{equation}
\label{surj:4:weight}
\tld{R}/(c_{22},c_{33})\onto \tld{R}/( \fW_{0}\cap \fC_{(\eps_1,0)}\cap \fC_{(\eps_2,0)})
\end{equation}
is an isomorphism. The ring on the right-hand side is equidimensional of dimension $3$ and has $4$ minimal primes. By Lemma \ref{lem:iso:ring}, it suffices to show that $\tld{R}/(c_{22},c_{33})$ is reduced, is equidimensional of dimension $3$, and has $4$ minimal primes.

Now $\tld{R}/(c_{22},c_{33})$ is the quotient of the power series ring $\F[\![c_{11},\ c_{ij},\ 1\leq i,j,\leq 3,\ i\neq j]\!]$ by the ideal generated by the following elements:
\begin{align*}
&c_{11}c_{23},&& c_{12}c_{23},&& c_{11}c_{32}-c_{12}c_{31},\\
&c_{31}c_{23},&&c_{23}c_{32},&&\kappa c_{11}=c_{21}c_{12},\\
&c_{11}-c_{13}c_{31},&&c_{11}-c_{12}c_{21}-c_{13}c_{31}-c_{23}c_{32}+c_{21}c_{13}c_{32}
\end{align*}
for some $\kappa\in \F^{\times}$ depending on $(a,b,c)$.
By standard manipulations, we conclude that $\tld{R}/(c_{22},c_{33})$ is isomorphic to the quotient of the power series ring $\F[\![c_{ij},\ 1\leq i,j,\leq 3,\ i\neq j]\!]$ by the ideal generated by the following elements:
\begin{align*}
&c_{31}c_{23},&& c_{23}c_{32},&& c_{12}c_{23},\\
&c_{31}(c_{12}-c_{13}c_{32}),&&c_{13}(\kappa c_{31}-c_{21}c_{32}),&&c_{21}(c_{12}-c_{13}c_{32})
\end{align*}
hence, by writing $\tld{c}_{12}\defeq c_{12}-c_{13}c_{32}$, $\tld{c}_{31}\defeq \kappa c_{31}-c_{21}c_{32}$ we obtain
\begin{equation*}
\tld{R}/(c_{22},c_{33})\cong 
\F[\![\tld{c}_{12},\tld{c}_{31},c_{23},c_{13},c_{21},c_{32}]\!]/(\tld{c}_{12}\tld{c}_{31},\,\tld{c}_{12}c_{23},\,\tld{c}_{12}c_{21},\,\tld{c}_{31}c_{23},\,\tld{c}_{31}c_{13},\,c_{23}c_{32}).
\end{equation*}
This latter ring is easily seen to be equidimensional of dimension $3$ and has $4$ minimal primes. It is furthermore reduced since it is the quotient of a power series ring by an ideal generated by squarefree monomials.
\end{proof}

From Lemma \ref{lem:ideal:0}, the same argument used in the proof of Lemma \ref{lem:ideal} gives the following:
\begin{lemma}\label{lem:ideal:1:id}
In the ring $\ovl{R}^{\expl,\nabla}_{\ovl{\fM},\id}$, we have
\begin{equation*}
(\fW_{0}\cap\fW_{\eps_1}\cap\fC_{(\eps_2,0)})+(\fW_{0}\cap\fW_{\eps_2}\cap\fC_{(\eps_1,0)})=\fW_{0}\cap\fC_{(\eps_1,0)}\cap\fC_{(\eps_2,0)}
\end{equation*}
\end{lemma}
\begin{proof}
From Table \ref{Table:intsct}, we immediately deduce that $(\fW_{0}\cap\fW_{\eps_1}\cap\fC_{(\eps_2,0)})\supseteq (c_{33})$ and $(\fW_{0}\cap\fW_{\eps_2}\cap\fC_{(\eps_1,0)})\supseteq (c_{22})$. The conclusion now follows as in Lemma \ref{lem:ideal} by noting that $(c_{22},c_{33})=\fW_{0}\cap\fC_{(\eps_1,0)}\cap\fC_{(\eps_2,0)}$.
\end{proof}

\vspace{2mm}

\paragraph{\emph{Ideal relations in $\ovl{R}_{\ovl{\fM},\alpha}^{\expl,\nabla}$}.}

\begin{lemma}
\label{lem:ideal:0:alpha}
Consider the ring $\overline{R}^{\expl,\nabla}_{\overline{\fM},\alpha}$ (cf.~ Table\ref{Table:intsct}) and let $I_{\Lambda_0}\defeq \fC_{(0,0)}\cap\fC_{(0,1)}\cap\fC_{(\eps_2,0)}$.

Then
\begin{enumerate}
	\item\label{id:int:1} $I_{\Lambda_0}=(c_{11}c_{33}^*-c_{13}c_{31},c_{23}c_{31},c_{23}(c_{32}c_{21}^*-d_{22}c_{31}))$,
	\item\label{id:int:2} $\fC_{(\eps_1,1)}\cap\fC_{(0,0)}\cap\fC_{(\eps_2,0)}=(c_{11},c_{13}c_{31})$,
\item \label{id:int:3}$\fC_{(0,1)}\cap\fC_{(0,0)}=(c_{23},c_{11}c_{33}^*-c_{13}c_{31})$.
\end{enumerate}
\end{lemma}
\begin{proof}
As in the proof of Lemma \ref{lem:ideal:0}, it suffices to work in the ring $\tld{R}$ which has the same presentation as $\overline{R}_{\overline{\fM},\alpha}^{\expl,\nabla}$ except that all $c^*_{ij}$ are set to $1$.
Recall that $\tld{c}_{32}\defeq c_{32}-d_{22}c_{31}$ in $\tld{R}$.

We start with item (\ref{id:int:1}).
From Table (\ref{Table:intsct}), we easily deduce 
\begin{equation*}
I_{\Lambda_0}\supseteq (c_{11}-c_{13}c_{31},c_{23}c_{31},c_{23}\tld{c}_{32})
\end{equation*}
i.e. a surjection
\begin{equation}
\label{cl:im:ring}
\tld{R}\big/ (c_{11}-c_{13}c_{31},c_{23}c_{31},c_{23}\tld{c}_{32})\onto
\tld{R}/ I_{\Lambda_0}
\end{equation}
which we claim it is an isomorphism. Note that by construction the ring $\tld{R}/ I_{\Lambda_0}$ has three minimal primes, and it is equidimensional of dimension 3.

From \cite[Proposition 8.11]{LLLM}, we immediately deduce that the ring $\tld{R}/(c_{11}-c_{13}c_{31},c_{23}c_{31},c_{23}\tld{c}_{32})$ is isomorphic to the quotient of the power series ring $\F[\![c_{31},c_{13},d_{22},\tld{c}_{32},c_{23}]\!]$ by the ideal generated by the following elements:
\begin{align*}
&c_{13}c_{31}c_{23},&&  c_{13}c_{31}d_{22}-\frac{b-c}{a-b}c_{13}\tld{c}_{32},&&c_{23}c_{31},\\
&c_{23}\tld{c}_{32},&&(a-b)c_{13}c_{31}d_{22}+(c-b)c_{13}\tld{c}_{32}+(-1-a+c)c_{23}c_{31}&&
\end{align*}
i.e. %
by the ideal generated by
\begin{align*}
c_{13}(c_{31}d_{22}-\frac{b-c}{a-b}\tld{c}_{32}),&&c_{23}c_{31},&&c_{23}\tld{c}_{32},
\end{align*}
or equivalently, the ideal generated by
\begin{align*}
c_{13}(c_{31}d_{22}-\frac{b-c}{a-b}\tld{c}_{32}),&&c_{23}c_{31},&&c_{23}(c_{31}d_{22}-\frac{b-c}{a-b}\tld{c}_{32}).
\end{align*}
In other words, by an evident change of variables we have
\begin{equation*}
\tld{R}\big/ (c_{11}-c_{13}c_{31},c_{23}c_{31},c_{23}\tld{c}_{32})\cong \F[\![X,Y,Z,W,c_{22}']\!]\big/(XY,YZ,WZ)
\end{equation*}
and the latter ring is equidimensional of dimension 3 and has 3 irreducible components. It is moreover reduced, since the ideal of relations is generated by square-free monomials.%

The argument to prove (\ref{id:int:2}) is completely analogous and we only  sketch it.
It is immediate to obtain from Table \ref{Table:intsct} the inclusion $\fC_{(\eps_1,1)}\cap \fC_{(0,0)}\cap\fC_{(\eps_2,0)}\supseteq (c_{11}, c_{13}c_{31})$. From \cite[Proposition 8.11]{LLLM}, we see that $\tld{R}/(c_{11}, c_{13}c_{31})$ is isomorphic to 
\begin{align*}
&\F[\![c_{13},c_{23},c_{31},\tld{c}_{32},d_{22}]\!]/(c_{13}c_{31},c_{13}\tld{c}_{32},c_{31}c_{23}\tld{c}_{32},c_{31}(\kappa c_{23}-c_{13}d_{22}))=\\
&\qquad=\F[\![c_{13},c_{23},c_{31},\tld{c}_{32},d_{22}]\!]/(c_{13}c_{31},c_{13}\tld{c}_{32},c_{31}c_{23}),
\end{align*}
for some $\kappa\in \F^\times$ depending on $(a,b,c)$.
The conclusion is obtained by an analogous argument to the previous case.

The proof of item (\ref{id:int:3}) is similar and left to the reader.
(Note that $\fC_{(0,1)}\cap\fC_{(0,0)}\supseteq (c_{23},c_{11}-c_{13}c_{31})$ and use the explicit presentation of $\tld{R}$ given by \cite[Proposition 8.11]{LLLM} to check that $\tld{R}/(c_{23},c_{11}-c_{13}c_{31})$ is reduced with $2$ irreducible component of dimension $3$).
\end{proof}
\begin{lemma}
\label{lem:ideal:1:alpha}
In $\overline{R}^{\expl,\nabla}_{\overline{\fM},\alpha}$, we have
\begin{align}
\label{it:ideal:1:alpha:1}
&(I_{\Lambda_0}\cap \fC_{(\eps_1,1)})+(I_{\Lambda_0}\cap \fC_{(\eps_2,1)}\cap \fC_{(\eps_2-\eps_1,0)})=I_{\Lambda_0},\\
\label{it:ideal:1:alpha:2}
&(\fC_{(\eps_1,1)}\cap \fC_{(0,0)} \cap \fC_{(\eps_2,0)} \cap \fC_{(0,1)}) +(\fC_{(\eps_1,1)} \cap \fC_{(0,0)} \cap \fC_{(\eps_2,0)} \cap \fC_{(\eps_2,1)}) = \fC_{(\eps_1,1)} \cap \fC_{(0,0)} \cap \fC_{(\eps_2,0)},\\
\label{it:ideal:1:alpha:3}
&(\fC_{(0,1)}\cap\fC_{(0,0)}\cap\fC_{(\eps_2,0)})+(\fC_{(0,1)}\cap\fC_{(0,0)}\cap\fC_{(\eps_2-\eps_1,0)})=\fC_{(0,1)}\cap\fC_{(0,0)}.
\end{align}
\end{lemma}
\begin{proof}
We work in $\tld{R}$ and set $\tld{c}_{32}\defeq c_{32}-d_{22}c_{31}$.
An elementary check on the list of generators of the ideals $I_{\Lambda_0}$, $\fC_{(\eps_2,1)}$,  $\fC_{(\eps_2-\eps_1,0)}$ and $\fC_{(\eps_1,1)}$ shows that
\begin{equation*}
(I_{\Lambda_0}\cap \fC_{(\eps_1,1)})\supseteq (c_{11}-c_{13}c_{31}, c_{23}c_{31}),\qquad
(I_{\Lambda_0}\cap \fC_{(\eps_2,1)}\cap \fC_{(\eps_2-\eps_1,0)})\supseteq (c_{23}\tld{c}_{32})
\end{equation*}
and we deduce (again by looking at the list of generators of $I_{\Lambda_0}$) that
\begin{equation*}
(I_{\Lambda_0}\cap \fC_{(\eps_1,1)})+(I_{\Lambda_0}\cap \fC_{(\eps_2,1)}\cap \fC_{(\eps_2-\eps_1,0)})\supseteq I_{\Lambda_0}.
\end{equation*}
The reverse inclusion is obvious.

Similarly, $(c_{11}) \subseteq \fC_{(\eps_1,1)} \cap \fC_{(0,0)} \cap \fC_{(\eps_2,0)} \cap \fC_{(\eps_2,1)}$ and $(c_{11}-c_{13}c_{31}) \subseteq \fC_{(\eps_1,1)} \cap \fC_{(0,0)} \cap \fC_{(\eps_2,0)} \cap \fC_{(0,1)}$.
Hence $(\fC_{(\eps_1,1)}\cap \fC_{(0,0)} \cap \fC_{(\eps_2,0)} \cap \fC_{(0,1)}) + (\fC_{(\eps_1,1)}\cap \fC_{(0,0)} \cap \fC_{(\eps_2,0)} \cap \fC_{(\eps_2,1)}) \supseteq \fC_{(\eps_1,1)}\cap \fC_{(0,0)} \cap \fC_{(\eps_2,0)}$.

The proof of item (\ref{it:ideal:1:alpha:3}) 
is similar and left to the reader (note that $c_{11}-c_{13}c_{31}\in (\fC_{(0,1)}\cap\fC_{(0,0)}\cap\fC_{(\eps_2,0)})$ and $c_{23}\in (\fC_{(0,1)}\cap\fC_{(0,0)}\cap\fC_{(\eps_2-\eps_1,0)})$).
\end{proof}

\begin{rmk}
The ideal relation appearing in Lemmas \ref{lem:ideal:0:alpha}, \ref{lem:ideal:1:alpha} are compatible with the outer automorphism of $\tld{W}^\vee$.
Explicitly, define $\tld{\delta}\defeq (123)t_{(0,0,-1)}$ and let $\tld{w}\mapsto \tld{\delta}\tld{w}\tld{\delta}^{-1}$ be the corresponding outer automorphism on $\tld{W}^\vee$.
Then $\delta$ acts on $\Sigma_0$ and the ideal relations of Lemmas \ref{lem:ideal:0:alpha}, \ref{lem:ideal:1:alpha} holds for shape $\beta$ with $\fC_{(\omega_i,a_i)}$ replaced by $\fC_{\big((\tld{\delta}^*)^{-1}(\omega_i),a_i\big)}$.
As an example, for shape $\beta$ the relation 
(\ref{it:ideal:1:alpha:3}) becomes $(\fC_{(\eps_1,1)}\cap\fC_{(\eps_1,0)}\cap\fC_{(0,0)})+(\fC_{(\eps_1,1)}\cap\fC_{(\eps_1,0)}\cap\fC_{(\eps_1-\eps_2,0)})=\fC_{(\eps_1,1)}\cap\fC_{(\eps_1,0)}$.
\end{rmk}

\section{Lattices in generic Deligne--Lusztig representations}
\label{sec:lct}

The aim of this section is to classify lattices with irreducible cosocle in generic $\GL_3(\Fq)$ Deligne--Lusztig representations, providing the crucial representation-theoretic input to deduce Breuil's lattice conjecture from the weight part of Serre conjecture. 
The main result (Theorem \ref{thm structure}) states that the submodule structure of lattices with irreducible cosocle can be predicted using the extension graph introduced in \S \ref{sec:graph}. 

\paragraph{\it{Outline of the proof}.}
Let $R$ be a generic Deligne--Lusztig representation.
We have two main steps in the proof of the classification theorem: first, by local algebraic methods, we describe the reduction of lattices with irreducible cosocles isomorphic to a lower alcove weight with {\it defect zero} (Theorem \ref{thm:soc:simple}).
Second, in \S \ref{subsec:dist}, we introduce another notion of distance related to the second part of Theorem \ref{thm structure}, which we call {\it saturation distance}.
It turns out that this notion is closely related to the submodule structure of the reduction of lattices.

Using a crucial global input coming from the geometry of Galois deformation rings, we show that
\begin{enumerate}
\item if measured from $\sigma$ the graph and saturation distance coincide, then the reduction of the lattice with cosocle isomorphic to $\sigma$ is as predicted in Theorem \ref{thm structure} (Proposition \ref{prop:dcosoc}); and
\item measured from a lower alcove weight of defect zero, the saturation distance and the graph distance coincide (Proposition \ref{cor:max:sat1}, which uses Theorem \ref{thm:soc:simple});
\end{enumerate}
In contrast to the other notions of distance that we introduce, the saturation distance involves lattices in characteristic zero, making it far more flexible.
Taking advantage of this flexibility, we finally show by an induction on defect that the saturation distance and the graph distance coincide, completing the proof of Theorem \ref{thm structure}.

\paragraph{\it{Structure of \S \ref{sec:lct}}}

For $R$ as above and $\sigma \in \JH(\ovl{R})$, let $R^{\sigma}$ denote the unique (up to homothety) $\cO$-lattice in $R$ with irreducible cosocle $\sigma$ and write $\ovl{R}^\sigma$ to denote its reduction modulo $\varpi$.
The first main step is in \S \ref{sec:inj:env}. 
The argument uses the modular representation theory of algebraic groups to embed $\ovl{R}^{\sigma}$ into the $\rG\defeq \un{G}_0(\Fp)$-restriction of a tensor product $V_{\mu}$ of algebraic Weyl modules with non $p$-restricted highest weight (see \S \ref{subsubsec:GL3} for the definition of $V_{\mu}$).
The content of \S \ref{subsub:Weyl} is the description of $V_{\mu}|_{\rG}$ provided by Theorem \ref{thm:Vdecomp}.
This theorem describes the Jordan-H\"older constituents of $V_{\mu}|_{\rG}$ and \emph{the existence of non-trivial extensions} between constituents at graph distance one (cf.~the key technical result Proposition \ref{prop:nonsplit}).
The embedding of $\ovl{R}^{\sigma}$ in $V_{\mu}$ is constructed in \S \ref{subsub:emb:arg}.
One first proves the existence of a non-zero (and unique up to scalar) morphism $\ovl{R}^{\sigma}\ra V_{\mu}$ (Proposition  \ref{cor:map}); an inductive argument, using the description of the submodule structure of $V_{\mu}|_{\rG}$, then shows that the image of this morphism contains all the constituents of $\ovl{R}^{\sigma}$.
The submodule structure of $\ovl{R}^{\sigma}$ is then obtained from that of $V_{\mu}$.

The second part of the proof of Theorem \ref{thm structure} is the content of \S \ref{subsec:dist}. The key insight is the introduction  of the auxiliary notion of saturation distance on $\JH(\ovl{R})$  in \S \ref{sub:sub:def:dist} which relates the position of saturated lattices in $R^{\sigma}$.
Using a \emph{global} input, we  give in \S \ref{sub:sub:def:dist} a first coarse relation between the saturation and the graph distance (cf.~ Corollary \ref{ineq:sat:gr}). %
Subsequently, we prove in \S \ref{sec:DE} that the three distances are actually equal provided that $\sigma$ verifies an appropriate condition relating its defect, its graph distance and its saturated distance (we say that $\sigma$ is \emph{maximally saturated} in $R$). In particular, if $\sigma$ is maximally saturated in $R$, the structure of $\ovl{R}^{\sigma}$ is predicted by the extension graph (Proposition \ref{prop:dcosoc}).

We are hence left to prove that \emph{all} constituents of $R$ are maximally saturated.
This is shown by an induction argument in \S \ref{sub:red:MS}.
The proof of Theorem \ref{thm structure} concludes this section.

\subsection{The classification statement}

\subsubsection{Some generalities}\label{subsec:gencat}

Let $\cC$ be a nonzero finite abelian category over $\F$.
Let $M$ be a nonzero object of $\cC$.
A \emph{decreasing} (resp. ~\emph{increasing}) \emph{filtration} $\mathscr{F}$ on $M$ is a collection of subobjects $\mathscr{F}^n (M) \subset M$ (resp. $\mathscr{F}_n (M) \subset M$) for $n\in \Z$ such that $\mathscr{F}^{n+1}(M)\subset \mathscr{F}^n (M)$ (resp.~ $\mathscr{F}_n (M)\subset \mathscr{F}_{n+1} (M)$) for all $i$.
A filtration $\mathscr{F}$ is \emph{exhaustive} and \emph{separated} if $\mathscr{F}^n (M) = 0$ for $i$ sufficiently large (resp.~ small) and $\mathscr{F}^n (M) = M$ for $n$ sufficiently small (resp.~ large), and this property will always be assumed to hold.
We write $\gr^n_{\mathscr{F}}(M)\defeq \frac{\mathscr{F}^n(M)}{\mathscr{F}^{n+1}(M)}$ (resp.~ $\gr_n^{\mathscr{F}}(M)\defeq \frac{\mathscr{F}_n(M)}{\mathscr{F}_{n-1}(M)}$), and omit $\mathscr{F}$ from the notation if it is clear from context.
A filtration is \emph{semisimple} if $\gr^n_{\mathscr{F}}(M)$ (resp. $\gr_n^{\mathscr{F}}(M)$) is semisimple for all $n\in \Z$.
By shifting the filtration, we will assume that $\gr^n_{\mathscr{F}}(M) = 0$ (resp. $\gr_n^{\mathscr{F}}(M) = 0$) for $n<0$ and that $\gr^0_{\mathscr{F}}(M) \neq 0$ (resp.~ $\gr_0^{\mathscr{F}}(M) \neq 0$).
The \emph{length} of a filtration is the maximal $\ell\in \Z$ such that $\gr^{\ell-1}_{\mathscr{F}}(M) \neq 0$ (resp.~ $\gr_{\ell-1}^{\mathscr{F}}(M) \neq 0$).

The \emph{socle} of $M$, denoted $\soc (M)$ is defined to be the maximal (with respect to inclusion) semisimple subobject of $M$.
The \emph{radical} of $M$, denoted $\rad (M)$, is the minimal (with respect to inclusion) subobject of $M$ whose corresponding quotient is semisimple.
The \emph{cosocle} of $M$ is $\cosoc(M) \defeq M/\rad (M)$.
We inductively define the \emph{radical} and \emph{socle filtration} on $M$: we set $\rad^0(M) = M$ and let $\rad^n(M)\defeq \rad\left(\rad^{n-1}(M)\right)$, and set $\soc_{-1} (M) = 0$ and let $\soc_n (M)$ be the inverse image, via the canonical projection $M\onto M/\soc_{n-1}(M)$, of $\soc\left(M/\soc_{n-1}(M)\right)\subseteq M/\soc_{n-1}(M)$. 
Then the radical (resp. ~socle) filtration is a decreasing (resp. ~increasing) semisimple filtration.
Moreover, $\gr_{\rad}^0(M)=\cosoc(M)$ and $\gr^{\soc}_0(M)=\soc(M)$.
Since formation of cosocle (resp. ~socle) is right (resp. ~left) exact, the filtration induced from the radical (resp.~ socle) filtration on a quotient object (resp. ~subobject) is the radical (resp. ~socle) filtration.
The lengths of the radical and socle filtrations coincide, and we call this value the \emph{Loewy length} of $M$, and denote it by $\ell\ell(M)$.
Any semisimple filtration has length at least $\ell\ell(M)$, and we say that it is a \emph{Loewy series} if its length equals $\ell\ell(M)$.
If $\mathscr{F}$ is a decreasing Loewy series than we necessarily have 
\begin{equation}\label{eqn:Loewy}
\rad^n(M)\subseteq \mathscr{F}^n(M)\subseteq \soc_{\ell\ell(M)-n-1}(M)
\end{equation}
for all $n\in \{0,\dots,\ell\ell(M)\}$.
We say that $M$ is \emph{rigid} if $\rad^n(M) = \soc_{\ell\ell(M)-n-1}(M)$ for all $n$.

We say that $M$ is \emph{multiplicity free} if every Jordan--H\"older factor of $M$ appears with multiplicity one.
We now suppose that $M$ is multiplicity free.
We say that $\sigma\in \JH(M)$ points to $\sigma'\in\JH(M)$ if there exists a subquotient of $M$ which is isomorphic to a nontrivial extension of $\sigma$ by $\sigma'$.
We say that a subset $S\subset \JH(M)$ is closed if $\sigma\in S$ and $\sigma$ points to $\sigma'$ imply that $\sigma' \in S$.

\begin{prop}\label{prop:subclosed}
The assignment of $\JH(N)$ to a subobject $N\subset M$ gives a bijection between subobjects of $M$ and closed subsets of $\JH(M)$.
\end{prop}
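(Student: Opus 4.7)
The plan is to prove the bijection in three steps: injectivity of $N \mapsto \JH(N)$, showing $\JH(N)$ is always closed, and surjectivity via an explicit construction using minimum subobjects. The basic tool I will use repeatedly is a consequence of multiplicity-freeness: for any subobjects $A, B \subseteq M$, the short exact sequence $0 \to A \cap B \to A \oplus B \to A + B \to 0$ together with equality of JH multisets forces
\[
\JH(A+B) = \JH(A) \cup \JH(B), \qquad \JH(A \cap B) = \JH(A) \cap \JH(B),
\]
as \emph{sets}.

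Injectivity follows because if $\JH(N_1) = \JH(N_2)$, then $N_2/(N_1 \cap N_2) \cong (N_1 + N_2)/N_1$ has JH factors contained in $\JH(N_2) = \JH(N_1)$, but simultaneously disjoint from $\JH(N_1)$ by multiplicity-freeness of $N_1+N_2 \subseteq M$, forcing $N_2 \subseteq N_1$. For closedness of $\JH(N)$, suppose $\sigma \in \JH(N)$ and $\sigma$ points to $\sigma'$, with $E = P/Q$ the unique nontrivial subquotient of $M$ with JH factors $\{\sigma,\sigma'\}$. The object $(N \cap P)/(N \cap Q)$ embeds in $E$ and contains $\sigma$ as a JH factor (using that $\JH(N)$ decomposes as $\JH(N\cap Q) \sqcup \JH((N \cap P)/(N \cap Q)) \sqcup \JH(N/(N \cap P))$, and multiplicity-freeness of $M$ isolates $\sigma, \sigma'$ to the middle term). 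Since $E$'s only subobjects are $0, \sigma', E$, we must have $(N \cap P)/(N \cap Q) = E$, hence $\sigma' \in \JH(N)$.

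For surjectivity, I introduce for each $\sigma \in \JH(M)$ the subobject $M_\sigma^{\min}$, defined as the minimum subobject of $M$ having $\sigma$ as a JH factor; existence follows from the intersection formula above, and a short argument shows $\cosoc(M_\sigma^{\min}) = \sigma$ (any simple quotient gives a proper subobject still containing $\sigma$ unless that quotient is $\sigma$ itself). Given a closed subset $S$, I set $N_S \defeq \sum_{\sigma \in S} M_\sigma^{\min}$, so by the sum formula $\JH(N_S) = \bigcup_{\sigma \in S} \JH(M_\sigma^{\min}) \supseteq S$. To obtain equality it suffices to prove the \emph{key claim}: $\JH(M_\sigma^{\min})$ coincides with the minimum closed subset $S_\sigma \subseteq \JH(M)$ containing $\sigma$. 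The inclusion $S_\sigma \subseteq \JH(M_\sigma^{\min})$ is immediate, since $\JH(M_\sigma^{\min})$ is closed (previous paragraph) and contains $\sigma$.

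The main obstacle is the reverse inclusion $\JH(M_\sigma^{\min}) \subseteq S_\sigma$, which I will prove by induction on the length of $M_\sigma^{\min}$. Setting $R \defeq \rad(M_\sigma^{\min})$, I consider for any $\sigma'' \in \JH(\cosoc(R))$ the subquotient $E_{\sigma''}$ of $M_\sigma^{\min}/\rad(R)$ obtained by killing a semisimple complement of $\sigma''$ inside $\cosoc(R)$, an extension of $\sigma$ by $\sigma''$. This extension must be nontrivial, for otherwise $M_\sigma^{\min}$ would surject onto $\sigma''$, yielding a proper subobject still containing $\sigma$ as a JH factor and contradicting minimality of $M_\sigma^{\min}$. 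Therefore $\sigma$ points to $\sigma''$, so $\sigma'' \in S_\sigma$, and by the inductive hypothesis $\JH(M_{\sigma''}^{\min}) = S_{\sigma''} \subseteq S_\sigma$. To finish, each $M_{\sigma''}^{\min}\subseteq R$ maps onto the $\sigma''$-component of $\cosoc(R)$ (by multiplicity-freeness, the alternative would force $\sigma'' \in \JH(\rad(R))$, incompatible with $\sigma'' \in \JH(\cosoc(R))$), so $\sum_{\sigma''}M_{\sigma''}^{\min}$ surjects onto $\cosoc(R)$; a Nakayama-type argument ($\rad(B/A) = B/A$ implies $\cosoc(B/A) = 0$, hence $B/A = 0$, valid in any finite-length abelian category) forces this sum to equal $R$. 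Therefore $\JH(R) = \bigcup_{\sigma''} \JH(M_{\sigma''}^{\min}) \subseteq S_\sigma$, completing the induction.
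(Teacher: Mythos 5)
Your proof is correct. Let me compare it with the paper's argument, since they take genuinely different routes.

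The paper argues ``top-down'': it asserts that $\JH(N)$ is closed without comment, and then, given a closed $S$, takes $N$ to be the minimal subobject with $S \subseteq \JH(N)$ (well-defined by your intersection formula) and shows $\JH(N) = S$ by contradiction. Namely, if $\sigma \in \JH(N)\setminus S$, it walks ``up'' the graph $\Gamma(M)$ — replacing $\sigma$ by any predecessor $\sigma'$ that points to it, which also cannot lie in $S$ since $S$ is closed — until it reaches a $\sigma$ with no predecessor in $\JH(N)$; it then takes the maximal subobject $N'$ of $N$ with $\sigma \notin \JH(N')$, checks that $N/N'$ has socle $\sigma$ and in fact $N/N' \cong \sigma$ by the ``no predecessor'' property, and notes that $N'$ still satisfies $S \subseteq \JH(N')$, contradicting minimality.

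Your proof is ``bottom-up'': you build the inverse map explicitly via the canonical subobjects $M_\sigma^{\min}$, characterize $\JH(M_\sigma^{\min})$ as the minimal closed subset $S_\sigma$ by an induction on length that realizes $\rad(M_\sigma^{\min})$ as $\sum_{\sigma''} M_{\sigma''}^{\min}$, and define $N_S = \sum_{\sigma\in S} M_\sigma^{\min}$. You also spell out injectivity and the closedness of $\JH(N)$, which the paper leaves to the reader. Your route is longer but more constructive, and $M_\sigma^{\min}$ is exactly the module $Q_{\cdot}(\sigma)$ that the paper later introduces in Lemma \ref{lem mult 1}; you are effectively proving its key properties (existence, uniqueness, cosocle $\sigma$) here. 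One small stylistic point: the Nakayama step is cleanest stated as ``$A + \rad(B) = B$ implies $A = B$, since any simple quotient of $B/A$ would factor through $B/(\rad(B)+A)$'' — your version deduces $\rad(B/A) = B/A$ first, which requires the identical argument, so it is circular as written, though the conclusion is of course right.
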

\begin{proof}
It is easy to see that $\JH(N)$ is a closed subset of $\JH(M)$.
Suppose that $S\subset \JH(M)$ is a closed subset.
Let $N$ be the minimal subobject of $M$ with $S\subset \JH(N)$.
Suppose that $\JH(N) \setminus S$ is nonempty and contains $\sigma$.
If $\sigma'\in \JH(N)$ points to $\sigma$, then $\sigma'$ is not in $S$ since $S$ is closed.
By replacing $\sigma$ by $\sigma'$ repeatedly, we can assume without loss of generality that $\sigma'$ does not point to $\sigma$ for all $\sigma' \in \JH(N)$.
Let $N'$ be the maximal subobject of $N$ such that $\sigma\notin \JH(N')$.
This maximality implies that the socle of $N/N'$ must be isomorphic to $\sigma$.
By the assumption above, there is no subobject of $N/N'$ which is an extension by $\sigma$, and therefore $N/N'$ is isomorphic to $\sigma$.
Then the existence of $N'$ contradicts the minimality of $N$.
\end{proof}

Suppose that $M$ is multiplicity free and that $\mathscr{F}$ is a decreasing (resp.~ increasing) filtration on $M$.
For $\sigma \in \JH(M)$, we define $\mathrm{d}_{\mathscr{F}}^M(\sigma)$ (resp. $\mathrm{d}^{\mathscr{F}}_M(\sigma)$) to be the unique value $n$ such that $\Hom_{\cC}(\sigma,\gr_{\mathscr{F}}^n(M))$ (resp.~ $\Hom_{\cC}(\sigma,\gr^{\mathscr{F}}_n(M))$) is nonzero.
For $\mathscr{F}$ semisimple, we say that $\sigma\in \JH(M)$ $\mathscr{F}$\emph{-points to} $\sigma'\in \JH(M)$ if $\sigma$ points to $\sigma'$ and the (shifted) induced filtration on the subquotient which is isomorphic to a nontrivial extension of $\sigma$ by $\sigma'$ has length two.

If $M$ is multiplicity free, we can attach a directed acyclic graph $\Gamma(M)$ and a subgraph $\Gamma_{\mathscr{F}}(M)$ of $\Gamma(M)$ (resp. ~$\Gamma^{\mathscr{F}}(M)$) of $\Gamma(M)$) where the vertices are in bijection with the Jordan--H\"older factors of $M$ and there is an arrow $\sigma\ra \sigma'$ in $\Gamma(M)$ if $\sigma$ points to $\sigma'$ and an arrow $\sigma\ra \sigma'$ in $\Gamma_{\mathscr{F}}(M)$ (resp.~in $\Gamma^{\mathscr{F}}(M)$) if $\sigma$ $\mathscr{F}$-points to $\sigma'$.
An \emph{extension path} (in $M$) is a directed path in $\Gamma(M)$, and an \emph{extension path} in $\mathscr{F}$ is a directed path in $\Gamma_{\mathscr{F}}(M)$ (resp. ~$\Gamma^{\mathscr{F}}(M)$).
The following proposition is immediate from the definitions.

\begin{prop}\label{prop:subgraph}
Let $N$ be a subquotient of $M$.
If $\mathscr{F}$ is a semisimple filtration on $M$, then it naturally induces a semisimple filtration on $N$.
Moreover, $\Gamma(N)$ \emph{(}resp.~$\Gamma_{\mathscr{F}}(N)$ and $\Gamma^{\mathscr{F}}(N)$\emph{)} is the maximal subgraph of $\Gamma(M)$ \emph{(}resp.~$\Gamma_{\mathscr{F}}(M)$ and $\Gamma^{\mathscr{F}}(M)$\emph{)} with vertices corresponding to $\JH(N)$.
\end{prop}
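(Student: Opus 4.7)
The plan is to reduce everything to bookkeeping using the multiplicity-freeness of $M$, as the statement asserts. I will first construct the induced filtration, then verify the graph identifications.

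First I would write any subquotient as $N = M'/M''$ with $M'' \subseteq M' \subseteq M$ and define
\[
\mathscr{F}^n(N) \defeq \frac{\mathscr{F}^n(M)\cap M' + M''}{M''}
\]
(and symmetrically for an increasing filtration). The second and third isomorphism theorems identify $\gr^n_{\mathscr{F}}(N)$ with a subquotient of $\gr^n_{\mathscr{F}}(M)$, and hence it is semisimple since $\mathscr{F}$ is. Exhaustiveness and separatedness are inherited. Stability of the formula under taking a subobject followed by a quotient (or vice versa) is routine, so the construction is canonical.

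For the graph statements, the essential observation is that since $M$ is multiplicity free, so is every subquotient, and for any pair $\sigma,\sigma'\in \JH(M)$ there is at most one subquotient of $M$ isomorphic to an extension of $\sigma$ by $\sigma'$. Given $\sigma,\sigma'\in \JH(N)$, any subquotient of $N$ that is an extension of $\sigma$ by $\sigma'$ is a fortiori a subquotient of $M$, so the two candidate subquotients in $N$ and $M$ coincide by uniqueness. Consequently $\sigma$ points to $\sigma'$ in $N$ if and only if it points to $\sigma'$ in $M$, which proves the claim for $\Gamma(N)$.

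For $\Gamma_{\mathscr{F}}(N)$ (and similarly $\Gamma^{\mathscr{F}}(N)$), one has to track the induced filtration on such an extension. But by the very construction of the induced filtration above, if $E \subset N$ (up to subquotient) is the unique extension of $\sigma$ by $\sigma'$, then the filtration it inherits from $N$ coincides with the one it inherits from $M$, since both are obtained by intersecting $E$ with $\mathscr{F}^\bullet(M)$ and passing to the quotient. Therefore the condition ``$\sigma$ $\mathscr{F}$-points to $\sigma'$'', i.e. that the induced filtration on $E$ has length $2$, is the same whether computed in $N$ or in $M$. This identifies $\Gamma_{\mathscr{F}}(N)$ with the full subgraph of $\Gamma_{\mathscr{F}}(M)$ on $\JH(N)$, and analogously for $\Gamma^{\mathscr{F}}(N)$, completing the proof. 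No step here is technically hard; the only point to be careful about is checking that the induced filtration on a sub-of-a-quotient (or quotient-of-a-sub) is well defined and independent of the presentation, which is a standard diagram chase.
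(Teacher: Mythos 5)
Your construction of the induced filtration is fine, as is the forward direction of the graph claim: a nonsplit length-two subquotient of $N$ is a subquotient of $M$, and by multiplicity-freeness it is the unique subquotient of $M$ with $\JH = \{\sigma,\sigma'\}$, so an edge $\sigma\to\sigma'$ in $\Gamma(N)$ gives one in $\Gamma(M)$. Your observation that the $\mathscr{F}$-filtration on such a subquotient is the same whether induced from $N$ or from $M$ is also correct. But the content of the proposition lies in the converse direction, and there you have a real gap. When you write that "the two candidate subquotients in $N$ and $M$ coincide by uniqueness," you are silently assuming that $N$ possesses a subquotient isomorphic to an extension of $\sigma$ by $\sigma'$ whenever $M$ does. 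That existence statement is precisely what needs proof and does not follow just from $\sigma,\sigma'\in\JH(N)$: in general a subquotient of a multiplicity-free module need not contain a length-two subquotient with a prescribed pair of Jordan--H\"older factors (for two non-adjacent constituents of a uniserial module no such subquotient exists), so one must actually use the hypothesis that $\sigma$ points to $\sigma'$ in $M$.

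The gap is easily closed. Let $E=B/C$ be the unique length-two subquotient of $M$ with $\JH(E)=\{\sigma,\sigma'\}$ and write $N=M'/M''$. Treat the submodule case first: set $B'=B\cap M'$, $C'=C\cap M'$. Since $M$ is multiplicity free and $\sigma\in\JH(M')$, $\sigma$ cannot occur in $\JH(B/B')$, which injects into $M/M'$; hence $\sigma\in\JH(B')$, and likewise $\sigma'\in\JH(B')$, while $\sigma,\sigma'\notin\JH(C')$. Thus $B'/C'$ injects into $E$ and has length two, so $B'/C' = E$, exhibiting $E$ as a subquotient of $M'$. The quotient case is dual (use images under $M\twoheadrightarrow M/M''$ instead of intersections), and the general case is obtained by composing. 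Inserting this existence step makes your uniqueness and filtration arguments complete.
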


\begin{lemma}\label{lemma:point}
Suppose that $M$ is multiplicity free and $\sigma\in \JH(M)$.
If $\mathrm{d}_{\rad}^M(\sigma)>0$ \emph{(}resp. $\mathrm{d}^{\soc}_M(\sigma)>0$\emph{)}, there is a $\sigma'$ such that $\mathrm{d}_{\rad}^M(\sigma')=\mathrm{d}_{\rad}^M(\sigma)-1$ \emph{(}resp. $\mathrm{d}^{\soc}_M(\sigma')=\mathrm{d}^{\soc}_M(\sigma)-1$\emph{)} and 
 $\sigma'$ $\rad$-points to $\sigma$ \emph{(}resp. $\sigma$ $\soc$-points to $\sigma'$\emph{)}.
\end{lemma}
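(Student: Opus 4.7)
I will prove the statement for the radical filtration and invoke a dual argument (applied to the Matlis/Nakayama-style dual of $M$, equivalently swapping the roles of $\rad$ and $\soc$) for the socle case. Set $n := \mathrm{d}_{\rad}^M(\sigma) > 0$, so $\sigma$ embeds into the semisimple object $\gr^n_{\rad}(M) = \rad^n(M)/\rad^{n+1}(M)$. The first step is to cut down to the relevant two-layer piece by passing to
\[
M' := \rad^{n-1}(M)/\rad^{n+1}(M).
\]
By construction $\rad(M') = \rad^n(M)/\rad^{n+1}(M) = \gr^n_{\rad}(M)$ is semisimple and $M'/\rad(M') = \gr^{n-1}_{\rad}(M)$ is semisimple, so $M'$ has Loewy length at most $2$.

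The key step is then to localize around $\sigma$ inside this two-layer object. Since $\rad(M')$ is semisimple, I can write $\rad(M') = \sigma \oplus T$ for some subobject $T \subset M'$, and form $\overline{M'} := M'/T$. Then $\overline{M'}$ fits into the short exact sequence
\[
0 \to \sigma \to \overline{M'} \to \gr^{n-1}_{\rad}(M) \to 0,
\]
and $\sigma$ is contained in $\rad(\overline{M'})$ (being the image of $\rad(M')$). In particular the sequence does not split, for otherwise $\rad(\overline{M'})$ would be zero. Decomposing $\gr^{n-1}_{\rad}(M) = \bigoplus_i \sigma'_i$ and using the corresponding splitting of $\Ext^1(\gr^{n-1}_{\rad}(M), \sigma) = \bigoplus_i \Ext^1(\sigma'_i, \sigma)$, non-splitness of $\overline{M'}$ forces the pullback $X_i \subset \overline{M'}$ of some $\sigma'_i$ to be a nontrivial extension $0 \to \sigma \to X_i \to \sigma'_i \to 0$. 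Taking $\sigma' := \sigma'_i$ gives an element with $\mathrm{d}_{\rad}^M(\sigma') = n-1$.

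It remains to verify the two assertions about pointing. The object $X_i$ is a subquotient of $M$ which is a nontrivial extension of $\sigma'$ by $\sigma$; by multiplicity-freeness of $M$ it is the unique subquotient of $M$ with this isomorphism class, so $\sigma'$ points to $\sigma$ in the sense of \S\ref{subsec:gencat}. Moreover, the radical filtration of $M$ induces on $X_i$ exactly the two-step filtration with $\sigma$ at depth $n$ and $\sigma'$ at depth $n-1$, which after the shift has length two; hence $\sigma'$ indeed $\rad$-points to $\sigma$. The only subtle point, and the one worth checking carefully, is the claim that $\sigma \subseteq \rad(\overline{M'})$ forces non-splitness; this uses the elementary observation that if $\overline{M'}$ were a direct sum of semisimple pieces then its own radical would vanish.
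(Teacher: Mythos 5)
Your proof is correct and follows essentially the same route as the paper's: pass to the two-layer subquotient $\rad^{n-1}(M)/\rad^{n+1}(M)$, observe that $\sigma$ sits inside its radical, and deduce from this that $\sigma$ cannot split off from the layer above. The paper compresses the final step into one sentence ("otherwise $\sigma$ would not be in the radical"), while you unpack it by quotienting out the complement $T$ and decomposing the extension class in $\Ext^1(\bigoplus_i\sigma'_i,\sigma)$; that is just an explicit version of the same observation, so the two arguments coincide in substance.
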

\begin{proof}
We consider the radical filtration; the socle filtration is analyzed similarly.
Let $d = \mathrm{d}_{\rad}^M(\sigma)$.
Then the radical filtration on $\rad^{d-1} (M)$ is a shift of the radical filtration on $M$ by definition and induces the radical filtration on $\rad^{d-1} (M)/\rad^{d+1}(M)$.
Then there is a $\sigma'\in \JH(\gr^{d-1}_{\rad}(M))$ which $\rad$-points to $\sigma$, otherwise $\sigma$ would not be in the radical of $\rad^{d-1} (M)/\rad^{d+1}(M)$ which is $\gr^d_{\rad}(M)$.
\end{proof}

\begin{cor}\label{cor:path}
Suppose that $M$ is multiplicity free and $\sigma\in \JH(M)$.
Then there is an extension path in the radical \emph{(}resp. ~socle\emph{)} filtration of length $\mathrm{d}_{\rad}^M(\sigma)$ \emph{(}resp. ~$\mathrm{d}^{\soc}_M(\sigma)$\emph{)} ending \emph{(}resp.~beginning\emph{)} with $\sigma$.
\end{cor}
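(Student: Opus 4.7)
The plan is to prove Corollary \ref{cor:path} by a straightforward induction on the integer $\mathrm{d}_{\rad}^M(\sigma)$ (respectively $\mathrm{d}^{\soc}_M(\sigma)$), with the inductive step being a direct application of Lemma \ref{lemma:point}. I will treat the radical filtration case; the socle filtration case is entirely analogous, with arrows reversed.

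First I would set up the base case: if $\mathrm{d}_{\rad}^M(\sigma) = 0$, then the length-zero extension path consisting of the single vertex $\sigma$ is a path in $\Gamma_{\rad}(M)$ ending with $\sigma$, and there is nothing more to prove. For the inductive step, suppose the statement is established for all Jordan--H\"older factors of $M$ whose radical-depth is strictly less than $d>0$, and suppose $\mathrm{d}_{\rad}^M(\sigma) = d$. By Lemma \ref{lemma:point} applied to $\sigma$, there exists $\sigma' \in \JH(M)$ with $\mathrm{d}_{\rad}^M(\sigma') = d-1$ such that $\sigma'$ $\rad$-points to $\sigma$; equivalently, there is an arrow $\sigma' \to \sigma$ in $\Gamma_{\rad}(M)$. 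By the inductive hypothesis applied to $\sigma'$, there is an extension path in the radical filtration of length $d-1$ ending with $\sigma'$. Concatenating this path with the arrow $\sigma' \to \sigma$ produces an extension path in $\Gamma_{\rad}(M)$ of length $d$ ending with $\sigma$, as required.

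The socle version proceeds identically, using the second half of Lemma \ref{lemma:point} to produce, given $\sigma$ with $\mathrm{d}^{\soc}_M(\sigma) = d>0$, a Jordan--H\"older factor $\sigma'$ of socle-depth $d-1$ such that $\sigma$ $\soc$-points to $\sigma'$; the induction then builds a path beginning with $\sigma$. There is no real obstacle here --- the whole content is already packaged in Lemma \ref{lemma:point}, and this corollary is simply iterating that lemma to build a path rather than just a single edge. The only mild point to check is that the concatenated path is genuinely a directed path in $\Gamma_{\rad}(M)$ (resp. $\Gamma^{\soc}(M)$), which is immediate from the definition of the graph and the fact that each successive arrow is produced by Lemma \ref{lemma:point}.
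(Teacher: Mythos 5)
Your proof is correct and follows exactly the same approach as the paper: induction on $\mathrm{d}_{\rad}^M(\sigma)$ (resp. $\mathrm{d}^{\soc}_M(\sigma)$), with the base case trivial and the inductive step supplied by Lemma \ref{lemma:point}.
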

\begin{proof}
The case $\mathrm{d}_{\rad}^M(\sigma) = 0$ (resp. $\mathrm{d}^{\soc}_M(\sigma) = 0$) is trivial.
The induction step follows from Lemma \ref{lemma:point}.
\end{proof}

\begin{lemma}\label{lemma:pathdist}
Suppose that $M$ is multiplicity free and $\mathscr{F}$ is an increasing semisimple filtration on $M$.
If $\sigma$ points to $\sigma'$ then $d^{\mathscr{F}}_M(\sigma)>d^{\mathscr{F}}_M(\sigma')$.
\end{lemma}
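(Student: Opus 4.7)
My plan is to pass to the unique length-two subquotient $E\subseteq M$ realizing the nontrivial extension of $\sigma$ by $\sigma'$, and to read off the inequality from the behaviour of the induced filtration on $E$. By hypothesis $\sigma$ points to $\sigma'$, so by definition of ``points to'' there is a unique such subquotient $E$, sitting in a nonsplit short exact sequence
\[
0\lra \sigma'\lra E\lra \sigma\lra 0,
\]
so that $\soc(E)\cong \sigma'$, $\cosoc(E)\cong \sigma$, and in particular $E$ has no subobject isomorphic to $\sigma$.

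The next step is to transport $\mathscr{F}$ to $E$ in the standard way: writing $E=N/N'$ for $N'\subseteq N\subseteq M$, I would set $\mathscr{F}_n(E)\defeq (\mathscr{F}_n(M)\cap N + N')/N'$. A short diagram chase shows that each graded piece $\gr_n^{\mathscr{F}}(E)$ is a subquotient of $\gr_n^{\mathscr{F}}(M)$, hence semisimple, and that its Jordan--H\"older constituents form a subset of those of $\gr_n^{\mathscr{F}}(M)$. Since $M$ is multiplicity free, each of $\sigma,\sigma'$ is a Jordan--H\"older factor of a unique graded piece of $\mathscr{F}$ on $M$, hence of a unique graded piece of the induced filtration on $E$ as well; explicitly, $\sigma$ lies in $\gr_d^{\mathscr{F}}(E)$ and $\sigma'$ in $\gr_{d'}^{\mathscr{F}}(E)$, where $d\defeq \mathrm{d}^{\mathscr{F}}_M(\sigma)$ and $d'\defeq \mathrm{d}^{\mathscr{F}}_M(\sigma')$.

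It then remains to rule out $d\le d'$, which I expect to be the only real content of the argument (and will not be difficult). If $d=d'$, then $E$ coincides with its unique nonzero graded piece, hence is semisimple, contradicting nonsplitness. If $d<d'$, then $\mathscr{F}_d(E)$ is a nonzero proper subobject of $E$ containing $\sigma$ as a Jordan--H\"older factor; but because $E$ has length two and socle isomorphic to $\sigma'$, its only nonzero proper subobject is $\sigma'$, a contradiction. Hence $d>d'$, as required. The main obstacle, such as it is, is simply bookkeeping of layers for $\sigma$ and $\sigma'$ in $E$; the multiplicity-freeness of $M$ is what makes this bookkeeping clean.
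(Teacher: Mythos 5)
Your argument is correct, and it takes a route that is close in spirit to but not identical with the paper's. The paper's proof never passes to the subquotient $E$: it observes that $\mathscr{F}_d(M)$ with $d = \mathrm{d}^{\mathscr{F}}_M(\sigma)$ is a subobject of $M$ containing $\sigma$, so by Proposition \ref{prop:subclosed} its Jordan--H\"older set is closed and therefore contains $\sigma'$; this gives $d' \le d$ immediately, and equality is then ruled out because a semisimple graded piece cannot support a nonsplit extension. Your version instead works entirely inside the length-two module $E$, and its real content is the claim that the induced filtration on a subquotient keeps $\sigma$ and $\sigma'$ at the same layer indices $d$ and $d'$ that they occupy in $M$. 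That claim is true, but deserves to be spelled out: $\gr_n^{\mathscr{F}}(E)$ is a subquotient of $\gr_n^{\mathscr{F}}(M)$ (quotient of $(\mathscr{F}_n(M)\cap N)/(\mathscr{F}_{n-1}(M)\cap N)$, which injects into $\gr_n^{\mathscr{F}}(M)$), so by multiplicity-freeness each of $\sigma,\sigma'$ can only appear in $\gr_n^{\mathscr{F}}(E)$ for the unique $n$ at which it appears in $\gr_n^{\mathscr{F}}(M)$. With that in hand, $E$ has exactly two (possibly coincident) nonzero layers, so your case split $d=d'$ versus $d<d'$ is exhaustive and each leads cleanly to a contradiction. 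On balance your argument trades the appeal to Proposition \ref{prop:subclosed} for a more explicit filtration bookkeeping on $E$; both are short, and the paper's is marginally more economical because it never has to justify the transport of layer indices.
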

\begin{proof}
Let $d$ be $d^{\mathscr{F}}_M(\sigma)$.
By Proposition \ref{prop:subclosed}, $\mathscr{F}_d (M)$ contains $\sigma'$ as a Jordan--H\"older factor.
Thus $d^{\mathscr{F}}_M(\sigma)\geq d^{\mathscr{F}}_M(\sigma')$.
If $d^{\mathscr{F}}_M(\sigma)= d^{\mathscr{F}}_M(\sigma')$, then there is a subquotient of $M$ which is isomorphic to a direct sum of $\sigma$ and $\sigma'$.
This contradicts the fact that $\sigma$ points to $\sigma'$.
\end{proof}

\begin{prop}\label{prop:rigcrit}
Suppose that $M\in \cC$ is multiplicity free.
Then $M$ is rigid if and only if for every $\sigma \in \JH(M)$, there is an extension path in the radical \emph{(}resp. socle\emph{)} filtration of length $\ell\ell(M)-1 - \mathrm{d}_{\rad}^M(\sigma)$ \emph{(}resp. $\ell\ell(M)-1 - \mathrm{d}^{\soc}_M(\sigma)$\emph{)} beginning \emph{(}resp.~ending\emph{)} with $\sigma$.
\end{prop}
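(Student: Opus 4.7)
The plan is to reformulate rigidity as an equality of Jordan--H\"{o}lder indices, and then translate the path condition into the same equality via a simple length estimate.

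First I would show that $M$ is rigid if and only if $\mathrm{d}_{\rad}^M(\sigma) + \mathrm{d}^{\soc}_M(\sigma) = \ell\ell(M)-1$ for every $\sigma \in \JH(M)$. The inequality $\mathrm{d}_{\rad}^M(\sigma) + \mathrm{d}^{\soc}_M(\sigma) \leq \ell\ell(M)-1$ always holds by \eqref{eqn:Loewy}: if $\sigma \in \gr^n_{\rad}(M)$ then $\sigma \in \JH(\soc_{\ell\ell(M)-1-n}(M))$. If $M$ is rigid, the identity $\rad^n(M) = \soc_{\ell\ell(M)-1-n}(M)$ yields $\gr_{\rad}^n(M) = \gr^{\soc}_{\ell\ell(M)-1-n}(M)$, so equality holds. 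Conversely, if equality of indices holds for every $\sigma$, then $\rad^n(M)$ and $\soc_{\ell\ell(M)-1-n}(M)$ have the same Jordan--H\"{o}lder factors, hence coincide by Proposition \ref{prop:subclosed}.

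Next I would establish the key path-length estimate: any extension path $\sigma_0 \to \cdots \to \sigma_k$ in the radical filtration beginning at $\sigma_0 = \sigma$ satisfies $k \leq \mathrm{d}^{\soc}_M(\sigma)$. Indeed, it is in particular an extension path in $\Gamma(M)$, and Lemma \ref{lemma:pathdist} applied to the socle filtration gives the strict decrease $\mathrm{d}^{\soc}(\sigma_0) > \cdots > \mathrm{d}^{\soc}(\sigma_k) \geq 0$.

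For the ``only if'' direction, assume $M$ is rigid. Corollary \ref{cor:path} yields an extension path $\sigma = \sigma_0 \to \cdots \to \sigma_e$ in the socle filtration, where $e = \mathrm{d}^{\soc}_M(\sigma) = \ell\ell(M)-1 - \mathrm{d}_\rad^M(\sigma)$. Rigidity forces $\mathrm{d}_\rad(\sigma_{i+1}) = \mathrm{d}_\rad(\sigma_i)+1$, and since $\sigma_i$ points to $\sigma_{i+1}$ with consecutive radical indices, the induced radical filtration on the relevant subquotient has length two; hence this path is also an extension path in the radical filtration, of the desired length. For the ``if'' direction, the hypothesized path of length $\ell\ell(M)-1 - \mathrm{d}_\rad^M(\sigma)$ combined with the path-length bound gives $\ell\ell(M)-1 - \mathrm{d}_\rad^M(\sigma) \leq \mathrm{d}^{\soc}_M(\sigma)$, so the first step delivers rigidity. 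The socle version follows by the evident symmetry (reversing arrows and swapping $\rad$ and $\soc$). No serious obstacle arises; the only point requiring a brief check is that, under rigidity, the extension path in the socle filtration really is a path in the radical filtration, which reduces to the length-two condition on the induced filtration of an extension and is automatic once the radical indices of the two factors differ by one.
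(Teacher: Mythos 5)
Your proof is correct and takes essentially the same route as the paper: both reduce rigidity to the identity $\mathrm{d}_{\rad}^M(\sigma)+\mathrm{d}^{\soc}_M(\sigma)=\ell\ell(M)-1$, obtain the ``if'' direction from Lemma \ref{lemma:pathdist} applied to the hypothesized path together with inequality (\ref{eqn:Loewy}), and obtain the ``only if'' direction by converting a path supplied by Corollary \ref{cor:path} from one filtration to the other via rigidity. You are a bit more careful than the paper's terse closing sentence in spelling out that the equality of indices upgrades, via Proposition \ref{prop:subclosed}, from equality of Jordan--H\"older factor sets to equality of the submodules $\rad^n(M)$ and $\soc_{\ell\ell(M)-1-n}(M)$; this is a worthwhile clarification.
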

\begin{proof}
First suppose that $M$ is rigid.
There is an extension path $P_{\rad}$ in the radical filtration of length $\mathrm{d}_{\rad}^M(\sigma)$ ending at $\sigma$ by Corollary \ref{cor:path}.
By rigidity, this is an extension path in the socle filtration of length $\mathrm{d}_{\rad}^M(\sigma) = \ell\ell(M)-1 - \mathrm{d}^{\soc}_M(\sigma)$.

Now suppose that there is an extension path in the radical filtration of length $\ell\ell(M)-1 - \mathrm{d}_{\rad}^M(\sigma)$ starting with $\sigma$.
Then 
\[\mathrm{d}^{\soc}_M(\sigma) \geq \ell\ell(M)-1 - \mathrm{d}_{\rad}^M(\sigma)\]
by Lemma \ref{lemma:pathdist}.
The reverse inequality is implied by (\ref{eqn:Loewy}), and we conclude that $\gr_{\rad}^n(M)$ and $\gr^{\soc}_{\ell\ell(M)-1-n}(M)$ are isomorphic, and thus that $M$ is rigid.
\end{proof}

The following is self evident.

\begin{prop}\label{prop:dualgraph}
Let $M\in \cC$ be multiplicity free.
Then the dual object $M^*$ in the dual abelian category $\cC^*$ is also multiplicity free.
A decreasing filtration $\mathscr{F}$ on $M$ gives rise to an increasing filtration $\mathscr{F}^*$ on $M^*$.
Then the map sending $\sigma\in \JH(M)$ to $\sigma^* \in \JH(M^*)$ extends to isomorphisms of directed graphs $\Gamma(M) \risom \Gamma(M^*)$ and $\Gamma_{\mathscr{F}}(M)\risom \Gamma^{\mathscr{F}^*}(M)^*$ where $-^*$ denotes the transpose of a directed graph.
In particular, if $\sigma_0 \ra \sigma_1 \ra \cdots \ra \sigma_n$ is an extension path in $\mathscr{F}$, then $\sigma_n^* \ra \sigma_{n-1}^* \ra \cdots \ra \sigma_0^*$ is an extension path in $\mathscr{F}^*$.
\end{prop}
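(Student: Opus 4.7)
The plan is to verify each assertion by invoking that duality $(-)^*\colon\cC\risom\cC^*$ is an exact contravariant equivalence of abelian categories, and unwinding the definitions from \S\ref{subsec:gencat}. First, since $(-)^*$ preserves composition series (sending each simple factor $\sigma$ to $\sigma^*$ and reversing the order), one has $\JH(M^*) = \{\sigma^* : \sigma \in \JH(M)\}$ with matching multiplicities, so $M^*$ is multiplicity free. Given a decreasing filtration $\mathscr{F}$ on $M$, I would define $\mathscr{F}^*_n(M^*) \subseteq M^*$ to be the kernel of the dual of the inclusion $\mathscr{F}^{n+1}(M)\hookrightarrow M$; equivalently, $\mathscr{F}^*_n(M^*) \cong (M/\mathscr{F}^{n+1}(M))^*$. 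This is an exhaustive and separated increasing filtration with $\gr^{\mathscr{F}^*}_n(M^*) \cong \gr^n_{\mathscr{F}}(M)^*$, hence semisimple of the same length as $\mathscr{F}$.

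For the isomorphism $\Gamma(M) \risom \Gamma(M^*)^*$, I would use that by multiplicity-freeness, for any ordered pair $\sigma,\sigma'\in\JH(M)$ there is at most one subquotient $Q$ of $M$ fitting in $0\ra \sigma'\ra Q\ra \sigma\ra 0$, and its dual $Q^*$ is the corresponding subquotient of $M^*$ fitting in $0\ra \sigma^*\ra Q^*\ra (\sigma')^*\ra 0$. Since $Q$ is a nontrivial extension if and only if $Q^*$ is, the pointing arrows in $\Gamma(M)$ are precisely the reverses of those in $\Gamma(M^*)$. For the refined statement $\Gamma_{\mathscr{F}}(M) \risom \Gamma^{\mathscr{F}^*}(M^*)^*$, the same observation applies once one notes that $(-)^*$ commutes with passing to the induced filtration on a subquotient, so the induced filtration on $Q^*$ has length two if and only if the one on $Q$ does. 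The statement about extension paths is then immediate from the definition of graph transpose.

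There is no real obstacle here beyond careful bookkeeping with dualization. The only point requiring attention is the compatibility of $(-)^*$ with the formation of induced filtrations on subquotients, which follows from the exactness of $(-)^*$ together with the definition of the induced filtration in terms of intersections and images.
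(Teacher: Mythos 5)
The paper gives no written proof of this proposition (it is introduced as ``The following is self evident.''), and your argument is exactly the natural unwinding of the definitions under the contravariant exact equivalence $(-)^*\colon\cC\risom\cC^*$; it is correct. Note that you have also implicitly corrected two typos in the statement as printed -- the isomorphisms should read $\Gamma(M)\risom\Gamma(M^*)^*$ and $\Gamma_{\mathscr{F}}(M)\risom\Gamma^{\mathscr{F}^*}(M^*)^*$, as your argument correctly produces and as the final clause about extension paths requires.
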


\subsubsection{The main result} \label{sec:mainresult}
We now use the notation from \S \ref{subsec:gencat} with $\cC$ the category of finite $\F[\rG]$-modules.
Let $R$ be a 2-generic Deligne--Lusztig representation of $\rG$ over $E$.
By \cite[Appendix, Theorem 3.4]{herzig-duke} (see the proof of Proposition \ref{prop:typedecomp}), $R$ is residually multiplicity free.
We will show that the elements in $\JH(\ovl{R})$ are $p$-regular in Lemma \ref{lem:deep:type}.
If $\sigma\in \JH(\ovl{R})$, then there exists a unique (up to homothety) $\cO$-lattice $R^\sigma\subseteq R$ with irreducible cosocle isomorphic to $\sigma$ by \cite[Lemma 4.1.1]{EGS}, and we write $\ovl{R}^\sigma$ to denote its reduction modulo $\varpi$. 
In \S \ref{sec:graph}, we defined a distance function $\mathrm{d}_{\mathrm{gph}}$ on $p$-regular Serre weights. 
We now want to relate the graph distance to the submodule structure of $\ovl{R}^\sigma$. 
To simplify notation, we fix $R$ and write $\dcosoc{\sigma}{\sigma'}$ for $\mathrm{d}_\rad^{\ovl{R}^{\sigma}}(\sigma')$.

\begin{defn}
Let $V$ be a set of (isomorphism classes of) weights $\sigma = F(\mu)$ with $\mu$ $p$-regular.
Let $\sigma\in V$.
Let $\Gamma$ be a directed graph with vertex set $V$.
Then we say that $\Gamma$ is \emph{predicted by the extension graph with respect to $\sigma$} if there is an edge from $\kappa_1$ to $\kappa_2$ if and only if $\dgr{\kappa_1}{\kappa_2} = 1$ and $\dgr{\sigma}{\kappa_1} \leq \dgr{\sigma}{\kappa_2}$ (see Definition \ref{defn:dgph}).
\end{defn}

Our main result on the representation theory side is the following:
\begin{thm}
\label{thm structure}
Let $R$ be as above and $13$-generic.
Let $\sigma\in \JH(\ovl{R})$. Then 
\begin{enumerate}
\item \label{item:loewy} $\dgr{\sigma}{\kappa} = \dcosoc{\sigma}{\kappa}$ for all $\kappa\in \JH(\ovl{R})$, in particular, 
\[
\ell\ell(\ovl{R}^\sigma)=2\Def_R(\sigma)+3(f-\Def_R(\sigma))+1;
\]
\item \label{item:radpt} $\Gamma_{\rad}(\ovl{R}^{\sigma})$ is predicted by the extension graph with respect to $\sigma$;
\item \label{item:point} $\Gamma(\ovl{R}^{\sigma})$ is predicted by the extension graph with respect to $\sigma$; 
\item \label{item:rigid} $\ovl{R}^\sigma$ is rigid; and
\item \label{item:sat} if $\kappa\in\JH(\ovl{R})$ and $R^{\kappa}\into R^{\sigma}$ is a saturated inclusion, then 
$p^{\dgr{\kappa}{\sigma}}R^{\sigma}\into R^{\kappa}$ is a saturated inclusion.
\end{enumerate}
\end{thm}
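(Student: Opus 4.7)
The proof follows the two-phase strategy laid out in the outline of Section \ref{sec:lct}, the underlying idea being to attack a tractable base case by local algebraic methods and then propagate to the general case using an auxiliary \emph{saturation distance} defined in characteristic zero, which is more flexible than the mod $p$ cosocle distance and can be controlled via the geometry of tame potentially crystalline deformation rings.

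Phase one (base case). First I would establish items (\ref{item:loewy})--(\ref{item:rigid}) in the case where $\sigma \in \JH(\taubar)$ is a lower alcove weight of defect zero. The plan is to construct an embedding $\taubar^{\sigma}\hookrightarrow V_\mu|_\rG$, where $V_\mu$ is an appropriate tensor product of Weyl modules of $\un{G}$ with (possibly) non-$p$-restricted highest weights. The source side comes from Proposition \ref{cor:map}, producing a nonzero map $\taubar^\sigma\to V_\mu|_\rG$, unique up to scalar. The target side is governed by Theorem \ref{thm:Vdecomp}, which identifies $\JH(V_\mu|_\rG)$ and, crucially, produces the expected non-split extensions between Jordan--H\"older factors at graph distance one; that non-splitness is the content of Proposition \ref{prop:nonsplit}, an algebraic-to-rational-points comparison restricting algebraic $\un{G}$-representations to $\un{G}(\F_p)$. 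An induction on the extension graph starting from the cosocle $\sigma$ shows that the image of the map contains every Jordan--H\"older factor of $\taubar^\sigma$, so the map is actually injective, and items (\ref{item:loewy})--(\ref{item:rigid}) for $\sigma$ are inherited from the submodule structure of $V_\mu|_\rG$.

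Phase two (saturation distance and induction). For general $\sigma$, the plan is to introduce (in \S \ref{subsec:dist}) the saturation distance $\dsat{\sigma}{\kappa}$, defined as the least integer $d\ge 0$ such that $p^d \tau^\sigma\subseteq \tau^\kappa$ is a saturated inclusion. Three pieces fit together: (a) a coarse inequality between $\dsat{}{}$ and $\dgr{}{}$, obtained from Corollary \ref{ineq:sat:gr} using the component labeling of $\overline{R}_{\rhobar}^\tau$ in Proposition \ref{prop:identify:cmpt} and intersection multiplicities on its special fiber; (b) Proposition \ref{prop:dcosoc}, stating that once $\dsat{\sigma}{-}=\dgr{\sigma}{-}$ on $\JH(\taubar)$ (``$\sigma$ is maximally saturated''), items (\ref{item:loewy})--(\ref{item:rigid}) all follow for $\taubar^\sigma$; and (c) Proposition \ref{prop saturation 1}, the distance-one case of item (\ref{item:sat}), proved by combining the explicit deformation ring presentations of \cite{LLLM}, the combinatorics of \S \ref{sec:weights}, and a Kisin--Taylor--Wiles patching functor in the spirit of \cite[\S B.2]{EGS}. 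Proposition \ref{cor:max:sat1} then uses Phase one to exhibit the lower alcove defect zero weights as maximally saturated; dualizing in $\tau$ gives item (\ref{item:sat}) in the lower alcove defect zero case. Finally, an induction on the defect of $\sigma$, using (c) to climb one adjacency at a time in the extension graph and to upgrade saturation statements between lattices, promotes item (\ref{item:sat}) to \emph{every} $\sigma\in \JH(\taubar)$, so that Proposition \ref{prop:dcosoc} yields items (\ref{item:loewy})--(\ref{item:rigid}) uniformly.

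The main obstacle is Phase one: Proposition \ref{prop:nonsplit}, where one must pin down non-split extensions of $\rG$-representations arising from restrictions of non-$p$-restricted Weyl modules, is the most delicate local computation and is what dictates the $13$-genericity hypothesis. A secondary difficulty is input (c) of Phase two, where the passage from a characteristic zero saturation assertion to the mod $p$ structure of $\taubar^\sigma$ is achieved only through the global input of a patching functor and the explicit geometry of $\Spec \overline{R}_{\overline{\fM},\bf{w}}^{\expl,\nabla}$ worked out in \S \ref{sec:match}.
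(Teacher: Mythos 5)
Your proposal is correct and follows essentially the same two-phase strategy as the paper: first the embedding $\ovl{\tau}^{\sigma}\hookrightarrow V_{\mu}|_{\rG}$ (via Proposition \ref{cor:map}, Theorem \ref{thm:Vdecomp}, and Proposition \ref{prop:nonsplit}) to settle the lower alcove, defect zero case in Theorem \ref{thm:soc:simple}, then the introduction of the saturation distance and the notion of a maximally saturated weight, with Proposition \ref{prop saturation 1} as the global input, Proposition \ref{prop:dcosoc} converting maximal saturation into items (\ref{item:loewy})--(\ref{item:rigid}), Proposition \ref{cor:max:sat1} bootstrapping the base case, and Proposition \ref{prop step 3} carrying out the induction on defect. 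A couple of minor attributional slips: Corollary \ref{ineq:sat:gr} follows from Proposition \ref{prop saturation 1} by the triangle inequality and does not itself invoke Proposition \ref{prop:identify:cmpt} (that input lives inside the proof of Proposition \ref{prop saturation 1}); and it is primarily Proposition \ref{prop saturation 1} (which must be downgraded to $10$-genericity of $\rhobar$) rather than Proposition \ref{prop:nonsplit} (which needs only $9$-deep weights) that forces the $13$-genericity hypothesis.
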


\begin{rmk} \label{rmk:point}
By Proposition \ref{prop:typedecomp}, every maximal geodesic in $\JH(\ovl{R})$ starting from $\sigma$ has the same length.
Then item (\ref{item:rigid}) follows from items (\ref{item:loewy}), (\ref{item:radpt}), and Proposition \ref{prop:rigcrit}.
Furthermore, (\ref{item:point}) and Proposition \ref{prop:subclosed} gives a classification of submodules of $\ovl{R}^\sigma$, from which one can easily deduce items (\ref{item:loewy}) and (\ref{item:radpt}).
\end{rmk}

The proof of Theorem \ref{thm structure} will be carried out in the following subsections.

\subsection{Injective envelopes}
\label{sec:inj:env}

We now relax our hypotheses on $\un{G}_0$, but keep much of the related notation.
Let $\un{G}_0$ be a connected reductive group over $\F_p$ and let $\un{G}$ be the base change $\un{G}_0 \times_{\Fp} \F$.
Assume that $\un{G}$ is split, and isomorphic to $\un{G}_s \times_{\F_p} \F$ where $\un{G}_s$ is a connected split reductive group over $\F_p$.
Let $\un{G}_0^{\der}$ be the derived subgroup of $\un{G}_0$ and let $\un{G}^{\der}$ be the base change $\un{G}_0^{\der} \times_{\F_p} \F$.
Assume that $\un{G}_0^{\der}$ is \emph{simply connected}.
Let $\rG$ (resp.~$\rG^{\der}$) be the finite group $\un{G}_0(\F_p)$ (resp. ~$\un{G}_0^{\der}(\F_p)$).
Let $F:\un{G} \ra \un{G}$ denote the relative Frobenius with respect to $\un{G}_s$.
There is an automorphism $\pi$ of $\un{G}_s$, and hence its based root datum, so that $F \circ \pi: \un{G} \ra \un{G}$ is the relative Frobenius with respect to $\un{G}_0$.
This definition of $\pi$ is consistent with the special case introduced in \S \ref{subsection:Notation}.
Let $h$ be the Coxeter number of $\un{G}$.
We will eventually specialize to the case where $\un{G}$ is a product of copies of $\GL_3$, so that $h=3$.

We define $\Proj(\sigma)$ to be the projective hull of $\sigma$ in the category of $\F[\rG]$-modules. As $\F[\rG]$ is a Frobenius algebra, we have an isomorphism $\Proj(\sigma)\cong \Inj(\sigma)$ where $\Inj(\sigma)$ denotes the injective envelope of $\sigma$ (again in the category of $\F[\rG]$-modules, cf.~ \cite[\S 6, Theorems 4 and 6]{alperin}).

\subsubsection{Algebraic groups, Frobenius kernels, and finite groups}

In this section, we compare injective envelopes of  weights for representations of Frobenius kernel and of finite groups.
One goal is to prove that, under genericity conditions, the graph distance introduced in \S \ref{sec:gph:1} can be characterized in terms of non-vanishing of $\mathrm{Ext}^1_{\rG}$-groups (Lemma \ref{lem:dic}).
We moreover introduce and describe a non $p$-restricted Weyl module $V_\mu$ which will play a key role in the proof of a particular case of Theorem \ref{thm structure}

Let $\un{G}_1$ denote the Frobenius kernel of $\un{G}$ and $\un{G}_1\un{T}$ denote the product of $\un{T}$ and $\un{G}_1$ in $\un{G}$.
We similarly define $\un{G}^{\der}_1$ and $\un{G}^{\der}_1\un{T}^{\der}$.
The $\un{G}$-representation $L(\mu)$ remains irreducible when restricted to $\un{G}_1\un{T}$ and $\un{G}_1$ (cf.~ \cite[II.3.10 and II.9.6]{RAGS}) %
and we will use the standard notations $L_1(\mu)=L(\mu)|_{\un{G}_1}$, $\widehat{L}_1(\mu)=L(\mu)|_{\un{G}_1\un{T}}$ (cf.~ \emph{loc.~ cit.}).
We write $\widehat{Q}_1(\mu)$ to denote the injective envelope of the irreducible representation $\widehat{L}_1(\mu)$  in the category of $\un{G}_1\un{T}$-modules.
It restricts to an injective envelope of $L_1(\mu)$ in the category of $\un{G}_1$-modules. %
As in the case of the finite group $\rG$, it is isomorphic to a projective cover of $L_1(\mu)$ as well.
We recall the following important result.

\begin{thm}
\label{thm:dec:tens}
Assume that $p> 2(h-1)$ and that $\un{G}$ has no factors of type $A_1$.
Then $\wdht{Q}_1(\mu)$ has a unique $\un{G}$-module structure which will be denoted by $Q_1(\mu)$ in what follows. 
In particular, $\soc_{\un{G}} Q_1(\mu)$ is isomorphic to $L(\mu)$ since $L(\mu)$ is the unique extension of $\wdht{L}_1(\mu)$.
Assume that $\mu\in X_1(\un{T})$ is $h-2$-deep. Then
\begin{align}
\label{res:G}
Q_1(\mu)|_G\cong \Proj(F(\mu))\cong \Inj(F(\mu)).
\end{align}
\end{thm}
\begin{proof}
The first part of the theorem is well known, cf.~ \cite[II.11.11]{RAGS}.
In what follows we deduce the isomorphism (\ref{res:G})  from  \cite[Lemma 6.1]{PillenDL} (where it is stated when $\un{G}$ is semisimple). %
It suffices to show that $Q_1(\mu)|_{\rG}$ is injective and its socle is $F(\mu)$.

\paragraph{\textbf{Claim 1.}} 
Let $M$ be a $\un{G}$-module. Then $\soc_{\un{G}^{\der}}(M|_{\un{G}^{\der}})=\soc_{\un{G}}(M)|_{\un{G}^{\der}}$.
The analogous statement holds true for $\un{G}_1\un{T}$ and $\un{G}^{\der}_1\un{T}^{\der}$ and for the finite groups $\rG$ and $\rG^{\der}$.
\begin{proof}[Proof of Claim 1]
Let  $\un{Z}\defeq \un{G}/\un{G}^{\der}$ (resp. ~$\un{Z}_1\defeq \un{G}_1/\un{G}^{\der}_1$).
For $\bullet\in\{\emptyset,\ 1\}$ the group $\un{Z}_\bullet$ is diagonalizable and hence, by the Hochschild--Serre spectral sequence \cite[I.6.9(3)]{RAGS}, the restriction functor $\Res^{\un{G}_\bullet}_{\un{G}^{\der}_\bullet}$  (which is exact) induces a canonical isomorphism
\begin{equation}
\label{iso:HS1}
\Ext_{\un{G}_\bullet}^i(M,\ N)\stackrel{\sim}{\longrightarrow}H^0\big(\un{Z}_\bullet,\Ext^i_{\un{G}^{\der}_\bullet}(M|_{\un{G}^{\der}_\bullet},\ N|_{\un{G}^{\der}_\bullet}\big)\big)
\end{equation}
for all $i\in\N$ and all $\un{G}_\bullet$-modules $M,\ N$. Since the $\un{G}^{\der}_\bullet$-restriction of an irreducible $\un{G}_\bullet$-module remains irreducible, we conclude that $\Res^{\un{G}_\bullet}_{\un{G}^{\der}_\bullet}$ commutes with formation of socles and cosocles.
By \cite[II.9.6 (11)]{RAGS},  
this implies the required statement for the groups $\un{G}_1\un{T}$, $\un{G}^{\der}_1\un{T}^{\der}$. 

Since $Z\defeq \rG/\rG^{\der}$ has order prime to $p$, we also have a canonical isomorphism
\begin{equation}
\label{iso:HS1:fin}
\Ext_{\rG}^i(M,\ N)\stackrel{\sim}{\longrightarrow}H^0\big(\mathrm{Z},\Ext^i_{\rG^{\der}}(M|_{\rG^{\der}},\ N|_{\rG^{\der}}\big)
\end{equation}
for all $i\in\N$ and all $\rG$-modules $M,\ N$ which implies the statement in the case of finite groups.
\end{proof}

\paragraph{\textbf{Claim 2.}}
Let $\nu$ be a $p$-restricted weight.
Then $\wdht{Q}_1(\nu)|_{\un{G}^{\der}_1\un{T}^{\der}}$ is the injective envelope of $\wdht{L}_1(\nu)|_{\un{G}^{\der}_1\un{T}^{\der}}$ as a $\un{G}^{\der}_1\un{T}^{\der}$-module. 
\begin{proof}[Proof of Claim 2.]
By Claim 1, the socle of $\wdht{Q}_1(\nu)|_{\un{G}^{\der}_1\un{T}^{\der}}$ is isomorphic to $\wdht{L}_1(\nu)|_{\un{G}^{\der}_1\un{T}^{\der}}$.
It suffices to prove injectivity.
By \cite[II.9.4]{RAGS}, it is enough to prove injectivity for the the restriction to $\un{G}^{\der}_1$. %
As $\un{G}^{\der}_1$ and $\un{G}_1$ are both finite, and $\un{G}^{\der}_1$ is closed in $\un{G}_1$, we deduce that $\Res^{\un{G}_1}_{\un{G}^{\der}_1}$ maps injectives to injectives 
(since it has an exact left adjoint cf.~\cite[I.3.5, I.8.16]{RAGS}). 
\end{proof}
 
We are now ready to prove (\ref{res:G}).
We show that $Q_1(\mu)|_{\rG}$ is an injective $\rG$-module with socle isomorphic to $F(\mu)$.
By \cite[Lemma 6.1]{PillenDL} (which holds also holds in the nonsplit case, cf.~the final remark of \cite[\S 11]{PillenDL}) and Claim 2, we have
\begin{equation}
\label{iso:res}
Q_1(\mu)|_{\rG^{\der}}\cong \Inj(F(\mu)|_{\rG^{\der}}),
\end{equation}
so that $Q_1(\mu)|_{\rG^{\der}}$ is an injective $\rG^{\der}$-module with socle isomorphic to $F(\mu)|_{\rG^{\der}}$.

We first show injectivity.
The functor of $\rG/\rG^{\der}$-invariants is exact on the category of $\rG/\rG^{\der}$-representations.
We hence obtain a canonical isomorphism
\[
\Hom_{\rG}(\bullet,Q_1(\mu)|_{\rG})\cong 
\big(\Hom_{\rG'}(\bullet,Q_1(\mu)|_{\rG^{\der}})\big)^{\rG/\rG^{\der}}
\]
and $\Hom_{\rG}(\bullet,Q_1(\mu)|_{\rG})$, being the composite of two exact functors, is therefore exact.

The socle of $Q_1(\mu)|_{\rG}$ contains a submodule isomorphic to $F(\mu)$ and its restriction to $\rG^{\der}$ is isomorphic to $F(\mu)|_{\rG^{\der}}$.
Thus, the socle of $Q_1(\mu)|_{\rG}$ is isomorphic to $F(\mu)$.
(We are grateful to the referee for simplifying the argument in our first version).
\end{proof}

Recall that an irreducible $\un{G}$-module $L(\kappa)$, with $\kappa\in X^*_+(\un{T})$, is said to be $p$-bounded if $\langle \kappa, \alpha^\vee\rangle<2(h-1)p$ for all coroots $\alpha\in\un{R}^\vee$; a $\un{G}$-module is $p$-bounded if all its Jordan-H\"older factors are $p$-bounded. 
Similarly, a $\un{G}$-module is defined to be $m$-deep if the highest weights of all its Jordan--H\"older factors are $m$-deep.
The following lemmas will be used several times in the rest of this section.

\begin{lemma}[\cite{PillenPS}, Lemma 3.1]
\label{lem:res:soc}
Let $M$ be a $\un{G}$-module. If $M$ is $3(h-1)$-deep with $p$-bounded highest weight, then
\begin{align*}
\soc^{\un{G}}_i(M)|_{\rG}=\soc^{\rG}_i(M|_{\rG}),&& \rad_{\un{G}}^i(M)|_{\rG}=\rad_{\rG}^i(M|_{\rG})
\end{align*}
\end{lemma}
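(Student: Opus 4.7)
The statement is attributed to Pillen (loc.\ cit.), and I would follow that strategy closely. The plan is to reduce to the base case $i=0$ (the statement about socles), and then deduce the filtration statement inductively using that the quotient $M/\soc_{\un{G}}(M)$ remains $p$-bounded and $3(h-1)$-deep. The statement about the radical filtration is formally dual: one can either repeat the argument working with cosocles and projective covers, or invoke it by duality after noting that $\un{G}$-duality preserves the hypotheses and intertwines socle and radical filtrations.

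For the base case, the inclusion $\soc_{\un{G}}(M)|_{\rG}\subseteq \soc_{\rG}(M|_{\rG})$ is routine: every $\un{G}$-submodule is a fortiori an $\rG$-submodule, and by Steinberg's tensor product theorem the $p$-boundedness hypothesis ensures that simple $\un{G}$-constituents of $\soc_{\un{G}}(M)$ have highest weights which, after the standard twisted decomposition, give irreducible $\rG$-modules of the form $F(\mu)$, so that $\soc_{\un{G}}(M)|_{\rG}$ is $\rG$-semisimple.

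The reverse inclusion is the substantive step, and the key tool is Theorem~\ref{thm:dec:tens}. Write $\soc_{\un{G}}(M)=\bigoplus_i L(\nu_i)$. Under the $p$-boundedness and $3(h-1)$-deepness assumption, the weights $\nu_i$ lie in the regular, sufficiently restricted range where classical results (\cite{RAGS}, II.9.4, II.9.6) guarantee that the $\un{G}_1\un{T}$-socle of $M$ agrees with $\soc_{\un{G}}(M)|_{\un{G}_1\un{T}}$; hence $M$ embeds as a $\un{G}_1\un{T}$-module into $\bigoplus_i \widehat{Q}_1(\nu_i)$. Restricting this embedding to $\rG$ and invoking Theorem~\ref{thm:dec:tens}, we obtain an $\rG$-embedding $M|_{\rG}\hookrightarrow \bigoplus_i \Inj_{\rG}(F(\nu_i))$. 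Since the $\rG$-socle of the right-hand side is precisely $\bigoplus_i F(\nu_i) = \soc_{\un{G}}(M)|_{\rG}$, we conclude that $\soc_{\rG}(M|_{\rG})\subseteq \soc_{\un{G}}(M)|_{\rG}$, completing the base case.

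The main obstacle in executing this plan is ensuring that the $\un{G}$-socle and $\un{G}_1\un{T}$-socle of $M$ really do coincide, and that the embedding constructed in the previous paragraph exists as claimed—this is exactly where the numerical hypothesis $3(h-1)$-deep enters decisively. One must verify that no ``spurious'' $\un{G}_1\un{T}$-socle factors appear from $p$-restricted shifts of the $\nu_i$ that could otherwise survive under restriction to $\rG$. Once one knows the genericity bound forces all composition factors of $M$ to avoid the walls controlling such pathologies, the injective envelope argument above works cleanly, and inducting on the socle length (resp.\ on the radical length) gives the full statement.
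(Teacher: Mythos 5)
Your overall plan (reduce to the socle base case, embed $M$ into a sum of injective envelopes, then deduce things by induction and duality) is a reasonable reconstruction of how the cited result of Pillen is actually proved, and the duality trick for the radical filtration is exactly what the paper does. However, there is a real gap in the pivotal step of the base case. You produce an embedding of $M$ into $\bigoplus_i \widehat{Q}_1(\nu_i)$ \emph{as a $\un{G}_1\un{T}$-module} and then propose to ``restrict this embedding to $\rG$.'' This is not possible: $\rG=\un{G}(\Fp)$ has trivial intersection with the Frobenius kernel $\un{G}_1$, so $\rG$ is not a subgroup of $\un{G}_1\un{T}$ and a $\un{G}_1\un{T}$-linear map carries no $\rG$-equivariance. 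What you actually need is a $\un{G}$-module embedding $M\hookrightarrow\bigoplus_i Q_1(\nu_i)$ (using the unique algebraic structure on $\widehat{Q}_1$ from Theorem~\ref{thm:dec:tens}); and $\bigoplus_i Q_1(\nu_i)$ is not injective as a $\un{G}$-module, so extending the socle inclusion to all of $M$ requires a separate argument — in the deep/generic range, a vanishing statement along the lines of $\Ext^1_{\un{G}}\bigl(M/\soc_{\un{G}}(M),\,\bigoplus_i Q_1(\nu_i)\bigr)=0$ (compare the spectral-sequence argument in Lemma~\ref{lem:algemb} of the paper). This Ext-vanishing is precisely where the numerical genericity bound bites, not in comparing $\un{G}$-socles with $\un{G}_1\un{T}$-socles as you suggest. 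Once you have the $\un{G}$-embedding, restricting to $\rG$ and using $Q_1(\nu_i)|_{\rG}\cong\Inj_{\rG}(F(\nu_i))$ does give $\soc_{\rG}(M|_{\rG})\subseteq\soc_{\un{G}}(M)|_{\rG}$ as you wish.

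A second, smaller, point: for the easy inclusion you assert that $\soc_{\un{G}}(M)|_{\rG}$ is $\rG$-semisimple because the $p$-boundedness makes restriction of simples well-behaved. This is not automatic — $L(\mu)|_{\rG}$ with $\mu$ $p$-bounded but not $p$-restricted decomposes via Steinberg's tensor product as $F(\mu^0)\otimes F(\pi\omega)$, which is semisimple only after invoking a translation-principle argument as in Lemma~\ref{lem:trans:princ}; you should make this dependence explicit.

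Also note that your argument works directly with the reductive group $\un{G}$, whereas the paper's proof is shorter and takes a different route: it invokes the statement of \cite{PillenPS} Lemma 3.1 for the \emph{derived} group (with a remark about the non-split case and the Frobenius twist), then transfers to $\un{G}$ using the Hochschild--Serre observation (Claim 1 in the proof of Theorem~\ref{thm:dec:tens}) that restriction to $\un{G}^{\der}$ commutes with socle/radical formation. Re-proving Pillen's lemma from scratch as you attempt is legitimate and instructive, but it makes visible the Ext-vanishing that the black-box citation hides — and that is precisely the step your sketch elides.
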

\begin{proof}
The statement on the socle filtration for $\un{G}^{\der}$ and $\rG^{\der}$ follows from the proof of \cite[Lemma 3.1]{PillenPS}.
While \emph{loc. ~cit.} ~assumes that $\un{G}^{\der}_0$ is split over $\F_p$, the proof applies setting $n=1$ and using that $L(p\lambda_1)|_{\rG^{\der}}$ is isomorphic to $L(\pi\lambda_1)|_{\rG^{\der}}$ rather than $L(\lambda_1)|_{\rG^{\der}}$.
By duality, noting that $M$ is $3(h-1)$-deep if and only if  its linear dual $M^*$ is $3(h-1)$-deep, and that $\soc^{\bullet}_{\ell\ell(M)-i}(M^*)=(M/\rad_{\bullet}^i(M))^*$ for $\bullet\in\{\rG^{\der},\ \un{G}^{\der}\}$, we obtain the analogous statement for the radical filtration (recall that $\ell\ell(M)$ is the Loewy length of $M$).
The general case follows from Claim 1 in the proof of Theorem \ref{thm:dec:tens}.
\end{proof}

\begin{cor}\label{cor:projfil}
Let $\mu \in X_1(\un{T})$ such that $Q_1(\mu)$ is $3(h-1)$-deep.
Then
\begin{align*}
\soc^{\un{G}}_i(Q_1(\mu))|_{\rG}=\soc^{\rG}_i(\Inj(F(\mu))),&& \rad_{\un{G}}^i(Q_1(\mu))|_{\rG}=\rad_{\rG}^i(\Inj(F(\mu))).
\end{align*}
\end{cor}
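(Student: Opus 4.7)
The plan is to obtain Corollary \ref{cor:projfil} as a direct synthesis of the two preceding results in the same subsection. Under the running assumption $p>2(h-1)$, the $3(h-1)$-deep hypothesis on $Q_1(\mu)$ is stronger than the $h-2$-deep hypothesis needed in Theorem \ref{thm:dec:tens}, so that theorem gives a $\rG$-equivariant identification
\[
Q_1(\mu)|_{\rG} \;\cong\; \Inj(F(\mu)).
\]
Consequently, both $\Inj(F(\mu))$ and $Q_1(\mu)|_{\rG}$ have the same socle and radical filtrations, and it suffices to show that these filtrations are obtained by restriction of the corresponding $\un{G}$-module filtrations on $Q_1(\mu)$.

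This last claim is exactly the output of Lemma \ref{lem:res:soc} applied to the $\un{G}$-module $M = Q_1(\mu)$, provided we verify its two hypotheses: that $M$ is $3(h-1)$-deep (which is an assumption of the corollary) and that $M$ has $p$-bounded highest weight. The second point follows from the standard description of the structure of $Q_1(\mu)$ as a $\un{G}$-module (cf. \cite[II.11.9-11.11]{RAGS}): its composition factors are of the form $L(\nu)$ with $\nu$ bounded by $2(p-1)\eta + w_0\mu$ in the dominance order. Since $\mu \in X_1(\un{T})$ is $p$-restricted, this forces $\langle\nu, \alpha_0^\vee\rangle < 2(h-1)p$ for each such $\nu$, i.e., $Q_1(\mu)$ is $p$-bounded as required.

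Combining these ingredients, Lemma \ref{lem:res:soc} then yields
\[
\soc^{\un{G}}_i(Q_1(\mu))|_{\rG} = \soc^{\rG}_i\bigl(Q_1(\mu)|_{\rG}\bigr) = \soc^{\rG}_i(\Inj(F(\mu))),
\]
and the analogous chain of equalities for the radical filtration, completing the proof. No genuine obstacle is anticipated: this is a purely formal consequence of the two preceding results, the only mildly delicate point being the verification of the $p$-boundedness of $Q_1(\mu)$, which is standard modular representation theory.
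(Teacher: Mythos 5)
Your proof is correct and follows exactly the same route as the paper, which simply cites Theorem \ref{thm:dec:tens} together with Lemma \ref{lem:res:soc}. The only addition on your side is the explicit verification that $Q_1(\mu)$ is $p$-bounded (needed to apply Lemma \ref{lem:res:soc}); the paper leaves this implicit, and your check via the highest weight $2(p-1)\eta + w_0\mu$ is the standard argument.
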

\begin{proof}
This follows from Theorem \ref{thm:dec:tens} and Lemma \ref{lem:res:soc}.
\end{proof}

If $\nu,\kappa\in X^*(\un{T})$ we let $m_{\kappa}(\nu)\defeq \dim_\F(L(\kappa))_{\nu}$.
Moreover we write $\nu\in L(\kappa)$ as a shorthand for $(L(\kappa))_{\nu}\neq 0$.

\begin{lemma}[Translation principle]
\label{lem:trans:princ}
Assume that $p\geq2(h-1)$. 
Let $\lambda,\ \xi\in X_1(\un{T})$. Assume that for all weights $\nu\in L(\xi)$, the weights $\lambda+\nu$ belongs to the same alcove as $\lambda$. Then we have the following isomorphism of $\un{G}$-modules:
\begin{enumerate}
	\item\label{it:tr:princ:1} $L(\lambda)\otimes_{\F}L(\xi)=\underset{\nu\in L(\xi)}{\bigoplus}L(\lambda+\nu)^{\oplus m_{\xi}(\nu)}$,
	\item\label{it:tr:princ:2} $Q_1(\lambda)\otimes_{\F}L(\xi)=\underset{\nu\in L(\xi)}{\bigoplus}Q_1(\lambda+\nu)^{\oplus m_{\xi}(\nu)}$.
\end{enumerate}
\end{lemma}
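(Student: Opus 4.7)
The plan is to deduce both parts from the strong linkage principle together with classical Ext-vanishing between Weyl and induced modules; the hypothesis that $\lambda+\nu$ stays in the alcove of $\lambda$ for every weight $\nu$ of $L(\xi)$ is exactly the genericity input needed to make all the arguments collapse.

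For part (\ref{it:tr:princ:1}), I would first observe that the hypothesis forces $\lambda$ itself to sit sufficiently deep in its alcove that each $\lambda+\nu$ is regular, and in fact each irreducible $L(\lambda+\nu)$ coincides with its Weyl module $V(\lambda+\nu)$ and its dual Weyl module $\nabla(\lambda+\nu)$ (by the Jantzen sum formula / linkage in a single alcove). By Mathieu's filtration theorem, $V(\lambda)\otimes L(\xi)=L(\lambda)\otimes L(\xi)$ carries a good filtration. The strong linkage principle combined with the hypothesis that each $\lambda+\nu$ lies in the interior of a single alcove shows that the only dominant highest weights that can appear as graded pieces $\nabla(\mu)$ are precisely the $\mu=\lambda+\nu$ for $\nu$ a $\un{T}$-weight of $L(\xi)$. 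A character count, using that $\mathrm{ch}\,L(\lambda)=\mathrm{ch}\,V(\lambda)$, identifies the multiplicity of each $\nabla(\lambda+\nu)$ as $\dim_\F L(\xi)_\nu$. Finally, since $\mathrm{Ext}^1_{\un{G}}(V(\mu_1),\nabla(\mu_2))=0$ for all dominant $\mu_1,\mu_2$, and each piece is both a Weyl and an induced module, no non-trivial extensions arise between the graded pieces and the good filtration splits canonically, yielding (\ref{it:tr:princ:1}).

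For part (\ref{it:tr:princ:2}), the strategy is parallel but passes through $\un{G}_1\un{T}$. The key input is that the functor $(-)\otimes_\F L(\xi)$ on $\un{G}_1\un{T}$-modules admits an exact left adjoint $(-)\otimes_\F L(\xi)^{*}$, so it preserves injectives. Hence $Q_1(\lambda)\otimes L(\xi)$, restricted to $\un{G}_1\un{T}$, is $\un{G}_1\un{T}$-injective and therefore a direct sum of copies of the indecomposable injectives $\wdht{Q}_1(\mu)$. The multiplicity of $\wdht{Q}_1(\mu)$ is
\[
\dim_\F \Hom_{\un{G}_1\un{T}}\bigl(\wdht{L}_1(\mu),\,Q_1(\lambda)\otimes L(\xi)\bigr)
=\dim_\F \Hom_{\un{G}_1\un{T}}\bigl(\wdht{L}_1(\mu)\otimes L(\xi)^{*},\,Q_1(\lambda)\bigr),
\]
which by injectivity of $Q_1(\lambda)$ reduces to a socle computation. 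Using Theorem \ref{thm:dec:tens} together with the hypothesis (which guarantees that $\mu=\lambda+\nu$ remains in the regular interior of its alcove, so that $\wdht{L}_1(\mu)$ lies in the same $\un{G}_1\un{T}$-block as $\wdht{L}_1(\lambda)$ exactly for such $\mu$), the socle is isomorphic to $\wdht{L}_1(\lambda)$ with multiplicity $\dim L(\xi)_\nu$ when $\mu=\lambda+\nu$ and vanishes otherwise. This gives the $\un{G}_1\un{T}$-decomposition, and by the uniqueness of the $\un{G}$-structure on each $\wdht{Q}_1(\mu)$ (Theorem \ref{thm:dec:tens}) it promotes to a $\un{G}$-equivariant direct sum, yielding (\ref{it:tr:princ:2}).

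The main obstacle will be the socle computation in part (\ref{it:tr:princ:2}): one must verify that the $\un{G}_1\un{T}$-linkage classes detected by the summands of $Q_1(\lambda)\otimes L(\xi)$ are exactly the weights $\lambda+\nu$, which relies on the generic position of $\lambda$ provided by the alcove hypothesis and on the fact, granted by the bound $p\geq 2(n-1)$, that $Q_1(\lambda)$ has a well-behaved $\un{G}$-structure whose Weyl filtration is compatible with linkage. Part (\ref{it:tr:princ:1}) is then a purely linkage-theoretic statement once the $L=V=\nabla$ identification is in place.
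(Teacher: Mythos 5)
Your proof of part (\ref{it:tr:princ:1}) has a genuine gap: you claim that the hypothesis forces $L(\lambda+\nu)=V(\lambda+\nu)=\nabla(\lambda+\nu)$, but the alcove hypothesis only says that the weights $\lambda+\nu$ lie in the same $p$-alcove as $\lambda$; it does not say $\lambda$ lies in the closure of the lowest alcove. The lemma is applied precisely to weights in the upper $p$-restricted alcove (e.g. $\mu_i\in B$ throughout \S\ref{subsubsec:GL3} and in the proof of Lemma \ref{lem:dic}), where $V(\lambda)$ is a non-split extension with additional composition factors and $L(\lambda)\neq V(\lambda)$. Without that identification, $L(\lambda)\otimes L(\xi)$ is not a tensor product of modules with good filtrations, so Mathieu's theorem does not apply and the subsequent character count and $\Ext$-vanishing argument never get off the ground. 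The correct input at this stage is a translation-functor computation (as in \cite[Lemma 5.1]{PillenDL}): one shows the isomorphism over the semisimple derived group $\un{G}^{\der}$, where it is a restatement of the translation principle in terms of projection onto blocks, and then extends to $\un{G}$ using the central-character comparison and the Hochschild--Serre isomorphism \eqref{iso:HS1}; your good-filtration approach cannot replace this in the higher-alcove case.

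For part (\ref{it:tr:princ:2}), the $\un{G}_1\un{T}$-level argument (adjoint functor preserving injectives, Frobenius-reciprocity computation of multiplicities) is correct and parallels the paper. But the last step, promoting the $\un{G}_1\un{T}$-isomorphism to a $\un{G}$-isomorphism by appealing to ``uniqueness of the $\un{G}$-structure on each $\wdht Q_1(\mu)$,'' is not justified. Uniqueness of the $\un{G}$-extension on an indecomposable $\wdht Q_1(\mu)$ does not by itself ensure that the $\un{G}$-action on the ambient module restricts to the canonical one on each summand, nor that the $\un{G}_1\un{T}$-decomposition is $\un{G}$-stable. One needs something more: either an ad hoc argument controlling $\un{G}/\un{G}_1$-extensions, or the argument the paper actually uses, namely showing that $\Hom_{\un{G}}(\bullet,\,Q_1(\lambda)\otimes L(\xi))$ is exact on the subcategory of $p$-bounded $\un{G}$-modules (via a Lyndon--Hochschild--Serre reduction together with the bound $\langle\omega_\kappa,\alpha_0^\vee\rangle< 2(n-1)\le p-2$), so that a socle-matching map lifts to an injection and length considerations finish. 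That Ext-vanishing step is where the hypothesis $p\ge 2(n-1)$ actually gets used, and it does not appear in your argument.
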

\begin{proof}
We first prove item (\ref{it:tr:princ:1}).
The isomorphism holds upon restriction to $\un{G}^{\der}_1$ by \cite[Lemma 5.1]{PillenDL}. As the LHS and RHS of (\ref{it:tr:princ:1}) have the same central characters, the $\un{G}^{\der}_1$-isomorphisms extend to $\un{G}$-isomorphisms by (\ref{iso:HS1}).

We now switch to item (\ref{it:tr:princ:2}). 
By the same argument as above, we deduce from \cite[Lemma 5.1]{PillenDL} a $\un{G}_1\un{T}$-equivariant isomorphism 
\begin{equation}
\label{eq:iso:GTf}
\widehat{Q}_1(\lambda)\otimes_{\F}\widehat{L}_1(\xi) \cong \underset{\nu\in \widehat{L}_1(\xi)}{\bigoplus} \widehat{Q}_1(\lambda+\nu)^{\oplus m_{\xi}(\nu)}.
\end{equation}
By item (\ref{it:tr:princ:1}) and  the isomorphism $\soc_{\un{G}}(Q_1(\lambda))\cong L(\lambda)$ from Theorem \ref{thm:dec:tens},
we have a $\un{G}$-equivariant injection
\begin{equation}
\label{eq:inj:soc}
\soc_{\un{G}}\left(\underset{\nu\in L(\xi)}{\bigoplus}Q_1(\lambda+\nu)^{\oplus m_{\xi}(\nu)} \right)=
\underset{\nu\in L(\xi)}{\bigoplus}L(\lambda+\nu)^{\oplus m_{\xi}(\nu)}=L(\lambda)\otimes_{\F}L(\xi)\into Q_1(\lambda)\otimes_{\F}L(\xi).
\end{equation}
We claim that in the full subcategory of $p$-bounded $\un{G}$-modules the functor $\Hom_{\un{G}}\left(\bullet,Q_1(\lambda)\otimes_{\F}L(\xi)\right)$ is exact.
Granting the claim we deduce from (\ref{eq:inj:soc}) a $\un{G}$-equivariant morphism 
\[
\underset{\nu\in L(\xi)}{\bigoplus} Q_1(\lambda+\nu)^{\oplus m_{\xi}(\nu)}\into Q_1(\lambda)\otimes_{\F}L(\xi)
\]
which is injective since it is injective on socles.
The morphism is hence an isomorphism by  (\ref{eq:iso:GTf}) (note that $\widehat{L}_1(\xi)_{\nu}=L(\xi)_{\nu}$ for all $\nu\in X_1(\un{T})$ since $\xi\in X_1(\un{T})$) and item (\ref{it:tr:princ:2}) follows.

We prove the claim. It will be enough to prove that for any irreducible, $p$-bounded $\un{G}$-module $L(\kappa)$ one has \begin{equation} \label{eqn:extvanish}
\Ext^1_{\un{G}}(L(\kappa), Q_1(\lambda)\otimes_{\F}L(\xi))=0.
\end{equation}
As $L(\kappa)$ is $p$-bounded we can write $\kappa=\kappa^{(0)}+p\omega_{\kappa}$ where $\kappa^{(0)}\in X_1(\un{T})$ and $\omega_\kappa\in X^*_+(\un{T})$ satisfies $\langle \omega_{\kappa},\alpha^{\vee}\rangle<2(h-1)$ for all coroots $\alpha^\vee\in\un{R}^\vee$. 
Recall that $Q_1(\lambda)|_{\un{G}_1\un{T}}\cong\widehat{Q}_1(\lambda)$.
As $\widehat{Q}_1(\lambda)$ is injective as a $\un{G}_1$-module the Lyndon--Hochschild--Serre spectral sequence \cite[I.6.6(3), I.6.5(2)]{RAGS}, together with (\ref{eq:iso:GTf}), provides us with an isomorphism
\begin{align*}
&\Ext_{\un{G}}^1(L(\kappa), Q_1(\lambda)\otimes_{\F}L(\xi))\\
\cong
& \, \Ext_{\un{G}/\un{G}_1}^1\big(L(p\omega_{\kappa}), \Hom_{\un{G}_1}\big(L_1(\kappa^{(0)}), \widehat{Q}_1(\lambda)\otimes_{\F}L_1(\xi)\big)\big)&\\
\cong
& \, \Ext_{\un{G}/\un{G}_1}^1\bigg(L(p\omega_{\kappa}), \Hom_{\un{G}_1}\big(L_1(\kappa^{(0)}), \underset{\nu\in \widehat{L}_1(\xi)}{\bigoplus} \widehat{Q}_1(\lambda+\nu)\big)^{\oplus m_{\xi}(\nu)}\bigg)&\\
\cong& \, \left\{\begin{matrix}
 \Ext_{\un{G}}^1\Big(L(\omega_{\kappa}), L\big(\frac{\lambda+\nu-\kappa^{(0)}}{p}\big)\Big)^{\oplus m_{\xi}(\nu)} &\text{if}&\lambda+\nu-\kappa^{(0)}\in pX^0(\un{T})
\\
0&\text{else.}&
\end{matrix}\right.
\end{align*}
As $\langle \omega_{\kappa},\alpha^{\vee}\rangle<2(h-1)\leq p-2$ for all coroots $\alpha^\vee\in\un{R}^\vee$ it follows that $\omega_{\kappa}$ lies in the lower $p$-restricted alcove; in particular, there are no algebraic extensions between $L(\omega_{\kappa})$ and $L(\omega)$ for any $\omega\in X^0(\un{T})$.
This establishes (\ref{eqn:extvanish}).
\end{proof}

\subsubsection{The case of $\GL_3$}
\label{subsubsec:GL3}

We now describe the modules $Q_1(\mu)|_{\rG}$ in more detail in the case $\un{G}_0$ is $\prod_{\tld{v}\in \mathcal{S}} \Res_{k_{\tld{v}}/\FF_p} \GL_3$ as in \S \ref{subsection:Notation} and $\mu\in X_1(\un{T})$.
We recall the alcove labeling for $\SL_3$ in \cite[\S 13.9]{HumphreysBook} and write \[\mathscr{A}\defeq\{A,\ B,\ C,\ D,\ E,\ F,\ G,\ H,\ I,\ J\}.\]
If $X\in\mathscr{A}$ let $\tld{w}_X\in W_a$ be the unique element such that $\tld{w}_X\cdot C_0=X$.
For $i\in\cJ$ we define $\tld{w}_{X,i}\in\un{W}_a$ in the evident way.
For $\un{X}\in\mathscr{A}^{\cJ}$ we also define $\tld{w}_{\un{X}}\in\un{W}_a$ in the evident way.
In what follows we let $f\defeq \#\cJ$.

\emph{Assume from now on that $p\geq 5$ and $\mu$ is $2$-deep.}
Let $\mu^{\op}$ be ${\tld{w}}_{\un{B}}\cdot \mu$ and write $\mu^{\op}$ as the sum $\sum_i \mu_i^{\op}$.
Let $Q_1(\mu_i)$ be the ${\GL_3}_{/\F}$-module defined in Theorem \ref{thm:dec:tens}.
It is rigid with Loewy length $\ell\ell(Q_1(\mu))=6+1$ and is endowed with a Weyl filtration (cf.~ ~\cite[II.11.13, II.11.5(5), II.4.19]{RAGS} see also \cite[\S 13.9]{HumphreysBook}) with submodule $V_{\mu_i}\defeq V(\mu_i^{\op}+p\eta'_i)$, the ${\GL_3}_{/\F}$-module obtained by extension of scalars from the Weyl module for ${\GL_3}_{/\Fp}$ with highest weight $\mu_i^{\op}+p\eta'_i$.
Moreover, the socle filtration of $Q_1(\mu_i)$ (cf.~ \cite[\S 13.9]{HumphreysBook} for a concise reference; see also \cite[Proposition 8.4]{Andersen-Kaneda} and its proof and \cite[\S II.D.4]{RAGS}) and $V_{\mu_i}$ (cf.~ \cite[\S 4]{BDM}, see Table \ref{TableWeyl1} below) are known.
(The condition that $\mu$ is $2$-deep is to guarantee that all $V_{\mu_i}$ has maximal length; their Loewy length in this case is $3+1$.)
In particular, one sees that $V_{\mu_i}$ is a multiplicity free submodule of $Q_1(\mu_i)$.
Then the $\un{G}$-modules $Q_1(\mu)$ and $V_\mu$ are defined to be the tensor products $\otimes_iQ_1(\mu_i)$ and $\otimes_i V_{\mu_i}$, respectively.

The module $Q_1(\mu)$ is rigid with Loewy length $\ell\ell(Q_1(\mu))=6f+1$.
The socle filtration on $Q_1(\mu)$ is the tensor product of the socle filtrations $\Fil$ on $Q_1(\mu_i)$ for $i \in\cJ$, and the graph $\Gamma(V_\mu)$ is the product $\prod_i \Gamma(V_{\mu_i})$.
In particular, $V_\mu$ is rigid.
Let $\Fil$ be the unique increasing Loewy series for $Q_1(\mu)$; its restriction to $V_\mu$ is the unique Loewy series for $V_\mu$.

Recall that an irreducible $\F[\rG]$-module $F$ is said to be \emph{$n$-deep} if we can write $F\cong L(\mu)|_{\un{G}(\Fp)}$ for some $\mu\in X_1(\un{T})$ which is $n$-deep.
A  $\F[\rG]$-module is defined to be $n$-deep if all its Jordan-H\"older constituents are $n$-deep.

\begin{lemma}
\label{lem:deep}
Let $\mu\in X^*(\un{T})$ and $n\in\N$ and $\tld{w}\in \tld{\un{W}}$.
If $\mu$ is $n$-deep in alcove $a$, then $\tld{w}\cdot\mu$ is $n$-deep in alcove $\tld{w}\cdot a$.
In particular, if $\mu\in X_1(\un{T})$ is $n$-deep, then $Q_1(\mu)$ and $V_{\mu}$ are $n$-deep.
If $\mu\in X_1(\un{T})$ is $n+2$-deep, then $Q_1(\mu)|_{\rG}$ and $V_{\mu}|_{\rG}$ are all $n$-deep.
\end{lemma}
\begin{proof}
The first two claims are easy.
To prove the final claim, first note that the Jordan--H\"older factors of $Q_1(\mu)|_{\rG}$ and $V_{\mu}|_{\rG}$ are the same, and so it suffices to prove the claim for $L|_{\rG}$ where $L \in \JH(V_\mu)$.
Suppose that $L$ is isomorphic to $L(\lambda)$ and $\lambda = \lambda^0+p\omega_\lambda$.
Then 
\[
L|_{\rG} \cong L(\lambda^0+\pi\omega_\lambda) \cong \oplus_{\eps \in L(\pi\omega_\lambda)} L(\lambda+\eps)^{m_{\pi\omega_\lambda}(\eps)}
\]
by Lemma \ref{lem:trans:princ}(\ref{it:tr:princ:1}).
The result now follows from the fact that $|\langle {\eps},\alpha^{\vee}\rangle|\leq 2$ for any ${\eps}\in L(\pi\omega_\lambda)$  and any positive root $\alpha\in R^+$ of a simple factor of $\un{G}$.
\end{proof}

Assume $\mu$ is 6-deep.
By Corollary \ref{cor:projfil}, the socle filtration on $\Inj(F(\mu))$ is given by $(\Fil_n Q_1(\mu))|_{\rG}$.
Since $\Inj(F(\mu))$ is rigid (it is isomorphic to $\Proj(F(\mu))$), this is the unique increasing Loewy series.
One can use Lemma \ref{lem:trans:princ} to compute $\gr_n \Inj(F(\mu))$.
We do this in the case $n=1$ to compute $\rG$-extensions, justifying the name ``extension graph'' introduced in Section \ref{sec:graph}.
\begin{lemma}
\label{lem:dic}
Assume that $\mu_i$ is $6$-deep for all $i$ and let $\sigma\defeq F(\mu)$.
Then $\dgr{\sigma}{\kappa}=1$ if and only if $\Ext^1_{\rG}(\kappa,\sigma)\neq 0$, in which case the dimension of the $\Ext^1$ group is $1$.
\end{lemma}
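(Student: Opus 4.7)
The plan is to compute $\dim_{\F}\Ext^1_{\rG}(\sigma,\kappa)$ as the multiplicity of $\kappa$ in the first layer of the socle filtration of the injective envelope $\Inj(\sigma)$, then match the resulting Jordan--H\"older factors with the combinatorial adjacency in Definition \ref{df:adj}.

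First, I would invoke Theorem \ref{thm:dec:tens} (with $p \geq 5 > 2(h-1)$ for $\un{G}$ of type $A_2$) together with Corollary \ref{cor:projfil} to identify $\Inj(F(\mu))$ with $Q_1(\mu)|_{\rG}$ and, more importantly, to identify the socle filtrations. The $7$-deep hypothesis on each $\mu_i$ ensures that the weights appearing in $Q_1(\mu)$ remain $3(h-1)=6$-deep, which is exactly what Corollary \ref{cor:projfil} needs. Standard homological algebra then yields, for any irreducible $\kappa \not\cong \sigma$,
\[
\dim_{\F}\Ext^1_{\rG}(\sigma,\kappa) \;=\; [\gr_1^{\soc}\Inj(\sigma):\kappa] \;=\; [\gr_1^{\soc}Q_1(\mu)|_{\rG} : \kappa].
\]

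Next, I would exploit the tensor product decomposition $Q_1(\mu) = \bigotimes_i Q_1(\mu_i)$ together with rigidity of each $Q_1(\mu_i)$: this guarantees that the socle filtration on the tensor product is the tensor product of socle filtrations, so that
\[
\gr_1^{\soc} Q_1(\mu) \;\cong\; \bigoplus_{i=0}^{f-1} \gr_1^{\soc} Q_1(\mu_i) \,\otimes\, \bigotimes_{j\neq i} L(\mu_j).
\]
Restricting to $\rG$ via the Steinberg tensor product theorem turns the right-hand side into a direct sum indexed by embeddings, with summand $i$ contributing weights $F(\mu')$ where $\mu'_j = \mu_j$ for $j \neq i$. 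Thus $\Ext^1_{\rG}(\sigma,\kappa)$ can be nonzero only if $\sigma$ and $\kappa$ differ in a single embedding $j$, which is already one half of the adjacency condition.

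The substantive step is identifying, for each $i$, the Jordan--H\"older constituents of $\gr_1^{\soc} Q_1(\mu_i)$ as a $\GL_{3,\F}$-module and matching them with the seven allowed values of $\omega_j-\omega'_j$ listed in Definition \ref{df:adj}. Using the standard description of the socle structure of $Q_1(\lambda)$ for $\GL_3$ (cf. \cite[\S 13.9]{HumphreysBook}, together with the analysis of the maximal multiplicity-free submodule $V_{\mu_i}=V(\mu_i^{\op}+p\eta_i)$ introduced in \S\ref{subsubsec:GL3}), one finds that $\gr_1^{\soc}Q_1(\mu_i)$ is a multiplicity-free semisimple module whose constituents are exactly the seven simple $\un{G}_i$-modules $L(\mu_i')$ whose associated Serre weight $F(\mu_i')$ is adjacent to $F(\mu_i)$ in the single-embedding extension graph. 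Tensoring with $\bigotimes_{j\neq i}L(\mu_j)$ and restricting to $\rG$ then realises exactly the seven adjacent weights of Definition \ref{df:adj} via the parameterisation of Proposition \ref{properties of trns}, each with multiplicity one. Summing the contributions over $i$ gives the required equivalence.

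The main obstacle is step three: correctly identifying the seven Jordan--H\"older constituents of $\gr_1^{\soc}Q_1(\mu_i)$ for $\GL_3$ in one embedding. Three of them are the expected wall reflections (giving the usual $\un{G}_1\un{T}$-extensions), but four further neighbours arise only after restriction to $\rG$, where weights differing by elements of $(p-\pi)X^0(\un{T})$ become identified and extensions between the corresponding simples become visible; equivalently, they arise from non-$p$-restricted highest weights in $Q_1(\mu_i)$ that Lemma \ref{lem:trans:princ} and the tensor product structure make tractable. The $7$-deep hypothesis is precisely what guarantees that these seven constituents are pairwise non-isomorphic $\rG$-representations (no accidental coincidences in the $(p-\pi)X^0(\un{T})$-quotient), yielding the promised $\dim = 1$ statement and completing the proof.
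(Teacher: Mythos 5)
Your overall strategy---identifying $\Inj(F(\mu))$ with $Q_1(\mu)|_{\rG}$ via Corollary \ref{cor:projfil}, decomposing $\gr_1$ over embeddings, and reading off $\Ext^1_{\rG}$ from the first socle layer---is exactly the route the paper's proof takes. But the central computational claim is wrong: as an algebraic $\GL_{3,\F}$-module, $\gr_1^{\soc}Q_1(\mu_i)$ has only \emph{three} Jordan--H\"older constituents (Humphreys \S 13.9), not seven, and precisely one of those three has $p$-restricted highest weight. The seven weights emerge only after restriction to $\rG$: the restricted factor stays irreducible, while each of the two non-restricted factors $L(\nu^0)\otimes L(\omega)^{[1]}$ splits via the translation principle (Lemma \ref{lem:trans:princ}) into three pieces, giving $1+3+3=7$. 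Your closing paragraph's attempted correction (``three wall reflections plus four appearing on restriction'') still miscounts: all three algebraic factors are reflections of $\mu_i$, but only one is $p$-restricted, and the split is $1+(3+3)$; obscuring this hides the fact that the translation principle is doing essentially all of the work.

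There is a second, more consequential slip. You assert that the summand indexed by $i$ contributes Serre weights $F(\mu')$ with $\mu'_j=\mu_j$ for $j\neq i$, but this is not what happens at the level of highest weights: the Steinberg factorization $L(\tld{w}\cdot\mu)|_{\rG}\cong L(\tld{w}_+\cdot\mu)\otimes L(\pi\omega_-)|_{\rG}$ puts the $\tld{w}_+$-modification at embedding $i$ while $\pi\omega_-$ lives at embedding $i+1$, so the resulting highest weight differs from $\mu$ at \emph{two} consecutive embeddings. The reconciliation with the single-index condition in Definition \ref{df:adj} happens only in the $(\omega,a)$-coordinates, using the $\pi$-shifted bijection of Lemma \ref{tmumap} and the formula for $\Trns_\mu$---this is precisely the load-bearing step of the argument, and it must be carried out rather than asserted. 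Your citation of Proposition \ref{properties of trns} instead of Lemma \ref{tmumap} suggests this step was not actually unwound.
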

\begin{proof}
Since $\mu$ is $6$-deep, $Q_1(\mu)$ is $6$-deep by Lemma \ref{lem:deep}, so that $Q_1(\mu)|_{\rG} \cong \Inj(F(\mu))$ by Corollary \ref{cor:projfil}.
It suffices to show that $[\gr_1 Q_1(\mu)|_\rG:\kappa] \leq 1$ and that $\dgr{\sigma}{\kappa}=1$ if and only if $[\gr_1 Q_1(\mu)|_\rG:\kappa] = 1$.
Note that $\gr_1 Q_1(\mu) = \oplus_i (\gr_1 Q_1(\mu_i)) \otimes \bigotimes_{j\neq i} L(\mu_j)$.
Let $\tld{w}_\mu$ be the element of $\un{W}_a$ so that $\lambda \defeq \tld{w}_\mu^{-1}\cdot \mu$ is in $\un{A}$.
The length of $\gr_1 Q_1(\mu)$ is $3f$ with Jordan--H\"older factors of the form $L(\tld{w}\cdot \mu)$ for $3f$ choices of $\tld{w}$.
Writing $\tld{w}$ as $t_{\omega_-}\tld{w}_+$ with $\tld{w}_+\in\tld{\un{W}}^+_1$, the $3f$ choices of $\tld{w}$ correspond to $\omega_- = 0$, $\eps'_{1,i}$, or $\eps'_{2,i}$ for some $i \in \cJ$, with $\tld{w}_+$ the unique element in $\tld{\un{W}}^+_1$ so that $t_{\omega_-}\tld{w}_+$ is in $\un{W}_a$ and $\tld{w}_+ \tld{w}_\mu \cdot \un{A}=\tld{w}_{B,i} \tld{w}_\mu \cdot \un{A}$.

It suffices to show that 
\[L(\tld{w}\cdot \mu)\vert_{\rG}\]
is multiplicity free and contains exactly the weights of the form $F(\Trns_{\lambda+\eta}(\omega,\pi(\tld{w}_{B,i}\tld{w}_\mu \cdot \un{A})))$ with $\omega$ a permutation of $\pi\omega_-$. 
(Here, $\omega$ as the first argument of $\Trns_{\lambda+\eta}$ is understood to be the image of $\omega$ in $\un{\Lambda}_W$.)

We have isomorphisms
\begin{equation}\label{eqn:tp}
L(\tld{w}\cdot \mu)\vert_{\rG}\cong L(\tld{w}_+\cdot \mu) \otimes L(\pi\omega_{-,i}) \vert_{\rG}\cong\oplus_{\omega\in L(\pi\omega_{-,i})} F(\tld{w}_+\cdot \mu +\omega)^{\oplus m_{\pi\omega_{-,i}}(\omega)},
\end{equation}
where the second isomorphism follows from Lemma \ref{lem:trans:princ}.
On the other hand, the pair $(\omega,\pi(\tld{w}_{+,i} \tld{w}_\mu \cdot \un{A}))$ is $\beta(\omega,\tld{w}_{+,i}\tld{w}_\mu)$ with $\beta$ as in Lemma \ref{lem:eq:Weyl} since we have that $t_\omega \pi(\tld{w}_{+,i}\tld{w}_\mu) \in \un{W}_a$.
Then by definition, we have 
\begin{equation}\label{eqn:trans}
F(\Trns_{\lambda+\eta}(\omega,\pi(\tld{w}_{+} \tld{w}_\mu \cdot \un{A})))\cong F(\tld{w}_+ \tld{w}_\mu \cdot (\lambda+\omega)) \cong F(\tld{w}_+\cdot \mu +w_+w_\mu \omega), 
\end{equation}
where $w_+w_\mu$ is the image of $\tld{w}_+\tld{w}_\mu$ in $\un{W}$.
We conclude by combining (\ref{eqn:tp}) and (\ref{eqn:trans}).
\end{proof}

\subsubsection{Study of the Weyl module $V_\mu|_{\rG}$}
\label{subsub:Weyl}

We now assume that $\mu\in X_1(\un{T})$ is such that $\mu\in \un{B}$ and push the analysis in Lemma \ref{lem:dic} further to describe $V_\mu|_{\rG}$ in this case.
Recall that $\underline{G}_0 \defeq \Res_{k_{\tld{v}}/\Fp} \GL_3$ and that $\un{G}
\defeq \un{G}_0 \times_{\F_p} \F$.
Recall that $\underline{T} \subset \un{G}$ is the diagonal torus, and $\un{\La}_W$ is the weight lattice of $\un{G}^{\der}$.
Let $\un{\La}_{\eta'}\subseteq \un{\La}_W$ be the convex hull of the $\un{W}$-orbit of $\eta'$.
Explicitly, we have
\begin{align*}
\un{\La}_{\eta'}=\left\{
\begin{aligned}
(\nu_i)_i\in \un{\La}_W,\ \nu_i\in \{0,\ \pm\eps_{1,i},\ \pm\eps_{2,i},\ \pm \eta'_i,\  \pm(\eps_{1,i}-\eps_{2,i}),\ \pm(2\eps_{1,i}-\eps_{2,i}),\ \pm(\eps_{1,i}-2\eps_{2,i})\}
\end{aligned}
\right\}
\end{align*}
We define the subgraph $\un{\La}_{\preceq(\eta',0)}\subseteq \un{\La}_{\eta'}\times \cA$ as follows:
\begin{align*}
\un{\La}_{\preceq{\eta',0}}\defeq \big\{(\omega,a)\in \un{\La}_{\eta'}\times \cA \ : \ \text{$a_{i}=0$ if $\omega_i=w \eta'_i$ for some $w\in W$} \big\}.
\end{align*}
The main result concerning $V_\mu$ is the following.

\begin{thm}\label{thm:Vdecomp}
Let $\mu\in X_1(\un{T})$ be a $p$-restricted weight such that $\mu$ is $2$-deep in alcove $\un{B}$. %
\begin{enumerate}
\item \label{item:Vbij} The translation map $\Trns_{\mu^{\op}+\eta}:\un{\La}_{W}^{(\mu^{\op}+\eta)}\times \cA\rightarrow X_1(\un{T})/\langle (p-\pi)X^0(\un{T})\rangle$ induces a bijection:
\begin{align*}
\Trns_{V_\mu}:\un{\La}_{\preceq(\eta',0)}&\rightarrow \JH(V_{\mu}|_{\rG})\\
(\omega,a)&\mapsto \sigma_{(\omega,a)}.
\end{align*}
\item \label{item:multone} We have
\[
[\gr_d(V_{\mu})|_{\rG}:\sigma_{(\omega,a)}] =
\begin{cases} 
      1 & \textrm{ if } d = \dgr{\sigma_{(0,\un{1})}}{\sigma_{(\omega,a)}} \\
      0 & \textrm{ if } d < \dgr{\sigma_{(0,\un{1})}}{\sigma_{(\omega,a)}}
   \end{cases}
\]
for all $(\omega,a)\in \un{\La}_{\preceq(\eta',0)}$.
\item \label{item:maxext} 
Assume that $\mu$ 6-deep.
Then there exists a $\rG$-submodule $U \subset V_{\mu}|_{\rG}$ such that:
\begin{enumerate}
	\item $U$ is multiplicity free;
	\item $\JH(U)=\JH(V_{\mu}|_{\rG})$;  
	\item if we denote the restriction $\Fil|_U$ by $\Fil$, for any $\sigma\in \JH(U)$ we have $[\gr_d (U):\sigma] =1$
if and only if $d=\dgr{\sigma_{(0,\un{1})}}{\sigma}$; and
	\item if $(\omega,a),\ (\omega',a')\in \un{\La}_{\preceq(\eta',0)}$, then $\sigma_{(\omega,a)}$ points to $\sigma_{(\omega',a')}$ \emph{(}with respect to $U$\emph{)} if and only if $\dgr{\sigma_{(0,\un{1})}}{\sigma_{(\omega,a)}} \geq \dgr{\sigma_{(0,\un{1})}}{\sigma_{(\omega',a')}}$ and $\dgr{\sigma_{(\omega,a)}}{\sigma_{(\omega',a')}} = 1$.
	In particular $\Gamma(U)^\op$ (i.e.~ the graph obtained from $\Gamma(U)$ by reversing the direction of the edges) is predicted by the extension graph with respect to $\sigma_{(0,\un{1})}$.
\end{enumerate}
\end{enumerate}
\end{thm}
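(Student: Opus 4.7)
The proof will proceed by reducing the structure of $V_\mu\vert_\rG$ to the algebraic structure of $V_\mu$ as a $\un{G}$-module (which is known via \cite{BDM} and the discussion after Theorem \ref{thm:dec:tens}), using the translation principle, and then constructing the extensions explicitly.

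\medskip

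\noindent\textbf{Parts (\ref{item:Vbij}) and (\ref{item:multone}).} First, the $\un{G}$-module $V_\mu$ is a tensor product $\otimes_i V_{\mu_i}$, and each factor $V_{\mu_i}$ has a known rigid structure with four socle layers indexed by a sub-poset of the affine Weyl group; its Jordan–H\"older factors are of the form $L(\tld{w}_i\cdot\mu_i)$ with $\tld{w}_i\in\tld{\un{W}}$ of specific form. Writing such a $\tld{w}_i$ uniquely as $t_{\omega_{-,i}}\tld{w}_{+,i}$ with $\tld{w}_{+,i}\in \tld{\un{W}}^{+,\mathrm{der}}_1$, one has $L(\tld{w}_i\cdot\mu_i)\cong L(\tld{w}_{+,i}\cdot\mu_i)\otimes L(\pi\omega_{-,i})^{[0]}$, so by Lemma~\ref{lem:trans:princ} (which applies thanks to the 2-deep hypothesis) its restriction to $\rG^{\der}$ decomposes, multiplicity-freely, into Serre weights indexed by the $\un{W}$-translates of $\pi\omega_{-,i}$. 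Following the calculation in the proof of Lemma~\ref{lem:dic}, each resulting constituent matches $F(\Trns_{(\mu^{\op}+\eta)}(\ovl{\omega},\pi(\tld{w}_{+,i}\tld{w}_{\mu_i}\cdot\un{A})))$ for some $\omega$ in the $\un{W}$-orbit of $\pi\omega_{-,i}$, giving an injection $\Lambda_{\preceq(\eta,0)}\hookrightarrow \JH(V_\mu\vert_\rG)$; the condition $a_i=0$ when $\omega_i$ is a $\un{W}$-translate of $\eta_i$ reflects that the corresponding $\tld{w}_i$ sends $\un{A}$ to alcove $A$. A direct count shows this map is the whole of $\JH(V_\mu\vert_\rG)$. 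Part (\ref{item:multone}) then follows because, under the 2-deep hypothesis, Lemma~\ref{lem:res:soc} gives $\gr_d(V_\mu)\vert_\rG = \gr_d(V_\mu\vert_\rG)$, and by tensor factorization $\gr_d V_\mu = \bigoplus_{\sum d_i=d}\bigotimes_i \gr_{d_i}(V_{\mu_i})$; tracking the algebraic depth $d_i$ of each constituent via the formula above, one verifies that the total depth equals the graph distance to $\sigma_{(\un{0},\un{1})}$.

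\medskip

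\noindent\textbf{Part (\ref{item:maxext}): construction of $U$.} The submodule $U$ cannot in general be taken to be $V_\mu\vert_\rG$, since the latter is typically \emph{not} multiplicity free. The plan is to define $U$ inductively by peeling off the socle filtration: set $U_0\defeq \soc V_\mu\vert_\rG=\sigma_{(\un{0},\un{1})}$, and inductively build $U_{d+1}$ by choosing, inside $\soc_{d+1}V_\mu\vert_\rG /\ U_{d}$, exactly one copy of each Serre weight $\sigma_{(\omega,a)}$ with $\dgr{\sigma_{(\un{0},\un{1})}}{\sigma_{(\omega,a)}}=d+1$, then taking the preimage in $V_\mu\vert_\rG$. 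Multiplicity-freeness and the description of $\gr_d U$ follow by construction combined with (\ref{item:multone}). The two-way adjacency claim then reads: (a) if $\sigma_{(\omega,a)}$ points to $\sigma_{(\omega',a')}$ in $U$, then $\dgr{\sigma_{(\omega,a)}}{\sigma_{(\omega',a')}}=1$, and (b) conversely whenever the distance is one and the source lies strictly deeper in the socle filtration, the extension is non-split.

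\medskip

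\noindent\textbf{The non-splitting step.} Statement (a) follows from Lemma~\ref{lem:dic} and Lemma~\ref{lemma:pathdist}: a non-trivial extension forces $\Ext^1_\rG\neq 0$, hence graph-distance one, and the socle-layer jump is exactly one. The hard part is (b), the \emph{non}-splitting of the required extensions; this is the main obstacle and is where the 9-deep hypothesis enters. The approach is to reduce, via the tensor product decomposition $V_\mu=\otimes_i V_{\mu_i}$ and the fact that adjacency in a single embedding corresponds to adjacency in a single tensor factor, to the case where $\omega$ and $\omega'$ differ in a single embedding $i$. There one works inside $V_{\mu_i}\otimes \bigotimes_{j\neq i} L(\tld{w}_{j,+}\cdot\mu_j)\otimes L(\pi\omega_{-,j})^{[0]}$ (the other factors being semisimple and playing the role of a coefficient system) and analyses the restriction of a suitable algebraic extension class from $V_{\mu_i}$ to $\rG$ using the argument of Proposition~\ref{prop:nonsplit}: the core point is that the algebraic extension in $V_{\mu_i}$ is sent to a non-trivial $\rG$-extension because the rational points of the relevant unipotent subgroup detect the extension class, provided genericity rules out spurious algebraic cancellations, which is ensured by the 9-deep hypothesis. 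Combining (a) and (b) completes the proof of part (\ref{item:maxext}).
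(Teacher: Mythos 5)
Your outline for parts (\ref{item:Vbij}) and (\ref{item:multone}) matches the paper's approach (decompose $\tld{w}_i = t_{\omega_{-,i}}\tld{w}_{+,i}$, apply Lemma~\ref{lem:trans:princ}, track alcove labels), except for one citation slip: you invoke Lemma~\ref{lem:res:soc} to claim $\gr_d(V_\mu)|_{\rG}=\gr_d(V_\mu|_{\rG})$, but that lemma needs $3(h-1)=6$-depth whereas the hypothesis in parts (\ref{item:Vbij})--(\ref{item:multone}) is only $2$-depth. The paper avoids this by \emph{not} invoking the lemma: part (\ref{item:multone}) is stated as a one-sided bound (multiplicity $1$ at depth $=d$, $0$ at depth $<d$), read off directly from the tensor decomposition and translation principle without asserting compatibility of socle filtrations. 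Your appeal to Lemma~\ref{lem:res:soc} would prove more than needed, but under hypotheses you have not secured.

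The more serious gap is in the construction of $U$ for part (\ref{item:maxext}). You propose building $U$ inductively by layers: at layer $d+1$, ``choose one copy of each Serre weight'' inside $\soc_{d+1}(V_\mu|_\rG)/U_d$ and take the preimage. Two problems. First, this does not specify a submodule: a Serre weight in $\soc_{d+1}/U_d$ may appear with multiplicity $\geq 2$, and which copy you pick genuinely matters. Different copies have different extension classes against weights at layer $d$; for a bad choice, some of the required extensions of item (d) will split, and then your step (b) is simply false for the $U$ you built. Second, taking preimages under $V_\mu|_\rG \onto V_\mu|_\rG/U_d$ can create new constituents at layers $> d+1$ that you did not intend to include, and you have not checked this does not happen. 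The paper's construction (Lemma~\ref{lem:max:multfree}) sidesteps both issues: using the algebraic tensor decomposition $V_\mu=\bigotimes_i V_{\mu_i}$, it isolates, for each embedding $i$, the ``unwanted'' direct summand in $L(\mu_i^{\op}+p\eta_i)\otimes\bigotimes_{j\neq i}V_{\mu_j}$ (via the splitting $V_{\mu_{i+1}}\otimes L(\eta_{i+1})\cong\bigoplus_\omega V_{\mu_{i+1}+\omega}$ from the translation principle), and sets $U=\bigcap_i U_i$ where $U_i$ is an explicit preimage. This gives a canonical $U$ whose graded pieces and multiplicities can be checked layer-by-layer with no choices involved; the non-splitting of Proposition~\ref{prop:nonsplit} is then applied to a concretely identified length-two subquotient $M$ of $\Fil_{d_1}(V_\mu)/\Fil_{d_2-1}(V_\mu)$. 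You would need either to supply a similarly canonical choice at each inductive step, or to show that non-splitting holds independently of the choice --- neither is done in your proposal.
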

\begin{proof}[Proof of \ref{thm:Vdecomp}(\ref{item:Vbij})-(\ref{item:multone})]
We first show that the image of $\Trns_{V_\mu}$ contains $\JH(V_\mu|_{\rG})$.
Let $\un{\tld{w}} \in \un{W}_a$ such that $[V_\mu:L(\tld{w}\cdot \mu)]\neq 0$.
It suffices to show that $\Trns_{V_\mu}$ contains 
\begin{equation} \label{eqn:JHV}
\JH(L(\tld{w}\cdot \mu)|_{\rG}).
\end{equation}
The proof is similar to that of Lemma \ref{lem:dic}.
There is a decomposition $\tld{w} = t_{\omega_-} \tld{w}_+$ where $\tld{w}_+\in \tld{\un{W}}^+_1$.
Again, $L(\tld{w}\cdot \mu)|_{\rG}$ is isomorphic to $L(\tld{w}_+\cdot \mu) \otimes L(\pi\omega_-)|_{\rG}$, which is isomorphic to
\[\underset{\varepsilon\in L(\pi\omega_-)}\bigoplus F(\tld{w}_+\cdot \mu+\varepsilon)^{\oplus m_{\pi\omega_-}(\varepsilon)}\]
by Lemma \ref{lem:trans:princ}.
As in the proof of Lemma \ref{lem:dic}, the summand $F(\tld{w}_+\cdot \mu+\varepsilon)$ is $\sigma_{(w_0\varepsilon,\pi(\tld{w}_+\cdot \tld{w}_h \cdot \un{A}))}$.
Then (\ref{eqn:JHV}) is contained in the image of $\Trns_{V_\mu}$ by an analysis of the weights of $L(\pi\omega_-)$.
Indeed, $\tld{w} \cdot \mu$ is in one of the alcoves in the set $\{A,B,C,D,E,F,G\}^{\cJ}$ so that $\omega_{-,i}$ can be taken to be one of $0$, $\eps_{1,i}$, $\eps_{2,i}$, and $\eta'_i$ for all $i$ and if $\omega_{-,i}$ is $\eta'_i$ then $\tld{w}_{+,i} = \tld{w}_{B,i}$.

Item (\ref{item:Vbij}) follows from (\ref{item:multone}) and the above paragraph.
We now prove item (\ref{item:multone}).
With $\tld{w}$ as above, we define $n_i(\tld{w}_i)\in \N$ by 
\[
[\gr_{n_i(\tld{w}_i)}(V_{\mu_i}):L(\tld{w}_i\cdot \mu_i)]\neq 0.
\]
Let $0\leq d\leq 3f$.
Since $V_\mu$ is multiplicity free, it suffices to show that 
\begin{equation} \label{eqn:Vdist}
\underset{\sum_i n_i(\tld{w}_i) = d}\bigoplus L(\tld{w}\cdot \mu)|_{\rG}
\end{equation}
contains weights of the form $\sigma_{(\omega,a)}$ with multiplicity one if $(\omega,a)\in \un{\La}_{\preceq(\eta',0)}$ and the distance between $(\omega_i,a_i)$ and $(0_i,1_i)$ is $n_{\pi^{-1}i}(\tld{w}_{\pi^{-1}i})$ for all $i$.
If $\nu \in L(\pi\omega_-)$ is a permutation of $\pi\omega_-$, then it appears in $L(\pi\omega_-)$ with multiplicity one.
Thus $\sigma_{(\varepsilon,\pi(\tld{w}_+ \tld{w}_h) \cdot \un{A})}$ appears in (\ref{eqn:Vdist}) with multiplicity one.
A casewise analysis, using the fifth column of Table \ref{TableWeylAlpha} and the description of the socle layers of $V_{\mu_i}$ in \cite{BDM}, shows that the distance from $(\nu_i,\pi(\tld{w}_{+,\pi^{-1}i}\cdot \tld{w}_{B,\pi^{-1}i}) \cdot A)$ to $(0_i,1_i)$ is $n_{\pi^{-1}i}(\tld{w}_{\pi^{-1}i})$.
For example, if $n_{\pi^{-1}i}(\tld{w}_{\pi^{-1}i}) = 2$, then $\omega_{-,\pi^{-1}i}$ is $\eps'_{1,\pi^{-1}i}$ or $\eps'_{2,\pi^{-1}i}$ and $\tld{w}_{+,\pi^{-1}i}$ is trivial.
Then $\nu_i$ is a permutation of $\eps_{1,i}$ or $\eps_{2,i}$.
\end{proof}

We now move to the proof of Theorem \ref{thm:Vdecomp}(\ref{item:maxext}). We start with the following preliminary lemma.

\begin{lemma}
\label{lem:max:multfree}
Let $\mu\in X_1(\un{T})$ be a $p$-restricted weight which is $2$-deep in alcove $\un{B}$.
There exists a $\rG$-submodule $U\subset V_\mu|_{\rG}$ such that $\JH(U)=\JH(V_\mu|_{\rG})$ and 
\begin{equation}\label{eqn:exactdist}
[\gr_d(U)|_{\rG}:\sigma_{(\omega,a)}] =
\begin{cases} 
      1 & \textrm{ if } d = \dgr{\sigma_{(0,\un{1})}}{\sigma_{(\omega,a)}} \\
      0 & \textrm{ if } d \neq \dgr{\sigma_{(0,\un{1})}}{\sigma_{(\omega,a)}},
   \end{cases}
\end{equation}
where $\gr$ is with respect to $\Fil \defeq \Fil|_U$.
In particular, $U$ is multiplicity free.
\end{lemma}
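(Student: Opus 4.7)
The plan is to build $U$ by induction on graph distance, constructing an ascending chain $U_0\subset U_1\subset\cdots\subset U_N$ of $\rG$-submodules of $V_\mu|_\rG$ (with $N$ the maximum graph distance from $\sigma_{(\un{0},\un{1})}$ to a Jordan--H\"older factor of $V_\mu|_\rG$) such that for every $d$: $U_d\subseteq \Fil_d V_\mu|_\rG$, $U_d\cap \Fil_{d-1}V_\mu|_\rG = U_{d-1}$, and $U_d/U_{d-1}\cong \bigoplus_{\dgr{\sigma_{(\un{0},\un{1})}}{\sigma}=d}\sigma$. Taking $U\defeq U_N$, the restricted filtration $\Fil|_U$ is a semisimple filtration with graded pieces as required; moreover $U$ is automatically multiplicity free with $\JH(U)=\JH(V_\mu|_\rG)$, since each Serre weight occurs in exactly one layer by construction and the bijection of Theorem \ref{thm:Vdecomp}(\ref{item:Vbij}) accounts for all Jordan--H\"older factors.

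For the base case $d=0$, Theorem \ref{thm:Vdecomp}(\ref{item:multone}) applied at $d=0$ shows that $\gr_0 V_\mu|_\rG$ contains $\sigma_{(\un{0},\un{1})}$ with multiplicity one and no other Serre weight (every other weight in $\JH(V_\mu|_\rG)$ has positive distance from $\sigma_{(\un{0},\un{1})}$); I would take $U_0$ to be this unique copy.

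For the inductive step, given $U_{d-1}$, I would work in the short exact sequence
\[
0\to \Fil_{d-1}V_\mu|_\rG/U_{d-1}\to \Fil_d V_\mu|_\rG/U_{d-1}\to \gr_d V_\mu|_\rG\to 0.
\]
By Theorem \ref{thm:Vdecomp}(\ref{item:multone}), $\gr_d V_\mu|_\rG$ admits $\bigoplus_{\dgr{\sigma_{(\un{0},\un{1})}}{\sigma}=d}\sigma$ as a submodule with multiplicity one in each summand. I would lift this direct sum to a semisimple submodule $\tilde{W}^{(d)}$ of $\Fil_d V_\mu|_\rG/U_{d-1}$, and define $U_d$ to be its preimage in $\Fil_d V_\mu|_\rG$; the intersection condition $U_d\cap \Fil_{d-1}V_\mu|_\rG = U_{d-1}$ follows because $\tilde{W}^{(d)}$ has zero intersection with $\Fil_{d-1}V_\mu|_\rG/U_{d-1}$.

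The main obstacle is producing the semisimple lift $\tilde{W}^{(d)}$: for each distance-$d$ weight $\sigma$, I need a subobject of $\Fil_d V_\mu|_\rG/U_{d-1}$ isomorphic to $\sigma$ whose image in $\gr_d V_\mu|_\rG$ is the specified copy, with these subobjects being mutually disjoint. Equivalently, certain extensions of $\sigma$ by the weights appearing in $\Fil_{d-1}V_\mu|_\rG/U_{d-1}$ (which, by the inductive hypothesis, are ``extra'' copies of weights at distance $<d$ not captured by $U_{d-1}$) must split. I would verify this splitting embeddingwise, exploiting the tensor product decomposition $V_\mu=\bigotimes_{i=0}^{f-1}V_{\mu_i}$, the explicit socle structure of each $V_{\mu_i}$ described in \cite{BDM} (cf. the reference to Table \ref{TableWeyl1}), and Steinberg's tensor product theorem together with the layer-by-layer analysis of $V_\mu|_\rG$ carried out in the proof of Theorem \ref{thm:Vdecomp}(\ref{item:Vbij})--(\ref{item:multone}).
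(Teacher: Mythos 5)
Your layer-by-layer induction is a genuinely different route from the paper's, which instead builds $U$ directly as an intersection $U=\bigcap_i U_i$ of preimages: for each $i$ the paper identifies the Weyl quotient $V_{\mu_i}\onto L(\mu_i^{\op}+p\eta_i)$, restricts to $\rG$, uses Steinberg together with the translation principle (Lemma \ref{lem:trans:princ}) to split $V_{\mu_{i+1}}\otimes L(\eta_{i+1})\cong\bigoplus_{\omega\in L(\eta_{i+1})}V_{\mu_{i+1}+\omega}$, and takes $U_i$ to be the preimage of the $\omega\neq 0$ summands. That construction is concrete and never touches extension theory, which is why it works already at the stated $2$-deep.

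The gap in your proposal is the inductive step: you announce that you would produce the semisimple lift $\tilde W^{(d)}$ "by verifying splitting embeddingwise" via the tensor structure and \cite{BDM}, but this is precisely the hard part and it is not actually carried out. There is a slick way to complete your argument — observe first (from Theorem \ref{thm:Vdecomp}(\ref{item:multone})) that every distance-$(d-1)$ factor of $\Fil_{d-1}V_\mu|_\rG$ sits in layer $d-1$ with multiplicity one and is therefore already absorbed into $U_{d-1}$, so that $\Fil_{d-1}V_\mu|_\rG/U_{d-1}$ only has Jordan–H\"older factors at graph distance $\le d-2$ from $\sigma_{(\un0,\un1)}$ (note your "distance $<d$" is weaker than this and, as stated, would block the argument); then for $\sigma$ at distance $d$ and any such factor $\sigma'$ one has $\dgr{\sigma}{\sigma'}\ge 2$, so $\Ext^1_\rG(\sigma,\sigma')=0$ and the lift exists by d\'evissage, with disjointness of the lifts automatic since they are simple and pairwise non-isomorphic. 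The problem is that the vanishing $\Ext^1_\rG(\sigma,\sigma')=0$ for $\dgr{\sigma}{\sigma'}\ge 2$ is exactly what Lemma \ref{lem:dic} supplies, and that lemma needs $\mu_i$ to be $7$-deep, whereas Lemma \ref{lem:max:multfree} only assumes $2$-deep. So either you must strengthen the depth hypothesis (which would impact the downstream use of this lemma inside Theorem \ref{thm:Vdecomp}(\ref{item:maxext}), where the ambient depth is $9$ and this would be tolerable, but then the statement you are proving is strictly weaker than the one in the paper), or you must replace the $\Ext^1$ argument with a tensor-factorwise computation — at which point you are essentially reproducing the paper's construction rather than giving an independent proof.
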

\begin{proof}
The notation in this proof is complicated by necessity.
To illustrate the simple underlying idea, we first present the proof in the case that $f= \#\cJ = 1$.
We have the following: 
\begin{align*}
\gr_3(V_\mu) &\cong L(\tld{w}_G\tld{w}_B\cdot \mu) \\
\gr_2(V_\mu) &\cong L(\tld{w}_E\tld{w}_B\cdot \mu) \oplus L(\tld{w}_F \tld{w}_B\cdot \mu)\\
\gr_1(V_\mu) &\cong L(\tld{w}_C\tld{w}_B\cdot \mu) \oplus L(\tld{w}_B\cdot \mu) \oplus L(\tld{w}_D \tld{w}_B\cdot \mu)\\
\gr_0(V_\mu) &\cong L(\mu).
\end{align*}
For the alcove $a$, there is a decomposition $\tld{w}_a$ as the product $t_{\omega_{a,-}}\tld{w}_{a,+}$ where $\tld{w}_{a,+} \in \tld{W}^+_1$ and $\omega_{a,-} \in X^*(T)$.
Let $\lambda$ be $\tld{w}_B \cdot \mu$.
Then 
\begin{align*}
L(\tld{w}_a \tld{w}_B \cdot \mu)|_{\rG} &\cong L(\tld{w}_a \cdot \lambda) \cong L(\tld{w}_{a,+} \cdot \lambda+p\omega_{a,-})|_{\rG} \cong (L(\tld{w}_{a,+} \cdot \lambda)\otimes L(p\omega_{a,-}))|_{\rG}\\
&\cong (L(\tld{w}_{a,+} \cdot \lambda)\otimes L(\omega_{a,-}))|_{\rG} \cong \oplus_{\omega \in L(\omega_{a,-})} L(\tld{w}_{a,+} \cdot \lambda+\omega)|^{\oplus m_{\omega_{a,-}}(\omega)}_{\rG}\\
&\cong \oplus_{\omega \in L(\omega_{a,-})} F(\Trns_{\lambda+\eta}(\omega,\ovl{a}))^{\oplus m_{\omega_{a,-}}(\omega)},
\end{align*}
where $\ovl{a}$ is the unique $pX^*(T)$-translate of $a$ lying in $\{A,B\}$.
The third isomorphism above follows from the Steinberg tensor product theorem, and the fifth isomorphism follows from Lemma \ref{lem:trans:princ}(\ref{it:tr:princ:1}).
We see then that $\gr_1(V_\mu)|_{\rG}$ is isomorphic to the multiplicity-free direct sum of $F(\Trns_{\lambda+\eta}(\omega,0))$ where $\omega$ is a permutation of $0,\, \eps_1,$ or $\eps_2$.
Similarly, $\gr_2(V_\mu)|_{\rG}$ is isomorphic to the multiplicity-free direct sum of $F(\Trns_{\lambda+\eta}(\omega,1))$ where $\omega$ is a permutation of $\eps_1$ or $\eps_2$.
Finally, $\gr_3(V_\mu)|_{\rG}$ is isomorphic to $\oplus_{\omega\in L(\eta')} F(\Trns_{\lambda+\eta}(\omega,0))^{m_{\eta'}(\omega)}$.
Then, we can take $U$ to be the preimage of $\oplus_{\omega \in L(\eta')} F(\Trns_{\lambda+\eta}(\omega,0))$ in $V_\mu|_{\rG}$.
It is then easy to check that $U$ satisfies the required properties.

We now proceed to the general case.
Let $i\in\cJ$.
We have an exact sequence of $\un{G}$-modules
\[0\ra \rad(V_{\mu_i})\ra V_{\mu_i}\stackrel{p_i}{\ra} L(\mu_i^{\op}+p\eta'_i)\ra 0,\]
which gives the exact sequence
\[0\ra \rad(V_{\mu_i})\otimes \bigotimes_{j\neq i} V_{\mu_j} \ra V_\mu \ra L(\mu_i^{\op}+p\eta'_i)\otimes \bigotimes_{j\neq i} V_{\mu_j}\ra 0.\]
Then $L(\mu_i^{\op}+p\eta'_i)\otimes \bigotimes_{j\neq i} V_{\mu_j}|_{\rG}$ is isomorphic to 
\[L(\mu_i^{\op})\otimes \big( V_{\mu_{\pi i}}\otimes L(\eta'_{\pi i}) \big) \otimes \bigotimes_{j\neq i,\, \pi i} V_{\mu_j}|_{\rG}.\]
Then we claim that \[V_{\mu_{\pi i}}\otimes L(\eta'_{\pi i}) \cong \oplus_{\omega\in L(\eta'_{\pi i})} V_{\mu_{\pi i}+\omega}^{\oplus m_{\eta'_{\pi i}}(\omega)}\]
as $\un{G}$-representations.
We have an embedding 
\begin{equation}\label{eqn:VQembedding}
V_{\mu_{\pi i}}\otimes L(\eta'_{\pi i}) \into Q_1(\mu_{\pi i}) \otimes L(\eta'_{\pi i}) \cong \oplus_{\omega\in L(\eta'_{\pi i})} Q_1(\mu_{\pi i}+\omega)^{\oplus m_{\eta'_{\pi i}}(\omega)},
\end{equation}
where the last isomorphism follows from Lemma \ref{lem:trans:princ}.
Since:
\begin{enumerate}
\item $V_{\mu_{\pi i}}\otimes L(\eta'_{\pi i})$ contains all the Jordan--H\"older factors of the right-hand side of (\ref{eqn:VQembedding}) with highest weights in alcove $G$ (using Lemma \ref{lem:trans:princ}), and
\item $\oplus_{\omega\in L(\eta'_{\pi i})} V_{\mu_{\pi i}+\omega}^{\oplus m_{\eta'_{\pi i}}(\omega)}$ is the minimal submodule of the right-hand side of (\ref{eqn:VQembedding}) containing all the Jordan--H\"older factors (counted with multiplicities) with highest weights in alcove $G$ (using the cosocle filtration of $V_{\mu_{\pi i}+\omega}$),
\end{enumerate}
there is an injective map $\oplus_{\omega\in L(\eta'_{\pi i})} V_{\mu_{\pi i}+\omega}^{\oplus m_{\eta'_{\pi i}}(\omega)} \into V_{\mu_{\pi i}}\otimes L(\eta'_{\pi i})$ which must be an isomorphism since the domain and codomain have the same length.

Let $U_i$ be the preimage of 
\[\underset{\omega \in L(\eta'_i), \, \omega \neq 0}\bigoplus L(\mu_i^{\op})\otimes V_{\mu_{\pi i}+\omega} \otimes \bigotimes_{j\neq i,\, \pi i} V_{\mu_j}|_{\rG}\]
in $V_\mu|_{\rG}$.
Let $U$ be the intersection $\cap_i U_i$.
We claim that $U$ has the desired properties.
If $d<\dgr{\sigma_{(0,\un{1})}}{\sigma_{(\omega,a)}}$, then (\ref{eqn:exactdist}) holds by Theorem \ref{thm:Vdecomp}(\ref{item:multone}).
If $d=\dgr{\sigma_{(0,\un{1})}}{\sigma_{(\omega,a)}}$, then $[\gr_d(V_\mu|_{\rG}):\sigma_{(\omega,a)}] =1$ and $[\gr_d(U_i):\sigma_{(\omega,a)}] =1$ for all $i$ by the proof of Theorem \ref{thm:Vdecomp}(\ref{item:multone}).
We conclude that $[\gr_d(U):\sigma_{(\omega,a)}] =1$.

We now suppose that $d>\dgr{\sigma_{(0,\un{1})}}{\sigma_{(\omega,a)}}$ and use the notation of the proof of Theorem \ref{thm:Vdecomp}(\ref{item:multone}).
Suppose that $\sigma_{(\omega,a)}$ is a Jordan--H\"older factor of $L(\tld{w}\cdot \mu)|_{\rG}\cap \gr_d(U)$.
Then there is some $i$ such that $n_{\pi^{-1}i}(\tld{w}_{\pi^{-1}i}) > d_i$ where $d_i$ is the distance from $(\omega_i,a_i)$ to $(0_i,1_i)$.
More precisely, $n_{\pi^{-1}i}(\tld{w}_{\pi^{-1}i})$ is $3$ and $(\omega_i,a_i)$ is $(0_i,0_i)$.
However, one can check as in the proof of Theorem \ref{thm:Vdecomp}(\ref{item:multone}) that $L(\tld{w}\cdot \mu)|_{\rG}\cap\gr_d (U_{\pi^{-1}i})$ does not contain any weight of the form $\sigma_{(\omega,a)}$ with $(\omega_i,a_i)=(0_i,0_i)$.
\end{proof}

\begin{prop}
\label{prop:Ext:dim}
Let $\lambda,\, \theta$ be $6$-deep in an alcove in $\{A,\ B,\ C,\ D,\ E,\ F,\ G\}^\cJ$. %
Then $\Ext^1_{\un{G}}(L(\theta), L(\lambda))$ is at most one dimensional, and it is one dimensional if and only if $\lambda$ and $\theta$ are linked and lie in adjacent alcoves
(i.e.~there exists $i_0 \in \cJ$ such that $\lambda_i=\theta_i$ for all $i\neq i_0$ and $\lambda_{i_0}$ and $\theta_{i_0}$ lie in different alcoves sharing a face).
\end{prop}
\begin{proof}
We immediately reduce to the case of $\GL_3$.
Using (\ref{iso:HS1}), it suffices to consider the case of $\mathrm{SL}_3$, where the result follows from \cite[\S 4.1]{Andersen87}.
\end{proof}

Let $\mu \in X_1(\un{T})$ be a $p$-restricted weight $6$-deep in alcove $\un{B}$. 
Let $\tld{w},\ \tld{y}\in \un{W}_a$ be elements such that the alcoves containing $\lambda\defeq \tld{w}\cdot \mu, \theta\defeq \tld{y}\cdot \mu$ are in $\{A,\ B,\ C,\ D,\ E,\ F,\ G\}^\cJ$ and $\Ext^1_{\un{G}}(L(\theta), L(\lambda))\neq 0$. 
Note that $\lambda,\, \theta$ are both $6$-deep in their alcove by Lemma \ref{lem:deep}.

Let $i_0 \in \cJ$ be as in Proposition \ref{prop:Ext:dim}.
Let $M$ be a non-split extension of $L(\theta)$ by $L(\lambda)$, which is unique up to isomorphism by Proposition \ref{prop:Ext:dim}.

\begin{prop}
\label{prop:nonsplit}
If $F_0$ and $F_1$ are in the socle and cosocle of $M|_{\rG}$, respectively, with $[M|_{\rG}:F_0]=1=[M|_{\rG}:F_1]$ and $\dgr{F_0}{F_1} = 1$, then there is a subquotient of $M|_{\rG}$ which is a nonsplit extension, unique up to isomorphism by Lemma \ref{lem:dic}, of $F_1$ by $F_0$.
\end{prop}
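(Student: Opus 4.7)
My plan is to analyze $M|_{\rG}$ as a Loewy-length-$\leq 2$ $\rG$-module whose socle and cosocle are controlled by the semisimple restrictions $L(\lambda)|_{\rG}$ and $L(\theta)|_{\rG}$, and then produce the desired nonsplit subquotient by a pushout-pullback construction, reducing the claim to a weight-space computation identifying a specific matrix entry of the extension class $[M|_{\rG}]\in \Ext^1_{\rG}(L(\theta)|_{\rG},L(\lambda)|_{\rG})$ as nonzero.

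For the structural step, Steinberg's tensor product theorem together with Lemma \ref{lem:trans:princ} yields decompositions
\[
L(\lambda)|_{\rG} \cong \bigoplus_{\omega\in L(\pi\omega_-^{\lambda})} F(\lambda^{(0)}+\omega),
\qquad
L(\theta)|_{\rG}\cong \bigoplus_{\omega'\in L(\pi\omega_-^{\theta})} F(\theta^{(0)}+\omega'),
\]
where $\lambda=\lambda^{(0)}+p\omega_-^{\lambda}$ with $\lambda^{(0)}\in X_1(\un{T})$ (and analogously for $\theta$). The $9$-deep hypothesis ensures that each sum is multiplicity-free and that the two sets of Jordan--H\"older constituents are disjoint, by the same weight analysis as in the proof of Lemma \ref{lem:dic}. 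Consequently $M|_{\rG}$ has Loewy length at most $2$, $\soc(M|_{\rG})\supseteq L(\lambda)|_{\rG}$, and $\cosoc(M|_{\rG})$ is a quotient of $L(\theta)|_{\rG}$.

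Combining the multiplicity-one conditions with the disjointness above, the hypothesis identifies $F_0$ as an isotypic direct summand of $L(\lambda)|_{\rG}$ and $F_1$ as an isotypic direct summand of $L(\theta)|_{\rG}$. Pushing out $[M|_{\rG}]$ along the projection $L(\lambda)|_{\rG}\twoheadrightarrow F_0$ and pulling back along the inclusion $F_1\hookrightarrow L(\theta)|_{\rG}$ produces a class in $\Ext^1_{\rG}(F_1,F_0)$ whose underlying extension is a subquotient of $M|_{\rG}$. By Lemma \ref{lem:dic}, the target has dimension one over $\F$ (using $\dgr{F_0}{F_1}=1$), so the proposition reduces to verifying that this specific matrix entry of $[M|_{\rG}]$ is nonzero.

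The main obstacle is this nonvanishing. Since $\lambda_i=\theta_i$ for $i\neq i_0$, we may factor $M\cong M_{i_0}\otimes \bigotimes_{i\neq i_0} L(\lambda_i)$, where $M_{i_0}$ is the unique nonsplit algebraic extension of $L(\theta_{i_0})$ by $L(\lambda_{i_0})$ in the $i_0$-th $\GL_3$-factor, determined by a reflection $s_\alpha$ through a wall in the weight space of that factor. By the $\un{T}$-equivariance of the algebraic extension class and the compatibility with the restriction map $\Ext^1_{\un{G}}(L(\theta), L(\lambda))\to \Ext^1_{\rG}(L(\theta)|_{\rG}, L(\lambda)|_{\rG})$, the surviving nontrivial matrix entries correspond to pairs $(F(\lambda^{(0)}+\omega), F(\theta^{(0)}+\omega'))$ whose difference $\omega'-\omega$ is the image of a specific root under $s_\alpha$. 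The adjacency $\dgr{F_0}{F_1}=1$ combined with multiplicity one singles out $(F_0,F_1)$ as exactly such a pair, identifying the extracted class as a generator of $\Ext^1_{\rG}(F_1,F_0)$; this weight-theoretic identification, in the spirit of the calculations in \cite{PillenDL}, is the key computation to carry out.
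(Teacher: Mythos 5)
Your structural reduction matches the paper's overall strategy: using the translation principle and Steinberg's theorem to decompose $L(\lambda)|_{\rG}$ and $L(\theta)|_{\rG}$ into multiplicity-free direct sums, and recognizing that the claim reduces (via pushout along $L(\lambda)|_{\rG}\twoheadrightarrow F_0$ and pullback along $F_1\hookrightarrow L(\theta)|_{\rG}$) to the nonvanishing of a single matrix entry of $[M|_{\rG}]$ in $\Ext^1_{\rG}(F_1,F_0)$. That much is faithful to the paper (cf. the use of $\Proj(F_1)\to M|_{\rG}\to\Inj(F_0)$ in the paper's Proposition \ref{prop:nonsplit:res}).

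The genuine gap is in the last paragraph. You assert that ``$\un{T}$-equivariance of the algebraic extension class,'' together with compatibility with the restriction $\Ext^1_{\un{G}}(L(\theta),L(\lambda))\to\Ext^1_{\rG}(L(\theta)|_{\rG},L(\lambda)|_{\rG})$, forces the surviving matrix entries to be precisely those pairs $(F(\lambda^{(0)}+\omega),F(\theta^{(0)}+\omega'))$ with $\omega'-\omega$ an image of a root under $s_\alpha$. This is not a valid inference: restriction to $\rG$ destroys the $\un{T}$-module structure, and there is no a priori ``weight constraint'' on which entries of the restricted extension class vanish. Indeed, in the hardest case that the paper explicitly isolates --- when $\omega$ (the weight appearing in $\gr_1 Q_1(\lambda^0_{f-1})$) equals $\omega_\lambda$, and $F_1$ occurs with multiplicity two in the ambient module $\oplus_{\nu_\lambda}L(\theta^0+\nu_\lambda)\otimes L(p\omega)|_{\rG}$ --- naive multiplicity or weight arguments are genuinely insufficient to determine whether the relevant composite is zero. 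The paper resolves this by first embedding $M$ algebraically into $Q_1(\lambda^0)\otimes L(p\omega_\lambda)$ (Lemma \ref{lem:algemb}), proving that the image of $\gr_1(M)$ is stable under the involution swapping the last two tensor factors (Lemma \ref{lem:algrigid}), and then deducing a contradiction from that symmetry if the matrix entry were zero (Corollary \ref{cor:inv}). There is also an initial reduction to the case where $\lambda_i$ is $p$-restricted for $i\neq i_0$ (Proposition \ref{prop:nonsplit:res}) by splitting $M|_{\rG}$ into direct summands $M'_j$; your tensor factorization $M\cong M_{i_0}\otimes\bigotimes_{i\neq i_0}L(\lambda_i)$ is the right idea here, but without the embedding/involution argument the core nonvanishing in the $\omega_\lambda=\omega$ case remains unestablished.
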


Proposition \ref{prop:nonsplit} will be proven in several steps. 
Let $\lambda$ be $\lambda^0+p\omega_\lambda$ and $\theta = \theta^0 + p\omega_\theta$ so that $\lambda^0$ and $\theta^0$ are in $X_1(\un{T})$ and $\omega_\lambda$ and $\omega_\theta$ belong to $\{0,\eps'_{1,i}, \eps'_{2,i},\eta'_i\}^{\cJ}$.
We begin with an algebraization lemma.

\begin{lemma}\label{lem:algemb}
With $M$ and $i_0$ defined as above, assume moreover that $\lambda_i$ is $p$-restricted for all $i \neq i_0$.
Then there is an injection $M \into Q_1(\lambda^0) \otimes L(p\omega_\lambda) \cong \big(\otimes_{i\neq i_0} Q_1(\lambda_i)\big) \otimes Q_1(\lambda^0_{i_0}) \otimes L(p\omega_\lambda)$ whose restriction to $\rG$ is an injective hull.
\end{lemma}
\begin{proof}
The proof follows closely the argument of \cite[Lemma 3.1]{PillenPS} (which is in turn based on \cite[Lemma 2.2]{Andersen87}).
There is an injection 
\[\soc\, M \cong L(\lambda^0) \otimes L(p\omega_\lambda) \into Q_1(\lambda^0) \otimes L(p\omega_\lambda).\]
This extends to an injection $M \into Q_1(\lambda^0) \otimes L(p\omega_\lambda)$
since
\[\Ext_{\un{G}}^1(L(\theta), Q_1(\lambda^0) \otimes L(p\omega_\lambda))=0\]
by the proof of (\ref{eqn:extvanish}).
Both $Q_1(\lambda^0) \otimes L(p\omega_\lambda)$ and $M$ are $6$-deep by Lemma \ref{lem:deep}. 
By Lemma \ref{lem:res:soc} the restriction to $\rG$ of the map $M\into Q_1(\lambda^0) \otimes L(p\omega_\lambda)$ is an isomorphism on socles, and is thus essential.
The restriction $Q_1(\lambda^0) \otimes L(p\omega_\lambda)|_{\rG}$ is an injective object by Lemma \ref{lem:trans:princ} and Theorem \ref{thm:dec:tens}.
\end{proof}

We begin with the following special case of Proposition \ref{prop:nonsplit}, from which the general case will follow by the translation principle.

\begin{prop}
\label{prop:nonsplit:res}
If $\lambda_i$ is $p$-restricted for all $i\neq i_0$, the conclusion of Proposition \ref{prop:nonsplit} holds.
\end{prop}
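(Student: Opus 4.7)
The approach is to exploit the product structure $\un{G} \cong \prod_i \GL_3$ to factorise $M$ as an external tensor product, and then reduce the problem to an analysis in the $i_0$-th $\GL_3$-factor. A K\"unneth-type computation, together with the facts that $\lambda_i = \theta_i$ for $i \neq i_0$ and $\Hom_{\GL_3}(L(\lambda_i), L(\lambda_i)) \cong \F$, identifies
\[
\Ext^1_{\un{G}}(L(\theta), L(\lambda)) \cong \Ext^1_{\GL_3}(L(\theta_{i_0}), L(\lambda_{i_0})),
\]
so the (unique up to isomorphism) nonsplit extension $M$ takes the form $M \cong M_{i_0} \boxtimes N'$, where $M_{i_0}$ is the unique nonsplit extension of $L(\theta_{i_0})$ by $L(\lambda_{i_0})$ in the $i_0$-th factor, and $N' = \boxtimes_{i \neq i_0} L(\lambda_i)$ encodes the remaining factors.

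Restricting to $\rG$ and using the hypothesis that $\lambda_i$ is $p$-restricted for $i \neq i_0$, the $\rG$-module $N'|_{\rG}$ is an irreducible Serre weight $N$. Consequently $M|_{\rG} \cong \tld{M}_{i_0} \otimes N$, where $\tld{M}_{i_0}$ denotes $M_{i_0}$ viewed as an $\rG$-module via the appropriate Frobenius-twisted embedding. The problem now reduces to: (i) identifying which Jordan--H\"older constituents of $\tld{M}_{i_0}$ can be linked by a nonsplit $\rG$-extension, and (ii) verifying that tensoring by the fixed irreducible $N$ propagates this structure, which is straightforward given the multiplicity-one assumption on $F_0$ and $F_1$.

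The single-factor analysis proceeds by embedding $M_{i_0}$ into a suitable injective envelope $Q_1(\mu')$ (or dually, relating it to a Weyl module in the spirit of $V_{\mu_{i_0}}$) and exploiting the known rigid socle filtration of $Q_1(\mu')|_{\GL_3(\F_q)}$ from Corollary \ref{cor:projfil}, together with Lemma \ref{lem:trans:princ}, to pinpoint the locations of the Serre weights appearing in $L(\lambda_{i_0})|_{\GL_3(\F_q)}$ and $L(\theta_{i_0})|_{\GL_3(\F_q)}$. By Lemma \ref{lem:dic} and the hypothesis $\dgr{F_0}{F_1}=1$, we have $\dim_{\F}\Ext^1_\rG(F_1, F_0) = 1$, so the desired conclusion reduces to showing that the class of $M$ projects nontrivially to this one-dimensional $\rG$-Ext group.

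The hard part is to prove this nonvanishing of the natural restriction map
\[
\Ext^1_{\un{G}}(L(\theta), L(\lambda)) \longrightarrow \Ext^1_{\rG}(F_1, F_0),
\]
obtained by composing with the projection $L(\theta)|_{\rG} \twoheadrightarrow F_1$ and inclusion $F_0 \hookrightarrow L(\lambda)|_{\rG}$ (both well-defined up to scalar by the multiplicity-one hypothesis). I expect to verify it either by a direct distribution-algebra computation, matching a cocycle representing $M$ in $\un{G}_1$-cohomology against a nonzero class in $\rG$-cohomology via a Lyndon--Hochschild--Serre argument, or by exhibiting an explicit witness vector in $\tld{M}_{i_0}$ whose $\rG$-orbit realises the extension. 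The translation principle (Lemma \ref{lem:trans:princ}) should further reduce the problem to a particular configuration of $(\lambda_{i_0}, \theta_{i_0})$ where $M_{i_0}$ sits inside a short multiplicity-free piece of a Weyl module, making the nonvanishing amenable to explicit verification using the adjacency description of the extension graph.
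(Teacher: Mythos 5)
Your setup is broadly consistent with the paper's opening moves: you factorise $M$ as a tensor product concentrated at the $i_0$-th embedding, identify $\Ext^1_{\un{G}}(L(\theta),L(\lambda))$ with a single $\GL_3$-Ext group, invoke Lemma \ref{lem:trans:princ}, Corollary \ref{cor:projfil} and Lemma \ref{lem:dic} to reduce the size of the target $\Ext^1_{\rG}$ to one, and correctly isolate the crux as the nonvanishing of the restriction map
\[
\Ext^1_{\un{G}}\bigl(L(\theta),L(\lambda)\bigr) \;\longrightarrow\; \Ext^1_{\rG}(F_1,F_0).
\]
The paper reaches the same reduction by embedding $M|_{\rG}\hookrightarrow \Inj(M|_{\rG})\cong Q_1(\lambda^0)\otimes L(p\omega_\lambda)|_{\rG}$, applying $\gr_1$ of the socle filtration, and reducing to proving a composite $F_1\to\gr_1(\Inj(F_0))$ is nonzero.

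Where you stop is exactly where the real work begins, and you have not supplied it. Both of your proposed routes (``a direct distribution-algebra computation\ldots matching a cocycle\ldots via Lyndon--Hochschild--Serre,'' or ``exhibiting an explicit witness vector'') are programmatic placeholders, and neither is carried out or even sketched at the level of detail needed to believe it. The nonvanishing is genuinely subtle: after $\gr_1$, the paper distinguishes an easy regime (where one of $\omega_\lambda,\omega_\theta$ vanishes, or where the relevant direct summand of $\gr_1 Q_1(\lambda^0_{f-1})$ contributes the weight $F_1$ with multiplicity one) where multiplicity-one alone forces the result, from a critical mixed case --- $\omega=\omega_\lambda$ with $\{\ovl{\omega}_\lambda,\ovl{\omega}_\theta\}=\{\omega_1,\omega_2\}$ --- in which $F_1$ occurs with multiplicity two in the relevant tensor product and multiplicity counting fails. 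That case is handled by an ad hoc symmetry argument: one constructs an algebraic embedding $M\hookrightarrow Q_1(\lambda^0)\otimes L(p\omega_\lambda)$ (Lemma \ref{lem:algemb}), uses rigidity to show the image of $\gr_1(M)$ lies in $L(\theta^0)\otimes L(p\omega_\lambda)\otimes L(p\omega_\lambda)$ and is stable under the involution permuting the last two tensor factors (Lemma \ref{lem:algrigid}, Corollary \ref{cor:inv}), and deduces that vanishing of the restriction map would force the image into a non-involution-stable subspace --- a contradiction. Your proposal contains no trace of this case distinction or of the involution argument, and the extension-graph adjacency information you invoke only controls the dimension of the target, not the image of the restriction. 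As written, the proposal does not constitute a proof.
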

\begin{proof}
We assume that $\dgr{F_0}{F_1} = 1$.
Fix nonzero maps $\Proj(F_1) \ra M|_{\rG}$ and $M|_{\rG} \ra \Inj(F_0)$, unique up to scalar.
If suffices to show that the composition 
\begin{equation}\label{eqn:projinj}
\Proj(F_1) \ra M|_{\rG} \ra \Inj(F_0)
\end{equation}
is nonzero.
The first map factors through $\Proj(F_1)/\rad^2 (\Proj(F_1))$ since the Loewy length of $M|_{\rG}$ is two.
The second map factors as $M|_{\rG}\into \Inj(M|_{\rG}) \surj \Inj(F_0)$ where the second map is a projection to a direct summand.
Applying $\gr_1$ to the composition 
\[\Proj(F_1)/\rad^2 (\Proj(F_1)) \ra M|_{\rG}\into \Inj(M|_{\rG}) \surj \Inj(F_0),\] 
we obtain 
\begin{equation}\label{eqn:gr1}
F_1 \into \gr_1(M|_{\rG}) \into \gr_1 \Inj(M|_{\rG}) \surj \gr_1 \Inj(F_0),
\end{equation}
where the grading is with respect to the socle filtration.
It suffices to show that the composition (\ref{eqn:gr1}) is nonzero.

By Lemma \ref{lem:trans:princ}, $F_0$ (resp.~$F_1$) is isomorphic to $F(\lambda')$ where $\lambda' = \lambda^0+\varepsilon_\lambda$ (resp.~$F(\theta^0+\varepsilon_\theta)$) for some $\varepsilon_\lambda \in L(\pi\omega_\lambda)$ (resp.~$\varepsilon_\theta\in L(\pi\omega_\theta)$).
Then using Lemma \ref{lem:algemb}, (\ref{eqn:gr1}) can be rewritten as
\[
L(\theta^0+\varepsilon_\theta)|_{\rG} \into L(\theta)|_{\rG} \into \big(\gr_1 \big(Q_1(\lambda^0)\otimes L(p\omega_\lambda)\big)\big)|_{\rG} \surj \big(\gr_1 Q_1(\lambda^0+\varepsilon_\lambda)\big)|_{\rG},
\]
where the second map is the restriction of a map of $\un{G}$-modules.
By Lemma \ref{lem:res:soc}, the socle filtrations of 
$Q_1(\lambda^0)\otimes L(p\omega_\lambda)$ and $Q_1(\lambda^0)\otimes L(\pi\omega_\lambda)$ both induce the socle filtration on $Q_1(\lambda^0)\otimes L(p\omega_\lambda)|_{\rG}$.
Applying Lemma \ref{lem:trans:princ} to $Q_1(\lambda^0)\otimes L(\pi\omega_\lambda)$ and using the description of $Q_1(\lambda^0+\nu)$ in \cite[\S 13.9]{HumphreysBook}, we see that the graded pieces of the socle filtration of $Q_1(\lambda^0)\otimes L(\pi\omega_\lambda)$ are $\gr_k\big(Q_1(\lambda^0)\big) \otimes L(\pi\omega_\lambda)$.
We claim that the graded pieces of the socle filtration of $Q_1(\lambda^0)\otimes L(p\omega_\lambda)$ are $\gr_k\big(Q_1(\lambda^0)\big) \otimes L(p\omega_\lambda)$.
Taking the tensor product of the socle filtration on $Q_1(\lambda^0)$ with $L(p\omega_\lambda)$ gives a semisimple filtration $\cF$ (as follows from Steinberg's tensor product theorem).
Moreover, since the restrictions $Q_1(\lambda^0)\otimes L(\pi\omega_\lambda)|_\rG$ and $Q_1(\lambda^0)\otimes L(p\omega_\lambda)|_\rG$ are isomorphic, Lemma \ref{lem:res:soc} implies that the dimensions of the graded pieces of the socle filtrations of $Q_1(\lambda^0)\otimes L(\pi\omega_\lambda)$ and $Q_1(\lambda^0)\otimes L(p\omega_\lambda)$ agree.
A dimension consideration implies that $\cF$ is the socle filtration.

In particular, we have 
\[
\gr_1 \big(Q_1(\lambda^0)\otimes L(p\omega_\lambda)\big) \cong \bigoplus_i \Big(L(p\omega_\lambda) \otimes \big(\gr_1 Q_1(\lambda_i^0)\big) \otimes \bigotimes_{j\neq i}  L(\lambda_j^0)\Big).
\]
The algebraic map $L(\theta) \into \gr_1 (Q_1(\lambda^0) \otimes L(p\omega_\lambda))$ factors through the direct summand
\[L(\lambda^{i_0}) \otimes \big(\gr_1 Q_1(\lambda_{i_0}^0)\big) \otimes L(p\omega_\lambda)\]
by alcove considerations, where $\lambda^{i_0} \defeq \sum_{i\neq i_0} \lambda_i$.
(Note that $\lambda_i=\lambda^0_i$ and $\theta_i=\lambda_i$ for all $i\neq i_0$.)
Additionally, the map $F_1 \ra \gr_1 \Inj(F_0) \cong (\gr_1 Q_1(\lambda'))|_{\rG}$ factors through the direct summand
\[
L(\lambda^{\prime,i_0}) \otimes (\gr_1 Q_1(\lambda'_{i_0}))|_{\rG},
\]
where $\lambda^{\prime,i_0} = \sum_{i\neq i_0} \lambda'_i$, since if $j\neq i_0$, then the highest weights of the Jordan--H\"older factors of 
\[
\Big((\otimes_{i\neq j} L(\lambda_i')) \otimes\big(\gr_1 Q_1(\lambda_j')\big)\Big)|_{\rG}
\]
lie in the same alcove as the highest weight of $L(\lambda^0)$ except exactly at embedding $j$.
Thus it suffices to show that the composition
\begin{equation}\label{eqn:gr1'}
L(\theta^0+\varepsilon_\theta)|_{\rG} \into L(\theta)|_{\rG} \into L(\lambda^{i_0}) \otimes (\gr_1 Q_1(\lambda_{i_0}^0))\otimes L(p\omega_\lambda)|_{\rG} \surj L(\lambda^{\prime,i_0}) \otimes (\gr_1 Q_1(\lambda'_{i_0}))|_{\rG}
\end{equation}
is nonzero.

The $G$-module $\gr_1 Q_1(\lambda_{i_0}^0)$ is the direct sum of three irreducible modules in alcoves $A$, $C$,  and $D$ (resp.~$B$, $E$, and $F$) if $\lambda_{i_0}^0$ is in alcove $B$ (resp.~$A$).
Let $\tld{w}_1$, $\tld{w}_2$, and $\tld{w}_3$ be the elements of $W_a$ such that $\tld{w}_1\cdot A$, $\tld{w}_2\cdot A$, and $\tld{w}_3\cdot A$ are the alcoves $A$, $C$,  and $D$ (resp.~$B$, $E$, and $F$), respectively.
Let $\tld{w}_B\in W_a$ be the element such that $B = \tld{w}_B\cdot A$.
The natural identification $X^*(T)\backslash \tld{W}/\Omega \cong \{A,B\}$ where $\Omega = \Stab_{\tld{W}}(A)$ and the fact that $\theta_{i_0}^0$ is in alcove $A$ (resp.~$B$) if $\lambda_{i_0}^0$ is in alcove $B$ (resp.~$A$) shows that $X^*(T)\tld{y}_{i_0}\tld{w}_B \Omega = X^*(T)\tld{w}_j\Omega$ for any $j\in \{1,2,3\}$.
Moreover, it is not difficult to check that 
\[
X^*(T)\tld{y}_{i_0}\tld{w}_B \Omega = \coprod_{j=1}^3 X^*(T)\tld{w}_j.
\]
This implies that there exists unique $j \in \{1,2,3\}$ and $\omega\in X^*(T)$ such that 
\begin{equation} \label{eqn:omega}
t_{-\omega_{\theta,i_0}} \tld{y}_{i_0}\tld{w}_B(\tld{w}_\lambda t_{-\omega_{\lambda,i_0}} \tld{w}_{i_0}\tld{w}_B)^{-1} = t_{-\omega}\tld{w}_j,
\end{equation}
where $\tld{w}_\lambda \in W_a$ is the unique element such that $\tld{w}_\lambda \cdot \lambda_{i_0}^0 \in A$, or equivalently that $\tld{w}_\lambda t_{-\omega_{\lambda,i_0}} \tld{w}_{i_0}\tld{w}_B \in \Omega$.
Then we have that
\[
\theta^0_{i_0}+p\omega = t_\omega t_{-\omega_{\theta,i_0}}\tld{y}_{i_0} \cdot \mu_{i_0} = \tld{w}_j \tld{w}_\lambda t_{-\omega_{\lambda,i_0}}\tld{w}_{i_0} \cdot \mu_{i_0} = \tld{w}_j \tld{w}_\lambda \cdot \lambda_{i_0}^0,
\]
so that by construction, $\omega\in X^*(T)$ is the unique weight such that $L(\theta^0_{i_0}+p\omega)$ is a Jordan--H\"older factor of $\gr_1 Q_1(\lambda_{i_0}^0)$.
We now consider $\omega$ as an element of $X^*(\un{T})$ in the $i_0$-embedding.

We now proceed casewise.
If $\omega_\lambda$ (resp. $\omega_\theta$) is $0$ then $\gr_0(M|_{\rG})$ (resp. $\gr_1(M|_{\rG})$) is irreducible, and so the map $M|_{\rG} \ra \Inj(F_0)$ is injective (resp. the map $\Proj(F_1) \ra M|_{\rG}$ is surjective) and the composition (\ref{eqn:projinj}) is nonzero.

Now assume that $\omega_\lambda$ and $\omega_\theta$ are both nonzero.
At most one of $\omega_\lambda$ and $\omega_\theta$ can be $\eta'_{i_0}$.
By duality, we can and will assume that $\omega_\lambda\neq \eta'_{i_0}$.
Then $\omega_\lambda$ is either $\eps'_{1,i_0}$ or $\eps'_{2,i_0}$.
We assume that $\omega_\lambda$ is $\eps'_{2,i_0}$, the other case being symmetric.
So if $\lambda_{i_0}$ is in alcove $D$ (resp.~$F$), then $\theta_{i_0}$ is in alcove $E$ or $F$ (resp.~$C$, $D$, or $G$) by Proposition \ref{prop:Ext:dim}.
We claim that if $\omega_\theta$ is $\eps'_{1,i_0}$ (resp.~$\eps'_{2,i_0}$ or $\eta'_{i_0}$), then the $\omega$ defined above is $\eps'_{2,i_0}+(1,1,1)_{i_0}$ (resp.~$0$ or $\eps'_{1,i_0}$).
Indeed, since $t_{-\omega}\tld{w}_j \cdot A \in \{A,B\}$ by (\ref{eqn:omega}) and $\tld{w}_j\cdot A \in \{A,B,C,D,E,F\}$ by definition, $\omega_{i_0}$ is in $\{0,\eps'_1,\eps'_2\}+X^0(T)$.
From (\ref{eqn:omega}), we see that $\omega \equiv \omega_\theta-\omega_\lambda \pmod{\un{\Lambda}_R}$. 
These two facts determine $\omega$.

A morphism 
\[
L(\theta_{i_0}) \cong L(\theta_{i_0}^0+p\omega_\theta) \ra (\gr_1 Q_1(\lambda_{i_0}^0)) \otimes L(p\omega_\lambda)
\]
must factor through $L(\theta_{i_0}^0+p\omega')\otimes L(p\omega_\lambda)$ for some $\omega'$.
By construction, $\omega'$ must be $\omega$.
We conclude that the map 
\[
L(\theta) \into L(\lambda^{i_0}) \otimes (\gr_1 Q_1(\lambda_{i_0}^0)) \otimes L(p\omega_\lambda)
\]
factors through $L(\theta^0 + p\omega) \otimes L(p\omega_\lambda)$.
It suffices to show that the composition 
\[
L(\theta^0+\varepsilon_\theta)|_{\rG} \into L(\theta)|_{\rG} \into L(\theta^0)\otimes L(p\omega) \otimes L(p\omega_\lambda)|_{\rG} \surj L(\lambda^{\prime,i_0}) \otimes (\gr_1 Q_1(\lambda'_{i_0}))|_{\rG}
\]
is nonzero.

Let $\varepsilon'_\lambda\in L(\pi\omega_\lambda)$ be such that $\lambda^0+\varepsilon_\lambda$ and $\theta^0+\varepsilon'_\lambda$ are in the same $\un{\tld{W}}$-orbit under the $p$-dot action.
Similar to an earlier argument, of the three simple Jordan--H\"older factors of $L(\lambda^{\prime,i_0}) \otimes (\gr_1 Q_1(\lambda'_{i_0}))$, only the restriction to $\rG$ of one of the form $L(\theta^0+\varepsilon'_\lambda) \otimes L(p\omega')$ for some $\omega'$ contains $F_1 \cong F(\theta^0+\eps_\theta)$ as a Jordan--H\"older factor.
By construction, $\omega'$ must be $\omega$.
It now suffices to show that the composition 
\begin{align}\label{eqn:gr1''}
L(\theta^0+\varepsilon_\theta)|_{\rG} \into L(\theta^0+\pi\omega_\theta)|_{\rG} \cong L(\theta)|_{\rG} \into &L(\theta^0)\otimes L(p\omega) \otimes L(p\omega_\lambda)|_{\rG} \\
\nonumber \cong &L(\theta^0)\otimes L(\pi\omega_\lambda) \otimes L(p\omega)|_{\rG} \surj L(\theta^0+\varepsilon'_\lambda) \otimes L(p\omega)|_{\rG}
\end{align}
is nonzero.
Moreover, the multiplicity $[L(\theta^0+\varepsilon'_\lambda) \otimes L(p\omega)|_{\rG}:F_1]$ is one.

Assume now that $\omega_\theta$ is not $\eps'_{1,i_0}$. 
Then using the assumption that $[M|_{\rG}:F_1]=1$ so that $\varepsilon_\theta \neq 0$, one can check that $F_1$ appears in 
$L(\theta^0) \otimes L(p\omega)\otimes L(p\omega_\lambda)|_{\rG}$ with multiplicity one, we see that (\ref{eqn:gr1''}) is nonzero.

Finally, we assume that $\omega_\theta$ is $\eps'_{1,i_0}$. 
Let $\varepsilon$ be $\varepsilon_\theta - \varepsilon'_\lambda$ which is in $L(\pi\omega)$ by the composition (\ref{eqn:gr1''}).
Note that $\varepsilon \not\equiv \varepsilon'_\lambda \pmod{X^0(\un{T})}$.
The weight $F_1$ appears with multiplicity two in 
$\oplus_{\nu_\lambda\in L(\pi\omega_\lambda)} L(\theta^0+\nu_\lambda) \otimes L(p\omega)|_{\rG}$,
with multiplicity one in each of 
$L(\theta^0+\varepsilon'_\lambda) \otimes L(p\omega)|_{\rG}$
and 
$L(\theta^0+\varepsilon-\pi(1,1,1)_{i_0}) \otimes L(p\omega)|_{\rG}$.
Assume for the sake of contradiction that the composition (\ref{eqn:gr1''}) is zero.
Then the image of the composition of the first four maps of (\ref{eqn:gr1''}) is contained in $L(\theta^0+\varepsilon-\pi(1,1,1)_{i_0}) \otimes L(p\omega)|_{\rG}$.
In particular, since $\varepsilon \not\equiv \varepsilon'_\lambda \pmod{X^0(\un{T})}$, the image of the composition of the first three maps of (\ref{eqn:gr1''}) is not stable under the involution action on $L(\theta^0) \otimes L(p\omega)\otimes L(p\omega_\lambda)|_{\rG} \cong L(\theta^0+p(1,1,1)_{i_0}) \otimes L(p\omega_\lambda)\otimes L(p\omega_\lambda)|_{\rG}$ which permutes the last two tensor factors.
Since $F_1$ appears with multiplicity one in $L(\theta)|_{\rG}$, this implies that the image of 
\[
L(\theta^0+p(1,1,1)_{i_0})\otimes L(p\omega_\theta-p(1,1,1)_{i_0}) \into L(\theta^0+p(1,1,1)_{i_0}) \otimes L(p\omega_\lambda) \otimes L(p\omega_\lambda),
\]
which induces the second map of (\ref{eqn:gr1''}), is not stable under this involution action (note the key role played by Lemma \ref{lem:algemb}).
However, the unique submodule of $L(p\omega_\lambda)\otimes L(p\omega_\lambda)$ isomorphic to $L(p\omega_\theta-p(1,1,1)_{i_0})$ is the submodule where this involution acts by $-1$.
This is a contradiction.
\end{proof}

\begin{proof}[Proof of Proposition \ref{prop:nonsplit}]
We write $\lambda$ and $\theta$ as $\lambda^0+p\omega_\lambda$ and $\theta^0 + p\omega_\theta$, respectively, where $\lambda^0$ and $\theta^0$ are $p$-restricted weights.
Write $\omega_\lambda$ as $\omega_{\lambda,i_0}+\omega_\lambda^{i_0}$ where $\omega_{\lambda,i_0}$ (resp. ~$\omega_\lambda^{i_0}$) is $0$ away from (resp. ~at) embedding $i_0$.
Similarly write $\omega_\theta$ as $\omega_{\theta,{i_0}}+\omega_\theta^{i_0}$.
Then $\omega_\lambda^{i_0}$ equals $\omega_\theta^{i_0}$ by assumption, and so we set $\omega^{i_0} \defeq \omega_\lambda^{i_0}=\omega_\theta^{i_0}$.
Let $M_{i_0}$ be the unique up to isomorphism nontrivial extension of $L(\theta_{i_0})$ by $L(\lambda_{i_0})$ and let $M^{i_0}$ be $\otimes_{i\neq i_0} L(\lambda_i)$ so that $M$ is isomorphic to $M_{i_0} \otimes M^{i_0}$.
Then $M|_{\rG}$ is isomorphic to 
\begin{align*}
M_{i_0}\otimes M^{i_0}|_{\rG} &\cong M_{i_0}\otimes \big(\underset{i\neq i_0} \bigotimes L(\lambda^0_i)\big) \otimes L(p\omega^{i_0}) |_{\rG} \\
&\cong M_{i_0}\otimes \big(\underset{i\neq i_0} \bigotimes L(\lambda^0_i)\big) \otimes L(\pi\omega^{i_0}) |_{\rG}. 
\end{align*}
Let $M'$ be $M_{i_0}\otimes \big(\underset{i\neq i_0}\bigotimes L(\lambda^0_i)\big) \otimes L(\pi\omega^{i_0})$.
By Lemma \ref{lem:res:soc}, we have that 
$\soc(M') |_{\rG}\cong \soc (M'|_{\rG}) \cong \soc(M|_{\rG}) \cong \soc(M)|_{\rG}$,
so that 
\[\soc (M') \cong L(\lambda_{i_0}) \otimes \big(\underset{i\neq i_0} \bigotimes L(\lambda^0_i)\big) \otimes L(\pi\omega^{i_0}).\]
Similarly, we have that
\[\cosoc (M') \cong L(\theta_{i_0}) \otimes \big(\underset{i\neq i_0} \bigotimes L(\lambda^0_i)\big) \otimes L(\pi\omega^{i_0}).\]
The socle and cosocle of $M'$ can be decomposed using Lemma \ref{lem:trans:princ}.
Fix a direct sum decomposition $\cosoc (M') = \oplus_j M'_{1,j}$ into simple modules.
Using the known dimensions of $\Ext^1_{\un{G}}$ groups between simple modules and the fact that $M'$ is rigid of Loewy length two, one sees that the minimal submodule $M_j'$ of $M'$ whose projection to $\cosoc (M')$ contains $M'_{1,j}$ has length exactly two.
Thus the natural surjection $\oplus M_j' \onto \sum M_j' = M'$ is an isomorphism by length considerations.
More explicitly, if $M_{1,j}'$ is isomorphic to $L(\lambda^0+p\omega_{\lambda,i_0}+\pi\varepsilon^{i_0})$ for some $\varepsilon^{i_0} \in L(\omega^{i_0})$, then $M_{0,j}'$ is isomorphic to $L(\theta^0+p\omega_{\theta,i_0}+w\pi\varepsilon^{i_0})$ for the unique element $w\in \un{W}$ such that these weights are linked.

The alcoves of $F_1$ and $F_0$ differ only in embedding $i_0$.
Then by Lemma \ref{lem:dic}, the corresponding elements of $\un{\Lambda}^\nu_W$ under $\Trns_\nu$ for any appropriate $\nu$ must be the same for all embeddings except for $\pi(i_0)$.
By the above explicit description, we conclude that $F_1$ is in $\JH(M_{1,j}'|_{\rG})$ if and only if $F_0$ is in $\JH(M_{0,j}'|_{\rG})$.
So it suffices to prove the proposition for $M'_j$ in place of $M$.
This now follows from Proposition \ref{prop:nonsplit:res}.
\end{proof}

\begin{proof}[Proof of Theorem \ref{thm:Vdecomp}(\ref{item:maxext})]
Let $\Gamma$ be the directed graph with vertices $\JH(U) = \JH(V_\mu|_{\rG})$ so that there is a directed edge $\sigma_1 \ra \sigma_2$ for $\sigma_1$ and $\sigma_2 \in \JH(V_\mu|_{\rG})$ if and only if $\dgr{\sigma_{(0,\un{1})}}{\sigma_1} > \dgr{\sigma_{(0,\un{1})}}{\sigma_2}$ and $\dgr{\sigma_1}{\sigma_2} = 1$. Note that the first condition ensures that $\Gamma$ is acyclic.

We claim that $\Gamma$ is a subgraph of $\Gamma(U)$.
Let $\sigma_1$ and $\sigma_2$ be in $\JH(U)$ with $\dgr{\sigma_1}{\sigma_2} = 1$.
Let $d_j = \dgr{\sigma_{(0,\un{1})}}{\sigma_j}$ for $j = 1$ and $2$ and suppose that $d_1>d_2$, so that $d_1 = d_2+1$.
Then by Lemma \ref{lem:max:multfree}, $d_U^{\soc}(\sigma_j)$ is $d_j$ for $j=1$ and $2$.
Moreover, by Theorem \ref{thm:Vdecomp}(\ref{item:multone}) and parity reasons, $[\gr_{d_j}(V_\mu)|_{\rG}:\sigma_k] = \delta_{j k}$ for $j$ and $k$ in $\{1,\, 2\}$.
Using that every non-trivial extension which can occur in the layers of $V_\mu$ does occur (see Table \ref{TableWeyl1}), it is easy to check that there is a unique length two subquotient $M$ of $\Fil_{d_1}(V_\mu)/\Fil_{d_2-1}(V_\mu)$ such that $\sigma_1$ and $\sigma_2$ appear in $\JH(M|_{\rG})$ with multiplicity one.
By Proposition \ref{prop:nonsplit}, there is a subquotient of $M|_{\rG}$ which is a nonsplit extension of $\sigma_1$ by $\sigma_2$.
For multiplicity reasons, this must also be a subquotient of $U$.
Hence, there is a directed edge from $\sigma_1$ to $\sigma_2$ in $\Gamma(U)$.

We now claim that if $\Gamma$ is a subgraph of a directed graph $\Gamma'$ such that:
\begin{enumerate}
	\item $\Gamma$ and $\Gamma'$ have the same vertices;
	\item $\Gamma'$ is acyclic; and
	\item $\sigma_1\ra \sigma_2$ is a subgraph of $\Gamma'$ only if $\dgr{\sigma_1}{\sigma_2}=1$
\end{enumerate}
then $\Gamma'=\Gamma$.
Since $\Gamma(U)$ satisfies these conditions ((3) follows from Lemmas \ref{lem:deep} and \ref{lem:dic}), this would complete the proof.

Assume $\sigma_1\ra \sigma_2$ is a subgraph of $\Gamma'$.
By (3) and the geometry of the extension graph, we have that $\dgr{\sigma_{(0,\un{1})}}{\sigma_1} = \dgr{\sigma_{(0,\un{1})}}{\sigma_2}\pm1$, and hence it is enough to prove that $\dgr{\sigma_{(0,\un{1})}}{\sigma_1} = \dgr{\sigma_{(0,\un{1})}}{\sigma_2}+1$.
Suppose otherwise.
Then $\sigma_2\ra\sigma_1$ is a subgraph of $\Gamma$ by definition and thus a subgraph of $\Gamma'$.
But this contradicts (2).
\end{proof}

\subsubsection{The embedding construction}
\label{subsub:emb:arg}

We start with the following observation.
\begin{lemma}
\label{lem:deep:type}
Let $R$ be an $n$-generic Deligne--Lusztig representation. 
Then $R$ is $n-2$-deep.
\end{lemma}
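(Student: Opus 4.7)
The plan is to argue embeddingwise and reduce the statement to a direct computation using the explicit description of the Jordan--H\"older constituents of $\taubar$ via the translation map $\Trns$ developed in Section~\ref{sec:graph}. First I would write $\tau \cong R_s(\lambda+\eta)$ with $\lambda \in X^*_+(\un{T})$ lying $n$-deep in $\un{C}_0$, as in Definition~\ref{defi:gen:type}. Since we may assume $n\geq 3$ (the conclusion is trivial otherwise), Proposition~\ref{prop:typedecomp} applies with $\mu = \lambda+\eta$ and $\nu = 0$, yielding
\[
\JH(\taubar) = F\bigl(\Trns_{\lambda+\eta}(s(\Sigma))\bigr).
\]
Thus every Jordan--H\"older constituent is of the form $F(\mu')$ for $\mu' = \Trns_{\lambda+\eta}(s\omega,a) \in X_1(\un{T})/(p-\pi)X^0(\un{T})$ with $(\omega,a)\in \Sigma$, and it suffices to show that every such $\mu'$ admits a $p$-restricted representative that is $(n-2)$-deep.

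Next, I would apply formula~(\ref{eq:ref:form}) from Section~\ref{sec:gph:1} to rewrite
\[
\mu' \equiv \tld{w}\cdot\bigl(\lambda + \sec(s\omega)\bigr) \pmod{(p-\pi)X^0(\un{T})},
\]
where $\tld{w}\in \tld{\un{W}}^{\der}$ is the unique element lying in $\un{W}_a^{\der}\cdot t_{-\pi^{-1}\sec(s\omega)}$ with $\pi(\tld{w}\cdot\un{A}) = a$. Writing $\tld{w} = t_\nu w$ with $w\in \un{W}$ and using the $p$-dot action, the representative takes the form
\[
\mu' + \eta \;=\; w(\lambda+\eta) + w(\sec(s\omega)) + p\nu,
\]
so that for any positive coroot $\alpha^\vee$ and each embedding $j$,
\[
\langle \mu'+\eta,\alpha^\vee\rangle_j
\;=\; \langle \lambda+\eta, w^{-1}\alpha^\vee\rangle_j
+ \langle \sec(s\omega), w^{-1}\alpha^\vee\rangle_j
+ p\,\langle \nu,\alpha^\vee\rangle_j.
\]

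The main observation is now quantitative. Inspecting $\Sigma_0$ (Definition~\ref{weightlabel}) and the section $\sec:\Lambda_W\ra X^*(\un{T})$ sending $\omega_1, \omega_2$ to $(1,0,0), (0,0,-1)$, one sees that $|\langle \sec(s\omega), \beta^\vee\rangle_j|\leq 2$ for every simple coroot $\beta^\vee$ and all $(\omega,a)\in \Sigma$ (since $s\omega$ ranges through the $\un{W}$-orbits of $\{\underline 0, \omega_1, \omega_2, \omega_1-\omega_2, \omega_1+\omega_2\}$). Combined with the $n$-depth of $\lambda+\eta$, the first two terms on the right-hand side above lie in an interval of the form $(pm+n-2,\; p(m+1)-(n-2))$ for a unique $m\in\{0,-1\}$, and the integer $\langle \nu,\alpha^\vee\rangle_j$ is then forced by $p$-restrictedness to cancel the multiple of $p$. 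This gives exactly the desired $(n-2)$-depth of $\mu'$ in every embedding.

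The only real difficulty is bookkeeping: one must verify that the element $\tld{w}\in \un{W}_a^{\der}$ produced by the formula genuinely lies in $\widetilde{\un{W}}^{+,\mathrm{der}}_1$ so that the resulting representative is $p$-restricted (not merely $p$-regular), and check the bound $|\langle \sec(s\omega),\beta^\vee\rangle_j|\leq 2$ uniformly over the nine entries of $\Sigma_0$ and all six choices of $s\in \un{W}$. Both are routine consequences of Table~\ref{TablePP} and the explicit description of $\Sigma_0$, so no genuinely new input is needed beyond Section~\ref{sec:graph}.
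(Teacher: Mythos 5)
Your proof is correct and takes essentially the same approach as the paper: both reduce to a direct computation with the explicit form of the Jordan--H\"older factors of $\taubar$ coming from $\Trns_\mu$ and the description of $\Sigma_0$, and the key estimate in each case is that the shift $\sec(s\omega)$ pairs with every positive coroot to something of absolute value at most $2$. The only cosmetic difference is that the paper first invokes $\un{W}_a$-invariance of depth to reduce to the obvious weights (Proposition~\ref{prop:serrewts}), whereas you run the bound uniformly over all of $\Sigma_0$; the arithmetic is identical either way.
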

\begin{proof}
If we write $R=R_s(\mu+\eta)$ with $\mu$ being $n$-deep it is hence enough to prove that $\sigma_{r(\omega,a)}$ is $n-2$-deep for any obvious weight $\sigma_{(\omega,a)}$ in $\JH(\ovl{R})$. This follows from Proposition \ref{prop:serrewts}.
\end{proof}

From now onwards, we assume that $\mu$ is $6$-deep in alcove $\un{B}$. 
(By Lemma \ref{lem:deep:type}, the Deligne--Lusztig representation $R\defeq R_s(\mu^{\op}+\eta')$ is therefore $4$-deep.) 
We write $\sigma^{\op}\defeq F(\mu) = F(\Trns_{\mu^{\op}+\eta}(0,1))$. 
Note from Proposition \ref{prop:typedecomp} that the unique element $\sigma\in \JH(\ovl{R})$ satisfying $\dgr{\sigma}{\sigma^{\op}}=3f$ (i.e.~having maximal graph distance from $\sigma^{\op}$) is 
\begin{equation*}
\sigma\defeq F(\mu^{\op}+s(\eta')) = F(\Trns_{\mu^{\op}+\eta}(s(\eta'),0)).
\end{equation*}
To ease notation, we write $\lambda\defeq \mu^{\op}+s(\eta')$ (hence $\sigma= F(\lambda)$).

\begin{lemma}
\label{lem:mult1:G}
Let $\mu\in X_1(\un{T})$ be a weight such that $\mu\in \un{B}$ is $6$-deep  and $R\defeq R_s(\mu^{\op}+\eta')$. %
Set $\sigma^{\op}\defeq F(\mu)$ and let $\sigma\defeq F(\lambda)\in \JH(\ovl{R})$ be the unique constituent at maximal graph distance.
We have:
\begin{equation*}
[\Inj(\sigma^{\op}): \sigma]= [V_\mu|_{\rG}:\sigma]=[U:\sigma] = [\gr_{3f}(V_\mu|_{\rG}):\sigma]=1.
\end{equation*}
\end{lemma}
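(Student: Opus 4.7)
The plan is to identify $\sigma$ with a specific vertex in the parameterization of Theorem \ref{thm:Vdecomp}, then read off the multiplicities from items (1)--(3).

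For the identification, note that $\eta = \alpha_1+\alpha_2 \in \Lambda_R$ (since $\un G^{\mathrm{der}}$ is of type $A_2^f$), so $s\eta \in \Lambda_R$ for every $s \in \un W$. Taking $(\omega,a) = (s\eta,\un 0) \in \Lambda_W\times \mathcal{A}$, the decomposition of Lemma \ref{tmumap} reads $\omega_0 = 0$ and $\nu = s\eta \in \Lambda_R$, with corresponding element $\tld w = \id \in \tld{\un W}_1^{+,\mathrm{der}}$. The formula for $\Trns'_{\mu^{\op}+\eta}$ then gives
\[
\Trns'_{\mu^{\op}+\eta}(s\eta,\un 0) \;=\; \id\cdot\bigl((\mu^{\op}+\eta)-\eta+\sec(0)+\can(s\eta)\bigr) \;=\; \mu^{\op}+s\eta \;=\; \lambda,
\]
so $\sigma = F(\lambda) = \sigma_{(s\eta,\un 0)}$ in the parameterization used by Theorem \ref{thm:Vdecomp}. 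Componentwise, each $s\eta_i$ is a $\un W$-translate of $\eta_i$, so $(s\eta,\un 0) \in \Lambda_{\preceq(\eta,0)}$ by the very definition of this set. Inspecting Definition \ref{df:adj} and the triangle of Figure \ref{Triangle}, the vertex $(s\eta_i,0_i)$ is a ``corner'' of the $i$-th factor, and its shortest path to $(0_i,1_i)$ has length $3$ (any path must alternate $a$-flips with Weyl-lattice shifts of length $\omega_j$), so $\dgr{\sigma^{\op}}{\sigma} = 3f$.

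I then apply Theorem \ref{thm:Vdecomp} term by term. Item (1) produces a bijection $\Lambda_{\preceq(\eta,0)} \risom \JH(V_\mu|_{\rG})$, so $\sigma \in \JH(V_\mu|_{\rG})$ and the latter is multiplicity-free; hence $[V_\mu|_{\rG}:\sigma] = 1$. Item (2) applied with $d = 3f = \dgr{\sigma^{\op}}{\sigma}$ gives $[\gr_{3f}(V_\mu|_{\rG}):\sigma] = 1$. Item (3) produces a multiplicity-free submodule $U\subseteq V_\mu|_{\rG}$ with the same Jordan--H\"older factors, yielding $[U:\sigma] = 1$.

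The remaining equality $[\Inj(\sigma^{\op}):\sigma] = 1$ is the main obstacle. By Theorem \ref{thm:dec:tens} one has $\Inj(\sigma^{\op}) \cong Q_1(\mu)|_{\rG}$, and the inclusion $V_\mu \hookrightarrow Q_1(\mu)$ of $\un G$-modules gives the lower bound $[\Inj(\sigma^{\op}):\sigma] \geq [V_\mu|_{\rG}:\sigma] = 1$ for free. For the upper bound, I would decompose
\[
[Q_1(\mu)|_{\rG}:\sigma] \;=\; \sum_\kappa [Q_1(\mu):L(\kappa)]\cdot [L(\kappa)|_{\rG}:\sigma]
\]
over the algebraic composition factors $L(\kappa)$ of $Q_1(\mu)$ and show that only $\kappa = \mu^{\op}+p\eta$ (the cosocle of $V_\mu$, appearing with multiplicity one in $Q_1(\mu)$) contributes. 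Indeed, by Steinberg, $L(\mu^{\op}+p\eta)|_{\rG} \cong L(\mu^{\op})|_{\rG}\otimes L(\pi\eta)|_{\rG}$, and Lemma \ref{lem:trans:princ} shows $\sigma = F(\mu^{\op}+s\eta)$ occurs there exactly once, coming from the extremal weight $s\eta$ of the adjoint representation $L(\pi\eta)$ (which has weight multiplicity one). To rule out other $\kappa$, one writes the Steinberg decomposition $\kappa = \kappa^0 + p\omega_\kappa$ and uses the $9$-deepness of $\mu$ together with the extremality of $\lambda$ in the alcove structure to show that no weight $\epsilon$ of $L(\pi\omega_\kappa)$ can satisfy $\kappa^0 + \epsilon \equiv \lambda \pmod{(p-\pi)X^0(\un T)}$ for any other $\kappa$ in the linkage class of $\mu$. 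A cleaner alternative invokes Brauer reciprocity $[\Inj(\sigma^{\op}):\sigma] = [\Inj(\sigma):\sigma^{\op}]$ together with the symmetric role of $\sigma$ and $\sigma^{\op}$ as mutual corner vertices, reducing the problem to the analogous analysis applied to $Q_1(\lambda)|_{\rG}$.
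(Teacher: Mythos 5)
Your handling of the first three equalities is correct and is exactly how the paper argues: you identify $\sigma$ as $\sigma_{(s\eta,\un 0)}$, check it is a corner at graph distance $3f$ from $\sigma_{(\un 0,\un 1)}=\sigma^{\op}$, and read off the multiplicities from Theorem \ref{thm:Vdecomp}. (One small remark: the last conclusion $[U:\sigma]=1$ requires item (\ref{item:maxext}), which asks for $\mu_i$ to be $9$-deep, whereas the lemma as stated only assumes $7$-deep; this is a minor tension already present in the paper's own formulation.)

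For the fourth equality $[\Inj(\sigma^{\op}):\sigma]=1$, your plan is in the right direction but the crucial step is not actually carried out, and this is where the content of the lemma lies. You write the natural decomposition
\[
[Q_1(\mu)|_{\rG}:\sigma]\;=\;\sum_\kappa [Q_1(\mu):L(\kappa)]\,[L(\kappa)|_{\rG}:\sigma],
\]
but you then assert two nontrivial facts without proof: (a) the only $\kappa$ with $[L(\kappa)|_{\rG}:\sigma]\neq 0$ among those occurring in $Q_1(\mu)$ is $\kappa=\mu^{\op}+p\eta$, and (b) $[Q_1(\mu):L(\mu^{\op}+p\eta)]=1$. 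Neither is immediate. For (a), $Q_1(\mu)$ has composition factors $L(\tld w\cdot\mu)$ ranging over all ten alcoves $A,\dots,J$ from the Humphreys picture (not just the seven of $V_\mu$), and each of those could a priori drop $\sigma$ after restriction and translation; ruling them out requires the actual inspection of the socle layers of $Q_1(\mu_i)$ (which is what the paper's pointer to Theorem \ref{thm:dec:tens} and Corollary \ref{cor:projfil} is meant to unlock, by replacing $V_\mu$ with $Q_1(\mu)$ and repeating the verbatim casework of the proof of Theorem \ref{thm:Vdecomp}(\ref{item:multone}) layer by layer). For (b) you merely note that $L(\mu^{\op}+p\eta)$ is the cosocle of $V_\mu\subset Q_1(\mu)$; this gives one copy but does not by itself rule out further occurrences higher in the socle filtration of $Q_1(\mu)$. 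Finally, your ``cleaner alternative'' via Brauer reciprocity $[\Inj(\sigma^{\op}):\sigma]=[\Inj(\sigma):\sigma^{\op}]$ is a correct identity for the symmetric algebra $\F[\rG]$, but the reduction ``to the analogous analysis applied to $Q_1(\lambda)|_{\rG}$'' is not symmetric in the way you suggest: $\lambda_i$ lies in the \emph{lower} alcove, while Theorem \ref{thm:Vdecomp} is stated only for $\mu_i\in B$, so you would first have to transport the relevant statements across a duality (or reprove them in the lower-alcove setting using the second half of Table \ref{TableWeyl1}). In short: the same ``similarly'' the paper invokes, you also invoke; the genuine calculation (tracking $\sigma$ through all socle layers of $Q_1(\mu)|_{\rG}$, not just those coming from $V_\mu$) is sketched but not supplied.
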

\begin{proof}
The equations $[V_\mu|_{\rG}:\sigma]= [U:\sigma] = [\gr_{3f}(V_\mu|_{\rG}):\sigma]=1$ follow from Theorem \ref{thm:Vdecomp}(\ref{item:multone}).
The equation $[\Inj(\sigma^{\op}): \sigma] = 1$ is proved similarly using Theorem \ref{thm:dec:tens}.
\end{proof}

\begin{prop}
\label{cor:map}
Let $\mu\in X_1(\un{T})$ be a weight such that $\mu\in \un{B}$ is $6$-deep and $R\defeq R_s(\mu^{\op}+\eta')$.
Set $\sigma^{\op}\defeq F(\mu)$ and let $\sigma\defeq F(\lambda)\in \JH(\ovl{R})$ be the unique constituent at maximal graph distance.
Let $R^{\sigma}$ be an $\cO$-lattice in $R$ with irreducible cosocle isomorphic to $\sigma$.
There is an injection $\iota:\overline{R}^{\sigma}\rightarrow U$.
\end{prop}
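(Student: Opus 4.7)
\emph{Strategy.} The plan is to build $\iota$ in two phases: first produce a canonical nonzero $\rG$-equivariant map $\phi \colon \bar{\tau}^\sigma \to U$ by a multiplicity-one/filter chase, and then establish injectivity via an induction on the cosocle filtration of $\bar{\tau}^\sigma$, matching it against the known submodule structure of $U$ supplied by Theorem \ref{thm:Vdecomp}(\ref{item:maxext}).

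\emph{Existence of $\phi$ and factorization through $U$.} By Proposition \ref{prop:typedecomp} the module $\bar{\tau}$ is multiplicity free, so $[\bar{\tau}^\sigma : \sigma^{\op}] = 1$. Since $\F[\rG]$ is a Frobenius algebra, Frobenius reciprocity yields
\[
\dim_{\F}\Hom_{\rG}(\bar{\tau}^\sigma, \Inj(\sigma^{\op}))
= [\bar{\tau}^\sigma : \sigma^{\op}] = 1,
\]
and by Theorem \ref{thm:dec:tens} we identify $\Inj(\sigma^{\op}) \cong Q_1(\mu)|_{\rG}$, giving a unique-up-to-scalar nonzero $\phi \colon \bar{\tau}^\sigma \to Q_1(\mu)|_{\rG}$. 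This $\phi$ must factor through $V_\mu|_{\rG}$: otherwise the composite $\bar{\tau}^\sigma \to Q_1(\mu)/V_\mu$ would be a nonzero quotient of $\bar{\tau}^\sigma$, hence would have $\sigma$ as its unique cosocle constituent, forcing $\sigma \in \JH(Q_1(\mu)/V_\mu)$; but by Lemma \ref{lem:mult1:G} we have $[Q_1(\mu)|_{\rG}:\sigma] = [V_\mu|_{\rG}:\sigma] = 1$, a contradiction. The same argument applied to $V_\mu/U$, using that $U$ is multiplicity free with $\sigma \in \JH(U)$ (Theorem \ref{thm:Vdecomp}(\ref{item:maxext})), shows $\phi$ further factors through $U$.

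\emph{Injectivity.} The delicate point is that $\phi \colon \bar{\tau}^\sigma \to U$ is injective. By Proposition \ref{prop:subclosed}, submodules of $U$ correspond bijectively to closed subsets of $\JH(U)$, and by Theorem \ref{thm:Vdecomp}(\ref{item:maxext})(d) a pointing relation $\kappa \to \kappa'$ in $\Gamma(U)$ forces $\dgr{\sigma_{(\un{0},\un{1})}}{\kappa} > \dgr{\sigma_{(\un{0},\un{1})}}{\kappa'}$. In particular, since $\sigma$ sits at the maximal graph distance $3f$ from $\sigma^{\op} = \sigma_{(\un{0},\un{1})}$, no arrow in $\Gamma(U)$ points into $\sigma$. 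Consequently the image $\phi(\bar{\tau}^\sigma)$, having simple cosocle $\sigma$, is forced to equal the unique submodule $N_\sigma \subseteq U$ associated with the smallest closed subset of $\JH(U)$ containing $\sigma$ — namely, the weights lying on a geodesic from $\sigma$ to $\sigma^{\op}$. Injectivity is then proved by induction on the layers of the cosocle filtration of $\bar{\tau}^\sigma$: at each level, the non-trivial extensions in $\bar{\tau}^\sigma$ (controlled by the combinatorics of $\JH(\bar\tau)$ via Proposition \ref{prop:typedecomp}) match the pointing structure of $U$, whose existence of non-split extensions is guaranteed by Proposition \ref{prop:nonsplit} and Lemma \ref{lem:dic}; this lets one lift injectivity from $\bar{\tau}^\sigma/\rad^{k+1}$ to $\bar{\tau}^\sigma/\rad^{k+2}$.

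\emph{Main obstacle.} The principal technical difficulty is to identify $\JH(\bar{\tau}^\sigma) = F(\Trns_{\mu^{\op}+\eta}(s(\Sigma)))$, parametrized by $\Sigma$, with $\JH(N_\sigma)$, the geodesic locus from $\sigma$ to $\sigma^{\op}$ inside $\Lambda_{\preceq(\eta,0)}$, and to verify that the adjacency pattern in $\bar{\tau}$ matches the restriction of the pointing graph $\Gamma(U)$ to this locus. This requires working one embedding at a time, exploiting the product structure of the extension graph and the fact that $\sigma$ and $\sigma^{\op}$ occupy opposite ``corners'' of each embedding factor; once established, a final length count forces $\ker\phi = 0$ and completes the proof.
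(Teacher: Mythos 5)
Your opening two phases match the paper: the nonzero map $\phi\colon\taubar^\sigma\to\Inj(\sigma^{\op})$ is forced into $U$ because $\sigma$ appears with multiplicity one in $Q_1(\mu)|_{\rG}$, $V_\mu|_{\rG}$, and $U$, so $\Hom_{\rG}(\taubar^\sigma,\coker(U\into\Inj(\sigma^{\op})))=0$. That part is fine.

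The injectivity step is where you go wrong, and the error is worth isolating precisely. You correctly observe that $\im(\phi)$ contains $\sigma$ as a constituent, hence (by Proposition~\ref{prop:subclosed} and the description of $\Gamma(U)$ in Theorem~\ref{thm:Vdecomp}(\ref{item:maxext})) contains the minimal submodule $N_\sigma \subseteq U$ with $\sigma\in\JH(N_\sigma)$. At that moment the proof is essentially finished: $\JH(N_\sigma)$ is the set of vertices on geodesics from $\sigma$ to $\sigma_{(\un{0},\un{1})}$, which one counts to be $9^f = |\JH(\taubar)| = \ell(\taubar^\sigma)$, so $\im(\phi)\supseteq N_\sigma$ forces $\ell(\im(\phi)) \geq \ell(\taubar^\sigma)$ and hence $\ker\phi = 0$. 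This is exactly the paper's argument and it is a one-line length count.

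Instead, you launch into an ``induction on the layers of the cosocle filtration of $\taubar^\sigma$,'' claiming that ``the non-trivial extensions in $\taubar^\sigma$ (controlled by the combinatorics of $\JH(\taubar)$ via Proposition~\ref{prop:typedecomp}) match the pointing structure of $U$.'' This is circular: Proposition~\ref{prop:typedecomp} only identifies $\JH(\taubar)$ as a set; it tells you nothing about which extensions inside $\taubar^\sigma$ are split. Knowing the extension structure of $\taubar^\sigma$ is precisely what the present proposition (and ultimately Theorem~\ref{thm structure}) is designed to establish, so you cannot assume it. Moreover, even granting that input, ``lifting injectivity from $\taubar^\sigma/\rad^{k+1}$ to $\taubar^\sigma/\rad^{k+2}$'' is the wrong direction —- injectivity of a map is not detected on quotients. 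The ``main obstacle'' you describe (matching $\JH(\taubar^\sigma)$ to the geodesic locus inside $\Lambda_{\preceq(\eta,0)}$) does need checking, but only as an $f$-fold product of a small single-embedding count, not as an adjacency-matching induction; and once it is checked, the length argument renders the rest of your final paragraph unnecessary. Discard the induction and replace it by the length comparison, and your proof becomes the paper's.
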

Note that the map $\iota$ is unique up to scalar, as there is a unique up to scalar non-zero morphism $\Proj(\sigma)\ra  U$ by Lemma \ref{lem:mult1:G}.
\begin{proof}
Since $\sigma^\op \in \JH(\ovl{R}^\sigma)$, there is a unique up to scalar nonzero map 
\begin{align}\label{map:non:zero}
\ovl{R}^{\sigma}\rightarrow\Inj(\sigma^{\op}).
\end{align}
We first prove that the map (\ref{map:non:zero}) factors through the embedding \begin{align}
\label{exact:seq}
U&\stackrel{\alpha}{\into} \Inj(\sigma^{\op})
\end{align}
or, equivalently, that the composite of (\ref{map:non:zero}) with the natural projection $\Inj(\sigma^{\op})\onto \coker(\alpha)$ is zero.
As formation of cosocle is right exact, and $\ovl{R}^{\sigma}$ has irreducible cosocle isomorphic to $\sigma$, the image of the composite map $\ovl{R}^{\sigma}\ra \coker(\alpha)$ is either zero, or has irreducible cosocle $\sigma$.
By Lemma \ref{lem:mult1:G}, we know that $\sigma$ is not a Jordan-H\"older constituent  of $\coker(\alpha)$, and therefore $\Hom_{\rG}(\ovl{R}^{\sigma}, \coker(\alpha))=0$.
Now the image of this nonzero map contains $\sigma$ as a Jordan--H\"older factor.
Since the minimal submodule of $U$ containing $\sigma$ as a Jordan--H\"older factor has length $9^f$, which is the length of $\ovl{R}^\sigma$, by Theorem \ref{thm:Vdecomp}(\ref{item:maxext}) and Proposition \ref{prop:subclosed}, this map must be an injection.
\end{proof}

\begin{thm}
\label{thm:soc:simple}
With the hypotheses of Theorem \ref{thm structure}, assume moreover that $\sigma$ is a lowest alcove weight with $\Def_R(\sigma)=0$.
Then Theorem \ref{thm structure}(\ref{item:loewy})-(\ref{item:rigid}) hold.
\end{thm}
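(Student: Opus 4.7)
The plan is to exploit the embedding $\iota:\taubar^\sigma\into U$ provided by Proposition \ref{cor:map}, together with the explicit submodule structure of $U$ supplied by Theorem \ref{thm:Vdecomp}(\ref{item:maxext}), and to transfer the latter to $\taubar^\sigma$ via this embedding. First, I would reduce to the specific setup preceding Proposition \ref{cor:map}: given a lower alcove weight $\sigma\in\JH(\taubar)$ of defect zero, by Proposition \ref{changeofcoordinates} we may reindex $\tau$ as $R_s(\mu^{\op}+\eta)$ with $\mu_i\in B$ sufficiently deep (which is ensured by the genericity hypothesis and Lemma \ref{lem:deep:type}) so that $\sigma=F(\mu^{\op}+s(\omega_1+\omega_2))$ appears as the unique constituent of $\taubar$ at maximal graph distance $3f$ from $\sigma^{\op}\defeq F(\mu)$.

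Next I would identify $\iota(\taubar^\sigma)$ inside $U$. Both $\taubar^\sigma$ and $U$ are multiplicity free, so Proposition \ref{prop:subclosed} reduces the identification to checking that $\JH(\taubar)\subseteq\JH(U)$ is closed under the arrow relation of $\Gamma(U)$. By Theorem \ref{thm:Vdecomp}(\ref{item:maxext})(d), the arrows $\kappa_1\ra\kappa_2$ in $\Gamma(U)$ are characterised by $\dgr{\sigma^{\op}}{\kappa_1}\geq \dgr{\sigma^{\op}}{\kappa_2}$ and $\dgr{\kappa_1}{\kappa_2}=1$, so closedness reduces to a combinatorial check using the triangular picture of $\Sigma_0\subseteq \La_{\preceq(\eta,0)}$ (cf.~Table \ref{TableExtGraph} and Figure \ref{Triangle}). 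Since $\iota(\taubar^\sigma)$ has Jordan--H\"older set equal to $\JH(\taubar)$, it must coincide with the subobject of $U$ corresponding to this closed subset.

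Then, by Proposition \ref{prop:subgraph}, the graph $\Gamma(\taubar^\sigma)$ is the induced subgraph of $\Gamma(U)$ on $\JH(\taubar)$. Combining this with the additive distance identity
\[
\dgr{\sigma^{\op}}{\kappa}+\dgr{\kappa}{\sigma}=3f \qquad \text{for all } \kappa\in\JH(\taubar),
\]
which follows by direct verification in each embedding (where $\sigma^{\op}$ and $\sigma$ correspond to antipodal corners of the triangle $\Sigma_0$ of diameter $3$), the arrow condition becomes $\dgr{\sigma}{\kappa_1}\leq\dgr{\sigma}{\kappa_2}$ and $\dgr{\kappa_1}{\kappa_2}=1$, which is precisely the prediction of the extension graph with respect to $\sigma$. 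This establishes Theorem \ref{thm structure}(\ref{item:point}). Items (\ref{item:loewy}) and (\ref{item:radpt}) follow from the same identity applied to the restriction of the socle filtration of $U$ to $\iota(\taubar^\sigma)$, which yields the radical filtration of $\taubar^\sigma$ with graded piece indexed by $3f-\dgr{\sigma}{\cdot}$; rigidity (\ref{item:rigid}) then follows from Proposition \ref{prop:rigcrit} via the geodesic extension paths already visible in the extension graph.

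The main obstacle is the combinatorial closedness check of $\JH(\taubar)$ in $\Gamma(U)$: one must verify that no $\kappa\in\JH(\taubar)$ has a neighbor $\kappa'\in\La_{\preceq(\eta,0)}\setminus\JH(\taubar)$ with $\dgr{\sigma^{\op}}{\kappa'}\leq\dgr{\sigma^{\op}}{\kappa}$. The crucial input is the exclusion of the pairs $(\omega,1)$ for $\omega$ in the $\un{W}$-orbit of $\eta$ from $\La_{\preceq(\eta,0)}$, which prevents potentially problematic ``outward'' neighbors of the boundary vertices of $\Sigma_0$ from appearing in $\JH(U)$, forcing every outgoing arrow from $\JH(\taubar)$ to remain in $\JH(\taubar)$.
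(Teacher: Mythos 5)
Your proposal is essentially the same argument that the paper uses: the paper's proof of Theorem~\ref{thm:soc:simple} is simply ``By Remark~\ref{rmk:point}, it suffices to show that (\ref{item:point}) holds. This follows from Proposition~\ref{cor:map} and Proposition~\ref{prop:subgraph}.'' Your additive-distance identity $\dgr{\sigma^{\op}}{\kappa}+\dgr{\kappa}{\sigma}=3f$ is the correct (and correctly verified) step that translates the characterization of $\Gamma(U)$ in Theorem~\ref{thm:Vdecomp}(\ref{item:maxext})(d), which is phrased relative to $\sigma_{(\un{0},\un{1})}=\sigma^{\op}$, into the ``predicted with respect to $\sigma$'' form required by Theorem~\ref{thm structure}(\ref{item:point}); this is implicit in the paper's terse citation of Proposition~\ref{prop:subgraph}.

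However, you misidentify where the real work happens. The ``combinatorial closedness check of $\JH(\taubar)$ in $\Gamma(U)$'' that you flag as the main obstacle is not an independent condition to verify in the proof of Theorem~\ref{thm:soc:simple}. Once Proposition~\ref{cor:map} gives the injection $\iota\colon\taubar^\sigma\into U$, the image $\iota(\taubar^\sigma)$ is a subobject of $U$, and hence its Jordan--H\"older set $\JH(\taubar)$ is \emph{automatically} a closed subset of $\Gamma(U)$ by Proposition~\ref{prop:subclosed}; there is nothing further to check. The combinatorial input you have in mind (that the minimal closed subset of $\Gamma(U)$ containing $\sigma$ has exactly $9^f$ elements) is indeed what one must verify, but it lives inside the \emph{proof of Proposition~\ref{cor:map}} itself, not in the deduction of Theorem~\ref{thm:soc:simple}. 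Your second paragraph and your final paragraph therefore present a non-obstacle as the crux of the argument. Also, your derivation of items~(\ref{item:loewy}) and~(\ref{item:radpt}) directly from the socle filtration of $U$ is plausible but circuitous and risks entangling itself with the rigidity statement~(\ref{item:rigid}); the cleaner route is the one noted in Remark~\ref{rmk:point}, namely that once~(\ref{item:point}) is known, items~(\ref{item:loewy}) and~(\ref{item:radpt}), and from them~(\ref{item:rigid}) via Proposition~\ref{prop:rigcrit}, follow formally.
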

\begin{proof}
By Remark \ref{rmk:point}, it suffices to prove Theorem \ref{thm structure}(\ref{item:point}).
This follows from Theorem \ref{thm:Vdecomp}(\ref{item:maxext}) and Propositions \ref{cor:map} and \ref{prop:subgraph}.
\end{proof}

\subsection{Proof of the structure theorem in the general case}
\label{subsec:dist}

The aim of this section is to deduce Theorem \ref{thm structure} from the particular case in Theorem \ref{thm:soc:simple}. 
For this, we introduce a third notion of distance called the saturation distance (see Definition \ref{df:sat:dist}).
The first step is to show an inequality between the graph and saturation distances (Corollary \ref{ineq:sat:gr}) and then proceed further in \S \ref{sec:DE} to prove that under appropriate conditions on $\sigma\in\JH(\ovl{R})$ (i.e.~when $\sigma$ is {\it maximally saturated}) the notions of graph, saturation, and \emph{cosocle} distances ($\dcosoc{\sigma}{\kappa}$) actually coincide (Proposition \ref{prop:dcosoc}). 
The agreement of the three distances is equivalent to Theorem \ref{thm structure}, taking into account Lemma \ref{lem:simplify:def}.
In \S \ref{sub:red:MS}, we show that all weights are maximally saturated, thus concluding the proof of Theorem \ref{thm structure}.

\subsubsection{Notions of distances}
\label{sub:sub:def:dist}
In what follows,  $R$ is a $3$-generic Deligne--Lusztig representation.
Recall that if $\sigma\in \JH(\ovl{R})$ we write $\mathrm{d}^{\sigma}_{\mathrm{rad}}$ as a shorthand for $\mathrm{d}^{\ovl{R}^{\sigma}}_{\mathrm{rad}}.$
We establish some properties of $\mathrm{d}^{\sigma}_{\mathrm{rad}}$.
We deduce the following from Lemma \ref{lem:dic}.

\begin{cor}
\label{cor:comp:gr:cosoc}
Assume that $\ovl{R}$ is $6$-deep and $\sigma \in \JH(\ovl{R})$.
Then $\dgr{\sigma}{\sigma'}\leq \dcosoc{\sigma}{\sigma'}$ for all $\sigma'\in \JH(\ovl{R})$.
\end{cor}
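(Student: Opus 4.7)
The plan is to use Corollary \ref{cor:path} to produce a chain of Jordan--H\"older factors in $\taubar^{\sigma}$ linking $\sigma$ to $\sigma'$, translate ``pointing'' into non-vanishing of $\mathrm{Ext}^1_{\rG}$, and conclude via Lemma \ref{lem:dic} that each step has graph distance at most one, so that the triangle inequality for $\mathrm{d}_{\mathrm{gph}}$ yields the desired bound.

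More concretely, set $d \defeq \dcosoc{\sigma}{\sigma'}$. Applying Corollary \ref{cor:path} to $M = \taubar^{\sigma}$ and the factor $\sigma'$ produces an extension path in the radical filtration
\[
\kappa_0 \ra \kappa_1 \ra \cdots \ra \kappa_d = \sigma'
\]
of length $d$ ending at $\sigma'$. Since $\mathrm{d}_{\rad}^{\taubar^{\sigma}}(\kappa_0) = 0$ and $\taubar^{\sigma}$ has irreducible cosocle $\sigma$, we must have $\kappa_0 = \sigma$. By the definition of $\rad$-pointing (hence pointing), each $\kappa_i$ admits a nontrivial extension $0 \ra \kappa_{i+1} \ra E_i \ra \kappa_i \ra 0$ as a subquotient of $\taubar^{\sigma}$; in particular $\mathrm{Ext}^1_{\rG}(\kappa_i, \kappa_{i+1}) \neq 0$ for each $i$.

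All the $\kappa_i$ lie in $\JH(\taubar)$, which by the assumption that $\tau$ is $7$-deep means they are $7$-deep Serre weights. Hence Lemma \ref{lem:dic} applies and gives $\dgr{\kappa_i}{\kappa_{i+1}} = 1$ for all $0 \leq i \leq d-1$. The triangle inequality for the graph metric on regular Serre weights with fixed central character (Definition \ref{df:adj} and the remarks following it) then yields
\[
\dgr{\sigma}{\sigma'} \ \leq \ \sum_{i=0}^{d-1} \dgr{\kappa_i}{\kappa_{i+1}} \ = \ d \ = \ \dcosoc{\sigma}{\sigma'},
\]
which is the desired inequality. No step is a serious obstacle: the argument is essentially formal, the only nontrivial input being the identification of adjacency with non-vanishing of $\mathrm{Ext}^1$ provided by Lemma \ref{lem:dic}, and the genericity assumption ``$\tau$ is $7$-deep'' is exactly what is needed to trigger that lemma for every pair $(\kappa_i, \kappa_{i+1})$ appearing in the chain.
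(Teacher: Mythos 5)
Your proof is correct and takes essentially the same approach as the paper: both construct a chain of Jordan--H\"older factors in the radical filtration of $\taubar^\sigma$ linking $\sigma$ to $\sigma'$ in $\dcosoc{\sigma}{\sigma'}$ steps (you via Corollary \ref{cor:path}, the paper via a direct one-step induction using $\Ext^1_{\rG}(\gr^k(\taubar^\sigma),\kappa)\neq 0$), then invoke Lemma \ref{lem:dic} on each consecutive pair to get graph distance one, and close with the triangle inequality.
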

\begin{proof}
The assumption that $\ovl{R}$ is $6$-deep implies %
that all the elements in $\JH(\ovl{R})$ satisfy the hypotheses of Lemma \ref{lem:dic}.
If $\kappa\in \JH(\ovl{R})$ and $k+1 = \dcosoc{\sigma}{\kappa}$, then 
$\Ext^1_{\rG}\big(\gr^k(\ovl{R}^{\sigma}),\kappa\big) \neq 0$.
Hence $\Ext^1_{\rG}\big(\kappa',\kappa) \neq 0$ for some $\kappa'\in \JH(\ovl{R})$ with $\dcosoc{\sigma}{\kappa'} = k$.
Equivalently, $\dgr{\kappa}{\kappa'} = 1$ by Lemma \ref{lem:dic}.
If $d = \dcosoc{\sigma}{\sigma'}$, we conclude that there is a sequence of weights $\sigma_i$ for $0\leq i\leq d-1$ such that $\sigma_0 = \sigma$, $\sigma_d = \sigma'$, and $\dgr{\sigma_i}{\sigma_{i+1}} = 1$ for all $i$.
Hence $\dgr{\sigma}{\sigma'} \leq d$.
\end{proof}

\begin{lemma}
\label{lem mult 1}
Let $\sigma,\ \sigma_1\in \JH(\ovl{R})$. There is a unique minimal subrepresentation $Q_{\sigma}(\sigma_1)\subseteq \ovl{R}^{\sigma}$ containing $\sigma_1$ as a Jordan--H\"{o}lder factor.
Moreover, we have that 
\begin{enumerate}
\item $\cosoc(Q_{\sigma}(\sigma_1))\cong \sigma_1$; and \label{item:cosocle}
\item $Q_{\sigma}(\sigma_1)$ is the image of any nonzero map $\ovl{R}^{\sigma_1} \ra \ovl{R}^\sigma$.
\end{enumerate}
\end{lemma}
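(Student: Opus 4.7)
The lemma is a formal consequence of the general categorical machinery set up in \S\ref{subsec:gencat} once multiplicity-freeness is available. The first step in the plan is to invoke Proposition \ref{prop:typedecomp}, whose (genericity-dependent) conclusion is that $\JH(\taubar)$ is multiplicity free, and hence so is $\JH(\taubar^{\sigma})$. Proposition \ref{prop:subclosed} then gives a bijection between subrepresentations of $\taubar^{\sigma}$ and the closed subsets of $\JH(\taubar^{\sigma})$ under the pointing relation, compatible with arbitrary intersections. Existence and uniqueness of $Q_{\sigma}(\sigma_1)$ therefore come for free: the intersection of any family of closed subsets containing $\sigma_1$ is again closed and contains $\sigma_1$, so there is a unique minimal such subset, and $Q_{\sigma}(\sigma_1)$ is defined to be the corresponding subrepresentation.

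For item (\ref{item:cosocle}), the plan is to describe this minimal closed subset explicitly as $\sigma_1$ together with all $\sigma' \in \JH(\taubar^{\sigma})$ reachable from $\sigma_1$ by a (possibly empty) sequence of pointings within $\JH(\taubar^{\sigma})$. Every $\sigma' \ne \sigma_1$ in this set is, by construction, pointed to by some other element of the set, and hence does not lie in $\cosoc(Q_{\sigma}(\sigma_1))$. On the other hand, $\sigma_1$ cannot lie in $\rad(Q_{\sigma}(\sigma_1))$: if it did, the radical would be a strictly smaller subrepresentation containing $\sigma_1$ as a Jordan--H\"older factor, contradicting the minimality of $Q_{\sigma}(\sigma_1)$. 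Thus $\cosoc(Q_{\sigma}(\sigma_1)) \cong \sigma_1$.

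For item (2), I would argue as follows. Given a nonzero morphism $\phi\colon \taubar^{\sigma_1} \to \taubar^{\sigma}$, right exactness of the cosocle together with $\cosoc(\taubar^{\sigma_1}) = \sigma_1$ forces $\cosoc(\mathrm{Im}(\phi))$ to be a nonzero quotient of $\sigma_1$, hence $\cong \sigma_1$. In particular $\sigma_1 \in \JH(\mathrm{Im}(\phi))$, so the minimality of $Q_{\sigma}(\sigma_1)$ gives $Q_{\sigma}(\sigma_1) \subseteq \mathrm{Im}(\phi)$. For the reverse containment, suppose the inclusion were strict; pick any irreducible quotient $\sigma'$ of $\mathrm{Im}(\phi)/Q_{\sigma}(\sigma_1)$. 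Composing with the projection $\mathrm{Im}(\phi) \twoheadrightarrow \mathrm{Im}(\phi)/Q_{\sigma}(\sigma_1)$ and using that any surjection onto an irreducible factors through the cosocle, we obtain a second irreducible quotient of $\cosoc(\mathrm{Im}(\phi)) \cong \sigma_1$, so $\sigma' \cong \sigma_1$. This produces two occurrences of $\sigma_1$ in $\JH(\mathrm{Im}(\phi)) \subseteq \JH(\taubar^{\sigma})$, contradicting multiplicity-freeness.

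There is no real obstacle in this argument: the whole proof is purely formal once Proposition \ref{prop:typedecomp} is in hand, and the only delicate point is to use multiplicity-freeness twice---once to pass between subrepresentations and closed subsets, and once in the final contradiction in (2).
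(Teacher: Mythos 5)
Your argument is correct, and it rests on the same essential ingredients as the paper's: multiplicity freeness of $\taubar^{\sigma}$ (Proposition \ref{prop:typedecomp}) and right exactness of the cosocle. The organization is different and somewhat longer, though. The paper simply \emph{defines} $Q_{\sigma}(\sigma_1)$ to be the image of a nonzero map $\taubar^{\sigma_1}\to\taubar^{\sigma}$ (so (2) is definitional up to uniqueness), notes that (1) is immediate from right exactness of cosocle, and then proves minimality in one stroke: for any subrepresentation $M$ containing $\sigma_1$, multiplicity freeness means $\taubar^{\sigma}/M$ does not contain $\sigma_1$, so the composite $Q_{\sigma}(\sigma_1)\hookrightarrow\taubar^{\sigma}\twoheadrightarrow\taubar^{\sigma}/M$ is zero, i.e.~$Q_{\sigma}(\sigma_1)\subseteq M$. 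This simultaneously establishes existence, uniqueness, and minimality without invoking Proposition \ref{prop:subclosed} or the closed-subset description at all. Your route instead constructs $Q_{\sigma}(\sigma_1)$ via the closed-subset bijection, derives (1) by a radical argument, and then has to prove (2) by a two-containment argument plus a contradiction from multiplicity freeness. Nothing in your argument is wrong, but one might note that your final contradiction in (2) (producing two copies of $\sigma_1$ in $\JH(\mathrm{Im}\,\phi)$) is really the same multiplicity-freeness observation the paper uses, just slightly rearranged; you could shorten your proof by observing directly, as the paper does, that $\mathrm{Im}(\phi)$ itself is minimal among subrepresentations containing $\sigma_1$, which yields both containments at once.
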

\begin{proof}
Let $Q_{\sigma}(\sigma_1)$ be the image of any nonzero map $\ovl{R}^{\sigma_1} \ra \ovl{R}^\sigma$.
It is clear that $\cosoc(Q_{\sigma}(\sigma_1))\cong \sigma_1$.
Suppose that $M\subset \ovl{R}^\sigma$ is a subrepresentation containing $\sigma_1$ as a Jordan--H\"{o}lder factor.
As $\ovl{R}^\sigma$ is multiplicity free, $\ovl{R}^\sigma/M$ does not contain $\sigma_1$ as a Jordan--H\"{o}lder factor, and hence the composition $Q_{\sigma}(\sigma_1) \hookrightarrow \ovl{R}^\sigma \surj \ovl{R}^\sigma/M$ is $0$, or equivalently, $Q_{\sigma}(\sigma_1)\subset M$.
\end{proof}

\begin{lemma}
\label{lem length submodule}
Let $\sigma$ and $\sigma_1\in \JH(\ovl{R})$ and let $\sigma_2\in \JH(Q_{\sigma}(\sigma_1))$. 
If $n=\dcosoc{\sigma_1}{\sigma_2}$, then we have $\Hom_{\rG}\big(\sigma_2,\gr^n_{\rad}(Q_{\sigma}(\sigma_1))\neq 0$.
Moreover $\dcosoc{\sigma}{\sigma_2}\geq \dcosoc{\sigma}{\sigma_1}+\dcosoc{\sigma_1}{\sigma_2}$.
\end{lemma}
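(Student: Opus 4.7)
The proof will exploit the structural information about $Q_\sigma(\sigma_1)$ given by Lemma \ref{lem mult 1}, together with standard facts about the radical filtration on multiplicity free modules recalled in \S \ref{subsec:gencat}.

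The first step is to dispose of the ``moreover'' part, i.e. the inequality $\dcosoc{\sigma}{\sigma_2}\geq m+n$ where $m = \dcosoc{\sigma}{\sigma_1}$ and $n = \dcosoc{\sigma_1}{\sigma_2}$. The key is to show that $\rad^i(Q_\sigma(\sigma_1))\subseteq \rad^{m+i}(\taubar^\sigma)$ for every $i\geq 0$. For $i=0$: since $\cosoc(Q_\sigma(\sigma_1))\cong\sigma_1$ by Lemma \ref{lem mult 1}(\ref{item:cosocle}) and $\sigma_1$ appears in $\gr^m_\rad(\taubar^\sigma)$, the weight $\sigma_1$ is not a Jordan--H\"older factor of $\taubar^\sigma/\rad^m(\taubar^\sigma)$; but any nonzero quotient of $Q_\sigma(\sigma_1)$ has cosocle a quotient of $\sigma_1$, hence the image of $Q_\sigma(\sigma_1)$ in $\taubar^\sigma/\rad^m(\taubar^\sigma)$ must vanish, giving $Q_\sigma(\sigma_1)\subseteq \rad^m(\taubar^\sigma)$. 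For general $i$, the quotient $Q_\sigma(\sigma_1)/(Q_\sigma(\sigma_1)\cap \rad^{m+i}(\taubar^\sigma))$ embeds into $\rad^m(\taubar^\sigma)/\rad^{m+i}(\taubar^\sigma)$, which has Loewy length at most $i$, forcing $\rad^i(Q_\sigma(\sigma_1))\subseteq Q_\sigma(\sigma_1)\cap \rad^{m+i}(\taubar^\sigma)$.

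For the first part, I will use the nonzero map $\taubar^{\sigma_1}\ra\taubar^\sigma$ whose image is $Q_\sigma(\sigma_1)$, again by Lemma \ref{lem mult 1}. Since the radical is compatible with surjections, the induced surjection $\pi:\taubar^{\sigma_1}\onto Q_\sigma(\sigma_1)$ yields $\rad^n(Q_\sigma(\sigma_1)) = \pi(\rad^n(\taubar^{\sigma_1}))$, and hence a surjection $\gr^n_\rad(\taubar^{\sigma_1})\onto \gr^n_\rad(Q_\sigma(\sigma_1))$. By definition of $n$, the weight $\sigma_2$ occurs in $\gr^n_\rad(\taubar^{\sigma_1})$; since $\taubar$ is residually multiplicity free (Proposition \ref{prop:typedecomp}) and $\sigma_2\in \JH(Q_\sigma(\sigma_1))$, the unique copy of $\sigma_2$ in $\taubar^{\sigma_1}$ does not lie in $\ker(\pi)$. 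Thus $\sigma_2\in \JH(\gr^n_\rad(Q_\sigma(\sigma_1)))$, and since this graded piece is semisimple the desired $\Hom$ is nonzero.

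Combining the two steps, $\sigma_2$ appears in $\rad^n(Q_\sigma(\sigma_1))\subseteq \rad^{m+n}(\taubar^\sigma)$, which gives $\dcosoc{\sigma}{\sigma_2}\geq m+n$. There is no serious obstacle here; the only point requiring care is the control of the radical filtration under passage to the submodule $Q_\sigma(\sigma_1)\subseteq \taubar^\sigma$, where one must use the minimality and irreducible cosocle properties of $Q_\sigma(\sigma_1)$ rather than any direct functoriality of $\rad$ with respect to inclusions.
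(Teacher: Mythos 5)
Your proof is correct and follows essentially the same two ideas as the paper: (i) the surjection $\taubar^{\sigma_1}\onto Q_\sigma(\sigma_1)$ is strictly compatible with the radical filtrations, which combined with multiplicity-freeness pins $\sigma_2$ into $\gr^n_\rad(Q_\sigma(\sigma_1))$; and (ii) $\rad^i(Q_\sigma(\sigma_1))\subseteq\rad^{m+i}(\taubar^\sigma)$, which the paper phrases as saying the radical filtration of $\taubar^\sigma$ shifted by $m$ induces a semisimple filtration on $Q_\sigma(\sigma_1)$ starting at $0$, whereas you spell it out by the equivalent direct induction on $i$. The two formulations of (ii) are interchangeable, so there is no meaningful divergence from the paper's argument.
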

\noindent Note that, \emph{a priori}, the inequality may be strict, so that what we call the \emph{cosocle distance} $\dcosoc{\sigma}{\kappa}$ from $\sigma$ to $\kappa$ is not \emph{a priori} necessarily a metric. 
\begin{proof}
Since formation of cosocle is right exact, we see that the map $\ovl{R}^{\sigma_1} \surj Q_{\sigma}(\sigma_1)$ is strictly compatible with the radical filtrations.
This proves the first claim.

Let $\mathscr{F}=\{\Fil^k(\ovl{R}^\sigma)\}_k$ be the decreasing filtration defined by $\Fil^k(\ovl{R}^\sigma)=\rad^{k+\dcosoc{\sigma}{\sigma_1}}(\ovl{R}^\sigma)$ (i.e.~
$\mathscr{F}$ is the radical filtration of $\ovl{R}^\sigma$ shifted by $\dcosoc{\sigma}{\sigma_1}$).
Under the inclusion $Q_{\sigma}(\sigma_1) \hookrightarrow \ovl{R}^\sigma$, $\mathscr{F}$ induces a semisimple filtration on $Q_{\sigma}(\sigma_1)$, beginning at $0$.
So, with the radical filtration on the domain and $\mathscr{F}$ on the codomain, this inclusion is compatible with the filtrations.
This immediately gives $\dcosoc{\sigma}{\sigma_2}-\dcosoc{\sigma}{\sigma_1}\geq \dcosoc{\sigma_1}{\sigma_2}$ for any $\sigma_2\in \JH(Q_{\sigma}(\sigma_1))$.
\end{proof}

We now introduce a third notion of distance on the set $\mathrm{JH}(\ovl{R})$ and give a comparison result with the graph distance (Corollary \ref{ineq:sat:gr}).
To do this, we make crucial use of a \emph{global} input (Proposition \ref{prop saturation 1}).
We also use these results to deduce some basic results about $\Gamma_{\rad}(\ovl{R}^{\sigma})$ (see, for example, Lemma \ref{lem:simplify:def}).

\begin{defn}
\label{df:sat:dist}
Let $\sigma_1,\ \sigma_2\in \JH(\ovl{R})$. Let $R^{\sigma_1}$ be an $\cO$-lattice with irreducible cosocle $\sigma_1$ and fix a saturated inclusion of lattices $R^{\sigma_2}\subseteq R^{\sigma_1}$.
The \emph{saturation distance} (\emph{with respect to $R$}) between $\sigma_1$ and $\sigma_2$, noted by $\dsat{\sigma_1}{\sigma_2}$, is defined to be the unique integer $d$ such that $p^dR^{\sigma_1}\subseteq R^{\sigma_2}$ is a saturated inclusion.
\end{defn}

\begin{rmk}
\label{rmk:unr:coeff}
We indicate a justification for the existence of the integer $d$ in Definition \ref{df:sat:dist}.
Given two $\cO$-lattices $\Lambda_1$ and $\Lambda_2$ in an $E$-vector space $V$, there is a unique $d\in \Z$ such that $\varpi^d \Lambda_2 \subset \Lambda_1$ is a saturated inclusion.
If $V$, $\Lambda_1$, and $\Lambda_2$ are obtained from base change from an unramified subfield of $E$, then in fact $\varpi^d$ is a power of $p$ up to units.
Since $R$, $R^{\sigma_1}$, and $R^{\sigma_2}$ are defined over an unramified extension of $\Q_p$, the integer $d$ in Definition \ref{df:sat:dist} exists.
\end{rmk}

\noindent The following lemma is clear:
\begin{lemma}
\label{lem:dsat:dist}
The saturation distance is a metric on $\JH(\ovl{R})$.
\end{lemma}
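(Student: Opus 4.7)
The plan is to verify the three axioms of a metric after first unpacking the definition. Recall that for any two lattices $L,L'$ in the $E$-vector space $\tau$, an inclusion $L' \subseteq L$ is \emph{saturated} exactly when $L'\not\subseteq pL$; equivalently, the minimal $k\ge 0$ with $p^k L' \subseteq L$ equals $0$. Thus, up to homothety, one can always normalise $\tau^{\sigma_2}\subseteq \tau^{\sigma_1}$ to be saturated, and $\dsat{\sigma_1}{\sigma_2}$ is then the unique $d\ge 0$ such that $p^d\tau^{\sigma_1}\subseteq \tau^{\sigma_2}$ but $p^{d-1}\tau^{\sigma_1}\not\subseteq \tau^{\sigma_2}$. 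Since rescaling either lattice by a power of $p$ shifts both $\tau^{\sigma_2}\subseteq \tau^{\sigma_1}$ and $p^d\tau^{\sigma_1}\subseteq \tau^{\sigma_2}$ by the same amount, the definition depends only on the homothety classes.

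For non-negativity and non-degeneracy, the normalisation $\tau^{\sigma_2}\subseteq \tau^{\sigma_1}$ immediately forces $d=\dsat{\sigma_1}{\sigma_2}\geq 0$. If $\sigma_1=\sigma_2$, then $\tau^{\sigma_1}=\tau^{\sigma_2}$ (the lattice being unique up to homothety), so $d=0$. Conversely $d=0$ means $\tau^{\sigma_1}\subseteq \tau^{\sigma_2}\subseteq \tau^{\sigma_1}$, hence $\tau^{\sigma_1}=\tau^{\sigma_2}$, and taking cosocles gives $\sigma_1=\sigma_2$.

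For symmetry, set $d=\dsat{\sigma_1}{\sigma_2}$ with $\tau^{\sigma_2}\subseteq \tau^{\sigma_1}$ and $p^d\tau^{\sigma_1}\subseteq \tau^{\sigma_2}$ saturated. To compute $\dsat{\sigma_2}{\sigma_1}$, take $L\defeq p^d\tau^{\sigma_1}$, which is a homothetic representative of $\tau^{\sigma_1}$ saturated inside $\tau^{\sigma_2}$. Then $\dsat{\sigma_2}{\sigma_1}$ is the unique $d'\ge 0$ with $p^{d'}\tau^{\sigma_2}\subseteq L$ saturated, i.e.\ with $p^{d'-d}\tau^{\sigma_2}\subseteq \tau^{\sigma_1}$ saturated; since $\tau^{\sigma_2}\subseteq \tau^{\sigma_1}$ is already saturated by construction, this forces $d'=d$.

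The main obstacle is the triangle inequality, because saturation is \emph{not} transitive (e.g.\ $pL\subseteq L$ is never saturated, yet can be written as a composition of two saturated inclusions). To handle this, pick homothetic representatives with $\tau^{\sigma_3}\subseteq \tau^{\sigma_2}\subseteq \tau^{\sigma_1}$ all saturated, so that $p^{d_{12}}\tau^{\sigma_1}\subseteq \tau^{\sigma_2}$ and $p^{d_{23}}\tau^{\sigma_2}\subseteq \tau^{\sigma_3}$ are saturated with $d_{ij}=\dsat{\sigma_i}{\sigma_j}$. Let $e\ge 0$ be the largest integer with $p^{-e}\tau^{\sigma_3}\subseteq \tau^{\sigma_1}$, so $\tau^{\sigma_3}_{\mathrm{new}}\defeq p^{-e}\tau^{\sigma_3}$ is saturated in $\tau^{\sigma_1}$ and represents the homothety class of $\tau^{\sigma_3}$. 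By definition, $d_{13}$ is the minimal $d\ge 0$ with $p^d\tau^{\sigma_1}\subseteq \tau^{\sigma_3}_{\mathrm{new}}$. Chaining the two saturated inclusions gives $p^{d_{12}+d_{23}}\tau^{\sigma_1}\subseteq \tau^{\sigma_3}\subseteq \tau^{\sigma_3}_{\mathrm{new}}$, so $d_{13}\leq d_{12}+d_{23}$, completing the proof.
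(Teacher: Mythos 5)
Your proof is correct and fills in what the paper leaves implicit (the paper states the lemma with no proof, calling it clear). Your verification is the natural one: unwind the definition via homothety representatives, check positivity and non-degeneracy, prove symmetry by observing that $p^d\tau^{\sigma_1}$ is a saturated representative of $\tau^{\sigma_1}$ inside $\tau^{\sigma_2}$, and prove the triangle inequality by chaining saturated inclusions and noting the chained inclusion factors through the saturated representative $\tau^{\sigma_3}_{\mathrm{new}}\supseteq \tau^{\sigma_3}$. Two small points worth flagging for completeness: first, the finiteness and integrality of $d$ (i.e.\ that one can always find an integer $d$, rather than a rational power of $p$, making $p^d\tau^{\sigma_1}\subseteq\tau^{\sigma_2}$ saturated) is exactly what the paper's Remark following Definition \ref{df:sat:dist} addresses via the unramifiedness of $E/\Qp$, so your argument is implicitly invoking that remark; second, in the non-degeneracy direction the identification of cosocles uses the uniqueness clause of \cite[Lemma 4.1.1]{EGS}, which you correctly gesture at.
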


\noindent The following proposition is the main global input.

\begin{prop}
\label{prop saturation 1}
Let $R$ be a $13$-generic Deligne--Lusztig representation.
Let $\sigma_1,\sigma_2\in \JH(\ovl{R})$ be such that $\dgr{\sigma_1}{\sigma_2}=1$.
Then $\dsat{\sigma_1}{\sigma_2}=1$.
\end{prop}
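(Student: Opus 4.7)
The plan is to prove $\dsat{\sigma_1}{\sigma_2}=1$ by combining a global Taylor--Wiles patching argument with the explicit intersection-theoretic analysis of the tamely potentially crystalline deformation ring developed in \cite{LLLM}. First I would note that the condition $\dsat{\sigma_1}{\sigma_2}=1$ is equivalent to the inclusion $p\tau^{\sigma_1}\subseteq \tau^{\sigma_2}$, i.e., to the quotient $\tau^{\sigma_1}/\tau^{\sigma_2}$ being annihilated by $p$; since the inequality $\dsat{\sigma_1}{\sigma_2}\geq 1$ is automatic from $\sigma_1\neq\sigma_2$, only the upper bound requires proof.

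Using Proposition \ref{prop:singlepatch} I would construct a weak minimal patching functor $M_\infty$ for $\rhobar$, and apply $M_\infty$ to the saturated inclusion $\tau^{\sigma_2}\hookrightarrow\tau^{\sigma_1}$. By exactness this yields a saturated inclusion of maximal Cohen--Macaulay $R_\infty(\tau)$-modules $M_\infty(\tau^{\sigma_2})\hookrightarrow M_\infty(\tau^{\sigma_1})$, each of generic rank one on every component by the minimality axiom of $M_\infty$. The strategy is to pass to an auxiliary $10$-generic tame type $\tau'$, chosen via (a variant of) Lemma \ref{lemma:weight-combinatorics} so that both $\sigma_1,\sigma_2$ lie in $W^?(\rhobar,\taubar')$ and moreover the shape $\bf{w}(\rhobar,\tau')=(\tld{w}_j)_j$ satisfies $\ell(\tld{w}_j)\geq 2$ in every embedding. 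This is precisely the regime where the explicit presentation of $\ovl{R}_{\ovl{\fM},\bf{w}(\rhobar,\tau')}^{\expl,\nabla}$ developed in \cite[\S 5.3, \S 6]{LLLM} is available, and where the matching of components of \S \ref{sec:match} gives a completely explicit description of the prime ideals $\mathfrak{p}(\sigma_i)$.

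In this favorable setting, Lemma \ref{lemma:expl4wt} combined with Table \ref{Table:intsct} tells us that the minimal prime ideals $\mathfrak{p}^{\expl}(\sigma_1)$ and $\mathfrak{p}^{\expl}(\sigma_2)$ of $\ovl{R}_{\ovl{\fM},\bf{w}(\rhobar,\tau')}^{\expl,\nabla}$ differ in exactly two explicit generators, since $\dgr{\sigma_1}{\sigma_2}=1$. Combined with the support description of Proposition \ref{prop:identify:cmpt}(\ref{item:patchcomp}) and Lemma \ref{lem:supp}, together with the Cohen--Macaulay and generic-rank-one structure of $M_\infty((\tau')^{\sigma_i})$, this intersection-theoretic closeness forces the quotient module $M_\infty((\tau')^{\sigma_1})/M_\infty((\tau')^{\sigma_2})$ to be annihilated by $p$. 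Translating back through the exact patching functor yields the desired inclusion $p(\tau')^{\sigma_1}\subseteq(\tau')^{\sigma_2}$, proving $\dsat{\sigma_1}{\sigma_2}=1$ computed with respect to $\tau'$.

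The main obstacle will be twofold. First, carrying out the intersection-theoretic computation in $\ovl{R}_{\ovl{\fM},\bf{w}(\rhobar,\tau')}^{\expl,\nabla}$ in sufficient detail to extract the precise annihilator statement on the patched modules; this relies in an essential way on the generic-rank-one property combined with Cohen--Macaulayness, both of which are byproducts of the minimality axiom. Second, justifying that the saturation distance between $\sigma_1$ and $\sigma_2$ is an invariant of the pair of weights, and does not depend on the ambient tame type containing both of them; this type-independence should ultimately follow from the uniqueness (up to homothety) of lattices with prescribed irreducible cosocle and from the description in Lemma \ref{lem mult 1} of $Q_{\sigma_1}(\sigma_2)$ as the image of any non-zero map $\taubar^{\sigma_2}\to\taubar^{\sigma_1}$, but making this rigorous is the subtle step of the argument and is what embodies the philosophy of \cite[\S B.2]{EGS} alluded to in the introduction.
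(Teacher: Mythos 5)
Your global strategy is essentially right --- patch, pass to the explicit presentation of the deformation ring when all shapes have length at least two, and use Cohen--Macaulayness plus the intersection-theoretic description of the ideals $\mathfrak{p}(\sigma_i)$. But the specific route you choose introduces a circularity that the paper carefully avoids. You propose to fix $\rhobar$ and then pass to an auxiliary type $\tau'$ whose shape has length $\geq 2$ everywhere, then to deduce the conclusion for $\tau$ by arguing that $\dsat{\sigma_1}{\sigma_2}$ is independent of the ambient type. That type-independence is not available at this stage of the argument: it is, essentially, a consequence of Theorem \ref{thm structure}(\ref{item:sat}) combined with the type-independence of the graph distance, and Proposition \ref{prop saturation 1} is itself the key global input used to prove Theorem \ref{thm structure}. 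The appeal to Lemma \ref{lem mult 1} will not give you type-independence directly, since $Q_{\sigma_1}(\sigma_2)$ only controls the picture modulo $p$, whereas saturation distance is an integral-lattice invariant. The paper sidesteps this entirely by changing $\rhobar$ rather than $\tau$: one fixes a semisimple $10$-generic $\rhobar$ such that $\sigma_1, \sigma_2 \in W^?(\rhobar,\taubar)$ \emph{and} $\ell(\bf{w}(\rhobar,\tau)_i)\geq 2$ in every embedding (such $\rhobar$ exists by Proposition \ref{typematching} and Table \ref{Intersections}, e.g.\ using a shape with all components in $\{\alpha\beta, \beta\alpha, \ldots\}$). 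This keeps the lattices $\tau^{\sigma_i}$ themselves in play and makes the explicit presentation of $\ovl{R}^{\expl,\nabla}_{\ovl{\fM},\bf{w}(\rhobar,\tau)}$ available for the \emph{same} type $\tau$.

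Your first obstacle --- actually carrying out the Cohen--Macaulay/intersection computation --- is also where the real work is, and you have not done it. To fill it in one needs to write $\mathfrak{p}(\sigma_1)R'_\infty(\tau) = ((x_j)_j+(p))$, $\mathfrak{p}(\sigma_2)R'_\infty(\tau) = ((y_1, x_2, \ldots, x_N)+(p))$ (the symmetric difference of size $2$ is Lemma \ref{lemma:expl4wt}), then use Lemma \ref{lem:supp} applied to $C\defeq\coker(\tau_2+p\tau_1\hookrightarrow \tau_1)$ to show $x_1 y_2\cdots y_N$ annihilates $M'_\infty(C)$, combine with the symmetric containment for $\coker(p^k\tau_1 + p\tau_2 \hookrightarrow \tau_2)$, cancel a factor of $p = x_1 y_1$, and derive from $k>1$ an inclusion $y_2^2\cdots y_N^2 M'_\infty(\sigma_1)\subseteq y_1 y_2\cdots y_N M'_\infty(\sigma_1)$ contradicting freeness of $M'_\infty(\sigma_1)$ over (a power series ring over) $\F[\![y_1,\ldots,y_N]\!]$. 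Your phrase ``intersection-theoretic closeness forces the quotient to be $p$-torsion'' is precisely the conclusion, not a proof of it.
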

\begin{proof}
Let $\tau_{\cS}$ be the collection of tame inertial type such that $\sigma(\tau_{\cS})=R$.
We can and do fix a collection $\rhobar_{\cS}$ of semisimple $10$-generic Galois representation such that if $\tld{w} \defeq \tld{w}(\rhobar_{\cS},\tau_{\cS})$, then $\ell(\tld{w}^*_i)\geq 2$ for all $i\in \cJ$ and $\sigma_1$ and $\sigma_2 \in W^?(\rhobar_\cS,\tau_\cS)$.
(Note that the condition that $R$, hence $\tau_{\cS}$, is $13$-generic guarantees that $\rhobar_{\cS}$ is $10$-generic by the argument of Proposition \ref{telim2}.)
Fix a weak minimal patching functor $M_\infty$ for $\rhobar_\cS$, which exists by Corollary \ref{cor:multipatch}. Recall $R^{\tau_{\tld{v}},\ovl{\beta}_{\tld{v}},\Box}_{\ovl{\fM}_{\tld{v}},{\rhobar_{\tld{v}}}}$ from \cite[Definition 5.10]{LLLM}.

For $i\in\{1,2\}$ define the modules
\begin{align*}
M'_\infty (R^{\sigma_i})&\defeq M_\infty (R^{\sigma_i})\widehat{\otimes}_{\big(\widehat{\otimes}_{\tld{v}\in\cS}R^\Box_{\rhobar_{\tld{v}}}\big)} \Big(\widehat{\otimes}_{\tld{v}\in\cS}R^{\tau_{\tld{v}},\ovl{\beta}_{\tld{v}},\Box}_{\ovl{\fM}_{\tld{v}},\rhobar_{\tld{v}}}\Big)\\
M'_\infty (\sigma_i)&\defeq M_\infty (\sigma_i)\widehat{\otimes}_{\big(\widehat{\otimes}_{\tld{v}\in\cS}R^\Box_{\rhobar_{\tld{v}}}\big)} \Big(\widehat{\otimes}_{\tld{v}\in\cS}R^{\tau_{\tld{v}},\ovl{\beta}_{\tld{v}},\Box}_{\ovl{\fM}_{\tld{v}},\rhobar_{\tld{v}}}\Big).
\end{align*}
Similarly define $R'_\infty(\tau_\cS)$ and $\ovl{R}'_\infty(\tau_\cS)$.
Then $M'_\infty (R^{\sigma_i})$ is $p$-torsion free and maximally Cohen-Macaulay over $R_\infty'(\tau_\cS)$, and generically of rank one, and similarly $M'_\infty (\sigma_i)$ is maximally Cohen-Macaulay over $\ovl{R}_\infty'(\tau_\cS)$.

We recall the setup of Lemma \ref{lemma:expl4wt}.
The ring $R'_\infty(\tau_\cS)$ is formally smooth over $\widehat{\otimes}_{\tld{v}\in\cS} R^{\expl,\nabla}_{\ovl{\fM}_{\tld{v}},\tld{w}_{\tld{v}}}$, where we write $\ovl{\fM}_{\tld{v}}\in Y^{\eta,\tau_{\tld{v}}}(\F)$ for the unique Kisin module corresponding to $\rhobar_{\tld{v}}$ as in Theorem \ref{fixedpoints}.
By letting $N\defeq \sum_{i\in \cJ}(4-\ell(\tld{w}^*_i))$, the latter ring is formally smooth over $R_N\defeq \widehat{\otimes}_{j=1}^N\cO[\![x_j,y_j]\!]/(x_jy_j-p)$ by \cite[\S 5.3.2]{LLLM} using that $\ell(\tld{w}^*_i)>1$ for all $i\in\cJ$.
Fix an isomorphism $R_N[\![t_1,\ldots,t_k]\!] \risom R'_\infty(\tau_\cS)$.
Let $S\subset R_N[\![t_1,\ldots,t_k]\!]$ be the subring $\Z_p[\![(x_j,y_j)_{j=1}^N, (t_j)_{j=1}^k]\!]/(x_jy_j-p)_{j=1}^N$.
If $M'$ is a maximal Cohen--Macaulay $R'_\infty(\tau_\cS)$-module, it is a maximal Cohen--Macaulay $S$-module as well by \cite[Corollaire 5.7.10]{EGAIV} and using that the maximal ideal of $R'_\infty(\tau_\cS)$ is the only prime above the maximal ideal of $S$. The maximality follows from the fact that $R'_\infty(\tau_\cS)$ is finite over $S$.
It is convenient to work over $S$ below since $\Spec S/pS$ is reduced.

For $\sigma \in \JH(\ovl{R})$, $\mathrm{Ann}_{R_\infty'(\tau_\cS)}(M_\infty'(\sigma))$ is $\mathfrak{p}(\sigma)R_\infty'(\tau_\cS)$ by Proposition \ref{prop:identify:cmpt}(\ref{item:patchcomp}).
For $\sigma\in W^?(\rhobar_\cS,\tau_\cS)$, let $\mathfrak{p}(\sigma)$ be $(z_j(\sigma))_{j=1}^N+(\varpi)$ where $z_j(\sigma) \in \{x_j,y_j\}$ for each $1\leq j\leq N$.
Let $\fp_S(\sigma) \subset S$ be the preimage of $\mathfrak{p}(\sigma)R_\infty'(\tau_\cS)$ for all $\sigma \in \JH(\ovl{R})$ so that $\fp_S(\sigma) = (z_j(\sigma))_{j=1}^N+(p)$ and $\supp_S M_\infty'(\sigma) = \Spec (S/\fp_S(\sigma)) $.

For all $1\leq j\leq N$, assume without loss of generality that $z_j(\sigma_1)=x_j$ and let $z_j \defeq z_j(\sigma_2)$.
Then by Lemma \ref{lemma:expl4wt}, $\#(\{x_j\}_j \Delta \{z_j\}_j) = 2$.
We assume without loss of generality that $z_1 = y_1$ and $z_j = x_j$ for $j\neq 1$.
To simplify notation, let $R_i = R^{\sigma_i}$ for $i = 1,\ 2$. 
We fix a chain of saturated inclusion of lattices $p^kR_1 \subseteq R_2\subseteq R_1$ with $k\geq 1$.
Since $R$ is residually multiplicity free, $C \defeq \coker(R_2+pR_1\hookrightarrow R_1)$ does not contain $\sigma_2$ as a Jordan--H\"{o}lder factor (as can be seen from descent to an unramified coefficient ring).
Thus, 
\[
\supp_S M_{\infty}'(C) \subset \bigcup_{\sigma \in \JH(\ovl{R}),\, \sigma \neq \sigma_2} \Spec S/\fp_S(\sigma).
\]
The scheme theoretic support of $M_{\infty}'(C)$ in $\Spec S$ is contained in $\Spec S/pS$ and is thus generically reduced, so that by the proof of Lemma \ref{lem:supp}, the scheme-theoretic support of $M_{\infty}'(C)$ in $\Spec S$ is a closed subscheme of 
\[
\Spec \big(S/ \bigcap_{\sigma \in \JH(\ovl{R}),\, \sigma \neq \sigma_2} \fp_S(\sigma) \big).
\]
Since $x_1 y_2 \cdots y_N\in \fp_S(\sigma)$ for all $\sigma \in \JH(\ovl{R})$ with $\sigma \neq \sigma_2$, $x_1 y_2 \cdots y_N$ annihilates $M_{\infty}'(C)$, or equivalently%
\[x_1 y_2 \cdots y_{N} M'_\infty(R_1) \subset M'_\infty( R_2) + pM'_\infty(R_1).\]
Symmetrically, we have
\[y_1 y_2 \cdots y_{N} M_\infty'(R_2) \subset M_\infty'(p^k R_1) + pM_\infty'(R_2).\]
Combining these, we have 
\begin{align*}
x_1 y_2 \cdots y_{N} y_1 y_2 \cdots y_{N} M_\infty'(R_1) &\subset y_1 y_2 \cdots y_{N} M_\infty'(R_2) + y_1 y_2 \cdots y_{N} pM_\infty'(R_1) \\
&\subset M_\infty'(p^k R_1) + pM_\infty'(R_2) + y_1 y_2 \cdots y_{N} pM_\infty'(R_1).
\end{align*}
Simplifying and canceling $p$, we have
\[y_2^2 \cdots y_{N}^2 M_\infty'(R_1) \subset p^{k-1} M_\infty'(R_1) + M_\infty'(R_2) + y_1 y_2 \cdots y_{N} M_\infty'(R_1).\]
Assume that $k>1$.
Then projecting via $M_\infty'(R_1)\onto M_\infty'(\ovl{R}_1)\onto M_\infty'(\sigma_1)$, we have the inclusion
\[y_2^2 \cdots y_{N}^2 M_\infty'(\sigma_1) \subset y_1 y_2 \cdots y_{N} M_\infty'(\sigma_1).\]
This is a contradiction, since $M_\infty'(\sigma_1)$ is free over (a power series ring over) $\F[\![y_1,\cdots y_{N}]\!]$, being maximal Cohen-Macaulay over it.
\end{proof}

We deduce the following inequality:

\begin{cor}
\label{ineq:sat:gr}
Assume that $R$ is $13$-generic and let $\sigma_1,\ \sigma_2\in \JH(\ovl{R})$. Then $\dgr{\sigma_1}{\sigma_2}\geq \dsat{\sigma_1}{\sigma_2}$.
\end{cor}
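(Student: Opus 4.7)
The plan is to prove the inequality by induction on the integer $d=\dgr{\sigma_1}{\sigma_2}$, reducing the general case to the distance-one case which was handled by Proposition \ref{prop saturation 1}. The case $d=0$ is trivial, and the case $d=1$ is exactly the content of Proposition \ref{prop saturation 1}.

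For the inductive step, the key observation to establish is that one can always find an intermediate weight $\kappa\in \JH(\taubar)$ with $\dgr{\sigma_1}{\kappa}=1$ and $\dgr{\kappa}{\sigma_2}=d-1$. Granting this, the triangle inequality (Lemma \ref{lem:dsat:dist}) combined with the inductive hypothesis and Proposition \ref{prop saturation 1} gives
\[
\dsat{\sigma_1}{\sigma_2}\leq \dsat{\sigma_1}{\kappa}+\dsat{\kappa}{\sigma_2}\leq 1+(d-1)=d=\dgr{\sigma_1}{\sigma_2},
\]
which is the desired conclusion.

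The main point is therefore to produce the intermediate weight $\kappa$ \emph{inside} $\JH(\taubar)$. Since the graph distance $\dgr{\cdot}{\cdot}$ is the product metric over the embeddings (and, in each embedding, is controlled by Proposition \ref{prop:typedecomp} and Definition \ref{df:adj}), it suffices to work embeddingwise, reducing to the case $f=1$. By Proposition \ref{prop:typedecomp}, $\JH(\taubar)$ is labeled by the triangle $\Trns_\mu(t_\nu s(\Sigma))$, and from the explicit depiction of this triangle (Figure \ref{Triangle}) together with Table \ref{TableExtGraph} one checks directly that $\Sigma$ is a geodesically convex subgraph of the extension graph: given two vertices of $\Sigma$ at graph distance $d$, any minimal path between them in the full extension graph passes through other vertices of $\Sigma$, so that in particular one can find a first step $\kappa$ lying in $\Sigma$. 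Assembling these embeddingwise first steps into a single weight $\kappa\in \JH(\taubar)$ provides the intermediate weight needed in the inductive step.

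The principal subtlety, and the only non-formal input, is precisely this geodesic convexity of the triangle $\Sigma$ within the extension graph; all other ingredients are either combinatorial bookkeeping or direct appeals to Proposition \ref{prop saturation 1} and Lemma \ref{lem:dsat:dist}. Since the diameter of $\Sigma$ in a single embedding is small (at most $3$), the verification amounts to a finite case check against Table \ref{TableExtGraph}, and no additional deformation-theoretic input is required beyond what is already encoded in Proposition \ref{prop saturation 1}.
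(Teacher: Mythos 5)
Your proof is correct and takes essentially the same route as the paper's: induction on $\dgr{\sigma_1}{\sigma_2}$, using the triangle inequality for $\dsat{}{}$ (Lemma \ref{lem:dsat:dist}) together with the distance-one case (Proposition \ref{prop saturation 1}), via an intermediate weight $\kappa\in\JH(\taubar)$ on a geodesic. The one place you are more explicit than the paper is in isolating the existence of such a $\kappa$ inside $\JH(\taubar)$ as the geodesic-convexity of the triangle $\Sigma$ within the extension graph; the paper simply asserts the existence of $\kappa$ without comment, so your observation is a legitimate and useful elaboration rather than a deviation.
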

\begin{proof}
The proof proceeds by induction on $\dgr{\sigma_1}{\sigma_2}$, the case $\dgr{\sigma_1}{\sigma_2}=1$ being covered by Proposition \ref{prop saturation 1}.
Pick a weight $\kappa \in \JH(\ovl{R})$ distinct from $\sigma_1$ and $\sigma_2$ such that $\dgr{\sigma_1}{\kappa}+\dgr{\kappa}{\sigma_2}=\dgr{\sigma_1}{\sigma_2}$.
Then we have
\[\dgr{\sigma_1}{\sigma_2}=\dgr{\sigma_1}{\kappa}+\dgr{\kappa}{\sigma_2}\geq\dsat{\sigma_1}{\kappa}+\dsat{\kappa}{\sigma_2}\geq\dsat{\sigma_1}{\sigma_2}.\]
\end{proof}

We conclude this section showing that, under appropriate genericity conditions on the Deligne--Lusztig representation $R$, the graph $\Gamma_{\rad}(\ovl{R}^{\sigma})$ is predicted by the extension graph if its Loewy strata are predicted by the extension graph (which is a weaker assumption, \emph{a priori}).

\begin{lemma}
\label{lem:dyc}
Assume that $R$ is $13$-generic.
Let $\sigma_1,\ \sigma_2\in \JH(\ovl{R})$ be such that $\dgr{\sigma_1}{\sigma_2}=1$. Let us fix $\sigma\in \JH(\ovl{R})$ as well as two saturated inclusions of lattices $R^{\sigma_2}\subseteq R^{\sigma}$ and $R^{\sigma_1}\subseteq R^{\sigma}$.
Then either $R^{\sigma_2}\subseteq R^{\sigma_1}$ or $R^{\sigma_1}\subseteq R^{\sigma_2}$.
\end{lemma}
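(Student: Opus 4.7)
The plan is to reduce the comparability claim to Proposition~\ref{prop saturation 1} via a simple $\varpi$-scaling argument. The underlying observation is that the lattices $\tau^{\sigma_i}$ are defined only up to powers of $\varpi$, and the hypothesis that $\tau^{\sigma_i}\subseteq\tau^\sigma$ is saturated merely pins down one specific representative in each scaling class.

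Concretely, I would first choose $\varpi$-rescalings $\tld{\tau}^{\sigma_1}, \tld{\tau}^{\sigma_2}$ of the given $\tau^{\sigma_1}, \tau^{\sigma_2}$ such that $\tld{\tau}^{\sigma_2}\subseteq\tld{\tau}^{\sigma_1}$ is a saturated inclusion. Since $\dgr{\sigma_1}{\sigma_2}=1$, Proposition~\ref{prop saturation 1} gives $\dsat{\sigma_1}{\sigma_2}=1$, i.e.\ $\varpi\,\tld{\tau}^{\sigma_1}\subseteq\tld{\tau}^{\sigma_2}\subseteq\tld{\tau}^{\sigma_1}$ with both inclusions saturated. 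Writing $\tau^{\sigma_i}=\varpi^{a_i}\tld{\tau}^{\sigma_i}$ for some integers $a_i\in\Z$, the lemma follows by a short case analysis on the sign of $a_2-a_1$: if $a_2\geq a_1$, the inclusion $\tld{\tau}^{\sigma_2}\subseteq\tld{\tau}^{\sigma_1}$ combined with $\varpi^{a_2}\tld{\tau}^{\sigma_1}\subseteq \varpi^{a_1}\tld{\tau}^{\sigma_1}$ yields $\tau^{\sigma_2}\subseteq\tau^{\sigma_1}$, while if $a_2<a_1$, the inclusion $\varpi\,\tld{\tau}^{\sigma_1}\subseteq\tld{\tau}^{\sigma_2}$ combined with $\varpi^{a_1-1}\tld{\tau}^{\sigma_2}\subseteq \varpi^{a_2}\tld{\tau}^{\sigma_2}$ yields $\tau^{\sigma_1}\subseteq\tau^{\sigma_2}$.

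The substantive content sits entirely in Proposition~\ref{prop saturation 1}, which has already been established using the Taylor--Wiles patching machinery and the explicit computations of deformation rings from \cite{LLLM} (see Section~\ref{sec:weights}). Once that input is granted, the argument here is purely formal; in fact it shows that any two lattices $\tau^{\sigma_1},\tau^{\sigma_2}$ with $\dgr{\sigma_1}{\sigma_2}=1$ become comparable after a suitable $\varpi$-normalization, and the auxiliary data $\sigma$, $\tau^\sigma$ enter only to fix that normalization. There is thus no real obstacle to the proof beyond invoking the already-established saturation bound.
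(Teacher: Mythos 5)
Your argument is correct, and it rests on exactly the same key input (Proposition \ref{prop saturation 1}) as the paper's proof; the packaging is slightly different. The paper fixes the scalings via the given saturated inclusions $\tau^{\sigma_i}\subseteq\tau^{\sigma}$, writes $p^{k+1}\tau^{\sigma_1}\subseteq\tau^{\sigma_2}\subseteq p^k\tau^{\sigma_1}$ with both inclusions saturated, and then uses saturation of $\tau^{\sigma_1}\subseteq\tau^{\sigma}$ and $\tau^{\sigma_2}\subseteq\tau^{\sigma}$ to pin down $k\in\{-1,0\}$; either value gives comparability. You instead normalize the pair $(\tau^{\sigma_1},\tau^{\sigma_2})$ directly and observe that, once $\dsat{\sigma_1}{\sigma_2}=1$ gives $\varpi\,\tld\tau^{\sigma_1}\subseteq\tld\tau^{\sigma_2}\subseteq\tld\tau^{\sigma_1}$, \emph{every} rescaling of the two lattices is comparable, regardless of sign. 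That is a genuine (small) sharpening: you have shown that the hypotheses involving $\sigma$ and $\tau^{\sigma}$ are logically superfluous for the bare comparability conclusion — they only serve to specify which representatives are meant, and any representatives would do. The paper's formulation is what it actually needs later (it compares lattices that really are sitting inside a common $\tau^{\sigma}$), but your version makes visible that the comparability is a purely two-lattice consequence of $\dsat{\sigma_1}{\sigma_2}=1$. Both arguments are correct; yours is a hair more elementary and isolates the essential point more cleanly.
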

\begin{proof}
We have $\dgr{\sigma_1}{\sigma_2}=\dsat{\sigma_1}{\sigma_2}=1$ by Proposition \ref{prop saturation 1}.
Hence there exists an integer $k\in \Z$ such that
\[p^{k+1}R^{\sigma_1}\subseteq R^{\sigma_2}\subseteq p^{k}R^{\sigma_1}\]
is chain of saturated inclusions of lattices (see Remark \ref{rmk:unr:coeff}). 
The first inclusion implies that $p^{k+1}R^{\sigma_1}\subseteq R^{\sigma}$ so that $k+1 \geq 0$.
The second inclusion implies that $p^{-k}R^{\sigma_2} \subset R^{\sigma}$ so that $-k \geq 0$.
Hence $k = 0$ or $-1$ and the result follows.
\end{proof}

\begin{lemma}\label{lem:criterion}
Assume that $R$ is $13$-generic.
Let $\sigma,\,\sigma_1,\,\sigma_2\in \JH(\ovl{R})$ and fix saturated inclusions $R^{\sigma_i}\subseteq R^{\sigma}$ for $i=1,2$. 
If $\dcosoc{\sigma}{\sigma_2}\geq\dcosoc{\sigma}{\sigma_1}$ and $\dgr{\sigma_1}{\sigma_2}=1$ then $R^{\sigma_2}\subseteq R^{\sigma_1}$.
\end{lemma}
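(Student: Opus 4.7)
The plan is to apply Lemma \ref{lem:dyc} to obtain a dichotomy on the relative position of $\tau^{\sigma_1}$ and $\tau^{\sigma_2}$ inside $\tau^\sigma$, and then rule out the unwanted alternative using the cosocle-distance hypothesis. Since $\dgr{\sigma_1}{\sigma_2}=1$ and both $\tau^{\sigma_i}\subseteq \tau^\sigma$ are saturated, Lemma \ref{lem:dyc} yields either $\tau^{\sigma_2}\subseteq \tau^{\sigma_1}$ (the desired conclusion) or $\tau^{\sigma_1}\subseteq \tau^{\sigma_2}$. I would then show the second alternative leads to a contradiction.

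Assume $\tau^{\sigma_1}\subseteq \tau^{\sigma_2}$. Factoring the saturated composition $\tau^{\sigma_1}\hookrightarrow \tau^{\sigma_2}\hookrightarrow \tau^\sigma$ and reducing modulo $p$, the reduction of the outer inclusion is injective with image $Q_\sigma(\sigma_2)$ by Lemma \ref{lem mult 1}(2), and the reduction of the composition is likewise injective with image $Q_\sigma(\sigma_1)$. A short diagram chase then forces the reduction $\taubar^{\sigma_1}\to \taubar^{\sigma_2}$ of the inner inclusion to also be injective, identifying $Q_\sigma(\sigma_1)$ with a submodule of $Q_\sigma(\sigma_2)\subseteq \taubar^\sigma$. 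In particular $\sigma_1\in \JH(Q_\sigma(\sigma_2))$, so Lemma \ref{lem length submodule} (applied with its role of $\sigma_1$ played by $\sigma_2$ and its role of $\sigma_2$ played by $\sigma_1$) gives
\[\dcosoc{\sigma}{\sigma_1}\;\geq\;\dcosoc{\sigma}{\sigma_2}+\dcosoc{\sigma_2}{\sigma_1}.\]

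Since $\tau$ is $13$-generic, it is $7$-deep by Lemma \ref{lem:deep:type}, so Corollary \ref{cor:comp:gr:cosoc} yields $\dcosoc{\sigma_2}{\sigma_1}\geq \dgr{\sigma_2}{\sigma_1}=1$. Combining, $\dcosoc{\sigma}{\sigma_1}>\dcosoc{\sigma}{\sigma_2}$, contradicting the hypothesis. The whole argument amounts to distance bookkeeping, with the substantive inputs—Lemma \ref{lem:dyc}, which packages the saturation-distance statement of Proposition \ref{prop saturation 1}, together with the radical-filtration behavior of Lemma \ref{lem length submodule}—already in place; no new technical obstacle arises.
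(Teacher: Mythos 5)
Your proof is correct and follows the paper's own argument exactly: apply Lemma \ref{lem:dyc} to get the dichotomy, assume $\tau^{\sigma_1}\subseteq\tau^{\sigma_2}$, deduce $Q_\sigma(\sigma_1)\subseteq Q_\sigma(\sigma_2)$, and contradict the hypothesis via Lemma \ref{lem length submodule}. One small inaccuracy: the mod-$p$ reductions of the saturated inclusions $\tau^{\sigma_i}\hookrightarrow\tau^\sigma$ are not injective in general (all reductions of lattices in $\tau$ have the same length, so injectivity would force them to be isomorphisms); what you actually need and do obtain is that the image $Q_\sigma(\sigma_1)$ of $\taubar^{\sigma_1}\to\taubar^\sigma$ is contained in the image $Q_\sigma(\sigma_2)$ of $\taubar^{\sigma_2}\to\taubar^\sigma$, which follows directly from the factorization $\taubar^{\sigma_1}\to\taubar^{\sigma_2}\to\taubar^\sigma$ without any injectivity.
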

\begin{proof}
By Lemma \ref{lem:dyc}, either $R^{\sigma_1} \subset R^{\sigma_2}$ or $R^{\sigma_2} \subset R^{\sigma_1}$.
Suppose that the former holds.
Then $Q_\sigma(\sigma_1) \subset Q_\sigma(\sigma_2)$, and thus $\dcosoc{\sigma}{\sigma_1}> \dcosoc{\sigma}{\sigma_2}$ by Lemma \ref{lem length submodule}.
This is a contradiction.
\end{proof}

\begin{defn}
\label{def:prediction}
Assume that $R$ is $2$-generic and let $\sigma\in \JH(\ovl{R})$.
We say that the radical strata of $\ovl{R}^\sigma$ are \emph{predicted by the extension graph} 
if $\dgr{\sigma}{\sigma'} = \dcosoc{\sigma}{\sigma'}$ for all $\sigma' \in \JH(\ovl{R})$.
\end{defn}

\begin{lemma}
\label{lem:simplify:def}
Assume that $R$ is $13$-generic and let $\sigma\in\JH(\ovl{R})$.
If the radical strata of $\ovl{R}^{\sigma}$ are predicted by the extension graph, then $\Gamma_{\rad}(\ovl{R}^\sigma)$ is predicted by the extension graph with respect to $\sigma$.
\end{lemma}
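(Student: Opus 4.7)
The plan is to prove the two directions of the equivalence separately, exploiting that the predicted-strata hypothesis converts cosocle distance into graph distance, and that the extension graph is bipartite.

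First I would unwind the definitions: an edge $\kappa_1\to\kappa_2$ in $\Gamma_{\rad}(\taubar^\sigma)$ means that the unique subquotient of $\taubar^\sigma$ isomorphic to an extension of $\kappa_1$ by $\kappa_2$ is nontrivial and that $\dcosoc{\sigma}{\kappa_2}=\dcosoc{\sigma}{\kappa_1}+1$ (so that the shifted induced radical filtration on this length-two subquotient really has length two). Under the assumption that the radical strata of $\taubar^\sigma$ are predicted by the extension graph, the cosocle-distance condition is equivalent to $\dgr{\sigma}{\kappa_2}=\dgr{\sigma}{\kappa_1}+1$; and the nontrivial-extension condition forces $\Ext^1_{\rG}(\kappa_1,\kappa_2)\neq 0$, which by Lemma \ref{lem:dic} (applicable since $\tau$ is $13$-generic, hence $\taubar$ is $7$-deep by Lemma \ref{lem:deep:type}) forces $\dgr{\kappa_1}{\kappa_2}=1$. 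This handles the ``only if'' direction.

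For the converse, suppose $\dgr{\kappa_1}{\kappa_2}=1$ and $\dgr{\sigma}{\kappa_1}\leq \dgr{\sigma}{\kappa_2}$. Definition \ref{df:adj} shows that an edge of the extension graph changes exactly one alcove coordinate $a_j\in\Z/2\Z$, so the extension graph is bipartite with respect to the parity of the total alcove label. Consequently $\dgr{\sigma}{\kappa_1}$ and $\dgr{\sigma}{\kappa_2}$ have opposite parities, and together with the triangle inequality and our assumption we force $\dgr{\sigma}{\kappa_2}=\dgr{\sigma}{\kappa_1}+1$. Using the predicted radical strata, this translates into the cosocle-distance equality $\dcosoc{\sigma}{\kappa_2}=\dcosoc{\sigma}{\kappa_1}+1$, which will serve as the length-two filtration condition once a nontrivial extension is produced.

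To exhibit such a nontrivial extension inside $\taubar^\sigma$, I would appeal to Lemma \ref{lem:criterion} (whose hypotheses are verified above) to obtain a saturated inclusion $\tau^{\kappa_2}\subseteq\tau^{\kappa_1}$, which induces $Q_{\sigma}(\kappa_2)\subseteq Q_{\sigma}(\kappa_1)\subseteq\taubar^\sigma$ and in particular places $\kappa_2$ in $\JH(\taubar^{\kappa_1})$. The inequality of Lemma \ref{lem length submodule} then gives
\[
1=\dcosoc{\sigma}{\kappa_2}-\dcosoc{\sigma}{\kappa_1}\geq \dcosoc{\kappa_1}{\kappa_2},
\]
so $\dcosoc{\kappa_1}{\kappa_2}=1$ since $\kappa_1\neq\kappa_2$. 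Multiplicity freeness of $\taubar^{\kappa_1}$ then produces, as the appropriate quotient of $\taubar^{\kappa_1}$ by the maximal subobject not containing $\kappa_1$ or $\kappa_2$, a length-two nontrivial extension of $\kappa_1$ by $\kappa_2$; pushing this subquotient through the surjection $\taubar^{\kappa_1}\twoheadrightarrow Q_{\sigma}(\kappa_1)\subseteq \taubar^{\sigma}$ gives the desired nontrivial length-two subquotient of $\taubar^{\sigma}$, establishing $\kappa_1\to\kappa_2$ in $\Gamma_{\rad}(\taubar^\sigma)$.

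The only mild technical point is that the length-two subquotient so produced really is the ``unique'' one featuring in the definition of pointing; this is immediate from the multiplicity-freeness of $\taubar^\sigma$, so no substantive obstacle arises. In short, the hypothesis on radical strata plus bipartiteness of the extension graph reduce the statement to a combination of Lemmas \ref{lem:dic}, \ref{lem length submodule}, and \ref{lem:criterion}.
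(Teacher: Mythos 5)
Your proof is correct and follows essentially the same route as the paper's: for the ``if'' direction you use Lemma \ref{lem:criterion} to obtain the saturated inclusion and hence $\kappa_2\in\JH(Q_\sigma(\kappa_1))$, then Lemma \ref{lem length submodule} to force $\dcosoc{\kappa_1}{\kappa_2}=1$, and then read off the nonsplit extension from the second radical layer of $Q_\sigma(\kappa_1)$, whose cosocle is $\kappa_1$ by Lemma \ref{lem mult 1}. You are somewhat more explicit than the paper on two points (the ``only if'' direction via Lemma \ref{lem:dic}, and the bipartiteness argument showing that $\dgr{\sigma}{\kappa_1}\leq\dgr{\sigma}{\kappa_2}$ together with $\dgr{\kappa_1}{\kappa_2}=1$ actually forces $\dgr{\sigma}{\kappa_2}=\dgr{\sigma}{\kappa_1}+1$), but these are implicit in the paper's setup and are not different in substance.
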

\begin{proof}
Let $\sigma_i,$ and $\sigma_{i+1}$ be elements of $\JH(\ovl{R})$ such that $\dgr{\sigma}{\sigma_i}=i$ and $\dgr{\sigma}{\sigma_{i+1}}=i+1$.
As the radical strata of $\ovl{R}^{\sigma}$ are predicted by the extension graph we have $\dgr{\sigma}{\sigma_i}=\dcosoc{\sigma}{\sigma_i}$ and $\dgr{\sigma}{\sigma_{i+1}}=\dcosoc{\sigma}{\sigma_{i+1}}$.

If $\dgr{\sigma_i}{\sigma_{i+1}}=1$, then $\sigma_{i+1}\in \JH(Q_{\sigma}(\sigma_{i}))$ by Lemma \ref{lem:criterion}.
By Lemma \ref{lem length submodule}, we have 
\[i+1 = \dcosoc{\sigma}{\sigma_{i+1}}\geq \dcosoc{\sigma}{\sigma_i}+\dcosoc{\sigma_i}{\sigma_{i+1}} = i+\dcosoc{\sigma_i}{\sigma_{i+1}}.\]
This implies that $\sigma_{i+1}$ appears in the second layer of the radical filtration of $Q_{\sigma}(\sigma_i)$, whose cosocle is isomorphic to $\sigma_i$ by Lemma \ref{lem mult 1}(\ref{item:cosocle}).
Then there is an edge from $\sigma_i$ to $\sigma_{i+1}$ in $\Gamma_{\rad}(\ovl{R}^\sigma)$.

Conversely, if there is an edge from $\sigma_i$ to $\sigma_{i+1}$ in $\Gamma_{\rad}(\ovl{R}^\sigma)$, then $\dgr{\sigma_i}{\sigma_{i+1}}$ is $1$ by Lemma \ref{lem:dic}.
\end{proof}

\subsubsection{Distance equalities}
\label{sec:DE}
In this subsection, we define when a weight $\sigma\in \JH(\ovl{R})$ is maximally saturated.
Crucially using Lemma \ref{lem:dic}, we show that if $\sigma$ is maximally saturated, then the graph, saturation, and cosocle distances from $\sigma$ are equal, and therefore the structure Theorem \ref{thm structure} holds for $\ovl{R}^{\sigma}$. From now on, \emph{we assume that the Deligne--Lusztig representation $R$ is $13$-generic}.%

From Proposition \ref{prop:typedecomp} and the definition of defect (Definition \ref{df:defc}), it follows that
\begin{equation}
\max\big\{\dgr{\sigma}{\kappa},\  \kappa\in \JH(\ovl{R})\big\}=3f-\Def_R(\sigma).
\end{equation}
Furthermore, if $\Def_R(\sigma)=0$, then there is a unique $\sigma^{\mathrm{op}}\in\JH(\ovl{R})$ which has maximal graph distance from $\sigma$.

\begin{defn}
\label{df:max:sat}
Let $\sigma\in \JH(\ovl{R})$. We say that the weight $\sigma$ is \emph{maximally saturated in $R$} if
the following property holds:
\begin{equation}
\label{eq:max:sat}
\text{If}\ \kappa\in \JH(\ovl{R})\ \text{verifies}\ \dgr{\sigma}{\kappa}=3f-\Def_R(\sigma)\ \text{then}\ 
\dsat{\sigma}{\kappa}=3f-\Def_R(\sigma).
\end{equation}
\end{defn}

The following proposition motivates Definition \ref{df:max:sat}.

\begin{prop}
\label{prop:max:sat}
A weight $\sigma\in \JH(\ovl{R})$ is maximally saturated if and only if $\dgr{\sigma}{\kappa}=\dsat{\sigma}{\kappa}$ for all $\kappa\in \JH(\ovl{R})$.
\end{prop}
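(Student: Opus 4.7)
The backward implication is immediate from Definition \ref{df:max:sat}. For the forward direction, Corollary \ref{ineq:sat:gr} already gives the inequality $\dsat{\sigma}{\kappa}\leq \dgr{\sigma}{\kappa}$ for every $\kappa\in \JH(\taubar)$, so the task is to prove the reverse inequality for an arbitrary $\kappa$, given that it holds for the $\kappa$ of maximal graph distance from $\sigma$. The strategy is to use the metric property of $\dsat$ (Lemma \ref{lem:dsat:dist}) to propagate the maximal-saturation hypothesis from the boundary to all of $\JH(\taubar)$.

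Concretely, fix $\kappa \in \JH(\taubar)$. I will produce a weight $\kappa'\in \JH(\taubar)$ realising the maximal graph distance from $\sigma$ and satisfying
\[
\dgr{\sigma}{\kappa'}=\dgr{\sigma}{\kappa}+\dgr{\kappa}{\kappa'}=3f-\Def_{\tau}(\sigma),
\]
i.e.\ $\kappa$ lies on a graph geodesic from $\sigma$ to $\kappa'$. Granting such a $\kappa'$, maximal saturation of $\sigma$ yields $\dsat{\sigma}{\kappa'}=\dgr{\sigma}{\kappa'}$, and combining the triangle inequality for $\dsat$ with Corollary \ref{ineq:sat:gr} applied to $(\kappa,\kappa')$ produces
\[
\dgr{\sigma}{\kappa'}=\dsat{\sigma}{\kappa'}\leq \dsat{\sigma}{\kappa}+\dsat{\kappa}{\kappa'}\leq \dsat{\sigma}{\kappa}+\dgr{\kappa}{\kappa'}.
\]
Substituting the geodesic additivity of $\dgr$ into the left-hand side yields $\dgr{\sigma}{\kappa}\leq \dsat{\sigma}{\kappa}$, completing the argument.

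It remains to construct $\kappa'$. Since graph adjacency in $\JH(\taubar)$ modifies a single embedding at a time (Definition \ref{df:adj}) and both the graph distance and the defect are additive across the factorisation $\JH(\taubar)\cong \Sigma^f$ of Definition \ref{weightlabel}, the assertion reduces to the corresponding statement on a single factor $\Sigma_0$: given $\sigma_i,\kappa_i \in \Sigma_0$, one can extend a geodesic from $\sigma_i$ to $\kappa_i$ to a geodesic from $\sigma_i$ to some $\kappa'_i\in\Sigma_0$ whose length attains the maximal distance from $\sigma_i$ in $\Sigma_0$ (which is $3$ if $\sigma_i\in \Sigma_0^{\mathrm{obv}}$ and $2$ if $\sigma_i\in \Sigma_0^{\mathrm{inn}}$). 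This is verified by a finite inspection of the nine-vertex graph depicted in Table \ref{TableExtGraph}; assembling the embedding-wise choices then produces the required global $\kappa'$.

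The main conceptual move is the triangle-inequality argument that promotes the hypothesis from the boundary (maximal-distance weights) to the interior. The only routine, but non-automatic, ingredient is the geodesic-extendability verification on $\Sigma_0$; all other tools are already developed in Section \ref{sec:graph} and \S \ref{sub:sub:def:dist}.
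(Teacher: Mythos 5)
Your proof is correct and takes essentially the same route as the paper's: fix a weight at maximal graph distance from $\sigma$ with $\kappa$ on a geodesic to it, then combine the $\dsat$ triangle inequality, $\dsat\leq\dgr$ from Corollary \ref{ineq:sat:gr}, geodesic additivity of $\dgr$, and the maximal-saturation hypothesis at the boundary. The only difference is cosmetic: you make explicit the per-embedding extendability of geodesics (which the paper folds into ``there are weights $\sigma_0,\ldots,\sigma_D$'' and ``it is then easy to see''), and you extract $\dgr{\sigma}{\kappa}\leq\dsat{\sigma}{\kappa}$ directly instead of observing that the equality chain collapses.
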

\begin{proof}
The ``if" part is clear.
Let $\kappa\in \JH(\ovl{R})$ be any weight and write $d\defeq \dgr{\sigma}{\kappa}$ and $D \defeq 3f-\Def_R(\sigma)$.
There are weights $\sigma = \sigma_0,\, \sigma_1,\ldots,\, \sigma_D$ so that $\kappa \in \{\sigma_0,\sigma_1,\ldots,\sigma_D\}\subset \JH(\ovl{R})$, $\dgr{\sigma_i}{\sigma_{i+1}} = 1$ for all $0\leq i \leq D-1$ and $\dgr{\sigma}{\sigma_D} = D$.
It is then easy to see that $\dgr{\sigma_i}{\sigma_{j}} = j-i$ if $0\leq i\leq j \leq D$.
We have the following chain of \emph{a priori} inequalities
\begin{align*}
\dsat{\sigma}{\sigma_{D}}&\leq \dsat{\sigma}{\kappa}+\dsat{\kappa}{\sigma_{D}}\\
&\leq \dgr{\sigma}{\kappa}+\dgr{\kappa}{\sigma_{D}}\\
&=\dgr{\sigma}{\sigma_{D}} \\
&=\dsat{\sigma}{\sigma_{D}},
\end{align*}
using Corollary \ref{ineq:sat:gr} where the last equality holds by assumption.
We conclude that the above inequalities are in fact equalities and that $\dsat{\sigma}{\kappa} = \dgr{\sigma}{\kappa}$.
\end{proof}

The following lemma will be the key in relating the notions of saturation and cosocle distance.

\begin{lemma}
\label{covering lemma}
Let $d< \ell\ell (\ovl{R}^{\sigma})$ and let %
$$
\sigma_d\lar\sigma_{d-1}\lar\dots\lar\sigma_{1}\lar\sigma_0\defeq \sigma
$$
be an extension path in $\Gamma_{\rad}(\ovl{R}^{\sigma})$ \emph{(}note that $\dcosoc{\sigma}{\sigma_i}=i$ for all $i\in\{0,\dots,d\}$\emph{)}.
For each $i\in \{0,\dots,d\}$ let us fix a saturated inclusion $R^{\sigma_i}\subseteq R^{\sigma}$.
Then we have a chain of (saturated) inclusions 
$$
R^{\sigma_{d}}\subseteq R^{\sigma_{d-1}}\subseteq\dots\subseteq
R^{\sigma_{1}}\subseteq R^{\sigma_0}.
$$
\end{lemma}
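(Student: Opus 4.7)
The plan is to reduce the chain inclusion to the consecutive case and apply Lemma \ref{lem:criterion} pair by pair, since all the saturated inclusions $\tau^{\sigma_i}\subseteq \tau^{\sigma}$ have already been fixed. The key point is that an edge in $\Gamma_{\rad}(\taubar^{\sigma})$ records both a non-trivial extension and a strict increase of the cosocle distance, which is exactly what Lemma \ref{lem:criterion} takes as input.

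More precisely, for each $i\in\{0,\dots,d-1\}$ I would first argue that $\dgr{\sigma_i}{\sigma_{i+1}}=1$: the existence of the arrow $\sigma_{i+1}\to\sigma_i$ in the radical filtration gives a length-two subquotient of $\taubar^{\sigma}$ which is a non-trivial extension of $\sigma_i$ by $\sigma_{i+1}$, so $\Ext^1_{\rG}(\sigma_i,\sigma_{i+1})\neq 0$. Since $\tau$ is $13$-generic, all Jordan--H\"older constituents of $\taubar$ are sufficiently deep (by Lemma \ref{lem:deep:type}) for Lemma \ref{lem:dic} to apply, yielding the graph-adjacency. Second, the parenthetical observation in the statement gives $\dcosoc{\sigma}{\sigma_{i+1}} = i+1 > i = \dcosoc{\sigma}{\sigma_i}$. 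With these two inputs, Lemma \ref{lem:criterion}, applied to the triple $(\sigma,\sigma_i,\sigma_{i+1})$ with the fixed saturated inclusions $\tau^{\sigma_i},\tau^{\sigma_{i+1}}\subseteq \tau^{\sigma}$, yields $\tau^{\sigma_{i+1}}\subseteq\tau^{\sigma_i}$. Concatenating these inclusions produces the desired chain.

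It remains to verify that each inclusion $\tau^{\sigma_{i+1}}\subseteq\tau^{\sigma_i}$ is itself saturated (not only their composite into $\tau^{\sigma}$). This is a purely formal check: if $x\in \tau^{\sigma_{i+1}}$ with $x\in p\tau^{\sigma_i}$, then $x\in p\tau^{\sigma}\cap \tau^{\sigma_{i+1}}$, and the latter equals $p\tau^{\sigma_{i+1}}$ by saturation of $\tau^{\sigma_{i+1}}\subseteq\tau^{\sigma}$. Hence $\tau^{\sigma_{i+1}}\cap p\tau^{\sigma_i}=p\tau^{\sigma_{i+1}}$, which is the saturation condition.

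There is no genuine obstacle in the argument; the entire content of the lemma has already been packaged into Lemma \ref{lem:criterion}, which itself rests on Lemma \ref{lem:dyc} and ultimately on the global input Proposition \ref{prop saturation 1}. The role of this lemma in the overall strategy is rather to organize these pairwise inclusions into a chain that can be iterated along an extension path, which will be used in subsequent arguments in \S \ref{sec:DE} to pass between the radical structure of $\taubar^{\sigma}$ and the saturation behavior of the associated lattices.
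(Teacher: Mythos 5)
Your proposal follows exactly the same route as the paper: apply Lemma~\ref{lem:criterion} pairwise along the path, using the distance hypothesis $\dcosoc{\sigma}{\sigma_{i+1}}>\dcosoc{\sigma}{\sigma_i}$ together with graph-adjacency $\dgr{\sigma_i}{\sigma_{i+1}}=1$ (which you correctly extract from Lemma~\ref{lem:dic}), and then concatenate. This is all the paper does, and your elaborations of the two hypotheses are useful and correct.

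One correction to your saturation check. In this paper, ``$L_1\subseteq L_2$ is a saturated inclusion'' of two lattices in the same $E$-space means $L_1\not\subseteq pL_2$ (normalizing the homothety representative), not that $L_2/L_1$ is $p$-torsion-free. Your intermediate claim $p\tau^\sigma\cap\tau^{\sigma_{i+1}}=p\tau^{\sigma_{i+1}}$ is in fact false in general (e.g.\ $\tau^\sigma=\cO^3$, $\tau^{\sigma_i}=\cO^2\oplus p\cO$, $\tau^{\sigma_{i+1}}=\cO\oplus p\cO\oplus p\cO$, with all the ambient saturations holding). The correct and much shorter argument: if $\tau^{\sigma_{i+1}}\subseteq p\tau^{\sigma_i}$, then $\tau^{\sigma_{i+1}}\subseteq p\tau^{\sigma_i}\subseteq p\tau^\sigma$, contradicting saturatedness of $\tau^{\sigma_{i+1}}\subseteq\tau^\sigma$. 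So your conclusion is right, but the supporting computation should be replaced.
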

\begin{proof}
Since $\dcosoc{\sigma}{\sigma_{i+1}} = i+1 \geq i = \dcosoc{\sigma}{\sigma_i}$ for $0 \leq i \leq d-1$, the result follows from Lemma \ref{lem:criterion}.
\end{proof}

We can use Lemma \ref{covering lemma} to show that all three notions of distance from $\sigma$ agree in some particular situations.

\begin{prop}
\label{prop:dcosoc}
If $\sigma \in \JH(\ovl{R})$ is maximally saturated, then $\Gamma_{\rad}(\ovl{R}^\sigma)$ is predicted by the extension graph with respect to $\sigma$.
\end{prop}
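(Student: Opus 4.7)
The plan is to establish $\dcosoc{\sigma}{\sigma'} = \dgr{\sigma}{\sigma'}$ for every $\sigma' \in \JH(\taubar)$, which by Lemma \ref{lem:simplify:def} will imply the desired prediction of $\Gamma_{\rad}(\taubar^{\sigma})$. The inequality $\dgr{\sigma}{\sigma'} \leq \dcosoc{\sigma}{\sigma'}$ is precisely Corollary \ref{cor:comp:gr:cosoc}, which applies because $\tau$ is $13$-generic, hence residually $11$-deep by Lemma \ref{lem:deep:type} (and in particular $7$-deep). The real content is therefore the reverse inequality, and the plan is to obtain it by routing through the saturation distance.

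More precisely, set $d := \dcosoc{\sigma}{\sigma'}$. Using Corollary \ref{cor:path} I would fix an extension path $\sigma = \sigma_0 \to \sigma_1 \to \dots \to \sigma_d = \sigma'$ in $\Gamma_{\rad}(\taubar^{\sigma})$, so that each consecutive pair has a nonzero class in $\Ext^1_{\rG}(\sigma_i,\sigma_{i+1})$ and hence, by Lemma \ref{lem:dic}, satisfies $\dgr{\sigma_i}{\sigma_{i+1}} = 1$. After fixing saturated inclusions $\tau^{\sigma_i} \subseteq \tau^{\sigma}$ for each $i$, Lemma \ref{covering lemma} produces a chain of saturated inclusions $\tau^{\sigma'} \subseteq \tau^{\sigma_{d-1}} \subseteq \dots \subseteq \tau^{\sigma_0} = \tau^{\sigma}$, and Proposition \ref{prop saturation 1} guarantees that $\dsat{\sigma_i}{\sigma_{i+1}} = 1$, i.e.\ $p\,\tau^{\sigma_i} \subseteq \tau^{\sigma_{i+1}}$ is saturated for every $i$. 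Since the composition of saturated inclusions of $\cO$-lattices is saturated (the successive quotient fits in a short exact sequence of $\varpi$-torsion-free modules), composing these $d$ steps yields that $p^{d}\tau^{\sigma} \subseteq \tau^{\sigma'}$ is a saturated inclusion, so $\dsat{\sigma}{\sigma'} = d$.

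To conclude, the hypothesis that $\sigma$ is maximally saturated in $\tau$ together with Proposition \ref{prop:max:sat} identifies $\dsat{\sigma}{\sigma'}$ with $\dgr{\sigma}{\sigma'}$; combined with the previous step this forces $\dcosoc{\sigma}{\sigma'} = d = \dgr{\sigma}{\sigma'}$, as required. The one nontrivial ingredient in the argument is Proposition \ref{prop saturation 1}, whose proof already deploys patching and the intersection theory on tamely potentially crystalline deformation rings; with it in hand the remainder of the plan is a purely formal manipulation of the three distance functions via the covering principle of Lemma \ref{covering lemma}, and I do not foresee any further obstacle.
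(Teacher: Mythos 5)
The reduction via Lemma \ref{lem:simplify:def}, the invocation of Corollary \ref{cor:comp:gr:cosoc} for the inequality $\dgr{\sigma}{\sigma'}\leq\dcosoc{\sigma}{\sigma'}$, and the use of Lemma \ref{covering lemma} and Proposition \ref{prop saturation 1} are all the right ingredients and match the paper's strategy. However, the decisive step of your argument is wrong and, as a consequence, you never obtain the reverse inequality you identify as ``the real content.''

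The claim that ``the composition of saturated inclusions of $\cO$-lattices is saturated'' is false in the sense used here. In this paper, for two lattices $L'\subseteq L$ sitting inside the \emph{same} $E$-vector space $\tau$, a saturated inclusion means $L'\not\subseteq pL$; the quotients $L/L'$ are typically killed by a power of $p$ and are certainly never $\varpi$-torsion-free, so your parenthetical justification does not apply. Concretely, saturation distance is only a metric (Lemma \ref{lem:dsat:dist}), and the triangle inequality it satisfies can be strict: with $L_0=\cO^3$, $L_1=\langle e_1,e_2,pe_3\rangle$, $L_2=\langle pe_1,e_2,pe_3\rangle$ one checks that $p L_0\subseteq L_1$, $pL_1\subseteq L_2$, and $pL_0\subseteq L_2$ are each saturated, so the one-step saturation distances are $1,1$, but $\dsat{L_0}{L_2}=1$, not $2$. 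From your chain you only get $p^d\tau^\sigma\subseteq\tau^{\sigma'}$ as an inclusion, i.e.\ $\dsat{\sigma}{\sigma'}\leq d=\dcosoc{\sigma}{\sigma'}$. Combining this with maximal saturation yields $\dgr{\sigma}{\sigma'}=\dsat{\sigma}{\sigma'}\leq\dcosoc{\sigma}{\sigma'}$, which is merely the inequality you already had from Corollary \ref{cor:comp:gr:cosoc}; the needed bound $\dcosoc{\sigma}{\sigma'}\leq\dgr{\sigma}{\sigma'}$ is not established.

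The paper's proof is a genuine argument by contradiction that exploits the bipartite structure of the extension graph, which your approach does not use. Assuming $\dcosoc{\sigma}{\sigma'}=k>d=\dgr{\sigma}{\sigma'}$, one takes an extension path $\sigma'=\sigma_k\leftarrow\cdots\leftarrow\sigma_0=\sigma$ in $\Gamma_{\rad}(\taubar^\sigma)$ and, since $\dgr{\sigma}{\sigma_i}\leq i$ along the path while the endpoint satisfies $\dgr{\sigma}{\sigma_k}=d<k$, locates an index $i$ with $\dgr{\sigma}{\sigma_i}=i$ but $\dgr{\sigma}{\sigma_{i+1}}<i+1$; bipartiteness forces $\dgr{\sigma}{\sigma_{i+1}}\leq i-1$. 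Maximal saturation then gives $p^{i-1}\tau^\sigma\subseteq\tau^{\sigma_{i+1}}$, and the chain of saturated inclusions from Lemma \ref{covering lemma} passes this through to $p^{i-1}\tau^\sigma\subseteq\tau^{\sigma_i}\subseteq\tau^\sigma$, contradicting $\dgr{\sigma}{\sigma_i}=\dsat{\sigma}{\sigma_i}=i$. This retrograde-step argument is the missing idea: it is precisely what converts membership in the radical filtration into a saturation constraint without needing the (false) additivity of saturation distance along a chain.
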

\begin{proof}
Assume that $\sigma$ is maximally saturated.
By Lemma \ref{lem:simplify:def}, it suffices to show that $\dgr{\sigma}{\sigma'}=\dcosoc{\sigma}{\sigma'}$ for all $\sigma'\in \JH(\ovl{R})$.
Let $\sigma' \in \JH(\ovl{R})$ and $d = \dgr{\sigma}{\sigma'}$.
We have that $\Hom_{\rG}(\sigma', \gr^{k}(\ovl{R}^{\sigma}))\neq 0$ for some $k\geq d$ by Corollary \ref{cor:comp:gr:cosoc}. Assume for the sake of contradiction that $k>d$.
By the definition of the radical filtration we may, and do, fix an extension path of length $k+1$ in $\Gamma_{\rad}(\ovl{R}^{\sigma})$:
\[ \sigma' = \sigma_{k}\lar\sigma_{k-1}\lar\dots\lar\sigma_{1}\lar\sigma_{0} = \sigma. \]
Since $\dgr{\sigma_0}{\sigma_{k}}=d<k$ and $\dgr{\sigma_0}{\sigma_i}\le \dcosoc{\sigma_0}{\sigma_i} = i$ for all $i$ by Corollary \ref{cor:comp:gr:cosoc}, there exists an index $i\in\{0,\dots,k-1\}$ such that
\begin{enumerate} 
	\item \label{cnd1} $\dgr{\sigma_0}{\sigma_{i}}=i$ and
	\item \label{cnd2} $\dgr{\sigma_0}{\sigma_{i+1}}<i+1$.
\end{enumerate}
Moreover, since the extension graph is bipartite, we have that $\dgr{\sigma_0}{\sigma_{i+1}}\leq i-1$. By Lemma \ref{covering lemma}, there is a chain of  saturated inclusions:
$$
R^{\sigma_{i+1}}\subseteq R^{\sigma_i}\subseteq \dots\subseteq 
R^{\sigma_{1}}\subseteq R^{\sigma}
$$ 
where, in particular, $R^{\sigma_{i}}\subseteq R^{\sigma}$ is saturated as well.
As $\dgr{\sigma_0}{\sigma_{i+1}}=\dsat{\sigma_0}{\sigma_{i+1}}$ by Proposition \ref{prop:max:sat}, we further have $p^{i-1}R^{\sigma}\subseteq R^{\sigma_{i+1}}$.
We conclude that $p^{i-1}R^{\sigma}\subseteq R^{\sigma_i}\subseteq R^{\sigma}$, and hence that $\dsat{\sigma_0}{\sigma_i} \leq i-1$.
This contradicts (\ref{cnd1}) since $\dgr{\sigma_0}{\sigma_{i}} = \dsat{\sigma_0}{\sigma_i}$ by Proposition \ref{prop:max:sat}.
\end{proof}

We now use Theorem \ref{thm:soc:simple} to prove that lower alcove weights of defect zero are maximally saturated.

\begin{prop} 
\label{cor:max:sat1}
Let $R$ be a $13$-generic Deligne--Lusztig representation and let $\sigma\in \JH(\ovl{R})$ be a constituent with $\Def_R(\sigma)=0$. 
Assume that $\sigma\cong F(\lambda)$ where $\lambda\in \ X_1(\un{T})$ is in alcove $\un{A}$.
Then $\sigma$ is maximally saturated in $\ovl{R}$.
\end{prop}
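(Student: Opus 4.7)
The plan is to reduce the statement to the case of the unique weight $\sigma^{\op}\in\JH(\taubar)$ with $\dgr{\sigma}{\sigma^{\op}}=3f$ (which exists and is unique since $\Def_\tau(\sigma)=0$), and show that $\dsat{\sigma}{\sigma^{\op}}=3f$ by exhibiting an explicit chain of lattices arising from the radical filtration of $\taubar^{\sigma}$. The starting point is that Theorem \ref{thm:soc:simple} is applicable: since $\sigma$ is a lower-alcove weight with $\Def_\tau(\sigma)=0$, items (\ref{item:loewy})--(\ref{item:rigid}) of Theorem \ref{thm structure} hold for $\taubar^{\sigma}$. In particular, $\dcosoc{\sigma}{\kappa}=\dgr{\sigma}{\kappa}$ for every $\kappa\in\JH(\taubar)$, so $\dcosoc{\sigma}{\sigma^{\op}}=3f$ and $\Gamma_{\rad}(\taubar^{\sigma})$ is predicted by the extension graph with respect to $\sigma$.

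By Corollary \ref{cor:path} applied to $M=\taubar^{\sigma}$ and the weight $\sigma^{\op}$, there exists an extension path in the radical filtration
\[
\sigma=\sigma_0\longleftarrow\sigma_1\longleftarrow\cdots\longleftarrow\sigma_{3f}=\sigma^{\op}
\]
of length $3f$. By Theorem \ref{thm structure}(\ref{item:radpt}) (or directly by Lemma \ref{lem:dic} applied to each consecutive pair), each edge satisfies $\dgr{\sigma_i}{\sigma_{i+1}}=1$ and $\dgr{\sigma}{\sigma_i}=i$. Fix, for each $i$, a saturated inclusion $\tau^{\sigma_i}\subseteq \tau^{\sigma}$. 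Lemma \ref{covering lemma} then arranges these lattices into a chain of saturated inclusions
\[
\tau^{\sigma^{\op}}=\tau^{\sigma_{3f}}\subseteq\tau^{\sigma_{3f-1}}\subseteq\cdots\subseteq \tau^{\sigma_1}\subseteq\tau^{\sigma_0}=\tau^{\sigma}.
\]

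The crucial input is now Proposition \ref{prop saturation 1}, which, applied to each consecutive pair (of graph-distance one), yields $\dsat{\sigma_i}{\sigma_{i+1}}=1$, i.e.~$p\,\tau^{\sigma_i}\subseteq \tau^{\sigma_{i+1}}$ is a saturated inclusion. Multiplying this inclusion by $p^{3f-i-1}$ (which is injective, as the lattices are $p$-torsion free) we obtain a saturated inclusion $p^{3f-i}\tau^{\sigma_i}\subseteq p^{3f-i-1}\tau^{\sigma_{i+1}}$. Concatenating for $i=0,\dots,3f-1$ produces the chain
\[
p^{3f}\tau^{\sigma}\subseteq p^{3f-1}\tau^{\sigma_1}\subseteq\cdots\subseteq p\,\tau^{\sigma_{3f-1}}\subseteq \tau^{\sigma^{\op}},
\]
each step being saturated. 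Since a composition of two saturated inclusions $A\subseteq B\subseteq C$ of $\cO$-lattices is saturated (if $pc\in A$ for some $c\in C$, then $pc\in B$ forces $c\in B$ as $C/B$ is $p$-torsion free, and then $pc\in A$ with $c\in B$ forces $c\in A$ as $B/A$ is $p$-torsion free), we conclude that $p^{3f}\tau^{\sigma}\subseteq\tau^{\sigma^{\op}}$ is saturated, that is $\dsat{\sigma}{\sigma^{\op}}=3f=\dgr{\sigma}{\sigma^{\op}}$. Because $\sigma^{\op}$ is the unique weight at maximal graph distance $3f=3f-\Def_\tau(\sigma)$, Definition \ref{df:max:sat} is verified and $\sigma$ is maximally saturated in $\tau$.

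The proof is essentially an assembly of prior results, so there is no single hard step; the key conceptual move is recognizing that Theorem \ref{thm:soc:simple} controls $\dcosoc{\sigma}{\cdot}$ well enough to produce a radical extension path of maximal length, at which point Proposition \ref{prop saturation 1} and Lemma \ref{covering lemma} convert this path into the desired saturation statement. The only step requiring a small verification is the elementary fact that saturated lattice inclusions compose to saturated inclusions, used in the final concatenation.
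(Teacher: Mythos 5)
Your argument has a genuine gap at the very last step: the assertion that a composite of two saturated inclusions of lattices is itself saturated is \emph{false}, and your parenthetical justification relies on an incorrect reading of what a saturated inclusion of lattices means here. In this paper, for two $\cO$-lattices $A\subseteq B$ spanning the same finite-dimensional $E$-vector space, the inclusion is \emph{saturated} precisely when $A\not\subseteq pB$, i.e.\ when $A$ is the largest member of its homothety class contained in $B$; the quotient $B/A$ is a finite torsion $\cO$-module, so it is never $p$-torsion-free unless $A=B$, and the chain of implications in your parenthetical does not apply. A concrete counterexample to the composition claim: with $A=p\cO e_1\oplus p^2\cO e_2\subseteq B=p\cO e_1\oplus \cO e_2\subseteq C=\cO e_1\oplus \cO e_2$, one checks that $A\not\subseteq pB$ and $B\not\subseteq pC$, yet $A\subseteq pC$, so $A\subseteq C$ is \emph{not} saturated. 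Consequently your concatenated chain $p^{3f}\tau^{\sigma}\subseteq p^{3f-1}\tau^{\sigma_1}\subseteq\cdots\subseteq\tau^{\sigma^{\op}}$ only yields $\dsat{\sigma}{\sigma^{\op}}\leq 3f$, which is already provided by Corollary \ref{ineq:sat:gr}; it gives no lower bound.

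The missing ingredient, and the mechanism the paper actually uses, is a \emph{second} chain obtained by duality. Applying Theorem \ref{thm:soc:simple} to the dual type $\tau^*$ and the weight $\sigma^\vee$, and transporting back via Proposition \ref{prop:dualgraph}, one learns that $\Gamma_{\rad}(\taubar^{\sigma^{\op}})$ is also predicted by the extension graph. The \emph{reversed} path $\sigma_0\leftarrow\sigma_1\leftarrow\cdots\leftarrow\sigma_{3f}=\sigma^{\op}$ is then an extension path in $\Gamma_{\rad}(\taubar^{\sigma^{\op}})$, and Lemma \ref{covering lemma}, applied with $\sigma^{\op}$ as the base point and with $n_i\defeq\dsat{\sigma_i}{\sigma^{\op}}$, produces the chain
\[
p^{n_0}\tau^{\sigma}\subseteq p^{n_1}\tau^{\sigma_1}\subseteq\cdots\subseteq p^{n_{3f-1}}\tau^{\sigma_{3f-1}}\subseteq\tau^{\sigma^{\op}}.
\]
Each step of this second chain, combined with the strict inclusions $\tau^{\sigma_{i+1}}\subsetneq\tau^{\sigma_i}$ from the first chain, forces the strict decrease $n_i>n_{i+1}$; together with the upper bound $n_i\leq 3f-i$ of Corollary \ref{ineq:sat:gr} this pins down $n_i=3f-i$ for all $i$, in particular $n_0=3f$. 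The crucial point is that the second chain is \emph{not} the rescaling of the first one by powers of $p$ (as in your argument), but is an a priori independent, tight chain whose existence is guaranteed only because the structure of $\taubar^{\sigma^{\op}}$ is known -- and that is precisely what the duality supplies.
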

\begin{proof}
By Theorem \ref{thm:soc:simple}, we can and do fix an extension path in $\Gamma_{\rad}(\ovl{R}^{\sigma})$ with starting point $\sigma_0\defeq \sigma$: $\sigma^{\op}=\sigma_{3f}\lar\sigma_{3f-1}\lar\dots\lar\sigma_{1}\lar\sigma_0$. By Lemma \ref{covering lemma} we have a sequence of saturated inclusions:
\begin{equation*}
R^{\sigma}\supseteq R^{\sigma_1}\supseteq\dots\supseteq R^{\sigma_{3f-1}}\supseteq R^{\sigma^{\op}}
\end{equation*}
where $R^{\sigma^{\op}}\subseteq R^{\sigma}$ is itself saturated.
For each $0 \leq i\leq 3f-1$, let $n_i$ be $\dsat{\sigma_i}{\sigma^{\op}}$. 
It suffices to show that $n_0 = \dgr{\sigma_0}{\sigma^{\op}} = 3f$.

By Theorem \ref{thm:soc:simple}, the reduction of the lattice in the dual Deligne--Lusztig representation $R^*$ with cosocle $\sigma^\vee$ is rigid and $\Gamma(\ovl{\big(R^*\big)}^{\sigma^\vee})$ is predicted by the extension graph with respect to $\sigma^\vee$.
Noting that the reduction of the dual of a lattice is the dual of the reduction of a lattice and using Proposition \ref{prop:dualgraph}, we see that $\Gamma_\rad(\ovl{R}^{\sigma^{\op}})$ is predicted by the extension graph with respect to $\sigma^{\op}$.
In particular, $\sigma_0\lar\sigma_1\lar\dots\lar\sigma_{3f-1}\lar{\sigma_{3f}}=\sigma^{\op}$ is an extension path in $\Gamma_{\rad}(\ovl{R}^{\sigma})$ and hence, from Lemma \ref{covering lemma}  we deduce a chain of (saturated) inclusions
\begin{equation*}
R^{\sigma^{\op}}\supseteq p^{n_{3f-1}}R^{\sigma_{3f-1}}\supseteq\dots\supseteq p^{n_1}R^{\sigma_{1}}\supseteq p^{n_0}R^{\sigma}
\end{equation*}
where we necessarily have $n_{i-1}>n_i$ for all $i$ (as $R^{\sigma_{i+1}}\subseteq R^{\sigma_{i}}$ is saturated). In particular $n_0\geq 3f$.
On the other hand, Corollary \ref{ineq:sat:gr} implies that $n_0\leq 3f$, so that $n_0=3f$. 
\end{proof}

\subsubsection{Induction on defect}
\label{sub:red:MS}

In this subsection, we show inductively that all weights are maximally saturated, starting from lower alcove weights as in Proposition \ref{cor:max:sat1}.
We conclude the section with the proof of Theorem \ref{thm structure}.
We first start with the defect zero case.

\begin{lemma} \label{lem:propagate1}
Let $R$ be a $13$-generic Deligne--Lusztig representation.
If $\sigma\in \JH(\ovl{R})$ and $\Def_R(\sigma)=0$, then $\sigma$ is maximally saturated.
\end{lemma}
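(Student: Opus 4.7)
The strategy is to reduce to Proposition \ref{cor:max:sat1}, which handles the special case where $\sigma$ is lower alcove in every embedding. By Proposition \ref{prop:max:sat}, it suffices to verify that $\dsat{\sigma}{\sigma^{\op}}=3f$, where $\sigma^{\op}$ denotes the unique weight in $\JH(\taubar)$ at maximal graph distance $3f$ from $\sigma$ (the uniqueness follows from $\Def_\tau(\sigma)=0$). The combinatorial analysis of \S \ref{sec:weights} (in particular the description of obvious defect-zero weights in Table \ref{TableExtGraph}) shows that the involution $\sigma \mapsto \sigma^{\op}$ flips the alcove label $a_i$ in each embedding, so that one of $\sigma,\sigma^{\op}$ is lower alcove in exactly the embeddings where the other is upper alcove.

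The two extreme cases follow at once. If $\sigma$ is lower alcove in every embedding, then Proposition \ref{cor:max:sat1} applies directly and yields $\dsat{\sigma}{\sigma^{\op}}=3f$. If instead $\sigma$ is upper alcove in every embedding, then $\sigma^{\op}$ is lower alcove in every embedding; applying Proposition \ref{cor:max:sat1} to $\sigma^{\op}$ gives $\dsat{\sigma^{\op}}{\sigma}=3f$, and the conclusion follows from the symmetry of $\dsat$ (Lemma \ref{lem:dsat:dist}).

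The main obstacle is the mixed case, where $\sigma$ is lower alcove in some embeddings and upper alcove in others (so that $\sigma^{\op}$ is mixed as well, and Proposition \ref{cor:max:sat1} cannot be applied to either). The plan is to pick an auxiliary lower-alcove defect-zero weight $\kappa\in\JH(\taubar)$ and exploit that, by Proposition \ref{cor:max:sat1} combined with Proposition \ref{prop:max:sat}, one has $\dsat{\kappa}{\kappa'}=\dgr{\kappa}{\kappa'}$ for every $\kappa'\in \JH(\taubar)$. In particular, Proposition \ref{prop:dcosoc} applied to $\kappa$ gives that $\Gamma_{\rad}(\taubar^\kappa)$ is predicted by the extension graph with respect to $\kappa$, and by Lemma \ref{covering lemma} this furnishes chains of saturated inclusions along any extension path in $\Gamma_{\rad}(\taubar^\kappa)$. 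The idea is then to run the argument of Proposition \ref{cor:max:sat1} simultaneously for several well-chosen $\kappa$'s — e.g. a $\kappa$ closest to $\sigma$ and its opposite $\kappa^{\op}$ closest to $\sigma^{\op}$ — assemble the resulting saturated chains into an extension path in $\Gamma_{\rad}(\taubar^\sigma)$ of length $3f$ from $\sigma$ to $\sigma^{\op}$, and conclude by a second application of Lemma \ref{covering lemma}. The subtlety is that a naive triangle-inequality bound via a single $\kappa$ falls short (by roughly twice the number of embeddings where $\sigma$ is upper alcove, due to the bounded diameter of the per-embedding extension graph on $\Sigma_0^{\mathrm{obv}}$), so the combination of chains from opposite ends of the graph is essential.
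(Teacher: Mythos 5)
The reduction via Proposition \ref{prop:max:sat} to showing $\dsat{\sigma}{\sigma^{\op}}=3f$, and the dispatch of the two extreme cases (lower alcove everywhere directly via Proposition \ref{cor:max:sat1}, and upper alcove everywhere by applying it to $\sigma^{\op}$ and symmetry of $\dsat$), are correct. But the mixed case is not actually handled: you explicitly acknowledge that a triangle-inequality argument with a single auxiliary $\kappa$ falls short by about $2\cdot\#\{\text{upper alcove embeddings}\}$, and the proposed remedy --- ``run the argument of Proposition \ref{cor:max:sat1} simultaneously for several well-chosen $\kappa$'s'' and ``assemble the resulting saturated chains'' --- is a gesture, not an argument. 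Combining chains from $\kappa$ near $\sigma$ and $\kappa'$ near $\sigma^{\op}$ does not obviously produce a single length-$3f$ extension path in $\Gamma_{\rad}(\taubar^\sigma)$, because we have no direct control on $\Gamma_{\rad}(\taubar^\sigma)$ (establishing that control is the whole point of the lemma). So there is a genuine gap.

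The paper closes this gap with a different mechanism that you should compare against: a \emph{propagation} argument across adjacent defect-zero weights. One first proves the stepping stone: if $\sigma_0,\sigma_1\in\JH(\taubar)$ both have defect zero, $\dgr{\sigma_0}{\sigma_1}=1$, and $\sigma_0$ is already known to be maximally saturated, then $\sigma_1$ is also maximally saturated. The lemma then follows by starting from any lower-alcove defect-zero weight (Proposition \ref{cor:max:sat1}) and inducting along a path of adjacent defect-zero weights reaching $\sigma$. The technical content of the single propagation step is exactly what your ``combine chains'' instinct is straining toward, but localized to a single adjacency: since $\sigma_0$ is maximally saturated, Proposition \ref{prop:dcosoc} makes $\Gamma_{\rad}(\taubar^{\sigma_0})$ (and by duality $\Gamma_{\rad}(\taubar^{\sigma_0^{\op}})$) predicted by the extension graph; one then chooses two extension paths from $\sigma_0$ to $\sigma_0^{\op}$, one through $\sigma_1$ and one through $\sigma_1^{\op}$, and feeds both into Lemma \ref{covering lemma} to obtain a large commutative diagram of saturated inclusions (diagram (\ref{big graph})). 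From this diagram one reads off that $p\tau^{\sigma_1^{\op}}\subseteq\tau^{\sigma_1}$ and $p^{3f}\tau^{\sigma_1}\subseteq p\tau^{\sigma_1^{\op}}$ are both saturated, giving $\dsat{\sigma_1}{\sigma_1^{\op}}=3f$. Working one adjacent step at a time is precisely what prevents the diameter-of-$\Sigma_0^{\mathrm{obv}}$ loss that defeats the single-$\kappa$ triangle bound you computed.
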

\begin{proof}
We claim that if $\sigma \defeq \sigma_0,\ \sigma_1 \in \JH(\ovl{R})$ such that $\Def_R(\sigma_0)=0$, $\Def_R(\sigma_1)=0$, $\dgr{\sigma_0}{\sigma_1} = 1$, and $\sigma_0$ is maximally saturated, then $\sigma_1$ is maximally saturated.
The result then follows from Proposition \ref{cor:max:sat1} and an easy induction argument.

By Proposition \ref{prop:dcosoc}, the graph $\Gamma_{\rad}(\ovl{R}^{\sigma})$ is predicted by the extension graph. 
By duality (cf.~ the proof of Proposition \ref{cor:max:sat1}), the graph $\Gamma_{\rad}\big(\ovl{R}^{\sigma^{\op}}\big)$ is also predicted by the extension graph.
Hence we may and do choose two extension paths, of starting point $\sigma$, in the graph $\Gamma_{\rad}(\ovl{R}^{\sigma_0})$ having the form:
$\sigma_0^{\op}\lar\sigma'_{3f-1}\lar\dots\lar\sigma_2'\lar\sigma_1\lar\sigma_0$ and 
$\sigma_0^{\op}\lar\sigma_{1}^{\op}\lar\sigma''_{3f-2}\lar\dots\lar\sigma''_1\lar\sigma_0$
where $\sigma_{i}',\ \sigma''_{i}\in \JH(\ovl{R}^{\sigma})$. As the graph $\Gamma_{\rad}(\ovl{R}^{\sigma^{\op}})$ is predicted by the extension graph, the extension paths above induce extensions paths in $\ovl{R}^{\sigma^{\op}}$ by ``reversing the arrows and the endpoints''.
Let us fix saturated inclusions of lattices $R^{\sigma_i'}, R^{\sigma''_i}\subseteq R^{\sigma}$, $R^{\sigma_1}, R^{\sigma_1^{\op}}\subseteq R^{\sigma^{\op}}$.
Since $\sigma$ and $\sigma^{\op}$ are maximally saturated, we deduce that $p^{3f-i}R^{\sigma_i'}, p^{3f-i}R^{\sigma''_i}\subseteq R^{\sigma^{\op}}$, $p^{3f-1}R^{\sigma_1}, pR^{\sigma_1^{\op}}\subseteq R^{\sigma^{\op}}$ are saturated inclusions as well.

By Lemma \ref{covering lemma} we deduce the following chain of saturated inclusions:
\begin{equation}
\label{big graph}
\xymatrix@=1pc{
&&p^{3f-1}R^{\sigma_1}\ar@{}[r]|-*[@]{\subseteq} 
&%
\dots\ar@{}[r]|-*[@]{\subseteq}
&pR^{\sigma_{3f-1}'}\ar@{}[rd]|-*[@]{\subseteq}
&
&
R^{\sigma_{3f-1}'}\ar@{}[r]|-*[@]{\subseteq} 
&\dots\ar@{}[r]|-*[@]{\subseteq}
&R^{\sigma_{1}}\ar@{}[rd]|-*[@]{\subseteq}
&
\\
p^{3f}R^{\sigma_1}\ar@{}[r]|-*[@]{\subseteq}&p^{3f}R^{\sigma}\ar@{}[ru]|-*[@]{\subseteq}\ar@{}[rd]|-*[@]{\subseteq}
&&&&R^{\sigma^{\op}}\ar@{}[ur]|-*[@]{\subseteq}\ar@{}[rd]|-*[@]{\subseteq}
&&&&R^{\sigma}
  \\
&&p^{3f-1}R^{\sigma''_{1}}\ar@{}[r]|-*[@]{\subseteq} 
&%
\dots\ar@{}[r]|-*[@]{\subseteq}
&pR^{\sigma_{1}^{\op}}\ar@{}[ru]|-*[@]{\subseteq}
&
&R^{\sigma_1^{\op}}\ar@{}[r]|-*[@]{\subseteq} 
&\dots\ar@{}[r]|-*[@]{\subseteq}
&R^{\sigma''_{1}}\ar@{}[ru]|-*[@]{\subseteq}
&
}
\end{equation}
We claim that the inclusions $p^{3f}R^{\sigma_1}\subseteq pR^{\sigma_1^{\op}}$ and 
$pR^{\sigma_1^{\op}}\subseteq R^{\sigma_1}$, obtained by composing the saturated inclusions above, are saturated. %

We first show that $p^{3f}R^{\sigma_1}\subseteq pR^{\sigma_1^{\op}}$ is saturated.
If not, then we obtain a chain of saturated inclusion $p^{3f-1}R^{\sigma_1}\subseteq pR^{\sigma_1^{\op}}\subseteq R^{\sigma^{\op}}$. 
We deduce that $\sigma_1$ is a constituent in image  $Q_{\sigma^{\op}}(\sigma_1^{\op})$ and hence $\dcosoc{\sigma^{\op}}{\sigma_1}\geq \dcosoc{\sigma^{\op}}
{\sigma_1^{\op}}+\dcosoc{\sigma_1^{\op}}{\sigma_1}\geq \dgr{\sigma^{\op}}
{\sigma_1^{\op}}+\dgr{\sigma_1^{\op}}{\sigma_1}\geq 3f+1$ by Corollary \ref{cor:comp:gr:cosoc}
 and Lemma \ref{lem length submodule}.
On the other hand, as the graph $\Gamma_{\rad}\big(\ovl{R}^{\sigma^{\op}_0}\big)$ is predicted by the extension graph we have that $\dcosoc{\sigma^{\op}}{\sigma_1}=\dgr{\sigma^{\op}}{\sigma_1}=3f-1$ contradiction.

The evident analogue of the previous argument shows that $pR^{\sigma_1^{\op}}\subseteq R^{\sigma_1}$ is saturated as well. Hence $\dsat{\sigma_1}{\sigma_1^{\op}}=3f$.
\end{proof}

We now give the induction argument.

\begin{prop}
\label{prop step 3}
Let $R$ be a $13$-generic Deligne--Lusztig representation.
Then any constituent $\sigma\in \JH(\ovl{R})$ is maximally saturated in $\ovl{R}$.
\end{prop}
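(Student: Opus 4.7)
The plan is to prove the proposition by induction on the defect $\delta \defeq \Def_\tau(\sigma)$. The base case $\delta = 0$ is exactly Lemma \ref{lem:propagate1}. For the inductive step we will assume that every $\sigma'' \in \JH(\taubar)$ with $\Def_\tau(\sigma'') < \delta$ is maximally saturated. By Proposition \ref{prop:max:sat} this gives us $\dsat{\sigma''}{\kappa} = \dgr{\sigma''}{\kappa}$ for all $\kappa \in \JH(\taubar)$, and by Proposition \ref{prop:dcosoc} the graph $\Gamma_{\rad}(\taubar^{\sigma''})$ is predicted by the extension graph with respect to $\sigma''$.

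Let $\sigma \in \JH(\taubar)$ have defect $\delta \geq 1$. By Proposition \ref{prop:max:sat}, it suffices to show $\dsat{\sigma}{\kappa} = D$ for every $\kappa \in \JH(\taubar)$ realising $\dgr{\sigma}{\kappa} = D \defeq 3f - \delta$. I would fix such a $\kappa$ and use the adjacency structure recorded in Table \ref{TableExtGraph} and Definition \ref{obvweights}, which shows that for every inner position $i$ of $\sigma$, every neighbour of $\sigma_i$ in $\Sigma_0$ lies in $\Sigma_0^{\mathrm{obv}}$. In particular, one can find $\sigma' \in \JH(\taubar)$ differing from $\sigma$ only at a chosen inner embedding $i$, with $\sigma'_i \in \Sigma_0^{\mathrm{obv}}$; then $\Def_\tau(\sigma') = \delta - 1$, and $\dsat{\sigma}{\sigma'} = 1$ by Proposition \ref{prop saturation 1}.

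Exploiting the inductive hypothesis, we deduce the maximal saturation of $\sigma$ following the schema of the proof of Lemma \ref{lem:propagate1}. Concretely, by Proposition \ref{prop:dcosoc} applied to $\sigma'$, I would choose two extension paths in $\Gamma_{\rad}(\taubar^{\sigma'})$: one passing through $\sigma$ and terminating at an appropriate weight at maximal graph distance from $\sigma'$, another passing through $\kappa$ and terminating at a paired maximally distant weight. Dualising along the lines of Proposition \ref{prop:dualgraph} produces companion paths in the radical filtration of a lattice in the dual type. Lemma \ref{covering lemma} converts each such extension path into a chain of saturated lattice inclusions, and assembling these chains into a commutative diagram analogous to \eqref{big graph}, while invoking the maximal saturation of the endpoints (known by induction), forces the composite inclusion $p^D \tau^{\sigma} \subseteq \tau^{\kappa}$ to be saturated. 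Combined with Corollary \ref{ineq:sat:gr}, this yields $\dsat{\sigma}{\kappa} = D$ and closes the induction.

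The main obstacle is the combinatorial bookkeeping in the inductive step: one must arrange that the two extension paths in $\Gamma_{\rad}(\taubar^{\sigma'})$ simultaneously contain $\sigma$, $\kappa$, and paired terminal weights compatible with the dualisation step. The delicate sub-case is when $\kappa_j \in \Sigma_0^{\mathrm{inn}}$ at every inner position $j$ of $\sigma$, for then $\sigma$ does \emph{not} lie on a geodesic from $\sigma'$ to $\kappa$ and the naive triangle-inequality bound $\dsat{\sigma}{\kappa} \geq \dsat{\sigma'}{\kappa} - \dsat{\sigma}{\sigma'}$ is not sharp enough to conclude. In this sub-case the argument really must proceed through the two-path diagrammatic chase mirroring Lemma \ref{lem:propagate1}, rather than by a clean triangle-inequality calculation.
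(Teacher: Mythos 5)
Your induction framework and base case match the paper, and the triangle-inequality observation handles the cases where a lower-defect weight appears at maximal graph distance from $\sigma$. But in the remaining case your proposal has a genuine gap, and it is precisely the sub-case you flag. The paper's proof does \emph{not} work with a weight $\sigma'$ adjacent to $\sigma$; it works with $\sigma_{d-1}$ adjacent to the \emph{far} weight $\sigma_d = \kappa$, chosen on the geodesic from $\sigma$ to $\sigma_d$ with $\Def_\tau(\sigma_{d-1}) = \delta - 1$. That choice is always available when $\Def_\tau(\sigma_d) = \delta$, since at each inner embedding $i$ the middle vertices of geodesics from $\sigma_i$ to $\sigma_{d,i}$ all lie in $\Sigma_0^{\mathrm{obv}}$. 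With $\sigma_{d-1}$ maximally saturated by induction, the paper then fixes saturated inclusions $\tau^{\sigma_{d-1}}, \tau^{\sigma_d} \subseteq \tau^\sigma$, invokes Lemma \ref{lem:dyc} to split into two containment cases, and in each case derives a contradiction from the constraint $\dcosoc{\sigma_{d-1}}{\sigma} = d-1$ together with Lemma \ref{lem length submodule}; the second case additionally re-runs the index-hunting argument from the proof of Proposition \ref{prop:dcosoc} inside $\Gamma_{\rad}(\taubar^{\sigma})$. This is a different shape of argument from the two-path diagram chase of Lemma \ref{lem:propagate1}, and it does not require any pairing of $\sigma$ and $\kappa$ with ``opposite'' weights.

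Your proposal, by contrast, modifies $\sigma$ at an inner embedding to produce $\sigma'$, and then tries to force the situation into the mold of diagram \eqref{big graph}. That mold relies on defect-zero structure: in Lemma \ref{lem:propagate1}, both $\sigma_0$ and $\sigma_1$ have a \emph{unique} opposite weight and these opposites are adjacent, which is what lets the two paths close up into a commutative ladder with controlled saturation exponents. When $\delta > 0$, neither $\sigma$ nor $\sigma'$ has a unique farthest weight, and in the sub-case you identify (where $\kappa$ is inner at every inner embedding of $\sigma$) the weight $\sigma$ does not lie on any geodesic issuing from $\sigma'$, so $\sigma$ cannot appear at step one of an extension path of $\Gamma_{\rad}(\taubar^{\sigma'})$ terminating at a weight of maximal graph distance from $\sigma'$. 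You acknowledge this obstruction but assert the argument ``really must proceed through the two-path diagrammatic chase'' without saying how; as it stands, no such chase is available. You would need to replace this step with something along the lines of the paper's Lemma \ref{lem:dyc} trichotomy applied to $\sigma_{d-1}$ and $\sigma_d$ inside $\tau^{\sigma}$, together with the $\dcosoc{\cdot}{\cdot}$ estimates from Corollary \ref{cor:comp:gr:cosoc} and Lemma \ref{lem length submodule}.
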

\begin{proof}
We induct on the defect $\delta \defeq \Def_R(\sigma)$ for $\sigma\in \JH(\ovl{R})$.
The case $\delta=0$ holds by Lemma \ref{lem:propagate1}.
Suppose that $\delta>0$.
To ease notation, let $d\defeq 3f-\delta$ and pick a weight $\sigma_{d}\in \JH(\ovl{R})$ such that  $\dgr{\sigma}{\sigma_{d}}=d$ (in other words $\sigma_d\in \JH(\ovl{R})$ is at maximal graph distance from $\sigma$); we will show that $d=\dsat{\sigma}{\sigma_d}$.  Note that $\Def_R(\sigma_d)\leq \Def_R(\sigma)$.
If $\Def_R(\sigma_d)< \Def_R(\sigma)$ then $\sigma_d$ is maximally saturated (by induction on $\Def_R(\sigma_d)$) and hence $\dgr{\sigma}{\sigma_d}=\dsat{\sigma}{\sigma_d}$ by Proposition \ref{prop:max:sat}.

We now consider the case $\Def_R(\sigma_d)=\Def_R(\sigma)$. 
By a direct check on the extension graph, we see that there exists $\sigma_{d-1}\in \JH(\ovl{R})$ with $\dgr{\sigma_d}{\sigma_{d-1}}=1$ and $\Def_R(\sigma_{d-1})= \Def_R(\sigma)-1$.
Note that $\dgr{\sigma_{d-1}}{\sigma} = d-1$.
By induction, the weight $\sigma_{d-1}$ is maximally saturated, hence $\dsat{\sigma_{d-1}}{\kappa}=\dgr{\sigma_{d-1}}{\kappa}=\dcosoc{\sigma_{d-1}}{\kappa}$ for all $\kappa\in \JH(\ovl{R})$ by Propositions \ref{prop:max:sat} and \ref{prop:dcosoc}.

Let $R^{\sigma_{d-1}}\subseteq R^{\sigma}$, $R^{\sigma_d}\subseteq R^{\sigma}$ be saturated inclusions of lattices.
By Lemma \ref{lem:dyc}, we are in one of the following situations:
\begin{enumerate}
	\item\label{case1} $R^{\sigma_d}\subseteq R^{\sigma_{d-1}}\subseteq R^{\sigma}$;
\item\label{case2} $R^{\sigma_{d-1}}\subseteq R^{\sigma_{d}}\subseteq R^{\sigma}$.
\end{enumerate}
where the inclusions are all saturated.

\emph{Case (\ref{case1}).} Taking $\kappa = \sigma$ in the above, we have that $\dsat{\sigma}{\sigma_{d-1}}=d-1$, and hence obtain chains of saturated inclusions:
\begin{equation*} 
\xymatrix{
p^{d-1}R^{\sigma}\ar@{}[r]|{\subseteq}&R^{\sigma_{d-1}}\ar@{}[r]|{\subseteq}& R^{\sigma}\\
&R^{\sigma_d}\ar@{}[u]|{\rotatebox{90}{$\subseteq$}}&
}
\end{equation*}
Assume for the sake of contradiction that $\dsat{\sigma}{\sigma_d}<d$, i.e.~that we have a factorization
\begin{equation*} 
\xymatrix{
p^{d-1}R^{\sigma}\ar@{}[r]|{\subseteq}\ar@{^{(}.>}[rd]&R^{\sigma_{d-1}}\ar@{}[r]|{\subseteq}& R^{\sigma}.\\
&R^{\sigma_d}\ar@{}[u]|{\rotatebox{90}{$\subseteq$}}&
}
\end{equation*}
Then, we necessarily have that $\dsat{\sigma}{\sigma_d} = d-1$.
We obtain a commutative diagram:
\begin{equation*}
\xymatrix{
p^{d-1}R^{\sigma}\ar@{^{(}->}[r]\ar@{->>}[d]&R^{\sigma_d}\ar@{^{(}->}[r]\ar@{->>}[d]&R^{\sigma_{d-1}}\ar@{->>}[d]\\
(p^{d-1}R^{\sigma})\otimes_{\cO}\F\ar@/_2pc/[rr]_{\neq 0}\ar_>>>>{\neq 0}[r]&\ovl{R}^{\sigma_d}\ar_{\neq 0}[r]&\overline{R}^{\sigma_{d-1}}
}
\end{equation*}
where the lower arrows are all \emph{non-zero}.

In particular, $\sigma$ is a constituent of 
$Q_{\sigma_{d-1}}(\sigma_d)$.
By Corollary \ref{cor:comp:gr:cosoc} and Lemma \ref{lem length submodule}, we have $\dcosoc{\sigma_{d-1}}{\sigma}\geq \dcosoc{\sigma_{d-1}}{\sigma_d}+\dcosoc{\sigma_d}{\sigma}\geq \dgr{\sigma_{d-1}}{\sigma_d}+\dgr{\sigma_d}{\sigma}= d+1$. On the other hand, as  $\sigma_{d-1}$ is maximally saturated, $\dcosoc{\sigma_{d-1}}{\sigma}=d-1$, a contradiction.

\emph{Case  (\ref{case2}).} We now have a commutative diagram 
\begin{equation*}
\xymatrix{
R^{\sigma_{d-1}}\ar@{^{(}->}[r]\ar@{->>}[d]&R^{\sigma_d}\ar@{^{(}->}[r]\ar@{->>}[d]&R^{\sigma}\ar@{->>}[d]\\
\ovl{R}^{\sigma_{d-1}}\ar@/_2pc/[rr]_{\neq 0}\ar_{\neq 0}[r]&\ovl{R}^{\sigma_d}\ar_{\neq 0}[r]&\overline{R}^{\sigma}
}
\end{equation*}
where again the lower arrows are \emph{non-zero}. 
Exactly as in the previous case, we deduce that $\dcosoc{\sigma}{\sigma_{d-1}}\geq \dcosoc{\sigma}{\sigma_d}+1\geq d+1$.

We therefore may, and do, fix an extension path in the radical filtration of $\ovl{R}^{\sigma}$:
\begin{equation*}
\sigma_{d-1} = \sigma'_{k}\lar\sigma'_{k-1}\lar\dots\lar\sigma'_{1}\lar\sigma'_0 = \sigma.
\end{equation*}
For notational convenience, we set $\kappa_i\defeq \sigma'_{k-i}$.
As $k\geq d+1$, and as $\sigma_{d-1}$ is maximally saturated, we deduce as in the proof of Proposition \ref{prop:dcosoc} the existence of an index $i\in \{0,\dots, k-1\}$ such that
\begin{enumerate} 
	\item\label{it:4.3.13:1} $\dgr{\kappa_0}{\kappa_{i}}=\dsat{\kappa_0}{\kappa_{i}}=i$ and
	\item\label{it:4.3.13:2} $\dgr{\kappa_0}{\kappa_{i+1}}=\dsat{\kappa_0}{\kappa_{i+1}}<i+1$, and actually $\dgr{\kappa_0}{\kappa_{i+1}}\leq i-1$.
\end{enumerate}
Fix a chain of saturated inclusions $R^{\kappa_0}\subseteq R^{\kappa_1}\subseteq \dots\subseteq R^{\kappa_k}$.
Item (\ref{it:4.3.13:2}) implies that
$R^{\kappa_{0}}\subseteq R^{\kappa_{i+1}}\subseteq p^{-i+1}R^{\kappa_{0}}$.
The induced inclusion $R^{\kappa_i} \subseteq R^{\kappa_{i+1}} \subseteq p^{-i+1}R^{\kappa_0}$ contradicts item (\ref{it:4.3.13:1}).
\end{proof}

\begin{proof}[Proof of Theorem \ref{thm structure}]
(\ref{item:sat}) follows from Propositions \ref{prop:max:sat} and \ref{prop step 3}.
(\ref{item:loewy}) and (\ref{item:radpt}) follow from Propositions \ref{prop:dcosoc} and \ref{prop step 3}.
Note that $\Gamma_{\rad}(\ovl{R}^\sigma)$ is a subgraph of $\Gamma(\ovl{R}^\sigma)$.
Using Lemma \ref{lem:dic} and the fact that $\Gamma(\ovl{R}^\sigma)$ is acyclic, $\Gamma(\ovl{R}^\sigma)$ must in fact be $\Gamma_{\rad}(\ovl{R}^\sigma)$.
This implies (\ref{item:point}).
\end{proof}

\begin{table}[h]
\caption{\textbf{Comparison between alcoves $C_i$, affine Weyl group elements $\tld{w}_i$ and the graph.}}\label{TableWeylAlpha}
\centering
\adjustbox{max width=\textwidth}{
\begin{tabular}{| c | c | c | c | c |}
\hline
$\tld{w}_i$&$\tld{w}_{+,i}$&$\omega_{-,i}$&
$\tld{w}_i\cdot A$&$(\omega,a)_{\pi i}$\\
\hline
$\id$&$\id$&$t_{0}$&$A$&$(0,0)$
\\
\hline
$(13)t_{-(\eps'_1+\eps'_2)}$&$(13)t_{-(\eps'_1+\eps'_2)}$&$t_{0}$&$B$&$(0,1)$\\
\hline
$(123)t_{-\alpha_{2}}$&$(123)t_{-\eps'_{2}}$&$t_{\eps'_{2}}$&$C$&
$(s(\eps'_{2}),0)$; $s\in S_3$
\\
\hline
$(12)t_{\alpha_{2}}$&$(12)t_{-(\eps'_{1}-\eps'_{2}-\un{1})}$&$t_{\eps'_{2}}$&$E$&
$(s(\eps'_{2}),1)$; $s\in S_3$\\
\hline
$(132)t_{-\alpha_{1}}$&$(132)t_{-\eps'_{1}}$&$t_{\eps'_{1}}$&$D$&
$(s(\eps'_{1}),0)$; $s\in S_3$\\
\hline
$(23)t_{\alpha_{1}}$&$(23)t_{-(\eps'_{2}-\eps'_{1}+\un{1})}$&$t_{\eps'_{1}}$&$F$&
$(s(\eps'_{1}),1)$; $s\in S_3$\\
\hline
$(13)t_{-2(\eps'_1+\eps'_2)}$&$(13)t_{-(\eps'_1+\eps'_2)}$&$t_{(\eps'_1+\eps'_2)}$&$J$&$(s(\eps'_1+\eps'_2),1)$; $s\in S_3$ and $(0,1)$\\
\hline
\hline
\hline
$\tld{w}_i$&$\tld{w}_{i,+}$&$\tld{w}_{i,-}$&
$\tld{w}_i\cdot B$&$(\omega,a)_{\pi i}$\\
\hline
$\id$&$\id$&$t_{0}$&$B$&$(0,1)$
\\
\hline
$(13)t_{-(\eps'_1+\eps'_2)}$&$(13)t_{-(\eps'_1+\eps'_2)}$&$t_{0}$&$A$&$(0,0)$\\
\hline
$(132)t_{-(2\alpha_{1}+\alpha_{2})}$&$(132)t_{-(2\eps'_{1}-\un{1})}$&$t_{\eps'_{2}}$&$E$&
$(s(\eps'_{2}),1)$; $s\in S_3$
\\
\hline
$(23)t_{-\alpha_{2}}$&$(23)t_{-\eps'_{2}}$&$t_{\eps'_{2}}$&$C$&
$(s(\eps'_{2}),0)$; $s\in S_3$\\
\hline
$(123)t_{-(\alpha_{1}+2\alpha_{2})}$&$(123)t_{-(2\eps'_{2}+\un{1})}$&$t_{\eps'_{1}}$&$F$&
$(s(\eps'_{1}),1)$; $s\in S_3$\\
\hline
$(12)t_{-\alpha_{1}}$&$(12)t_{-\eps'_{1}}$&$t_{\eps'_{1}}$&$D$&
$(s(\eps'_{1}),0)$; $s\in S_3$\\
\hline
$(13)t_{-2(\eps'_1+\eps'_2)}$&$(13)t_{-(\eps'_1+\eps'_2)}$&$t_{(\eps'_1+\eps'_2)}$&$G$&$(s(\eps'_1+\eps'_2),0)$; $s\in S_3$ and $(0,0)$\\
\hline
\hline
\end{tabular}}
\caption*{For each element $\widetilde{w}^*$ in the first column, we consider the decomposition $\tld{w}_i=\omega_{i,-}\cdot \tld{w}_{i,+}$ with  $(\tld{w}_{i,+})_i\in \widetilde{\un{W}}^+$. In the fourth column we write the alcove containing $\tld{w}_i\cdot A$. In the fifth column we write the $\pi i$-th coordinate of the points in the graph (with origin $\mu+\eta'$) corresponding to an irreducible constituent of $(\otimes_i L(\tld{w}_i\cdot \mu_i))|_{\rG}$.

Similar comments apply to the second half of the table. Note that in this case, we consider the decomposition $\tld{w}_i=\omega_{-,i}\cdot \tld{w}_{+,i}$ with  $\tld{w}_{+,i}\in \widetilde{\un{W}}_1^+\tld{w}_B$ and the graph has origin in $\mu^{\op}+\eta$.

Finally, we have set $\alpha_1$ and $ \alpha_2 \in X^*(T)$ to be $(1,-1,0)$ and $(0,1,-1)$ respectively.  
}
\end{table}

\begin{table}[h]
\caption{\textbf{Graph of $V(\mu^{\op}+p\eta')$}}\label{TableWeyl1}
\centering
\adjustbox{max width=\textwidth}{
\begin{tabular}{ c }
$
\xymatrix{
&&
\underset{L(z+p-2,y,x-p+2)\otimes L(\eta')}{\mathbf{G}}\ \ar@{-}[dr]\ar@{-}[dl]&
&&
\\
&
\underset{L(z+p-2,x-p+1,y-p+1)\otimes L(\eps'_1)}{\mathbf{E}}\ar@{-}[dl]\ar@{-}[dr]\ar@{-}[drrr]
&
&
\underset{L(y+p-1,z+p-1,x-p+2)\otimes L(\eps'_2)}{\mathbf{F}}\ar@{-}[dr]\ar@{-}[dl]\ar@{-}[dlll]
&
\\
\underset{L(y-1,x-p+1,z)\otimes L(\eps'_1)}{\mathbf{C}}\ar@{-}[drr]
&
&
\underset{L(z+p-2,y,x-p+2)}{\mathbf{A}}\ar@{-}[d]
&
&
\underset{L(x,z+p-1,y+1)\otimes L(\eps'_2)}{\mathbf{D}}\ar@{-}[dll]
\\
&&\underset{(L(x,y,z))}{\mathbf{B}}&&
}
$\\
\hline
\hline
$
\xymatrix{
&&&
\underset{L(z+p-2,y,x-p+2)\otimes L(\eta')}{\mathbf{J}}\ar@{-}[drrr]\ar@{-}[dr]\ar@{-}[d]\ar@{-}[dl]\ar@{-}[dlll]&
&&&
\\
\underset{L(z+p-2,x+1,y+1)\otimes L(2\eps'_1-\un{1})}{\mathbf{H}}\ar@{-}[dr]&&
\underset{L(y-1,z-1,x-p+2)\otimes L(\eps'_1)}{\mathbf{C}}\ar@{-}[drrr]\ar@{-}[dl]
&
\underset{L(x,y,z)\otimes L(\eta')}{\mathbf{G}}\ar@{-}[drr]\ar@{-}[dll]
&
\underset{L(z+p-2,x+1,y+1)\otimes L(\eps'_2)}{\mathbf{D}}\ar@{-}[dlll]\ar@{-}[dr]
&&\underset{L(y-1,z-1,x-p+2)\otimes L(2\eps'_2+\un{1})}{\mathbf{I}}\ar@{-}[dl]
\\
&\underset{L(x,z-1,y-p+1)\otimes L(\eps'_1)}{\mathbf{E}}\ar@{-}[drr]
&
&
&
&
\underset{L(y+p-1,x+1,z)\otimes L(\eps'_2)}{\mathbf{F}}\ar@{-}[dll]
&
\\
&&&\underset{L(x,y,z)}{\mathbf{A}}&&&
}
$
\end{tabular}
}
\caption*{The graphs of $V(\mu^{\op}+p\eta')$ when $f=1$ (and $\mu$ is $2$-deep). In the first (resp. second) diagram $\mu\in X_1(\un{T})$ is upper alcove (resp. lower alcove). 
For each alcove we write below the unique weight $L$ such that $L(x,y,z)\uparrow L$.
We remark that $\gr_1(V(\mu^{\op}+p\eta')=\gr_1(Q_1(\mu))$ if $\mu=(x,y,z)$ is upper alcove, while $\gr_1(V(\mu^{\op}+p\eta')\oplus L(\mu^{\op})=\gr_1(Q_1(\mu))$ if $\mu=(x,y,z)$ is lower alcove.
}
\end{table}

\section{Breuil's Conjectures}
\label{sec:BC}

In this section, we deduce generalizations of Breuil's  conjectures on mod $p$ multiplicity one and lattices from the results in \S \ref{sec:weights} - \ref{sec:lct}.
In \S \ref{sec:cyc} - \ref{sec:gauge}, we prove a version of Breuil's conjectures for abstract patching functors.
Finally, we deduce the main results in \S \ref{subsec:global}.

\subsection{Cyclicity for patching functors}
\label{sec:cyc}

In this subsection, we show that certain patched modules for tame types are locally free of rank one over the corresponding local deformation space using several inductive steps. The base case is Lemma \ref{lemma:4weight}. Each inductive step uses one of two arguments in \cite[\S 10]{EGS}.
It is here that the results of \S \ref{sec:idealrel} enter.

Recall from \S \ref{subsection:Notation} that $\cS$ is a finite set so that for each $\tld{v} \in \cS$, $F_{\tld{v}}$ is a finite unramified extension of $\Q_p$ of degree $f_{\tld{v}}$.
For each $\tld{v}$, let $\rhobar_{\tld{v}}:G_{F_{\tld{v}}} \ra \GL_3(\F)$ be a $10$-generic 
semisimple continuous Galois representation and let $\rhobar_{\cS}$ be $(\rhobar_{\tld{v}})_{\tld{v}\in\cS}$.
Recall the weak minimal patching functor setting of \S \ref{sec:SWC}.
Suppose that $\rhobar_{\cS}|_{I_\cS} \cong \taubar_\cS(s,\mu)$ for $s\in \un{W}$ and $\mu \in X^*(\un{T})$.

Let $K$ be $\prod_{\tld{v}\in\cS} K_{\tld{v}}$.
Suppose that $M_\infty$ is a weak minimal patching functor for $\rhobar_{\cS}$ (cf.~ Definition \ref{minimalpatching}).
For each $\tld{v}\in\cS$, let $\tau_{\tld{v}}$ be an $13$-generic tame inertial type for $F_{\tld{v}}$.
Recall $R^{\tau_{\tld{v}},\ovl{\beta}_{\tld{v}},\Box}_{\ovl{\fM}_{\tld{v}},{\rhobar_{\tld{v}}}}$ from \cite[Definition 5.10]{LLLM}.
We let $\tau_\cS$ be $(\tau_{\tld{v}})_{\tld{v}}$ so that $\sigma(\tau_\cS)$ is the $K$-module $\otimes_{\tld{v}\in \cS} \sigma(\tau_{\tld{v}})$. 
Let
\[R'_\infty(\tau_\cS) \defeq R_\infty(\tau_\cS) \widehat{\otimes}_{(\widehat{\otimes}_{\tld{v}\in \cS} R^{\tau_{\tld{v}}}_{\tld{v}})} \Big(\widehat{\otimes}_{\tld{v}\in \cS} R^{\tau_{\tld{v}},\ovl{\beta}_{\tld{v}},\Box}_{\ovl{\fM}_{\tld{v}},{\rhobar_{\tld{v}}}}\Big)\]
and 
\[M'_\infty(V) \defeq M_\infty(V) \widehat{\otimes}_{(\widehat{\otimes}_{\tld{v}\in \cS} R^{\tau_{\tld{v}}}_{\tld{v}})} \Big(\widehat{\otimes}_{\tld{v}\in \cS} R^{\tau_{\tld{v}},\ovl{\beta}_{\tld{v}},\Box}_{\ovl{\fM}_{\tld{v}},{\rhobar_{\tld{v}}}}\Big)\]
for any subquotient $V$ of a lattice in $\sigma(\tau_\cS)$.

We assume that $R_\infty(\tau_\cS)$ is nonzero.
Then there exists $\tld{w} = (\tld{w}_{\tld{v}})_{\tld{v}\in \cS}=(\tld{w}_i)_{i\in \cJ} \in \Adm^\vee(\eta)$ such that $\tau_\cS \cong \tau_\cS(s(w^*)^{-1},\mu+s(\tld{w}^*)^{-1}(0))$ with the lowest alcove presentation $(s(w^*)^{-1},\mu+s(\tld{w}^*)^{-1}(0)-\eta)$ by Theorem \ref{fixedpoints} and Proposition \ref{typereflection}. Note that $\tld{w}_{\tld{v}}= \tld{w}(\rhobar_{\tld{v}},\tau_{\tld{v}})$.
Then, as exlpained in Section \ref{sec:match}, $\Spf \ovl{R}'_\infty(\tau_\cS)$ (resp. $\Spf R'_\infty(\tau_\cS)$) is formally smooth over $\Spf \Big(\widehat{\otimes}_{i\in\cJ}\ovl{R}_{\ovl{\fM}_{\tld{v}},\tld{w}_{i}}^{\expl,\nabla}\Big)$ (resp. $\Spf \Big(\widehat{\otimes}_{i\in \cJ}{R}_{\ovl{\fM}_{\tld{v}},\tld{w}_{i}}^{\expl,\nabla}\Big)$ if $\ell(\tld{w}_{i})>1$ for all $i\in \cJ$), where $i=(\tld{v},i_{\tld{v}})$.
We consider $\ovl{R}'_\infty(\tau_\cS)$ (resp. ${R}'_\infty(\tau_\cS)$) both as a $\widehat{\otimes}_{i\in\cJ}\ovl{R}_{\ovl{\fM}_{\tld{v}},\tld{w}_{i}}^{\expl,\nabla}$-algebra and a $\widehat{\otimes}_{\tld{v}\in \cS}\ovl{R}^\Box_{\rhobar_{\tld{v}}}$-algebra (resp. a $\widehat{\otimes}_{i\in \cJ}{R}_{\ovl{\fM}_{\tld{v}},\tld{w}_{i}}^{\expl,\nabla}$-algebra and a $\widehat{\otimes}_{\tld{v}\in \cS}{R}^\Box_{\rhobar_{\tld{v}}}$-algebra).
It is easy to see that $M'_\infty(V)$ is a cyclic $R'_\infty(\tau_\cS)$-module if and only if $M_\infty(V)$ is a cyclic $R_\infty(\tau_\cS)$-module.
Moreover, $M'_\infty(V)$ is always maximal Cohen--Macaulay over its support.
It will often be more convenient to prove that $M'_\infty(V)$ is a cyclic $R'_\infty(\tau_\cS)$-module, and we will switch to $M'_\infty(V)$ without comment.
If $\sigma \in \JH(\ovl{\sigma(\tau_\cS)})$, then recall from \S \ref{sec:mainresult} that $\sigma(\tau_\cS)^\sigma$ is the unique $\cO$-lattice up to homothety in $\sigma(\tau_\cS)$ with cosocle $\sigma$.

\begin{thm}\label{thm:cyclic}
Let $\tau_\cS$ be a $13$-generic tame type and $\sigma \defeq F(\lambda) \in W^?(\rhobar_{\cS},\tau_\cS)$ 
such that for all $i \in \cJ$, 
\begin{equation}
\label{up:each:emb}
\lambda_{\pi^{-1}(i)}\in X_1(T) \text{ is in alcove $C_1$ if } \ell(\tld{w}_i^*)\leq 1.
\end{equation} 
Then $M_\infty(\sigma(\tau_\cS)^\sigma)$ is free of rank one as a $R_\infty(\tau_\cS)$-module.
\end{thm}

\noindent The proof of Theorem \ref{thm:cyclic} proceeds by proving cases of increasing complexity.
We distinguish five steps in the argument.

\begin{lemma} \label{lemma:diamond}
If  $\sigma \in W^{?}(\rhobar_{\cS})$, then $M_\infty(\sigma)$ is a cyclic $R_\infty(\tau_\cS)$-module.
\end{lemma}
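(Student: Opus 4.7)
The strategy is to show directly that $M_\infty(\sigma)$ is free of rank one over its scheme-theoretic support. By Proposition \ref{prop:identify:cmpt}(\ref{item:patchcomp}), the scheme-theoretic support of $M_\infty(\sigma)$ is $\Spec R_\infty(\sigma)$, where
\[R_\infty(\sigma)\defeq R_\infty\big/\textstyle\sum_{\tld{v}\in \tld{\Sigma}} \mathfrak{p}(\sigma_{\tld{v}}) R_\infty,\]
a single irreducible component of $\Spec \ovl{R}_\infty(\tau)$. Since the annihilator of $M_\infty(\sigma)$ contains $\sum_{\tld{v}\in \tld{\Sigma}} \mathfrak{p}(\sigma_{\tld{v}}) R_\infty$, the $R_\infty(\tau)$-action factors through the quotient $R_\infty(\sigma)$, and cyclicity over $R_\infty(\tau)$ is equivalent to cyclicity over $R_\infty(\sigma)$.

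For this reduced problem, I will use the following well-known fact: if $R$ is a regular local Noetherian ring and $M$ is a maximal Cohen--Macaulay $R$-module with Hilbert--Samuel multiplicity $e(M) = 1$, then $M$ is free of rank one over $R$. Indeed, Auslander--Buchsbaum forces $M$ to be free, and equating multiplicities gives the rank. Now $M_\infty(\sigma)$ is maximal Cohen--Macaulay over $R_\infty(\sigma)$ by Definition \ref{minimalpatching}(\ref{dimd}), and $e(M_\infty(\sigma)) = 1$ by Theorem \ref{SWC}. So the whole lemma reduces to showing that $R_\infty(\sigma)$ is formally smooth over $\cO$.

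To check formal smoothness, I will pass to the formally smooth cover of $R_\infty(\tau)$ coming from the explicit rings of \cite{LLLM}: after passing to $R'_\infty(\tau)$, we have $\Spf R'_\infty(\tau)$ formally smooth over $\Spf\bigl(\widehat\bigotimes_{\tld{v}} \ovl{R}^{\expl,\nabla}_{\ovl{\fM}_{\tld{v}},\bf{w}_{\tld{v}}}\bigr)$, and the latter in turn decomposes as a completed tensor product over embeddings $i$ of explicit rings $\ovl{R}^{\expl,\nabla}_{\ovl{\fM}_{\tld{v}},\tld{w}_{\tld{v},i}}$. Via the matching of components carried out in \S \ref{sec:match} (summarised in Table \ref{Table:intsct}), the image of $\mathfrak{p}(\sigma_{\tld{v}})$ in this product is a product of explicit prime ideals $\mathfrak{p}^{\expl}(\sigma_{\tld{v},i})$, one per embedding. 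When $\ell(\tld{w}_{\tld{v},i})\geq 2$, the quotient $\ovl{R}^{\expl,\nabla}_{\ovl{\fM}_{\tld{v}},\tld{w}_{\tld{v},i}}/\mathfrak{p}^{\expl}(\sigma_{\tld{v},i})$ is formally smooth, directly from the presentations in \cite[Table 6]{LLLM}. When $\ell(\tld{w}_{\tld{v},i})\leq 1$, the hypothesis (\ref{up:each:emb}) on $\lambda_{\tld{v},i-1}$ is precisely what forces $\mathfrak{p}^{\expl}(\sigma_{\tld{v},i})$ to be one of the ``obvious'' components of Table \ref{Table:intsct} (the $\fL_j$, $\fU_{ob}$, $\fL_{ob}$ type), whose defining ideal is generated by an explicit regular sequence of linear forms, so that the quotient is again formally smooth. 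Crucially, the hypothesis excludes the shadow (Weyl-segment) components $\fW_j, \fU_1, \fU_2$, which carry the nonlinear relations analysed in \S \ref{sec:idealrel} and whose quotients are not formally smooth.

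The main obstacle will be the bookkeeping for the shapes $\id,\alpha,\beta,\gamma$: one needs to trace the index shift relating the position $i-1$ of the weight in (\ref{up:each:emb}) to the position $i$ of the shape in $\ovl{R}^{\expl,\nabla}_{\ovl{\fM}_{\tld{v}},\bf{w}_{\tld{v}}}$, using the bijection of Proposition \ref{prop:cmp:phi}, and confirm that ``upper alcove'' on the weight side translates into ``obvious'' on the component side. This is a casewise verification against Table \ref{Table:intsct} and is the only substantive computation; once carried out, the three steps above conclude the proof.
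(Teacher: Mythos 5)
You follow the same Diamond--Fujiwara route as the paper: identify the scheme-theoretic support via Proposition \ref{prop:identify:cmpt}(\ref{item:patchcomp}), use that $M_\infty(\sigma)$ is maximal Cohen--Macaulay of multiplicity one (from Definition \ref{minimalpatching}(\ref{dimd}) and Theorem \ref{SWC}), and conclude that $M_\infty(\sigma)$ is free of rank one over its support provided that support is regular. The gap is in your justification of the regularity.

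You invoke condition (\ref{up:each:emb}) and assert that it is what prevents $\mathfrak{p}^{\expl}(\sigma_{\tld{v},i})$ from being a ``shadow'' component ($\fW_j$, $\fU_1$, $\fU_2$), and that the quotients by those ideals are not formally smooth. Both claims are wrong. First, (\ref{up:each:emb}) is a hypothesis of Theorem \ref{thm:cyclic} and not of Lemma \ref{lemma:diamond}: the lemma is stated for arbitrary $\sigma \in W^?(\rhobar_{\tld{\Sigma}})$, so your argument as written fails precisely for the shadow weights. Second, and more seriously, the shadow component quotients \emph{are} formally smooth. The apparently nonlinear generators---$c_{33}^*c_{12}-c_{13}c_{32}$ in $\fW_1$, $c_{11}-c_{13}c_{31}$ in $\fU_1$, etc.---each eliminate a variable, since the starred coefficients $c_{**}^*$ are units in the complete local ring; after the elimination the remaining ambient relations collapse and each quotient is a power series ring. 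You can read this off directly from the computation in the proof of Lemma \ref{lem:ideal:0}, which shows $\tld{R}/(c_{22},c_{33}) \cong \F[\![U,V,W,X,Y,Z]\!]/(UV,UW,UY,VW,VX,WZ)$, a reduced union of four coordinate 3-planes; the shadow components sit among these, and each is regular. Once this false claim is removed, (\ref{up:each:emb}) is seen to be irrelevant and your argument goes through for all $\sigma \in W^?(\rhobar_{\tld{\Sigma}})$, recovering the paper's short proof.
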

\begin{proof}
The proof follows the methods of Diamond and Fujiwara (\cite{diamond}).
Note that the support of $M_\infty(\sigma)$ is formally smooth as can be checked from Theorem \ref{SWC}.%
Since $M_\infty(\sigma)$ is maximal Cohen--Macaulay over its formally smooth support by Definition \ref{minimalpatching}(\ref{dimd}), it is free over its support by the Auslander--Buchsbaum--Serre theorem and the Auslander--Buchsbaum formula.
Since $e(M_\infty(\sigma)) =1$ by Theorem \ref{SWC}, the free rank is one.
\end{proof}

If $V$ is a $\varpi$-torsion $K$-module satisfying the hypotheses of Lemma \ref{lem:supp}, we write $R_{\infty}(V)$ to denote the quotient $R_\infty/I(V)$.
\emph{For the rest of this subsection, we assume that the weight $\sigma$ satisfies the conditions of Theorem \ref{thm:cyclic}.}

\begin{lemma}\label{lemma:4weight}
Assume that $\ell(\widetilde{w}^*_i) > 1$ for all $i\in\cJ$.
Let $\sigma\in W^?(\rhobar_{\cS},\tau_\cS)$ 
and let $V$ be a nonzero quotient of $\ovl{\sigma}(\tau_\cS)^\sigma$.
Then $M_\infty(V)$ is a cyclic $R_\infty(\tau_\cS)$-module.
\end{lemma}
\begin{proof}
If $R'_\infty(\tau_\cS)$ is formally smooth over $\cO$, then the result follows from Lemma \ref{lemma:diamond}.
Suppose otherwise.
We use the notation of the proof of Lemma \ref{prop saturation 1} with $R = \sigma(\tau_\cS)$.
Recall that for $\sigma \in \JH(\ovl{\sigma(\tau_\cS)})$, $\fp_S(\sigma) = (z_j(\sigma))_{j=1}^N + (p)$.
By Lemma \ref{lemma:expl4wt}, for $\sigma_1$ and $\sigma_2\in W^?(\rhobar_{\cS},\tau_\cS)$, we have
\begin{equation}\label{eqn:comp}
\#\big(\{z_j(\sigma_1)\}_j \Delta \{z_j(\sigma_2)\}_j\big) = 2\dgr{\sigma_1}{\sigma_2}.
\end{equation}

Let $\sigma_1 \in W^?(\rhobar_{\cS},\tau_\cS)$ be such that $\dgr{\sigma}{\sigma_1} =1$.
Furthermore, fix a saturated inclusion $\sigma(\tau_\cS)^{\sigma_1} \into \sigma(\tau_\cS)^\sigma$. 
Letting $M\defeq M'_\infty(\sigma(\tau_\cS)^\sigma)$ and $M_1\defeq M'_\infty(\sigma(\tau_\cS)^{\sigma_1})$, we have a map $M_1\ra M$.
By Proposition \ref{prop saturation 1}, $\sigma(\tau_\cS)^\sigma/\sigma(\tau_\cS)^{\sigma_1}$ is $p$-torsion, 
with Jordan--H\"{o}lder factors determined by Theorem \ref{thm structure}. 
By the proofs of Lemmas \ref{lem:supp} and \ref{prop saturation 1}, the scheme-theoretic support of $M/M_1$ in $\Spec S$ is
\begin{equation}\label{eqn:sts}
\Spec \big( S/\bigcap_{\sigma' \in \JH(\sigma(\tau_\cS)^\sigma/\sigma(\tau_\cS)^{\sigma_1})} \fp_S(\sigma') \big).
\end{equation}
The sets $\{z_j(\sigma_1)\}_j$ and $\{z_j(\sigma)\}_j$ differ at exactly one component, say $j_1$.
The equation (\ref{eqn:comp}) determines $\fp_S(\sigma')$ for all $\sigma'\in \JH(\ovl{\sigma(\tau_\cS)})$ from which one checks that $z_{j_1}(\sigma)$ is in $\fp_S(\sigma')$ for any $\sigma' \in \JH(\sigma(\tau_\cS)^\sigma/\sigma(\tau_\cS)^{\sigma_1})$.
By (\ref{eqn:sts}), we see that $\coker(M_1 \ra M)$ is killed by $z_{j_1}(\sigma)$.
Similarly, we have that $\coker(pM \ra M_1)$ is annihilated by $z_{j_1}(\sigma_1)$.
Hence we have inclusions $z_{j_1}(\sigma)M\subset M_1$ and $z_{j_1}(\sigma_1) M_1 \subset pM$.
Combining these, we have that $pM = z_{j_1}(\sigma)z_{j_1}(\sigma_1)M \subset z_{j_1}(\sigma_1)M_1\subset pM$.
We conclude that the above inclusions are equalities.

Applying the above argument for all $\sigma_1\in W^?(\rhobar_{\cS},\tau_\cS)$ at distance one from $\sigma$, we have that $M'_\infty(\sigma) \cong M'_\infty(\sigma(\tau_\cS)^\sigma)/(\varpi,\{z_i(\sigma)\}_i)$.
The left hand side, and hence the right hand side, is cyclic by Lemma \ref{lemma:diamond}.
We conclude that $M'_\infty(\ovl{\sigma}(\tau_\cS)^\sigma)$ is cyclic by Nakayama's lemma.
Since $M'_\infty(V)$ is a nonzero quotient of $M'_\infty(\ovl{\sigma}(\tau_\cS)^\sigma)$, it too is cyclic.
\end{proof}

As in \S \ref{subsec:comb}, let $\sigma_{(\omega,a)}\defeq F(\Trns_\mu(s\omega,a))$ (where $(s\omega,a) \in \un{\Lambda}_W^\mu \times \mathcal{A}$).
Then $W^?(\rhobar_\cS) = \{ \sigma_{(\omega,a)} : (\omega,a) \in r(\Sigma)\}$.
Similarly, for $\tld{w}\in \Adm^\vee(\eta)$, we have a bijection 
\begin{align*}
(\tld{w}^*)^{-1}(\Sigma) &\risom \JH(\ovl{\sigma(\tau_\cS)})\\
(\omega,a) &\mapsto \sigma_{(\omega,a)}
\end{align*}
where $\tau_{\cS}\defeq\tau(s (w^*)^{-1},\mu-s(\tld{w}^*)^{-1}(0))$.
In what follows, $\sigma\in W^?(\rhobar_\cS,\tau_\cS)$ is a Serre weight satisfying (\ref{up:each:emb}), and $V$ is a nonzero quotient of $\ovl{\sigma}(\tau_\cS)^\sigma$.
We assume that for each $i\in \cJ$, there exists a subset $\Sigma_{V,i}\subseteq (\tld{w}_i^*)^{-1}(\Sigma_0)$ such that
\begin{align}
\prod_{i\in \cJ} \Sigma_{V,i}&\risom \JH(V) \label{eqn:product}\\
(\omega,a)&\mapsto \sigma_{(\omega,a)}=F(\Trns_\mu(s\omega,a))\nonumber
\end{align}
is a bijection.
All representations we consider below satisfy this assumption.

In the following lemmas, we use a gluing procedure to show that $M_\infty(V)$ is cyclic in cases where the set $W^?(\rhobar_{\cS},\tau_\cS)$ is of increasing complexity.
In figures \ref{TableBroom}, \ref{TableL6W},\ref{TableL8W}, \ref{TableL8W1}, and \ref{TableL9W} we give pictorial realization of the gluing procedure employed in the proofs of Lemma \ref{lemma:broom},\ref{lemma:6weight}, \ref{lemma:8weight}, and \ref{lemma:9weight}, respectively.

\begin{lemma}\label{lemma:broom}
Suppose that for each $i\in\cJ$ either $\ell(\widetilde{w}^*_i) > 1$ or $\Sigma_{V,i} \subset \{(\omega,1),\,(0,0),\,(\eps_1,0),\,(\eps_2,0)\}$ for some $\omega\in\{0,\, \eps_1,\,\eps_2\}$.
Then $M_\infty(V)$ is a cyclic $R_\infty(\tau_\cS)$-module.
\end{lemma}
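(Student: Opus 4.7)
The plan is to induct on the cardinality of the set
\[\mathcal{B}\defeq \big\{(\tld{v},i)\in\tld{\Sigma}\times\{0,\ldots,f_{\tld{v}}-1\}\ :\ \ell(\tld{w}_{\tld{v},i})\leq 1\big\},\]
i.e.~the collection of embeddings at which the broom hypothesis (rather than the length-two-or-more hypothesis) is being invoked. When $\mathcal{B}=\emptyset$ the conclusion is exactly Lemma \ref{lemma:4weight}, which serves as the base case. Note that one may freely replace $M_\infty$ by $M'_\infty$ throughout and work on the formally smooth cover $\Spf R'_\infty(\tau)$, since cyclicity is preserved under the formally smooth base change and the ideal relations of \S\ref{sec:idealrel} are formulated on the explicit rings $\ovl{R}_{\ovl{\fM}_{\tld{v}},\bf{w}_{\tld{v}}}^{\expl,\nabla}$.

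For the inductive step, fix $(\tld{v}_0,i_0)\in\mathcal{B}$ and, to simplify notation, abbreviate $\bf{w}_0\defeq \tld{w}_{\tld{v}_0,i_0}$. By hypothesis $\bf{w}_0\in\{\id,\alpha,\beta,\gamma\}$ and $\Sigma_{V_{\tld{v}_0},i_0}\subset \{(\omega,1),(\un 0,0),(\omega_1,0),(\omega_2,0)\}$ for some $\omega\in\{0,\omega_1,\omega_2\}$. Using Theorem \ref{thm structure} (specifically the predicted radical structure of the lattices $\taubar^{\sigma_{\tld{v}_0}}$) together with Proposition \ref{prop:subclosed}, I would write $V_{\tld{v}_0}$ as the pushout of two proper quotients $W_{\tld{v}_0}^{(1)}$ and $W_{\tld{v}_0}^{(2)}$ whose slices at embedding $i_0$ are obtained by removing one of the three lower-alcove vertices from the broom. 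Concretely, when $\omega=\un 0$ and $\bf{w}_0=\id$, one takes $\Sigma_{W^{(1)}_{\tld{v}_0},i_0}$ (resp.\ $\Sigma_{W^{(2)}_{\tld{v}_0},i_0}$) to consist of the three weights remaining after discarding $(\omega_1,0)$ (resp.~$(\omega_2,0)$); at every other embedding the slices of $W^{(k)}_{\tld{v}_0}$ agree with those of $V_{\tld{v}_0}$. Tensoring with $\bigotimes_{\tld{v}\neq\tld{v}_0}V_{\tld{v}}$ produces $W^{(1)},W^{(2)}$ and a short exact sequence
\[0\longrightarrow W^{(1)}\cap W^{(2)}\longrightarrow W^{(1)}\oplus W^{(2)}\longrightarrow V\longrightarrow 0\]
of $K$-modules. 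At the embedding $(\tld{v}_0,i_0)$ the intersection slice is the two-weight set $\{(\omega,1),(\un 0,0)\}$, which no longer witnesses the broom: all three of $W^{(1)},W^{(2)},W^{(1)}\cap W^{(2)}$ satisfy the inductive hypothesis (their broom-embedding count has strictly dropped, since the slice at $(\tld{v}_0,i_0)$ either no longer uses the upper-alcove vertex or collapses to a configuration directly handled by Lemma \ref{lemma:4weight} after discarding that embedding from $\mathcal{B}$). Hence $M'_\infty(W^{(1)})$, $M'_\infty(W^{(2)})$, and $M'_\infty(W^{(1)}\cap W^{(2)})$ are cyclic. A pictorial realization of this splitting is recorded in Table \ref{TableBroom}.

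Applying $M'_\infty$ to the above short exact sequence and invoking Lemma \ref{lem:supp} together with Proposition \ref{prop:identify:cmpt}(\ref{item:patchcomp}), cyclicity of $M'_\infty(V)$ amounts to the identity
\[I(W^{(1)})\cap I(W^{(2)})\ +\ \bigl(I(W^{(1)})+I(W^{(2)})\bigr)\cdot R'_\infty(\tau)\ =\ I(V)\]
inside $R'_\infty(\tau)$, where $I(\cdot)$ denotes the intersection of the prime ideals $\mathfrak{p}(\sigma)$ for $\sigma$ running through the Jordan--H\"older factors. Since $R'_\infty(\tau)$ is formally smooth over $\bigotimes_{\tld{v}}\ovl{R}_{\ovl{\fM}_{\tld{v}},\bf{w}_{\tld{v}}}^{\expl,\nabla}$, this identity reduces to an identity of ideals in $\ovl{R}_{\ovl{\fM}_{\tld{v}_0},\bf{w}_0}^{\expl,\nabla}$, which is exactly furnished by Lemma \ref{lem:ideal} when $\bf{w}_0=\id$ and by Lemma \ref{lem:ideal:1:alpha} when $\bf{w}_0=\alpha$ (the cases $\bf{w}_0=\beta,\gamma$ are obtained by symmetry, cycling the roles of the three simple reflections). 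A standard argument using Nakayama and the fact that $M'_\infty(V)$ is maximal Cohen--Macaulay over its support then upgrades this to cyclicity of $M'_\infty(V)$, completing the induction.

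The main obstacle is bookkeeping: making sure that the quotients $W^{(1)}, W^{(2)}$ chosen at embedding $(\tld{v}_0,i_0)$ are genuinely quotients of $V_{\tld{v}_0}$ (this requires the sub-structure predicted by Theorem \ref{thm structure} together with the broom hypothesis so that the corresponding closed subsets of $\JH(V_{\tld{v}_0})$ are closed in the sense of Proposition \ref{prop:subclosed}), and matching the broom pattern with the appropriate ideal relation from \S\ref{sec:idealrel}. In particular one must verify that the various $\omega\in\{0,\omega_1,\omega_2\}$ and the various shapes $\bf{w}_0\in\{\id,\alpha,\beta,\gamma\}$ can all be reduced, via the outer automorphisms of $\tld{W}^{\der}$ acting on the extension graph, to the two model cases explicitly computed in Lemmas \ref{lem:ideal} and \ref{lem:ideal:1:alpha}; the relevant symmetry is exactly the rotational symmetry of $\Sigma_0$ noted in Table \ref{TableExtGraph} and the order-three symmetry of $\Adm^+(\eta)$.
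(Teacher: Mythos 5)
Your high-level plan — split $V$ at an offending embedding into two pieces, apply the inductive hypothesis to each, and glue via an ideal relation from \S\ref{sec:idealrel} — matches the paper's. But there are specific errors that need to be corrected.

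First, the induction variable. You induct on $\#\mathcal{B}$, the number of embeddings where $\ell(\tld{w}_{\tld{v},i})\leq 1$; but $\mathcal{B}$ depends only on the shape data $\bf{w}_{\tld{v}}$, not on $V$, so after your decomposition the pieces $W^{(1)},W^{(2)},W^{(3)}$ have the \emph{same} $\mathcal{B}$. Your parenthetical appeal to ``discarding that embedding from $\mathcal{B}$'' requires a change of type at each stage, which you never actually carry out. The paper instead inducts on $n=\#\{(\tld{v},i):\ell(\tld{w}_{\tld{v},i})\leq 1\text{ and }\#\ovl{\Sigma}_{V_{\tld{v}},i}=3\}$, where $\ovl{\Sigma}$ is the image of $\Sigma$ in $\Lambda_W^\mu$; this decreases under the decomposition (the pieces have $\#\ovl{\Sigma}=2$ at the splitting embedding), and the type-change argument is performed once in the base case $n=0$, not repeatedly.

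Second, the short exact sequence is written backwards. If $W^{(1)},W^{(2)}$ are quotients of $V$ with common quotient $W^{(3)}$, the exact sequence is $0\to V\to W^{(1)}\oplus W^{(2)}\to W^{(3)}\to 0$; $V$ is the fiber product (not a pushout) and should be on the left. The notation $W^{(1)}\cap W^{(2)}$ for objects that are quotients of $V$, not subobjects of a common ambient, is not meaningful as written.

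Third, and most seriously, your ideal identity is wrong. Writing $I_j=I(W^{(j)})$, the expression
\[
I_1\cap I_2\ +\ (I_1+I_2)R'_\infty(\tau)\ =\ I(V)
\]
simplifies (since $I_1\cap I_2\subseteq I_1+I_2$) to $I_1+I_2 = I(V) = I_1\cap I_2$, which is false unless $I_1=I_2$. The identity the argument actually requires is $I_1+I_2=I_3$, where $I_3=I(W^{(3)})$ corresponds to the \emph{common quotient}, not to $V$. With that identity, the Mayer--Vietoris computation $R/I_1\times_{R/I_3}R/I_2\cong R/(I_1\cap I_2)$ is the formal step that yields cyclicity. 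This $I_1+I_2=I_3$ is precisely Lemma \ref{lem:ideal} (applied to the $\mathfrak{W},\mathfrak{L}$-ideals in $\ovl{R}^{\expl,\nabla}_{\ovl{\fM}_0,\id}$). Your citation of Lemma \ref{lem:ideal:1:alpha} is misplaced for this lemma: that identity is needed later (in Lemma \ref{lemma:8weight}), not in the broom lemma.
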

\begin{proof}
Note that by condition (\ref{up:each:emb}) one has $(\omega,1) \in \Sigma_{V,i}$ whenever $\ell(\tld{w}^*_i)\leq 1$.
In this case, we assume without loss of generality that $(\omega,0) \in \Sigma_{V,i}$ (because the quotient of a cyclic module is cyclic).
Let $\overline{\Sigma}_{V,i}$ be the image of $\Sigma_{V,i}$ in $\un{\Lambda}_W$ under the natural projection.
We proceed by induction on 
\begin{equation*}
n = \#\big\{ i\in \cJ:\ell(\widetilde{w}^*_i) \leq 1 \textrm{ and } \#\overline{\Sigma}_{V,i} = 3\big\}
\end{equation*}
If $n=0$ then a casewise check shows that there exists $\tld{w}' \in \Adm(\eta)$ such that for all $i\in \cJ$, $\ell(\tld{w}'_i) > 1$ and $\Sigma_{V,i} \subset \Sigma_{(\tld{w}'_i)^{-1}}$. 
For example, if $\Sigma_{V,i} \subset \{(0,1),(0,0),(\eps_1,0)\}$, then one takes $\tld{w}'_i = \beta\alpha t_{\un{1}}$. %
Let $\tau_\cS'$ be $\tau_\cS(s(w')^{-1},\mu+s(\tld{w}')^{-1}(0))$ where $w'\in \un{W}$ is the image of $\tld{w}'$.
Then $M_\infty(V)$ is a cyclic $R_\infty(\tau_\cS')$-module by Lemma \ref{lemma:4weight}.
Hence $M_\infty(V)$ is a cyclic as a $R_\infty$-module, and thus it is a cyclic as an $R_\infty(\tau_\cS)$-module as well.

Suppose now that $n>0$ so that there is an $i'\in \cJ$ such that $\#\overline{\Sigma}_{V_{i'}} = 3$ and $\ell(\tld{w}^*_{i'})\leq 1$.
By Theorem \ref{thm structure} and Propositions \ref{prop:subclosed} and \ref{prop:subgraph}, there are quotients $V^1$ and $V^2$ of $V$ satisfying (\ref{eqn:product}) with the following additional properties %
\begin{enumerate}
\item $\Sigma_{V,i'} = {\Sigma}_{V^1,i'}\cup{\Sigma}_{V^2,i'}$, \label{property:union}
\item $(\omega,0)\in \Sigma_{V^1,i'}\cap\Sigma_{V^2,i'}$,
\item $\#{\overline{\Sigma}}_{V^1,i'},\#{\overline{\Sigma}}_{V^2,i'} = 2$, and
\item ${\Sigma}_{V^j,i} = \Sigma_{V,i}$ for all $i \neq i'$ and $j=1,2$.
\end{enumerate}
For example, if $\Sigma_{V,i'} = \{(0,1),(0,0),(\eps_1,0),(\eps_2,0)\}$, then one takes $\Sigma_{V^1,i'} = \{(0,1),(0,0),(\eps_1,0)\}$ and ${\Sigma}_{V^2,i'} = \{(0,1),(0,0),(\eps_2,0)\}$ (cf.~ Table \ref{TableBroom}).

Let $V^3$ be the quotient of $V^1$ and $V^2$ satisfying (\ref{eqn:product}) such that $\Sigma_{V^3,i} = \Sigma_{V^1,i}\cap\Sigma_{V^2,i}$ for all $i$.
By the inductive hypothesis, $M'_\infty(V^1)$, $M'_\infty(V^2)$, and $M'_\infty(V^3)$ are cyclic.
Let $I_j$ be $\mathrm{Ann}_{R'_\infty(\tau_\cS)} M'_\infty(V^j)$ for $j = 1,2,3$.
By (\ref{property:union}), we have an injection $V \into V^1 \oplus V^2$.
In fact, this injection lands in $V^1 \times_{V^3} V^2$.
Comparing lengths, we see see that the injection $V \into V^1 \times_{V^3} V^2$ is in fact an isomorphism, which, using the exactness of $M_\infty$ (see Definition \ref{minimalpatching}), gives the first isomorphism below:
\begin{align*}
M'_\infty(V) &\cong M'_\infty(V^1) \times_{M'_\infty(V^3)} M'_\infty(V^2) \\
&\cong R'_\infty(\tau_\cS)/I_1 \times_{R'_\infty(\tau_\cS)/I_3} R'_\infty(\tau_\cS)/I_2 \\
&\cong R'_\infty(\tau_\cS)/I_1 \times_{R'_\infty(\tau_\cS)/(I_1+I_2)} R'_\infty(\tau_\cS)/I_2 \\
&\cong R'_\infty(\tau_\cS)/(I_1\cap I_2).
\end{align*}
The second isomorphism follows from Lemma \ref{lem:supp}. 
The third isomorphism follows from the fact that $I_1+I_2 = I_3$ by Theorem \ref{thm:matching}, Table \ref{Table:intsct}, Lemmas \ref{lem:ideal} and \ref{lem:ideal:1:alpha}(\ref{it:ideal:1:alpha:3}).
We deduce in particular that $M'_\infty(V)$ is cyclic. %
\end{proof}
We remind the reader that our parametrization of Serre weights $\sigma_{(\omega,a)}$ is ``centered at $\rhobar_\cS$''. Thus $\sigma_{(\omega,a)}\in W^?(\rhobar_\cS)$ exactly when $(\omega,a)\in r(\Sigma)$, and $\sigma_{(\omega,a)}\in \JH(\ovl{\sigma(\tau_\cS)})$ exactly when $(\omega,a)\in(\tld{w}^*)^{-1}(\Sigma)$.

\begin{lemma} \label{lemma:6weight}
Suppose that for all $i\in\cJ$ either $\ell(\widetilde{w}^*_i) > 1$ or 
\[\Sigma_{V,i} \subset (\tld{w}_i^*)^{-1} (\Sigma_0 \setminus \{(\nu_1,0),(\nu_2,1),(\nu_3,0)\})\]
where $(\nu_1,\nu_2,\nu_3)$ is $(\eps_1-\eps_2,\eps_1, \eps_1+\eps_2)$, $(\eps_2-\eps_1, \eps_2, \eps_1+\eps_2)$, or $(\eps_1-\eps_2,0,\eps_2-\eps_1)$.
Then $M_\infty(V)$ is a cyclic $R_\infty(\tau_\cS)$-module.
\end{lemma}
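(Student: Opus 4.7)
The plan is to induct on
\[
N(V)\ \defeq\ \sum_{(\tld{v},i)\,:\,\ell(\tld{w}_{\tld{v},i})\leq 1} \#\Sigma_{V_{\tld{v}},i},
\]
reducing to Lemma \ref{lemma:broom} as the base case. The strategy is the same gluing argument as in the proof of Lemma \ref{lemma:broom}, but now reducing to that lemma rather than to Lemma \ref{lemma:4weight}, and using the three-component ideal identities of \S \ref{sec:idealrel} in place of Lemma \ref{lem:ideal}. When $\Sigma_{V_{\tld{v}},i}$ is broom-admissible at every pair $(\tld{v},i)$ with $\ell(\tld{w}_{\tld{v},i})\leq 1$ we invoke Lemma \ref{lemma:broom} and are done.

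For the inductive step, pick a pair $(\tld{v}',i')$ with $\ell(\tld{w}_{\tld{v}',i'})\leq 1$ and $\Sigma_{V_{\tld{v}'},i'}$ not broom-admissible. By the hypothesis, $\Sigma_{V_{\tld{v}'},i'}$ is contained in $(\tld{w}_{\tld{v}',i'}^*)^{-1}$ of one of the three six-element hexagons described in the statement. A casewise inspection (for each of the three hexagons and each admissible shape $\tld{w}_{\tld{v}',i'}\in\{\id,\alpha,\beta,\gamma\}$) produces subsets $\Sigma^j \subsetneq \Sigma_{V_{\tld{v}'},i'}$, $j=1,2,3$, with $\Sigma^1\cup\Sigma^2 = \Sigma_{V_{\tld{v}'},i'}$, $\Sigma^3 = \Sigma^1\cap\Sigma^2$, with $\Sigma^1$ broom-admissible, and $\Sigma^2, \Sigma^3$ still hexagon-admissible but with strictly smaller total size. (A typical example: for the hexagon $\{(\omega_2-\omega_1,0),(0,1),(\omega_2,1),(0,0),(\omega_1,0),(\omega_2,0)\}$, take $\Sigma^1=\{(0,1),(0,0),(\omega_1,0),(\omega_2,0)\}$ and $\Sigma^2=\{(\omega_2-\omega_1,0),(\omega_2,1),(0,0),(\omega_1,0),(\omega_2,0)\}$, so that $\Sigma^3=\{(0,0),(\omega_1,0),(\omega_2,0)\}$; cf.~the pictorial record in Table \ref{TableL6W}.) Using Theorem \ref{thm structure} together with Propositions \ref{prop:subclosed} and \ref{prop:subgraph}, each $\Sigma^j$ corresponds to a well-defined quotient $V^j_{\tld{v}'}$ of $\taubar^{\sigma_{\tld{v}'}}_{\tld{v}'}$; set $V^j \defeq V^j_{\tld{v}'}\otimes \bigotimes_{\tld{v}\neq \tld{v}'}V_{\tld{v}}$. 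A length count gives the pullback decomposition $V\cong V^1\oplus_{V^3}V^2$.

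By the inductive hypothesis, $M'_\infty(V^j)$ is cyclic for $j=1,2,3$; by Lemma \ref{lem:supp}, $M'_\infty(V^j)\cong R'_\infty(\tau)/I_j$ where $I_j=\bigcap_{\sigma\in \JH(V^j)\cap W^?(\rhobar_{\tld{\Sigma}},\taubar)}\mathfrak{p}(\sigma)R'_\infty(\tau)$. Exactness of $M_\infty$ and the decomposition give
\[
M'_\infty(V)\cong R'_\infty(\tau)/I_1\ \times_{R'_\infty(\tau)/(I_1+I_2)}\ R'_\infty(\tau)/I_2,
\]
which collapses to $R'_\infty(\tau)/(I_1\cap I_2)$ precisely when $I_1+I_2=I_3$. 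This last identity is the content of Lemma \ref{lem:ideal:1:id} (when $\tld{w}_{\tld{v}',i'}=\id$) or Lemma \ref{lem:ideal:1:alpha} (when $\tld{w}_{\tld{v}',i'}\in\{\alpha,\beta,\gamma\}$), pulled back to $R'_\infty(\tau)$ via the formally smooth map $\Spf R'_\infty(\tau)\to \Spf\big(\bigotimes_{\tld{v}}\ovl{R}^{\expl,\nabla}_{\ovl{\fM}_{\tld{v}},\bf{w}_{\tld{v}}}\big)$, after identifying each $\mathfrak{p}(\sigma)$ with its explicit generator in $\ovl{R}^{\expl,\nabla}_{\ovl{\fM}_{\tld{v}'},\tld{w}_{\tld{v}',i'}}$ recorded in Table \ref{Table:intsct} and \S \ref{sec:match}.

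The main obstacle is the combinatorial casework: for each of the three hexagon patterns and each shape $\tld{w}_{\tld{v}',i'}\in\{\id,\alpha,\beta,\gamma\}$, one must exhibit a splitting $(\Sigma^1,\Sigma^2,\Sigma^3)$ that is simultaneously (i) compatible with the extension-graph-predicted submodule structure of $\taubar^{\sigma_{\tld{v}'}}_{\tld{v}'}$ so that the pullback decomposition $V\cong V^1\oplus_{V^3}V^2$ holds, and (ii) matches, after the identifications of \S \ref{sec:match}, exactly one of the ideal identities $I_1+I_2=I_3$ established in \S \ref{sec:idealrel}. Once these verifications are carried out in all cases, the induction closes.
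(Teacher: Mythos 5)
The proposal takes a genuinely different route from the paper and, as written, it does not work. The paper's inductive step for this lemma uses a submodule/kernel gluing argument invoking \cite[Lemma 10.1.13]{EGS} (with $M'' = \ker(M_\infty(V)\surj M_\infty(V^1))$ and $M' = M_\infty(V^2)$, where $V^1$ is a \emph{quotient} of $V$ obtained by removing one $(\nu_4,1)$ and $V^2$ is a \emph{submodule} of $V$, a Weyl segment with $\#\Sigma_{V^2_{\tld{v}'},i'}=2$). You instead try to replay the pullback decomposition $V\cong V^1\oplus_{V^3}V^2$ from Lemma \ref{lemma:broom}. The obstacle is structural: for that decomposition both $V^1$ and $V^2$ must be quotients of $V$, so $N_1 \defeq \JH(V)\setminus\Sigma^1$ and $N_2 \defeq \JH(V)\setminus\Sigma^2$ must both be closed subsets of $\JH(V)$ (Proposition \ref{prop:subclosed}), and $\Sigma^1\cup\Sigma^2=\JH(V)$ forces $N_1\cap N_2=\emptyset$. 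But in the hexagon $\Sigma_0\setminus\{(\nu_1,0),(\nu_2,1),(\nu_3,0)\}$ with the cosocle being the unique upper-alcove weight at graph distance $0$, the cosocle points to all four distance-one weights, and all four of those point to the unique distance-two weight. Hence any nonempty closed subset not containing the cosocle already contains the distance-two weight; two disjoint nonempty closed subsets cannot coexist. In your typical example, removing $\{(0,1)\}$ does not define a quotient of $V$ because $\{(0,1)\}$ is the cosocle and is not closed, so $V^2$ simply does not exist as a quotient.

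This is precisely why the paper reserves the pullback argument and the three-term ideal identities (Lemmas \ref{lem:ideal:1:id}, \ref{lem:ideal:1:alpha}) for Lemma \ref{lemma:8weight}, where the eight-element set genuinely decomposes as a union of two hexagons, both realized as quotients. For the six-element case one must instead pair a quotient $V^1$ with a \emph{submodule} $V^2$ of $V$ and invoke \cite[Lemma 10.1.13]{EGS} to glue; no ideal-sum identity of the type you cite is needed here. Your induction variable $N(V)$ is also different from the paper's $n=\#\{(\tld{v},i):\#(\Sigma_{V_{\tld{v}},i}\cap r(\Sigma_0))=5\}$, and the paper's base case requires a preliminary type-change step (so that at each short embedding either $\ell(\tld{w}_{\tld{v},i})>1$ or $\Sigma_{V_{\tld{v}},i}$ contains all five modular weights) before reducing to Lemma \ref{lemma:broom}; your induction skips this reduction.
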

\begin{proof}
We note that $\#(\Sigma_{V,i}\cap r(\Sigma_0)) \leq 5$ for all $i\in \cJ$.
We proceed by induction on $n = \#\{ i\in \cJ:\#(\Sigma_{V,i}\cap r(\Sigma_0)) = 5\}$.
There is a unique $\tld{w}'_i \in \Adm(2,1,0)$ such that 
\begin{equation}
\label{eq:intsct:types}
(\tld{w}_i^*)^{-1} (\Sigma_0 \setminus \{(\nu_1,0),(\nu_2,1),(\nu_3,0)\})= (\tld{w}_i^*)^{-1} (\Sigma_0) \cap (\tld{w}'_i)^{-1} (\Sigma_0).
\end{equation}
One can check that if $\#((\tld{w}_i^*)^{-1} (\Sigma_0 \setminus \{(\nu_1,0),(\nu_2,1),(\nu_3,0)\}) \cap r(\Sigma_0)) \leq 4$, then one of $\ell(\tld{w}^*_i)$ and $\ell(\tld{w}'_i)$ is strictly greater than one.
If we change the type $\tau_\cS$ so that $\tld{w}^*_i$ is replaced by $\tld{w}'_i$, but $\tld{w}^*_{i'}$ are unchanged for $i' \neq i$, there is still a surjection $\ovl{\sigma}(\tau_\cS)^\sigma\onto V$ by (\ref{eq:intsct:types}) and Theorem \ref{thm structure}.
We can therefore assume without loss of generality that for each $i$, either $\ell(\tld{w}^*_i)>1$ or $\#((\tld{w}_i^*)^{-1} (\Sigma_0 \setminus \{(\nu_1,0),(\nu_2,1),(\nu_3,0)\}) \cap r(\Sigma_0)) = 5$.

Suppose that $n=0$.
If $\ell(\tld{w}^*_i) \leq 1$ for some $i\in \cJ$, then $\#((\tld{w}_i^*)^{-1} (\Sigma_0 \setminus \{(\nu_1,0),(\nu_2,1),(\nu_3,0)\}) \cap r(\Sigma_0)) = 5$ by assumption.
On the other hand, $\Sigma_{V,i}\cap r(\Sigma_0) < 5$, which implies that $\Sigma_{V,i}\cap r(\Sigma_0) \subset \{(\omega,1),(0,0),(\eps_1,0),(\eps_2,0)\}$ where $\omega = 0, \eps_1,$ or $\eps_2$ by considerations of submodule structure.
Hence, there is a quotient $V'$ of $V$ such that $V'$ is of the form in Lemma \ref{lemma:broom} and the induced map $M_\infty(V) \ra M_\infty(V')$ is an isomorphism by exactness of $M_\infty$ and Theorem \ref{SWC}.
We are then done by Lemma \ref{lemma:broom}.

Suppose now that $n > 0$.
Suppose that $\#(\Sigma_{V,i'}\cap r(\Sigma_0)) = 5$ for some $i'\in \cJ$.
This in particular implies that $\#(\Sigma_{V,i'})=6$, by consideration of submodule structure given in Theorem \ref{thm structure}.
Then $V$ has a unique quotient $V^1$ such that $\Sigma_{V^1,i} = \Sigma_{V,i}$ if $i \neq i'$ and $\Sigma^1_{V,i'} = {\Sigma}_{V,i'}\setminus \{(\tld{w}^*_{i'})^{-1}(\nu_4,1)\}$ where $\nu_4 = 0,\eps_1,$ or $\eps_2$ and $\nu_4 \neq \nu_2$ (cf.~ Figure \ref{TableL6W} for an example in the particular case where $\tld{w}_{i'}=t_{\un{1}}$).
Then $M_\infty(V^1)$ is cyclic by the induction hypothesis.
There is also a submodule $V^2 \subset V$ such that $\Sigma_{V^2,i} = \Sigma_{V,i}$ if $i \neq i'$ and $\# \Sigma_{V^2,i'} = \#\big(\Sigma_{V^2,i'} \cap r(\Sigma_0) \big)= 2$ (cf.~ Figure \ref{TableL6W}; note also that $(\tld{w}^*_{i'})^{-1}(\nu_4,1)\in {\Sigma}_{V^2,i'}$ since $\soc\, V^2\subseteq \soc\,V$).
One can check that $M_\infty(V^2)$ is cyclic by the inductive hypothesis.
Then letting $M'' = \ker(M_\infty(V) \surj M_\infty(V^1))$ and $M' = M_\infty(V^2)$, \cite[Lemma 10.1.13]{EGS} implies that $M_\infty(V)$ is cyclic.
\end{proof}

\begin{lemma}\label{lemma:8weight}
Suppose that for all $i\in\cJ$ either $\ell(\widetilde{w}_i) > 1$ or $\Sigma_{V,i} \subset (\tld{w}_i^*)^{-1} (\Sigma_0 \setminus \{(\nu,0)\})$ where $\nu$ is $\eps_1-\eps_2,\eps_2-\eps_1,$ or $\eps_1+\eps_2$.
Then $M_\infty(V)$ is a cyclic $R_\infty(\tau_\cS)$-module.
\end{lemma}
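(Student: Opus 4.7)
The plan is to mimic the induction-and-glue strategy used in Lemmas~\ref{lemma:broom} and~\ref{lemma:6weight}, inducting on
\[
n \defeq \#\big\{(\tld{v},i)\,:\, \ell(\tld{w}_{\tld{v},i})\leq 1 \text{ and } \#\Sigma_{V_{\tld{v}},i}=8\big\}.
\]
When $n=0$, every short-shape embedding carries at most seven weights. A routine use of Theorem~\ref{thm structure} and Proposition~\ref{prop:subclosed} to trim redundant pieces in each such embedding should produce a quotient $V'$ of $V$ which satisfies the hypothesis of Lemma~\ref{lemma:6weight} and for which the induced map $M_\infty(V)\to M_\infty(V')$ is an isomorphism, after which Lemma~\ref{lemma:6weight} finishes the base case.

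For the inductive step, fix an embedding $(\tld{v}',i')$ with $\tld{w}_{\tld{v}',i'}\in\{\id,\alpha,\beta,\gamma\}$ and $\#\Sigma_{V_{\tld{v}'},i'}=8$. Using the extension-graph description of the submodule structure of $\taubar^\sigma$ from Theorem~\ref{thm structure}(\ref{item:point}) together with Proposition~\ref{prop:subclosed}, we would select a quotient $V^1$ of $V$ and a submodule $V^2\subseteq V$ satisfying: at embeddings other than $(\tld{v}',i')$ the indexing sets are unchanged; the set $\Sigma_{V^1_{\tld{v}'},i'}$ has seven elements, obtained by dropping one ``corner'' weight so that $V^1$ satisfies the hypothesis of the present lemma with $n$ decreased by one; and $\Sigma_{V^2_{\tld{v}'},i'}$ has six elements, tailored so that $V^2$ satisfies the hypotheses of Lemma~\ref{lemma:6weight} and so that $V^2$ surjects onto $\ker(V\onto V^1)$. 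The inductive hypothesis applied to $V^1$ and Lemma~\ref{lemma:6weight} applied to $V^2$ then yield that both $M_\infty(V^1)$ and $M_\infty(V^2)$ are cyclic.

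The concluding step is an application of \cite[Lemma~10.1.13]{EGS} to the induced exact sequence $0\to \ker(M_\infty(V)\onto M_\infty(V^1))\to M_\infty(V) \to M_\infty(V^1) \to 0$; this invocation requires a sum-of-ideals identity $I_1+I_2=I_3$ among the annihilator ideals in $\ovl{R}'_\infty(\tau)$ of $M'_\infty(V^1)$, $M'_\infty(V^2)$, and a suitable common quotient. Pulling back through the formally smooth diagram~\eqref{diag:f.s.prime} reduces the identity to the tensor factor $\ovl{R}^{\expl,\nabla}_{\ovl{\fM}_{\tld{v}'},\bf{w}_{\tld{v}'}}$ at the embedding $(\tld{v}',i')$, where it is supplied by Lemma~\ref{lem:ideal:1:id} when $\tld{w}_{\tld{v}',i'}=\id$ and by Lemma~\ref{lem:ideal:1:alpha} when $\tld{w}_{\tld{v}',i'}\in\{\alpha,\beta,\gamma\}$; the explicit dictionary between Serre weights and prime ideals is furnished by Table~\ref{Table:intsct} and Proposition~\ref{prop:identify:cmpt}.

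The hard part will be the careful matching between the combinatorially natural splitting $(V^1,V^2)$ of the eight-weight configuration and the \emph{specific} triples of ideals that appear on the two sides of the identities in Lemmas~\ref{lem:ideal:1:id} and~\ref{lem:ideal:1:alpha}; one must verify that the chosen ``corner'' weight to be dropped into $V^1$, and the chosen six-weight submodule $V^2$, correspond exactly to the decomposition realized by those ideal identities. The threefold symmetry of $\Sigma_0$ permuting the three possible excluded weights $\nu\in\{\omega_1-\omega_2,\omega_2-\omega_1,\omega_1+\omega_2\}$, together with the matching symmetry among the minimal primes of $\ovl{R}^{\expl,\nabla}_{\ovl{\fM}_{\tld{v}'},\tld{w}_{\tld{v}',i'}}$, should reduce this verification to a single representative case, but it is at this bookkeeping step that the argument will demand care.
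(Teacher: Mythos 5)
Your proposal diverges from the paper's argument in two respects, and both create gaps.

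First, the induction variable is wrong. You induct on the number of short-shape embeddings with \emph{exactly eight} weights, whereas the paper inducts on the number of short-shape embeddings $(\tld v,i)$ for which $\Sigma_{V_{\tld v},i}$ is \emph{not contained in any} of the three six-element sets $(\tld{w}_{\tld v,i}^*)^{-1}(\Sigma_0\setminus\{(\nu_1,0),(\nu_2,1),(\nu_3,0)\})$. The paper's choice makes the base case $n=0$ literally the hypothesis of Lemma~\ref{lemma:6weight}. With your $n$, the base case $n=0$ only says every short-shape embedding has at most seven weights, and a seven-element closed subset of $(\tld w^*)^{-1}(\Sigma_0\setminus\{(\nu,0)\})$ need not lie in any of the three six-element configurations (for example, excluding a corner and a middle weight such as $(0,0)$ fails all three). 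Trimming such a seven-element set down to a six-element one is not ``routine'': the extra weight one would need to delete can be modular (there are six modular weights at a short shape), and then $M_\infty(V)\to M_\infty(V')$ is not an isomorphism, so your reduction to Lemma~\ref{lemma:6weight} breaks.

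Second, your gluing step is misaligned with the ideal identities you quote. You propose a quotient $V^1$ together with a submodule $V^2\subset V$ and an application of \cite[Lemma 10.1.13]{EGS}, i.e.\ the argument used in Lemma~\ref{lemma:6weight}. The paper instead splits $V$ into \emph{two quotients} $V^1$ and $V^2$, each satisfying the Lemma~\ref{lemma:6weight}-type constraint at $(\tld v',i')$ (so the induction hypothesis, in the paper's sense, applies to both), and then glues them via the fiber-product argument of Lemma~\ref{lemma:broom}: $M'_\infty(V)\cong M'_\infty(V^1)\oplus_{M'_\infty(V^3)}M'_\infty(V^2)$, reduced to the identity $I_1+I_2=I_3$ on annihilator ideals. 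That sum-of-ideals statement is precisely what Lemmas~\ref{lem:ideal:1:id} and~\ref{lem:ideal:1:alpha} provide, e.g.\ $(\fW_1\cap\fW_2\cap\fL_3)+(\fW_1\cap\fW_3\cap\fL_2)=\fW_1\cap\fL_2\cap\fL_3$. It is not the input shape that \cite[Lemma 10.1.13]{EGS} expects; so even granting your decomposition, the ideal computation you would actually need is not supplied by the paper's preliminaries and is not carried out in your proposal. Without those ideal identities (or substitutes you would have to prove), the gluing step in your version does not go through.
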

In the setting of Lemma \ref{lemma:8weight} note that when $\ell(\tld{w}_i)\leq 1$ the condition $\Sigma_{V,i} \subset (\tld{w}_i^*)^{-1} (\Sigma_0 \setminus \{(\nu,0)\})$, $\nu\in\{\eps_1-\eps_2,\eps_2-\eps_1,\eps_1+\eps_2\}$ is equivalent to $\# \Sigma_{V,i}<9$.
\begin{proof}
We proceed by induction on 
\begin{align*}
n=\#\{i\in \cJ : &\ell(\tld{w}^*_i) \leq 1 \textrm{ and }\Sigma_{V,i} \not\subset (\tld{w}_i^*)^{-1}(\Sigma_0 \setminus \{(\nu_1,0),(\nu_2,1),(\nu_3,0)\}) \textrm{ for } \\
&(\nu_1,\nu_2,\nu_3) = (\eps_1-\eps_2,\eps_1, \eps_1+\eps_2), (\eps_2-\eps_1, \eps_2, \eps_1+\eps_2), \textrm{ and } (\eps_1-\eps_2,0,\eps_2-\eps_1)\}.
\end{align*}
The case $n=0$ is covered by Lemma \ref{lemma:6weight}.
Suppose that $n>0$, and that $i' \in \cJ$ with $\ell(\tld{w}^*_{i'})\leq 1$ and
\begin{equation*}
\Sigma_{V,i'} \not\subset (\tld{w}_{i'}^*)^{-1}(\Sigma_0 \setminus \{(\nu_1,0),(\nu_2,1),(\nu_3,0)\})
\end{equation*} for $(\nu_1,\nu_2,\nu_3)\in\{ (\eps_1-\eps_2,\eps_1, \eps_1+\eps_2),\ (\eps_2-\eps_1, \eps_2, \eps_1+\eps_2),\ (\eps_1-\eps_2,0,\eps_2-\eps_1)\}$.
Assume without loss of generality that $\Sigma_{V,i'} = (\tld{w}_{i'}^*)^{-1}(\Sigma_0 \setminus \{(\nu,0)\})$, as the quotient of a cyclic module is again cyclic.
We can further assume without loss of generality that $\nu = \eps_1+\eps_2$ (which implies that if $\sigma = \sigma_{(\omega,a)}$, then $(\omega_{i'},a_{i'}) = (\tld{w}^*_{i'})^{-1}(0,1)$).
Then there are quotients $V^1$ and $V^2$ of $V$ such that 
\begin{enumerate}
\item ${\Sigma}_{V,i'} = \Sigma_{V^1,i'} \cup \Sigma_{V^2,i'}$,
\item $\Sigma_{V^j,i} = \Sigma_{V,i}$ for $i \neq i'$ and $j=1,2$,
\item $\Sigma_{V^1,i'} \subset (\tld{w}_{i'}^*)^{-1}(\Sigma_0 \setminus \{(\eps_1-\eps_2,0), (\eps_1,1), (\eps_1+\eps_2,0)\})$, and
\item $\Sigma_{V^2,i'} \subset (\tld{w}_{i'}^*)^{-1}(\Sigma_0 \setminus \{(\eps_2-\eps_1,0), (\eps_2,1), (\eps_1+\eps_2,0)\})$
\end{enumerate}
(cf.~ Figure \ref{TableL8W} for an example when $\tld{w}_{i'}=\alpha$).
We now argue by induction as in the proof of Lemma \ref{lemma:broom} using Theorem \ref{thm:matching}, Lemmas \ref{lem:ideal:1:id} and \ref{lem:ideal:1:alpha}(\ref{it:ideal:1:alpha:1}) and \ref{lem:ideal:1:alpha}(\ref{it:ideal:1:alpha:2}).
Note that the analogue of Lemma \ref{lem:ideal:1:alpha} for the shapes $\beta$ and $\gamma$ hold symmetrically (see Remark \ref{rmk:chg:var}). 
\end{proof}

\begin{lemma}\label{lemma:9weight}
With $V$ as described before Lemma \ref{lemma:broom}, $M_\infty(V)$ is a cyclic $R_\infty(\tau_\cS)$-module.
\end{lemma}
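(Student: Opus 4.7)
The plan is to proceed by induction on
\[
n \defeq \#\{(\tld{v},i) : \ell(\tld{w}_{\tld{v},i}) \leq 1 \text{ and } \Sigma_{V_{\tld{v}},i} \supseteq (\tld{w}_{\tld{v},i}^*)^{-1}\{(\omega_1-\omega_2,0),(\omega_2-\omega_1,0),(\omega_1+\omega_2,0)\}\},
\]
which counts the short-shape embeddings at which $V_{\tld{v}}$ contains all three ``corner'' weights of $\Sigma_0$. The base case $n=0$ is exactly the hypothesis of Lemma \ref{lemma:8weight}, so $M_\infty(V)$ is cyclic in that case.

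For the inductive step, fix an embedding $(\tld{v}', i')$ witnessing $n > 0$. Using the submodule structure of $\taubar_{\tld{v}'}^{\sigma_{\tld{v}'}}$ provided by Theorem \ref{thm structure} together with Propositions \ref{prop:subclosed} and \ref{prop:subgraph}, I would construct two quotients $V^1$ and $V^2$ of $V$ that agree with $V$ outside the $(\tld{v}', i')$ coordinate and whose $(\tld{v}', i')$ components each exclude a single, distinct corner weight; the common quotient $V^3$ then excludes both of these corners at $(\tld{v}', i')$. All three modules $V^1, V^2, V^3$ have invariant strictly less than $n$, so by the inductive hypothesis (and Lemma \ref{lemma:8weight} when the invariant drops to zero), the patched modules $M_\infty(V^j)$ are cyclic $R_\infty(\tau)$-modules for $j = 1, 2, 3$.

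Since $\Sigma_V = \Sigma_{V^1} \cup \Sigma_{V^2}$ at the embedding $(\tld{v}', i')$, a length count as in the proof of Lemma \ref{lemma:broom} identifies $V$ with the fibre product $V^1 \times_{V^3} V^2$. The exactness of $M_\infty$ combined with Lemma \ref{lem:supp} then yields
\[
M'_\infty(V) \cong R'_\infty(\tau)/I(V^1) \times_{R'_\infty(\tau)/I(V^3)} R'_\infty(\tau)/I(V^2),
\]
so provided the ideal identity $I(V^1) + I(V^2) = I(V^3)$ holds in $R'_\infty(\tau)$, this fibre product equals $R'_\infty(\tau)/(I(V^1) \cap I(V^2))$, and cyclicity of $M_\infty(V)$ follows.

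The principal obstacle is therefore establishing this ideal identity in the ring $\overline{R}^{\expl,\nabla}_{\overline{\fM}_{\tld{v}'},\bf{w}_{\tld{v}'}}$ for each possible short shape $\tld{w}_{\tld{v}', i'} \in \{\id, \alpha, \beta, \gamma\}$. Via the matching of Serre weights with minimal prime ideals in Table \ref{Table:intsct}, this reduces to explicit intersection-theoretic computations of the kind carried out in Lemmas \ref{lem:ideal:1:id} and \ref{lem:ideal:1:alpha}, using the presentations of $\overline{R}^{\expl,\nabla}_{\overline{\fM}_{\tld{v}'}, \id}$ and $\overline{R}^{\expl,\nabla}_{\overline{\fM}_{\tld{v}'}, \alpha}$ from \cite[Propositions 8.4 and 8.11]{LLLM}; the $\beta$ and $\gamma$ cases would follow from the $\alpha$ case by the obvious symmetry of the setup.
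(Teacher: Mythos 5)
Your inductive setup matches the paper's: the same invariant $n$ (counting short-shape embeddings where $\Sigma_{V_{\tld{v}},i}$ contains all three corner weights $(\tld w^*)^{-1}(\nu,0)$, $\nu\in\{\omega_1-\omega_2,\omega_2-\omega_1,\omega_1+\omega_2\}$), and the same base case via Lemma \ref{lemma:8weight}. The gap is in the inductive step. You propose a two-quotient fibre-product gluing, as in Lemmas \ref{lemma:broom} and \ref{lemma:8weight}, which requires that two distinct corner weights can each be removed from $V$ by passing to a quotient; this in turn forces both corners to lie in the socle of $V$ at the $(\tld{v}',i')$-embedding. But by Theorem \ref{thm structure}, the submodule structure of $\taubar_{\tld{v}'}^{\sigma_{\tld{v}'}}$ in that embedding is controlled by the extension graph with respect to $\sigma_{\tld{v}'}$, and the three corners sit at distinct graph distances from the cosocle, so at most one of them is in the socle. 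For the length-one shapes the unique socle corner is moreover the modular one (see Table \ref{TableL9W} for $\gamma$: only $\sigma_{(\gamma^+(\omega_1+\omega_2),0)}$ is at the bottom; $\sigma_{(\gamma^+(\omega_1-\omega_2),0)}$ and $\sigma_{(\gamma^+(\omega_2-\omega_1),0)}$ still point downward). So the quotients $V^1$, $V^2$ that you need cannot both be formed, and the decomposition $V\cong V^1\times_{V^3}V^2$ does not exist. Separately, the ideal identity $I(V^1)+I(V^2)=I(V^3)$ you propose to verify is not among the intersection computations established in \S\ref{sec:idealrel}, and you would have to supply it.

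The paper's proof sidesteps the obstruction by switching to the quotient-plus-\emph{submodule} gluing already used for Lemma \ref{lemma:6weight}: form a single quotient $V^1$ removing the socle corner $(\tld w^*)^{-1}(\nu,0)$; if $M_\infty(V)\ra M_\infty(V^1)$ is an isomorphism (the removed corner non-modular), you are done by induction; otherwise, Theorem \ref{thm structure} supplies a length-two \emph{submodule} $V^2\subset V$ (a Weyl segment with both constituents modular and containing the removed corner), and one concludes via \cite[Lemma 10.1.13]{EGS} applied to $M'=M_\infty(V^2)$ and $M''=\ker(M_\infty(V)\onto M_\infty(V^1))$. That gluing lemma, unlike the fibre product of two quotients, does not ask for two distinct corners to be simultaneously removable, which is precisely why the paper reaches for it here.
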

\begin{proof}
We proceed by induction on 
\begin{align*}
n=\#\{i\in \cJ:\ &\ell(\tld{w}^*_i) \leq 1 \textrm{ and }\#\Sigma_{V,i}=9\}.
\end{align*}
(In the definition of $n$, note that the condition $\#\Sigma_{V,i}=9$ can be replaced by: for all $\nu\in\{ \eps_1-\eps_2, \eps_2-\eps_1, \eps_1+\eps_2\}$ one has $\Sigma_{V,i} \not\subset (\tld{w}_i^*)^{-1}(\Sigma_0 \setminus \{(\nu,0)\})$.)
The case $n=0$ is covered by Lemma \ref{lemma:8weight}.
Suppose that $n>0$.
Let $i'\in \cJ$ be such that $\ell(\tld{w}^*_{i'}) \leq 1$ and $\#\Sigma_{V,i'}=9$ (equivalently, $\ell(\tld{w}^*_{i'}) \leq 1$ and $\Sigma_{V,i'} = (\tld{w}_{i'}^*)^{-1}(\Sigma_0)$).
Then $V$ has a quotient $V^1$ such that $\Sigma_{V^1,i} = \Sigma_{V,i}$ if $i \neq i'$ and $\Sigma_{V^1,i'} = \Sigma_{V,i'}\setminus \{(\tld{w}_{i'}^*)^{-1}(\nu,0)\}$ where $\nu = \eps_1-\eps_2,\eps_2-\eps_1,$ or $\eps_1+\eps_2$ (cf.~ Figure \ref{TableL9W} for an example when $\tld{w}^*_{i'}=\gamma^+ t_{\un{1}}$).
If the map $M_\infty(V) \ra M_\infty(V^1)$ is an isomorphism, we are done.
Otherwise, there is a submodule $V^2$ of $V$ such that $\Sigma_{V^2,i} = {\Sigma}_{V,i}$ if $i \neq i'$, $\#\Sigma_{V^2,i'} =\#\big(\Sigma_{V^2,i'} \cap r(\Sigma_0) \big)= 2$, and $(\tld{w}_{i'}^*)^{-1}(\nu,0) \in \Sigma_{V^2,i'}\subset r(\Sigma_0)$.
Then one argues as in the proof of Lemma \ref{lemma:6weight}.
\end{proof}

\begin{proof}[Proof of Theorem \ref{thm:cyclic}]
Lemma \ref{lemma:9weight} implies that $M_\infty(\ovl{\sigma}(\tau_\cS)^\sigma)$ is a cyclic $\ovl{R}_\infty(\tau)$-module.
Nakayama's Lemma implies that $M_\infty(\sigma(\tau_\cS)^\sigma)$ is a cyclic $R_\infty(\tau_\cS)$-module.
By Theorem \ref{SWC}, $M_\infty(\sigma)$, and thus $M_\infty(\sigma(\tau_\cS)^\sigma)$, is nonzero.
By Theorem \ref{thm:BM}, $R_\infty(\tau_\cS)$ is irreducible and reduced, so that Definition \ref{minimalpatching}(\ref{dimd}) implies that $M_\infty(\sigma(\tau_\cS)^\sigma)$ is a faithful $R_\infty(\tau_\cS)$-module.
Since a faithful cyclic module is free of rank one, we are done.
\end{proof}

Finally, we show that the hypothesis of Theorem \ref{thm:cyclic} is actually neccessary:
\begin{prop}
Let $\tau_\cS$ be a $13$-generic tame type and $\sigma \defeq F(\lambda) \in W^?(\rhobar_{\cS},\tau_\cS)$. Assume that for some $i \in \cJ$, 
\begin{equation}
\label{low:each:emb}
\lambda_{\pi^{-1}(i)}\in X_1(T) \text{ is in alcove $C_0$ and } \ell(\tld{w}_i^*)\leq 1.
\end{equation} 
Then $M_\infty(\sigma(\tau_\cS)^\sigma)$ is not free as an $R_\infty(\tau_\cS)$-module.
\end{prop}
\begin{proof} As we will not make use of this, we only give a sketch of proof. Unlike the rest of this section we will parametrize the weights by centering around $\tau_\cS$, so that $\tau_\cS=\tau(s,\mu)$ and $\sigma_{(\omega,a)}$ means $\sigma_{(\omega,a)}\defeq F(\Trns_\mu(s\omega,a))$. In particular, $W^?(\rhobar_{\cS},\tau_\cS)$ consists of $\sigma_{(\omega,a)}$ such that $(\omega,a)\in \Sigma_{\tld{w}^*}$.

We write $\sigma=\sigma_{(\omega,a)}$, so $(\omega,a)\in \Sigma$. For each $i$ satisfying (\ref{low:each:emb}), we have $a_i=0$. If moreover $(\omega_i,a_i)\in \Sigma_0^{\mathrm{inn}}$, we set $(\omega'_i,a'_i)=(\omega_i,1)$, otherwise we set $(\omega'_i,a'_i)$ to be one of the two elements in $\Sigma_0$ that are adjacent to $(\omega_i,a_i)$. For $i$ not satisfying (\ref{low:each:emb}), we set $(\omega'_i,a'_i)=(\omega_i,a_i)$. Thus we obtain a weight $\sigma'=\sigma_{(\omega',a')}$ which satisfies the hypothesis of Theorem \ref{thm:cyclic}.

We now fix a saturated inclusion $\imath:\sigma(\tau_\cS)^{\sigma} \into \sigma(\tau_\cS)^{\sigma'}$, and let $C$ denote the cokernel. Since $M_{\infty}( \sigma(\tau_\cS)^{\sigma'})\cong R_\infty(\tau_\cS)$, $M_\infty(\imath)$ identifies $M_{\infty}( \sigma(\tau_\cS)^{\sigma})$ with an ideal $I(C)$ of $R_\infty(\tau_\cS)$. Thus we need to show that $I(C)$ is not a principal ideal, and to do so it suffices to show that the image of $I(C)$ in $\ovl{R}_\infty(\tau_\cS)$ is not principal. 

Let $V$ denote the cokernel of $\imath \mod \varpi$. Using Theorem \ref{thm structure}, we see that $V$ is multiplicity free, and $\JH(V)\cap W^?(\rhobar_\cS)$ consists of $\sigma_{(\nu,c)}$ in $(\nu,c)\in \Sigma_{\tld{w}^*}\setminus\prod_i \Sigma_{V^c,i}$, where: %
\begin{itemize}
\item if $i$ does not satisfy (\ref{low:each:emb}), $\Sigma_{V^c,i}=  \Sigma_{\tld{w}_i^*}$; 
\item if $i$ satisfies (\ref{low:each:emb}) and  $(\omega_i,a_i)\in \Sigma_0^{\mathrm{inn}}$, 
$\Sigma_{V^c,i}=\{(\omega_i,0),(\mu_1,1),(\mu_2,1)\}$
where $\{\omega_i,\mu_1,\mu_2\}=\{0,\eps_1,\eps_2\}$; and
\item if $i$ satisfies (\ref{low:each:emb}) and $(\omega_i,a_i)\notin \Sigma_0^{\mathrm{inn}}$, $\Sigma_{V^c,i}=\big\{(\omega_i,0), (\omega''_i,1)\big\}$ where $\omega''_i$ is such that $\{\omega_i,\omega'_i,\omega''_i\}=\{\eps_2-\eps_1,0,\eps_2\}$, $\{\eps_1-\eps_2,0,\eps_1\}$, or $\{\eps_1+\eps_2,\eps_1,\eps_2\}$.
\end{itemize} 
Lemma \ref{lem:supp} shows that $M_\infty(V)\cong \ovl{R}_{\infty}(\tau_\cS)/I(V)$, where $I(V)$ is the intersection, over $\kappa\in \JH(V)\cap W^?(\rhobar_{\cS})$, of the ideals $\fp(\kappa)$ defined in Proposition \ref{prop:identify:cmpt}.
Note that the image of $I(C)$ in $\ovl{R}_{\infty}(\tau_\cS)$ is $I(V)\ovl{R}_{\infty}(\tau_\cS)$. 
Thus we need to show $I(V)\ovl{R}_{\infty}(\tau_\cS)$, or equivalently, $I(V)\ovl{R}'_{\infty}(\tau_\cS)$ is not principal.
Recall that there is a formally smooth map $\widehat{\otimes}_{i\in\cJ}\ovl{R}_{\ovl{\fM}_{\tld{v}},\tld{w}_{i}}^{\expl,\nabla}\to \ovl{R}'_{\infty}(\tau_\cS)$. 
We claim that the ideal $I(V)$ comes from $I \defeq\widehat{\bigotimes}_i\Big( \underset{(\nu_i,c_i)\in \Sigma_{\tld{w}_i^*}\setminus\Sigma_{V^c,i}}{\bigcap}\fC_{(\nu_i,c_i)} \ovl{R}_{\ovl{\fM}_{\tld{v}},\tld{w}_{f_{\tld{v}}-1-i}}^{\expl,\nabla}\Big)$. 
To check the claim, let $\ovl{R}_{\ovl{\fM},\tld{w}}^{\expl,\nabla}\defeq \widehat{\otimes}_{i\in\cJ}\ovl{R}_{\ovl{\fM}_{\tld{v}},\tld{w}_{i}}^{\expl,\nabla}$.
We first check that $\ovl{R}_{\ovl{\fM},\tld{w}}^{\expl,\nabla}/I$ has the expected cycle. To do this, we check using Table \ref{Table:intsct} that  $\ovl{R}_{\ovl{\fM}_{\tld{v}},\tld{w}_{f_{\tld{v}}-1-i}}^{\expl,\nabla}/\underset{(\nu_i,c_i)\in \Sigma_{\tld{w}_i^*}\setminus\Sigma_{V^c,i}}{\bigcap}\fC_{(\nu_i,c_i)} \ovl{R}_{\ovl{\fM}_{\tld{v}},\tld{w}_{f_{\tld{v}}-1-i}}^{\expl,\nabla}$ is filtered by  $\ovl{R}_{\ovl{\fM}_{\tld{v}},\tld{w}_{f_{\tld{v}}-1-i}}^{\expl,\nabla}/\fC_{(\nu_j,c_j)} $ for $(\nu_j,c_j)\in \Sigma_{\tld{w}_i^*}\setminus\Sigma_{V^c,i}$, and that $\underset{(\nu_i,c_i)\in \Sigma_{\tld{w}_i^*}\setminus\Sigma_{V^c,i}}{\bigcap}\fC_{(\nu_i,c_i)} \ovl{R}_{\ovl{\fM}_{\tld{v}},\tld{w}_{f_{\tld{v}}-1-i}}^{\expl,\nabla}$ is filtered by  $\ovl{R}_{\ovl{\fM}_{\tld{v}},\tld{w}_{f_{\tld{v}}-1-i}}^{\expl,\nabla}/\fC_{(\nu_j,c_j)}$ for $(\nu_j,c_j)\in \Sigma_{V^c,i}$. Thus we can filter the quotient $\ovl{R}_{\ovl{\fM},\tld{w}}^{\expl,\nabla}/I$ with factors of the form $\widehat{\otimes}_{i\in\cJ}\ovl{R}_{\ovl{\fM}_{\tld{v}},\tld{w}_{i}}^{\expl,\nabla}/\fp_i$, where the $\fp_i$ are minimal primes of $\ovl{R}_{\ovl{\fM}_{\tld{v}},\tld{w}_{i}}^{\expl,\nabla}$ (note that taking completion is an exact operation).
As $\Gamma(\mathrm{im}(\imath \pmod{\varpi}))$ decomposes as a product over $\cJ$, an inductive argument using Theorem \ref{thm:matching} and the above filtration on $\ovl{R}_{\ovl{\fM},\tld{w}}^{\expl,\nabla}/I$ shows that the cycle of $\ovl{R}_{\ovl{\fM},\tld{w}}^{\expl,\nabla}/I$ is given by the components of  $\ovl{R}_{\ovl{\fM},\tld{w}}^{\expl,\nabla}$ labeled by the Serre weights corresponding to $\Gamma(\mathrm{im}(\imath \pmod{\varpi}))$. 
In order to prove the claim we are left to show that $I$ is a radical ideal.
To see this, we make the following observation: If $R$, $S$ are two reduced local Noetherian rings over a perfect field $k$, and $J$, $L$ are radical ideals of $R$,$S$, then $J\otimes_k L$ is a radical ideal of $R\otimes_k S$. 
This is because $(R\otimes S)/(J \otimes L)$ embeds into $((R/J) \otimes S) \times (R \otimes (S/L))$ 
(as can be seen by choosing bases of $J$,$L$ as $k$-vector spaces and extending them to bases of $R$,$S$), and the latter ring is reduced.
Finally, we note that in the situation above the property of being a radical ideal is preserved by completion, so that $J\widehat{\otimes}_k L$ is also radical in $R\widehat{\otimes}_k S$.
The claim is proven.

By Nakayama's lemma, the size of a minimal set of generators of $I$ is given by the dimension over $\bF$ of
\[ I\otimes_{\widehat{\otimes}_{i\in\cJ}\ovl{R}_{\ovl{\fM}_{\tld{v}},\tld{w}_{i}}^{\expl,\nabla}} \bF=\bigotimes_i \big(\underset{(\nu_i,c_i)\in \Sigma_{\tld{w}_i^*}\setminus\Sigma_{V^c,i}}{\bigcap}\fC_{(\nu_i,c_i)} \otimes_{\ovl{R}_{\ovl{\fM}_{\tld{v}},\tld{w}_{f_{\tld{v}}-1-i}}^{\expl,\nabla}} \F \big)\]
Hence, it suffices to show that for $i$ satisfying (\ref{low:each:emb}), $\underset{(\nu_i,c_i)\in \Sigma_{\tld{w}_i^*}\setminus\Sigma_{V^c,i}}{\bigcap}\fC_{(\nu_i,c_i)} \otimes_{\ovl{R}_{\ovl{\fM}_{\tld{v}},\tld{w}_{f_{\tld{v}}-1-i}}^{\expl,\nabla}} \F$ has dimension greater than one, which can be checked from Table \ref{Table:intsct}.
\end{proof}

\subsection{Gauges for patching functors}\label{sec:gauge}

In this subsection, we compute the image of maps between patched modules for lattices in Deligne--Lusztig representations.
This can be viewed as a calculation of lattice gauges in families.
The main ingredient for this section is Theorem \ref{thm:cyclic}, after which algebro-geometric arguments using the projection formula and the Cohen--Macaulay property prove Theorem \ref{thm:gauge}. We learned these arguments from M. Emerton.

We continue to use the weak minimal patching functor setting of \S \ref{sec:SWC}.
Let $\cS$, $\rhobar_{\cS}$, $R_\infty$, $X_\infty$, and $M_\infty$ be as in \S \ref{sec:cyc}.
Let $\tau_\cS$ be $(\tau_{\tld{v}})_{\tld{v}\in\cS}$, where $\tau_{\tld{v}}$ is an $13$-generic tame type.
Let $X_{\infty}(\tau_\cS)\defeq \Spec R_{\infty}(\tau_\cS)$.
It is a normal and Cohen--Macaulay scheme by (the proof of) Theorem \ref{thm:BM}.
Let $Z \subset X_{\infty}(\tau_\cS)$ be the locus of points lying on two irreducible components of the special fiber of $X_{\infty}(\tau_\cS)$.
Note that the codimension of $Z\subset X_{\infty}(\tau_\cS)$ is two.
Let 
\begin{equation*}
j: U \defeq X_{\infty}(\tau_\cS) \setminus Z \into X_{\infty}(\tau_\cS)
\end{equation*}
be the natural open immersion. Note that $j$ and $U$ comes from pulling back the open immersion $j_0: U_0\into \Spf( \widehat{\underset{\tld{v}\in \cS}{\bigotimes}} R_{\tld{v}}^{\tau_{\tld{v}}})$ via the formally smooth map $X_\infty(\tau_{\cS})\to  \Spf(\widehat{\underset{\tld{v}\in \cS}{\bigotimes}} R_{\tld{v}}^{\tau_{\tld{v}}})$, where $U_0$ is defined in an analogous way as $U$.

\begin{lemma}\label{lemma:Uregular}
The scheme $U$ is regular.
\end{lemma}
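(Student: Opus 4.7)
The plan is to verify regularity locally on $U$ by splitting into the generic-fiber and special-fiber cases, and to reduce the latter to the explicit presentations of the deformation rings established in \cite{LLLM} and in \S\ref{sec:match}.

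First I would check the generic fibre. Since $X_\infty(\tau)$ is formally smooth over $\prod_{\tld{v}} \Spec R^{\tau_{\tld{v}}}_{\rhobar_{\tld{v}}}$, it suffices to observe that $R^{\tau_{\tld{v}}}_{\rhobar_{\tld{v}}}[1/p]$ is formally smooth over $E$. This is a standard fact about potentially crystalline deformation rings with fixed regular Hodge--Tate weights (in our case $(2,1,0)$) and fixed inertial type, following Kisin's construction of such rings as closed subspaces cut out inside a smooth generic-fibre moduli of $(\varphi,N)$-modules. Combined with Theorem \ref{thm:conngenfib} (asserting that $R^{\tau_{\tld{v}}}_{\rhobar_{\tld{v}}}[1/p]$ is a domain), this shows $X_{\infty}(\tau)[1/p]$ is a regular scheme. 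Hence every point of $U$ in the generic fibre is regular.

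Next I would treat points in the special fibre. By the diagram~\eqref{diag:f.s.prime}, $X_\infty(\tau)$ admits a formally smooth cover by $\Spf \bigl(\widehat{\bigotimes}_{\tld{v}} R^{\tau_{\tld{v}},\ovl{\beta},\Box}_{\ovl{\fM}_{\tld{v}},{\rhobar_{\tld{v}}}}\bigr)$, which is in turn formally smooth over $\widehat{\bigotimes}_{\tld{v}} R^{\expl,\nabla}_{\ovl{\fM}_{\tld{v}},\bf{w}_{\tld{v}}}$. Since formal smoothness preserves both regularity and the stratification by intersections of components, it suffices to prove that for each $\tld{v}$ the scheme $\Spec R^{\expl,\nabla}_{\ovl{\fM}_{\tld{v}},\bf{w}_{\tld{v}}}$ is regular away from the locus $Z_{\tld{v}}$ where its special fibre is non-smooth. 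When $\ell(\widetilde{w}_{\tld{v},j}) \ge 2$ for every $j$, \cite[\S 5.3.2]{LLLM} gives a formally smooth map from $\widehat{\bigotimes}_{j=1}^{N_{\tld{v}}} \cO[\![x_j,y_j]\!]/(x_jy_j - p)$ to $R^{\expl,\nabla}_{\ovl{\fM}_{\tld{v}},\bf{w}_{\tld{v}}}$. At a closed point where for each $j$ at most one of $x_j,\, y_j$ vanishes, the other being a unit, the completed tensor product localises to a formal power series ring over $\cO$, hence is regular; the exceptional locus, where some pair $(x_j,y_j)$ both vanish, coincides with the locus on at least two components of the special fibre.

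The main obstacle is the case of shapes $\bf{w}_{\tld{v}}$ where some $\widetilde{w}_{\tld{v},j}$ has length $\leq 1$. Here the explicit presentations of $R^{\expl,\nabla}_{\ovl{\fM}_{\tld{v}},\bf{w}_{\tld{v}}}$ from \cite[\S 8]{LLLM} carry additional monodromy relations, and the complement of the intersection locus must be analysed component by component. For this, I would use Proposition~\ref{prop:identify:cmpt} together with the explicit prime ideals $\mathfrak{p}(\sigma)$ matched in \S\ref{sec:match} (see Table~\ref{Table:intsct}): a direct inspection of the generators shows that for each minimal prime $\mathfrak{p}(\sigma)$ of the special fibre the quotient $\ovl{R}^{\expl,\nabla}_{\ovl{\fM}_{\tld{v}},\bf{w}_{\tld{v}}}/\mathfrak{p}(\sigma)$ is formally smooth over $\F$. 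Since this component lifts flatly to characteristic zero by Proposition~\ref{prop:identify:cmpt}, and since a Cohen--Macaulay ring which is generically smooth on each component and whose components meet transversally along a codimension-two locus is regular outside that locus, we conclude that $R^{\expl,\nabla}_{\ovl{\fM}_{\tld{v}},\bf{w}_{\tld{v}}}$ is regular away from $Z_{\tld{v}}$. Combining the two cases yields the regularity of $U$.
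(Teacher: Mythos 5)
Your proposal is structurally the same as the paper's: you verify regularity along the generic fiber using smoothness of the potentially crystalline deformation rings in characteristic zero, and you verify regularity of the special fiber from the explicit presentations of $\overline{R}^{\expl,\nabla}_{\ovl{\fM},\widetilde{w}_j}$ in \cite[Table 7 and \S 8]{LLLM}, then try to bootstrap from the special fiber to the mixed-characteristic scheme. The paper handles this last step cleanly: once $\overline{U}$ is known to be regular, one observes that at a characteristic-$p$ point the tangent-space dimension of $U$ is at most one more than that of $\overline{U}$, while by $p$-flatness the Krull dimension of $U$ is exactly one more, so $U$ is regular there (equivalently, $\overline{R}$ regular plus $p$ a nonzerodivisor forces $R$ regular). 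Your bootstrap instead appeals to a general principle stated as ``a Cohen--Macaulay ring which is generically smooth on each component and whose components meet transversally along a codimension-two locus is regular outside that locus,'' which as phrased is false: generic smoothness of a single irreducible component says nothing at all about its singular points away from any intersection locus. You in fact verify a stronger hypothesis (each $\ovl{R}^{\expl,\nabla}/\mathfrak{p}(\sigma)$ is \emph{formally} smooth, not merely generically so), which is what makes the argument work, but the principle you cite should be restated to require smoothness, not generic smoothness, of each component. It would also be worth making explicit, rather than leaving to the phrase ``lifts flatly to characteristic zero,'' that the passage from regularity of the fiber $\ovl{U}$ to regularity of the mixed-characteristic scheme $U$ is the standard $p$-flatness/tangent-space count that the paper records directly.
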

\begin{proof}
The irreducible components of the special fiber of $X_\infty(\tau_\cS)$ are formally smooth over $\F$, hence the special fiber $\overline{U}$ of $U$ is regular by the last part of Theorem \ref{thm:BM}.
The dimension of the tangent space of $U$ at a characteristic $p$ point is at most one more than the dimension of the tangent space of $\ovl{U}$ at that point.
By $p$-flatness, the Krull dimension of $U$ is one more than the Krull dimension of $\overline{U}$, and so $U$ is regular at characteristic $p$ points.
Since the generic fiber of $U$, which is isomorphic to the generic fiber of $X_\infty(\tau_\cS)$, is regular, $U$ is regular.
\end{proof}

We now use the notation $j^*$ and $j_*$ which take quasi-coherent sheaves on $X_\infty(\tau_\cS)$ to those on $U$ and vice versa, respectively.

\begin{lemma} \label{lemma:Ugauge}
Let $\sigma,\,\kappa\in \JH(\ovl{\sigma(\tau_\cS)})$ and let $\iota: \sigma(\tau_\cS)^\kappa \into \sigma(\tau_\cS)^\sigma$ be a saturated injection.
For any $\theta = \otimes_{\tld{v}\in\cS} \theta_{\tld{v}}\in W^?(\rhobar_{\cS})$ let $m(\theta)$ be the multiplicity with which $\theta$ appears in the cokernel of $\iota$.
Then the induced injection $j^*M_\infty(\iota):  j^*M_\infty(\sigma(\tau_\cS)^\kappa) \into j^*M_\infty(\sigma(\tau_\cS)^\sigma)$ has image 
\[j^* \Big(\prod_{\theta\in W^?(\rhobar_{\cS})} \mathfrak{p}(\theta)^{m(\theta)}M_\infty(\sigma(\tau_\cS)^\sigma)\Big),\]
where $\mathfrak{p}(\theta) = \sum_{\tld{v}\in\cS} \mathfrak{p}(\theta_{\tld{v}})$.
\end{lemma}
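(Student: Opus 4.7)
The plan is to reduce the computation to a Cartier-divisor calculation on the regular scheme $U$. By Lemma \ref{lemma:Uregular}, $U$ is regular. The modules $M_\infty(\tau^\sigma)$ and $M_\infty(\tau^\kappa)$ are maximal Cohen--Macaulay over $R_\infty(\tau)$ and generically of rank one (Definition \ref{minimalpatching}(\ref{support}),(\ref{minimal})), so by the Auslander--Buchsbaum formula their pullbacks $j^*M_\infty(\tau^\sigma)$ and $j^*M_\infty(\tau^\kappa)$ are locally free of rank one, i.e.\ invertible sheaves on $U$. Since $\iota$ is injective and $M_\infty$ is exact, $j^*M_\infty(\iota)$ realizes $j^*M_\infty(\tau^\kappa)$ as an invertible subsheaf of $j^*M_\infty(\tau^\sigma)$, and thus is determined by an effective Cartier divisor on $U$, namely the divisor of $j^*\coker M_\infty(\iota)$. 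This divisor is supported on the special fiber of $U$, whose irreducible components correspond bijectively, via Proposition \ref{prop:identify:cmpt}, to elements of $W^?(\rhobar_{\tld{\Sigma}},\taubar)$; write $D_\theta \defeq \Spec(R_\infty(\tau)/\mathfrak{p}(\theta)R_\infty)$ for the component associated to $\theta$.

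Next I would compute the length of $j^*\coker M_\infty(\iota)$ at the generic point of $D_\theta\cap U$ for each $\theta \in W^?(\rhobar_{\tld{\Sigma}},\taubar)$. To do this, choose any Jordan--H\"older filtration of the cokernel $\coker(\iota)$ and apply the exact functor $M_\infty$ to obtain a corresponding filtration of $\coker M_\infty(\iota)$. The Jordan--H\"older factors of $\coker(\iota)$ that are not in $W^?(\rhobar_{\tld{\Sigma}})$ are killed by $M_\infty$ by Proposition \ref{WE}, so they contribute nothing. For each $\theta \in W^?(\rhobar_{\tld{\Sigma}},\taubar)$, the graded piece $M_\infty(\theta)$ is scheme-theoretically supported exactly on $D_\theta$ by Proposition \ref{prop:identify:cmpt}(\ref{item:patchcomp}); since $D_\theta \cap U$ is regular (as a reduced component of the special fiber of the regular scheme $U$) and $M_\infty(\theta)$ is Cohen--Macaulay of Hilbert--Samuel multiplicity one (Theorem \ref{SWC}), it follows that $M_\infty(\theta)|_{D_\theta \cap U}$ is an invertible sheaf on $D_\theta \cap U$. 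In particular, each occurrence of $\theta$ in $\coker(\iota)$ contributes exactly $1$ to the length of $j^*\coker M_\infty(\iota)$ at the generic point of $D_\theta \cap U$, while weights $\theta'\neq \theta$ contribute $0$ because their support is disjoint from this generic point (as we have removed all component crossings by passing to $U$).

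Summing these contributions, the Cartier divisor of the image equals $\sum_{\theta\in W^?(\rhobar_{\tld{\Sigma}})} m(\theta)\,[D_\theta \cap U]$. On the other hand, since $j^*M_\infty(\tau^\sigma)$ is invertible and $\mathfrak{p}(\theta)R_\infty(\tau)$ cuts out $D_\theta$, the submodule $\mathfrak{p}(\theta)M_\infty(\tau^\sigma)$ restricts on $U$ to the invertible subsheaf $j^*M_\infty(\tau^\sigma)(-(D_\theta\cap U))$, and the product $\prod_\theta \mathfrak{p}(\theta)^{m(\theta)}M_\infty(\tau^\sigma)$ restricts to the invertible subsheaf cut out by the same divisor. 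Matching these two invertible subsheaves of $j^*M_\infty(\tau^\sigma)$ via their Cartier divisors (possible since $U$ is regular, hence locally factorial) yields the claimed equality.

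The main technical point I anticipate is justifying the invertibility of $M_\infty(\theta)|_{D_\theta\cap U}$, i.e.\ that each Jordan--H\"older factor really contributes length one and not more: this relies crucially on the combination of the multiplicity-one statement of Theorem \ref{SWC}, the Cohen--Macaulay property of $M_\infty(\theta)$ over its support (Definition \ref{minimalpatching}(\ref{dimd})), and the regularity of $D_\theta\cap U$, which together force $M_\infty(\theta)|_{D_\theta\cap U}$ to be locally free of rank one. Once this is established, the divisor-theoretic comparison is formal.
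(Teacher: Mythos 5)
Your proof is correct and follows essentially the same route as the paper's: reduce to a Cartier--divisor computation on the regular open $U$ (where $j^*M_\infty(\tau^\sigma)$ and $j^*M_\infty(\tau^\kappa)$ are invertible), then identify the divisor of the cokernel by filtering $\coker(\iota)$ and using that each $\theta \in W^?(\rhobar_{\tld{\Sigma}},\taubar)$ contributes length one at the generic point of $D_\theta$. The paper compresses your second and third paragraphs into ``It is easily seen that $J = \ldots$,'' which your length computation correctly justifies.

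One small caution: a reduced irreducible component of the special fiber of a regular scheme is \emph{not} automatically regular (e.g.\ $\Zp[\![x,y]\!]/(x^2+y^2-p)$ with $-1$ a nonsquare mod $p$ is regular but its special fiber is singular), so the parenthetical justification ``as a reduced component of the special fiber of the regular scheme $U$'' is not a valid inference in general. In this situation the regularity of $D_\theta\cap U$ is available because the proof of Lemma \ref{lemma:Uregular} actually shows the stronger fact that $\ovl{U}$ itself is regular (and on $U$ the components of $\ovl{U}$ are disjoint). Alternatively, your argument really only uses the length of $M_\infty(\theta)$ at the generic point $\eta$ of $D_\theta$, where $\cO_{U,\eta}$ is a DVR because $U$ is regular and $\eta$ has codimension one, and the length is one by the associativity formula for multiplicities applied to $e(M_\infty(\theta)) = 1$; the global regularity of $D_\theta\cap U$ is not needed for that step.
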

\begin{proof}
Note that $j^*( M_\infty(\sigma(\tau_\cS)^\kappa))$ and $j^*( M_\infty(\sigma(\tau_\cS)^\sigma))$ are locally free (of rank one), since they are Cohen--Macaulay of full support over the regular scheme $U$ (see Definition \ref{minimalpatching}).
Then the image of $j^*M_\infty(\iota)$ is $J \otimes_{\cO_U} j^*( M_\infty(\sigma(\tau_\cS)^\sigma))$ where $J$ is the ideal sheaf of the Cartier divisor corresponding to the cokernel of $j^*M_\infty(\iota)$.
It is easily seen that $J =  j^* \Big(\prod_{\theta\in W^?(\rhobar_{\cS})} \mathfrak{p}(\theta)^{m(\theta)}R_\infty(\tau_\cS)\Big)$.
Finally, we have that
\begin{align*}
j^* \Big(\prod_{\theta\in W^?(\rhobar_{\cS})} \mathfrak{p}(\theta)^{m(\theta)}R_\infty(\tau_\cS)\Big) \otimes_{\cO_U} j^*\Big( M_\infty(\sigma(\tau_\cS)^\sigma)\Big) \cong & j^* \Big(\prod_{\theta\in W^?(\rhobar_{\cS})} \mathfrak{p}(\theta)^{m(\theta)}R_\infty(\tau_\cS)\Big) j^*\Big( M_\infty(\sigma(\tau_\cS)^\sigma)\Big) \\
= &j^* \Big(\prod_{\theta\in W^?(\rhobar_{\cS})} \mathfrak{p}(\theta)^{m(\theta)}M_\infty(\sigma(\tau_\cS)^\sigma)\Big)
\end{align*}
where the isomorphism follows from the fact that $j^*( M_\infty(\sigma(\tau_\cS)^\sigma))$ is locally free.
\end{proof}

Let $\sigma$ be as in Theorem \ref{thm:cyclic}, so that $M_\infty(\sigma(\tau_\cS)^\sigma)$ is cyclic.

\begin{thm}\label{thm:gauge}
With the notation of Lemma \ref{lemma:Ugauge}, suppose further that $\sigma$ is as in Theorem \ref{thm:cyclic}.
Then the induced injection $M_\infty(\iota):  M_\infty(\sigma(\tau_\cS)^\kappa) \into M_\infty(\sigma(\tau_\cS)^\sigma)$ has image 
\[j_*j^* \Big(\prod_{\theta\in W^?(\rhobar_{\cS})} \mathfrak{p}(\theta)^{m(\theta)}R_\infty(\tau_\cS)\Big) M_\infty(\sigma(\tau_\cS)^\sigma).\]
\end{thm}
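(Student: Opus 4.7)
The plan is to reduce the statement to a computation of ideals inside $R_{\infty}(\tau)$ and then use the Cohen--Macaulay hypothesis to check that the ideal in question is determined by its restriction to the codimension one locus $U$. First I would invoke Theorem \ref{thm:cyclic} to produce an isomorphism $M_\infty(\tau^\sigma) \cong R_\infty(\tau)$. Indeed, $M_\infty(\tau^\sigma)$ is cyclic by Theorem \ref{thm:cyclic} and maximally Cohen--Macaulay of full support on $X_\infty(\tau)$ by Definition \ref{minimalpatching}(\ref{support}) and the connectedness of the generic fiber (Theorem \ref{thm:conngenfib}); in particular its annihilator has no embedded primes and is contained in every minimal prime, hence is zero as $R_\infty(\tau)$ is reduced. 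Under this identification, the injection $M_\infty(\iota): M_\infty(\tau^\kappa)\hookrightarrow R_\infty(\tau)$ realizes the source as an ideal $I\subset R_\infty(\tau)$, and the theorem becomes the equality $I = j_\ast j^\ast J$ inside $R_\infty(\tau)$, where $J\defeq \prod_{\theta} \mathfrak{p}(\theta)^{m(\theta)} R_\infty(\tau)$.

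Next I would use Lemma \ref{lemma:Ugauge} to get $j^\ast I = j^\ast J$, which immediately gives a factorization $I\subset j_\ast j^\ast J$ (the right-hand side being the set of global sections of a sheaf which agrees with $I$ over $U$). It remains to show this inclusion is an equality, and for this the key input is that $I\cong M_\infty(\tau^\kappa)$ is itself maximal Cohen--Macaulay of full support on $X_\infty(\tau)$ (again by Definition \ref{minimalpatching}(\ref{support}) and Theorem \ref{thm:conngenfib}). Since $Z = X_\infty(\tau)\setminus U$ has codimension $\geq 2$ in $X_\infty(\tau)$ by construction, the depth of $I$ along $Z$ is at least $2$, so local cohomology vanishing ($H^0_Z(I)=H^1_Z(I)=0$) yields the canonical isomorphism $I \risom j_\ast j^\ast I = j_\ast j^\ast J$, as required.

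Therefore the argument rests on two essentially formal ingredients (cyclicity and the Cohen--Macaulay property) plus the calculation of Lemma \ref{lemma:Ugauge} on the regular open $U$. The only potential obstacle is the verification that the depth along $Z$ really is at least $2$; since $X_\infty(\tau)$ is Cohen--Macaulay (Theorem \ref{thm:BM} combined with the formally smooth cover by $R_\infty$) and $M_\infty(\tau^\kappa)$ is a maximal Cohen--Macaulay $R_\infty(\tau)$-module, its depth at any point equals the codimension of that point, so the codimension two hypothesis on $Z$ suffices. No further representation-theoretic input is needed beyond what is already encoded in Theorem \ref{thm:cyclic} and Lemma \ref{lemma:Ugauge}.
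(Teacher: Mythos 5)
Your proposal is correct and follows essentially the same route as the paper: identify $M_\infty(\tau^\sigma)$ with $R_\infty(\tau)$ via Theorem~\ref{thm:cyclic}, apply Lemma~\ref{lemma:Ugauge} over the regular open $U$, and then use that $M_\infty(\tau^\kappa)$ is maximal Cohen--Macaulay together with $\operatorname{codim} Z\geq 2$ to conclude $M_\infty(\tau^\kappa)\risom j_*j^*M_\infty(\tau^\kappa)$. You merely spell out the depth/local-cohomology justification that the paper states in one line.
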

\begin{proof}
If $\mathcal{M}$ is a Cohen--Macaulay sheaf on $X_\infty(\tau_\cS)$, then $j_*j^* \mathcal{M} = \mathcal{M}$ since the codimension of $Z$ is two (cf.~ \cite[Proposition 3.5]{hasset-kovacs}).
Hence the image of $M_\infty(\iota)$ is 
\[j_*j^* \Big(\prod_{\theta\in W^?(\rhobar_{\cS})} \mathfrak{p}(\theta)^{m(\theta)}M_\infty(\sigma(\tau_\cS)^\sigma)\Big)\]
by Lemma \ref{lemma:Ugauge}.
Since $M_\infty(\sigma(\tau_\cS)^\sigma)$ is free over $R_\infty(\tau_\cS)$, we have that 
\[j_*j^* \Big(\prod_{\theta\in W^?(\rhobar_{\cS})} \mathfrak{p}(\theta)^{m(\theta)}M_\infty(\sigma(\tau_\cS)^\sigma)\Big) = j_*j^* \Big(\prod_{\theta\in W^?(\rhobar_{\cS})} \mathfrak{p}(\theta)^{m(\theta)}R_\infty(\tau_\cS)\Big) M_\infty(\sigma(\tau_\cS)^\sigma).\]
\end{proof}
\begin{rmk}\label{rmk:descend} As $j$ comes from pulling back $j_0$ via $X_\infty(\tau_{\cS})\to  \Spf(\widehat{\underset{\tld{v}\in \cS}{\bigotimes}} R_{\tld{v}}^{\tau_{\tld{v}}})$ and $\fp(\theta)$ are ideals of $  \Spf(\widehat{\underset{\tld{v}\in \cS}{\bigotimes}} R_{\tld{v}}^{\tau_{\tld{v}}})$, we see that the ideal
\[j_*j^* \Big(\prod_{\theta\in W^?(\rhobar_{\cS})} \mathfrak{p}(\theta)^{m(\theta)}R_\infty(\tau_\cS)\Big)=j_{0*}j_0^* \Big(\prod_{\theta\in W^?(\rhobar_{\cS})} \mathfrak{p}(\theta)^{m(\theta)}\widehat{\underset{\tld{v}\in \cS}{\bigotimes}} R_{\tld{v}}^{\tau_{\tld{v}}}\Big)R_\infty(\tau_\cS)\]
comes from an ideal in $\widehat{\otimes}_{\tld{v}\in \cS} R_{\tld{v}}^\square$.
\end{rmk}
\subsection{Global applications}\label{subsec:global}

In this subsection, we deduce generalizations of conjectures of Demb\'el\'e and Breuil on mod $p$ multiplicity one and lattices, respectively (see \cite[Conjectures B.1 and 1.2]{breuil-buzzati}).
Theorem \ref{thm:modpmultone} follows immediately from Theorem \ref{thm:cyclic}.
While Theorem \ref{thm:lattice} also follows from Theorem \ref{thm:gauge}, it is crucial that the image of the map between two patched modules given in Theorem \ref{thm:gauge} is described by an ideal.
This is far from formal, and relies crucially on Theorem \ref{thm:cyclic}.

We use the setup in \cite[\S 7.1]{LLLM}.
Let $F/\Q$ be a CM field with maximal totally real subfield $F^+\neq \Q$ and let $\Sigma_p^+$ (resp.~ $\Sigma_p$) be the set of places of $F^+$ (resp.~ of $F$) lying above $p$.
Let $G_{/F^+}$ be a reductive group which is an outer form for $\GL_3$ which is quasi-split at all finite places of $F^+$ and which splits over $F$.
Suppose that $G_{/F^+}$ is definite, i.e.~that $G(F^+_v) \cong U_3(\R)$ for all $v|\infty$.
Recall from \cite[\S 7.1]{EGH} that $G$ admits a reductive model $\cG$ defined over $\cO_{F^+}[1/N]$, for some $N\in \N$ which is prime to $p$, together with an isomorphism
\begin{equation}
\label{iso integral}
\iota:\,\cG_{/\cO_{F}[1/N]} \stackrel{\iota}{\rightarrow}{\GL_3}_{/\cO_{F}[1/N]}
\end{equation}
which specializes to
$
\iota_w:\,\cG(\cO_{F^+_v})\stackrel{\sim}{\rightarrow}\cG(\cO_{F_w})\stackrel{\iota}{\rightarrow}\GL_3(\cO_{F_w})
$
for all places  $w\in \Sigma_p$ with $w|_{F^+} = v$.

Let $U=U^pU_p\leq G(\bA_{F^+}^{(\infty,p)})\times \cG(\cO_{F^+,p})$ be a compact open subgroup.
If $W$ is a finite $\cO$-module endowed with a continuous action of $U$ we write $S(U,W)$ to denote the space of algebraic automorphic forms with coefficients in $W$:
\[
S(U,W) \defeq \left\{f:\,G(F^{+})\backslash G(\A^{\infty}_{F^{+}})\rightarrow W\,|\, f(gu)=u^{-1}f(g)\,\,\forall\,\,g\in G(\A^{\infty}_{F^{+}}), u\in U\right\}.
\]
We define
\[S(U^p, W) \defeq \varinjlim_{U_{p}} S(U^pU_{p}, W) \textrm{ and } \tld{S}(U^p, W) \defeq \varprojlim_s S(U^p,W/\varpi^s)\]
where in the first limit the subgroups $U_{p}\leq \cG(\cO_{F^+,p})$ run over the compact open neighborhoods of $1\in \cG(\cO_{F^+,p})$.

For $U$ as above, let $\cP_U$ be the set of finite places $w$ in $F$ whose restriction $v \defeq w|_{F^+}$ is a place that splits in $F$ and at which $U$ is unramified.
Let $\cP \subset \cP_U$ be a subset of finite complement.
Then the universal Hecke algebra $\bT_{\cP}=\cO[T^{(i)}_w,\,\,w\in\cP,\,0 \leq i\leq n]$ on $\cP$ acts naturally on $S(U,W)$.
Let $\rbar: G_F \ra \GL_3(\F)$ be a continuous Galois representation.
We let $\fm\subseteq \bT_{\cP}$ be the maximal ideal which is the kernel of the system of Hecke eigenvalues $\overline{\alpha}:\bT_{\cP}\ra \F$ associated to $\overline{r}$, i.e.~ $\overline{\alpha}$ satisfies 
\[
\det\left(1-\overline{r}^{\vee}(\mathrm{Frob}_w)X\right)=\sum_{j=0}^3 (-1)^j(\mathbf{N}_{F/\Q}(w))^{\binom{j}{2}}\overline{\alpha}(T_w^{(j)})X^j
\]
for all $w$ as above.
Then we say that $\rbar$ is automorphic if $S(U,W)_{\fm}$ is nonzero for some $U$ and $W$.

For technical reasons, we choose a place $v_1$ of $F^+$ as in \cite[\S 2.3]{CEGGPS}.
We now fix $U^p\leq G(\bA_{F^+}^{(\infty,p)})$ to be the subgroup with:
\begin{enumerate}
\item $(U^p)_v=\cG(\cO_{F^+_{v}})$ for all finite places $v$ of $F^+$ which split in $F$ and do not belong to $\Sigma_p^+\cup\{v_1\}$;
\item $(U^p)_{v_1}$ is the preimage of the upper triangular matrices under the map
\[
\cG(\cO_{F^+_{v_1}})\ra \cG(k_{v_1})\stackrel{\iota_{\tld{v}_1}}{\ra}\GL_3(k_{\tld{v}_1})
\]
 \item $(U^p)_v$ is hyperspecial maximal compact in $G(F^+_v)$ if $v$ is inert in $F$.
\end{enumerate}

Let $\rbar$ be an automorphic Galois representation.
Let $\Sigma_0^+$ denote the set of finite places of $F^+$ which are the restriction of the finite places of $F$ away from $p$ where $\rbar$ ramifies.
For each $v\in \Sigma_0^+$, we let $\tau_{\tld{v}}$ be the minimally ramified type in the sense of \cite[Definition 2.4.14]{CHT} corresponding to $\rbar|_{G_{F_{\tld{v}}}}$ and  $\sigma(\tau_{\tld{v}})$ be the $\GL_3(\mathcal{O}_{F_{\tld{v}}})$-representation over $E$ associated to it (cf.~ the beginning of \S \ref{subsubsec:AL}).
We write $\sigma(\tau_v)\defeq \sigma(\tau_{\tld{v}})\circ\iota_{\tld{v}}$, which is a $\cG(\cO_{F^+_{v}})$-representation independent of the choice of $\tld{v}|v$.
For each $v\in \Sigma_0^+$ fix a $\cO$-lattice $\sigma(\tau_v)^\circ$ in $\sigma(\tau_v)$ and let $W_{\Sigma_0^+}$ be ${\otimes}_{v\in\Sigma_0^+}\sigma(\tau_v)^\circ$.

We let $\bT^{\univ}$ denote the abstract Hecke algebra over $\cO$ generated by the formal variables $T^{(j)}_{w}$, where $w$ runs over the finite places of $F$ such that $w|_{F+}$ is split in $F$ and $w|_{F^+}\notin \Sigma_0^+\cup\Sigma_p^+\cup\{v_1\}$,
and by $T^{(j)}_{v_1}$ for $j=1,2,3$.
For a $G(\bA_{F^+}^{(\infty,p)})$-module $V$ over $\cO$, $\bT^{\univ}$ acts naturally on $S(U^pU_p,W)$,  $S(U^p,W)$, and $\tld{S}(U^p,W)$ where $W = W_{\Sigma_0^+} \otimes V$ (cf.~ \cite[\S 2.3]{CEGGPS}), and we let $\fm\subseteq \bT^{\univ}$ be the maximal ideal as before.
We will now assume that $S(U,W)_{\fm}$ is nonzero for some choice of $V$ above.
In fact, one can show, by the proof of Proposition \ref{prop:levellower}, that this is a consequence of the hypothesis that $\rbar$ is automorphic. 

In the remainder of this section we let  $\rbar:G_F\ra \GL_3(\F)$ be an automorphic Galois representation that satisfies the Taylor--Wiles conditions in the sense of \cite[Definition 7.3]{LLLM}.
We assume furthermore that 
\begin{enumerate}[(i)]
\item the extension $F/F^+$ is unramified at all finite places;
\item (split ramification) if $\rbar:G_F \rightarrow \GL_3(\F)$ is ramified at a place $w$ of $F$, then $v = w|_{F^+}$ splits as $ww^c$;
\item $p$ is unramified in $F^+$ and all places in $F^+$ above $p$ split in $F$; and
\item $\rbar|_{G_{F_w}}$ is semisimple for all $w\in\Sigma_p$.
\end{enumerate}

For each $v\in \Sigma_p^+$, we choose a place $\tld{v}|v$ of $F$ and let $\cS$ be the set $\{\tld{v}|v\in \Sigma_p^+\}$.
Let $\rhobar_{\tld{v}}$ be $\rbar|_{G_{F_{\tld{v}}}}$ and $\rhobar_{\cS}$ be $(\rhobar_{\tld{v}})_{\tld{v}\in \cS}$.
We set $K\defeq \prod_{\tld{v}\in\cS}\GL_3(\cO_{F_{\tld{v}}})$.
Let $\tld{M}_\infty$ be the weak minimal patching functor for $\rbar$ in the sense of \cite[Definition 7.11]{LLLM} constructed in \cite[Proposition 7.15]{LLLM}.
As in \S \ref{sec:SWC}, we let $M_\infty$ be $\tld{M}_\infty \circ \prod_{\tld{v}\in \cS} \iota_{\tld{v}}$, which is a weak minimal patching functor for $\rhobar_\cS$ by Proposition \ref{prop:patchfunctor}.

Along with the construction of $\tld{M}_\infty$ (cf.~ \cite[\S 2.8]{CEGGPS}, \cite[\S 4.2]{Le}, \cite[\S 7.3]{LLLM}) one has a ring homomorphism $R_\infty=\underset{\tld{v}\in\cS}{\widehat{\otimes}}R_{\tld{v}}^{\Box}[\![x_1,\dots,x_h]\!]\ra 
\bT^{\univ}_{U^p\cG(\cO_{F^+,p})}(W_{\Sigma_0^+})_{\fm}$, where $\bT^{\univ}_{U^p\cG(\cO_{F^+,p})}(W_{\Sigma_0^+})$ is the image of $\bT^{\univ}$ in $\End_{\cO}\big(S(U^p\cG(\cO_{F^+,p}),W_{\Sigma_0^+})\big)$.
We also write $\fm$ for the pullback in $R_\infty$ of the maximal ideal of $\bT^{\univ}_{U^p\cG(\cO_{F^+,p})}(W_{\Sigma_0^+})_{\fm}$.
Then if $\ovl{W}_{\Sigma_p^+}$ is a smooth, finite dimensional $\cG(\cO_{F^+,p})$-representation over $\F$ one has
\begin{equation} \label{eqn:closedfiber}
(\tld{M}_\infty(\ovl{W}_{\Sigma_p^+})/\fm)^\vee=S(U^p\cG(\cO_{F^+,p}),\ovl{W}_{\Sigma_0^+}\otimes\ovl{W}^\vee_{\Sigma_p^+})[\fm].
\end{equation}
where $\cdot^\vee$ denotes Pontrjagin duals (cf.~ \cite[Theorem 4.1.5]{Le}).

\subsubsection{Automorphy lifting}
\label{subsubsec:AL}

Recall from Proposition \ref{prop:basic:Ktypes} that if $\tld{v}\in \cS$ and $\tau_{\tld{v}}$ is a $1$-generic tame inertial type for $I_{F_{\tld{v}}}$, we defined a $\GL_3(\cO_{F_{\tld{v}}})$-representation $\sigma(\tau_{\tld{v}})$ over $E$ corresponding to $\tau_{\tld{v}}$ by results towards inertial local Langlands.
We again let $\sigma(\tau_\cS)$ be $\otimes_{\tld{v}\in \cS} \sigma(\tau_{\tld{v}})$.

\begin{thm}\label{thm:automorphic}
Let $r:G_F \ra \GL_3(E)$ be an absolutely irreducible Galois representation such that 
\begin{enumerate}
	\item for all places $w\in\Sigma_p$, $\rbar|_{I_{F_w}}$ is semisimple and $10$-generic; 
	\item $r$ is unramified almost everywhere and satisfies $r^c\cong r^\vee \epsilon^{-2}$;
	\item for all places $\tld{v}\in \cS$, the representation $r|_{G_{F_{\tld{v}}}}$ is potentially crystalline, with parallel Hodge-Tate weights $(2,1,0)$ and with tame inertial type $\tau_{\tld{v}}$ $($see \cite[Definition 2.1]{LLLM}$)$;
	\item $\rbar$ satisfies the Taylor--Wiles conditions as above and $\rbar$ has split ramification; and
	\item $\rbar\cong \rbar_{\imath}(\pi)$ for a RACSDC representation $\pi$ of $\GL_3(\bA_{F})$ with trivial infinitesimal character such that $\otimes_{\tld{v}\in \cS}\sigma(\tau_{\tld{v}})$ is a $K$-type for $\otimes_{\tld{v}\in \cS}\pi_{\tld{v}}$.
\end{enumerate}
Then $r$ is automorphic in the sense of \cite[\S 7.2]{LLLM}.
\end{thm}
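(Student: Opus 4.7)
The plan is to deduce Theorem \ref{thm:automorphic} from the Kisin-Taylor-Wiles patching machinery underlying the weak minimal patching functor $M_\infty$ constructed in \cite[Proposition 7.15]{LLLM}, combined with the connectedness of the generic fibers of the potentially crystalline deformation rings established in Theorem \ref{thm:conngenfib}. Concretely, let $\tau \defeq \bigotimes_{v\in \Sigma_p^+}\sigma(\tau_{\tld{v}})$, regarded as a $K$-representation, and fix a $K$-stable $\cO$-lattice $\tau^{\circ}\subseteq \tau$. The Galois representation $r$ defines (after patching) a characteristic zero point $x_r$ of $\Spec R_\infty(\tau)[1/p]$, where $R_\infty(\tau)$ is the patched potentially crystalline deformation ring of type $((2,1,0),\tau)$. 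To prove automorphy it suffices to show that $x_r$ lies in the support of $M_\infty(\tau^{\circ})[1/p]$, as then classical local-global compatibility at the non-$p$-adic places together with the construction of $\tld{M}_\infty$ recovers $r$ from an automorphic form on a definite unitary group, which one then transfers to $\GL_3/F$ by standard base change.

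First I would verify the hypotheses of the patching setup: the assumption that $\rbar$ satisfies Taylor--Wiles, has split ramification, is $10$-generic and semisimple at places above $p$, together with the existence of $\pi$ realizing $\rbar$ with the correct $K$-type, guarantees both that $\tld{M}_\infty$ is well-defined and that $\tld{M}_\infty(\iota_{\tld{v}}\tau^{\circ})\neq 0$. Hence $M_\infty(\tau^{\circ})\neq 0$. Next, Definition \ref{minimalpatching}(\ref{support}) ensures $M_\infty(\tau^{\circ})$ is $p$-torsion free and maximal Cohen--Macaulay over $R_\infty(\tau)$, so its support in $\Spec R_\infty(\tau)$ is a union of irreducible components.

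The key step is then to invoke Theorem \ref{thm:conngenfib}, which under our $10$-genericity hypothesis asserts that for each $\tld{v}\in\tld{\Sigma}_p$ the ring $R_{\rhobar_{\tld{v}}}^{\tau_{\tld{v}}}[1/p]$ is a domain, so that the generic fiber of $\Spec R_\infty(\tau)$ is irreducible (the completed tensor product of geometrically integral regular $E$-algebras is geometrically integral after the formally smooth $R_\infty/\widehat{\otimes}_{\tld{v}}R_{\rhobar_{\tld{v}}}^{\square}$ base change). Since $M_\infty(\tau^{\circ})[1/p]$ is a nonzero maximal Cohen--Macaulay module on this regular irreducible generic fiber, it has full support there. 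Therefore $x_r$ lies in the support of $M_\infty(\tau^{\circ})$.

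The main obstacle, and precisely the reason for the elaborate analysis of \S \ref{sec:weights}, is establishing the irreducibility of the generic fiber in the absence of the classical potentially Barsotti--Tate input; this is handled by Theorem \ref{thm:conngenfib} via the geometric Breuil--M\'ezard statement of Proposition \ref{prop:identify:cmpt} and the ring-theoretic properties deduced from the explicit presentations in \cite[\S 5.3,\S 8]{LLLM}. Once this is in hand, the rest of the argument is standard: choosing a prime of $R_\infty(\tau)$ corresponding to $r$ in the support of $M_\infty(\tau^{\circ})$, using the finiteness of $M_\infty(\tau^{\circ})$ over the central Hecke algebra to produce a congruence between $r$ and an automorphic $r'$ at the same point, and concluding by classical local-global compatibility for $r'$ (whose existence follows from the hypothesis on $\pi$, together with solvable base change) that $r$ itself is automorphic in the sense of \cite[\S 7.2]{LLLM}.
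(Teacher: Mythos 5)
Your proposal is correct and takes essentially the same route as the paper, which simply states that ``given Theorem \ref{SWC2}, the proof of [LLLM, Theorem 7.4] goes through unchanged'' and highlights that $R_{\rhobar}^{\tau}[1/p]$ is a domain for $10$-generic $\tau$. You have unpacked what that cited proof contains---nonvanishing of $M_\infty(\tau^\circ)$, full support of $M_\infty(\tau^\circ)[1/p]$ over the irreducible generic fiber via Theorem \ref{thm:conngenfib}, and the standard congruence/local-global compatibility conclusion---which is exactly the argument the paper is invoking.
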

\begin{proof}
Given Theorems \ref{SWC2} and \ref{thm:BM}, the proof of \cite[Theorem 7.4]{LLLM} goes through unchanged.  %
\end{proof}
\begin{rmk} Compared to \cite[Theorem 7.4]{LLLM}, we relaxed the hypothesis that $p$ splits completely to $p$ being unramified. However, we also assumed that $\rbar$ is semisimple at all places above $p$. The reason is that we only established the connectedness of the generic fiber of $R_{\rhobar}^{\tau}$ when $\rhobar$ is semisimple (though we do know it for non-semisimple $\rhobar$ in the case that all shapes have length $\geq 2$). In work in progress, we will establish a counterpart of  \cite[Theorem 7.4]{LLLM} for non-semisimple representations. This will allow us to remove the semisimplicity hypothesis in a manner similar to  \cite[Theorem 7.4]{LLLM}.
\end{rmk}

\subsubsection{The Serre weight conjecture}

Let $\rbar:G_F\ra \GL_3(\F)$ be as in the beginning of \S \ref{subsec:global}.
Recall that for each $v\in \Sigma_p^+$, we chose a place $\tld{v}|v$ of $F$ and set $\cS$ to be the set $\{\tld{v}|v\in \Sigma_p^+\}$.
Furthermore, we let $\rhobar_{\tld{v}}$ be $\rbar|_{G_{F_{\tld{v}}}}$ and $\rhobar_{\cS}$ be $(\rhobar_{\tld{v}})_{\tld{v}\in \cS}$.
Recall that $K$ is the product $\prod_{\tld{v}\in \cS} \GL_3(\cO_{F_{\tld{v}}})$.
Let $W(\rbar)$ be the set of irreducible $K$-representations $\sigma$ over $\F$ such that 
\[
S(U^p\cG(\cO_{F^+,p}),\ovl{W}_{\Sigma_0^+}\otimes(\sigma^\vee \circ \prod_{\tld{v}\in \cS} \iota_{\tld{v}}))_\fm \neq 0.
\]
We have the following version of the weight part of Serre's conjecture.

\begin{thm} \label{SWC2}
Let $\rbar:G_F\rightarrow \GL_3(\F)$ be a continuous Galois representation, satisfying the Taylor-Wiles conditions.
Assume that $\rhobar_{\tld{v}} \defeq \rbar|_{G_{F_{\tld{v}}}}$ is semisimple and $10$-generic for all $\tld{v}\in \cS$, that $\rbar$ is automorphic, and that $\rbar$ has split ramification.
Then $W(\rbar) \circ \prod_{\tld{v}\in \cS}\iota_{\tld{v}}= W^?(\rhobar_\cS)$.
\end{thm}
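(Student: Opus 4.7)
The plan is to reduce Theorem \ref{SWC2} to the abstract Serre weight result Theorem \ref{SWC} applied to the weak minimal patching functor constructed from the global setup. First, I would invoke Proposition \ref{prop:patchfunctor}: composing the weak minimal patching functor $\tld{M}_\infty$ for $\rbar$ with $\prod_{v\in\Sigma_p^+}\iota_{\tld{v}}$ yields a weak minimal patching functor $M_\infty$ for $\rhobar_{\tld{\Sigma}_p}\defeq(\rbar|_{G_{F_{\tld{v}}}})_{\tld{v}\in\tld{\Sigma}_p}$. By the very construction of $\tld{M}_\infty$ via completed cohomology (cf. \cite[\S 7.1]{LLLM}), a factorizable Serre weight $\sigma=\otimes_{\tld{v}\in\tld{\Sigma}_p}\sigma_{\tld{v}}$ lies in $W(\rbar)\circ\prod_{v\in\Sigma_p^+}\iota_{\tld{v}}$ if and only if $M_\infty(\sigma)\neq 0$, so the identification $W(\rbar)\circ\prod_{v\in\Sigma_p^+}\iota_{\tld{v}}=W^{\mathrm{BM}}(\rhobar_{\tld{\Sigma}_p})$ holds tautologically.

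Next, I would establish that $W^{\mathrm{BM}}_t(\rhobar_{\tld{\Sigma}_p})\neq\emptyset$, which is the nontrivial input of automorphy. Since $\rbar$ is automorphic there exists at least one $\sigma_0\in W(\rbar)\circ\prod_{v\in\Sigma_p^+}\iota_{\tld{v}}=W^{\mathrm{BM}}(\rhobar_{\tld{\Sigma}_p})$. By the $10$-genericity hypothesis we may apply weight elimination (Proposition \ref{WE}) to conclude $\sigma_0\in W^?(\rhobar_{\tld{\Sigma}_p}^{\mathrm{ss}})=W^?(\rbar)$ (the last equality since each $\rbar|_{G_{F_{\tld{v}}}}$ is assumed semisimple). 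Every constituent of $W^?(\rhobar_{\tld{\Sigma}_p})$ appears in the reduction of a tame inertial type by Proposition \ref{prop:serrewts}, so $\sigma_0\in W^{\mathrm{BM}}_t(\rhobar_{\tld{\Sigma}_p})$, verifying the nonemptiness hypothesis of Theorem \ref{SWC}.

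With this in hand, Theorem \ref{SWC} immediately yields $W^{\mathrm{BM}}_t(\rhobar_{\tld{\Sigma}_p})=W^?(\rhobar_{\tld{\Sigma}_p})=W^?(\rbar)$. To finish it remains to promote this to $W^{\mathrm{BM}}(\rhobar_{\tld{\Sigma}_p})=W^?(\rbar)$. The inclusion $W^{\mathrm{BM}}\subset W^?$ is exactly weight elimination (Proposition \ref{WE}), and the reverse inclusion $W^?\subset W^{\mathrm{BM}}$ follows from $W^?=W^{\mathrm{BM}}_t\subset W^{\mathrm{BM}}$. Combining these inclusions with the tautological identification in the first step completes the proof.

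Since every individual step has been prepared by the previous sections, the only real subtlety — and the step I would handle most carefully — is the identification $W(\rbar)\circ\prod_{v\in\Sigma_p^+}\iota_{\tld{v}}=W^{\mathrm{BM}}(\rhobar_{\tld{\Sigma}_p})$: one must verify that the nonvanishing of the completed-cohomology Hecke eigenspace $S(U,V_\sigma)[\mathfrak{m}_{\rbar}]$ used in defining $W(\rbar)$ matches the nonvanishing of $M_\infty(\sigma)=\tld{M}_\infty(\prod_{v}\iota_{\tld{v}}(\sigma))$, which is essentially the content of the patching construction in \cite[Proposition 7.15]{LLLM}. No genuinely new representation-theoretic or geometric obstacle should appear beyond what is already packaged in Theorems \ref{SWC} and \ref{thm:BM}.
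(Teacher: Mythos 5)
Your proof follows the paper's own argument for Theorem \ref{SWC2}, which consists of the single observation that $W^{\mathrm{BM}}(\rhobar_{\tld{\Sigma}_p}) = W(\rbar)\circ\prod_{v\in\Sigma_p^+}\iota_{\tld{v}}$ (from \cite[Definition 7.11(2)]{LLLM}) followed by an invocation of Theorem \ref{SWC}; you have simply unpacked the two auxiliary points the paper leaves silent, namely the nonemptiness of $W^{\mathrm{BM}}_t(\rhobar_{\tld{\Sigma}_p})$ (the hypothesis of Theorem \ref{SWC}) and the passage from $W^{\mathrm{BM}}_t=W^?$ back to $W^{\mathrm{BM}}=W^?$. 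Both of these auxiliary steps, as you note, lean on the ``in particular'' clause of Proposition \ref{WE}: it asserts $M_\infty(\sigma)=0$ for \emph{any} $\sigma$ with some local factor outside $W^?(\rhobar_{\tld{v}}^{\mathrm{ss}})$, which simultaneously shows $W^{\mathrm{BM}}\subset W^?$ and, since every element of $W^?$ does appear in a tame type, $W^{\mathrm{BM}}\subset W^{\mathrm{BM}}_t$. The only caveat --- and it affects the paper's one-line proof exactly as much as yours --- is that the proof given for that clause only treats $\sigma_{\tld{v}}$ occurring as a Jordan--H\"older factor of some tame $K_{\tld{v}}$-type (so that Corollary \ref{separateweights} and Proposition \ref{telim} apply); since not every Serre weight of $\GL_3(k_{\tld{v}})$ is of this form, ruling out an automorphic weight lying outside the tame-type locus requires the extension sketched in the remark following Theorem \ref{SWC} (replacing $\tau$ by $V_\lambda\otimes\tau$), which is not carried out in the text. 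Your proposal inherits this lacuna from the reference rather than introducing a new one; aside from it, every step you take is the intended one.
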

\begin{proof}
We have that $W^\mathrm{BM}(\rhobar_\cS) = W(\rbar)\circ \prod_{\tld{v}\in \cS}\iota_{\tld{v}}$ by (\ref{eqn:closedfiber}). %
The result now follows from Theorem \ref{SWC}.
\end{proof}

\subsubsection{Mod $p$ multiplicity one}\label{subsec:multone}

We continue using the setup from the beginning of \S \ref{subsec:global}.
We have the following mod $p$ multiplicity one result.
\begin{thm}\label{thm:modpmultone}
Let $\tau_\cS$ and $\sigma\in W^?(\rhobar_\cS,\tau_\cS)$ be as in the statement of Theorem \ref{thm:cyclic}.
Then 
\[
S\Big(U^p\cG(\cO_{F^+,p}),\big(\ovl{\sigma}(\tau_\cS)^{\sigma} \circ \prod_{v\in \Sigma_p^+} \iota_{\tld{v}}\big)^\vee \otimes_{\cO_E}W_{\Sigma_0^+}\Big)[\mathfrak{m}]
\]
is one-dimensional over $\F$.
\end{thm}
\begin{proof}
By (\ref{eqn:closedfiber}), 
\[M_\infty(\ovl{\sigma}(\tau_\cS)^\sigma)/\mathfrak{m} \cong \tld{M}_\infty(\ovl{\sigma}(\tau_\cS)^\sigma \circ \prod_{v\in \Sigma_p^+} \iota_{\tld{v}})/\fm \cong \Big(S\Big(U^p\cG(\cO_{F^+,p}),\big(\ovl{\sigma}(\tau_\cS)^\sigma \circ \prod_{v\in \Sigma_p^+} \iota_{\tld{v}}\big)^\vee \otimes_{\cO} W_{\Sigma_0^+}\Big)[\mathfrak{m}]\Big)^\vee.\]
By Theorem \ref{thm:cyclic}, the dimension of $M_\infty(\ovl{\sigma}(\tau_\cS)^\sigma)/\mathfrak{m}$ is one.
\end{proof}

\subsubsection{Lattices in cohomology}

Let $r:G_F \ra \GL_3(E)$ be an automorphic Galois representation as in Theorem \ref{thm:automorphic}.
We say that $r$ is \emph{minimally ramified} if $r|_{G_{F_{\tld{v}}}}$ is minimally ramified in the sense of \cite[Definition 2.4.14]{CHT} for all $v\in \Sigma_0^+$.
Following the notation of \cite[\S 7.1]{LLLM}, let $\lambda$ be the kernel of the system of Hecke eigenvalues $\alpha:\bT^{\univ}\rightarrow \cO$ associated to $r$, i.e.~ $\alpha$ satisfies 
\[
\det\left(1-r^{\vee}(\mathrm{Frob}_w)X\right)=\sum_{j=0}^3 (-1)^j(\mathbf{N}_{F/\Q}(w))^{\binom{j}{2}}\alpha(T_w^{(j)})X^j
\]
for all $w$ as above. %
We now set $W \defeq W_{\Sigma_0^+}$ as in \S \ref{subsec:multone}.
By Theorem \ref{thm:automorphic}, $\tld{S}(U^p, W)[\lambda]$ is nonzero.
Since $r$ is minimally ramified, $r$ corresponds to a prime ideal of $R_\infty$ as in \cite[Theorem 5.2.1]{HLM}.
By an abuse of notation, we call this ideal $\lambda$.
Note that we have that $M_\infty/\lambda \isom \tld{S}(U^p, W)^{\mathrm{d}}/\lambda$ by (the proof of) \cite[Corollary 2.11]{CEGGPS}.

\begin{thm}\label{thm:lattice}
Let $r:G_F \ra \GL_3(E)$ be as in Theorem \ref{thm:automorphic}. Assume furthermore that $r$ is minimally ramified.
Let $\{\tau_{\tld{v}}\}_{\tld{v}\in\cS} $ be a $13$-generic tame  type.
The lattice $\sigma(\tau)^0 \defeq \sigma(\tau) \cap \tld{S}(U^p, W)[\lambda] \subset \sigma(\tau) \cap \tld{S}(U^p,W)[\lambda]\otimes_{\cO} E \cong \sigma(\tau)$ depends only on $\{r|_{G_{F_{\tld{v}}}}\}_{\tld{v}\in \cS}$.
\end{thm}
\begin{proof}
Let $\rhobar_\cS$ be $(\rbar|_{G_{F_{\tld{v}}}})_{\tld{v}\in \cS}$.
Fix $\sigma\in\JH(\ovl{\sigma(\tau_\cS)})$ as in Theorem \ref{thm:cyclic}, a saturated inclusion $\sigma(\tau_\cS)^\sigma \subset \sigma(\tau_\cS)^0$, and saturated inclusions $\sigma(\tau_\cS)^\kappa \subset \sigma(\tau_\cS)^\sigma$ for all $\kappa\in \JH(\ovl{\sigma(\tau_\cS)})$.
Let $\gamma(\kappa) \in \cO$ so that $\gamma(\kappa)^{-1} \sigma(\tau_\cS)^\kappa\subset \sigma(\tau_\cS)^0$ is saturated.
Then $\sigma(\tau_\cS)^0 = \sum_{\kappa\in \JH(\ovl{\sigma(\tau_\cS)})} \gamma(\kappa)^{-1} \sigma(\tau_\cS)^\kappa$ for some $\gamma(\kappa) \in \cO$ by \cite[Lemma 4.1.2]{EGS}.
It suffices to show that for each $\kappa \in \JH(\ovl{\sigma(\tau_\cS)})$, the ideal $(\gamma(\kappa)) \subset \cO$ depends only on $\rhobar_\cS$.

Observe that the image of the inclusion 
\[\cO \cong \Hom_{U_p}(\sigma(\tau_\cS)^\sigma, \tld{S}(U^p, W)[\lambda]) \ra \Hom_{U_p}(\sigma(\tau_\cS)^\kappa, \tld{S}(U^p, W)[\lambda]) \cong \cO,\] 
is given by the ideal $(\gamma(\kappa))$.
By Schikhov duality, the natural inclusion
\[\cO \cong \Hom_{U_p}((\tld{S}(U^p, W)[\lambda])^{\mathrm{d}}, (\sigma(\tau_\cS)^\sigma)^{\mathrm{d}}) \ra \Hom_{U_p}((\tld{S}(U^p, W)[\lambda])^{\mathrm{d}},(\sigma(\tau_\cS)^\kappa)^{\mathrm{d}}) \cong \cO,\]
is also given by the ideal $(\gamma(\kappa))$.
By another application of Schikhov duality, the natural inclusion
\[\cO \cong \Hom_{U_p}((\tld{S}(U^p, W)[\lambda])^{\mathrm{d}}, (\sigma(\tau_\cS)^\kappa)^{\mathrm{d}})^{\mathrm{d}} \ra \Hom_{U_p}((\tld{S}(U^p, W)[\lambda])^{\mathrm{d}},(\sigma(\tau_\cS)^\sigma)^{\mathrm{d}})^{\mathrm{d}} \cong \cO,\]
is also given by the ideal $(\gamma(\kappa))$.
Since $\tld{S}(U^p, W)[\lambda]^{\mathrm{d}}$ is canonically isomorphic to the torsion-free part of $\tld{S}(U^p, W)^{\mathrm{d}}/\lambda$,
 the ideal $(\gamma(\kappa))$ gives the inclusion
\[\Hom_{U_p}(M_\infty/\lambda,(\sigma(\tau_\cS)^\kappa)^{\mathrm{d}})^{\mathrm{d}} \ra \Hom_{U_p}(M_\infty/\lambda,(\sigma(\tau_\cS)^\sigma)^{\mathrm{d}})^{\mathrm{d}}.\]
Again, note that $\Hom_{U_p}(M_\infty/\lambda,(\sigma(\tau_\cS)^\sigma)^{\mathrm{d}})^{\mathrm{d}}$ is isomorphic to (the $p$-torsion-free part of) 
\[
\Hom_{U_p}(M_\infty,(\sigma(\tau_\cS)^\sigma)^{\mathrm{d}})^{\mathrm{d}}/\lambda = M_\infty(\sigma(\tau_\cS)^\sigma)/\lambda,\]
and similarly for $\kappa$.
So the image of
\begin{equation} \label{eqn:gauge}
\cO \cong M_\infty(\sigma(\tau_\cS)^\kappa)/\lambda \ra M_\infty(\sigma(\tau_\cS)^\sigma)/\lambda \cong \cO
\end{equation}
is given by the ideal $(\gamma(\kappa))$.

On the other hand, the image of (\ref{eqn:gauge}) is given by
\[
\Big(j_*j^* \Big(\prod_{\theta\in W^?(\rhobar_{\cS})} \mathfrak{p}(\theta)^{m(\theta)}R_\infty(\tau_\cS)\Big) \Big)
\big(M_\infty(\sigma(\tau_\cS)^\sigma)/\lambda\big)
\]
by Theorem \ref{thm:gauge}.
Then
 $(\gamma(\kappa))$ is generated by elements in the ideal
\[j_*j^* \Big(\prod_{\theta\in W^?(\rhobar_{\cS})} \mathfrak{p}(\theta)^{m(\theta)}R_\infty(\tau_\cS)\Big)\]
modulo the ideal $\lambda$. By Remark \ref{rmk:descend}, the above ideal comes from $\widehat{\otimes}_{\tld{v}\in \cS} R_{\tld{v}}^\square$, hence the ideal generated by its generators modulo $\lambda$ depends only on $\{r|_{G_{F_{\tld{v}}}}\}_{\tld{v}\in \cS}$.
\end{proof}
\begin{rmk}
In the hypotheses of Theorem \ref{thm:lattice}, assume further that $\tld{w}(\rhobar_{\tld{v}},\tau_{\tld{v}})_i$ has length at least $2$ for all $\tld{v}\in\cS$ and $i\in\Z/f_{\tld{v}}$.
Then the lattice $\sigma(\tau)^0$ can be described explicitly as in \cite[Theorem 14]{Le}.
\end{rmk}

\newpage

In the following figures we give a pictorial realization of the gluing procedure appearing in the proofs of Lemmas \ref{lemma:broom}--\ref{lemma:9weight}.
Recall that $\tld{w}^*_{i'}=\tld{w}(\rhobar_{\cS},\tau_\cS)^*_{i'}$.
\captionsetup[table]{name=Figure}
\begin{table}[h]
\caption{\textbf{Comparison of $\Sigma_{V^j,i'}$ in Lemma \ref{lemma:broom}}}\label{TableBroom}
\centering
\adjustbox{max width=\textwidth}{
\begin{tabular}{ c }
$
\xymatrix{
&
\sigma_{(0,1)} \ar@{-}[dl]\ar@{-}[d]&
&\sigma_{(0,1)}\ar@{-}[d]&&
\sigma_{(0,1)}\ar@{-}[dr]\ar@{-}[d]&
\\
\sigma_{(\eps_1, 0)} &\sigma_{(0,0)}&&
\sigma_{(0,0)}&&\sigma_{(0,0)}&\sigma_{(\eps_2,0)}
}
$\\
\end{tabular}}
\caption*{
From left to right with arrows pointing down, we have $\Sigma_{V^1,i'}$, $\Sigma_{V^3,i'}$ and $\Sigma_{V^2,i'}$.
The edges correspond to adjacent pairs.}%
\end{table}

\begin{table}[h]
\caption{\textbf{Comparison of $\Sigma_{V,i'}$ and $\Sigma_{V^j,i'}$ in Lemma \ref{lemma:6weight} when $\tld{w}_{i'}^*t_{-\un{1}} =\id$}}\label{TableL6W}
\centering
\adjustbox{max width=\textwidth}{
\begin{tabular}{ c }
$
\xymatrix{
&
\color{red}\sigma_{(0,1)} \ar@{-}@[red][dr]\ar@{-}@[red][d]\ar@{-}@[red][dl]\ar@{-}[drr]&&
&
&
\color{red}\sigma_{(0,1)} \ar@{-}@[red][dr]\ar@{-}@[red][d]\ar@{-}@[red][dl]\ar@{-}[drr]&&
&
\\
\color{red}\sigma_{(\eps_1,0)}\ar@{-}@[red][drr]&\color{red}\sigma_{(0,0)}\ar@{-}@[red][dr]&
\color{red} \sigma_{(\eps_2,0)}\ar@{-}@[red][d]&\sigma_{(\eps_2-\eps_1,0)}\ar@{-}[dl]
&
\color{red}\sigma_{(\eps_1,0)}&\color{red}\sigma_{(0,0)}&
\color{red} \sigma_{(\eps_2,0)}&\sigma_{(\eps_2-\eps_1,0)}
&
\color{red}\sigma_{(\eps_2,0)}\ar@{-}@[red][d]
\\
&&\color{red}\sigma_{(\eps_2,1)}&&&&&&\color{red}\sigma_{(\eps_2,1)}
}
$\\
\end{tabular}}
\caption*{
Assume $(\nu_1,\nu_2,\nu_3)=(\eps_1-\eps_2,\eps_1,\eps_1+\eps_2)$ and $\tld{w}_{i'}^*t_{-\un{1}}=\id$. Then %
$\nu_4=\eps_2$.  From left to right with arrows pointing down, we have $\Sigma_{V,i'}$, $\Sigma_{V^1,i'}$, and $\Sigma_{V^2,i'}$ (a Weyl segment). In red are the elements in $r(\Sigma_0)$.}
\end{table}

\begin{table}[h]
\caption{\textbf{Comparison of $\Sigma_{V^j,i'}$ in Lemma \ref{lemma:8weight} when  $\tld{w}_{i'}^*t_{-\un{1}}=\alpha$}}\label{TableL8W}
\centering
\adjustbox{max width=\textwidth}{
\begin{tabular}{ c }
$
\xymatrix{
&&
\color{red} \sigma_{(\alpha(0),1)} \ar@{-}@[red][dr]\ar@{-}@[red][d]\ar@{-}[dl]\ar@{-}[dll]&
&&
\color{red} \sigma_{(\alpha(0),1)} \ar@{-}@[red][dr]\ar@{-}@[red][d]\ar@{-}[dl]\ar@{-}@[red][drr]&&
\\
\sigma_{(\alpha(\eps_1-\eps_2), 0)} \ar@{-}[dr]&\sigma_{(\alpha(\eps_1),0)}\ar@{-}[d]&
\color{red}\sigma_{(\alpha(0),0)}\ar@{-}@[red][dl]&\color{red}\sigma_{(\alpha(\eps_2),0)}\ar@{-}@[red][dll]
&\sigma_{(\alpha(\eps_1),0)}\ar@{-}[drr]&
\color{red}\sigma_{(\alpha(0),0)}\ar@{-}@[red][dr]&\color{red}\sigma_{(\alpha(\eps_2),0)}\ar@{-}@[red][d]&
\color{red} \sigma_{(\alpha(\eps_2-\eps_1),0)}\ar@{-}@[red][dl]
\\
&\color{red} \sigma_{(\alpha(\eps_1),1)}&&
&&&\color{red} \sigma_{(\alpha(\eps_2),1)}&
}
$\\
\end{tabular}}
\caption*{In the notation of Lemma \ref{lemma:8weight} consider the case where $\nu=\eps_1+\eps_2$ and $\tld{w}_{i'}^*t_{-\un{1}}=\alpha$. On the left we have $\Sigma_{V^2,i'}$, where we write the elements in $r(\Sigma_0)$ in red.  %
Similarly on the right for $\Sigma_{V^1,i'}$.}

\end{table}
\begin{table}[h]
\caption{\textbf{Comparison of $\Sigma_{V^j,i'}$ in Lemma \ref{lemma:8weight} when  $\tld{w}_{i'}^*t_{-\un{1}}=\gamma^+$}}\label{TableL8W1}
\centering
\adjustbox{max width=\textwidth}{
\begin{tabular}{ c }
$
\xymatrix{
&&
\color{red} \sigma_{(\gamma^+(0),1)} \ar@{-}@[red][dr]\ar@{-}[d]\ar@{-}@[red][dl]\ar@{-}[dll]&
&&
\color{red} \sigma_{(\gamma^+(0),1)} \ar@{-}@[red][dr]\ar@{-}[d]\ar@{-}@[red][dl]\ar@{-}[drr]&&
\\
\sigma_{(\gamma^+(\eps_1-\eps_2), 0)} \ar@{-}[dr]&\color{red}\sigma_{(\gamma^+(\eps_1),0)}\ar@{-}@[red][d]&
\sigma_{(\gamma^+(0),0)}\ar@{-}[dl]&\color{red}\sigma_{(\gamma^+(\eps_2),0)}\ar@{-}@[red][dll]
&\color{red}\sigma_{(\gamma^+(\eps_1),0)}\ar@{-}@[red][drr]&
\sigma_{(\gamma^+(0),0)}\ar@{-}[dr]&\color{red}\sigma_{(\gamma^+(\eps_2),0)}\ar@{-}@[red][d]&
 \sigma_{(\gamma^+(\eps_2-\eps_1),0)}\ar@{-}[dl]
\\
&\color{red} \sigma_{(\gamma^+(\eps_1),1)}&&
&&&\color{red} \sigma_{(\gamma^+(\eps_2),1)}&
}
$\\
\end{tabular}}
\caption*{In the notation of Lemma \ref{lemma:8weight} consider the case where $\nu=\eps_1+\eps_2$ and $\tld{w}_{i'}^*t_{-\un{1}}=\gamma^+$. On the left we have $\Sigma_{V^2,i'}$, where we write the elements in $r(\Sigma_0)$ in red.  %
Similarly on the right for $\Sigma_{V^1,i'}$.}
\end{table}

\begin{table}[h]
\caption{\textbf{Comparison of $\Sigma_{V^j,i'}$ in Lemma \ref{lemma:9weight} when  $\tld{w}_{i'}^*t_{-\un{1}}=\gamma^+$}}\label{TableL9W}
\centering
\adjustbox{max width=\textwidth}{
\begin{tabular}{ c }
$
\xymatrix{
&&
\color{red} \sigma_{(\gamma^+(0),1)} \ar@{-}@[red][dr]\ar@{-}[d]\ar@{-}@[red][dl]\ar@{-}[dll]\ar@{-}[drr]&
&&&&&&
\\
\sigma_{(\gamma^+(\eps_1-\eps_2), 0)} \ar@{-}[d]&\color{red}\sigma_{(\gamma^+(\eps_1),0)}\ar@{-}@[red][dl]\ar@{-}@[red][drrr]&
\sigma_{(\gamma^+(0),0)}\ar@{-}[dll]\ar@{-}[drr]&\color{red} \sigma_{(\gamma^+(\eps_2),0)}\ar@{-}@[red][dlll]\ar@{-}@[red][dr]
&\sigma_{(\gamma^+(\eps_2-\eps_1),0)}\ar@{-}[d]&&&&
\\
\color{red} \sigma_{(\gamma^+(\eps_1),1)}\ar@{..}@[red][drr]&&
&&\color{red} \sigma_{(\gamma^+(\eps_2),1)}\ar@{..}@[red][dll]&
&\color{red} \sigma_{(\gamma^+(\eps_1),1)}\ar@{-}@[red][drr]&&&
\\
&&\color{red} \sigma_{(\gamma^+(\eps_1+\eps_2),0)}&&
&&&&\color{red} \sigma_{(\gamma^+(\eps_1+\eps_2),0)}
}
$\\
\end{tabular}}
\caption*{In the notation of Lemma \ref{lemma:9weight} consider the case where $\nu=\eps_1+\eps_2$ and $\tld{w}_{i'}^*t_{-\un{1}}=\gamma^+$. On the left, we have $\Sigma_{V,i'}$ (resp. $\Sigma_{V^1,i'}$ removing the dotted lower part), where we write the elements in $r(\Sigma_0)$ in red. On the right we have one of the possible choices for $\Sigma_{V^2,i'}$.}
\end{table}

\section{Addendum to \cite{LLLM}}
\label{sec:add}

\begin{enumerate}

\item In Theorem 1.1, the statement that ``its special fiber is as predicted by the geometric Breuil--M\'ezard conjecture'' means the following: under the assumptions of Theorem 1.1, the special fiber of $R^{\text{\tiny{$(2,1,0)$}},\tau}_{\rhobar}$ is reduced and the number of irreducible components is less or equal to $\#\big(W^?(\rhobar^{\mathrm{ss}})\cap \JH(\ovl{\sigma}(\tau))\big)$, with equality if $\rhobar$ is semisimple.
This can be checked directly using Theorem 6.14, Table 3, Table 7, Table 8, Propositions 8.5, 8.6, 8.12 and the results of section \S 8.2.2.

\item In Proposition 3.4 the codomain of $T_{\mathrm{tan}}$ should be replaced by
\[
\left\{
(\rho,\gamma_0)\ \mid\ \rho\in\mathrm{Rep}_{\F'[\varepsilon]/(\varepsilon^2)}(G_{K_\infty}),\ \gamma_0:\rho\mod\varepsilon\stackrel{\sim}{\ra} T_{dd}^*(\ovl{\fM})\right\}
\]

\item After equation (3.7), we remark that if $(\fM_A,\rho_A,\delta_A)\in D^{\tau,\Box}_{\ovl{\fM},\rhobar}$, then we have a canonical isomorphism $\fM_A\otimes_A\F\stackrel{\sim}{\ra}{\ovl{\fM}}$.%

\item After Definition 4.15, the symbol $Y^{[0,2],\tau}_{\ovl{\fM}}(R)$ denotes the category of pairs $(\fM_R,\jmath_R)$ where $\fM_R\in Y^{[0,h],\tau}(R)$ and $\jmath_R:\fM_R\otimes_R\F\stackrel{\sim}{\ra}\ovl{\fM}$ is an isomorphism in $Y^{[0,h],\tau}(\F)$. 
A similar comment applies to $Y^{\mu,\tau}_{\ovl{\fM}}(R)$.

\item Proof of Theorem 4.17: the ring $R$ is $p$-flat and reduced by \cite[Lemma 2.6]{calegariCrelle}.

\item The formula in Lemma 5.2 still converges in $\frac{1}{\lambda}\Mat(R[1/p][\![u]\!])$ for $R$ a complete local Noetherian flat $\cO$-algebra R. %
While it is possible to show it lies in $\cO^{\rig}_R$, for the computations and the arguments in this paper,
we only need that its formation is compatible with base change.

\item In Corollary 5.13, $T_1,\dots,T_8$ should be replaced by $T_1,\dots,T_9$. 
Similar comment applies to the displayed equation before Theorem 6.14.

\item In \S 5.3.2, ``$c_{11}\equiv 0$ modulo $\varpi$'' should read ``$c_{11}$ and $c_{13}\equiv 0$ modulo the maximal ideal''. 

\item In \S 5.3.3, line $-6$ and $-4$, the $c_{13}^*$ in the displayed equations must be replaced by $c_{13}$ .

\item Definition 7.1 should also define automorphic of weight $V$, level $U$, and coefficients $W$ as follows.
\begin{defn}
Let $\overline{r}:G_F\rightarrow \GL_3(\F)$ be a continuous Galois representation.
Let $V$ be a Serre weight for $\cG$, $U$ be a compact open subgroup of $G(\bA^{\infty,p}_F)\times \cG(\cO_{F^+,p})$ which is unramified at places $v|p$, and $W$ be an $\cO$-module with a $U$-action for which the factor $\cG(\cO_{F^+,p})$ acts trivially. 
We say that $\overline{r}$ is \emph{automorphic of weight $V$, level $U$, and coefficients $W$} if there exists a cofinite subset $\cP \subset \cP_U$ such that
$$
S(U,V\otimes W)_{\overline{\mathfrak{m}}}\neq 0
$$
where $\overline{\mathfrak{m}}$ is the kernel of the system of Hecke eigenvalues $\overline{\alpha}:\bT_{\cP}\rightarrow \F$ associated to $\overline{r}$, and $\overline{\alpha}$ satisfies the equality
$$
\det\left(1-\overline{r}^{\vee}(\mathrm{Frob}_w)X\right)=\sum_{j=0}^3 (-1)^j(\mathbf{N}_{F/\Q}(w))^{\binom{j}{2}}\overline{\alpha}(T_w^{(j)})X^j
$$
for all $w\in \cP$.
We say that $\overline{r}$ is \emph{automorphic of weight $V$} (or that $V$ is a Serre weight of $\overline{r}$) if $\overline{r}$ is \emph{automorphic of weight $V$, level $U$, and coefficients $W$} for some subgroup $U$ and coefficients $W$ as above.
We write $W(\rbar)$ for the set of all Serre weights of $\rbar$.
We say that $\rbar$ is \emph{automorphic} if $W(\rbar)\neq \emptyset$.
\end{defn}

\item
\label{item:def:mod} 
In Definition 7.11(2), ``automorphic of weight $V$" should be replaced with ``automorphic of weight $V$, level $U$, and coefficients $W$" where $U$ is a fixed compact open subgroup of $G(\bA^{\infty,p}_F)\times \cG(\cO_{F^+,p})$ which is unramified at places $v|p$ and $W$ is a fixed $\cO$-module with a $U$-action on which the factor $\cG(\cO_{F^+,p})$ acts trivially. This definition of patching functor depends on the implicit choices of $U$ and $W$.

\item The definition of $e(M)$, given before Proposition 7.14, is incorrect.
The correct definition of $e(M)$ is the following: given a finitely generated $R_\infty$-module $M$ with scheme-theoretic support $\Spec A$ of dimension at most $d$, define $e(M)$ to be $d!$ times the coefficient of the degree $d$-term of the Hilbert polynomial of $M$ (considered as an $A$-module).

\item In the paragraph following the proof of Proposition 7.14, the definition of $\Sigma_0$ should exclude primes dividing $p$.

\item \label{item:UW} In the proof of Proposition 7.15, ``automorphic of weight $V$" should be replaced with ``automorphic of weight $V$, level $U$, and coefficients $W$" where $U = \prod_{v\nmid \infty} U_v$ and $U_v$ is
\begin{itemize}
\item $\mathcal{G}(\cO_v)$ for $v$ which split in $F$ except for $v_1$,
\item the preimage of the upper triangular matrices under the map
\[
\mathcal{G}(\cO_v) \ra \mathcal{G}(k_v) \underset{\iota_{\tld{v}}}{\risom} \GL_3(k_{\tld{v}})
\]
if $v=v_1$, or
\item a maximal hyperspecial maximal compact open subgroup of $G(F_v)$ if $v$ is inert in $F$,
\end{itemize}
and $W$ is an $\cO$-lattice in $\otimes_{v\in \Sigma_0^+} \sigma(\tau_{\tld{v}}) \circ \iota_{\tld{v}}$.

\item
In the proof of Proposition 7.16 the tame inertial type $\tau'$ should read: $\tau'\defeq \omega_2^{-\big((b+1)+p(b+1)\big)}\oplus \omega_2^{-\big((c-1)+pa\big)}\oplus \omega_2^{-\big(a+p(c-1)\big)}$.

\item The proof of Theorem 7.8 also requires the following proposition, which is a level-lowering result based on techniques in \cite{Taylor}.
The proof of Theorem 7.4, which was omitted, uses the same techniques.
\begin{prop}\label{prop:levellower}
Let $\rbar:G_F\rightarrow \GL_3(\F)$ be a continuous Galois representation with split ramification outside $p$, which is automorphic and satisfies the Taylor-Wiles conditions.
If $\rbar$ is automorphic of a reachable weight, then it is automorphic of a reachable weight and level $U$ and coefficients $W$ where $U = \prod_{v\nmid \infty} U_v$ and $U_v$ is
\begin{itemize}
\item $\mathcal{G}(\cO_v)$ for $v$ which split in $F$ except for $v_1$,
\item the preimage of the upper triangular matrices under the map
\[
\mathcal{G}(\cO_v) \ra \mathcal{G}(k_v) \underset{\iota_{\tld{v}}}{\risom} \GL_3(k_{\tld{v}})
\]
if $v=v_1$, or
\item a maximal hyperspecial maximal compact open subgroup of $G(F_v)$ if $v$ is inert in $F$,
\end{itemize}
and $W$ is an $\cO$-lattice in $\otimes_{v\in \Sigma_0^+} \sigma(\tau_{\tld{v}}) \circ \iota_{\tld{v}}$.
\end{prop}
\begin{proof}
If $\rbar$ is automorphic of a reachable weight $V$, then $S(U,V\otimes W)_\mathfrak{m}$ is nonzero for some level $U$ and coefficients $W$.
Let $V$ be $\otimes_v V_v$ where $V_v = V_{\tld{v}} \circ \iota_{\tld{v}}$.
Choose tame types $\tau_v$ such that 
\begin{itemize}
\item $\rhobar_v$ is admissible with respect to $\tau_v$;
\item $\ell(\mathbf{w}(\rhobar_v,\tau_v)) \geq 3$; and
\item $V_{\tld{v}} \in \JH(\overline{\sigma}(\tau_v))$.
\end{itemize}
Letting $\sigma$ be an $\cO$-lattice in $\otimes_v \sigma(\tau_v) \circ \iota_{\tld{v}}$, we have that $S(U,\sigma \otimes W)_\mathfrak{m}$ is nonzero.
By \cite[Lemma 7.1.6 and Theorem 7.2.1]{EGH}, $\rbar \otimes_{\F} \ovl{\F}_p$ is isomorphic to the reduction of $r_\pi: G_F \ra \GL_n(\overline{\bQ}_p)$ for some $\pi$ as in \cite[Theorem 7.2.1]{EGH}.
Let $\pi_F$ be the RACSDC automorphic representation of $\GL_n(\bA_F)$ obtained from $\pi$ through base change.
Choose a totally real extension $L^+$ of $F^+$ such that
\begin{itemize}
\item $4|[L^+:\Q]$;
\item $L^+/F^+$ is Galois and solvable;
\item $L \defeq L^+ F$ is linearly disjoint from $\overline{F}^{\ker \rbar}(\zeta_p)$ over $F$;
\item $L/L^+$ is everywhere unramified;
\item $p$ is unramified in $L^+$;
\item $v_1$ splits completely in $L$;
\item if $\pi_L$ is the base change of $\pi_F$ to $L$ and $w$ is a place of $L$ lying above $\tld{v}$ for $v\in \Sigma_0^+$, then $\pi_{L,w}$ has Iwahori fixed vectors and $\tau_v|_{I_{L_w}}$ is trivial.
\end{itemize}

One can define analogues of $R_\tau^{univ}$ and $R_{\mathcal{S}}^{univ}$ from \cite[\S 5]{gee-annalen} as follows.
Let $S$ be the union of $\{v_1\}$ and $\Sigma^+$.
Let $\tld{S}$ be the union of $\{\tld{v}_1\}$ and $\{\tld{v}:v\in \Sigma^+\}$.
Let $S_L$ be the set of places in $L^+$ lying above places in $S$, and let $\tld{S}_L$ be the set of places in $L$ lying above places in $\tld{S}$.
Let $\tld{\Sigma}_{L,p}$ be the set of places in $L$ lying over a place in $\tld{\Sigma}_p^+$, $\tld{\Sigma}_{L,0}$ the set of places in $L$ lying over a place in $\tld{\Sigma}_0^+$, and $\tld{\Sigma}_{L,1}$ be the set of places in $L$ lying over $\tld{v}_1$.
Let $\tld{\Sigma}_L$ be the union of $\tld{\Sigma}_{L,p}$ and $\tld{\Sigma}_{L,0}$.
If $\tld{v} \in \tld{\Sigma}_{L,p}$, let $\tau_{\tld{v}}$ be $\tau_{\tld{v}|_{F^+}}|_{I_L}$.
If $\tld{v} \in \tld{\Sigma}_{L,0}$, let $R^{\square,\tau_{\tld{v}}}$ be the lifting ring for $\rbar|_{G_{L_{\tld{v}}}}$ parametrizing lifts whose characteristic polynomial is $(X-1)^3$.
We let $R_\tau^{univ}$ be the universal deformation ring corresponding to the deformation problem
\[
\left( F/F^+,S,\widetilde{S},\cO_E,\rbar, \epsilon^{-2} \delta_{F/F^+}, \{ R^\square_{\widetilde{v}_1} \} \cup \{ R^{\square,\tau_{\tld{v}}}_{\widetilde{v}} \}_{v \in \Sigma^+}\right),
\]
and we let $R_{\mathcal{S}}^{univ}$ be the universal deformation ring corresponding to the deformation problem
\[
\left(L/L^+,S_L,\widetilde{S}_L,\cO_E,\rbar|_{G_L}, \epsilon^{-2} \delta_{L/L^+}, \{ R^\square_{\widetilde{u}_1} \}_{\tld{u}_1\in \tld{S}_{L,1}} \cup \{ R^{\square,\tau_{\tld{v}}}_{\widetilde{v}} \}_{\tld{v} \in \tld{\Sigma}_L}\right),
\]
The proof of \cite[Theorem 5.1.4]{gee-annalen} shows that $R_\tau^{univ}$ is finite over $\cO_E$.
Indeed, $R_\tau^{univ}$ is finite over $R_{\mathcal{S}}^{univ}$ and $R_{\mathcal{S}}^{univ}$ is finite over $\cO_E$ since $(R_{\mathcal{S}}^{univ})^{\mathrm{red}}$ is isomorphic to an appropriate Hecke algebra by the proof of \cite[Theorem 3.4]{Guerberoff}.
One replaces Fontaine--Laffaille deformation rings with $R^{\square,\tau_{\tld{v}}}$ for $\tld{v} \in \tld{\Sigma}_{L,p}$, which is geometrically integral by \cite[\S 5.3]{LLLM}.

That $R_\tau^{univ}$ is finite over $\cO_E$ implies that there is a conjugate self-dual lift $r: G_F \ra \GL_3(\overline{\bQ}_p)$ of $\rbar$ which is minimally ramified outside $p$ and potentially crystalline of type $((0,1,2),\tau_v)$ at $\tld{v}$ for each $v\in \Sigma_0^+$.
Moreover, we have that the restriction $r|_{G_L}$, which corresponds to a point of $\Spec R_{\mathcal{S}}^{univ}$, is automorphic.
Solvable base change then implies that $r$ is automorphic.
Local-global compatibility implies that $S(U,\sigma \otimes W)_\mathfrak{m}$ is nonzero for $U$ and $W$ as in the statement of the proposition.
Then $S(U,V' \otimes W)_\mathfrak{m}$ is nonzero for some reachable $V' \in \JH(\overline{\sigma})$.
\end{proof}
This would then show that $M_{\infty}(\otimes_{v \in \Sigma^+_p} \overline{\sigma}(\tau_v)^\circ)$ is nonzero in the third paragraph (one cannot directly cite Definition 7.11(2) because of the change above).

\item In \S 8.1, after the proof of Lemma 8.2, the $\cO$-algebra $R^{\tau,\ovl{\beta},\Box}_{\ovl{\fM},\rhobar}$ has relative dimension $15$ over $\cO$.

\item In Corollary 8.4, there should be the further relation $c_{12}c_{33}=0$.

\item The final sentence of Corollary 8.4 should be replaced by the claim that the ring $R^{\tau,\ovl{\beta},\Box}_{\ovl{\fM},\rhobar}/\varpi$ is a quotient of $\tld{R}[\![c^*_{ii}-[\ovl{c}^*_{ii}], x_j,\ 1\leq i\leq 3,\ 1\leq j\leq 9]\!]$.
This comment applies also to Proposition 8.11.
These changes justify the dimension hypothesis in Lemma 8.8, which is used in the proof of Proposition 8.6.

\item In the proof of Proposition 8.11, we remark that $c_{33}=c_{23}c_{31}(c_{21}^*)^{-1}$.

\item In the caption of Table 4, the coefficients are in $\F$.

\item In the caption of Table 5, the coefficients are in $R$.

\item The entry $(1,3)$ of Table 6 should read ``Leading term of the monodromy condition''.

\item The missing entries in the second column of Table 6 can be read off from Table 5 and the missing entry in the third column of Table 6 can be read off from Proposition 8.3.

\item In Table 6, the Leading term of the monodromy condition for $\alpha\beta\alpha\gamma$, the second $c_{33}^*$ should be removed.
Moreover in the caption the $a_{s_{j+1}(i)}^{(j)}$ for $i=1,\,2,\,3$ should be bold.

\end{enumerate}

\newpage
\bibliography{Biblio}
\bibliographystyle{amsalpha}

\end{document}